\documentclass[12pt]{amsart}%
\usepackage{amsfonts}
\usepackage{amsmath}
\usepackage{amssymb}
\usepackage{graphicx}%
\setcounter{MaxMatrixCols}{30}
\providecommand{\U}[1]{\protect\rule{.1in}{.1in}}
\newtheorem{theorem}{Theorem}
\theoremstyle{plain}

\newtheorem{corollary}{Corollary}

\newtheorem{example}{Example}

\newtheorem{lemma}{Lemma}

\newtheorem{proposition}{Proposition}
\newtheorem{remark}{Remark}

\numberwithin{equation}{section}
\numberwithin{equation}{section}
\numberwithin{theorem}{section}
\numberwithin{lemma}{section}
\numberwithin{remark}{section}
\numberwithin{example}{section}
\numberwithin{proposition}{section}
\numberwithin{definition}{section}
\numberwithin{corollary}{section}
\textwidth=17.5cm \textheight=23.5cm
\hoffset=-2.5cm \voffset=-0.6cm
\begin{document}
\title[Separable morphisms of operator Hilbert systems]{Separable morphisms of operator Hilbert systems, Pietsch factorizations and
entanglement breaking maps}
\author{Anar Dosi}
\address[Dosi=Dosiev]{ Middle East Technical University NCC, Guzelyurt, KKTC, Mersin
10, Turkey }
\email{dosiev@yahoo.com, dosiev@metu.edu.tr}
\urladdr{http://www.mathnet.ru/php/person.phtml?option\_lang=eng\&personid=23380}
\date{March 28, 2019}
\subjclass[2000]{Primary 46L07; Secondary 46B40, 47L25}
\keywords{Quantum cone, multinormed $W^{\ast}$-algebra, quantum system, quantum order }

\begin{abstract}
In this paper we investigate operator Hilbert systems and their separable
morphisms. We prove that the operator Hilbert space of Pisier is an operator
system, which possesses the self-duality property. It is established a link
between unital positive maps and Pietch factorizations, which allows us to
describe all separable morphisms from an abelian $C^{\ast}$-algebra to an
operator Hilbert system. Finally, we prove a key property of entanglement
breaking maps that involves operator Hilbert systems.

\end{abstract}
\maketitle

\section{\textbf{Introduction\label{Sec1}}}

The separable morphisms between operator systems play a fundamental role in
many aspects of quantum information theory. A key result proven in
\cite{PaulTT} by Paulsen, Todorov and Tomforde asserts that a separability of
a linear mapping between finite dimensional matrix algebras is equivalent to
its property to be an entanglement breaking mapping. The latter in turn is
equivalent to max matrix (or min-max matrix) positive mapping of the related
operator system structures. Thus a separable channel can be thought as a max
matrix positive mapping between finite-dimensional matrix algebras preserving
the related traces. Whether the separable morphisms characterize the max
matrix positive maps of operator systems was formulated in \cite[Problem
6.16]{PaulTT} as an open problem. How to be with the min-max matrix positive
maps (see \cite[Problem 6.17]{PaulTT})? On this concern a possible
characterization of separable morphisms between some operator systems is of
great importance.

The operator systems are unital self-adjoint subspaces of the operator space
$\mathcal{B}\left(  H\right)  $ of all bounded linear operators on a Hilbert
space $H$. They critically occurred in Paulsen's approach \cite{Pa} to the
normed quantum functional analysis \cite{ER}, \cite{Pis}, \cite{Hel}. Abstract
characterization of operator systems was proposed by Choi and Effros in
\cite{ECh}. They are matrix-ordered $\ast$-vector spaces with their
Archimedian matrix order units. In the duality concept (see \cite{Dfaa},
\cite{DfaaR}) they are weakly closed, unital, separated, quantum cones on a
$\ast$-vector space $\mathcal{X}$ with a unit $e$. Recall that a quantum cone
$\mathfrak{C}$ on $\mathcal{X}$ is a quantum additive subset of the hermitian
matrix space $\mathcal{M}\left(  X\right)  _{h}$ over $\mathcal{X}$ such that
$a^{\ast}\mathfrak{C}a\subseteq\mathfrak{C}$ for all scalar matrices $a\in M$.
If $\mathfrak{C-e}$ is an absorbent quantum set in $\mathcal{M}\left(
X\right)  _{h}$, then we say that $\mathfrak{C}$ is unital, where
$\mathfrak{e=}\left\{  e^{\oplus n}:n\in\mathbb{N}\right\}  $. If
$\mathfrak{C\cap-C=}\left\{  0\right\}  $, the quantum cone is called a
separated one. The operator system structures of ordered spaces were
investigated in \cite{PT} and \cite{PaulTT}. They can be treated as
quantizations of unital cones in a unital $\ast$-vector space. Tensor products
of operator systems were considered in \cite{Kav}. For the quotients,
exactness and nuclearity in the operator system category see \cite{Kav2}. The
matrix duality and quantum polars of quantum cones were investigated in
\cite{Dfaa}, \cite{Dhjm} and \cite{DCcones}. Based on duality of quantum
cones, the classification of operator system structures among the operator
space structures on a unital $\ast$-vector space was obtained in \cite{DSbM}
(see also \cite{DAJM}). It is proved that the operator system structures on a
unital $\ast$-vector space $\mathcal{X}$ with their unital quantum cones
$\mathfrak{C}$ are in bijection relation with the operator space structures on
$\mathcal{X}$ with their hermitian unit balls $\mathfrak{B}$; the latter means
that $\mathfrak{B}^{\ast}=\mathfrak{B}$ and $e\in\mathfrak{B}$. Thus there are
no operator column and row Hilbert systems as well as Haagerup tensor product
of operator systems in their direct proper senses. Nonetheless the operator
Hilbert space $H_{o}$ of Pisier turns out to be an operator system whose
matrix norm is equivalent to the orignal matrix norm of $H_{o}$. That is a key
missing object of the theory of operator systems, which plays an important
role in the separability problem mentioned above. Notice that in the finite
dimensional case the operator Hilbert system was constructed in \cite{NP} by
Ng and Paulsen.

The present paper is devoted to operator Hilbert systems and their morphisms.
First we describe the $\min$ and $\max$ quantizations of the related unital
cone $\mathfrak{c}$ of a unital Hilbert $\ast$-space $H$, and the related
state space of the cone. To be precise, fix a unital Hilbert $\ast$-space $H$
with its unit hermitian vector $e$, and define the $\sigma\left(
H,\overline{H}\right)  $-closed, unital cone $\mathfrak{c}_{e}=\left\{
\zeta\in H_{h}:\left\Vert \zeta\right\Vert \leq\sqrt{2}\left(  \zeta,e\right)
\right\}  $, where $\overline{H}$ is the conjugate Hilbert space,
$\sigma\left(  H,\overline{H}\right)  $ is the weak topology obtained by means
of the canonical duality $\left\langle \cdot,\cdot\right\rangle $ of the pair
$\left(  H,\overline{H}\right)  $. By a quantization of $\mathfrak{c}_{e}$ we
mean a weakly closed, separated, unital, quantum cone $\mathfrak{C\subseteq
}M\left(  H\right)  _{h}$ such that $\mathfrak{C\cap}H=\mathfrak{c}_{e}$. So
are the quantizations $\min\mathfrak{c}_{e}$ and $\max\mathfrak{c}_{e}$, and
$\max\mathfrak{c}_{e}\subseteq\mathfrak{C\subseteq}\min\mathfrak{c}_{e}$ for
every quantization $\mathfrak{C}$ of $\mathfrak{c}_{e}$. Using the matrix
duality $\left\langle \left\langle \cdot,\cdot\right\rangle \right\rangle $ of
the pair $\left(  M\left(  H\right)  ,M\left(  \overline{H}\right)  \right)  $
associated with $\left(  H,\overline{H}\right)  $, one can define the quantum
polar $\mathfrak{C}^{\boxdot}=\left\{  \overline{\eta}\in M\left(
\overline{H}\right)  _{h}:\left\langle \left\langle \mathfrak{C}%
,\overline{\eta}\right\rangle \right\rangle \geq0\right\}  $ to be a quantum
cone on $\overline{H}$. We have also the conjugate cone $\overline
{\mathfrak{c}_{e}}$ and conjugate quantum cone $\overline{\mathfrak{C}}$ on
$\overline{H}$. In Section \ref{secQHS}, we prove that the operator Hilbert
space $H_{o}$ is an operator system whose quantum cone $\mathfrak{C}_{o}$ is a
quantization of $\mathfrak{c}_{e}$ and it is self-dual in the sense of
$\mathfrak{C}_{o}^{\boxdot}=\overline{\mathfrak{C}}_{o}$, that is, $H_{o}$ is
a self-dual operator system. Moreover, $\left(  \max\overline{\mathfrak{c}%
_{e}}\right)  ^{\boxdot}=\min\mathfrak{c}_{e}$ and $\left(  \min
\overline{\mathfrak{c}_{e}}\right)  ^{\boxdot}=\max\mathfrak{c}_{e}$.

In Section \ref{sectionPMOH}, we investigate the positive maps between
operator Hilbert systems. Since the $\min$-operator system structure on a
unital $\ast$-vector space is given by the standard cone\ $C\left(  X\right)
_{+}$ of the abelian $C^{\ast}$-algebra $C\left(  X\right)  $ of all complex
continuous functions on a compact Hausdorff topological space $X$ (see
\cite{PT}), the characterization of separable morphisms $C\left(  X\right)
\rightarrow H$ plays a key role in the solution of the $\min$-$\max$ matrix
positive mapping problem confirmed above. Fix a hermitian basis $F$ for $H$
containing $e$, and a probability measure $\mu$ on $X$. A family of real
valued Borel functions $k=\left\{  k_{f}:f\in F\right\}  \subseteq
\operatorname{ball}L^{\infty}\left(  X,\mu\right)  $ with $k_{e}=1$ is said to
be \textit{an }$H$-\textit{support on }$X$\textit{ }if $k_{f}\perp k_{e}$,
$f\neq e\ $and $\sum_{f\neq e}\left(  v,k_{f}\right)  ^{2}\leq\left(
v,k_{e}\right)  ^{2}\ $in $L^{2}\left(  X,\mu\right)  \ $for all $v\in
C\left(  X\right)  _{+}$. If $k$ is an $H$-support on $X$ then $T:C\left(
X\right)  \rightarrow H$, $Tv=\sum_{f}\left(  v,k_{f}\right)  f$ is a unital
positive mapping, that is, $T\left(  1\right)  =e$ and $T\left(  C\left(
X\right)  _{+}\right)  \subseteq\mathfrak{c}_{e}$. There is a bijection
between unital positive maps $T:C\left(  X\right)  \rightarrow H$ and
$H$-supports $k$ on $X$ (see below Theorem \ref{corCHe1}). In this case, $T$
is an absolutely summable mapping, which admits a unique bounded linear
extension $T_{k}:L^{2}\left(  X,\mu\right)  \rightarrow H$ being a unital
positive mapping of operator Hilbert systems. Moreover, $T_{k}$ coincides with
the Pietsch extension of an absolutely summable mapping $T$ \cite{AlP}. The
present theory can be treated as an ordered version of Pietsch factorizations
for absolutely summable maps.

Recall that a positive mapping $\phi:\mathcal{V\rightarrow W}$ of operator
systems is called\textit{ separable} if $\phi=\sum_{l}p_{l}\odot q_{l}$ is a
sum of $1$-rank operators made from positive functionals $q_{l}$ on
$\mathcal{V}$ and positive elements $p_{l}$ in $\mathcal{W}$ in the sense of
$\phi\left(  v\right)  =\lim_{k}\sum_{l=1}^{k}q_{l}\left(  v\right)  p_{l}$ in
$\mathcal{W}$ for every $v\in\mathcal{V}$. We obtain the following
characterization of the separable morphisms from $C\left(  X\right)  $ to $H$.
A morphism $C\left(  X\right)  \rightarrow H$ is separable iff its support $k$
on $X$ is maximal, in the sense of $\sum_{f\neq e}k_{f}^{2}\leq1$ in
$L^{\infty}\left(  X,\mu\right)  $. Thus the separable morphisms $C\left(
X\right)  \rightarrow H$ are in bijection relation with the maximal supports
on $X$. In this case, all extensions $T_{k}:L^{2}\left(  X,\mu\right)
\rightarrow H$ are Hilbert-Schmidt operators, whereas the original separable
morphisms $T:C\left(  X\right)  \rightarrow H$ are nuclear operators.

It is known \cite{PaulTT} that a linear mapping $\phi:M_{n}\rightarrow\left(
M_{m},\max M_{m}^{+}\right)  $ is matrix positive iff $\phi:M_{n}\rightarrow
M_{m}$ is separable. A problem of Paulsen-Todorov-Tomforde from \cite{PaulTT}
asks that whether the latter statement characterizes the matrix positive maps
$\phi:\mathcal{V\rightarrow}\left(  \mathcal{W},\max\mathcal{W}_{+}\right)  $
of operator systems. We provide an example of a morphism between operator
Hilbert systems which is not separable. Namely, let $H$ be an infinite
dimensional Hilbert space with its hermitian basis $F$. Fix $e,u\in F$, which
in turn define the unital cones $\mathfrak{c}_{e}$ and $\mathfrak{c}_{u}$ in
$H$, respectively. If $T\in\mathcal{B}\left(  H\right)  $ is a unitary given
by $T=u\odot\overline{e}+e\odot\overline{u}+\sum_{f\neq u,e}f\odot\overline
{f}$, then the matrix positive mapping $T:\left(  H,\max\mathfrak{c}%
_{u}\right)  \rightarrow\left(  H,\max\mathfrak{c}_{e}\right)  $ given by $T$
is not separable.

Finally, we consider the finite dimensional case, and prove that the unital
cone $\mathfrak{c}_{e}$ of the $2$-dimensional Hilbert space $\ell_{2}^{2}$
admits only one quantization, that is, $\min\mathfrak{c}_{e}=\max
\mathfrak{c}_{e}$. As an application to quantum information theory we prove
the following key property of the operator Hilbert systems. Let $H$ be an
operator Hilbert system, $\mathcal{M}$ either a finite-dimensional von Neumann
algebra or another operator Hilbert system, and let $\varphi:H\rightarrow
\mathcal{M}$ be a linear mapping. Then $\varphi$ is an entanglement breaking
mapping iff $\varphi^{\ast}:\mathcal{M}^{\ast}\rightarrow\left(  \overline
{H},\max\overline{\mathfrak{c}_{e}}\right)  $ is matrix positive. Similarly,
$\varphi^{\ast}:\overline{H}\rightarrow\mathcal{M}^{\ast}$ is an entanglement
breaking mapping iff $\varphi:\mathcal{M}\rightarrow\left(  H,\max
\mathfrak{c}_{e}\right)  $ is matrix positive.

\section{\textbf{Preliminaries}\label{secPr}}

In this section we introduce some preliminary notions and results. The vector
space of all $m\times n$-matrices $v=\left[  v_{ij}\right]  _{i,j}$ over a
complex vector space $V$ is denoted by $M_{m,n}\left(  V\right)  $, and we set
$M_{m}\left(  V\right)  =M_{m,m}\left(  V\right)  $ and $M_{m,n}%
=M_{m,n}\left(  \mathbb{C}\right)  $. Further, $M\left(  V\right)  $
(respectively, $M$) denotes the vector space of all infinite (respectively,
scalar) matrices over $V$ with only finitely many non-zero entries. A linear
mapping $\varphi:V\rightarrow W$ admits the canonical linear extensions
$\varphi^{\left(  n\right)  }:M_{n}\left(  V\right)  \rightarrow M_{n}\left(
W\right)  $ (respectively, $\varphi^{\left(  \infty\right)  }:M\left(
V\right)  \rightarrow M\left(  W\right)  $) over all matrix spaces defined as
$\varphi^{\left(  n\right)  }\left(  \left[  v_{ij}\right]  _{i,j}\right)
=\left[  \varphi\left(  v_{ij}\right)  \right]  _{i,j}$ (respectively,
$\varphi^{\left(  \infty\right)  }|M_{n}\left(  V\right)  =\varphi^{\left(
n\right)  }$). Notice that $\varphi^{\left(  \infty\right)  }$ preserves the
standard matrix operations.

\subsection{The quantum duality\label{subsecDD}}

By a quantum set $\mathfrak{B}$\textit{ on }$V$ we mean a collection
$\mathfrak{B=}\left(  \mathfrak{B}_{n}\right)  $ of subsets $\mathfrak{B}%
_{n}\subseteq M_{n}\left(  V\right)  $, $n\geq1$. Sometimes we write
$\mathfrak{B}\cap M_{n}\left(  V\right)  $ instead of $\mathfrak{B}_{n}$. If
$\mathfrak{B}$ and $\mathfrak{C}$ are quantum sets on $V$ then we put
$\mathfrak{B\subseteq C}$ whenever $\mathfrak{B}_{n}\subseteq\mathfrak{C}_{n}%
$, $n\geq1$. In a similar way, all set-theoretic operations and basic
algebraic operations can be defined over all quantum sets on $V$. The
Minkowski functional of an absorbent (in $M\left(  V\right)  $) absolutely
matrix convex set (see \cite{EW}) is called \textit{a matrix seminorm }on $V$.
A polynormed (or locally convex) topology defined by a separating family of
matrix seminorms is called \textit{a} \textit{quantum topology}, and the
vector space $V$ equipped with a quantum topology is called \textit{a quantum
space. }Thus a quantum topology $\mathfrak{t}$ on $V$ can be identified with a
filter base of absorbent, absolutely matrix convex sets on $V$ such that
$\left\{  \varepsilon\mathfrak{U:U\in t},\varepsilon>0\right\}  $ is a
neighborhood filter base of the origin with respect to the relevant polynormed
topology in $M\left(  V\right)  $. In particular, it inherits a polynormed
topology $\mathfrak{t}|M_{n}\left(  V\right)  $ in each $M_{n}\left(
V\right)  $. Note that $\mathfrak{t}|M_{n}\left(  V\right)  =\left(
\mathfrak{t}|V\right)  ^{n^{2}}$ \cite{DNew} (see also \cite{Djfa}), where
$\left(  \mathfrak{t}|V\right)  ^{n^{2}}$ indicates the direct product
topology in $V^{n^{2}}$ generated by $\mathfrak{t}|V$. Conversely, each
polynormed topology $t$ in $V$ is a trace of a certain quantum topology
$\mathfrak{t}$ in $M\left(  V\right)  $ called its \textit{quantization}, that
is, $t=\mathfrak{t}|V$. All these quantizations are running within $\min$ and
$\max$ quantizations \cite{EW}, that is, if $\mathfrak{t}$ is a quantum
topology on $V$ with $t=\mathfrak{t}|V$, then $\min t\subseteq
\mathfrak{t\subseteq}\max t$. A quantum space whose quantum topology is given
by a matrix norm is a called \textit{an operator }(or \textit{quantum normed})
\textit{space.} By a morphism between quantum spaces we mean a \textit{matrix
continuous linear mapping. }A linear mapping $\varphi:V\rightarrow W$ between
quantum spaces is matrix continuous iff $\varphi^{\left(  \infty\right)
}:M\left(  V\right)  \rightarrow M\left(  W\right)  $ is a continuous linear
mapping of the relevant polynormed spaces.

Let $\left(  V,W\right)  $ be a dual pair of vector spaces with the pairing
$\left\langle \cdot,\cdot\right\rangle :V\times W\rightarrow\mathbb{C}$. This
pairing defines a \textit{quantum }(or \textit{matrix})\textit{ pairing}
\[
\left\langle \left\langle \cdot,\cdot\right\rangle \right\rangle :M_{m}\left(
V\right)  \times M_{n}\left(  W\right)  \rightarrow M_{mn},\quad\left\langle
\left\langle v,w\right\rangle \right\rangle =\left[  \left\langle
v_{ij},w_{st}\right\rangle \right]  _{\left(  i,s\right)  ,\left(  j,t\right)
}=w^{\left(  m\right)  }\left(  v\right)  =v^{\left(  n\right)  }\left(
w\right)  ,
\]
where $v=\left[  v_{ij}\right]  _{i,j}\in M_{m}\left(  V\right)  $, $w=\left[
w_{st}\right]  _{s,t}\in M_{n}\left(  W\right)  $, which are identified with
the canonical linear maps $v:W\rightarrow M_{m}$, $v\left(  y\right)  =\left[
\left\langle v_{ij},y\right\rangle \right]  _{i,j}$, and $w:V\rightarrow
M_{n}$, $w\left(  x\right)  =\left[  \left\langle x,w_{st}\right\rangle
\right]  _{s,t}$, respectively. The same size matrix spaces $M_{n}\left(
V\right)  $ and $M_{n}\left(  W\right)  $ are also in the canonical duality
determined by \textit{the scalar pairing}%
\[
\left\langle \cdot,\cdot\right\rangle :M_{n}\left(  V\right)  \times
M_{n}\left(  W\right)  \rightarrow\mathbb{C},\quad\left\langle
v,w\right\rangle =\sum_{i,j}\left\langle v_{ij},w_{ij}\right\rangle .
\]
So, we have the weak and Mackey topologies $\sigma\left(  M_{n}\left(
V\right)  ,M_{n}\left(  W\right)  \right)  $ and $\varkappa\left(
M_{n}\left(  V\right)  ,M_{n}\left(  W\right)  \right)  $, respectively.
Actually, $\sigma\left(  M_{n}\left(  V\right)  ,M_{n}\left(  W\right)
\right)  =\sigma\left(  V,W\right)  ^{n^{2}}$ and $\varkappa\left(
M_{n}\left(  V\right)  ,M_{n}\left(  W\right)  \right)  =\varkappa\left(
V,W\right)  ^{n^{2}}$ (see \cite[4.4.2, 4.4.3]{Sh} and \cite{DTrans}). If
$V=W=\mathbb{C}$ then the scalar pairing $\left\langle \cdot,\cdot
\right\rangle :M_{n}\times M_{n}\rightarrow\mathbb{C}$ is given by
$\left\langle a,b\right\rangle =\sum_{i,j}a_{ij}b_{ji}=\tau\left(
ab^{\operatorname{t}}\right)  =\tau\left(  a^{\operatorname{t}}b\right)  $,
where $\tau$ is the trace on $M_{n}$ and $a^{\operatorname{t}}$ (or
$b^{\operatorname{t}}$) indicates to the transpose matrix. This duality
defines the trace class norm $\left\Vert a\right\Vert _{1}=\tau\left(
\left\vert a\right\vert \right)  =\sup\left\{  \left\vert \left\langle
a,b\right\rangle \right\vert :b\in\operatorname{ball}M_{n}\right\}  $, $a\in
M_{n}$. The space $M_{n}$ equipped with the norm $\left\Vert \cdot\right\Vert
_{1}$ is denoted by $T_{n}$, which is the predual of the von Neumann algebra
$M_{n}$. The following assertion was proved in \cite{DInt}.

\begin{theorem}
\label{tdos}Let $\left(  V,W\right)  $ be a dual pair. The weak topology
$\sigma\left(  V,W\right)  $ admits only one quantization $\mathfrak{s}\left(
V,W\right)  $ called \textit{the weak quantum topology of the dual pair
}$\left(  V,W\right)  $.
\end{theorem}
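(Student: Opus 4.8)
The plan is to reduce the statement to the equality $\min\sigma\left(V,W\right)=\max\sigma\left(V,W\right)$. As recalled above (following \cite{EW}), every quantization $\mathfrak{t}$ of $\sigma\left(V,W\right)$ is squeezed between $\min\sigma\left(V,W\right)$ and $\max\sigma\left(V,W\right)$, so the quantization is unique exactly when these two extreme quantizations coincide; since $\min\sigma\left(V,W\right)\subseteq\max\sigma\left(V,W\right)$ always holds, it suffices to prove the reverse inclusion, i.e. that the identity map $\iota\colon\left(V,\min\sigma\left(V,W\right)\right)\rightarrow\left(V,\max\sigma\left(V,W\right)\right)$ is matrix continuous.

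Next I would fix neighbourhood bases for the two extreme quantizations in the style of Effros and Webster. The topology $\sigma\left(V,W\right)$ is generated by the rank-one seminorms $p_{w}\left(v\right)=\left\vert\left\langle v,w\right\rangle\right\vert$, $w\in W$, with a convex base at the origin given by the polars $F^{\circ}=\left\{v\in V:\left\vert\left\langle v,f\right\rangle\right\vert\leq1,\ f\in F\right\}$ of the finite subsets $F\subseteq W$, whose Minkowski functional is $p_{F}=\max_{f\in F}p_{f}$. Then $\min\sigma\left(V,W\right)$ has the base $\left\{\left(F^{\circ}\right)^{\min}\right\}_{F}$ and $\max\sigma\left(V,W\right)$ the base $\left\{\left(F^{\circ}\right)^{\max}\right\}_{F}$, where $\left(\,\cdot\,\right)^{\min}$ and $\left(\,\cdot\,\right)^{\max}$ denote the minimal and maximal matrix-convex quantizations. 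The key observation is that $p_{F}$ factors through the quotient $q_{F}\colon V\rightarrow Z_{F}:=V/\ker p_{F}$, where $\ker p_{F}=\bigcap_{f\in F}\ker\left\langle\,\cdot\,,f\right\rangle$ has codimension at most $\left\vert F\right\vert$; hence $Z_{F}$ is a finite-dimensional normed space with $\dim Z_{F}\leq\left\vert F\right\vert$, isometrically a subspace of $\ell_{\infty}^{\left\vert F\right\vert}$ via the coordinates $\left\langle\,\cdot\,,f\right\rangle$. Since every contraction out of $\left(V,p_{F}\right)$ annihilates $\ker p_{F}$, the maximal quantization of $p_{F}$ is the pull-back along $q_{F}$ of $\max Z_{F}$; on the other hand $\left(p_{F}\right)^{\min}$ is the pull-back of $\min Z_{F}$, and, $\min$ being compatible with subspaces, it is the coordinatewise maximum $\max_{f\in F}\left(p_{f}\right)^{\min}$. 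Thus $\left(F^{\circ}\right)^{\min}$ and $\left(F^{\circ}\right)^{\max}$ are the $q_{F}^{\left(\infty\right)}$-preimages of the matrix unit balls of $\min Z_{F}$ and $\max Z_{F}$, respectively.

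The proof is then completed by a finite-dimensional comparison: for any finite-dimensional normed space $Z$ the identity map $\min Z\rightarrow\max Z$ is completely bounded. This is elementary: fixing a basis $\left\{e_{l}\right\}_{l\leq k}$ of $Z$ with dual functionals $\left\{e_{l}^{\ast}\right\}$ and writing $\left[z_{ij}\right]=\sum_{l}A_{l}\otimes e_{l}$ with $A_{l}=\left[e_{l}^{\ast}\left(z_{ij}\right)\right]\in M_{n}$, the inequalities $\left\Vert A_{l}\otimes e_{l}\right\Vert_{M_{n}\left(\max Z\right)}\leq\left\Vert A_{l}\right\Vert_{M_{n}}\left\Vert e_{l}\right\Vert_{Z}$ (a compatibility axiom of operator spaces) and $\left\Vert A_{l}\right\Vert_{M_{n}}\leq\left\Vert e_{l}^{\ast}\right\Vert_{Z^{\ast}}\left\Vert\left[z_{ij}\right]\right\Vert_{\min Z,n}$ yield $\left\Vert\left[z_{ij}\right]\right\Vert_{\max Z,n}\leq\bigl(\textstyle\sum_{l}\left\Vert e_{l}\right\Vert_{Z}\left\Vert e_{l}^{\ast}\right\Vert_{Z^{\ast}}\bigr)\left\Vert\left[z_{ij}\right]\right\Vert_{\min Z,n}$ for all $n$ (with an Auerbach basis the constant becomes $\dim Z$). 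Pulling this back along $q_{F}$ gives $\left(p_{F}\right)^{\max}\leq\dim Z_{F}\cdot\left(p_{F}\right)^{\min}$ on every $M_{n}\left(V\right)$, so the $\max\sigma\left(V,W\right)$-neighbourhood $\left(F^{\circ}\right)^{\max}$ contains the $\min\sigma\left(V,W\right)$-neighbourhood $\left(\dim Z_{F}\right)^{-1}\left(F^{\circ}\right)^{\min}$. Letting $F$ run over the finite subsets of $W$ shows that $\iota$ is matrix continuous, hence $\min\sigma\left(V,W\right)=\max\sigma\left(V,W\right)=:\mathfrak{s}\left(V,W\right)$.

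I expect the principal obstacle to be the repeated passage through the maximal quantization, which, unlike the minimal one, does not behave well under subspaces or products; this is exactly why the argument must be routed through the quotients $Z_{F}=V/\ker p_{F}$, on which $\max$ \emph{is} compatible (a metric surjection of normed spaces induces a complete metric surjection of the maximal operator space structures). One has to check with some care that every complete contraction $\left(V,p_{F}\right)\rightarrow\mathcal{B}\left(H\right)$ descends to $Z_{F}$, so that $\left(p_{F}\right)^{\max}$ is exactly the pull-back of $\max Z_{F}$; and that $\left\{\left(F^{\circ}\right)^{\max}:F\subseteq W\ \text{finite}\right\}$ is genuinely a neighbourhood base at the origin for $\max\sigma\left(V,W\right)$ (which is the substance of the maximality of $\max\sigma\left(V,W\right)$ among quantizations of $\sigma\left(V,W\right)$). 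Granted these structural facts, the finite-dimensional estimate above is the only quantitative ingredient, and it is entirely elementary.
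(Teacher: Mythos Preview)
The paper does not prove this theorem in the text; it is quoted from \cite{DInt}, and only the remark that $\mathfrak{s}(V,W)$ has the explicit defining family of matrix seminorms $p_{w}(v)=\|\langle\langle v,w\rangle\rangle\|$, $w\in M(W)$, is recorded. Your argument is correct and self-contained: the reduction to $\min\sigma(V,W)=\max\sigma(V,W)$, the factorization of each $p_{F}$ through the finite-dimensional quotient $Z_{F}=V/\ker p_{F}$, the identification of $(p_{F})^{\max}$ with the pullback of $\max Z_{F}$ (since every $p_{F}$-contraction into $\mathcal{B}(H)$ annihilates $\ker p_{F}$), and the Auerbach-basis estimate $\|\cdot\|_{\max Z,n}\leq(\dim Z)\,\|\cdot\|_{\min Z,n}$ all go through as you describe. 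The structural checks you flag---that $\{(F^{\circ})^{\max}\}_{F}$ is a base for $\max\sigma(V,W)$, and that $\max$ is compatible with the quotient $q_{F}$---are standard and hold for the reasons you indicate.

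The route implicit in the paper's remark is different in emphasis: rather than comparing $\min$ and $\max$ abstractly, one writes down the concrete family $\{p_{w}:w\in M(W)\}$, checks it is a quantum topology with trace $\sigma(V,W)$, and identifies it with both extremes (the scalars $w\in W$ already generate $\min\sigma(V,W)$, while every matrix seminorm with $\sigma(V,W)$-continuous trace is dominated by finitely many $p_{w}$, yielding $\max\sigma(V,W)$). Your approach trades this explicit description for a single finite-dimensional inequality, which is cleaner if one does not need the concrete family afterwards; the explicit description, however, is what the paper actually uses downstream (weak closures, bipolars), so from the paper's point of view the concrete family is the payoff rather than the uniqueness statement itself.
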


The quantum topology $\mathfrak{s}\left(  V,W\right)  $ has the defining
family $\left\{  p_{w}:w\in M\left(  W\right)  \right\}  $ of matrix
seminorms, where $p_{w}\left(  v\right)  =\left\Vert \left\langle \left\langle
v,w\right\rangle \right\rangle \right\Vert $ (see \cite{DInt}). Thus
$\min\sigma\left(  V,W\right)  =\max\sigma\left(  V,W\right)  =\mathfrak{s}%
\left(  V,W\right)  $. Moreover, $\mathfrak{s}\left(  V,W\right)  |V^{n^{2}%
}=\left(  \mathfrak{s}\left(  V,W\right)  |V\right)  ^{n^{2}}=\sigma\left(
V,W\right)  ^{n^{2}}=\sigma\left(  M_{n}\left(  V\right)  ,M_{n}\left(
W\right)  \right)  $ for all $n$ (see \cite{DNew}, \cite{DInt}). A quantum
topology $\mathfrak{t}$\ on $V$ is said to be compatible with the duality
$\left(  V,W\right)  $ if $\left(  V,\mathfrak{t}|V\right)  ^{\prime}=W$. In
this case, $\mathfrak{t}$ has a neighborhood filter base of the origin, which
consists of $\mathfrak{s}\left(  V,W\right)  $-closed, absorbent, absolutely
matrix convex sets in $M\left(  V\right)  $. Moreover, $\mathfrak{s}\left(
V,W\right)  \preceq\mathfrak{t\preceq r}\left(  V,W\right)  $ \cite[Lemma
5.1]{DNew}, where $\mathfrak{r}\left(  V,W\right)  =\max\varkappa\left(
V,W\right)  $.

Given a quantum set $\mathfrak{B}$ in $M\left(  V\right)  $ we have its weak
closure $\mathfrak{B}^{-}$ with respect to the weak quantum topology
$\mathfrak{s}\left(  V,W\right)  $, and the \textit{absolute matrix }(or
\textit{operator})\textit{ polar} $\mathfrak{B}^{\odot}$ \textit{in }$M\left(
W\right)  $ defined as the quantum set $\mathfrak{B}^{\odot}=\left\{  w\in
M\left(  W\right)  :\sup\left\Vert \left\langle \left\langle \mathfrak{B}%
,w\right\rangle \right\rangle \right\Vert \leq1\right\}  $. One can easily
verify that $\mathfrak{B}^{\odot}$ is $\mathfrak{s}\left(  W,V\right)
$-closed, absolutely matrix convex set in $M\left(  W\right)  $ (see
\cite{DNew}). Similarly, it is defined the absolute matrix polar
$\mathfrak{M}^{\odot}\subseteq M\left(  V\right)  $ of a quantum set
$\mathfrak{M}\subseteq M\left(  W\right)  $. If $\mathfrak{t}$ is a quantum
topology on $V$ compatible with the duality $\left(  V,W\right)  $ then
$\mathfrak{t}^{\odot}=\left\{  n\mathfrak{B}^{\odot}:\mathfrak{B\in t}%
,n\in\mathbb{N}\right\}  $ is a quantum bornology base, which consists of
$\mathfrak{s}\left(  W,V\right)  $-compact quantum sets on $W$ (see
\cite{DT}). The following quantum version of the classical bipolar theorem was
proved in \cite{EW} (see also \cite{EWin}) by Effros and Webster.

\begin{theorem}
\label{BiT}Let $\left(  V,W\right)  $ be a dual pair and let $\mathfrak{B}$ be
an absolutely matrix convex set in $M\left(  V\right)  $. Then $\mathfrak{B}%
^{\odot\odot}=\mathfrak{B}^{-}$, where $\mathfrak{B}^{-}$ is the
$\mathfrak{s}\left(  V,W\right)  $-closure of $\mathfrak{B}$.
\end{theorem}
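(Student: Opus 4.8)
The plan is to prove the two inclusions $\mathfrak{B}^{-}\subseteq\mathfrak{B}^{\odot\odot}$ and $\mathfrak{B}^{\odot\odot}\subseteq\mathfrak{B}^{-}$ separately, the second being the substantial half. For the first I would unwind the matrix pairing: for $v\in M_{m}(V)$ and $w\in M_{k}(W)$ the matrices $\langle\langle v,w\rangle\rangle\in M_{mk}$ and $\langle\langle w,v\rangle\rangle\in M_{km}$ are interchanged by the canonical shuffle unitary $M_{mk}\rightarrow M_{km}$, hence have equal operator norm. Thus if $v\in\mathfrak{B}_{m}$ then for every $w\in\mathfrak{B}^{\odot}$ we have $\|\langle\langle w,v\rangle\rangle\|=\|\langle\langle v,w\rangle\rangle\|\leq1$ by the definition of $\mathfrak{B}^{\odot}$, which is exactly the condition $v\in\mathfrak{B}^{\odot\odot}$; so $\mathfrak{B}\subseteq\mathfrak{B}^{\odot\odot}$. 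Since the operator polar of any quantum set is weakly closed in the relevant weak quantum topology and absolutely matrix convex, as recalled just before the statement, $\mathfrak{B}^{\odot\odot}$ is $\mathfrak{s}(V,W)$-closed; being a closed superset of $\mathfrak{B}$, it contains the $\mathfrak{s}(V,W)$-closure $\mathfrak{B}^{-}$.

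For the reverse inclusion I would argue by contraposition, reducing to a matricial separation lemma: \emph{if $\mathfrak{K}\subseteq M(V)$ is $\mathfrak{s}(V,W)$-closed and absolutely matrix convex and $v_{0}\in M_{n}(V)\setminus\mathfrak{K}_{n}$, then there exist $k\geq1$ and $w\in M_{k}(W)$ with $\|\langle\langle v,w\rangle\rangle\|\leq1$ for all $m$ and all $v\in\mathfrak{K}_{m}$, while $\|\langle\langle v_{0},w\rangle\rangle\|>1$.} Granting this, suppose $v_{0}\in M_{n}(V)$ lies outside $\mathfrak{B}^{-}$. The closure $\mathfrak{B}^{-}$ is again absolutely matrix convex because the operations $v\mapsto\alpha v\beta$ and $(v,v')\mapsto v\oplus v'$ are continuous for $\mathfrak{s}(V,W)$, so applying the lemma to $\mathfrak{K}=\mathfrak{B}^{-}$ produces $w\in M_{k}(W)$ with $\|\langle\langle v,w\rangle\rangle\|\leq1$ for all $v\in\mathfrak{B}\subseteq\mathfrak{B}^{-}$ — i.e. $w\in\mathfrak{B}^{\odot}$ — and $\|\langle\langle v_{0},w\rangle\rangle\|>1$, i.e. $v_{0}\notin\mathfrak{B}^{\odot\odot}$. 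Hence $\mathfrak{B}^{\odot\odot}\subseteq\mathfrak{B}^{-}$, and with the first part this gives $\mathfrak{B}^{\odot\odot}=\mathfrak{B}^{-}$.

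To prove the separation lemma I would first descend to level $n$. Since $\mathfrak{s}(V,W)|M_{n}(V)=\sigma(M_{n}(V),M_{n}(W))$, the set $\mathfrak{K}_{n}$ is a $\sigma(M_{n}(V),M_{n}(W))$-closed absolutely convex subset of $M_{n}(V)$ containing $0$, so the classical Hahn--Banach separation theorem in the dual pair $(M_{n}(V),M_{n}(W))$ yields a scalar functional $\omega\in M_{n}(W)$ with $|\langle v,\omega\rangle|\leq1$ for all $v\in\mathfrak{K}_{n}$ and $|\langle v_{0},\omega\rangle|>1$, where $\langle\cdot,\cdot\rangle$ is the scalar pairing on $M_{n}$; absolute convexity turns a real separating bound into an absolute one, and $\omega$ is rescaled so the strict inequality persists (the degenerate case, where $\omega$ annihilates $\mathfrak{K}_{n}$ but not $v_{0}$, is handled by the same rescaling). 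The remaining, genuinely hard step is to upgrade this single scalar functional to a matrix-valued separator, i.e. to produce $w\in M_{k}(W)$ — equivalently a linear map $\widehat{w}\colon V\rightarrow M_{k}$ whose amplifications carry every $\mathfrak{K}_{m}$ into the operator unit ball of $M_{mk}$ — that still violates the norm bound on $v_{0}$.

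This upgrade is where the operator-space Hahn--Banach theorem enters and is the main obstacle. When $\mathfrak{K}$ is absorbing it is the closed unit ball of a matrix seminorm $p$ on $V$, its operator polar $\mathfrak{K}^{\odot}$ is the closed unit ball of the dual matrix seminorm on $W$, and the inequality $\sup_{w\in\mathfrak{K}^{\odot}}\|\langle\langle v_{0},w\rangle\rangle\|>1$ is precisely the statement that the second matrix dual recovers the $p$-value of $v_{0}$; this is the Arveson--Wittstock extension/representation theorem, equivalently the injectivity of $\mathcal{B}(\mathcal{H})$ used to extend a matrix functional dominated by the matrix gauge of $\mathfrak{K}$. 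In the general, non-absorbing case I would enlarge $\mathfrak{K}$ slightly along the $v_{0}$-direction to make it absorbing without recapturing $v_{0}$, pass to the matrix-seminormed space $V/N$ with $N$ the largest subspace on which the gauge vanishes, run the absorbing argument there, and pull $w$ back to $M_{k}(W)$. I expect this amplification of a scalar separating functional to a completely contractive matrix-valued one to be the delicate point — it carries the genuinely operator-space content of the theorem — while the shuffle-unitary bookkeeping and the identification $\mathfrak{s}(V,W)|M_{n}(V)=\sigma(M_{n}(V),M_{n}(W))$ are routine. This is essentially the route of Effros and Webster, and one could also simply cite their result.
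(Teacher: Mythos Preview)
The paper does not actually prove this theorem: it is stated as a background result, attributed to Effros and Webster \cite{EW} (see also \cite{EWin}), and the only further remark is that the author has elsewhere \cite{Dfaa} given an alternative proof ``based on the duality theory of quantum cones.'' So there is no in-paper proof to compare against.

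Your sketch is essentially the Effros--Webster/Effros--Winkler argument, as you yourself note at the end, and it is correct in outline: the easy inclusion $\mathfrak{B}^{-}\subseteq\mathfrak{B}^{\odot\odot}$ follows from closedness of operator polars, and the hard inclusion is a matricial separation statement whose proof amounts to upgrading a classical Hahn--Banach functional at level $n$ to a completely contractive $M_{k}$-valued map via the Arveson--Wittstock extension theorem. Your handling of the non-absorbing case (perturb to make the gauge a genuine matrix seminorm, pass to the quotient) is the standard device and works. The one place where your write-up is a bit loose is the ``enlarge $\mathfrak{K}$ slightly along the $v_{0}$-direction'' step: you need to check that the enlarged set remains absolutely matrix convex and weakly closed, not just absolutely convex at level $n$; the cleanest way is to work with $\mathfrak{K}+\varepsilon\mathfrak{U}$ for a suitable absolutely matrix convex neighbourhood $\mathfrak{U}$ of $0$, or equivalently to pass directly to the Minkowski matrix gauge and invoke Ruan's theorem. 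By contrast, the alternative proof alluded to in \cite{Dfaa} avoids the explicit Arveson--Wittstock step by embedding the problem into the duality of quantum cones (via the Paulsen system construction) and using the bipolar theorem for quantum cones, Theorem~\ref{t1} of the present paper; that route trades the operator-space extension machinery for order-theoretic arguments.
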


In \cite{Dfaa} we found a new proof of the Bipolar Theorem \ref{BiT} based on
the duality theory of quantum cones. Thus the method of quantum cones is an
alternative tool to investigate quantum spaces.

\subsection{Involution and quantum cones\label{subsecInv}}

By \textit{an involution} on a vector space $\mathcal{X}$ we mean a conjugate
linear (or $\ast$-linear) mapping $x\mapsto x^{\ast}$ on $\mathcal{X}$ such
that $x^{\ast\ast}=x$ for all $x\in\mathcal{X}$. A vector space equipped with
an involution is called \textit{a }$\ast$\textit{-vector space}. An element
$x\in\mathcal{X}$ is called \textit{hermitian }if $x^{\ast}=x$. The set of all
hermitian elements is denoted by $\mathcal{X}_{h}$, which is a real linear
subspace in $\mathcal{X}$. It is easy to see that each $x\in\mathcal{X}$ has a
unique decomposition $x=\operatorname{Re}\left(  x\right)  +i\operatorname{Im}%
\left(  x\right)  $ with hermitians $\operatorname{Re}\left(  x\right)  $ and
$\operatorname{Im}\left(  x\right)  $. Now assume that $\mathcal{X}$ is a
$\ast$-vector space and $\left(  \mathcal{X},\mathcal{Y}\right)  $ is a dual
pair such that the involution on $\mathcal{X}$ is $\sigma\left(
\mathcal{X},\mathcal{Y}\right)  $-continuous. Then $\mathcal{Y}$ possesses the
canonical involution $y\mapsto y^{\ast}$, $\left\langle x,y^{\ast
}\right\rangle =\left\langle x^{\ast},y\right\rangle ^{\ast}$. Indeed, the
linear functional $y^{\ast}$ being a composition of weakly continuous mappings
$x\mapsto x^{\ast}$ and $x\mapsto\left\langle x,y\right\rangle $ turns out to
be weakly continuous. Hence $y^{\ast}\in\mathcal{Y}$. In this case $\left(
\mathcal{X},\mathcal{Y}\right)  $ is called \textit{a dual }$\ast
$\textit{-pair. }The involution on $\mathcal{X}$ is naturally extended to an
involution over the matrix space $M\left(  \mathcal{X}\right)  $. Namely, if
$x=\left[  x_{ij}\right]  _{i,j}\in M_{n}\left(  \mathcal{X}\right)  $ then we
set $x^{\ast}=\left[  x_{ji}^{\ast}\right]  _{i,j}\in M_{n}\left(
\mathcal{X}\right)  $, whereas $x^{\operatorname{t}}=\left[  x_{ji}\right]
_{i,j}$ indicates to the transpose of $x$. Thus $M\left(  \mathcal{X}\right)
$ turns out to be a $\ast$-vector space too. Note that $\left\langle
\left\langle x,y^{\ast}\right\rangle \right\rangle =\left\langle \left\langle
x^{\ast},y\right\rangle \right\rangle ^{\ast}$ for all $x\in M\left(
\mathcal{X}\right)  $ and $y\in M\left(  \mathcal{Y}\right)  $ (see
\cite[Lemma 4.1]{Dhjm}). If $a,b\in M$ and $y\in M\left(  \mathcal{Y}\right)
$ then $\left(  ayb\right)  ^{\ast}=b^{\ast}y^{\ast}a^{\ast}$. Indeed,%
\begin{align*}
\left\langle \left\langle x,\left(  ayb\right)  ^{\ast}\right\rangle
\right\rangle  &  =\left\langle \left\langle x^{\ast},ayb\right\rangle
\right\rangle ^{\ast}=\left(  \left(  I\otimes a\right)  \left\langle
\left\langle x^{\ast},y\right\rangle \right\rangle \left(  I\otimes b\right)
\right)  ^{\ast}=\left(  I\otimes b^{\ast}\right)  \left\langle \left\langle
x^{\ast},y\right\rangle \right\rangle ^{\ast}\left(  I\otimes a^{\ast}\right)
\\
&  =\left(  I\otimes b^{\ast}\right)  \left\langle \left\langle x,y^{\ast
}\right\rangle \right\rangle \left(  I\otimes a^{\ast}\right)  =\left\langle
\left\langle x,b^{\ast}y^{\ast}a^{\ast}\right\rangle \right\rangle
\end{align*}
for all $x\in M\left(  \mathcal{X}\right)  $. Further, if $x\in M_{n}\left(
\mathcal{X}\right)  $ and $y\in M_{n}\left(  \mathcal{Y}\right)  $ then
$\left\langle x,y^{\ast}\right\rangle =\left\langle x^{\ast},y\right\rangle
^{\ast}$ (see \cite{Dhjm} for the details), that is, $\left(  M_{n}\left(
\mathcal{X}\right)  ,M_{n}\left(  \mathcal{Y}\right)  \right)  $ equipped with
the scalar pairing is a $\ast$-dual pair as well.

Let $\mathcal{X}$ be a $\ast$-vector space. Then $M\left(  \mathcal{X}\right)
_{h}=\left\{  x\in M\left(  \mathcal{X}\right)  :x^{\ast}=x\right\}  $ is a
real subspace of $M\left(  \mathcal{X}\right)  $. If $\mathfrak{B\subseteq
}M\left(  \mathcal{X}\right)  _{h}$ is a quantum set then we say that
$\mathfrak{B}$ is \textit{a hermitian quantum set on }$\mathcal{X}$. A
hermitian quantum set $\mathfrak{C}$ over $\mathcal{X}$ is said to be
\textit{a quantum cone on} $\mathcal{X}$ if $\mathfrak{C+C\subseteq C}$ and
$a^{\ast}\mathfrak{C}a\subseteq\mathfrak{C}$ for all $a\in M$. A quantum cone
$\mathfrak{C}$ is said to be a \textit{quantum }$\ast$\textit{-cone} if
$M\left(  \mathcal{X}\right)  _{h}=\mathfrak{C-C}$. A quantum cone
$\mathfrak{C}$ on $\mathcal{X}$ is called \textit{a separated quantum cone on}
$\mathcal{X}$ if $\mathfrak{C\cap-C=}\left\{  0\right\}  $. Any $C^{\ast}%
$-algebra $A$ possesses the quantum $\ast$-cone $M\left(  A\right)
_{+}=\left(  M_{n}\left(  A\right)  _{+}\right)  $, which is the set of all
positive elements in $M\left(  A\right)  $. Obviously, any intersection of
quantum cones is a quantum cone. In particular, \textit{the quantum cone}
$\mathfrak{U}^{c}$ \textit{generated by a quantum set }$\mathfrak{U}$ is well defined.

Now let $\left(  \mathcal{X},\mathcal{Y}\right)  $ be a dual $\ast$-pair. If
$\mathfrak{C}$ is a quantum set on $\mathcal{X}$ then its \textit{quantum
polar} $\mathfrak{C}^{\boxdot}$ \textit{in }$M\left(  \mathcal{Y}\right)  $ is
defined as the quantum set $\mathfrak{C}^{\boxdot}=\left\{  y\in M\left(
\mathcal{Y}\right)  _{h}:\left\langle \left\langle \mathfrak{C},y\right\rangle
\right\rangle \geq0\right\}  $. The latter is $\mathfrak{s}\left(
\mathcal{Y},\mathcal{X}\right)  $-closed quantum cone on $\mathcal{Y}$. If
$\mathfrak{C}$ is a quantum $\ast$-cone on $\mathcal{X}$ then $\mathfrak{C}%
^{\boxdot}=\left\{  y\in M\left(  \mathcal{Y}\right)  :\left\langle
\left\langle \mathfrak{C},y\right\rangle \right\rangle \geq0\right\}  $ and it
is a separated quantum cone on $\mathcal{Y}$ (see \cite{Dhjm}). The following
bipolar theorem for quantum cones was proved in \cite{Dhjm}.

\begin{theorem}
\label{t1}Let $\left(  \mathcal{X},\mathcal{Y}\right)  $ be a dual $\ast$-pair
and let $\mathfrak{C}$ be a $\mathfrak{s}\left(  \mathcal{X},\mathcal{Y}%
\right)  $-closed, quantum cone on $\mathcal{X}$. Then $\mathfrak{C=C}%
^{\boxdot\boxdot}$. In this case, for $y\in M_{n}\left(  \mathcal{Y}\right)
_{h}$ we have $y\in\mathfrak{C}^{\boxdot}$ iff $\left\langle \mathfrak{C}%
_{n},y\right\rangle \geq0$.
\end{theorem}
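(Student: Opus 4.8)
The plan is to prove the membership criterion for $\mathfrak{C}^{\boxdot}$ first (essentially for free from the quantum cone axiom), and then to deduce $\mathfrak{C}=\mathfrak{C}^{\boxdot\boxdot}$ by a level-by-level Hahn--Banach separation argument that uses the criterion to upgrade a scalar separating functional into a genuine element of $\mathfrak{C}^{\boxdot}$. Throughout I will use the identification $\mathfrak{s}(\mathcal{X},\mathcal{Y})|M_m(\mathcal{X})=\sigma(M_m(\mathcal{X}),M_m(\mathcal{Y}))$ recorded above, so that each $\mathfrak{C}_m$ is a $\sigma$-closed convex cone (closed under addition and under nonnegative scalars, the latter via $a=tI$) containing $0$ in the real locally convex space $M_m(\mathcal{X})_h$; and I will use that $(M_m(\mathcal{X})_h,M_m(\mathcal{Y})_h)$ is a real dual pair, so that the weak dual of $M_m(\mathcal{X})_h$ is exactly $M_m(\mathcal{Y})_h$ and $\langle x,y\rangle\in\mathbb{R}$ for hermitian $x,y$.

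The technical heart is an elementary vectorization identity: for hermitian $x\in M_k(\mathcal{X})$, $y\in M_n(\mathcal{Y})$ and a scalar matrix $\alpha\in M_{k,n}$, writing $\zeta_{\alpha}\in\mathbb{C}^{kn}$ for the vector with $(\zeta_{\alpha})_{(i,s)}=\alpha_{is}$, one has
\[
\zeta_{\alpha}^{\ast}\,\langle\langle x,y\rangle\rangle\,\zeta_{\alpha}=\langle\alpha^{\ast}x\alpha,y\rangle.
\]
Since $\alpha\mapsto\zeta_{\alpha}$ is a linear bijection $M_{k,n}\to\mathbb{C}^{kn}$ and $\langle\langle x,y\rangle\rangle$ is hermitian, this yields
\[
\langle\langle x,y\rangle\rangle\geq0\text{ in }M_{kn}\iff\langle\alpha^{\ast}x\alpha,y\rangle\geq0\text{ for all }\alpha\in M_{k,n}.
\]
For the second assertion: if $y\in\mathfrak{C}^{\boxdot}\cap M_n(\mathcal{Y})$ then, taking $k=n$ and $\alpha=I_n$, we get $\langle x,y\rangle\geq0$ for all $x\in\mathfrak{C}_n$, i.e. $\langle\mathfrak{C}_n,y\rangle\geq0$. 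Conversely, if $y\in M_n(\mathcal{Y})_h$ satisfies $\langle\mathfrak{C}_n,y\rangle\geq0$, then for every $k$, every $x\in\mathfrak{C}_k$ and every $\alpha\in M_{k,n}$ the quantum cone axiom $a^{\ast}\mathfrak{C}a\subseteq\mathfrak{C}$ (applied to $\alpha$ padded by zeros) gives $\alpha^{\ast}x\alpha\in\mathfrak{C}_n$, hence $\langle\alpha^{\ast}x\alpha,y\rangle\geq0$; by the displayed equivalence $\langle\langle x,y\rangle\rangle\geq0$ for all such $x$ and $k$, that is $y\in\mathfrak{C}^{\boxdot}$. Note this half needs no closedness.

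For $\mathfrak{C}=\mathfrak{C}^{\boxdot\boxdot}$, the inclusion $\mathfrak{C}\subseteq\mathfrak{C}^{\boxdot\boxdot}$ is immediate, since any $x\in\mathfrak{C}$ pairs matrix-positively with any $y\in\mathfrak{C}^{\boxdot}$ by the very definitions of the two polars. For the reverse inclusion I would argue by contraposition at each level $m$. Let $x_0\in M_m(\mathcal{X})_h\setminus\mathfrak{C}_m$. Since $\mathfrak{C}_m$ is a $\sigma(M_m(\mathcal{X})_h,M_m(\mathcal{Y})_h)$-closed convex cone containing $0$, the Hahn--Banach separation theorem produces $\ell\in M_m(\mathcal{Y})_h$ with $\langle\mathfrak{C}_m,\ell\rangle\geq0$ and $\langle x_0,\ell\rangle<0$. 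By the criterion just proved (with $n=m$), $\ell\in\mathfrak{C}^{\boxdot}$; and by the vectorization identity with $\alpha=I_m$, $\zeta_{I_m}^{\ast}\langle\langle x_0,\ell\rangle\rangle\zeta_{I_m}=\langle x_0,\ell\rangle<0$, so $\langle\langle x_0,\ell\rangle\rangle\not\geq0$ and hence $x_0\notin\mathfrak{C}^{\boxdot\boxdot}$. Thus $\mathfrak{C}^{\boxdot\boxdot}\cap M_m(\mathcal{X})_h\subseteq\mathfrak{C}_m$ for every $m$, which is the bipolar identity.

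The genuinely quantum step — and the one I regard as the crux — is the passage from the scalar criterion $\langle\mathfrak{C}_n,y\rangle\geq0$ to full matrix positivity of $\langle\langle\mathfrak{C},y\rangle\rangle$, which works precisely because $\mathfrak{C}$ is stable under all rectangular compressions $x\mapsto\alpha^{\ast}x\alpha$; everything else is classical duality and bookkeeping. The main pitfall to guard against is running the separation in the wrong topology: one must know that $\mathfrak{C}_m$ is closed in $\sigma(M_m(\mathcal{X})_h,M_m(\mathcal{Y})_h)$ and that the representing functionals of this weak topology are exactly the hermitian matrices over $\mathcal{Y}$, both of which follow from the hypothesis ``$\mathfrak{s}(\mathcal{X},\mathcal{Y})$-closed'' together with the identity $\mathfrak{s}(\mathcal{X},\mathcal{Y})|M_m(\mathcal{X})=\sigma(M_m(\mathcal{X}),M_m(\mathcal{Y}))$.
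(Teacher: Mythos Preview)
Your argument is correct. The paper does not itself prove this theorem --- it is quoted from \cite{Dhjm} --- so there is no in-paper proof to compare against; your approach (the vectorization identity $\zeta_{\alpha}^{\ast}\langle\langle x,y\rangle\rangle\zeta_{\alpha}=\langle\alpha^{\ast}x\alpha,y\rangle$ to reduce the membership criterion for $\mathfrak{C}^{\boxdot}$ to the compression axiom $\alpha^{\ast}\mathfrak{C}\alpha\subseteq\mathfrak{C}$, followed by a level-wise Hahn--Banach separation in the real dual pair $(M_m(\mathcal{X})_h,M_m(\mathcal{Y})_h)$ and a promotion of the separating functional to $\mathfrak{C}^{\boxdot}$ via that same criterion) is the natural one and is complete as written.
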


Let $\left(  \mathcal{X}_{1},\mathcal{Y}_{1}\right)  $ and $\left(
\mathcal{X}_{2},\mathcal{Y}_{2}\right)  $ be dual $\ast$-pairs, and let
$\varphi:\mathcal{X}_{1}\rightarrow\mathcal{X}_{2}$ be a weakly continuous
$\ast$-linear mapping with its algebraic dual mapping $\varphi^{\ast}$, that
is, $\left\langle \varphi\left(  x_{1}\right)  ,y_{2}\right\rangle
=\left\langle x_{1},\varphi^{\ast}\left(  y_{2}\right)  \right\rangle $ for
all $x\in\mathcal{X}_{1}$ and $y_{2}\in\mathcal{Y}_{2}$. Then
\begin{equation}
\varphi^{\ast}\left(  \mathcal{Y}_{2}\right)  \subseteq\mathcal{Y}_{1}%
\quad\text{and\quad}\left\langle \left\langle \varphi^{\left(  \infty\right)
}\left(  x_{1}\right)  ,y_{2}\right\rangle \right\rangle =\left\langle
\left\langle x_{1},\left(  \varphi^{\ast}\right)  ^{\left(  \infty\right)
}\left(  y_{2}\right)  \right\rangle \right\rangle \label{conj1}%
\end{equation}
for all $x_{1}\in M\left(  \mathcal{X}_{1}\right)  $ and $y_{2}\in M\left(
\mathcal{Y}_{2}\right)  $. Indeed, $\varphi^{\ast}\left(  y_{2}\right)  $
being a composition of the weakly continuous mapping $\varphi$ and
$\sigma\left(  \mathcal{X}_{2},\mathcal{Y}_{2}\right)  $-continuous functional
$y_{2}$ turns out to be $\sigma\left(  \mathcal{X}_{1},\mathcal{Y}_{1}\right)
$-continuous. Therefore $\varphi^{\ast}\left(  y_{2}\right)  \in
\mathcal{Y}_{1}$. Moreover,
\[
\left\langle \left\langle \varphi^{\left(  \infty\right)  }\left(
x_{1}\right)  ,y_{2}\right\rangle \right\rangle =\left[  \left\langle
\varphi\left(  x_{1,i,j}\right)  ,y_{2,s,t}\right\rangle \right]  _{\left(
i,s\right)  ,\left(  j,t\right)  }=\left[  \left\langle x_{1,i,j}%
,\varphi^{\ast}\left(  y_{2,s,t}\right)  \right\rangle \right]  _{\left(
i,s\right)  ,\left(  j,t\right)  }=\left\langle \left\langle x_{1},\left(
\varphi^{\ast}\right)  ^{\left(  \infty\right)  }\left(  y_{2}\right)
\right\rangle \right\rangle .
\]
Notice that $\varphi^{\ast}:\mathcal{Y}_{2}\rightarrow\mathcal{Y}_{1}$ is a
(weakly continuous) $\ast$-linear mapping, for $\left\langle x_{1}%
,\varphi^{\ast}\left(  y_{2}^{\ast}\right)  \right\rangle =\left\langle
\varphi\left(  x_{1}\right)  ,y_{2}^{\ast}\right\rangle =\left\langle
\varphi\left(  x_{1}\right)  ^{\ast},y_{2}\right\rangle ^{\ast}=\left\langle
\varphi\left(  x_{1}^{\ast}\right)  ,y_{2}\right\rangle ^{\ast}=\left\langle
x_{1}^{\ast},\varphi^{\ast}\left(  y_{2}\right)  \right\rangle ^{\ast
}=\left\langle x_{1},\varphi^{\ast}\left(  y_{2}\right)  ^{\ast}\right\rangle
$, $x_{1}\in\mathcal{X}_{1}$. Note also that $\varphi^{\left(  \infty\right)
}:M\left(  \mathcal{X}_{1}\right)  \rightarrow M\left(  \mathcal{X}%
_{2}\right)  $ is a $\ast$-linear mapping. Indeed, $\varphi^{\left(
\infty\right)  }\left(  x^{\ast}\right)  =\varphi^{\left(  \infty\right)
}\left(  \left[  x_{ji}^{\ast}\right]  _{i,j}\right)  =\left[  \varphi\left(
x_{ji}^{\ast}\right)  \right]  _{i,j}=\left[  \varphi\left(  x_{ji}\right)
^{\ast}\right]  _{i,j}=\left[  \varphi\left(  x_{ij}\right)  \right]
_{i,j}^{\ast}=\varphi^{\left(  \infty\right)  }\left(  x\right)  ^{\ast}$ for
all $x\in M\left(  \mathcal{X}_{1}\right)  $.

\begin{lemma}
\label{lDualM}Let $\left(  \mathcal{X}_{1},\mathcal{Y}_{1}\right)  $ and
$\left(  \mathcal{X}_{2},\mathcal{Y}_{2}\right)  $ be dual $\ast$-pairs,
$\mathfrak{C}_{1}$ and $\mathfrak{C}_{2}$ quantum sets on $\mathcal{X}_{1}$
and $\mathcal{X}_{2}$, respectively, and let $\varphi:\mathcal{X}%
_{1}\rightarrow\mathcal{X}_{2}$ be a weakly continuous $\ast$-linear mapping
such that $\varphi^{\left(  \infty\right)  }\left(  \mathfrak{C}_{1}\right)
\subseteq\mathfrak{C}_{2}$. Then $\left(  \varphi^{\ast}\right)  ^{\left(
\infty\right)  }\left(  y^{\ast}\right)  =\left(  \varphi^{\ast}\right)
^{\left(  \infty\right)  }\left(  y\right)  ^{\ast}$ for all $y\in M\left(
\mathcal{Y}_{2}\right)  $, and $\left(  \varphi^{\ast}\right)  ^{\left(
\infty\right)  }\left(  \mathfrak{C}_{2}^{\boxdot}\right)  \subseteq
\mathfrak{C}_{1}^{\boxdot}$. Similarly, if $\left(  \varphi^{\ast}\right)
^{\left(  \infty\right)  }\left(  \mathfrak{K}_{2}\right)  \subseteq
\mathfrak{K}_{1}$ for quantum sets $\mathfrak{K}_{1}$ and $\mathfrak{K}_{2}$
on $\mathcal{Y}_{1}$ and $\mathcal{Y}_{2}$, respectively, then $\varphi
^{\left(  \infty\right)  }\left(  \mathfrak{K}_{1}^{\boxdot}\right)
\subseteq\mathfrak{K}_{2}^{\boxdot}$.
\end{lemma}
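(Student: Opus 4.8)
The plan is to dispose of the three assertions in turn; each is a short duality manipulation and none requires the bipolar theorems.

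First I would settle the $\ast$-linearity of $(\varphi^{\ast})^{(\infty)}$. Just before the lemma it was checked that $\varphi^{\ast}\colon\mathcal{Y}_{2}\rightarrow\mathcal{Y}_{1}$ is itself a $\ast$-linear mapping, and that for a $\ast$-linear mapping the induced map on matrices is again $\ast$-linear. So the identical entrywise computation applies with $\varphi$ replaced by $\varphi^{\ast}$, giving $(\varphi^{\ast})^{(\infty)}(y^{\ast})=(\varphi^{\ast})^{(\infty)}(y)^{\ast}$ for every $y\in M(\mathcal{Y}_{2})$. The only point worth recording is that this makes $(\varphi^{\ast})^{(\infty)}$ carry $M(\mathcal{Y}_{2})_{h}$ into $M(\mathcal{Y}_{1})_{h}$, so that applying it to the hermitian quantum set $\mathfrak{C}_{2}^{\boxdot}$ makes sense.

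Next, for $(\varphi^{\ast})^{(\infty)}(\mathfrak{C}_{2}^{\boxdot})\subseteq\mathfrak{C}_{1}^{\boxdot}$, I would fix $n$ and $y\in\mathfrak{C}_{2}^{\boxdot}\cap M_{n}(\mathcal{Y}_{2})_{h}$ and show $(\varphi^{\ast})^{(n)}(y)\in\mathfrak{C}_{1}^{\boxdot}$. Hermiticity is the previous paragraph. For the positivity condition, take any $m$ and any $x\in\mathfrak{C}_{1}\cap M_{m}(\mathcal{X}_{1})$; by the adjointness identity (\ref{conj1}) one has $\langle\langle x,(\varphi^{\ast})^{(\infty)}(y)\rangle\rangle=\langle\langle\varphi^{(\infty)}(x),y\rangle\rangle$, and the right-hand side is a positive semidefinite matrix because $\varphi^{(\infty)}(x)\in\varphi^{(\infty)}(\mathfrak{C}_{1})\subseteq\mathfrak{C}_{2}$ while $y\in\mathfrak{C}_{2}^{\boxdot}$. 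Since $m$ and $x$ range over all of $\mathfrak{C}_{1}$, this is exactly the condition $\langle\langle\mathfrak{C}_{1},(\varphi^{\ast})^{(n)}(y)\rangle\rangle\geq0$ defining membership in $\mathfrak{C}_{1}^{\boxdot}$, and $n$ was arbitrary.

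Finally, the last assertion is obtained by applying the second one with the pairs $(\mathcal{Y}_{1},\mathcal{X}_{1})$, $(\mathcal{Y}_{2},\mathcal{X}_{2})$ in place of $(\mathcal{X}_{1},\mathcal{Y}_{1})$, $(\mathcal{X}_{2},\mathcal{Y}_{2})$ and with $\varphi^{\ast}$ in place of $\varphi$: these are again dual $\ast$-pairs, $\varphi^{\ast}$ is weakly continuous and $\ast$-linear, and its algebraic dual relative to these pairs is $\varphi$ itself, since $\langle\varphi^{\ast}(y_{2}),x_{1}\rangle=\langle x_{1},\varphi^{\ast}(y_{2})\rangle=\langle\varphi(x_{1}),y_{2}\rangle$ forces $(\varphi^{\ast})^{\ast}=\varphi$. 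Then the hypothesis $(\varphi^{\ast})^{(\infty)}(\mathfrak{K}_{2})\subseteq\mathfrak{K}_{1}$ is precisely the hypothesis of the second part for $\varphi^{\ast}$, and its conclusion reads $\varphi^{(\infty)}(\mathfrak{K}_{1}^{\boxdot})\subseteq\mathfrak{K}_{2}^{\boxdot}$. I do not expect any real obstacle here; the only things to watch are the implicit quantification over all matrix sizes hidden in the symbol $\langle\langle\mathfrak{C},y\rangle\rangle\geq0$ (so the polar of a level-$n$ element is tested against every level $m$), and keeping straight which dual pair one is polarizing in once $\varphi$ and $\varphi^{\ast}$ are interchanged in the third part.
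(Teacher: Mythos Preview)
Your proposal is correct and follows essentially the same approach as the paper: the polar inclusions are obtained by the adjointness identity (\ref{conj1}) together with the hypothesis, and the final assertion by the symmetry of the two dual pairs. The only cosmetic difference is in the first step, where the paper verifies $(\varphi^{\ast})^{(\infty)}(y^{\ast})=(\varphi^{\ast})^{(\infty)}(y)^{\ast}$ by an explicit chain of matrix-pairing identities, whereas you invoke the general fact (established just before the lemma) that the matrix amplification of any $\ast$-linear map is again $\ast$-linear and apply it to $\varphi^{\ast}$; both are equivalent.
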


\begin{proof}
Take $y\in M\left(  \mathcal{Y}_{2}\right)  $. For every $x\in M\left(
\mathcal{X}_{1}\right)  $ we have
\begin{align*}
\left\langle \left\langle x,\left(  \varphi^{\ast}\right)  ^{\left(
\infty\right)  }\left(  y\right)  ^{\ast}\right\rangle \right\rangle  &
=\left\langle \left\langle x^{\ast},\left(  \varphi^{\ast}\right)  ^{\left(
\infty\right)  }\left(  y\right)  \right\rangle \right\rangle ^{\ast
}=\left\langle \left\langle \varphi^{\left(  \infty\right)  }\left(  x^{\ast
}\right)  ,y\right\rangle \right\rangle ^{\ast}=\left\langle \left\langle
\varphi^{\left(  \infty\right)  }\left(  x\right)  ^{\ast},y\right\rangle
\right\rangle ^{\ast}\\
&  =\left\langle \left\langle \varphi^{\left(  \infty\right)  }\left(
x\right)  ,y^{\ast}\right\rangle \right\rangle =\left\langle \left\langle
x,\left(  \varphi^{\ast}\right)  ^{\left(  \infty\right)  }\left(  y^{\ast
}\right)  \right\rangle \right\rangle
\end{align*}
thanks to (\ref{conj1}). Hence $\left(  \varphi^{\ast}\right)  ^{\left(
\infty\right)  }\left(  y\right)  ^{\ast}=\left(  \varphi^{\ast}\right)
^{\left(  \infty\right)  }\left(  y^{\ast}\right)  $. Finally, if
$y\in\mathfrak{C}_{2}^{\boxdot}$ then $\left(  \varphi^{\ast}\right)
^{\left(  \infty\right)  }\left(  y\right)  \in M\left(  \mathcal{Y}\right)
_{h}$ and $\left\langle \left\langle \mathfrak{C}_{1},\left(  \varphi^{\ast
}\right)  ^{\left(  \infty\right)  }\left(  y\right)  \right\rangle
\right\rangle =\left\langle \left\langle \varphi^{\left(  \infty\right)
}\left(  \mathfrak{C}_{1}\right)  ,y\right\rangle \right\rangle \subseteq
\left\langle \left\langle \mathfrak{C}_{2},y\right\rangle \right\rangle \geq
0$, which means that $\left(  \varphi^{\ast}\right)  ^{\left(  \infty\right)
}\left(  y\right)  \in\mathfrak{C}_{1}^{\boxdot}$, that is, $\left(
\varphi^{\ast}\right)  ^{\left(  \infty\right)  }\left(  \mathfrak{C}%
_{2}^{\boxdot}\right)  \subseteq\mathfrak{C}_{1}^{\boxdot}$. The rest follows
from the symmetry and (\ref{conj1}).
\end{proof}

\subsection{The \textbf{unital quantum cones\label{subsecUQC}}}

Let $\mathcal{X}$ be a $\ast$-vector space with its fixed hermitian element
$e$. We say that $\left(  \mathcal{X},e\right)  $ or just $\mathcal{X}$ is
\textit{a unital space. }The quantum set $\left(  \left\{  e_{n}\right\}
\right)  $ on $\mathcal{X}$ is denoted by $\mathfrak{e}$, where $e_{n}%
=e^{\oplus n}\in M_{n}\left(  \mathcal{X}\right)  _{h}$. A quantum cone
$\mathfrak{C}$ on the unital space $\left(  \mathcal{X},e\right)  $ is said to
be \textit{a unital quantum cone }if $\mathfrak{C-e}$ is absorbent in
$M\left(  \mathcal{X}\right)  _{h}$. Note that $\mathfrak{e\subseteq C}$ and
$\mathfrak{C}$ turns out to be a quantum $\ast$-cone if $\mathfrak{C}$ is a
unital quantum cone. Moreover, $\mathfrak{C-e}$ is a matrix convex set in
$M\left(  \mathcal{X}\right)  $ containing the origin (see \cite{Dhjm} for the
details). The quantum set $\cap_{r>0}r\left(  \mathfrak{C-}%
\mathcal{\mathfrak{e}}\right)  $ is called the algebraic closure of
$\mathfrak{C}$ and it is denoted by $\mathfrak{C}^{-}$. Note that
$\mathfrak{C\subseteq C}^{-}$ whenever $\mathfrak{e\subseteq C}$. We say that
$\mathfrak{C}$ is \textit{a closed }(\textit{or an Archimedian})\textit{
quantum cone }if it coincides with its algebraic closure, that is,
$\mathfrak{C=C}^{-}$. Notice that $\mathfrak{C}^{-}$ is smaller than any
(polynormed) topological closure of $\mathfrak{C}$. By analogy, a cone
$\mathfrak{c}$ in $\mathcal{X}$ is said to be unital if $\mathfrak{c-}e$ is
absorbent in $\mathcal{X}_{h}$, and it is closed if $\mathfrak{c}%
^{-}=\mathfrak{c}$, where $\mathfrak{c}^{-}=\cap_{r>0}r\left(  \mathfrak{c-}%
e\right)  $ is the algebraic closure of $\mathfrak{c}$. In particular,
$\mathcal{X}_{h}=\mathfrak{c-c}$ and $e\in\mathfrak{c}$.

\begin{lemma}
\label{lemPTT}Let $\mathcal{X}$ be a unital $\ast$-vector space with its unit
$e$, and let $\mathfrak{C}$ be a quantum cone on $\mathcal{X}$. If
$\mathfrak{C}_{m}$ is unital in the sense that $\mathfrak{C}_{m}-e^{\oplus m}$
is absorbent in $M_{m}\left(  \mathcal{X}\right)  _{h}$ for some $m$ then
$\mathfrak{C}$ is a unital quantum cone. In particular, if $\mathfrak{c}$ is a
unital cone in $\mathcal{X}$ then $\mathfrak{c}^{c}$ is a unital quantum cone
on $\mathcal{X}$.
\end{lemma}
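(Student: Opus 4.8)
The plan is to establish directly, for every $n\geq 1$, that $\mathfrak{C}_n-e^{\oplus n}$ is absorbent in $M_n(\mathcal{X})_h$, which is exactly the statement that $\mathfrak{C}$ is a unital quantum cone. The first move is a reduction to $m=1$. Being absorbent, $\mathfrak{C}_m-e^{\oplus m}$ contains the origin, so $e^{\oplus m}\in\mathfrak{C}_m$; compressing by the $m\times 1$ scalar matrix $a$ whose only nonzero entry is $a_{11}=1$ (note $a\in M$, so $a^{\ast}\mathfrak{C}a\subseteq\mathfrak{C}$) yields $e\in\mathfrak{C}_1$. Likewise, given $x\in\mathcal{X}_h$, the element $x\oplus 0\in M_m(\mathcal{X})_h$ is absorbed by $\mathfrak{C}_m-e^{\oplus m}$, i.e. $s(x\oplus 0)+e^{\oplus m}\in\mathfrak{C}_m$ for a suitable $s>0$; compressing by $a$ again gives $sx+e\in\mathfrak{C}_1$. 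Thus $\mathfrak{C}_1-e$ is absorbent in $\mathcal{X}_h$, and it suffices to prove the lemma under the hypothesis $m=1$.

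Two auxiliary facts carry the argument from level $1$ up to a general level $n$. First, for every positive semidefinite scalar matrix $B\in M_n$ one has $e\cdot B:=[\,eB_{ij}\,]_{i,j}\in\mathfrak{C}_n$: writing $B=\sum_k a_k^{\ast}a_k$ with row vectors $a_k\in M_{1,n}$, one computes $a_k^{\ast}\,e\,a_k=e\cdot(a_k^{\ast}a_k)$, which lies in $\mathfrak{C}_n$ since $e\in\mathfrak{C}_1$ and $a_k\in M$; summing yields $e\cdot B\in\mathfrak{C}_n$. In particular $e^{\oplus n}=e\cdot I_n\in\mathfrak{C}_n$, so $\mathfrak{C}_n-e^{\oplus n}$ is a convex set containing the origin (convex because $\mathfrak{C}_n$ is a convex cone, being closed under addition and, via $\sqrt{t}\,I_n$, under positive scaling). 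Second, $M_n(\mathcal{X})_h$ is spanned over $\mathbb{R}$ by the rank-one corner elements $a^{\ast}ya$ with $a\in M_{1,n}$ and $y\in\mathcal{X}_h$ (an elementary computation using coordinate rows $\epsilon_i$ for the diagonal and the combinations $\epsilon_i+\epsilon_j$, $\epsilon_i+i\epsilon_j$ for the off-diagonal entries). Hence any $x\in M_n(\mathcal{X})_h$ admits a finite decomposition $x=\sum_{k=1}^{N}a_k^{\ast}y_ka_k$ with $a_k\in M_{1,n}$ and $y_k\in\mathcal{X}_h$.

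Combining these, use absorbency at level $1$ to pick $t>0$ with $t^{-1}y_k+e\in\mathfrak{C}_1$ for all $k$. Then $a_k^{\ast}(t^{-1}y_k+e)a_k=t^{-1}a_k^{\ast}y_ka_k+e\cdot(a_k^{\ast}a_k)\in\mathfrak{C}_n$, and summing over $k$ gives $t^{-1}x+e\cdot A\in\mathfrak{C}_n$ with $A=\sum_k a_k^{\ast}a_k\geq 0$. Rescaling by $\mu^{-1}$ for $\mu\geq\|A\|$ and adding $e\cdot(I_n-\mu^{-1}A)\in\mathfrak{C}_n$ (legitimate since $I_n-\mu^{-1}A\geq 0$) produces $(t\mu)^{-1}x+e^{\oplus n}\in\mathfrak{C}_n$, i.e. $(t\mu)^{-1}x\in\mathfrak{C}_n-e^{\oplus n}$. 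As $x$ was arbitrary and $\mathfrak{C}_n-e^{\oplus n}$ is convex with $0$, it is absorbent; since $n$ was arbitrary, $\mathfrak{C}$ is a unital quantum cone. For the last assertion, if $\mathfrak{c}$ is a unital cone in $\mathcal{X}$ then $(\mathfrak{c}^{c})_1\supseteq\mathfrak{c}$, so $(\mathfrak{c}^{c})_1-e$ is absorbent, and the lemma applied with $m=1$ shows $\mathfrak{c}^{c}$ is a unital quantum cone.

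The delicate point is exactly the passage from one fixed matrix level to all higher levels: the maps $a^{\ast}(\cdot)a$ with scalar $a$ only let one transport a given level-$1$ element into $M_n(\mathcal{X})$, so $x$ at level $n$ has to be reassembled as a finite sum of such transports, after which the accumulated scalar coefficient matrix $A$ generally differs from the needed matrix unit $e^{\oplus n}$; the positivity observation $e\cdot B\in\mathfrak{C}_n$ for $B\geq 0$, plus a harmless rescaling, is precisely what repairs this mismatch. Everything else --- the compression step, the convexity of $\mathfrak{C}_n$, and the spanning fact --- is routine.
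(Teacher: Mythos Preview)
Your proof is correct and follows essentially the same strategy as the paper: reduce to $m=1$ by compressing with a coordinate row, then show absorbency at every level $n$ by decomposing an arbitrary $x\in M_n(\mathcal{X})_h$ into elementary hermitian pieces and lifting the level-$1$ absorbency. The paper spans $M_n(\mathcal{X})_h$ via the identification $M_n(\mathcal{X})_h=(M_n)_h\otimes_{\mathbb{R}}\mathcal{X}_h$ (citing \cite[Lemma~3.7]{PaulTT}) and then argues by cases on the signs of the scalar and vector factors, whereas you span by rank-one compressions $a^{\ast}ya$ with $a\in M_{1,n}$, $y\in\mathcal{X}_h$, and correct the accumulated scalar coefficient $A=\sum_k a_k^{\ast}a_k$ in one stroke by adding $e\cdot(I_n-\mu^{-1}A)\in\mathfrak{C}_n$; this makes your argument slightly more self-contained and avoids the case split, but the underlying idea is the same.
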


\begin{proof}
Take $x\in\mathcal{X}_{h}$. Then $x^{\oplus m}\in M_{m}\left(  \mathcal{X}%
\right)  _{h}$ and $x^{\oplus m}+re^{\oplus m}\in\mathfrak{C}_{m}$ for some
$r>0$. Since $\mathfrak{C}$ is a quantum cone, we deduce that
$x+re=\varepsilon\left(  x^{\oplus m}+re^{\oplus m}\right)  \varepsilon^{\ast
}\in\mathfrak{c}$, where $\varepsilon=\left[
\begin{array}
[c]{ccc}%
1 & 0\ldots & 0
\end{array}
\right]  \in M_{1,m}$ and $\mathfrak{c=C}_{1}$. Hence $\mathfrak{c}$ is a
unital cone. In particular, $e\in\mathfrak{c}$ and $\mathcal{X}_{h}%
=\mathfrak{c-c}$.

Now take $x\in M_{n}\left(  \mathcal{X}\right)  _{h}$ and prove that
$x+re^{\oplus n}\in\mathfrak{C}$ for some $r>0$. If $x=av^{\oplus n}$ for some
$a\in M_{n}^{+}$ and $v\in\mathfrak{c}$ then $x=a^{1/2}v^{\oplus n}a^{1/2}%
\in\mathfrak{c}^{c}\subseteq\mathfrak{C}$. But if $x=-av^{\oplus n}$ then
$-a+rI_{n}\geq0$ and $-v+se\in\mathfrak{c}$ for some real $r,s\geq0$. It
follows that
\[
x+rse^{\oplus n}=\left(  -a+rI_{n}\right)  v^{\oplus n}+r\left(  -v\right)
^{\oplus n}+rse^{\oplus n}=\left(  -a+rI_{n}\right)  v^{\oplus n}+r\left(
-v+se\right)  ^{\oplus n}\in\mathfrak{c}^{c}\text{.}%
\]
Taking into account that $\mathcal{X}_{h}=\mathfrak{c-c}$, we conclude that
$x+re^{\oplus n}\in\mathfrak{c}^{c}$ for some $r>0$ whenever $x=av^{\oplus n}$
with $a\in\left(  M_{n}\right)  _{h}$ and $v\in\mathcal{X}_{h}$. Thus
$\mathfrak{c}^{c}-e^{\oplus n}$ absorbs all hermitians from $\left(
M_{n}\right)  _{h}\otimes\mathcal{X}_{h}$. But $M_{n}\left(  \mathcal{X}%
\right)  _{h}=\left(  M_{n}\right)  _{h}\otimes\mathcal{X}_{h}$ (see
\cite[Lemma 3.7]{PaulTT}). Hence $\mathfrak{c}^{c}$ is unital, which in turn
implies that $\mathfrak{C}$ is a unital quantum cone on $\mathcal{X}$.
\end{proof}

Now let $\mathcal{X}$ be a unital $\ast$-vector space with its unit $e$ and
let $\left(  \mathcal{X},\mathcal{Y}\right)  $ be a dual $\ast$-pair. Consider
the following quantum subset $M\left(  \mathcal{Y}\right)  _{e}=\left\{  y\in
M\left(  \mathcal{Y}\right)  :\left\langle \left\langle e,y\right\rangle
\right\rangle =I\right\}  \ $in $M\left(  \mathcal{Y}\right)  $, which is
$\mathfrak{s}\left(  \mathcal{Y},\mathcal{X}\right)  $-closed and matrix
additive set. The following unital bipolar theorem was proved in
\cite{DCcones}.

\begin{theorem}
\label{tUBP}Let $\left(  \mathcal{X},\mathcal{Y}\right)  $ be a dual $\ast
$-pair with the unital space $\mathcal{X}$, and let $\mathfrak{C}$ be a
$\mathfrak{s}\left(  \mathcal{X},\mathcal{Y}\right)  $-closed, unital quantum
cone on $\mathcal{X}$. Then $\mathfrak{C}=\left(  \mathfrak{C}^{\boxdot}\cap
M\left(  \mathcal{Y}\right)  _{e}\right)  ^{\boxdot}$.
\end{theorem}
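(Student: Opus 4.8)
The plan is to reduce the identity to the quantum bipolar theorem (Theorem~\ref{t1}) by showing that the affine slice $S:=\mathfrak{C}^{\boxdot}\cap M(\mathcal{Y})_{e}$ \emph{generates} $\mathfrak{C}^{\boxdot}$ as a quantum cone. Let $\mathfrak{K}$ be the quantum cone generated by $S$. From $S\subseteq\mathfrak{C}^{\boxdot}$ one gets $\mathfrak{K}\subseteq\mathfrak{C}^{\boxdot}$, and since the operator polar sees only the generated quantum cone (because $\langle\langle a^{\ast}ya,x\rangle\rangle=(a^{\ast}\otimes I)\langle\langle y,x\rangle\rangle(a\otimes I)$ and $\langle\langle\cdot,x\rangle\rangle$ is additive, so that positivity passes through finite sums $\sum_{l}a_{l}^{\ast}y_{l}a_{l}$), we have $S^{\boxdot}=\mathfrak{K}^{\boxdot}$. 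Hence, once $\mathfrak{K}=\mathfrak{C}^{\boxdot}$ is established, Theorem~\ref{t1} applied to the $\mathfrak{s}(\mathcal{X},\mathcal{Y})$-closed quantum cone $\mathfrak{C}$ yields
\[
\bigl(\mathfrak{C}^{\boxdot}\cap M(\mathcal{Y})_{e}\bigr)^{\boxdot}=S^{\boxdot}=\mathfrak{K}^{\boxdot}=\mathfrak{C}^{\boxdot\boxdot}=\mathfrak{C},
\]
which also absorbs the trivial inclusion $\mathfrak{C}\subseteq S^{\boxdot}$. So the whole proof comes down to the claim: every $y\in\mathfrak{C}_{n}^{\boxdot}$ can be written as $y=a^{\ast}y_{0}a$ with $a\in M_{n}$ and $y_{0}\in S$.

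To prove the claim, fix $y\in\mathfrak{C}_{n}^{\boxdot}$ and set $P=\langle\langle e,y\rangle\rangle\in M_{n}$; since $e^{\oplus n}\in\mathfrak{C}_{n}$ and $\langle\langle e^{\oplus n},y\rangle\rangle=I_{n}\otimes P$, we have $P\geq0$. The core step is to show that $y$ vanishes on $\ker P$, i.e.\ $yv=0$ whenever $Pv=0$; this is the only place where unitality of $\mathfrak{C}$ is used, and in fact only the order-unit property of $e$ in the first-level cone $\mathfrak{c}:=\mathfrak{C}_{1}$. Consider the linear map $\Phi:\mathcal{X}\rightarrow M_{n}$, $\Phi(x)=\langle\langle x,y\rangle\rangle=[\langle x,y_{st}\rangle]_{s,t}$, so that $\Phi(\mathfrak{c})\subseteq M_{n}^{+}$ and $\Phi(e)=P$. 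Given $x\in\mathcal{X}_{h}$, pick $r>0$ with $\pm x+re\in\mathfrak{c}$; then $rP\pm\Phi(x)\geq0$, and testing the quadratic forms against $v\in\ker P$ gives $\langle\Phi(x)v,v\rangle=0$, hence $\Phi(x)v=0$ (a positive semidefinite matrix annihilates every vector on which its form vanishes). This holds for all $x\in\mathcal{X}_{h}$, hence for all $x\in\mathcal{X}$, and since $(\mathcal{X},\mathcal{Y})$ is a dual pair it says exactly $yv=0$; taking the involution-adjoint (recall $y^{\ast}=y$) also yields $v^{\ast}y=0$. Therefore $y=QyQ$, where $Q\in M_{n}$ is the orthogonal projection onto $\operatorname{ran}P=(\ker P)^{\perp}$.

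It remains to produce $y_{0}$. If $\mathfrak{C}^{\boxdot}=\{0\}$ the assertion is degenerate: then $\langle\langle e,\cdot\rangle\rangle$ vanishes on $\mathfrak{C}^{\boxdot}$, so $S=\varnothing$, $\varnothing^{\boxdot}=M(\mathcal{X})_{h}$, and $\mathfrak{C}=\mathfrak{C}^{\boxdot\boxdot}=\{0\}^{\boxdot}=M(\mathcal{X})_{h}$, so the identity holds. Otherwise some $y\in\mathfrak{C}_{m}^{\boxdot}$ has $\langle\langle e,y\rangle\rangle\neq0$; compressing by a unit vector outside its kernel and rescaling produces a matrix state $s_{0}\in\mathfrak{c}^{\boxdot}$ with $\langle e,s_{0}\rangle=1$, whence $s_{0}^{\oplus n}\in\mathfrak{C}_{n}^{\boxdot}$ and $\langle\langle e,s_{0}^{\oplus n}\rangle\rangle=I_{n}$. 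Now, for an arbitrary $y\in\mathfrak{C}_{n}^{\boxdot}$ with $P$ and $Q$ as above, let $\widetilde{P}\in M_{n}$ be the positive square root of the Moore--Penrose inverse of $P$, so that $\widetilde{P}P^{1/2}=P^{1/2}\widetilde{P}=Q$ and $P^{1/2}(I_{n}-Q)=0$, and put
\[
y_{0}:=\widetilde{P}\,y\,\widetilde{P}+(I_{n}-Q)\,s_{0}^{\oplus n}\,(I_{n}-Q).
\]
Both summands lie in $\mathfrak{C}_{n}^{\boxdot}$ (a quantum cone is stable under $a^{\ast}(\cdot)a$ and under addition), and $\langle\langle e,y_{0}\rangle\rangle=\widetilde{P}P\widetilde{P}+(I_{n}-Q)=Q+(I_{n}-Q)=I_{n}$, so $y_{0}\in S$; finally $P^{1/2}y_{0}P^{1/2}=P^{1/2}\widetilde{P}\,y\,\widetilde{P}P^{1/2}=QyQ=y$ (the cross term drops out since $P^{1/2}(I_{n}-Q)=0$, and we use $y=QyQ$), so $y=a^{\ast}y_{0}a$ with $a=P^{1/2}$, as claimed.

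I expect the vanishing-on-$\ker P$ step to be the only real obstacle: the passage from ``$y$ is nonnegative on $\mathfrak{C}$ and $\langle e,\cdot\rangle$ is degenerate along $\ker P$'' to ``$y$ itself is degenerate along $\ker P$'' is exactly what forces one to invoke the order-unit structure, and everything afterwards is routine bookkeeping with positive square roots of scalar matrices and the quantum-cone axioms. A second, more cosmetic, point to keep track of is that $S$ is merely an affine slice, not a quantum cone, so the bipolar theorem must be routed through the generated cone $\mathfrak{K}$ rather than applied to $S$ directly.
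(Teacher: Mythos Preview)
The paper does not prove Theorem~\ref{tUBP}; it is quoted from \cite{DCcones} as a known result, so there is no in-paper proof to compare against. Your argument, however, is self-contained and correct: you reduce the statement to the bipolar theorem~\ref{t1} by showing that the quantum cone generated by the affine slice $S=\mathfrak{C}^{\boxdot}\cap M(\mathcal{Y})_{e}$ coincides with $\mathfrak{C}^{\boxdot}$, and the key step---that any $y\in\mathfrak{C}_{n}^{\boxdot}$ satisfies $y=QyQ$ for the range projection $Q$ of $P=\langle\langle e,y\rangle\rangle$---is established cleanly via the order-unit property of $e$ in $\mathfrak{c}=\mathfrak{C}_{1}$. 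The Moore--Penrose normalization $y_{0}=\widetilde{P}\,y\,\widetilde{P}+(I_{n}-Q)s_{0}^{\oplus n}(I_{n}-Q)$ is the standard device for producing a matricial state from a positive functional once the support projection is identified, and your treatment of the degenerate case $\mathfrak{C}^{\boxdot}=\{0\}$ (equivalently $\mathfrak{C}=M(\mathcal{X})_{h}$) is also correct, since then $S=\varnothing$ and $\varnothing^{\boxdot}=M(\mathcal{X})_{h}$.

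Two minor cosmetic remarks. First, your parenthetical identity should read $\langle\langle x,a^{\ast}ya\rangle\rangle=(I\otimes a)^{\ast}\langle\langle x,y\rangle\rangle(I\otimes a)$ in the paper's index convention (the $\mathcal{Y}$-slot carries the second tensor factor); this does not affect the positivity conclusion. Second, you do not actually need the full equality $\mathfrak{K}=\mathfrak{C}^{\boxdot}$: the single-term representation $y=a^{\ast}y_{0}a$ already gives $\mathfrak{C}^{\boxdot}\subseteq\mathfrak{K}$, and since $\mathfrak{K}\subseteq\mathfrak{C}^{\boxdot}$ is immediate, the equality follows---but it is worth noting that you never need sums, so your argument in fact proves the slightly stronger fact that every element of $\mathfrak{C}^{\boxdot}$ is a \emph{single} compression of a matricial state.
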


The quantum set $\mathfrak{C}^{\boxdot}\cap M\left(  \mathcal{Y}\right)  _{e}$
from Theorem \ref{tUBP} is called \textit{a matricial state space of
}$\mathfrak{C}$, and it is denoted by $\mathcal{S}\left(  \mathfrak{C}\right)
$. Notice that $\mathcal{S}\left(  \mathfrak{C}\right)  $ is a matrix additive
subset in $M\left(  \mathcal{Y}\right)  _{h}$. In the case, of a $C^{\ast}%
$-algebra $\mathcal{A}$ we write $\mathcal{S}\left(  \mathcal{A}_{+}\right)  $
instead of $\mathcal{S}\left(  M\left(  \mathcal{A}\right)  _{+}\right)  $
keeping in mind the canonical quantum cone $M\left(  \mathcal{A}\right)  _{+}$
of positive elements in $M\left(  \mathcal{A}\right)  $.

Finally, let $\mathfrak{c}$ be a separated, (algebraically) closed, unital
cone in $\mathcal{X}$. Recall that a linear functional $\sigma:\mathcal{X}%
\rightarrow\mathbb{C}$ is said to be \textit{a state of the cone}
$\mathfrak{c}$ if $\sigma\left(  e\right)  =1$ and $\sigma\left(
\mathfrak{c}\right)  \geq0$ (that is, $\sigma$ is positive). If $S\left(
\mathfrak{c}\right)  $ is the set of all states of the cone $\mathfrak{c}$,
then $\left\Vert x\right\Vert _{e}=\sup\left\vert S\left(  \mathfrak{c}%
\right)  \left(  x\right)  \right\vert $, $x\in\mathcal{X}$ is an order $\ast
$-norm on $\mathcal{X}$ in the sense of $\left\Vert x^{\ast}\right\Vert
_{e}=\left\Vert x\right\Vert _{e}$, $x\in X$, and $\left\Vert x\right\Vert
_{e}=\inf\left\{  r>0:-re\leq x\leq re\right\}  $ for all $x\in\mathcal{X}%
_{h}$ (see \cite{PT}). Put $\mathcal{Y}$ to be the normed dual of
$\mathcal{X}$ equipped with the norm $\left\Vert \cdot\right\Vert _{e}$. Then
$\left(  \mathcal{X},\mathcal{Y}\right)  $ is a dual $\ast$-pair, $S\left(
\mathfrak{c}\right)  \subseteq\mathcal{Y}$, and $\mathfrak{c=}S\left(
\mathfrak{c}\right)  ^{\boxdot}\cap\mathcal{X}$ \cite{PT}. The unital quantum
cone $S\left(  \mathfrak{c}\right)  ^{\boxdot}$ (with respect to $\left(
\mathcal{X},\mathcal{Y}\right)  $) is called the minimal quantization
$\min\mathfrak{c}$ of the cone $\mathfrak{c}$, whereas $\mathfrak{c}%
^{\boxdot\boxdot}$ is the maximal quantization $\max\mathfrak{c}$ of the cone
$\mathfrak{c}$ (see \cite{DSbM}). Thus for every separated, closed, unital
quantum cone $\mathfrak{C}$ with $\mathfrak{c=C\cap}\mathcal{X}$ we have
$\max\mathfrak{c\subseteq C\subseteq}\min\mathfrak{c}$. Notice that
$\max\mathfrak{c}$ is the $\mathfrak{s}\left(  \mathcal{X},\mathcal{Y}\right)
$-closure of the unital quantum cone $\mathfrak{c}^{c}$ generated by
$\mathfrak{c}$ (see Lemma \ref{lemPTT}).

\subsection{The lattice ideal generated by a Radon measure\label{subsecARM}}

Now let $X$ be a compact Hausdorff topological space, $C\left(  X\right)  $ is
the abelian $C^{\ast}$-algebra of all complex continuous functions on $X$
equipped with the uniform norm $\left\Vert v\right\Vert _{\infty}%
=\sup\left\vert v\left(  X\right)  \right\vert $, $v\in C\left(  X\right)  $,
whose topological dual $C\left(  X\right)  ^{\ast}$ is reduced to the Banach
space $\mathcal{M}\left(  X\right)  $ of all Radon charges on $X$. Note that
$\mathcal{M}\left(  X\right)  $ is a $\ast$-vector space with the natural
involution $\mu\mapsto\mu^{\ast}$, $\left\langle v,\mu^{\ast}\right\rangle
=\left\langle v^{\ast},\mu\right\rangle ^{\ast}$ for all $v\in C\left(
X\right)  $. The real vector space of all hermitian charges is denoted by
$\mathcal{M}\left(  X\right)  _{h}$, which is equipped with the cone
$\mathcal{M}\left(  X\right)  _{+}$ of positive measures on $X$. It is well
known that $\mathcal{M}\left(  X\right)  _{h}$ is a complete vector lattice
with respect to the vector order induced by means of cone $\mathcal{M}\left(
X\right)  _{+}$. The related lattice operations are denoted by $\vee$ and
$\wedge$, respectively. A real vector subspace $V\subseteq\mathcal{M}\left(
X\right)  _{h}$ is said to be a closed subspace if it contains $\vee S$ (sup)
and $\wedge S$ (inf) whenever $S\subseteq V$. A vector subspace $I\subseteq
\mathcal{M}\left(  X\right)  _{h}$ is said to be \textit{an ideal of}
$\mathcal{M}\left(  X\right)  _{h}$ if $\left\vert \lambda\right\vert
\leq\left\vert \mu\right\vert $ for $\lambda\in\mathcal{M}\left(  X\right)
_{h}$ and $\mu\in I$ implies that $\lambda\in I$. In this case, $\left\vert
\mu\right\vert ,\mu_{+},\mu_{-}\in I$ whenever $\mu\in I$, and $I$ turns out
to be a vector sublattice. Any intersection of ideals turns out to be an ideal
automatically, therefore each subset $S\subseteq\mathcal{M}\left(  X\right)
_{h}$ generates an ideal to be the smallest ideal of $\mathcal{M}\left(
X\right)  _{h}$ containing $S$. \ An ideal which in turn is a closed subspace
is called \textit{a closed ideal. }Similarly, one can define \textit{the
closed ideal in }$\mathcal{M}\left(  X\right)  _{h}$\textit{ generated by}
$S$. The closed ideal in $\mathcal{M}\left(  X\right)  _{h}$ generated by a
singleton $\left\{  \mu\right\}  $ is denoted by $I_{\mu}\left(  X\right)  $.
One can prove that $I_{\mu}\left(  X\right)  =I_{\left\vert \mu\right\vert
}\left(  X\right)  $, and $\lambda\in\mathcal{M}\left(  X\right)  _{+}$
belongs to $I_{\mu}\left(  X\right)  $ iff $\lambda=\vee\left\{  \lambda\wedge
n\left\vert \mu\right\vert :n\in\mathbb{N}\right\}  $.

Let $\mu\in\mathcal{M}\left(  X\right)  _{+}$. The $L^{p}$-spaces
corresponding to a Radon measure $\mu\in\mathcal{M}\left(  X\right)  _{+}$ are
denoted by $L^{p}\left(  X,\mu\right)  $, $1\leq p\leq\infty$, which are
$\ast$-vector spaces. The Banach space $L^{1}\left(  X,\mu\right)  $ is
identified with a closed subspace of $\mathcal{M}\left(  X\right)  $ up to an
isometrical isomorphism such that $L^{1}\left(  X,\mu\right)  _{h}=I_{\mu
}\left(  X\right)  $. The identification is given by the mapping $L^{1}\left(
X,\mu\right)  \rightarrow\mathcal{M}\left(  X\right)  $, $\eta\mapsto\eta\mu$,
where $\left\langle h,\eta\mu\right\rangle =\int h\left(  t\right)
\eta\left(  t\right)  d\mu$ for all $h\in C\left(  X\right)  $. The present
result is well known as Lebesgue-Nikodym theorem \cite[Ch.V, 5.5, Theorem
2]{BourInt}.

Further, notice that $\mu\in\mathcal{M}\left(  X\right)  $ iff $\left\langle
v,\left\vert \mu\right\vert \right\rangle =\sup\left\{  \mu\left(  w\right)
:w\in C\left(  X\right)  ,\left\vert w\right\vert \leq v\right\}  <\infty$ for
every $v\in C\left(  X\right)  _{+}$. In this case, $\left\vert \mu\right\vert
\in\mathcal{M}\left(  X\right)  _{+}$ and $\mu=u\left\vert \mu\right\vert $
for a Borel function $u$ on $X$ such that $\left\vert u\right\vert =1$ almost
everywhere with respect to $\left\vert \mu\right\vert $. The space of all
probability measures on $X$ is denoted by $\mathcal{P}\left(  X\right)  $,
which is a $w^{\ast}$-compact subspace in the space $\mathcal{M}\left(
X\right)  $. Notice that $\mathcal{P}\left(  X\right)  $ is the $w^{\ast}%
$-closure of the convex hull of its extremal boundary $\partial\mathcal{P}%
\left(  X\right)  $ which consists of Dirac measures $\delta_{t}$, $t\in X$
thanks to Krein-Milman theorem.

Fix $\mu\in\mathcal{M}\left(  X\right)  _{+}$. Recall that a point $s\in X$ is
said to be\textit{ a }$\mu$\textit{-mass }if $\mu\left(  s\right)  >0$. Notice
that $s$ is a unique mass with respect to $\delta_{s}$.

\begin{lemma}
\label{lematom1}A point $s\in X$ is a $\mu$-mass iff $\delta_{s}\in I_{\mu
}\left(  X\right)  $. In this case $\delta_{s}=s^{\prime}\mu$ with $s^{\prime
}\in L^{2}\left(  X,\mu\right)  $. In this case, $s^{\prime}=\mu\left(
s\right)  ^{-1}\left[  s\right]  $.
\end{lemma}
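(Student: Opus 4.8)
The plan is to reduce the statement to the lattice description of $I_{\mu}\left(X\right)$ recorded just above: since $\mu\geq0$ we have $\left\vert\mu\right\vert=\mu$, so for $\lambda\in\mathcal{M}\left(X\right)_{+}$ the membership $\lambda\in I_{\mu}\left(X\right)$ is equivalent to $\lambda=\vee\left\{\lambda\wedge n\mu:n\in\mathbb{N}\right\}$. Taking $\lambda=\delta_{s}$, the whole argument then rests on one elementary observation: a positive measure $\nu$ with $\nu\leq\delta_{s}$ is necessarily of the form $t\delta_{s}$ with $0\leq t\leq1$, since $\delta_{s}-\nu\geq0$ forces $\nu\left(B\right)\leq\delta_{s}\left(B\right)=0$ for every Borel set $B$ not containing $s$, so that $\nu$ is carried by the singleton $\left\{s\right\}$. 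Throughout, $\left[s\right]$ denotes the class in $L^{2}\left(X,\mu\right)$ of the indicator function of $\left\{s\right\}$.

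I would first prove the implication $\delta_{s}\in I_{\mu}\left(X\right)\Rightarrow\mu\left(s\right)>0$ by contraposition. Assume $\mu\left(s\right)=0$. For each $n$, write $\delta_{s}\wedge n\mu=t_{n}\delta_{s}$ by the observation above; evaluating $t_{n}\delta_{s}\leq n\mu$ on the set $\left\{s\right\}$ yields $t_{n}\leq n\mu\left(s\right)=0$, so $\delta_{s}\wedge n\mu=0$ for all $n$ and $\vee\left\{\delta_{s}\wedge n\mu:n\in\mathbb{N}\right\}=0\neq\delta_{s}$. By the equivalence quoted above, $\delta_{s}\notin I_{\mu}\left(X\right)$.

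For the converse, suppose $s$ is a $\mu$-mass and set $c=\mu\left(s\right)>0$. Writing $\mu=c\delta_{s}+\mu^{\prime}$ with $\mu^{\prime}\geq0$ and $\mu^{\prime}\left(s\right)=0$, for every integer $n\geq1/c$ one has $n\mu-\delta_{s}=\left(nc-1\right)\delta_{s}+n\mu^{\prime}\geq0$, i.e.\ $\delta_{s}\leq n\mu$, whence $\delta_{s}\wedge n\mu=\delta_{s}$ and $\vee\left\{\delta_{s}\wedge n\mu:n\in\mathbb{N}\right\}=\delta_{s}$; thus $\delta_{s}\in I_{\mu}\left(X\right)$. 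Since $I_{\mu}\left(X\right)=L^{1}\left(X,\mu\right)_{h}$ under the Lebesgue--Nikodym identification $\eta\mapsto\eta\mu$, there is $s^{\prime}\in L^{1}\left(X,\mu\right)$ with $\delta_{s}=s^{\prime}\mu$. To pin down $s^{\prime}$, I would simply check that $c^{-1}\left[s\right]$ works: for every Borel set $A$, $\int_{A}c^{-1}\left[s\right]d\mu=c^{-1}\mu\left(A\cap\left\{s\right\}\right)=\delta_{s}\left(A\right)$, so $c^{-1}\left[s\right]\mu=\delta_{s}$ and, by uniqueness of the density, $s^{\prime}=c^{-1}\left[s\right]=\mu\left(s\right)^{-1}\left[s\right]$ in $L^{1}\left(X,\mu\right)$. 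Finally $\left[s\right]\in L^{2}\left(X,\mu\right)$ since $\left\Vert\left[s\right]\right\Vert_{2}^{2}=\mu\left(s\right)<\infty$, so in fact $s^{\prime}\in L^{2}\left(X,\mu\right)$, which completes the proof.

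There is no substantial obstacle here; the two points that need a little care are the description of the positive measures dominated by $\delta_{s}$, which is precisely what makes both implications run, and the bookkeeping of the identification $L^{1}\left(X,\mu\right)_{h}=I_{\mu}\left(X\right)$, so that the abstract ideal element is realized concretely as the function $\mu\left(s\right)^{-1}\left[s\right]$ and the finiteness of $\mu\left(s\right)$ upgrades it from $L^{1}$ to $L^{2}$.
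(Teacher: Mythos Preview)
Your argument is correct, but it takes a different route from the paper's. The paper proves both implications via absolute continuity: if $s$ is a $\mu$-mass and $\mu(N)=0$ then $s\notin N$, so $\delta_{s}(N)=0$, hence $\delta_{s}\ll\mu$ and Lebesgue--Nikodym gives $\delta_{s}=s'\mu\in I_{\mu}(X)$; conversely, $\delta_{s}\in I_{\mu}(X)$ means $\delta_{s}=\eta\mu$ for some $\eta\in L^{1}(X,\mu)$, so $\mu(s)=0$ would force $\delta_{s}(\{s\})=0$, a contradiction. You instead work directly with the lattice criterion $\lambda=\vee\{\lambda\wedge n\mu\}$ quoted just before the lemma, reducing both directions to the elementary fact that every positive measure dominated by $\delta_{s}$ is a multiple of $\delta_{s}$. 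Your approach is more order-theoretic and makes explicit use of the ideal description set up in the text; the paper's is shorter and leans on the measure-theoretic identification $I_{\mu}(X)=L^{1}(X,\mu)_{h}$ earlier in the argument. For the $L^{2}$ claim the paper first shows $s'\in L^{2}$ abstractly via $s'(s)=\int (s')^{2}d\mu$ and only then identifies $s'=\mu(s)^{-1}[s]$ by testing against $v\in C(X)$, whereas you compute $s'=\mu(s)^{-1}[s]$ directly by testing on Borel sets and then read off $\|[s]\|_{2}^{2}=\mu(s)<\infty$; your ordering is arguably cleaner here.
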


\begin{proof}
First assume that $s$ is a $\mu$-mass. Take a Borel set $N\subseteq X$ such
that $\mu\left(  N\right)  =0$. Then $s\notin N$, which in turn implies that
$\delta_{s}\left(  N\right)  =0$. Hence $\delta_{s}$ is absolutely continuous
with respect to $\mu$. By Lebesgue-Nikodym theorem, $\delta_{s}=s^{\prime}%
\mu\in I_{\mu}\left(  X\right)  $, $s^{\prime}\left(  t\right)  \geq0$ for
$\mu$-almost all $t\in X$, and $s^{\prime}\in L^{1}\left(  X,\mu\right)  $.
But $s^{\prime}\left(  s\right)  =\left\langle s^{\prime},\delta
_{s}\right\rangle =\left\langle s^{\prime},s^{\prime}\mu\right\rangle
=\int\left(  s^{\prime}\right)  ^{2}d\mu$, that is, $s^{\prime}\in
L^{2}\left(  X,\mu\right)  $. Conversely, suppose that $\delta_{s}\in I_{\mu
}\left(  X\right)  $. Since $\left\{  s\right\}  $ is a Borel set, the
condition $\mu\left(  s\right)  =0$ would imply that $\delta_{s}\left(
s\right)  =0$, a contradiction.

Actually, $s^{\prime}=\mu\left(  s\right)  ^{-1}\left[  s\right]  $. Indeed,
$\mu\left(  s\right)  ^{-1}\left[  s\right]  $ is a Borel function from
$L^{2}\left(  X,\mu\right)  $ and
\[
\left\langle v,\left(  \mu\left(  s\right)  ^{-1}\left[  s\right]  \right)
\mu\right\rangle =\mu\left(  s\right)  ^{-1}\int v\left[  s\right]  d\mu
=\mu\left(  s\right)  ^{-1}\int v\left(  s\right)  d\mu=v\left(  s\right)
=\left\langle v,\delta_{s}\right\rangle
\]
for all $v\in C\left(  X\right)  $, which means that $s^{\prime}=\mu\left(
s\right)  ^{-1}\left[  s\right]  $.
\end{proof}

\begin{remark}
Notice that $s^{\prime}=\dfrac{d\delta_{s}}{d\mu}$ is the Radon-Nikodym
derivative of $\delta_{s}$ with respect to $\mu$.
\end{remark}

\subsection{Pietsch factorization\label{subsecPF}}

Let $V$ be a (Hausdorff) polynormed space. A family $\left(  v_{i}\right)
_{i\in I}$ in $V$ is said to be an absolutely summable if $\sum_{i\in
I}\left\Vert v_{i}\right\Vert <\infty$ for every continuous seminorm
$\left\Vert \cdot\right\Vert $ on $V$. A continuous mapping $T:V\rightarrow W$
of polynormed spaces is called an absolutely summable if $\left(
Tv_{i}\right)  _{i\in I}$ is absolutely summable in $W$ for every summable
family $\left(  v_{i}\right)  _{i\in I}$ in $V$, that is, $T$ transforms
summable families from $V$ to absolutely summable ones in $W$. A linear
mapping $T:V\rightarrow W$ between normed spaces $V$ and $W$ is absolutely
summable iff there exists a positive $\rho$ such that $\sum_{\mathfrak{n}%
}\left\Vert Tv_{n}\right\Vert \leq\rho\sup\left\{  \sum_{\mathfrak{n}%
}\left\vert \left\langle v_{n},a\right\rangle \right\vert :a\in
\operatorname{ball}V^{\ast}\right\}  $ for all finite families $\left(
v_{n}\right)  _{n\in\mathfrak{n}}$ in $V$ \cite[Proposition 2.2.1]{AlP}. Put
$\pi\left(  T\right)  =\inf\left\{  \rho\right\}  $, which is a norm in the
space $\mathcal{A}\left(  V,W\right)  $ of all absolutely summable maps
between $V$ and $W$. If $W$ is complete then $\mathcal{A}\left(  V,W\right)  $
equipped with the $\pi$-norm is a Banach space. Now let $T\in\mathcal{B}%
\left(  V,W\right)  $ (the space of all bounded linear operators from $V$ to
$W$) and let $X\subseteq\operatorname{ball}V^{\ast}$ be an essential subset in
the sense of $\left\Vert v\right\Vert =\sup\left\vert \left\langle
v,X\right\rangle \right\vert $ for all $v\in V$, that is, the canonical
representation $V\rightarrow C\left(  X\right)  $, $v\mapsto\left\langle
v,\cdot\right\rangle $ is an isometry. The known result of Pietsch
\cite[Theorem 2.3.3]{AlP} asserts that $T\in\mathcal{A}\left(  V,W\right)  $
iff there exists $\mu\in\mathcal{M}\left(  X\right)  _{+}$ such that
$\left\Vert Tv\right\Vert \leq\int_{X}\left\vert \left\langle v,t\right\rangle
\right\vert d\mu\left(  t\right)  $ for all $v\in V$. In this case,
$\pi\left(  T\right)  =\min\left\{  \mu\left(  X\right)  \right\}  $ over all
$\mu\in\mathcal{M}\left(  X\right)  _{+}$ with the just indicated property. In
particular, $T\in\mathcal{A}\left(  C\left(  X\right)  ,W\right)  $ iff
$\left\Vert Tv\right\Vert \leq\int_{X}\left\vert v\left(  t\right)
\right\vert d\mu\left(  t\right)  $, $v\in C\left(  X\right)  $ for a certain
$\mu\in\mathcal{M}\left(  X\right)  _{+}$, where $X$ is a compact Hausdorff
topological space. For the Hilbert spaces $K$ and $H$ we have $\mathcal{A}%
\left(  K,H\right)  =\mathcal{B}^{2}\left(  K,H\right)  $ and $\left\Vert
T\right\Vert _{2}\leq\pi\left(  T\right)  \leq\sqrt{3}\left\Vert T\right\Vert
_{2}$, where $\mathcal{B}^{2}\left(  K,H\right)  $ is the space of all
Hilbert-Schmidt operators from $K$ to $H$. The idea of the proof of the
following key lemma of Pietsch will be used later on. For the completeness we
provide its proof.

\begin{lemma}
\label{lemAlP1}Let $X$ be a compact Hausdorff topological space, $\mu
\in\mathcal{M}\left(  X\right)  _{+}$, $\iota:C\left(  X\right)  \rightarrow
L^{2}\left(  X,\mu\right)  $ the canonical representation, $H$ a Hilbert space
and let $T:L^{2}\left(  X,\mu\right)  \rightarrow H$, $T=\sum_{r=1}^{m}%
\zeta_{r}\odot\overline{\eta_{r}}$ be a finite-rank operator given by a finite
family $\left(  \zeta_{r}\right)  _{r}\subseteq H$ and $\mu$-step functions
$\left(  \eta_{r}\right)  _{r}\subseteq L^{2}\left(  X,\mu\right)  $. Then
$T\iota:C\left(  X\right)  \rightarrow H$ is a nuclear operator with
$\left\Vert T\iota\right\Vert _{1}\leq\left\Vert T\right\Vert _{2}$.
\end{lemma}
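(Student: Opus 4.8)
The goal is to show that $T\iota\colon C(X)\to H$ is nuclear with $\|T\iota\|_1\le\|T\|_2$, where $T=\sum_{r=1}^m\zeta_r\odot\overline{\eta_r}$ with $(\eta_r)_r$ $\mu$-step functions. The key observation is that since each $\eta_r$ is a $\mu$-step function, the finitely many $\eta_r$ are simultaneously constant on a common finite measurable partition $X=\bigsqcup_{j=1}^N A_j$ of $X$ (refine the partitions defining the individual $\eta_r$). Let $p_j=\mu(A_j)$ and $[A_j]$ the indicator of $A_j$; write $e_j=p_j^{-1/2}[A_j]\in L^2(X,\mu)$ for those $j$ with $p_j>0$, so that $(e_j)$ is an orthonormal family in $L^2(X,\mu)$. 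Then each $\eta_r=\sum_j c_{rj}[A_j]=\sum_j c_{rj}p_j^{1/2}e_j$, and substituting gives
\[
T=\sum_{r=1}^m\zeta_r\odot\overline{\eta_r}=\sum_j\Bigl(\sum_r c_{rj}p_j^{1/2}\zeta_r\Bigr)\odot\overline{e_j}=\sum_j \xi_j\odot\overline{e_j},
\]
with $\xi_j=\sum_r c_{rj}p_j^{1/2}\zeta_r\in H$. Thus $T$ is presented as a finite sum of rank-one operators along an orthonormal family, and $\|T\|_2^2=\sum_j\|\xi_j\|^2$.

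**The main computation.** Now I compute $T\iota$. For $v\in C(X)$ we have $\langle \iota v,\overline{e_j}\rangle=p_j^{-1/2}\int_{A_j}v\,d\mu$, hence
\[
T\iota v=\sum_j\Bigl(p_j^{-1/2}\int_{A_j}v\,d\mu\Bigr)\xi_j=\sum_j\xi_j\otimes\Bigl(p_j^{-1/2}[A_j]\mu\Bigr)(v),
\]
so $T\iota=\sum_j\xi_j\odot\nu_j$ where $\nu_j=p_j^{-1/2}[A_j]\mu\in\mathcal{M}(X)=C(X)^*$. Since $\|\nu_j\|=p_j^{-1/2}\mu(A_j)=p_j^{1/2}$, this exhibits $T\iota$ as a finite-rank operator with nuclear-norm estimate
\[
\|T\iota\|_1\le\sum_j\|\xi_j\|\,\|\nu_j\|=\sum_j\|\xi_j\|\,p_j^{1/2}.
\]
To finish I would like to replace this by $\bigl(\sum_j\|\xi_j\|^2\bigr)^{1/2}=\|T\|_2$, which by Cauchy--Schwarz requires $\sum_j p_j\le 1$, i.e. $\mu(X)\le 1$. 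This is exactly the point where one must be slightly careful: the statement as written presupposes (or one should arrange) that $\mu$ is a probability measure, which is the normalization used throughout the $H$-support discussion; alternatively the nuclear norm of $\iota\colon C(X)\to L^2(X,\mu)$ itself carries the factor $\mu(X)^{1/2}$ and the clean inequality $\|T\iota\|_1\le\|T\|_2$ holds precisely when $\mu(X)\le 1$. Granting $\mu(X)=\sum_j p_j\le 1$, Cauchy--Schwarz yields
\[
\|T\iota\|_1\le\sum_j\|\xi_j\|\,p_j^{1/2}\le\Bigl(\sum_j\|\xi_j\|^2\Bigr)^{1/2}\Bigl(\sum_j p_j\Bigr)^{1/2}\le\|T\|_2,
\]
as desired.

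**Where the difficulty lies.** The only genuinely delicate point is the bookkeeping with the partition: one must check that finitely many step functions admit a common refining partition (elementary measure theory), that passing to $e_j=p_j^{-1/2}[A_j]$ really produces an orthonormal family so that $\|T\|_2^2=\sum_j\|\xi_j\|^2$ (this uses $\langle e_j,e_k\rangle=\delta_{jk}$ since the $A_j$ are disjoint), and that the coefficient-matrix manipulation $\zeta_r\odot\overline{\eta_r}\mapsto\xi_j\odot\overline{e_j}$ is just linearity of $\odot$ in the second slot. Everything else — the identification $C(X)^*=\mathcal{M}(X)$, the formula for the nuclear norm of a finite-rank operator, and Cauchy--Schwarz — is standard. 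I expect the author to absorb the normalization $\mu(X)\le 1$ either into the hypotheses or into an intermediate estimate on $\|\iota\|_1$; with that in hand the proof is a short direct verification rather than anything requiring the full Pietsch machinery, and indeed this lemma is the combinatorial heart that the Pietsch factorization theorem will later be bootstrapped from by a density/limiting argument over step functions.
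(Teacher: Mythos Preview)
Your proof is correct and follows essentially the same route as the paper: refine to a common partition, rewrite $T$ along the orthonormal family of normalized indicators, express $T\iota$ as a finite sum of rank-one operators against the measures $[A_j]\mu$, and finish with Cauchy--Schwarz. Your flag about the normalization $\mu(X)\le 1$ is well taken: the paper's own chain of inequalities silently drops the factor $\bigl(\sum_r \mu(X_r)\bigr)^{1/2}$ at the step where it passes from $\bigl(\sum_r \mu(X_r)\bigr)^{1/2}\bigl(\sum_r \mu(X_r)\|\eta_r\|^2\bigr)^{1/2}$ to $\bigl(\sum_r \mu(X_r)\|\eta_r\|^2\bigr)^{1/2}$, so the stated inequality $\|T\iota\|_1\le\|T\|_2$ indeed holds only under $\mu(X)\le 1$ (the general bound being $\|T\iota\|_1\le \mu(X)^{1/2}\|T\|_2$), which is the normalization used everywhere the lemma is invoked.
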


\begin{proof}
One can choose a partition $X=X_{1}\cup\ldots\cup X_{n}$ of $X$ into $\mu
$-measurable subsets $X_{r}\subseteq X$ such that $T=\sum_{r=1}^{n}\eta
_{r}\odot\overline{\chi_{r}}$ for a new family $\left(  \eta_{r}\right)
_{r}\subseteq H$ and an orthogonal family $\left(  \chi_{r}\right)
_{r}\subseteq L^{2}\left(  X,\mu\right)  $, where $\chi_{r}$ is the
characteristic function of $X_{r}$. Put $\widehat{\chi_{r}}=\mu\left(
X_{r}\right)  ^{-1/2}\chi_{r}$ and $\mu_{r}=\chi_{r}\mu\in I_{\mu}\left(
X\right)  _{+}$. Notice that $\left(  \widehat{\chi_{r}}\right)  _{r}$ is a
finite orthonormal family in $L^{2}\left(  X,\mu\right)  $, $T\left(
\widehat{\chi_{r}}\right)  =\mu\left(  X_{r}\right)  ^{-1/2}\left(  \chi
_{r},\chi_{r}\right)  \eta_{r}=\mu\left(  X_{r}\right)  ^{1/2}\eta_{r}$ and
$\sum_{r}\left\Vert T\left(  \widehat{\chi_{r}}\right)  \right\Vert ^{2}%
\leq\left\Vert T\right\Vert _{2}^{2}$. Moreover, $\left\Vert \mu
_{r}\right\Vert =\left\langle 1,\mu_{r}\right\rangle =\int\chi_{r}d\mu
=\mu\left(  X_{r}\right)  $ for all $r$, and $T\left(  \iota\left(  v\right)
\right)  =\sum_{r}\left(  v,\chi_{r}\right)  \eta_{r}=\sum_{r}\left(  \int
v\left(  t\right)  \chi_{r}\left(  t\right)  d\mu\right)  \eta_{r}=\sum
_{r}\left\langle v,\mu_{r}\right\rangle \eta_{r}$ for all $v\in C\left(
X\right)  $. Thus $T\iota=\sum_{r=1}^{n}\eta_{r}\odot\mu_{r}$ is a nuclear
operator and
\begin{align*}
\left\Vert T\iota\right\Vert _{1}  &  \leq\sum_{r}\left\Vert \eta
_{r}\right\Vert \left\Vert \mu_{r}\right\Vert =\sum_{r}\mu\left(
X_{r}\right)  ^{1/2}\mu\left(  X_{r}\right)  ^{1/2}\left\Vert \eta
_{r}\right\Vert \leq\left(  \sum_{r}\mu\left(  X_{r}\right)  \right)
^{1/2}\left(  \sum_{r}\mu\left(  X_{r}\right)  \left\Vert \eta_{r}\right\Vert
^{2}\right)  ^{1/2}\\
&  =\left(  \sum_{r}\mu\left(  X_{r}\right)  \left(  \eta_{r},\eta_{r}\right)
\right)  ^{1/2}=\left(  \sum_{r}\left(  \mu\left(  X_{r}\right)  ^{1/2}%
\eta_{r},\mu\left(  X_{r}\right)  ^{1/2}\eta_{r}\right)  \right)  ^{1/2}\\
&  =\left(  \sum_{r}\left(  T\left(  \widehat{\chi_{r}}\right)  ,T\left(
\widehat{\chi_{r}}\right)  \right)  \right)  ^{1/2}=\left(  \sum_{r}\left\Vert
T\left(  \widehat{\chi_{r}}\right)  \right\Vert ^{2}\right)  ^{1/2}%
\leq\left\Vert T\right\Vert _{2},
\end{align*}
that is, $\left\Vert T\iota\right\Vert _{1}\leq\left\Vert T\right\Vert _{2}$.
\end{proof}

Actually, the assertion proven in Lemma \ref{lemAlP1} is true for every
$T\in\mathcal{B}^{2}\left(  L^{2}\left(  X,\mu\right)  ,H\right)  $
\cite[3.3.3 Proposition 2]{AlP}. As a result we obtain the following
factorization \cite[3.3.4]{AlP} of an absolutely summable mapping.

\begin{proposition}
\label{propAlP2}Let $T\in\mathcal{A}\left(  V,W\right)  $ and let
$X\subseteq\operatorname{ball}V^{\ast}$ be an essential subset. There exists a
$\mu\in\mathcal{M}\left(  X\right)  _{+}$ such that $T$ can be factorized as
$T=T_{2}\iota T_{1}$, that is, the following diagram
\[%
\begin{array}
[c]{ccc}%
C\left(  X\right)  & \overset{\iota}{\longrightarrow} & L^{2}\left(
X,\mu\right) \\
^{T_{1}}\uparrow &  & \downarrow^{T_{2}}\\
V & \overset{T}{\longrightarrow} & W
\end{array}
\]
commutes with $\left\Vert T_{1}\right\Vert \leq1$ and $\left\Vert
T_{2}\right\Vert \leq\pi\left(  T\right)  $.
\end{proposition}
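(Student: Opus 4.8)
The plan is to extract the factorizing measure directly from the Pietsch domination theorem recalled above and then simply read off the three arrows of the diagram. If $\pi\left(T\right)=0$ then $T=0$ and there is nothing to prove, so assume $\pi\left(T\right)>0$. Applying the cited theorem of Pietsch to the essential set $X\subseteq\operatorname{ball}V^{\ast}$, we obtain $\mu_{0}\in\mathcal{M}\left(X\right)_{+}$ with $\mu_{0}\left(X\right)=\pi\left(T\right)$ and $\left\Vert Tv\right\Vert \leq\int_{X}\left\vert \left\langle v,t\right\rangle \right\vert d\mu_{0}\left(t\right)$ for all $v\in V$. Put $\mu=\pi\left(T\right)^{-1}\mu_{0}$, a probability measure on $X$; then $\left\Vert Tv\right\Vert \leq\pi\left(T\right)\int_{X}\left\vert \left\langle v,t\right\rangle \right\vert d\mu\left(t\right)$ for every $v\in V$, and this $\mu$ is the measure claimed in the statement.

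Take $T_{1}:V\rightarrow C\left(X\right)$ to be the canonical representation $T_{1}v=\left\langle v,\cdot\right\rangle $. Since $X$ is essential, $\left\Vert T_{1}v\right\Vert _{\infty}=\sup\left\vert \left\langle v,X\right\rangle \right\vert =\left\Vert v\right\Vert $, so $T_{1}$ is an isometry and $\left\Vert T_{1}\right\Vert \leq1$. Let $\iota:C\left(X\right)\rightarrow L^{2}\left(X,\mu\right)$ be the canonical representation; since $\mu$ is a probability measure, $\left\Vert \iota f\right\Vert _{2}=\left(\int_{X}\left\vert f\right\vert ^{2}d\mu\right)^{1/2}\leq\left\Vert f\right\Vert _{\infty}$, so $\left\Vert \iota\right\Vert \leq1$. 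Applying the Cauchy--Schwarz inequality in $L^{2}\left(X,\mu\right)$ to the pair $\left(\left\vert T_{1}v\right\vert ,1\right)$, for every $v\in V$ we obtain
\[
\left\Vert Tv\right\Vert \leq\pi\left(T\right)\int_{X}\left\vert \left(T_{1}v\right)\left(t\right)\right\vert d\mu\left(t\right)\leq\pi\left(T\right)\left(\int_{X}\left\vert \left(T_{1}v\right)\left(t\right)\right\vert ^{2}d\mu\right)^{1/2}=\pi\left(T\right)\left\Vert \iota\left(T_{1}v\right)\right\Vert _{2}.
\]

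It remains to construct $T_{2}$. The displayed estimate shows, first, that the assignment $\iota\left(T_{1}v\right)\mapsto Tv$ is unambiguous on the linear subspace $\iota\left(T_{1}\left(V\right)\right)$ of $L^{2}\left(X,\mu\right)$: if $\iota\left(T_{1}v\right)=\iota\left(T_{1}v^{\prime}\right)$ then $\left\Vert T\left(v-v^{\prime}\right)\right\Vert \leq\pi\left(T\right)\left\Vert \iota\left(T_{1}\left(v-v^{\prime}\right)\right)\right\Vert _{2}=0$, hence $Tv=Tv^{\prime}$; and second, that this assignment is bounded, with norm at most $\pi\left(T\right)$, on $\iota\left(T_{1}\left(V\right)\right)$. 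Extending it by continuity to the closure $\overline{\iota\left(T_{1}\left(V\right)\right)}$ (here one uses completeness of the target, which is the case of interest; in general one passes first to the completion of $W$) and then precomposing the resulting extension with the orthogonal projection of $L^{2}\left(X,\mu\right)$ onto $\overline{\iota\left(T_{1}\left(V\right)\right)}$, one obtains a bounded operator $T_{2}:L^{2}\left(X,\mu\right)\rightarrow W$ with $\left\Vert T_{2}\right\Vert \leq\pi\left(T\right)$, since the projection has norm at most $1$. For $v\in V$ the element $\iota\left(T_{1}v\right)$ lies in $\overline{\iota\left(T_{1}\left(V\right)\right)}$, so the projection fixes it and $T_{2}\iota T_{1}v=Tv$; thus the diagram commutes with the asserted norm bounds.

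The genuinely substantial ingredient is the Pietsch domination theorem itself, whose proof rests on a Hahn--Banach (minimax) separation argument and which has been quoted from \cite{AlP}; granting it, the only matters needing a little care are the normalization of the dominating measure to a probability measure, the Cauchy--Schwarz passage from the $L^{1}$-estimate to the $L^{2}$-estimate, and the well-definedness of $T_{2}$, all dealt with above. Essentiality of $X$ is what the cited domination step uses, and it is also what makes $T_{1}$ an isometry.
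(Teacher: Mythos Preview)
The paper does not supply its own proof of this proposition; it simply records the statement and attributes it to Pietsch \cite[3.3.4]{AlP}. Your argument is the standard one: invoke the domination inequality already quoted in Subsection~\ref{subsecPF}, normalize the dominating measure to a probability, pass from the $L^{1}$ bound to an $L^{2}$ bound via Cauchy--Schwarz, and then lift $T$ through $\iota T_{1}$ by a density/projection argument. This is correct and matches the route taken in \cite{AlP}.

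One small point worth flagging: your extension-by-continuity step for $T_{2}$ genuinely needs $W$ complete, and your parenthetical ``pass to the completion'' does not quite recover the diagram as stated, since $T_{2}$ would then take values in $\widehat{W}$ rather than $W$. In the paper's applications $W$ is always a Hilbert space, so this is harmless here; but if you want the proposition in the generality written, note that the factorization in \cite{AlP} is indeed stated with $T_{2}$ landing in $W$ only up to this caveat.
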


The factorization from Proposition \ref{propAlP2} is known as the Pietsch factorization.

\begin{remark}
\label{remIT11}If $T\in\mathcal{B}\left(  H,C\left(  X\right)  \right)  $ then
$\iota T\in\mathcal{B}^{2}\left(  H,L^{2}\left(  X,\mu\right)  \right)  $ for
every $\mu\in\mathcal{M}\left(  X\right)  _{+}$. Indeed, for a Hilbert basis
$F$ for $H$ we have $\sum_{f\in F}\left\vert \left(  Tf\right)  \left(
t\right)  \right\vert ^{2}=\sum_{f\in F}\left\vert \left\langle Tf,\delta
_{t}\right\rangle \right\vert ^{2}=\sum_{f\in F}\left\vert \left\langle
f,T^{\ast}\delta_{t}\right\rangle \right\vert ^{2}=\sum_{f\in F}\left\vert
\left(  f,T^{\ast}\delta_{t}\right)  \right\vert ^{2}\leq\left\Vert T^{\ast
}\delta_{t}\right\Vert ^{2}\leq\left\Vert T^{\ast}\right\Vert ^{2}=\left\Vert
T\right\Vert ^{2}$, $t\in X$. It follows that $\left\Vert \iota T\right\Vert
_{2}^{2}=\sum_{f}\left\Vert \left(  \iota T\right)  f\right\Vert ^{2}=\sum
_{f}\int\left\vert \left(  Tf\right)  \left(  t\right)  \right\vert ^{2}%
d\mu\leq\left\Vert T\right\Vert ^{2}\int1<\infty$.
\end{remark}

Based on these results one can prove that a superposition of two absolutely
summable maps turns out to be a nuclear operator (see \cite[3.3.5]{AlP}).

\section{Quantum cones on a Hilbert space\label{secQHS}}

In this section we introduce unital cones in a Hilbert space and classify
their quantizations.

\subsection{Hilbert $\ast$-space}

Let $H$ be a Hilbert space. By an involution on $H$ we mean a $\ast$-linear
mapping $H\rightarrow H$, $\zeta\mapsto\zeta^{\ast}$ such that $\zeta
^{\ast\ast}=\zeta$ and $\left(  \zeta^{\ast},\eta^{\ast}\right)  =\left(
\zeta,\eta\right)  ^{\ast}$ for all $\zeta,\eta\in H$. In the case of
$H=\ell^{2}\left(  F\right)  $ the mapping $\zeta\mapsto\zeta^{\ast}$ with
$\zeta^{\ast}=\sum_{f\in F}\zeta_{f}^{\ast}f$ for $\zeta=\sum_{f\in F}%
\zeta_{f}f$ is a natural involution on $H$, where $\zeta_{f}=\left(
\zeta,f\right)  $, $f\in F$. The set of all hermitian vectors from a Hilbert
$\ast$-space $H$ is denoted by $H_{h}$. Notice that $H_{h}$ is a real Hilbert
space, for $\left(  \zeta_{,}\eta\right)  \in\mathbb{R}$ whenever $\zeta
,\eta\in H_{h}$. For every $\zeta\in H$ we have a unique expansion
$\zeta=\operatorname{Re}\zeta+i\operatorname{Im}\zeta$ into its hermitian
parts, $\left(  i\operatorname{Im}\zeta,\operatorname{Re}\zeta\right)
=\left(  \operatorname{Re}\zeta,i\operatorname{Im}\zeta\right)  ^{\ast
}=\left(  \left(  \operatorname{Re}\zeta\right)  ^{\ast},\left(
i\operatorname{Im}\zeta\right)  ^{\ast}\right)  =-\left(  \operatorname{Re}%
\zeta,i\operatorname{Im}\zeta\right)  $, and $\left\Vert \zeta\right\Vert
^{2}=\left\Vert \operatorname{Re}\zeta\right\Vert ^{2}+\left\Vert
\operatorname{Im}\zeta\right\Vert ^{2}+\left(  i\operatorname{Im}%
\zeta,\operatorname{Re}\zeta\right)  +\left(  \operatorname{Re}\zeta
,i\operatorname{Im}\zeta\right)  =\left\Vert \operatorname{Re}\zeta\right\Vert
^{2}+\left\Vert \operatorname{Im}\zeta\right\Vert ^{2}$. Take a (real) Hilbert
basis $F$ for $H_{h}$, which turns out to be a (complex) basis for $H$. For
every $\zeta\in H$ with $\zeta=\sum_{f\in F}\zeta_{f}f$ we have
$\operatorname{Re}\zeta=\sum_{f}\left(  \operatorname{Re}\zeta_{f}\right)  f$
and $\operatorname{Im}\zeta=\sum_{f}\left(  \operatorname{Im}\zeta_{f}\right)
f$, which in turn implies that $\zeta^{\ast}=\operatorname{Re}\zeta
-i\operatorname{Im}\zeta=\sum_{f}\zeta_{f}^{\ast}f$. Thus every involution on
$H$ is reduced to the above considered example of $\ell^{2}\left(  F\right)  $
with respect to a suitable basis for $H$.

The conjugate Hilbert space to $H$ is denoted by $\overline{H}$, whose vectors
are denoted by $\overline{\zeta}$, $\zeta\in H$. Thus $\lambda\overline{\zeta
}=\overline{\lambda^{\ast}\zeta}$ and $\left(  \overline{\zeta},\overline
{\eta}\right)  =\left(  \zeta,\eta\right)  ^{\ast}=\left(  \zeta^{\ast}%
,\eta^{\ast}\right)  $ for all $\zeta,\eta\in H$ and $\lambda\in\mathbb{C}$.
Notice that the canonical mapping $\psi:\overline{H}\rightarrow H^{\ast}$,
$\psi\left(  \overline{\eta}\right)  =\left(  \cdot,\eta\right)  $ is an
isometric isomorphism. Thus $\left(  H,\overline{H}\right)  $ is a dual pair
with the canonical duality $\left\langle \zeta,\overline{\eta}\right\rangle
=\left(  \zeta,\eta\right)  $, $\zeta,\eta\in H$. Moreover, it is a dual
$\ast$-pair, for $\left\langle \zeta^{\ast},\overline{\eta}\right\rangle
=\left(  \zeta^{\ast},\eta\right)  =\left(  \zeta,\eta^{\ast}\right)  ^{\ast
}=\left\langle \zeta,\overline{\eta^{\ast}}\right\rangle ^{\ast}$, $\zeta
,\eta\in H$, which means that the involution is weakly continuous. In
particular, $\overline{H}$ possesses the involution $\overline{\eta}%
\mapsto\overline{\eta}^{\ast}$, $\left\langle \zeta,\overline{\eta}^{\ast
}\right\rangle =\left\langle \zeta^{\ast},\overline{\eta}\right\rangle ^{\ast
}$ (see Subsection \ref{subsecInv}). Thus $\left\langle \zeta,\overline{\eta
}^{\ast}\right\rangle =\left\langle \zeta,\overline{\eta^{\ast}}\right\rangle
$ for all $\zeta\in H$, which in turn implies that $\overline{\eta}^{\ast
}=\overline{\eta^{\ast}}$. In particular, $\left(  \overline{\zeta}^{\ast
},\overline{\eta}^{\ast}\right)  =\left(  \overline{\zeta^{\ast}}%
,\overline{\eta^{\ast}}\right)  =\left(  \zeta^{\ast},\eta^{\ast}\right)
^{\ast}=\left(  \zeta,\eta\right)  =\left(  \overline{\zeta},\overline{\eta
}\right)  ^{\ast}$, $\zeta,\eta\in H$, which means that $\overline{H}$ is a
Hilbert $\ast$-space as well.

Later on we fix a hermitian unit vector $e$ from $H$, which can be extended up
to a basis $F$ for $H_{h}$. Thus $\left(  H,e\right)  $ is a unital space.
Since $\overline{e}^{\ast}=\overline{e^{\ast}}=\overline{e}$, it follows that
$\left(  \overline{H},\overline{e}\right)  $ is a unital Hilbert $\ast$-space
either. As above in Subsection \ref{subsecDD}, the duality $\left\langle
\cdot,\cdot\right\rangle $ of the dual $\ast$-pair $\left(  H,\overline
{H}\right)  $ can be extended up to a matrix duality $\left\langle
\left\langle \cdot,\cdot\right\rangle \right\rangle :M\left(  H\right)  \times
M\left(  \overline{H}\right)  \rightarrow M$ by $\left\langle \left\langle
\zeta,\overline{\eta}\right\rangle \right\rangle =\left[  \left\langle
\zeta_{ik},\overline{\eta_{jl}}\right\rangle \right]  _{\left(  i,j\right)
,\left(  k,l\right)  }=\left[  \left(  \zeta_{ik},\eta_{jl}\right)  \right]
_{\left(  i,j\right)  ,\left(  k,l\right)  }$, and each $\left(  M_{n}\left(
H\right)  ,M_{n}\left(  \overline{H}\right)  \right)  $ is a dual $\ast$-pair
(see Subsection \ref{subsecInv}). In this case, for $a\in M_{n,m}$,
$\overline{\eta}\in M_{m}\left(  \overline{H}\right)  $ and $b\in M_{m,n}$ we
have
\begin{equation}
\overline{a\eta b}=\left[  \sum_{k,l=1}^{m}\overline{a_{ik}\eta_{kl}b_{lj}%
}\right]  _{i,j}=\left[  \sum_{k,l=1}^{m}a_{ik}^{\ast}\overline{\eta_{kl}%
}b_{lj}^{\ast}\right]  _{i,j}=\left(  a^{\ast}\right)  ^{\operatorname{t}%
}\overline{\eta}\left(  b^{\ast}\right)  ^{\operatorname{t}}. \label{faeb}%
\end{equation}
Note also that $\overline{\eta}^{\ast}=\left[  \overline{\eta_{ij}}\right]
_{i,j}^{\ast}=\left[  \overline{\eta_{ji}}^{\ast}\right]  _{i,j}=\left[
\overline{\eta_{ji}^{\ast}}\right]  _{i,j}=\overline{\eta^{\ast}}$ and
\begin{align*}
\left\langle \left\langle \zeta^{\ast},\overline{\eta}^{\ast}\right\rangle
\right\rangle  &  =\left[  \left\langle \zeta_{ki}^{\ast},\overline{\eta_{lj}%
}^{\ast}\right\rangle \right]  _{\left(  i,j\right)  ,\left(  k,l\right)
}=\left[  \left(  \zeta_{ki}^{\ast},\eta_{lj}^{\ast}\right)  \right]
_{\left(  i,j\right)  ,\left(  k,l\right)  }=\left[  \left(  \zeta_{ki}%
,\eta_{lj}\right)  ^{\ast}\right]  _{\left(  i,j\right)  ,\left(  k,l\right)
}\\
&  =\left[  \left\langle \zeta_{ki},\overline{\eta_{lj}}\right\rangle ^{\ast
}\right]  _{\left(  i,j\right)  ,\left(  k,l\right)  }=\left[  \left\langle
\zeta_{ik},\overline{\eta_{jl}}\right\rangle \right]  _{\left(  i,j\right)
,\left(  k,l\right)  }^{\ast}=\left\langle \left\langle \zeta,\overline{\eta
}\right\rangle \right\rangle ^{\ast}%
\end{align*}
for all $\zeta\in M\left(  H\right)  $ and $\eta\in M\left(  \overline
{H}\right)  $. Recall that the matrix norm $\left\Vert \cdot\right\Vert _{o}$
of an operator Hilbert space $H_{o}$ is given by $\left\Vert \zeta\right\Vert
_{o}=\left\Vert \left\langle \left\langle \zeta,\overline{\zeta}\right\rangle
\right\rangle \right\Vert ^{1/2}$, $\zeta\in M\left(  H_{o}\right)  $. Thus
$\left\Vert \zeta^{\ast}\right\Vert _{o}=\left\Vert \left\langle \left\langle
\zeta^{\ast},\overline{\zeta^{\ast}}\right\rangle \right\rangle \right\Vert
^{1/2}=\left\Vert \left\langle \left\langle \zeta^{\ast},\overline{\zeta
}^{\ast}\right\rangle \right\rangle \right\Vert ^{1/2}=\left\Vert \left\langle
\left\langle \zeta,\overline{\zeta}\right\rangle \right\rangle ^{\ast
}\right\Vert ^{1/2}=\left\Vert \left\langle \left\langle \zeta,\overline
{\zeta}\right\rangle \right\rangle \right\Vert ^{1/2}=\left\Vert
\zeta\right\Vert _{o}$ for all $\zeta\in M\left(  H_{o}\right)  $, and
$\left\Vert e\right\Vert _{o}=\left\Vert e\right\Vert =1$.

\subsection{Hilbert space norm on $M\left(  H\right)  $}

As above let $\left(  H,e\right)  $ be a unital Hilbert $\ast$-space and let
$F$ be a basis for $H_{h}$ which contains $e$. Along with the matrix pairing
$\left\langle \left\langle \cdot,\cdot\right\rangle \right\rangle $ we have
the scalar pairing $\left\langle \cdot,\cdot\right\rangle :M_{n}\left(
H\right)  \times M_{n}\left(  \overline{H}\right)  \rightarrow\mathbb{C}$
given by $\left\langle \zeta,\overline{\eta}\right\rangle =\sum_{i,j}%
\left\langle \zeta_{ij},\overline{\eta_{ij}}\right\rangle $ (see Subsection
\ref{subsecDD}). Note that $M_{n}\left(  H\right)  $ is a Hilbert space with
$\left(  \zeta,\eta\right)  =\sum_{i,j}\left(  \zeta_{ij},\eta_{ij}\right)
=\sum_{i,j}\left\langle \zeta_{ij},\overline{\eta_{ij}}\right\rangle
=\left\langle \zeta,\overline{\eta}\right\rangle $ for all $\zeta,\eta\in
M_{n}\left(  H\right)  $. Moreover, every $\zeta\in M_{n}\left(  H\right)  $
admits a unique expansion $\zeta=\sum_{f\in F}\left\langle \left\langle
\zeta,\overline{f}\right\rangle \right\rangle f^{\oplus n}$. Indeed,
$\zeta=\left[  \zeta_{ij}\right]  _{i,j}=\left[  \sum_{f}\left(  \zeta
_{ij},f\right)  f\right]  _{i,j}=\sum_{f}\left[  \left(  \zeta_{ij},f\right)
\right]  _{i,j}\otimes f=\sum_{f\in F}\left\langle \left\langle \zeta
,\overline{f}\right\rangle \right\rangle f^{\oplus n}$. Note that%
\begin{align}
\left\langle \zeta,a\otimes\overline{f}\right\rangle  &  =\left\langle
\zeta,a\overline{f}^{\oplus n}\right\rangle =\sum\left(  \zeta_{ij}%
,a_{ij}^{\ast}f\right)  =\sum a_{ij}\left(  \zeta_{ij},f\right)  =\tau\left(
a\left\langle \left\langle \zeta,\overline{f}\right\rangle \right\rangle
^{\operatorname{t}}\right) \label{fND}\\
&  =\tau\left(  a^{\operatorname{t}}\left\langle \left\langle \zeta
,\overline{f}\right\rangle \right\rangle \right) \nonumber
\end{align}
for all $\zeta\in M_{n}\left(  H\right)  $ and $a\in M_{n}$, where $\tau$
indicates to the standard trace of a matrix. On the matrix space $M_{n}\left(
H\right)  $ we have the Hilbert space norm $\left\Vert \zeta\right\Vert
_{2}=\left\langle \zeta,\overline{\zeta}\right\rangle ^{1/2}$, $\zeta\in
M_{n}\left(  H\right)  $. The family of unit balls $\operatorname{ball}%
\left\Vert \cdot\right\Vert _{2}$ is an absolutely convex quantum set
$\mathfrak{H}$ in $M\left(  H\right)  $ whereas $\mathfrak{B=}%
\operatorname{ball}\left\Vert \cdot\right\Vert _{o}$ is an absolutely matrix
convex set in $M\left(  H\right)  $. The self-dual property of $H_{o}$ asserts
\cite[3.5.2]{ER} that $\mathfrak{B}^{\odot}=\overline{\mathfrak{B}}$ with
respect to the duality $\left(  H,\overline{H}\right)  $. In particular,
$\left\Vert \zeta\right\Vert _{o}=\sup\left\Vert \left\langle \left\langle
\zeta,\overline{\mathfrak{B}}\right\rangle \right\rangle \right\Vert $ for all
$\zeta\in M\left(  H_{o}\right)  $. For the hermitian parts $\mathfrak{H\cap
}M\left(  H\right)  _{h}$ and $\mathfrak{B}\cap M\left(  H\right)  _{h}$ we
use the notations $\mathfrak{H}_{h}$ and $\mathfrak{B}_{h}$, respectively.

\begin{lemma}
\label{lHS11}If $\zeta,\eta\in M_{n}\left(  H\right)  $ then $\left\langle
\zeta,\overline{\eta}\right\rangle =\sum_{f}\tau\left(  \left\langle
\left\langle \zeta,\overline{f}\right\rangle \right\rangle \left\langle
\left\langle \eta,\overline{f}\right\rangle \right\rangle ^{\ast}\right)  $
and $\left\Vert \zeta\right\Vert _{o}\leq\left\Vert \zeta\right\Vert _{2}%
\leq\sqrt{n}\left\Vert \zeta\right\Vert _{o}$. In particular, $\left\Vert
\zeta\right\Vert _{2}=\left(  \sum_{f}\tau\left(  \left\vert \left\langle
\left\langle \zeta,\overline{f}\right\rangle \right\rangle \right\vert
^{2}\right)  \right)  ^{1/2}$ and $\mathfrak{H}\cap M_{n}\left(  H\right)
\subseteq\mathfrak{B}\cap M_{n}\left(  H\right)  \subseteq\sqrt{n}%
\mathfrak{H}\cap M_{n}\left(  H\right)  $ for all $n\in\mathbb{N}$.
\end{lemma}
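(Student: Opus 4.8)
The plan is to fix the hermitian Hilbert basis $F$ of $H_{h}$ (which is simultaneously an orthonormal basis of $H$ as a complex Hilbert space, since distinct elements of $F$ are orthogonal hermitian vectors of norm $1$) and to run everything through the scalar coefficient matrices $Z_{f}=\langle\langle\zeta,\overline{f}\rangle\rangle$ and $W_{f}=\langle\langle\eta,\overline{f}\rangle\rangle$ in $M_{n}$, whose entries are $(Z_{f})_{ij}=\langle\zeta_{ij},\overline{f}\rangle=(\zeta_{ij},f)$ and $(W_{f})_{ij}=(\eta_{ij},f)$. First I would prove the trace identity: expanding $\eta_{ij}=\sum_{f}(\eta_{ij},f)f$ and using orthonormality of $F$ gives $(\zeta_{ij},\eta_{ij})=\sum_{f}(\zeta_{ij},f)\overline{(\eta_{ij},f)}=\sum_{f}(Z_{f})_{ij}(W_{f}^{\ast})_{ji}$, hence
\[
\langle\zeta,\overline{\eta}\rangle=\sum_{i,j}(\zeta_{ij},\eta_{ij})=\sum_{f}\sum_{i,j}(Z_{f})_{ij}(W_{f}^{\ast})_{ji}=\sum_{f}\tau(Z_{f}W_{f}^{\ast})=\sum_{f}\tau\left(\langle\langle\zeta,\overline{f}\rangle\rangle\langle\langle\eta,\overline{f}\rangle\rangle^{\ast}\right),
\]
where interchanging the two finite sums is harmless. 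Putting $\eta=\zeta$ and using $\tau(Z_{f}Z_{f}^{\ast})=\tau(|Z_{f}|^{2})=\|Z_{f}\|_{\mathrm{HS}}^{2}$ yields the stated formula $\|\zeta\|_{2}^{2}=\langle\zeta,\overline{\zeta}\rangle=\sum_{f}\tau(|\langle\langle\zeta,\overline{f}\rangle\rangle|^{2})$.

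For the estimate $\|\zeta\|_{o}\leq\|\zeta\|_{2}$ I would argue directly on the $n^{2}\times n^{2}$ scalar matrix $\langle\langle\zeta,\overline{\zeta}\rangle\rangle$, whose entry at the multi-index position $((i,s),(j,t))$ is $(\zeta_{ij},\zeta_{st})$. By the Cauchy--Schwarz inequality $|(\zeta_{ij},\zeta_{st})|\leq\|\zeta_{ij}\|\,\|\zeta_{st}\|$, so for the Hilbert--Schmidt norm
\[
\|\langle\langle\zeta,\overline{\zeta}\rangle\rangle\|_{\mathrm{HS}}^{2}=\sum_{i,j,s,t}|(\zeta_{ij},\zeta_{st})|^{2}\leq\sum_{i,j}\|\zeta_{ij}\|^{2}\sum_{s,t}\|\zeta_{st}\|^{2}=\|\zeta\|_{2}^{4},
\]
and since the operator norm is dominated by the Hilbert--Schmidt norm, $\|\zeta\|_{o}^{2}=\|\langle\langle\zeta,\overline{\zeta}\rangle\rangle\|\leq\|\langle\langle\zeta,\overline{\zeta}\rangle\rangle\|_{\mathrm{HS}}\leq\|\zeta\|_{2}^{2}$. (Equivalently, one may record the identity $\langle\langle\zeta,\overline{\zeta}\rangle\rangle=\sum_{f}Z_{f}\otimes\overline{Z_{f}}$, obtained from the same basis expansion, and apply the triangle inequality together with $\|Z_{f}\otimes\overline{Z_{f}}\|=\|Z_{f}\|^{2}\leq\|Z_{f}\|_{\mathrm{HS}}^{2}$.)

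For the reverse estimate $\|\zeta\|_{2}\leq\sqrt{n}\,\|\zeta\|_{o}$ I would test $\langle\langle\zeta,\overline{\zeta}\rangle\rangle$, regarded as an operator on $\mathbb{C}^{n}\otimes\mathbb{C}^{n}=\mathbb{C}^{n^{2}}$ with running multi-index $(j,t)$, against the unnormalized maximally entangled vector $\omega=\sum_{k=1}^{n}e_{k}\otimes e_{k}$. Since $\omega$ has coordinates $\omega_{(j,t)}=\delta_{jt}$ and $\|\omega\|^{2}=n$, the double sum collapses,
\[
\big(\langle\langle\zeta,\overline{\zeta}\rangle\rangle\,\omega,\omega\big)=\sum_{i,j,s,t}(\zeta_{ij},\zeta_{st})\,\delta_{jt}\,\delta_{is}=\sum_{i,j}(\zeta_{ij},\zeta_{ij})=\|\zeta\|_{2}^{2},
\]
whence $\|\zeta\|_{2}^{2}=\big(\langle\langle\zeta,\overline{\zeta}\rangle\rangle\,\omega,\omega\big)\leq\|\langle\langle\zeta,\overline{\zeta}\rangle\rangle\|\cdot\|\omega\|^{2}=n\,\|\zeta\|_{o}^{2}$. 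The two inequalities pass at once to the unit balls: $\|\zeta\|_{o}\leq\|\zeta\|_{2}$ gives $\mathfrak{H}\cap M_{n}(H)\subseteq\mathfrak{B}\cap M_{n}(H)$, and $\|\zeta\|_{2}\leq\sqrt{n}\,\|\zeta\|_{o}$ gives $\mathfrak{B}\cap M_{n}(H)\subseteq\sqrt{n}\,\mathfrak{H}\cap M_{n}(H)$. The only genuine obstacle here is bookkeeping: one must keep the multi-index layout of $\langle\langle\cdot,\cdot\rangle\rangle$ straight and notice that $\langle\langle\zeta,\overline{\zeta}\rangle\rangle$ is the \emph{realigned} Gram matrix $\sum_{f}Z_{f}\otimes\overline{Z_{f}}$ rather than the Gram matrix of the family $(\zeta_{ij})$ itself, so it need not be positive semidefinite --- which is exactly why the lower bound must be extracted with the maximally entangled vector rather than from the trace of $\langle\langle\zeta,\overline{\zeta}\rangle\rangle$.
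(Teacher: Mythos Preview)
Your proof is correct and follows essentially the same route as the paper: the trace identity is obtained by expanding through the basis $F$, and the upper bound $\|\zeta\|_{2}\leq\sqrt{n}\,\|\zeta\|_{o}$ is gotten by pairing $\langle\langle\zeta,\overline{\zeta}\rangle\rangle$ against the vector $\omega=\sum_{k}e_{k}\otimes e_{k}$, which is exactly the paper's $(\langle\langle\zeta,\overline{\zeta}\rangle\rangle I_{n},I_{n})$ written in tensor language. The one small deviation is in the bound $\|\zeta\|_{o}\leq\|\zeta\|_{2}$: the paper expands $\langle\langle\zeta,\overline{\zeta}\rangle\rangle$ through the basis as $\sum_{f}Z_{f}(\overline{Z_{f}})^{\operatorname{t}}\otimes I_{n}$ and applies the triangle inequality together with $\|Z_{f}\|^{2}\leq\tau(|Z_{f}|^{2})$, whereas your primary argument bounds the operator norm of $\langle\langle\zeta,\overline{\zeta}\rangle\rangle$ by its Hilbert--Schmidt norm via an entrywise Cauchy--Schwarz estimate --- a slightly more elementary shortcut that avoids the auxiliary formulas (\ref{faeb}) and (\ref{fND}); your parenthetical alternative is precisely the paper's line.
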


\begin{proof}
Take $\zeta\in M_{n}\left(  H\right)  $. Using (\ref{faeb}) and (\ref{fND}),
we derive that%
\begin{align*}
\left\langle \zeta,\overline{\eta}\right\rangle  &  =\sum_{f}\left\langle
\zeta,\left(  \left\langle \left\langle \eta,\overline{f}\right\rangle
\right\rangle ^{\ast}\right)  ^{\operatorname{t}}\overline{f}^{\oplus
n}\right\rangle =\sum_{f}\left\langle \zeta,\left(  \left\langle \left\langle
\eta,\overline{f}\right\rangle \right\rangle ^{\ast}\right)
^{\operatorname{t}}\otimes\overline{f}\right\rangle =\sum_{f}\tau\left(
\left\langle \left\langle \eta,\overline{f}\right\rangle \right\rangle ^{\ast
}\left\langle \left\langle \zeta,\overline{f}\right\rangle \right\rangle
\right) \\
&  =\sum_{f}\tau\left(  \left\langle \left\langle \zeta,\overline
{f}\right\rangle \right\rangle \left\langle \left\langle \eta,\overline
{f}\right\rangle \right\rangle ^{\ast}\right)  .
\end{align*}
In particular,
\[
\left\Vert \zeta\right\Vert _{2}=\left\langle \zeta,\overline{\zeta
}\right\rangle ^{1/2}=\left(  \sum_{f}\tau\left(  \left\langle \left\langle
\zeta,\overline{f}\right\rangle \right\rangle ^{\ast}\left\langle \left\langle
\zeta,\overline{f}\right\rangle \right\rangle \right)  \right)  ^{1/2}=\left(
\sum_{f}\tau\left(  \left\vert \left\langle \left\langle \zeta,\overline
{f}\right\rangle \right\rangle \right\vert ^{2}\right)  \right)  ^{1/2},
\]
that is, $\left\Vert \zeta\right\Vert _{2}=\left(  \sum_{f}\tau\left(
\left\vert \left\langle \left\langle \zeta,\overline{f}\right\rangle
\right\rangle \right\vert ^{2}\right)  \right)  ^{1/2}$. It follows that
\begin{align*}
\left\Vert \zeta\right\Vert _{o}^{2}  &  =\left\Vert \left\langle \left\langle
\zeta,\overline{\zeta}\right\rangle \right\rangle \right\Vert =\left\Vert
\sum_{f,g}\left(  \left\langle \left\langle \zeta,\overline{f}\right\rangle
\right\rangle \left(  \left\langle \left\langle \zeta,\overline{g}%
\right\rangle \right\rangle ^{\ast}\right)  ^{\operatorname{t}}\otimes
I_{n}\right)  \left\langle \left\langle f^{\oplus n},\overline{g}^{\oplus
n}\right\rangle \right\rangle \right\Vert \\
&  \leq\sum_{f}\left\Vert \left\langle \left\langle \zeta,\overline
{f}\right\rangle \right\rangle \left(  \left\langle \left\langle
\zeta,\overline{f}\right\rangle \right\rangle ^{\ast}\right)
^{\operatorname{t}}\otimes I_{n}\right\Vert \leq\sum_{f}\left\Vert
\left\langle \left\langle \zeta,\overline{f}\right\rangle \right\rangle
\right\Vert ^{2}\\
&  =\sum_{f}\left\Vert \left\vert \left\langle \left\langle \zeta,\overline
{f}\right\rangle \right\rangle \right\vert ^{2}\right\Vert \leq\sum_{f}%
\tau\left(  \left\vert \left\langle \left\langle \zeta,\overline
{f}\right\rangle \right\rangle \right\vert ^{2}\right)  =\left\Vert
\zeta\right\Vert _{2}^{2},
\end{align*}
that is, $\left\Vert \zeta\right\Vert _{o}\leq\left\Vert \zeta\right\Vert
_{2}$. Furthermore, $\left\Vert \zeta\right\Vert _{2}^{2}=\left\langle
\zeta,\overline{\zeta}\right\rangle =\left(  \left\langle \left\langle
\zeta,\overline{\zeta}\right\rangle \right\rangle I_{n},I_{n}\right)
\leq\left\Vert \left\langle \left\langle \zeta,\overline{\zeta}\right\rangle
\right\rangle \right\Vert \left\Vert I_{n}\right\Vert _{2}^{2}=n\left\Vert
\zeta\right\Vert _{o}^{2}$, that is, $\left\Vert \zeta\right\Vert _{2}%
\leq\sqrt{n}\left\Vert \zeta\right\Vert _{o}$. The rest is clear.
\end{proof}

\begin{remark}
\label{remlHS}Notice that Schwarz inequality for the scalar pairing on
$\left(  H,\overline{H}\right)  $ follows from the matrix and then classical
Schwarz inequalities in the following way
\[
\left\langle \zeta,\overline{\eta}\right\rangle =\sum_{f}\tau\left(
\left\langle \left\langle \zeta,\overline{f}\right\rangle \right\rangle
\left\langle \left\langle \eta,\overline{f}\right\rangle \right\rangle ^{\ast
}\right)  \leq\left(  \sum_{f\neq e}\tau\left(  \left\vert \left\langle
\left\langle \zeta,\overline{f}\right\rangle \right\rangle \right\vert
^{2}\right)  \right)  ^{1/2}\left(  \sum_{f\neq e}\tau\left(  \left\vert
\left\langle \left\langle \eta,\overline{f}\right\rangle \right\rangle
\right\vert ^{2}\right)  \right)  ^{1/2}=\left\Vert \zeta\right\Vert
_{2}\left\Vert \eta\right\Vert _{2}%
\]
for all $\zeta,\eta\in M_{n}\left(  H\right)  $ (see Lemma \ref{lHS11}).
\end{remark}

Using the matrix ball $\mathfrak{B}$ and the scalar pairing $\left\langle
\cdot,\cdot\right\rangle $, we can define the norm (not a matrix one)
$\left\Vert \zeta\right\Vert _{so}=\sup\left\vert \left\langle \zeta
,\overline{\mathfrak{B}}\right\rangle \right\vert $ on $M\left(  H\right)  $.

\begin{corollary}
\label{corHS1}If $\zeta\in M_{n}\left(  H\right)  $ then $\left\Vert
\zeta\right\Vert _{so}\leq\sqrt{n}\left\Vert \zeta\right\Vert _{2}$ and
$\left\Vert \zeta\right\Vert _{o}\leq\left\Vert \zeta\right\Vert _{so}\leq
n\left\Vert \zeta\right\Vert _{o}$.
\end{corollary}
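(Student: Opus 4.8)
The plan is to establish the chain of inequalities $\left\Vert \zeta\right\Vert _{o}\leq\left\Vert \zeta\right\Vert _{so}\leq\sqrt{n}\left\Vert \zeta\right\Vert _{2}\leq n\left\Vert \zeta\right\Vert _{o}$ for every $\zeta\in M_{n}\left(  H\right)  $, the last step being exactly the right-hand estimate of Lemma \ref{lHS11}. From this chain both assertions of the corollary follow immediately.

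First I would dispatch the middle inequality $\left\Vert \zeta\right\Vert _{so}\leq\sqrt{n}\left\Vert \zeta\right\Vert _{2}$. If $\eta\in\mathfrak{B}\cap M_{n}\left(  H\right)  $, that is $\left\Vert \eta\right\Vert _{o}\leq1$, then the Schwarz inequality for the scalar pairing on $\left(  H,\overline{H}\right)  $ recalled in Remark \ref{remlHS} gives $\left\vert \left\langle \zeta,\overline{\eta}\right\rangle \right\vert \leq\left\Vert \zeta\right\Vert _{2}\left\Vert \eta\right\Vert _{2}$, and by Lemma \ref{lHS11} one has $\left\Vert \eta\right\Vert _{2}\leq\sqrt{n}\left\Vert \eta\right\Vert _{o}\leq\sqrt{n}$. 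Taking the supremum over such $\eta$ yields $\left\Vert \zeta\right\Vert _{so}\leq\sqrt{n}\left\Vert \zeta\right\Vert _{2}$, and combining this with $\left\Vert \zeta\right\Vert _{2}\leq\sqrt{n}\left\Vert \zeta\right\Vert _{o}$ (again Lemma \ref{lHS11}) produces $\left\Vert \zeta\right\Vert _{so}\leq n\left\Vert \zeta\right\Vert _{o}$.

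The remaining, and main, point is $\left\Vert \zeta\right\Vert _{o}\leq\left\Vert \zeta\right\Vert _{so}$. Here I would use the self-duality of $H_{o}$, namely $\mathfrak{B}^{\odot}=\overline{\mathfrak{B}}$ and hence $\left\Vert \zeta\right\Vert _{o}=\sup\left\{  \left\Vert \left\langle \left\langle \zeta,\overline{\eta}\right\rangle \right\rangle \right\Vert :\eta\in\mathfrak{B}\right\}  $, the supremum running over all matrix sizes. Fix $\eta\in\mathfrak{B}\cap M_{m}\left(  H\right)  $ and unit vectors $\alpha=\left[  \alpha_{is}\right]  $, $\beta=\left[  \beta_{jt}\right]  $ in $\mathbb{C}^{nm}$, the rows and columns of $\left\langle \left\langle \zeta,\overline{\eta}\right\rangle \right\rangle \in M_{nm}$ being indexed by pairs $\left(  i,s\right)  $, $\left(  j,t\right)  $ with $i,j\leq n$, $s,t\leq m$. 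Moving the scalars $\overline{\beta_{jt}}\alpha_{is}$ into the second slot of the inner product one gets the identity
\[
\left\langle \left\langle \left\langle \zeta,\overline{\eta}\right\rangle \right\rangle \beta,\alpha\right\rangle =\sum_{\left(  i,s\right)  ,\left(  j,t\right)  }\left(  \zeta_{ij},\eta_{st}\right)  \beta_{jt}\overline{\alpha_{is}}=\sum_{i,j}\left(  \zeta_{ij},w_{ij}\right)  =\left\langle \zeta,\overline{w}\right\rangle ,
\]
where $w_{ij}=\sum_{s,t}\alpha_{is}\overline{\beta_{jt}}\eta_{st}$, so that $w=A\eta B^{\ast}\in M_{n}\left(  H\right)  $ with $A,B\in M_{n,m}$ the matricizations $A_{is}=\alpha_{is}$, $B_{jt}=\beta_{jt}$ (its conjugate $\overline{w}$ being computed via (\ref{faeb}), though only $\left\Vert w\right\Vert _{o}$ is needed). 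Since $\left\Vert A\right\Vert \leq\left\Vert \alpha\right\Vert _{2}\leq1$, $\left\Vert B\right\Vert \leq\left\Vert \beta\right\Vert _{2}\leq1$, and $\mathfrak{B}=\operatorname{ball}\left\Vert \cdot\right\Vert _{o}$ is absolutely matrix convex, we get $\left\Vert w\right\Vert _{o}=\left\Vert A\eta B^{\ast}\right\Vert _{o}\leq\left\Vert A\right\Vert \left\Vert \eta\right\Vert _{o}\left\Vert B\right\Vert \leq1$; as $w\in M_{n}\left(  H\right)  $, the definition of $\left\Vert \cdot\right\Vert _{so}$ gives $\left\vert \left\langle \zeta,\overline{w}\right\rangle \right\vert \leq\left\Vert \zeta\right\Vert _{so}$. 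Taking the supremum first over the unit vectors $\alpha,\beta$ and then over $\eta\in\mathfrak{B}$ yields $\left\Vert \zeta\right\Vert _{o}\leq\left\Vert \zeta\right\Vert _{so}$, closing the chain.

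The step I expect to require the most care is the index bookkeeping in the displayed identity — keeping the two slots of the matrix pairing, the amplification $\left(  i,s\right)  \leftrightarrow\left(  j,t\right)  $, and the conjugations straight — together with invoking absolute matrix convexity of $\mathfrak{B}$ in the precise form ``$axb\in\mathfrak{B}$ whenever $\left\Vert a\right\Vert \leq1$, $\left\Vert b\right\Vert \leq1$ and $x\in\mathfrak{B}$''. Everything else is a routine application of Remark \ref{remlHS}, Lemma \ref{lHS11}, and the self-duality $\mathfrak{B}^{\odot}=\overline{\mathfrak{B}}$ recalled above.
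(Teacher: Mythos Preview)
Your proof is correct and follows essentially the same route as the paper's: the paper also reduces $\left\Vert \zeta\right\Vert _{o}\leq\left\Vert \zeta\right\Vert _{so}$ to the identity $(\langle\langle\zeta,\overline{\eta}\rangle\rangle a,b)=\langle\zeta,b^{\ast}\overline{\eta}a\rangle$ for $a,b$ viewed as Hilbert--Schmidt matrices of unit norm, and uses $b^{\ast}\overline{\eta}a\in\overline{\mathfrak{B}}$ by absolute matrix convexity; your $w=A\eta B^{\ast}$ is exactly this manipulation on the unconjugated side, with the index bookkeeping spelled out. The middle and right inequalities are likewise obtained in the paper from Remark~\ref{remlHS} and Lemma~\ref{lHS11} just as you do.
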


\begin{proof}
For all $\eta\in\mathfrak{B}$ and $a,b\in\operatorname{ball}HS_{n}$
(Hilbert-Schmidt operators) we have $\left(  \left\langle \left\langle
\zeta,\overline{\eta}\right\rangle \right\rangle a,b\right)  =\left\langle
\zeta,b^{\ast}\overline{\eta}a\right\rangle $ and $b^{\ast}\overline{\eta}%
a\in\overline{\mathfrak{B}}$. Then $\left\Vert \left\langle \left\langle
\zeta,\overline{\eta}\right\rangle \right\rangle \right\Vert =\sup\left\vert
\left(  \left\langle \left\langle \zeta,\overline{\eta}\right\rangle
\right\rangle \operatorname{ball}HS_{n},\operatorname{ball}HS_{n}\right)
\right\vert \leq\sup\left\vert \left\langle \zeta,\overline{\mathfrak{B}%
}\right\rangle \right\vert =\left\Vert \zeta\right\Vert _{so}$, which in turn
implies that $\left\Vert \zeta\right\Vert _{o}=\sup\left\Vert \left\langle
\left\langle \zeta,\overline{\mathfrak{B}}\right\rangle \right\rangle
\right\Vert \leq\left\Vert \zeta\right\Vert _{so}$. Further, using Lemma
\ref{lHS11}, we derive that $\left\Vert \zeta\right\Vert _{so}\leq
\sup\left\vert \left\langle \zeta,\sqrt{n}\overline{\mathfrak{H}}\right\rangle
\right\vert =\sqrt{n}\sup\left\vert \left\langle \zeta,\overline{\mathfrak{H}%
}\right\rangle \right\vert $. Take $\eta\in\mathfrak{H}$. But (see Remark
\ref{remlHS}) $\left\langle \zeta,\overline{\eta}\right\rangle \leq\left\Vert
\zeta\right\Vert _{2}\left\Vert \eta\right\Vert _{2}\leq\left\Vert
\zeta\right\Vert _{2}$, that is, $\left\Vert \zeta\right\Vert _{so}\leq
\sqrt{n}\left\Vert \zeta\right\Vert _{2}$. Finally, $\left\Vert \zeta
\right\Vert _{so}\leq n\left\Vert \zeta\right\Vert _{o}$ by to Lemma
\ref{lHS11}.
\end{proof}

Notice that $\left\Vert \zeta\right\Vert =\left\Vert \zeta\right\Vert
_{2}=\left\Vert \zeta\right\Vert _{o}=\left\Vert \zeta\right\Vert _{so}$ for
all $\zeta\in H=M_{1}\left(  H\right)  $.

\subsection{The unital cone $\mathfrak{c}$ in $\left(  H,e\right)  $}

As above let $\left(  H,e\right)  $ be a unital Hilbert $\ast$-space and let
$\left(  H,\overline{H}\right)  $ be the related dual $\ast$-pair. We define
the following closed (or $\sigma\left(  H,\overline{H}\right)  $-closed) cone
\[
\mathfrak{c=}\left\{  \zeta\in H_{h}:\left\Vert \zeta\right\Vert \leq\sqrt
{2}\left(  \zeta,e\right)  \right\}
\]
in $H$. Note that $e\in\mathfrak{c}$, and $\left(  \zeta,e\right)  \geq0$
whenever $\zeta\in\mathfrak{c}$. Take $\zeta\in H_{h}$ with $\zeta=\zeta
_{0}+\left(  \zeta,e\right)  e$, where $\zeta_{0}=\sum_{f\neq e}\left(
\zeta,f\right)  f\in H_{h}^{e}$, $H_{h}^{e}=H_{h}\cap H^{e}$ and
$H^{e}=\left\{  e\right\}  ^{\bot}$. Since $\left\Vert \zeta\right\Vert
^{2}=\left\Vert \zeta_{0}\right\Vert ^{2}+\left(  \zeta,e\right)  ^{2}$, we
conclude that $\zeta\in\mathfrak{c}$ iff $\left\Vert \zeta_{0}\right\Vert
\leq\left(  \zeta,e\right)  $. Thus $\zeta=\zeta_{0}+\lambda e\in\mathfrak{c}$
whenever $\lambda\geq\left\Vert \zeta_{0}\right\Vert $. The set of all states
of the cone $\mathfrak{c}$ is denoted by $S\left(  \mathfrak{c}\right)  $.
Since $\left\langle \zeta,\overline{e}\right\rangle \geq0$ for all $\zeta
\in\mathfrak{c}$, and $\left\langle e,\overline{e}\right\rangle =1$, we obtain
that $\overline{e}\in S\left(  \mathfrak{c}\right)  $. We write $\zeta\leq
\eta$ for $\zeta,\eta\in H$ whenever $\eta-\zeta\in\mathfrak{c}$.

\begin{lemma}
\label{lHSc11}The cone $\mathfrak{c}$ is a separated, unital cone such that
$-e\leq\operatorname{ball}\left(  H_{h}^{e}\right)  \leq e$ and
$\mathfrak{c\cap}H_{h}^{e}=\left\{  0\right\}  $. Thus $\mathfrak{c\cap
-c=}\left\{  0\right\}  $ and $\mathfrak{c-}e$ is an absorbent set in $H_{h}$.
In particular, $-e\leq F\leq e$, $\left(  F\backslash\left\{  e\right\}
\right)  \cap\mathfrak{c=\varnothing}$ and $H_{h}=\mathfrak{c-c}$. Moreover,
$S\left(  \mathfrak{c}\right)  \subseteq\overline{H}$ and $S\left(
\mathfrak{c}\right)  =\operatorname{ball}\left(  \overline{H_{h}^{e}}\right)
+\overline{e}$.
\end{lemma}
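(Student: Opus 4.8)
The plan is to work throughout with the orthogonal decomposition $H_h = H_h^e \oplus \mathbb{R}e$ recorded just before the lemma: writing $\zeta = \zeta_0 + \lambda e$ with $\zeta_0 \in H_h^e$ and $\lambda = (\zeta,e) \in \mathbb{R}$, one has $\|\zeta\|^2 = \|\zeta_0\|^2 + \lambda^2$, so that $\zeta \in \mathfrak{c}$ if and only if $\lambda \ge \|\zeta_0\|$ (in particular $\lambda \ge 0$). From this description the cone axioms are immediate: positive homogeneity is clear, and if $\zeta = \zeta_0 + \lambda e$ and $\eta = \eta_0 + \mu e$ lie in $\mathfrak{c}$, then $\zeta + \eta = (\zeta_0 + \eta_0) + (\lambda + \mu)e$ with $\|\zeta_0 + \eta_0\| \le \|\zeta_0\| + \|\eta_0\| \le \lambda + \mu$. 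For the order-unit ball inclusion I would note that, given $\zeta_0 \in H_h^e$ with $\|\zeta_0\| \le 1$, the elements $e \pm \zeta_0 = (\pm\zeta_0) + 1\cdot e$ have $e$-component $1 \ge \|\pm\zeta_0\|$, hence $e \pm \zeta_0 \in \mathfrak{c}$, i.e. $-e \le \zeta_0 \le e$; applied to the orthonormal vectors $f \in F\setminus\{e\}$ (which lie in $H_h^e$ and have norm $1$) this gives $-e \le F \le e$.

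Next come separatedness and the two "thinness" statements. If $\zeta \in \mathfrak{c} \cap H_h^e$ then $(\zeta,e) = 0$, so $\|\zeta\| \le \sqrt{2}\,(\zeta,e) = 0$ and $\zeta = 0$; this proves $\mathfrak{c} \cap H_h^e = \{0\}$, whence $(F\setminus\{e\}) \cap \mathfrak{c} = \varnothing$, each such $f$ being a nonzero element of $H_h^e$. If moreover $\zeta$ and $-\zeta$ both lie in $\mathfrak{c}$, then $(\zeta,e) \ge 0$ and $-(\zeta,e) \ge 0$, so $\zeta \in H_h^e \cap \mathfrak{c} = \{0\}$; thus $\mathfrak{c} \cap -\mathfrak{c} = \{0\}$ and $\mathfrak{c}$ is separated. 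For unitality and $H_h = \mathfrak{c} - \mathfrak{c}$, take $\eta = \eta_0 + \lambda e \in H_h$: for any $\mu \ge \max\{\|\eta_0\| - \lambda,\,0\}$ the $e$-component $\lambda + \mu$ of $\eta + \mu e$ dominates $\|\eta_0\|$, so $\eta + \mu e \in \mathfrak{c}$ and $\eta = (\eta + \mu e) - \mu e \in \mathfrak{c} - \mathfrak{c}$; and choosing $r > 0$ small enough that $r(\|\eta_0\| - \lambda) \le 1$ makes $r\eta + e = r\eta_0 + (r\lambda+1)e$ lie in $\mathfrak{c}$, so $\mathfrak{c} - e$ absorbs $\eta$. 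This establishes that $\mathfrak{c}$ is a separated, unital cone.

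Finally the state space. I would first show every $\sigma \in S(\mathfrak{c})$ is bounded, so that it lies in $H^{\ast} \cong \overline{H}$: from $\sigma(e \pm \zeta_0) \ge 0$ and $\sigma(e) = 1$ one gets $|\sigma(\zeta_0)| \le \|\zeta_0\|$ for $\zeta_0 \in H_h^e$, hence $|\sigma(\zeta_0 + \lambda e)| \le \|\zeta_0\| + |\lambda| \le \sqrt{2}\,\|\zeta_0 + \lambda e\|$ on $H_h$, and since $\sigma$ is determined by its restriction to $H_h$ (which is real-valued because $H_h = \mathfrak{c} - \mathfrak{c}$ forces $\sigma(H_h) \subseteq \mathbb{R}$), $\sigma$ is bounded on all of $H$; thus $S(\mathfrak{c}) \subseteq \overline{H}$. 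Writing $\sigma = \overline{\eta}$, the condition $\sigma(H_h) \subseteq \mathbb{R}$ forces $\eta \in H_h$, and $\sigma(e) = (e,\eta) = 1$ forces $\eta = \eta_0 + e$ with $\eta_0 \in H_h^e$; then $\sigma(\mathfrak{c}) \ge 0$ reads $(\zeta_0,\eta_0) + \lambda \ge 0$ for all $\zeta_0 \in H_h^e$ and all $\lambda \ge \|\zeta_0\|$, whose sharp case $\lambda = \|\zeta_0\|$, together with the replacement $\zeta_0 \mapsto -\zeta_0$, is exactly $|(\zeta_0,\eta_0)| \le \|\zeta_0\|$ for all $\zeta_0 \in H_h^e$, i.e. $\|\eta_0\| \le 1$. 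Conversely, for any $\eta_0 \in H_h^e$ with $\|\eta_0\| \le 1$, the functional $\overline{\eta_0 + e}$ sends $e$ to $1$ and, by Cauchy--Schwarz, sends $\zeta_0 + \lambda e \in \mathfrak{c}$ to $(\zeta_0,\eta_0) + \lambda \ge \lambda - \|\zeta_0\| \ge 0$, so it is a state. Hence $S(\mathfrak{c}) = \operatorname{ball}(\overline{H_h^e}) + \overline{e}$. The only place needing genuine care is this last identification: correctly collapsing the infinite family of inequalities $\sigma(\mathfrak{c}) \ge 0$ to the single bound $\|\eta_0\| \le 1$ by isolating the extreme rays $\lambda = \|\zeta_0\|$ of $\mathfrak{c}$ and exploiting the sign symmetry of $H_h^e$, and pairing it with the (routine but necessary) automatic boundedness of states that places $S(\mathfrak{c})$ inside $\overline{H}$ to begin with.
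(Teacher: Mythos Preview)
Your proof is correct and follows essentially the same route as the paper's: both exploit the orthogonal decomposition $\zeta=\zeta_0+(\zeta,e)e$ and the equivalent description $\zeta\in\mathfrak{c}\Leftrightarrow\|\zeta_0\|\le(\zeta,e)$ to verify separatedness, the order-unit property, and the identification $S(\mathfrak{c})=\operatorname{ball}(\overline{H_h^e})+\overline{e}$ via boundedness of states and the extremal rays $\zeta_0+\|\zeta_0\|e$. The only organisational difference is that you deduce $\eta\in H_h$ in one stroke from $\sigma(H_h)\subseteq\mathbb{R}$, whereas the paper first writes $\eta=\eta_0+e$ with $\eta_0\in H^e$ and then argues $\operatorname{Im}\eta_0=0$; the content is identical.
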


\begin{proof}
Take $\zeta\in\mathfrak{c\cap-c}$. Since $\left(  \pm\zeta,e\right)  \geq0$
and $\left\Vert \zeta\right\Vert \leq\sqrt{2}\left(  \zeta,e\right)  $, it
follows that $\left\Vert \zeta\right\Vert =0$ or $\zeta=0$. Note that
$\left\Vert e-\zeta_{0}\right\Vert =\sqrt{1+\left\Vert \zeta_{0}\right\Vert
^{2}}\leq\sqrt{2}=\sqrt{2}\left(  e-\zeta_{0},e\right)  $ for all $\zeta
_{0}\in\operatorname{ball}\left(  H_{h}^{e}\right)  $, which means that
$e\geq\operatorname{ball}\left(  H_{h}^{e}\right)  $. But $\operatorname{ball}%
\left(  H_{h}^{e}\right)  +e\subseteq\mathfrak{c}$ as well. Hence
$-e\leq\operatorname{ball}\left(  H_{h}^{e}\right)  \leq e$. Taking into
account that $\left(  H_{h}^{e},e\right)  =\left\{  0\right\}  $, we deduce
that $\mathfrak{c\cap}H_{h}^{e}=\left\{  0\right\}  $. Further, $\mathfrak{c-}%
e$ is an absorbent set in $H_{h}$. Indeed, for $\zeta\in H_{h}$ choose a real
$r$ with $\left\Vert \zeta_{0}\right\Vert -\left(  \zeta,e\right)  \leq r$.
Then $\zeta+re=\zeta_{0}+\left(  \left(  \zeta,e\right)  +\left(  re,e\right)
\right)  e=\zeta_{0}+\left(  \zeta+re,e\right)  e$ and $\left\Vert \zeta
_{0}\right\Vert \leq\left(  \zeta+re,e\right)  $, which means that
$\zeta+re\in\mathfrak{c}$. In particular, $\zeta=\zeta+re-re$ and
$\zeta+re,re\in\mathfrak{c}$, thereby $H_{h}=\mathfrak{c-c}$.

Further, prove that $S\left(  \mathfrak{c}\right)  =\operatorname{ball}\left(
\overline{H_{h}^{e}}\right)  +\overline{e}$. Take $\eta=\eta_{0}+e$ with
$\eta_{0}\in\operatorname{ball}\left(  H_{h}^{e}\right)  $. Then $\left\langle
e,\overline{\eta}\right\rangle =\left(  e,\eta\right)  =1$. If $\zeta
\in\mathfrak{c}$ then $\zeta=\zeta_{0}+\left(  \zeta,e\right)  e$ with
$\left\Vert \zeta_{0}\right\Vert \leq\left(  \zeta,e\right)  $. Note that
$\left\langle \zeta,\overline{\eta}\right\rangle =\left(  \zeta_{0},\eta
_{0}\right)  +\left(  \zeta,e\right)  $ and $\left\vert \left(  \zeta_{0}%
,\eta_{0}\right)  \right\vert \leq\left\Vert \zeta_{0}\right\Vert \left\Vert
\eta_{0}\right\Vert \leq\left\Vert \zeta_{0}\right\Vert \leq\left(
\zeta,e\right)  $. But $\left(  \zeta_{0},\eta_{0}\right)  $ is real,
therefore $\left\langle \zeta,\overline{\eta}\right\rangle \geq0$.
Consequently, $\overline{\eta}\in S\left(  \mathfrak{c}\right)  $. Conversely,
take $\sigma\in S\left(  \mathfrak{c}\right)  $. Using the fact $H_{h}%
=\mathfrak{c-c}$, we deduce that $\sigma$ is a $\ast$-linear functional. Take
$\zeta\in H_{h}$ with $\zeta=\zeta_{0}+\left(  \zeta,e\right)  e$, where
$\zeta_{0}\in H_{h}^{e}$. Since $-e\leq\left\Vert \zeta_{0}\right\Vert
^{-1}\zeta_{0}\leq e$, we derive that $\left\vert \sigma\left(  \zeta
_{0}\right)  \right\vert \leq\left\Vert \zeta_{0}\right\Vert $, which in turn
implies that $\left\vert \sigma\left(  \zeta\right)  \right\vert =\left\vert
\sigma\left(  \zeta_{0}\right)  +\left(  \zeta,e\right)  \right\vert
\leq\left\Vert \zeta_{0}\right\Vert +\left\vert \left(  \zeta,e\right)
\right\vert \leq2\left\Vert \zeta\right\Vert $. In the general case of
$\zeta\in H$ we derive that $\left\vert \sigma\left(  \zeta\right)
\right\vert \leq\left\vert \sigma\left(  \operatorname{Re}\zeta\right)
+if\left(  \operatorname{Im}\zeta\right)  \right\vert =\left(  \sigma\left(
\operatorname{Re}\zeta\right)  ^{2}+\sigma\left(  \operatorname{Im}%
\zeta\right)  ^{2}\right)  ^{1/2}\leq2\left(  \left\Vert \operatorname{Re}%
\zeta\right\Vert ^{2}+\left\Vert \operatorname{Im}\zeta\right\Vert
^{2}\right)  ^{1/2}\leq2\sqrt{2}\left\Vert \zeta\right\Vert $, which means
that $\sigma$ is a bounded linear functional on $H$, that is, $\sigma
=\overline{\eta}$ for a certain $\eta\in H$. But $\eta=\eta_{0}+\left(
\eta,e\right)  e=\eta_{0}+\sigma\left(  e\right)  e=\eta_{0}+e$, where
$\eta_{0}\in H^{e}$. Prove that $\eta_{0}\in\operatorname{ball}\left(
H_{h}^{e}\right)  $. Take any $\zeta_{0}\in H_{h}^{e}$, and put $\zeta
=\zeta_{0}+\left\Vert \zeta_{0}\right\Vert e\in\mathfrak{c}$. Then
$\sigma\left(  \zeta\right)  =\left\langle \zeta,\overline{\eta}\right\rangle
=\left(  \zeta_{0},\eta_{0}\right)  +\left\Vert \zeta_{0}\right\Vert \geq0$,
which in turn implies that $\left(  \zeta_{0},\eta_{0}\right)  \in\mathbb{R}$.
But $\left(  \zeta_{0},\eta_{0}\right)  =\left(  \zeta_{0},\operatorname{Re}%
\eta_{0}\right)  -i\left(  \zeta_{0},\operatorname{Im}\eta_{0}\right)  $,
therefore $\operatorname{Im}\eta_{0}\perp H_{h}^{e}$. Since $H_{h}^{e}$ is a
real Hilbert space and $\operatorname{Im}\eta_{0}\in H_{h}^{e}$, we conclude
that $\operatorname{Im}\eta_{0}=0$. Thus $\eta_{0}\in H_{h}^{e}$ and $\left(
\zeta_{0},\eta_{0}\right)  \geq-\left\Vert \zeta_{0}\right\Vert $ for all
$\zeta_{0}\in H_{h}^{e}$. In particular, $\left(  -\zeta_{0},\eta_{0}\right)
\geq-\left\Vert \zeta_{0}\right\Vert $ or $\left\vert \left(  \zeta_{0}%
,\eta_{0}\right)  \right\vert \leq\left\Vert \zeta_{0}\right\Vert $.
Consequently, $\left\Vert \eta_{0}\right\Vert =\sup\left\vert \left(
\operatorname{ball}\left(  H_{h}^{e}\right)  ,\eta_{0}\right)  \right\vert
\leq1$, which means that $\eta_{0}\in\operatorname{ball}\left(  H_{h}%
^{e}\right)  $. Thus $S\left(  \mathfrak{c}\right)  =\operatorname{ball}%
\left(  \overline{H_{h}^{e}}\right)  +\overline{e}$.
\end{proof}

By symmetry we have the cone $\overline{\mathfrak{c}}=\left\{  \overline
{\zeta}\in\overline{H}_{h}:\left\Vert \overline{\zeta}\right\Vert \leq\sqrt
{2}\left(  \overline{\zeta},\overline{e}\right)  \right\}  $ in $\left(
\overline{H},\overline{e}\right)  $, and $S\left(  \overline{\mathfrak{c}%
}\right)  =\operatorname{ball}\left(  H_{h}^{e}\right)  +e$ thanks to Lemma
\ref{lHSc11}. Note that $S\left(  \overline{\mathfrak{c}}\right)
\subseteq\mathfrak{c}$ and $S\left(  \mathfrak{c}\right)  \subseteq
\overline{\mathfrak{c}}$.

\begin{remark}
\label{remDPC}Note that $\mathfrak{c=}\left\{  \zeta\in H_{h}:\left\langle
\zeta,S\left(  \mathfrak{c}\right)  \right\rangle \geq0\right\}  $. Indeed,
take $\zeta\in H_{h}$ with $\left\langle \zeta,S\left(  \mathfrak{c}\right)
\right\rangle \geq0$. Then $\left(  \zeta,e\right)  =\left\langle
\zeta,\overline{e}\right\rangle \geq0$ and $\left(  \zeta,-\left\Vert
\zeta_{0}\right\Vert \zeta_{0}+e\right)  \geq0$, which in turn implies that
$\left\Vert \zeta_{0}\right\Vert =\left(  \zeta_{0},\left\Vert \zeta
_{0}\right\Vert ^{-1}\zeta_{0}\right)  \leq\left(  \zeta,e\right)  $. Hence
$\zeta\in\mathfrak{c}$. The inclusion $\mathfrak{c\subseteq}\left\{  \zeta\in
H_{h}:\left\langle \zeta,S\left(  \mathfrak{c}\right)  \right\rangle
\geq0\right\}  $ is obvious.
\end{remark}

Consider the norm $\left\Vert \zeta\right\Vert _{e}=\sup\left\vert
\left\langle \zeta,S\left(  \mathfrak{c}\right)  \right\rangle \right\vert $,
$\zeta\in H$ associated with the unital cone $\mathfrak{c}$ (see Subsection
\ref{subsecUQC}).

\begin{proposition}
\label{propOKE21}The norm $\left\Vert \cdot\right\Vert _{e}$ on $H$ is a
unital $\ast$-norm, which is equivalent to the original norm of $H$. Moreover,
$\left\Vert \cdot\right\Vert _{e}$ is an order norm in the sense of
$\left\Vert \zeta\right\Vert _{e}=\inf\left\{  r>0:-re\leq\zeta\leq
re\right\}  $ for all $\zeta\in H_{h}$. In particular, $\min\mathfrak{c}%
=S\left(  \mathfrak{c}\right)  ^{\boxdot}$ and $\max\mathfrak{c}%
=\mathfrak{c}^{\boxdot\boxdot}$ with respect to the dual $\ast$-pair $\left(
H,\overline{H}\right)  $.
\end{proposition}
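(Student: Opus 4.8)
The plan is to verify the four assertions in turn, with most of the work being the equivalence of $\left\Vert \cdot\right\Vert _{e}$ with the original Hilbert norm, after which the order-norm formula and the identification of $\min\mathfrak{c}$ and $\max\mathfrak{c}$ follow from the general machinery of Subsection \ref{subsecUQC}. First I would record that $\left\Vert \cdot\right\Vert _{e}$ is a $\ast$-norm: since the involution on $H$ is isometric for $\left\Vert \cdot\right\Vert$ and $S\left(\mathfrak{c}\right)^{\ast}=S\left(\mathfrak{c}\right)$ (because $\mathfrak{c}$ is an Archimedian quantum cone whose base point $e$ is hermitian, so $\sigma\mapsto\sigma^{\ast}$ permutes the states), one gets $\left\Vert \zeta^{\ast}\right\Vert _{e}=\left\Vert \zeta\right\Vert _{e}$; and $\left\Vert e\right\Vert _{e}=\sup\left\vert\left\langle e,S\left(\mathfrak{c}\right)\right\rangle\right\vert=1$ since $\sigma\left(e\right)=1$ for every state. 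The norm is a genuine norm (not merely a seminorm) because $S\left(\mathfrak{c}\right)$ separates points of $H$: from Lemma \ref{lHSc11}, $S\left(\mathfrak{c}\right)=\operatorname{ball}\left(\overline{H_{h}^{e}}\right)+\overline{e}$, and the real span of this set is all of $\overline{H_{h}}$, whose complexification is $\overline{H}$.

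For the equivalence of norms I would bound $\left\Vert\cdot\right\Vert_{e}$ both ways by $\left\Vert\cdot\right\Vert$. One direction is immediate: every $\sigma\in S\left(\mathfrak{c}\right)$ satisfies $\left\Vert\sigma\right\Vert\leq\left\Vert\overline{e}\right\Vert+\left\Vert\operatorname{ball}\left(\overline{H_{h}^{e}}\right)\right\Vert=2$ as a functional on $H$, whence $\left\Vert\zeta\right\Vert_{e}\leq2\left\Vert\zeta\right\Vert$ for all $\zeta\in H$ — this is in fact already carried out inside the proof of Lemma \ref{lHSc11}, where the chain $\left\vert\sigma\left(\zeta\right)\right\vert\leq2\sqrt{2}\left\Vert\zeta\right\Vert$ appears. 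For the reverse inequality $\left\Vert\zeta\right\Vert\leq c\left\Vert\zeta\right\Vert_{e}$, I would test against well-chosen states: write $\zeta=\operatorname{Re}\zeta+i\operatorname{Im}\zeta$ and reduce to hermitian $\zeta=\zeta_{0}+\left(\zeta,e\right)e$ with $\zeta_{0}\in H_{h}^{e}$; then $\overline{e}\in S\left(\mathfrak{c}\right)$ gives $\left(\zeta,e\right)=\left\langle\zeta,\overline{e}\right\rangle\leq\left\Vert\zeta\right\Vert_{e}$, while $\overline{\left\Vert\zeta_{0}\right\Vert^{-1}\zeta_{0}}+\overline{e}\in S\left(\mathfrak{c}\right)$ (valid when $\zeta_{0}\neq0$, using the explicit description of $S\left(\mathfrak{c}\right)$) gives $\left\Vert\zeta_{0}\right\Vert+\left(\zeta,e\right)=\left\langle\zeta,\overline{\left\Vert\zeta_{0}\right\Vert^{-1}\zeta_{0}+e}\right\rangle\leq\left\Vert\zeta\right\Vert_{e}$, hence $\left\Vert\zeta_{0}\right\Vert\leq\left\Vert\zeta\right\Vert_{e}$; combining, $\left\Vert\zeta\right\Vert^{2}=\left\Vert\zeta_{0}\right\Vert^{2}+\left(\zeta,e\right)^{2}\leq2\left\Vert\zeta\right\Vert_{e}^{2}$, and then a factor of $\sqrt{2}$ more absorbs the passage from the real to the complex case. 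This is the step I expect to carry the bulk of the argument; the only mild subtlety is handling $\zeta_{0}=0$ separately and tracking the constant through the real/imaginary decomposition.

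The order-norm identity $\left\Vert\zeta\right\Vert_{e}=\inf\left\{r>0:-re\leq\zeta\leq re\right\}$ for hermitian $\zeta$ is then not a separate computation: by the discussion recalled at the end of Subsection \ref{subsecUQC} (the Paulsen–Todorov result quoted there), for a separated, algebraically closed, unital cone $\mathfrak{c}$ the functional $\sup\left\vert S\left(\mathfrak{c}\right)\left(\cdot\right)\right\vert$ automatically agrees with the order seminorm $\inf\left\{r>0:-re\leq\cdot\leq re\right\}$ on $\mathcal{X}_{h}$; Lemma \ref{lHSc11} has already established that $\mathfrak{c}$ is separated, $\sigma\left(H,\overline{H}\right)$-closed (hence algebraically closed, as the algebraic closure is no larger than any topological one), and unital, so the hypotheses apply verbatim with $\mathcal{X}=H$. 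Alternatively one can check it by hand: $-re\leq\zeta\leq re$ forces $\left\Vert\zeta_{0}\right\Vert\leq r$ and $\left\vert\left(\zeta,e\right)\right\vert\leq r$, and conversely $r=\max\left(\left\Vert\zeta_{0}\right\Vert,\left\vert\left(\zeta,e\right)\right\vert\right)$ works — but one must then reconcile this with $\left\Vert\zeta\right\Vert_{e}$, so invoking the general result is cleaner.

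Finally, the last sentence is a pure citation: having shown $\left\Vert\cdot\right\Vert_{e}$ is a norm equivalent to the Hilbert norm, the normed dual of $\left(H,\left\Vert\cdot\right\Vert_{e}\right)$ is $\overline{H}$, so $\left(H,\overline{H}\right)$ is exactly the dual $\ast$-pair appearing in the definitions at the end of Subsection \ref{subsecUQC}; the definitions $\min\mathfrak{c}=S\left(\mathfrak{c}\right)^{\boxdot}$ and $\max\mathfrak{c}=\mathfrak{c}^{\boxdot\boxdot}$ are then literally the definitions given there, now legitimately applied with respect to $\left(H,\overline{H}\right)$. I would close by remarking, as the paper does in that subsection, that $\max\mathfrak{c}$ coincides with the $\mathfrak{s}\left(H,\overline{H}\right)$-closure of the quantum cone $\mathfrak{c}^{c}$ generated by $\mathfrak{c}$, via Lemma \ref{lemPTT}.
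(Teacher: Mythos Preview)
Your approach is essentially the paper's: both establish $\left\Vert\zeta\right\Vert_{e}\leq 2\left\Vert\zeta\right\Vert$ from the explicit description $S(\mathfrak{c})=\operatorname{ball}(\overline{H_{h}^{e}})+\overline{e}$, bound $\left\Vert\zeta\right\Vert$ by $\left\Vert\zeta\right\Vert_{e}$ for hermitian $\zeta=\zeta_{0}+(\zeta,e)e$ by testing against specific states, and then invoke Subsection~\ref{subsecUQC} for the final sentence. Your estimate is in fact tighter than the paper's, which reaches $\left\Vert\zeta\right\Vert\leq 3\left\Vert\zeta\right\Vert_{e}$ on $H_{h}$ via a more roundabout manipulation of $\left\Vert\zeta\right\Vert^{-1}(\zeta,\zeta)$.

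One small slip: from $(\zeta,e)\leq\left\Vert\zeta\right\Vert_{e}$ and $\left\Vert\zeta_{0}\right\Vert+(\zeta,e)\leq\left\Vert\zeta\right\Vert_{e}$ alone you cannot deduce $\left\Vert\zeta_{0}\right\Vert\leq\left\Vert\zeta\right\Vert_{e}$, since $(\zeta,e)$ may be negative. You must also test against $\overline{-\left\Vert\zeta_{0}\right\Vert^{-1}\zeta_{0}}+\overline{e}\in S(\mathfrak{c})$, obtaining $\left\Vert\zeta_{0}\right\Vert-(\zeta,e)\leq\left\Vert\zeta\right\Vert_{e}$; adding the two gives the claim. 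The paper records precisely this as the identity $\left\Vert\zeta_{0}\right\Vert=\left\Vert\zeta_{0}\right\Vert_{e}$ for $\zeta_{0}\in H_{h}^{e}$, equation~(\ref{X0}).

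For the order-norm formula the paper does not cite \cite{PT} but gives a direct two-line computation from Remark~\ref{remDPC} (that $\mathfrak{c}=\{\zeta\in H_{h}:\langle\zeta,S(\mathfrak{c})\rangle\geq 0\}$), unwinding $\inf\{r:re\pm\zeta\in\mathfrak{c}\}=\inf\{r:r\geq\sup|\langle\zeta,S(\mathfrak{c})\rangle|\}=\left\Vert\zeta\right\Vert_{e}$. Your appeal to the general result is equally valid.
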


\begin{proof}
Using Lemma \ref{lHSc11}, we obtain that $\left\Vert \zeta\right\Vert _{e}%
\leq\left\Vert \zeta\right\Vert \sup\left\Vert S\left(  \mathfrak{c}\right)
\right\Vert =\left\Vert \zeta\right\Vert \sup\left\Vert \operatorname{ball}%
\left(  \overline{H_{h}^{e}}\right)  +\overline{e}\right\Vert =2\left\Vert
\zeta\right\Vert $, $\zeta\in H$ and $\left\Vert e\right\Vert _{e}%
=\sup\left\vert \left\langle e,S\left(  \mathfrak{c}\right)  \right\rangle
\right\vert =1$. Moreover, $\left\Vert \zeta^{\ast}\right\Vert _{e}%
=\sup\left\vert \left\langle \zeta^{\ast},S\left(  \mathfrak{c}\right)
\right\rangle \right\vert =\sup\left\vert \left\langle \zeta,S\left(
\mathfrak{c}\right)  \right\rangle ^{\ast}\right\vert =\left\Vert
\zeta\right\Vert _{e}$, $\left\Vert \operatorname{Re}\zeta\right\Vert
_{e}=\sup\left\vert \left\langle \operatorname{Re}\zeta,S\left(
\mathfrak{c}\right)  \right\rangle \right\vert =\sup\left\vert
\operatorname{Re}\left\langle \zeta,S\left(  \mathfrak{c}\right)
\right\rangle \right\vert \leq\left\Vert \zeta\right\Vert _{e}$. Similarly,
$\left\Vert \operatorname{Im}\zeta\right\Vert _{e}\leq\left\Vert
\zeta\right\Vert _{e}$. Now take a nonzero $\zeta\in H_{h}$ with its expansion
$\zeta=\zeta_{0}+\lambda e$, where $\lambda=\left(  \zeta,e\right)  $. Then
\begin{align*}
\left\Vert \zeta\right\Vert  &  =\left\Vert \zeta\right\Vert ^{-1}\left(
\zeta,\zeta_{0}+\lambda e\right)  =\left\Vert \zeta\right\Vert ^{-1}\left(
\zeta,\left\Vert \zeta_{0}\right\Vert \left(  \left\Vert \zeta_{0}\right\Vert
^{-1}\zeta_{0}+e\right)  +\left(  \lambda-\left\Vert \zeta_{0}\right\Vert
\right)  e\right) \\
&  =\left\Vert \zeta_{0}\right\Vert \left\Vert \zeta\right\Vert ^{-1}\left(
\zeta,\left\Vert \zeta_{0}\right\Vert ^{-1}\zeta_{0}+e\right)  +\left(
\lambda-\left\Vert \zeta_{0}\right\Vert \right)  \left\Vert \zeta\right\Vert
^{-1}\left(  \zeta,e\right) \\
&  \leq\left\Vert \zeta_{0}\right\Vert \left\Vert \zeta\right\Vert ^{-1}%
\sup\left\langle \zeta,\operatorname{ball}\left(  \overline{H_{h}^{e}}\right)
+\overline{e}\right\rangle +\left\vert \lambda-\left\Vert \zeta_{0}\right\Vert
\right\vert \left\Vert \zeta\right\Vert ^{-1}\left\langle \zeta,\overline
{e}\right\rangle \\
&  \leq\left\Vert \zeta\right\Vert _{e}+\left(  \left\vert \lambda\right\vert
+\left\Vert \zeta_{0}\right\Vert \right)  \left\Vert \zeta\right\Vert
^{-1}\left\langle \zeta,\overline{e}\right\rangle \leq\left\Vert
\zeta\right\Vert _{e}+2\left\langle \zeta,\overline{e}\right\rangle
\leq3\left\Vert \zeta\right\Vert _{e}\text{,}%
\end{align*}
which in turn implies that $\left\Vert \zeta\right\Vert =\left(  \left\Vert
\operatorname{Re}\zeta\right\Vert ^{2}+\left\Vert \operatorname{Im}%
\zeta\right\Vert ^{2}\right)  ^{1/2}\leq5\left\Vert \zeta\right\Vert _{e}$.
Notice that if $\zeta_{0}\in H_{h}^{e}$ then
\begin{equation}
\left\Vert \zeta_{0}\right\Vert =\left(  \zeta_{0},\left\Vert \zeta
_{0}\right\Vert ^{-1}\zeta_{0}\right)  =\left(  \zeta_{0},\left\Vert \zeta
_{0}\right\Vert ^{-1}\zeta_{0}+e\right)  =\sup\left\vert \left\langle
\zeta_{0},\operatorname{ball}\left(  \overline{H_{h}^{e}}\right)
+\overline{e}\right\rangle \right\vert =\left\Vert \zeta_{0}\right\Vert _{e}.
\label{X0}%
\end{equation}
Now put $\alpha=\inf\left\{  r>0:-re\leq\zeta\leq re\right\}  $ for $\zeta\in
H_{h}$. Then (see Remark \ref{remDPC})
\begin{align*}
\alpha &  =\inf\left\{  r>0:re\pm\zeta\geq0\right\}  =\inf\left\{
r>0:\left\langle re\pm\zeta,\overline{\eta}\right\rangle \geq0,\overline{\eta
}\in S\left(  \mathfrak{c}\right)  \right\} \\
&  =\inf\left\{  r>0:r\pm\left\langle \zeta,\overline{\eta}\right\rangle
\geq0,\overline{\eta}\in S\left(  \mathfrak{c}\right)  \right\}  =\inf\left\{
r>0:\sup\left\vert \left\langle \zeta,S\left(  \mathfrak{c}\right)
\right\rangle \right\vert \leq r\right\} \\
&  =\sup\left\vert \left\langle \zeta,S\left(  \mathfrak{c}\right)
\right\rangle \right\vert =\left\Vert \zeta\right\Vert _{e}\text{,}%
\end{align*}
that is, $\left\Vert \cdot\right\Vert _{e}$ is an order norm on $H$.
Consequently, $\overline{H}$ is the normed dual of $H$ equipped with the norm
$\left\Vert \cdot\right\Vert _{e}$. It follows that $\min\mathfrak{c}=S\left(
\mathfrak{c}\right)  ^{\boxdot}$ and $\max\mathfrak{c}=\mathfrak{c}%
^{\boxdot\boxdot}$ with respect to the dual $\ast$-pair $\left(
H,\overline{H}\right)  $ (see Subsection \ref{subsecUQC}).
\end{proof}

\begin{remark}
As we have seen from the proof of Proposition \ref{propOKE21} that
$5^{-1}\left\Vert \zeta\right\Vert \leq\left\Vert \zeta\right\Vert _{e}%
\leq2\left\Vert \zeta\right\Vert $ for all $\zeta\in H$. A similar estimations
are obtained below in Lemma \ref{lBP2} for the related matrix norms.
\end{remark}

\begin{corollary}
\label{cormincbar}If $S\left(  \overline{\mathfrak{c}}\right)  ^{c}\cap H$ is
the cone in $H$ generated by $S\left(  \overline{\mathfrak{c}}\right)  $ then
$\mathfrak{c=}S\left(  \overline{\mathfrak{c}}\right)  ^{c}\cap H=\mathbb{R}%
_{+}S\left(  \overline{\mathfrak{c}}\right)  $. Thus $\min\mathfrak{c}%
=\overline{\mathfrak{c}}^{\boxdot}$ in $M\left(  H\right)  _{h}$,
$\min\overline{\mathfrak{c}}=\mathfrak{c}^{\boxdot}$ in $M\left(  \overline
{H}\right)  _{h}$, and both $S\left(  \overline{\mathfrak{c}}\right)  $ and
$\mathfrak{c}$ generate the same closed quantum cone $\mathfrak{c}%
^{\boxdot\boxdot}$ in $M\left(  H\right)  _{h}$. Thus $\max\mathfrak{c=}%
S\left(  \overline{\mathfrak{c}}\right)  ^{\boxdot\boxdot}$ in $M\left(
H\right)  _{h}$, and $\max\overline{\mathfrak{c}}=S\left(  \mathfrak{c}%
\right)  ^{\boxdot\boxdot}$ in $M\left(  \overline{H}\right)  _{h}$. In
particular, every $\zeta\in M_{n}\left(  H\right)  _{h}$ with $\sum_{f\neq
e}\left\vert \left\langle \left\langle \zeta,\overline{f}\right\rangle
\right\rangle \right\vert \leq\left\langle \left\langle \zeta,\overline
{e}\right\rangle \right\rangle $ in $M_{n}$ belongs to $\max\mathfrak{c}$.
\end{corollary}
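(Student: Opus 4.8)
The strategy is to deduce every assertion from the two state–space formulas of Lemma~\ref{lHSc11}, namely $S(\overline{\mathfrak{c}})=\operatorname{ball}(H_{h}^{e})+e$ and $S(\mathfrak{c})=\operatorname{ball}(\overline{H_{h}^{e}})+\overline{e}$, from Proposition~\ref{propOKE21} (which gives $\min\mathfrak{c}=S(\mathfrak{c})^{\boxdot}$ and $\max\mathfrak{c}=\mathfrak{c}^{\boxdot\boxdot}$ for the pair $(H,\overline{H})$, and dually for $\overline{H}$), from the easy observation that the quantum polar $\mathfrak{U}^{\boxdot}$ of a hermitian quantum set depends only on $\mathbb{R}_{+}\mathfrak{U}$, and from the bipolar Theorem~\ref{t1}. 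First I would establish the cone identity $\mathfrak{c}=\mathbb{R}_{+}S(\overline{\mathfrak{c}})=S(\overline{\mathfrak{c}})^{c}\cap H$. Writing a hermitian vector as $\zeta=\zeta_{0}+\lambda e$ with $\zeta_{0}\in H_{h}^{e}$ and $\lambda=(\zeta,e)$, the discussion preceding Lemma~\ref{lHSc11} gives $\zeta\in\mathfrak{c}$ iff $\lambda\geq\|\zeta_{0}\|$; on the other hand $\mathbb{R}_{+}(\operatorname{ball}(H_{h}^{e})+e)$ is exactly the set of $\eta_{0}+\mu e$ with $\mu\geq0$ and $\|\eta_{0}\|\leq\mu$, so the two sets coincide. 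Reading $S(\overline{\mathfrak{c}})^{c}\cap H$ as the cone generated by $S(\overline{\mathfrak{c}})$ in $H$ (equivalently, the level–$1$ part of the quantum cone it generates, since quantum–cone operations starting from level–$1$ generators cannot return a new vector to level $1$), this cone equals $\mathbb{R}_{+}S(\overline{\mathfrak{c}})$ because $S(\overline{\mathfrak{c}})$ is convex, hence equals $\mathfrak{c}$.

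Next I would turn the cone identity into the quantization statements. Since $\mathfrak{U}^{\boxdot}=(\mathbb{R}_{+}\mathfrak{U})^{\boxdot}$, the identity $\mathfrak{c}=\mathbb{R}_{+}S(\overline{\mathfrak{c}})$ gives $\mathfrak{c}^{\boxdot}=S(\overline{\mathfrak{c}})^{\boxdot}$ in $M(\overline{H})_{h}$, and the conjugate identity $\overline{\mathfrak{c}}=\mathbb{R}_{+}S(\mathfrak{c})$ gives $\overline{\mathfrak{c}}^{\boxdot}=S(\mathfrak{c})^{\boxdot}$ in $M(H)_{h}$. Feeding these into Proposition~\ref{propOKE21} and its conjugate version yields $\min\mathfrak{c}=S(\mathfrak{c})^{\boxdot}=\overline{\mathfrak{c}}^{\boxdot}$ and $\min\overline{\mathfrak{c}}=S(\overline{\mathfrak{c}})^{\boxdot}=\mathfrak{c}^{\boxdot}$. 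Applying $(\cdot)^{\boxdot}$ once more and using Theorem~\ref{t1} --- which identifies $\mathfrak{U}^{\boxdot\boxdot}$ with the smallest $\mathfrak{s}(H,\overline{H})$–closed quantum cone containing $\mathfrak{U}$, i.e.\ with the closed quantum cone generated by $\mathfrak{U}$ --- gives $\mathfrak{c}^{\boxdot\boxdot}=S(\overline{\mathfrak{c}})^{\boxdot\boxdot}$, so $\mathfrak{c}$ and $S(\overline{\mathfrak{c}})$ generate the same closed quantum cone; then $\max\mathfrak{c}=\mathfrak{c}^{\boxdot\boxdot}=S(\overline{\mathfrak{c}})^{\boxdot\boxdot}$ by Proposition~\ref{propOKE21}, and symmetrically $\max\overline{\mathfrak{c}}=S(\mathfrak{c})^{\boxdot\boxdot}$.

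For the last, concrete assertion I would argue directly that a hermitian $\zeta\in M_{n}(H)$ with $\sum_{f\neq e}|A_{f}|\leq A_{e}$, where $A_{f}=\langle\langle\zeta,\overline{f}\rangle\rangle\in M_{n}$, lies in the quantum cone $\mathfrak{c}^{c}$, which is contained in $\max\mathfrak{c}$ because $\max\mathfrak{c}$ is the weak closure of $\mathfrak{c}^{c}$ (Subsection~\ref{subsecUQC}). Here $\zeta=\sum_{f}A_{f}\otimes f=\sum_{f}A_{f}f^{\oplus n}$, each $A_{f}$ is hermitian (from $\zeta^{\ast}=\zeta$ and uniqueness of the expansion), and $A_{e}\geq\sum_{f\neq e}|A_{f}|\geq0$. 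Splitting $A_{f}=A_{f}^{+}-A_{f}^{-}$ into orthogonal positive parts ($|A_{f}|=A_{f}^{+}+A_{f}^{-}$), using $e\pm f\in\mathfrak{c}$ (Lemma~\ref{lHSc11}) hence $(e\pm f)^{\oplus n}=\sum_{i=1}^{n}\varepsilon_{i}^{\ast}(e\pm f)\varepsilon_{i}\in\mathfrak{c}^{c}$ with $\varepsilon_{i}\in M_{1,n}$ the standard row vectors, together with the elementary identity $B^{1/2}(e\pm f)^{\oplus n}B^{1/2}=B\otimes e\pm B\otimes f$ for $B\in M_{n}^{+}$, one gets $|A_{f}|\otimes e+A_{f}\otimes f=(A_{f}^{+})^{1/2}(e+f)^{\oplus n}(A_{f}^{+})^{1/2}+(A_{f}^{-})^{1/2}(e-f)^{\oplus n}(A_{f}^{-})^{1/2}\in\mathfrak{c}^{c}$. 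Summing over $f\neq e$ and adding $(A_{e}-\sum_{f\neq e}|A_{f}|)\otimes e=C^{1/2}e^{\oplus n}C^{1/2}\in\mathfrak{c}^{c}$ (with $C=A_{e}-\sum_{f\neq e}|A_{f}|\geq0$, and $e^{\oplus n}\in\mathfrak{c}^{c}$ since $\mathfrak{c}^{c}$ is unital by Lemma~\ref{lemPTT}) reconstitutes $\zeta$, whence $\zeta\in\mathfrak{c}^{c}\subseteq\max\mathfrak{c}$. The only step that is not formal bookkeeping with quantum polars and the state–space formulas is this last one --- choosing the right decomposition of $\zeta$ and checking the conjugation identity $B^{1/2}(e\pm f)^{\oplus n}B^{1/2}=B\otimes e\pm B\otimes f$ --- everything else follows once the first–step identity $\mathfrak{c}=\mathbb{R}_{+}S(\overline{\mathfrak{c}})$ is in place.
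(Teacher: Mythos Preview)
Your argument for the cone identity $\mathfrak{c}=\mathbb{R}_{+}S(\overline{\mathfrak{c}})=S(\overline{\mathfrak{c}})^{c}\cap H$ and the deduction of the $\min$/$\max$ formulas via Proposition~\ref{propOKE21} and the invariance $\mathfrak{U}^{\boxdot}=(\mathbb{R}_{+}\mathfrak{U})^{\boxdot}$ is exactly the paper's proof.

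For the last assertion your Jordan decomposition is a clean variant of what the paper does: the paper diagonalises each hermitian $A_{f}=\langle\langle\zeta,\overline{f}\rangle\rangle$ as $\mu_{f}^{\ast}v_{f}|d_{f}|\mu_{f}$ with diagonal sign matrix $v_{f}$ and writes $A_{f}f^{\oplus n}=\mu_{f}^{\ast}|d_{f}|^{1/2}(v_{f}f^{\oplus n}+e^{\oplus n})|d_{f}|^{1/2}\mu_{f}-|A_{f}|e^{\oplus n}$, using $v_{f}f+e\in S(\overline{\mathfrak{c}})$. Your use of $e\pm f\in\mathfrak{c}$ together with $|A_{f}|\otimes e+A_{f}\otimes f=(A_{f}^{+})^{1/2}(e+f)^{\oplus n}(A_{f}^{+})^{1/2}+(A_{f}^{-})^{1/2}(e-f)^{\oplus n}(A_{f}^{-})^{1/2}$ achieves the same telescoping, arguably more directly.

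There is one genuine slip, however. You conclude $\zeta\in\mathfrak{c}^{c}$, but when $F$ is infinite the expansion $\zeta=\sum_{f}A_{f}f^{\oplus n}$ may have infinitely many nonzero terms, and $\mathfrak{c}^{c}$ only contains \emph{finite} sums of conjugates of elements of $\mathfrak{c}$. The paper is careful here: it writes $\zeta=\lim_{\lambda}\bigl[\sum_{f\in\lambda}(\cdots)+(A_{e}-\sum_{f\in\lambda}|A_{f}|)e^{\oplus n}\bigr]$ over finite $\lambda\subseteq F\setminus\{e\}$; each bracketed expression lies in $\mathfrak{c}^{c}$ (the compensating coefficient $A_{e}-\sum_{f\in\lambda}|A_{f}|\geq A_{e}-\sum_{f\neq e}|A_{f}|\geq0$ keeps the last term positive), and the limit lands in $(\mathfrak{c}^{c})^{-}=\max\mathfrak{c}$. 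Your argument needs exactly this one-line limiting step; as written, the claim $\zeta\in\mathfrak{c}^{c}$ is not justified in the infinite-dimensional case.
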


\begin{proof}
If $\zeta=\zeta_{0}+\lambda e\in\mathfrak{c}$ with $\left\Vert \zeta
_{0}\right\Vert \leq\lambda$ then
\[
\zeta=\lambda\left(  \lambda^{-1}\zeta_{0}+e\right)  \in\lambda\left(
\operatorname{ball}\left(  H_{h}^{e}\right)  +e\right)  \subseteq\lambda
S\left(  \overline{\mathfrak{c}}\right)  \subseteq\mathbb{R}_{+}S\left(
\overline{\mathfrak{c}}\right)  \subseteq S\left(  \overline{\mathfrak{c}%
}\right)  ^{c}\cap H
\]
by Lemma \ref{lHSc11}. Since $S\left(  \overline{\mathfrak{c}}\right)
^{c}\cap H\subseteq\mathfrak{c}$, the equalities $S\left(  \overline
{\mathfrak{c}}\right)  ^{c}\cap H=\mathbb{R}_{+}S\left(  \overline
{\mathfrak{c}}\right)  =\mathfrak{c}$ follow. Using Proposition
\ref{propOKE21}, we deduce that $\min\overline{\mathfrak{c}}=S\left(
\overline{\mathfrak{c}}\right)  ^{\boxdot}=\left(  \mathbb{R}_{+}S\left(
\overline{\mathfrak{c}}\right)  \right)  ^{\boxdot}=\mathfrak{c}^{\boxdot}$,
and $\max\mathfrak{c}=\mathfrak{c}^{\boxdot\boxdot}=S\left(  \overline
{\mathfrak{c}}\right)  ^{\boxdot\boxdot}$, which is the closed quantum cone in
$M\left(  H\right)  _{h}$ generated by $S\left(  \overline{\mathfrak{c}%
}\right)  $ or $\mathfrak{c}$. By symmetry, we have $\min\mathfrak{c}%
=\overline{\mathfrak{c}}^{\boxdot}$ and $\max\overline{\mathfrak{c}}=S\left(
\mathfrak{c}\right)  ^{\boxdot\boxdot}$.

Finally, take $\zeta\in M_{n}\left(  H\right)  _{h}$ with $\sum_{f\neq
e}\left\vert \left\langle \left\langle \zeta,\overline{f}\right\rangle
\right\rangle \right\vert \leq\left\langle \left\langle \zeta,\overline
{e}\right\rangle \right\rangle $ in $M_{n}$. Each $\left\langle \left\langle
\zeta,\overline{f}\right\rangle \right\rangle $ is diagonalizable being a
hermitian matrix from $M_{n}$, that is, $\left\langle \left\langle
\zeta,\overline{f}\right\rangle \right\rangle =\mu_{f}^{\ast}v_{f}\left\vert
d_{f}\right\vert \mu_{f}$, where $d_{f}$ is a diagonal real matrix with its
polar decomposition $d_{f}=v_{f}\left\vert d_{f}\right\vert $, $\left\Vert
v_{f}\right\Vert \leq1$ ($v_{f}$ is a diagonal matrix) and a unitary $\mu_{f}%
$. Then
\begin{align*}
\left\langle \left\langle \zeta,\overline{f}\right\rangle \right\rangle
f^{\oplus n}  &  =\mu_{f}^{\ast}\left\vert d_{f}\right\vert \left(
v_{f}f^{\oplus n}+e^{\oplus n}\right)  \mu_{f}-\mu_{f}^{\ast}\left\vert
d_{f}\right\vert e^{\oplus n}\mu_{f}\\
&  =\mu_{f}^{\ast}\left\vert d_{f}\right\vert ^{1/2}\left(  v_{f}f^{\oplus
n}+e^{\oplus n}\right)  \left\vert d_{f}\right\vert ^{1/2}\mu_{f}-\left\vert
\left\langle \left\langle \zeta,\overline{f}\right\rangle \right\rangle
\right\vert e^{\oplus n}%
\end{align*}
and $v_{f}f^{\oplus n}+e^{\oplus n}\in\left(  \operatorname{ball}\left(
H_{h}^{e}\right)  +e\right)  ^{\oplus n}=S\left(  \overline{\mathfrak{c}%
}\right)  ^{\oplus n}$. It follows that
\begin{align*}
\zeta &  =\sum_{f\neq e}\left\langle \left\langle \zeta,\overline
{f}\right\rangle \right\rangle f^{\oplus n}+\left\langle \left\langle
\zeta,\overline{e}\right\rangle \right\rangle e^{\oplus n}=\lim_{\lambda}%
\sum_{\lambda}\left\langle \left\langle \zeta,\overline{f}\right\rangle
\right\rangle f^{\oplus n}+\left\langle \left\langle \zeta,\overline
{e}\right\rangle \right\rangle e^{\oplus n}\\
&  =\lim_{\lambda}\sum_{\lambda}\mu_{f}^{\ast}\left\vert d_{f}\right\vert
^{1/2}\left(  v_{f}f^{\oplus n}+e^{\oplus n}\right)  \left\vert d_{f}%
\right\vert ^{1/2}\mu_{f}+\left(  \left\langle \left\langle \zeta,\overline
{e}\right\rangle \right\rangle -\sum_{\lambda}\left\vert \left\langle
\left\langle \zeta,\overline{f}\right\rangle \right\rangle \right\vert
\right)  e^{\oplus n}\\
&  \in\left(  S\left(  \overline{\mathfrak{c}}\right)  ^{c}\right)
^{-}=S\left(  \overline{\mathfrak{c}}\right)  ^{\boxdot\boxdot}=\mathfrak{c}%
^{\boxdot\boxdot}=\max\mathfrak{c},
\end{align*}
where $\lambda$ is running over all finite subsets of $F\backslash\left\{
e\right\}  $. Whence $\zeta\in\max\mathfrak{c}$.
\end{proof}

\begin{corollary}
\label{corabsIn}If $S\left(  \mathfrak{c}\right)  ^{\circ}$ is the polar of
$S\left(  \mathfrak{c}\right)  $ with respect to the duality $\left(
H,\overline{H}\right)  $ then $S\left(  \mathfrak{c}\right)  ^{\circ}\cap
H_{h}=\operatorname{abc}\left\{  \left\{  e\right\}  \cup\operatorname{ball}%
\left(  H_{h}^{e}\right)  \right\}  $ in the real Hilbert space $H_{h}$, and
\[
\operatorname{abc}\left\{  \left\{  e\right\}  \cup\operatorname{ball}\left(
H^{e}\right)  \right\}  \subseteq S\left(  \mathfrak{c}\right)  ^{\circ
}\subseteq2\operatorname{abc}\left\{  \left\{  e\right\}  \cup
\operatorname{ball}\left(  H_{h}^{e}\right)  \right\}
\]
in the Hilbert space $H$, where $\operatorname{abc}$ indicates to the
absolutely convex hull of a given set.
\end{corollary}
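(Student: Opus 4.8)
The plan is to compute $S\left(\mathfrak{c}\right)^{\circ}$ directly from the description $S\left(\mathfrak{c}\right)=\operatorname{ball}\left(\overline{H_{h}^{e}}\right)+\overline{e}$ obtained in Lemma \ref{lHSc11}, and then to move between the hermitian picture (inside the real Hilbert space $H_{h}$) and the full complex picture by passing to real and imaginary parts.

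First I would record the elementary pairing formula: for $\zeta\in H$ and $\eta_{0}\in\operatorname{ball}\left(H_{h}^{e}\right)$, since $\eta_{0}\perp e$ one has $\left\langle\zeta,\overline{\eta_{0}+e}\right\rangle=\left(\zeta,\eta_{0}\right)+\left(\zeta,e\right)$. Writing a hermitian $\zeta\in H_{h}$ as $\zeta=\zeta_{0}+\lambda e$ with $\zeta_{0}\in H_{h}^{e}$ and $\lambda=\left(\zeta,e\right)\in\mathbb{R}$, this becomes the real number $\left(\zeta_{0},\eta_{0}\right)+\lambda$; as $\eta_{0}$ ranges over $\operatorname{ball}\left(H_{h}^{e}\right)$ the scalar $\left(\zeta_{0},\eta_{0}\right)$ ranges over $\left[-\left\Vert\zeta_{0}\right\Vert,\left\Vert\zeta_{0}\right\Vert\right]$, so $\sup\left\vert\left\langle\zeta,S\left(\mathfrak{c}\right)\right\rangle\right\vert=\left\vert\lambda\right\vert+\left\Vert\zeta_{0}\right\Vert$. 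Hence $\zeta\in S\left(\mathfrak{c}\right)^{\circ}$ iff $\left\vert\lambda\right\vert+\left\Vert\zeta_{0}\right\Vert\leq1$, and the set of such $\zeta$ is precisely $\operatorname{abc}\left\{\left\{e\right\}\cup\operatorname{ball}\left(H_{h}^{e}\right)\right\}$ inside $H_{h}$: each $\zeta_{0}+\lambda e$ with $\left\vert\lambda\right\vert+\left\Vert\zeta_{0}\right\Vert\leq1$ is the combination $\lambda\cdot e+\left\Vert\zeta_{0}\right\Vert\cdot\left(\left\Vert\zeta_{0}\right\Vert^{-1}\zeta_{0}\right)$ with coefficients of total modulus $\leq1$, and conversely any such combination obeys the inequality. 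This yields the first identity.

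For the sandwich in $H$, the left inclusion is a one-line estimate: a typical element of $\operatorname{abc}\left\{\left\{e\right\}\cup\operatorname{ball}\left(H^{e}\right)\right\}$ can be written as $z_{0}e+w$ with $w\in H^{e}$ and $\left\vert z_{0}\right\vert+\left\Vert w\right\Vert\leq1$, and its pairing against $\overline{\eta_{0}+e}\in S\left(\mathfrak{c}\right)$ equals $z_{0}+\left(w,\eta_{0}\right)$, of modulus at most $\left\vert z_{0}\right\vert+\left\Vert w\right\Vert\left\Vert\eta_{0}\right\Vert\leq1$; so it lies in $S\left(\mathfrak{c}\right)^{\circ}$. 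For the right inclusion I would use that $S\left(\mathfrak{c}\right)\subseteq\overline{H}_{h}$ (each $\overline{\eta_{0}+e}$ is hermitian) together with the $\ast$-compatibility of the dual pair $\left(H,\overline{H}\right)$ established in Subsection \ref{subsecInv}: for hermitian $\sigma$ one gets $\left\langle\operatorname{Re}\zeta,\sigma\right\rangle=\operatorname{Re}\left\langle\zeta,\sigma\right\rangle$ and $\left\langle\operatorname{Im}\zeta,\sigma\right\rangle=\operatorname{Im}\left\langle\zeta,\sigma\right\rangle$. Consequently, if $\zeta\in S\left(\mathfrak{c}\right)^{\circ}$ then $\operatorname{Re}\zeta,\operatorname{Im}\zeta\in S\left(\mathfrak{c}\right)^{\circ}\cap H_{h}=\operatorname{abc}\left\{\left\{e\right\}\cup\operatorname{ball}\left(H_{h}^{e}\right)\right\}$ by the first part; since the (complex) absolutely convex hull is convex and invariant under multiplication by $i$, $\zeta=\operatorname{Re}\zeta+i\operatorname{Im}\zeta$ lies in $2\operatorname{abc}\left\{\left\{e\right\}\cup\operatorname{ball}\left(H_{h}^{e}\right)\right\}$.

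All the computations are elementary, so the only real care needed is bookkeeping: the first identity is an equality of subsets of the real space $H_{h}$ (so $\operatorname{abc}$ there is the real absolutely convex hull), while the two-sided inclusion is in the complex space $H$ (so $\operatorname{abc}$ is the complex one, which accounts for the loss of the factor $2$ when recombining $\operatorname{Re}\zeta$ and $i\operatorname{Im}\zeta$), and one must not forget that $\left(\zeta_{0},\eta_{0}\right)$ is real for $\zeta_{0},\eta_{0}\in H_{h}^{e}$, which is exactly what makes $\sup\left\vert\left(\zeta_{0},\eta_{0}\right)+\lambda\right\vert=\left\vert\lambda\right\vert+\left\Vert\zeta_{0}\right\Vert$. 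The identification $S\left(\mathfrak{c}\right)=\operatorname{ball}\left(\overline{H_{h}^{e}}\right)+\overline{e}$ from Lemma \ref{lHSc11} is the input that does all the work.
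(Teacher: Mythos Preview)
Your proof is correct and follows essentially the same route as the paper's: both start from $S(\mathfrak{c})=\operatorname{ball}(\overline{H_{h}^{e}})+\overline{e}$, compute the pairing $\langle\zeta_{0}+\lambda e,\overline{\eta_{0}+e}\rangle=(\zeta_{0},\eta_{0})+\lambda$ for hermitian $\zeta$, deduce that membership in $S(\mathfrak{c})^{\circ}\cap H_{h}$ is equivalent to $|\lambda|+\|\zeta_{0}\|\leq 1$, and then handle general $\zeta$ by passing to $\operatorname{Re}\zeta,\operatorname{Im}\zeta$ via the hermitian nature of $S(\mathfrak{c})$. Your direct identification $\sup_{\eta_{0}}|(\zeta_{0},\eta_{0})+\lambda|=|\lambda|+\|\zeta_{0}\|$ is a slightly cleaner packaging of what the paper obtains by evaluating at $\eta_{0}=0$ and $\eta_{0}=\pm\|\zeta_{0}\|^{-1}\zeta_{0}$; the paper also pauses to verify that $\operatorname{abc}\{\{e\}\cup\operatorname{ball}(H^{e})\}$ is already closed, a remark you omit but which is not needed for the stated corollary.
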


\begin{proof}
First note that $\left\langle e,S\left(  \mathfrak{c}\right)  \right\rangle
=\left\langle e,\operatorname{ball}\left(  \overline{H_{h}^{e}}\right)
+\overline{e}\right\rangle =\left\langle e,\overline{e}\right\rangle =1$ and
\[
\sup\left\vert \left\langle \operatorname{ball}\left(  H^{e}\right)  ,S\left(
\mathfrak{c}\right)  \right\rangle \right\vert =\sup\left\vert \left\langle
\operatorname{ball}\left(  H^{e}\right)  ,\operatorname{ball}\left(
\overline{H_{h}^{e}}\right)  \right\rangle \right\vert \leq1,
\]
therefore $\operatorname{abc}\left\{  \left\{  e\right\}  \cup
\operatorname{ball}\left(  H^{e}\right)  \right\}  \subseteq S\left(
\mathfrak{c}\right)  ^{\circ}$. Note also that $\operatorname{abc}\left\{
\left\{  e\right\}  \cup\operatorname{ball}\left(  H^{e}\right)  \right\}  $
is closed. Indeed, if $\zeta=\lim_{n}\left(  \lambda_{n}\zeta_{n}+\mu
_{n}e\right)  $ for $\left(  \zeta_{n}\right)  _{n}\subseteq
\operatorname{ball}\left(  H^{e}\right)  $, $\lambda_{n},\mu_{n}\in\mathbb{C}$
with $\left\vert \lambda_{n}\right\vert +\left\vert \mu_{n}\right\vert \leq1$,
then $\mu=\lim_{n}\mu_{n}$ and $\zeta-\mu e=\lim_{n}\lambda_{n}\zeta_{n}=\eta$
and $\left\Vert \eta\right\Vert =\lim_{n}\left\vert \lambda_{n}\right\vert
\left\Vert \zeta_{n}\right\Vert \leq1$, Thus $\eta\in\operatorname{ball}%
\left(  H^{e}\right)  $ and $\zeta=\eta+\mu e=\left\Vert \eta\right\Vert
\left(  \left\Vert \eta\right\Vert ^{-1}\eta\right)  +\mu e$ with $\left\Vert
\eta\right\Vert +\left\vert \mu\right\vert =\lim_{n}\left(  \left\vert
\lambda_{n}\right\vert \left\Vert \zeta_{n}\right\Vert +\left\vert \mu
_{n}\right\vert \right)  \leq\lim_{n}\left(  \left\vert \lambda_{n}\right\vert
+\left\vert \mu_{n}\right\vert \right)  \leq1$, that is, $\zeta\in
\operatorname{abc}\left\{  \left\{  e\right\}  \cup\operatorname{ball}\left(
H^{e}\right)  \right\}  $.

Take $\zeta\in S\left(  \mathfrak{c}\right)  ^{\circ}\cap H_{h}$ with
$\zeta=\zeta_{0}+\left(  \zeta,e\right)  e$, $\zeta_{0}\in H_{h}^{e}$ and
$\left(  \zeta,e\right)  \in\mathbb{R}$. Then $s=\sup\left\vert \left\langle
\zeta,S\left(  \mathfrak{c}\right)  \right\rangle \right\vert \leq1$. If
$\zeta_{0}=0$ then $\zeta=\lambda e$ with $\left\vert \lambda\right\vert
=\left\vert \left\langle \zeta,\overline{e}\right\rangle \right\vert \leq s$,
therefore $\zeta\in\operatorname{abc}\left\{  \left\{  e\right\}
\cup\operatorname{ball}\left(  H^{e}\right)  \right\}  $. Assume that
$\zeta_{0}\neq0$. Then $\left\vert \left(  \zeta,e\right)  -\left(  \zeta
_{0},\eta\right)  \right\vert =\left\vert \left\langle \zeta,-\overline{\eta
}+\overline{e}\right\rangle \right\vert \leq s$ for all $\eta\in
\operatorname{ball}\left(  H_{h}^{e}\right)  $. In particular, $\left\vert
\left(  \zeta,e\right)  -r\left\Vert \zeta_{0}\right\Vert \right\vert
=\left\vert \left(  \zeta,e\right)  -\left(  \zeta_{0},r\left\Vert \zeta
_{0}\right\Vert ^{-1}\zeta_{0}\right)  \right\vert \leq s$ for all
$r\in\mathbb{R}$, $\left\vert r\right\vert \leq1$. It follows that $\left\vert
\left(  \zeta,e\right)  \right\vert \leq s$ (for $r=0$) and $\left\Vert
\zeta_{0}\right\Vert \leq s-\left\vert \left(  \zeta,e\right)  \right\vert $
(for $r=\pm1$). Hence $\zeta=\left\Vert \zeta_{0}\right\Vert \left(
\left\Vert \zeta_{0}\right\Vert ^{-1}\zeta_{0}\right)  +\left(  \zeta
,e\right)  e$ with $\left\Vert \zeta_{0}\right\Vert +\left\vert \left(
\zeta,e\right)  \right\vert \leq s\leq1$, that is, $\zeta\in\operatorname{abc}%
\left\{  \left\{  e\right\}  \cup\operatorname{ball}\left(  H_{h}^{e}\right)
\right\}  $ in $H_{h}$. Hence $S\left(  \mathfrak{c}\right)  ^{\circ}\cap
H_{h}=\operatorname{abc}\left\{  \left\{  e\right\}  \cup\operatorname{ball}%
\left(  H_{h}^{e}\right)  \right\}  $.

In the case of a non-hermitian $\zeta\in S\left(  \mathfrak{c}\right)
^{\circ}$ we have $\sup\left\vert \left\langle \operatorname{Re}\zeta,S\left(
\mathfrak{c}\right)  \right\rangle \right\vert =\sup\left\vert
\operatorname{Re}\left\langle \zeta,S\left(  \mathfrak{c}\right)
\right\rangle \right\vert \leq\sup\left\vert \left\langle \zeta,S\left(
\mathfrak{c}\right)  \right\rangle \right\vert \leq1$, which means that
$\operatorname{Re}\zeta\in S\left(  \mathfrak{c}\right)  ^{\circ}\cap H_{h}$.
Similarly, $\operatorname{Im}\zeta\in S\left(  \mathfrak{c}\right)  ^{\circ
}\cap H_{h}$. Finally, $\zeta=\left\Vert \zeta_{0}\right\Vert \left(
\left\Vert \zeta_{0}\right\Vert ^{-1}\zeta_{0}\right)  +\left(  \zeta
,e\right)  e$ and $\left\Vert \zeta_{0}\right\Vert +\left\vert \left(
\zeta,e\right)  \right\vert \leq\left\Vert \operatorname{Re}\zeta
_{0}\right\Vert +\left\Vert \operatorname{Im}\zeta_{0}\right\Vert +\left\vert
\left(  \operatorname{Re}\zeta,e\right)  \right\vert +\left\vert \left(
\operatorname{Im}\zeta,e\right)  \right\vert \leq2$, that is, $\zeta
\in2\operatorname{abc}\left\{  \left\{  e\right\}  \cup\operatorname{ball}%
\left(  H^{e}\right)  \right\}  $. Actually, $\zeta=\operatorname{Re}%
\zeta+i\operatorname{Im}\zeta\in2\operatorname{abc}\left\{  \operatorname{Re}%
\zeta,\operatorname{Im}\zeta\right\}  \in2\operatorname{abc}\left\{  \left\{
e\right\}  \cup\operatorname{ball}\left(  H_{h}^{e}\right)  \right\}  $ in the
Hilbert space $H$.
\end{proof}

Note that $S\left(  \mathfrak{c}\right)  ^{\circ}$ is the unit ball of the
norm $\left\Vert \cdot\right\Vert _{e}$. Based on Corollary \ref{corabsIn}, we
conclude that $\left\Vert \cdot\right\Vert _{e}$ is equivalent to the
Minkowski functional of the closed set $\operatorname{abc}\left\{  \left\{
e\right\}  \cup\operatorname{ball}\left(  H^{e}\right)  \right\}  $.

\subsection{The min and max quantizations of $\mathfrak{c}$}

As above let $H$ be a Hilbert $\ast$-space $H$ with a unit $e$ and related
unital cone $\mathfrak{c}$. Take $\zeta\in M_{n}\left(  H\right)  _{h}$, which
is identified with the bounded $\ast$-linear mapping $\zeta:\overline
{H}\rightarrow M_{n}$ such that $\zeta\left(  \overline{\eta}\right)
=\left\langle \left\langle \zeta,\overline{\eta}\right\rangle \right\rangle $
for all $\eta\in H$. Put $b=\zeta\left(  \overline{e}\right)  \in M_{n}$,
which is a hermitian matrix. We say that $\overline{e}$ is \textit{a dominant
point for} $\zeta$ if $b\geq0$ and $\zeta\left(  \overline{\eta}\right)
=\alpha\left(  \eta_{0}\right)  b\alpha\left(  \eta_{0}\right)  ^{\ast}$,
$\overline{\eta}=\overline{\eta_{0}}+\overline{e}\in S\left(  \mathfrak{c}%
\right)  $ for a certain continuous mapping $\alpha:\operatorname{ball}\left(
\overline{H_{h}^{e}}\right)  \rightarrow\operatorname{ball}M_{n}$ such that
$\alpha\left(  0\right)  =I_{n}$.

\begin{proposition}
\label{pmin11}For each $n$ we have
\[
\left(  \min\mathfrak{c}\right)  \cap M_{n}\left(  H\right)  _{h}=\left\{
\zeta\in M_{n}\left(  H\right)  _{h}:\left\Vert a^{\ast}\zeta a\right\Vert
\leq\sqrt{2}\left(  a^{\ast}\zeta a,e\right)  ,a\in M_{n,1}\right\}  .
\]
Moreover, for $\zeta\in M_{n}\left(  H\right)  _{h}$ we have $\zeta\in
\min\mathfrak{c}$ iff $\overline{e}$ is a dominant point for $\zeta$.
\end{proposition}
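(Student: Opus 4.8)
The plan is to read the first identity off the equality $\min\mathfrak{c}=S\left(  \mathfrak{c}\right)  ^{\boxdot}$ of Proposition~\ref{propOKE21}, and then to manufacture the dominant point from the same positivity condition. The key elementary observation is the identity
\[
a^{\ast}\left\langle \left\langle \zeta,\overline{\eta}\right\rangle \right\rangle a=\left\langle a^{\ast}\zeta a,\overline{\eta}\right\rangle =\left(  a^{\ast}\zeta a,\eta\right)
\]
valid for all $\zeta\in M_{n}\left(  H\right)  _{h}$, $a\in M_{n,1}$ and $\overline{\eta}\in\overline{H}$, which is a direct computation from the definitions of the scalar and matrix pairings; note $a^{\ast}\zeta a\in H_{h}$. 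Since a hermitian matrix $M\in M_{n}$ satisfies $M\geq0$ if and only if $a^{\ast}Ma\geq0$ for every $a\in M_{n,1}$, Proposition~\ref{propOKE21} gives that $\zeta\in\left(  \min\mathfrak{c}\right)  \cap M_{n}\left(  H\right)  _{h}$ iff $\left\langle \left\langle \zeta,\overline{\eta}\right\rangle \right\rangle \geq0$ for all $\overline{\eta}\in S\left(  \mathfrak{c}\right)  $, iff $\left(  a^{\ast}\zeta a,\eta\right)  \geq0$ for all $a\in M_{n,1}$ and $\overline{\eta}\in S\left(  \mathfrak{c}\right)  $, that is $\left\langle a^{\ast}\zeta a,S\left(  \mathfrak{c}\right)  \right\rangle \geq0$ for every $a$. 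By Remark~\ref{remDPC} this last condition says precisely that $a^{\ast}\zeta a\in\mathfrak{c}$ for every $a\in M_{n,1}$, and unwinding the defining inequality of $\mathfrak{c}$ yields the displayed formula.

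For the ``moreover'' statement, one direction is immediate: if $\overline{e}$ is a dominant point for $\zeta$, then for each $\overline{\eta}=\overline{\eta_{0}}+\overline{e}\in S\left(  \mathfrak{c}\right)  $ we have $\zeta\left(  \overline{\eta}\right)  =\alpha\left(  \eta_{0}\right)  b\alpha\left(  \eta_{0}\right)  ^{\ast}\geq0$ since $b\geq0$, hence $\zeta\in S\left(  \mathfrak{c}\right)  ^{\boxdot}=\min\mathfrak{c}$ by Proposition~\ref{propOKE21}. Conversely, assume $\zeta\in\min\mathfrak{c}$. From $\overline{e}\in S\left(  \mathfrak{c}\right)  $ we get $b=\zeta\left(  \overline{e}\right)  \geq0$, and since $\pm\overline{\eta_{0}}+\overline{e}\in S\left(  \mathfrak{c}\right)  $ for every $\overline{\eta_{0}}\in\operatorname{ball}\left(  \overline{H_{h}^{e}}\right)  $ by Lemma~\ref{lHSc11}, linearity of $\zeta$ gives $\zeta\left(  \overline{\eta}\right)  =b+c\left(  \eta_{0}\right)  $, where $c\left(  \eta_{0}\right)  =\zeta\left(  \overline{\eta_{0}}\right)  $ is hermitian, real-linear and continuous in $\eta_{0}$, and $-b\leq c\left(  \eta_{0}\right)  \leq b$. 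In $M_{n}$ the relation $-b\leq c\leq b$ with $b\geq0$ forces $c$ to be supported on $\operatorname{ran}b$ (test $b\pm c\geq0$ on $\ker b$ and apply the operator Cauchy--Schwarz inequality to the positive form $b-c$); hence $\zeta\left(  \overline{\eta}\right)  $ and its positive square root are supported on $\operatorname{ran}b$, and, writing $b_{0}$ for the invertible restriction of $b$ to $\operatorname{ran}b$, the operator $\zeta\left(  \overline{\eta}\right)  ^{1/2}b_{0}^{-1/2}$ has norm at most $\sqrt{2}$ because its square is similar to $I+b_{0}^{-1/2}c\left(  \eta_{0}\right)  b_{0}^{-1/2}$, whose spectrum lies in $\left[  0,2\right]  $. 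One then sets
\[
\alpha\left(  \eta_{0}\right)  =\zeta\left(  \overline{\eta}\right)  ^{1/2}\left(  b^{1/2}\right)  ^{\dagger}+P_{\ker b},\qquad\overline{\eta}=\overline{\eta_{0}}+\overline{e},
\]
with $\left(  \cdot\right)  ^{\dagger}$ the Moore--Penrose inverse and $P_{\ker b}$ the orthogonal projection onto $\ker b$. Using that $\zeta\left(  \overline{\eta}\right)  $ is supported on $\operatorname{ran}b$ one verifies $\alpha\left(  \eta_{0}\right)  b\alpha\left(  \eta_{0}\right)  ^{\ast}=\zeta\left(  \overline{\eta}\right)  $; moreover $\alpha\left(  0\right)  =b^{1/2}\left(  b^{1/2}\right)  ^{\dagger}+P_{\ker b}=I_{n}$, $\alpha$ is continuous by continuity of the positive square root, and $\left\Vert \alpha\left(  \eta_{0}\right)  \right\Vert \leq\sqrt{2}$ by the above. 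Thus $\overline{e}$ is a dominant point for $\zeta$.

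The routine parts are the first identity and the ``if'' direction. The main obstacle is the converse: one has to factor the operator inequality $-b\leq c\left(  \eta_{0}\right)  \leq b$ through $b^{1/2}$ uniformly in $\eta_{0}$, deal with the possible degeneracy $\ker b\neq0$ (which is exactly what forces the pseudo-inverse and the support analysis of $c\left(  \eta_{0}\right)  $), and control the norm of the resulting $\alpha$, the estimate $\left\Vert \alpha\left(  \eta_{0}\right)  \right\Vert \leq\sqrt{2}$ being the one that $-b\leq c\left(  \eta_{0}\right)  \leq b$ actually yields.
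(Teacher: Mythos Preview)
Your argument is correct and follows essentially the same route as the paper: both reduce the first identity to the condition $a^{\ast}\zeta a\in\mathfrak{c}$ via $\min\mathfrak{c}=S(\mathfrak{c})^{\boxdot}$ and Remark~\ref{remDPC}, and for the converse of the dominant-point characterization both factor the inequality $0\leq\zeta(\overline{\eta})\leq 2b$ through $b^{1/2}$, yielding the same $\sqrt{2}$ bound on $\alpha$. The only cosmetic difference is that the paper cites a Douglas-type lemma from \cite{Dik} for the factorization while you write it out explicitly with the Moore--Penrose inverse, and your addition of $P_{\ker b}$ is a tidier way to secure $\alpha(0)=I_{n}$ in the degenerate case.
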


\begin{proof}
Take $\zeta\in M_{n}\left(  H\right)  _{h}$. By Proposition \ref{propOKE21},
$\min\mathfrak{c=}S\left(  \mathfrak{c}\right)  ^{\boxdot}$, therefore
$\zeta\in\min\mathfrak{c}$ iff $\left\langle \left\langle \zeta,\overline
{\eta_{0}}+\overline{e}\right\rangle \right\rangle \geq0$ in $M_{n}$ for all
$\eta_{0}\in\operatorname{ball}\left(  H_{h}^{e}\right)  $. Since $\left(
\left\langle \left\langle \zeta,\overline{\eta_{0}}+\overline{e}\right\rangle
\right\rangle a,a\right)  =\left\langle a^{\ast}\zeta a,\overline{\eta_{0}%
}+\overline{e}\right\rangle $, $a\in M_{n,1}$, we derive that $\zeta\in
\min\mathfrak{c}$ iff $a^{\ast}\zeta a\in S\left(  \mathfrak{c}\right)
^{\boxdot}\cap H$ for all $a\in M_{n,1}$. But $S\left(  \mathfrak{c}\right)
^{\boxdot}\cap H=\mathfrak{c}$, which is a classical version of the Unital
Bipolar Theorem \ref{tUBP} (see also Remark \ref{remDPC}).

Now take $\zeta\in\left(  \min\mathfrak{c}\right)  \cap M_{n}\left(  H\right)
_{h}$ with its canonical expansion $\zeta=\zeta_{0}+be^{\oplus n}$ with
$b=\left\langle \left\langle \zeta,\overline{e}\right\rangle \right\rangle $
and $\zeta_{0}\in M_{n}\left(  H\right)  _{h}$, and $\left\langle \left\langle
\zeta_{0},\overline{e}\right\rangle \right\rangle =0$. Note that $\left(
ba,a\right)  =\left(  \left\langle \left\langle \zeta,\overline{e}%
\right\rangle \right\rangle a,a\right)  =\left\langle a^{\ast}\zeta
a,\overline{e}\right\rangle =\left(  a^{\ast}\zeta a,e\right)  \geq0$ for all
$a\in M_{n,1}$, which means that $b\geq0$. Moreover, $\zeta_{0}$ defines a
$\ast$-linear mapping $\zeta_{0}:\overline{H^{e}}\rightarrow M_{n}$ with
$\zeta_{0}\left(  \overline{\eta_{0}}\right)  =\left\langle \left\langle
\zeta_{0},\overline{\eta_{0}}\right\rangle \right\rangle $ for all $\eta
_{0}\in H^{e}$. Since $a^{\ast}\zeta_{0}a=\left(  a^{\ast}\zeta a\right)
_{0}$ and $a^{\ast}\zeta a\in\mathfrak{c}$, it follows that
\[
\left\vert \left(  \zeta_{0}\left(  \overline{\eta_{0}}\right)  a,a\right)
\right\vert =\left\vert \left\langle a^{\ast}\zeta_{0}a,\overline{\eta_{0}%
}\right\rangle \right\vert \leq\left\Vert a^{\ast}\zeta_{0}a\right\Vert
\left\Vert \overline{\eta}_{0}\right\Vert \leq\left(  a^{\ast}\zeta
a,e\right)  \left\Vert \overline{\eta_{0}}\right\Vert =\left(  \left\Vert
\overline{\eta_{0}}\right\Vert ba,a\right)
\]
for all $a$, which in turn implies that $-b\leq\zeta_{0}\left(  \overline
{\eta_{0}}\right)  \leq b$ whenever $\overline{\eta_{0}}\in\operatorname{ball}%
\overline{H_{h}^{e}}$. In particular, $0\leq\zeta_{0}\left(  \overline
{\eta_{0}}\right)  +b\leq2b$. Hence $\zeta_{0}\left(  \overline{\eta_{0}%
}\right)  +b=\alpha\left(  \eta_{0}\right)  b\alpha\left(  \eta_{0}\right)
^{\ast}$ for a unique $\alpha\left(  \eta_{0}\right)  \in M_{n}$ such that
\[
\left\vert \alpha\left(  \eta_{0}\right)  \right\vert ^{2}=\lim_{k}\left(
b+k^{-1}\right)  ^{-1/2}\left(  \zeta_{0}\left(  \overline{\eta_{0}}\right)
+b\right)  \left(  b+k^{-1}\right)  ^{-1/2}%
\]
(see \cite[1.6. Lemma 2]{Dik}). Note that $\alpha\left(  \eta_{0}\right)
:b\left(  \mathbb{C}^{n}\right)  \rightarrow\mathbb{C}^{n}$ is a well defined
linear mapping such that $\alpha\left(  \eta_{0}\right)  b^{1/2}=\left(
\zeta_{0}\left(  \overline{\eta_{0}}\right)  +b\right)  ^{1/2}$, $\left\Vert
\alpha\left(  \eta_{0}\right)  \right\Vert \leq\sqrt{2}$ and $\alpha\left(
\eta_{0}\right)  \left(  \ker\left(  b\right)  \right)  =\left\{  0\right\}
$. Notice that $b^{1/2}\left(  \mathbb{C}^{n}\right)  ^{\perp}=b\left(
\mathbb{C}^{n}\right)  ^{\perp}=\ker\left(  b\right)  $. For the fixed
$a=a_{1}+b^{1/2}\left(  a_{2}\right)  $ with $a_{1}\in\ker\left(  b\right)  $
and $b^{1/2}\left(  a_{2}\right)  \in\operatorname{im}\left(  b^{1/2}\right)
$, we obtain that
\[
\left\Vert \alpha\left(  \eta_{0}\right)  a\right\Vert =\left\Vert
\alpha\left(  \eta_{0}\right)  \left(  b^{1/2}\left(  a_{2}\right)  \right)
\right\Vert =\left\Vert \left(  \zeta_{0}\left(  \overline{\eta_{0}}\right)
+b\right)  ^{1/2}a_{2}\right\Vert \text{,}%
\]
which means (see below Remark \ref{remICR}) that $\alpha:\operatorname{ball}%
\left(  \overline{H_{h}^{e}}\right)  \rightarrow\operatorname{ball}M_{n}$ is a
continuous mapping (one can equip $\operatorname{ball}M_{n}$ with the strong
operator topology SOT). Since $\alpha\left(  \eta_{0}\right)  $ is uniquely
defined and $\zeta_{0}$ is linear, we derive that $\alpha\left(  0\right)
=I_{n}$. Moreover,
\[
\zeta_{0}\left(  \overline{\eta_{0}}\right)  +b=\zeta_{0}\left(
\overline{\eta}_{0}\right)  +\zeta\left(  \overline{e}\right)  =\left\langle
\left\langle \zeta_{0}+be^{\oplus n},\overline{\eta_{0}}+\overline
{e}\right\rangle \right\rangle =\left\langle \left\langle \zeta,\overline
{\eta_{0}}+\overline{e}\right\rangle \right\rangle =\zeta\left(
\overline{\eta_{0}}+\overline{e}\right)
\]
for all $\overline{\eta_{0}}\in\operatorname{ball}\overline{H_{h}^{e}}$. Thus
$\zeta\left(  \overline{\eta}\right)  =\alpha\left(  \eta_{0}\right)
b\alpha\left(  \eta_{0}\right)  ^{\ast}$ for all $\overline{\eta}%
=\overline{\eta_{0}}+\overline{e}\in S\left(  \mathfrak{c}\right)  $, which
means that $\overline{e}$ is a dominant point for $\zeta$.

Conversely, suppose $\zeta\left(  \overline{\eta}\right)  =\alpha\left(
\eta_{0}\right)  b\alpha\left(  \eta_{0}\right)  ^{\ast}$, $\overline{\eta
}=\overline{\eta_{0}}+\overline{e}\in S\left(  \mathfrak{c}\right)  $ for a
certain continuous mapping $\alpha:\operatorname{ball}\left(  \overline
{H_{h}^{e}}\right)  \rightarrow\operatorname{ball}M_{n}$ such that
$\alpha\left(  0\right)  =I_{n}$ and $b\geq0$. Then
\[
\left(  \left\langle \left\langle \zeta,\overline{\eta}\right\rangle
\right\rangle a,a\right)  =\left(  \alpha\left(  \eta_{0}\right)
b\alpha\left(  \eta_{0}\right)  ^{\ast}a,a\right)  =\left(  b\alpha\left(
\eta_{0}\right)  ^{\ast}a,\alpha\left(  \eta_{0}\right)  ^{\ast}a\right)
\geq0
\]
for all $\overline{\eta}=\overline{\eta_{0}}+\overline{e}\in S\left(
\mathfrak{c}\right)  $ and $a\in M_{n,1}$. It follows that $\left\langle
\left\langle \zeta,\overline{\eta}\right\rangle \right\rangle \geq0$ for all
$\overline{\eta}\in S\left(  \mathfrak{c}\right)  $, which means that
$\zeta\in\min\mathfrak{c}$.
\end{proof}

\begin{remark}
\label{remICR}Let $\left(  a_{\gamma}\right)  _{\gamma}$ be a net of positive
operators from $\mathcal{B}\left(  H\right)  $ such that $r1\leq a_{\gamma
}\leq s1$ for some $r,s>0$. If $a=\lim_{\gamma}a_{\gamma}$ in $\mathcal{B}%
\left(  H\right)  $ then $a^{1/2}=\lim_{\gamma}a_{\gamma}^{1/2}$ in
$\mathcal{B}\left(  H\right)  $. Indeed, let us surround the interval $\left[
r,s\right]  $ by a circle $C$ in $\operatorname{Re}>0$, and put $d$ to be the
distance from $C$ to $\left[  r,s\right]  $. The resolvent functions
$\mathcal{R}_{\gamma}\left(  z\right)  =\left(  z-a_{\gamma}\right)  ^{-1}$
and $\mathcal{R}\left(  z\right)  =\left(  z-a\right)  ^{-1}$ are holomorphic
on $\mathbb{C}\backslash\left[  r,s\right]  $ for all $\gamma$. Since
$\mathcal{R}_{\gamma}\left(  z\right)  $ and $\mathcal{R}\left(  z\right)  $
are normal operators, it follows that $\left\Vert \mathcal{R}_{\gamma}\left(
z\right)  \right\Vert \leq\sup\left\{  \left\vert z-t\right\vert ^{-1}:r\leq
t\leq s\right\}  \leq d^{-1}$ for all $z\in C$ and $\gamma$. Similarly,
$\left\Vert \mathcal{R}\left(  z\right)  \right\Vert \leq d^{-1}$, $z\in C$.
If $\sqrt{z}$ is the principal branch of the root function then
\[
\left\Vert \sqrt{z}\mathcal{R}\left(  z\right)  -\sqrt{z}\mathcal{R}_{\gamma
}\left(  z\right)  \right\Vert =\left\vert \sqrt{z}\right\vert \left\Vert
\mathcal{R}\left(  z\right)  \left(  a-a_{\gamma}\right)  \mathcal{R}_{\gamma
}\left(  z\right)  \right\Vert \leq\sup\left\vert \sqrt{C}\right\vert
d^{-2}\left\Vert a-a_{\gamma}\right\Vert
\]
for all $z\in C$, that is, $\lim_{\gamma}\sqrt{z}\mathcal{R}_{\gamma}\left(
z\right)  =\sqrt{z}\mathcal{R}\left(  z\right)  $ uniformly on $C$. Using the
holomorphic functional calculus (see \cite[2.2.15]{HelBA}) on the interior of
$C$, we conclude that
\[
\lim_{\gamma}a_{\gamma}^{1/2}=\lim_{\gamma}\int_{C}\sqrt{z}\mathcal{R}%
_{\gamma}\left(  z\right)  dz=\int_{C}\sqrt{z}\mathcal{R}\left(  z\right)
dz=a^{1/2},
\]
that is, $a^{1/2}=\lim_{\gamma}a_{\gamma}^{1/2}$ in $\mathcal{B}\left(
H\right)  $. The assertion just proven is valid still in the case of $r=0$
\cite{CH} (see also \cite{Win}).
\end{remark}

Now let us prove a duality result for $\min$ and $\max$ quantizations of the
cone $\mathfrak{c}$.

\begin{theorem}
\label{tmax32}The equalities $\left(  \max\overline{\mathfrak{c}}\right)
^{\boxdot}=\min\mathfrak{c}$ and $\left(  \min\overline{\mathfrak{c}}\right)
^{\boxdot}=\max\mathfrak{c}$ hold.
\end{theorem}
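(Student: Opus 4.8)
The plan is to derive both identities purely formally from the triple-polar identity, using the descriptions of the four quantizations already recorded in Corollary \ref{cormincbar} and Proposition \ref{propOKE21}. First I would fix notation for the two dual $\ast$-pairs at hand, $(H,\overline{H})$ with unit $e$ and the conjugate pair $(\overline{H},H)$ with unit $\overline{e}$; recall that the quantum polar of a quantum set on $H$ is a quantum set on $\overline{H}$ and conversely, and that Proposition \ref{propOKE21}, applied verbatim to the unital Hilbert $\ast$-space $(\overline{H},\overline{e})$, gives $\max\overline{\mathfrak{c}}=\overline{\mathfrak{c}}^{\boxdot\boxdot}$, where $\overline{\mathfrak{c}}^{\boxdot}\subseteq M(H)_{h}$ and $\overline{\mathfrak{c}}^{\boxdot\boxdot}\subseteq M(\overline{H})_{h}$.

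For the first equality, I would compute $(\max\overline{\mathfrak{c}})^{\boxdot}=(\overline{\mathfrak{c}}^{\boxdot\boxdot})^{\boxdot}=\overline{\mathfrak{c}}^{\boxdot\boxdot\boxdot}$. Now $\overline{\mathfrak{c}}^{\boxdot}$, being the quantum polar of a quantum set, is an $\mathfrak{s}(H,\overline{H})$-closed quantum cone on $H$, so Theorem \ref{t1} applies to it and yields $(\overline{\mathfrak{c}}^{\boxdot})^{\boxdot\boxdot}=\overline{\mathfrak{c}}^{\boxdot}$, that is, $\overline{\mathfrak{c}}^{\boxdot\boxdot\boxdot}=\overline{\mathfrak{c}}^{\boxdot}$. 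Since $\overline{\mathfrak{c}}^{\boxdot}=\min\mathfrak{c}$ by Corollary \ref{cormincbar}, combining these identities gives $(\max\overline{\mathfrak{c}})^{\boxdot}=\min\mathfrak{c}$.

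For the second equality no further bipolar argument is needed: Corollary \ref{cormincbar} gives $\min\overline{\mathfrak{c}}=\mathfrak{c}^{\boxdot}$ in $M(\overline{H})_{h}$, hence $(\min\overline{\mathfrak{c}})^{\boxdot}=(\mathfrak{c}^{\boxdot})^{\boxdot}=\mathfrak{c}^{\boxdot\boxdot}$, and $\mathfrak{c}^{\boxdot\boxdot}=\max\mathfrak{c}$ by Proposition \ref{propOKE21}. Alternatively this follows from the first equality with the roles of $H$ and $\overline{H}$ interchanged, using $\overline{\overline{H}}=H$ and $\overline{\overline{\mathfrak{c}}}=\mathfrak{c}$, and then taking quantum polars.

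I do not expect a genuine obstacle here; the one place where a careless argument could go wrong is applying Theorem \ref{t1} to $\overline{\mathfrak{c}}$ itself, which is only a cone concentrated at matrix level one and hence not a quantum cone in the sense required by that theorem. The bipolar step must instead be applied to the already-polarized object $\overline{\mathfrak{c}}^{\boxdot}$, which is a bona fide $\mathfrak{s}$-closed quantum cone; everything else is bookkeeping of which matrix space over $H$ or $\overline{H}$ each quantum polar lives in.
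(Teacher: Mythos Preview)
Your proof is correct and follows essentially the same route as the paper: both arguments reduce to the triple-polar identity $\mathfrak{K}^{\boxdot\boxdot\boxdot}=\mathfrak{K}^{\boxdot}$ (Theorem~\ref{t1} applied to an $\mathfrak{s}$-closed quantum cone) combined with the descriptions of $\min$ and $\max$ from Corollary~\ref{cormincbar} and Proposition~\ref{propOKE21}. The paper threads the first equality through $S(\mathfrak{c})^{\boxdot\boxdot\boxdot}$ rather than $\overline{\mathfrak{c}}^{\boxdot\boxdot\boxdot}$ and obtains the second by symmetry plus one more bipolar step, whereas your direct computation $(\min\overline{\mathfrak{c}})^{\boxdot}=(\mathfrak{c}^{\boxdot})^{\boxdot}=\max\mathfrak{c}$ is slightly shorter; these are cosmetic differences only.
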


\begin{proof}
By Proposition \ref{propOKE21}, $\min\mathfrak{c=}S\left(  \mathfrak{c}%
\right)  ^{\boxdot}$ is a closed, quantum cone on $H$. Using the Bipolar
Theorem \ref{t1}, we derive that $\min\mathfrak{c=}\left(  \min\mathfrak{c}%
\right)  ^{\boxdot\boxdot}=S\left(  \mathfrak{c}\right)  ^{\boxdot
\boxdot\boxdot}$. By Corollary \ref{cormincbar}, we have that $\left(
\max\overline{\mathfrak{c}}\right)  ^{\boxdot}=S\left(  \mathfrak{c}\right)
^{\boxdot\boxdot\boxdot}=\min\mathfrak{c}$. Hence $\left(  \max\overline
{\mathfrak{c}}\right)  ^{\boxdot}=\min\mathfrak{c}$. By symmetry, we also have
$\left(  \max\mathfrak{c}\right)  ^{\boxdot}=\min\overline{\mathfrak{c}}$,
which in turn implies that $\left(  \min\overline{\mathfrak{c}}\right)
^{\boxdot}=\left(  \max\mathfrak{c}\right)  ^{\boxdot\boxdot}=\max
\mathfrak{c}$ by the Bipolar Theorem \ref{t1}.
\end{proof}

\subsection{The unital quantum cones on $\left(  H,e\right)  $%
\label{subsecUCHE1}}

Now we introduce new quantizations of the separated, unital cone
$\mathfrak{c}$ in a Hilbert $\ast$-space $H$. Since the functional
$\overline{e}:H\rightarrow\mathbb{C}$ is a contraction, it turns out to be a
matrix contraction on the operator Hilbert space $H_{o}$. The projection
$\phi_{e}:H_{o}\rightarrow H_{o}$, $\phi_{e}\zeta=\left\langle \zeta
,\overline{e}\right\rangle e$ is a matrix contraction as well, for $\left\Vert
\phi_{e}^{\left(  n\right)  }\left(  \zeta\right)  \right\Vert _{o}=\left\Vert
\left\langle \left\langle \zeta,\overline{e}\right\rangle \right\rangle
e^{\oplus n}\right\Vert _{o}\leq\left\Vert \left\langle \left\langle
\zeta,\overline{e}\right\rangle \right\rangle \right\Vert \left\Vert e^{\oplus
n}\right\Vert _{o}\leq\left\Vert \zeta\right\Vert _{o}$, $\zeta\in
M_{n}\left(  H_{o}\right)  $, $n\in\mathbb{N}$. In particular, $\varphi
_{e}:H_{o}\rightarrow H_{o}$, $\varphi_{e}\left(  \zeta\right)  =\zeta
-\phi_{e}\left(  \zeta\right)  =\zeta_{0}$ is a matrix bounded mapping and
$\left\Vert \varphi_{e}^{\left(  n\right)  }\left(  \zeta\right)  \right\Vert
_{o}=\left\Vert \zeta-\phi_{e}^{\left(  n\right)  }\left(  \zeta\right)
\right\Vert _{o}\leq2\left\Vert \zeta\right\Vert _{o}$, $\zeta\in M_{n}\left(
H_{o}\right)  $, $n\in\mathbb{N}$. Thus $\zeta_{0}=\varphi_{e}^{\left(
n\right)  }\left(  \zeta\right)  =\sum_{f\neq e}\left\langle \left\langle
\zeta,\overline{f}\right\rangle \right\rangle f^{\oplus n}$, $\left\Vert
\zeta_{0}\right\Vert _{o}\leq2\left\Vert \zeta\right\Vert _{o}$ and
$\left\Vert \zeta_{0}\right\Vert _{2}\leq\left\Vert \zeta\right\Vert _{2}$
(see Lemma \ref{lHS11}) whenever $\zeta=\sum_{f\in F}\left\langle \left\langle
\zeta,\overline{f}\right\rangle \right\rangle f^{\oplus n}$. Moreover,
$a^{\ast}\zeta a=\sum_{f\in F}\left\langle \left\langle a^{\ast}\zeta
a,\overline{f}\right\rangle \right\rangle f^{\oplus m}$ for all $a\in M_{n,m}%
$, which in turn implies that $\left(  a^{\ast}\zeta a\right)  _{0}%
=\varphi_{e}^{\left(  m\right)  }\left(  a^{\ast}\zeta a\right)  =a^{\ast
}\varphi_{e}^{\left(  n\right)  }\left(  \zeta\right)  a=a^{\ast}\zeta_{0}a$.
On the unital space $\left(  H,e\right)  $ consider the following quantum
cones $\mathfrak{C}_{l}$, $\mathfrak{C}_{o}$ and $\mathfrak{C}_{u}$ whose
slices given by
\begin{align*}
\mathfrak{C}_{l}\mathfrak{\cap}M_{n}\left(  H\right)   &  =\left\{  \zeta\in
M_{n}\left(  H\right)  _{h}:\left\Vert a^{\ast}\zeta_{0}a\right\Vert _{2}\leq
m^{-1/2}\tau\left(  \left\langle \left\langle a^{\ast}\zeta a,\overline
{e}\right\rangle \right\rangle \right)  ,a\in M_{n,m},m\in\mathbb{N}\right\}
,\\
\mathfrak{C}_{o}\mathfrak{\cap}M_{n}\left(  H\right)   &  \mathfrak{=}\left\{
\zeta\in M_{n}\left(  H\right)  _{h}:\left\Vert a^{\ast}\zeta_{0}a\right\Vert
_{so}\leq\tau\left(  \left\langle \left\langle a^{\ast}\zeta a,\overline
{e}\right\rangle \right\rangle \right)  ,a\in M_{n,m},m\in\mathbb{N}\right\}
,\\
\mathfrak{C}_{u}\mathfrak{\cap}M_{n}\left(  H\right)   &  \mathfrak{=}\left\{
\zeta\in M_{n}\left(  H\right)  _{h}:\left\Vert a^{\ast}\zeta_{0}a\right\Vert
_{2}\leq\tau\left(  \left\langle \left\langle a^{\ast}\zeta a,\overline
{e}\right\rangle \right\rangle \right)  ,a\in M_{n,m},m\in\mathbb{N}\right\}
.
\end{align*}
The fact that these quantum cones are unital will be verified below. Note that
for every $\zeta$ from each of these cones we have $\tau\left(  \left\langle
\left\langle a^{\ast}\zeta a,\overline{e}\right\rangle \right\rangle \right)
\geq0$ for all $a\in M$. Taking into account that $\left(  \left\langle
\left\langle \zeta,\overline{e}^{\oplus n}\right\rangle \right\rangle
a,a\right)  =\left\langle a^{\ast}\zeta a,\overline{e}^{\oplus n}\right\rangle
=\tau\left(  \left\langle \left\langle a^{\ast}\zeta a,\overline
{e}\right\rangle \right\rangle \right)  $, we conclude that $\left\langle
\left\langle \zeta,\overline{e}^{\oplus n}\right\rangle \right\rangle \geq0$.
Further, note that
\[
\mathfrak{C}_{l}\mathfrak{\cap}H=\mathfrak{C}_{o}\mathfrak{\cap}%
H=\mathfrak{C}_{u}\mathfrak{\cap}H=\left\{  \zeta\in H_{h}:\left\Vert
\zeta_{0}\right\Vert \leq\left(  \zeta,e\right)  \right\}  =\mathfrak{c.}%
\]
As above for each $\zeta\in M_{n}\left(  H\right)  _{h}$ there corresponds a
unique expansion $\zeta=\sum_{f\in F}\left\langle \left\langle \zeta
,\overline{f}\right\rangle \right\rangle f^{\oplus n}$ with hermitian
$\left\langle \left\langle \zeta,\overline{f}\right\rangle \right\rangle \in
M_{n}$. Put $M_{n}\left(  H\right)  ^{e}=\left\{  \zeta\in M_{n}\left(
H\right)  :\left\langle \left\langle \zeta,\overline{e}\right\rangle
\right\rangle =0\right\}  $, $M_{n}\left(  H\right)  _{h}^{e}=M_{n}\left(
H\right)  _{h}\cap M_{n}\left(  H\right)  ^{e}$, and $\mathfrak{H}_{h}%
^{e}=\mathfrak{H}_{h}\cap M\left(  H\right)  _{h}^{e}$, which is the unit ball
of $M_{n}\left(  H\right)  _{h}^{e}$ relative to the norm $\left\Vert
\cdot\right\Vert _{2}$. Similarly, $\mathfrak{B}_{h}^{e}=\mathfrak{B}_{h}\cap
M\left(  H\right)  _{h}^{e}$ is the unit ball of $M_{n}\left(  H\right)
_{h}^{e}$ relative to the matrix norm $\left\Vert \cdot\right\Vert _{o}$. We
also put $\mathfrak{K}_{h}^{e}=\left(  n^{1/2}\mathfrak{H}_{h}^{e}\right)
_{n}$, which is a convex quantum set. Thus $\mathfrak{K}_{h}^{e}%
+\mathfrak{e}=\left(  n^{1/2}\mathfrak{H}_{h}^{e}+e^{\oplus n}\right)  _{n}$,
$\mathfrak{B}_{h}^{e}+\mathfrak{e}=\left(  \mathfrak{B}_{h}^{e}+e^{\oplus
n}\right)  _{n}$ and $\mathfrak{H}_{h}^{e}+\mathfrak{e}=\left(  \mathfrak{H}%
_{h}^{e}+e^{\oplus n}\right)  _{n}$ are quantum sets on $H$. Similarly, we
have the quantum sets $\overline{\mathfrak{K}_{h}^{e}}+\overline{\mathfrak{e}%
}$, $\overline{\mathfrak{B}_{h}^{e}}+\overline{\mathfrak{e}}$ and
$\overline{\mathfrak{H}_{h}^{e}}+\overline{\mathfrak{e}}$ on $\overline{H}$.

\begin{proposition}
\label{lHS21}The following equalities $\mathfrak{C}_{l}=\left(  \overline
{\mathfrak{K}_{h}^{e}}+\overline{\mathfrak{e}}\right)  ^{\boxdot}$,
$\mathfrak{C}_{o}=\left(  \overline{\mathfrak{B}_{h}^{e}}+\overline
{\mathfrak{e}}\right)  ^{\boxdot}$ and $\mathfrak{C}_{u}=\left(
\overline{\mathfrak{H}_{h}^{e}}+\overline{\mathfrak{e}}\right)  ^{\boxdot}$
hold with respect to the dual $\ast$-pair $\left(  H,\overline{H}\right)  $.
In particular, $\mathfrak{C}_{l}\subseteq\mathfrak{C}_{o}\subseteq
\mathfrak{C}_{u}$ are the inclusions of the separated, closed, unital, quantum
cones on $H$, which are quantizations of $\mathfrak{c}$.
\end{proposition}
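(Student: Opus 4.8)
The plan is to prove the three polar identities $\mathfrak{C}_l = (\overline{\mathfrak{K}_h^e} + \overline{\mathfrak{e}})^{\boxdot}$, $\mathfrak{C}_o = (\overline{\mathfrak{B}_h^e} + \overline{\mathfrak{e}})^{\boxdot}$, $\mathfrak{C}_u = (\overline{\mathfrak{H}_h^e} + \overline{\mathfrak{e}})^{\boxdot}$ by a direct computation unwinding the definition of the quantum polar, and then read off the structural consequences from the general theory developed earlier. I will treat the three cases in parallel since they differ only in the norm used on the ``$e$-complement'' component ($\|\cdot\|_2$ with the $m^{-1/2}$ weight, $\|\cdot\|_{so}$, and $\|\cdot\|_2$ respectively). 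First I fix $n$ and $\zeta \in M_n(H)_h$ with its canonical expansion $\zeta = \sum_{f\in F} \langle\langle \zeta,\overline{f}\rangle\rangle f^{\oplus n}$, write $b = \langle\langle \zeta,\overline{e}\rangle\rangle$ and $\zeta_0 = \sum_{f\neq e}\langle\langle\zeta,\overline{f}\rangle\rangle f^{\oplus n}$. By definition, $\zeta$ lies in $(\overline{\mathfrak{H}_h^e}+\overline{\mathfrak{e}})^{\boxdot}$ iff $\langle\langle \zeta, \overline{\eta_0} + \overline{e}^{\oplus m}\rangle\rangle \geq 0$ in $M_{nm}$ for every $m$ and every $\eta_0 \in M_m(H)_h^e$ with $\|\eta_0\|_2 \leq 1$ (and analogously with $\mathfrak{B}_h^e$, $\mathfrak{K}_h^e$). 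Testing against vectors $a \in M_{n,m}$ (identified with $\xi \in \mathbb{C}^{nm}$ via $a \mapsto$ the corresponding rank-one-type contraction), the positivity condition $\langle\langle\zeta,\overline{\eta_0}+\overline{e}^{\oplus m}\rangle\rangle \geq 0$ becomes, after the manipulations $\langle\langle\zeta,a\overline{\eta}b\rangle\rangle$-type identities from Subsection \ref{subsecInv} (formula (\ref{faeb}) and the relation $(a^*\zeta a)_0 = a^*\zeta_0 a$ noted just before the proposition), equivalent to $\langle a^*\zeta_0 a, \overline{\eta_0}\rangle + \tau(\langle\langle a^*\zeta a,\overline{e}\rangle\rangle) \geq 0$ for all $a$ and all $\eta_0$ in the appropriate unit ball. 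Taking the supremum over $\eta_0$ in the ball of $\|\cdot\|_2$ (resp. $\|\cdot\|_o$, resp. $n^{1/2}\mathfrak{H}_h^e$) turns $\sup_{\eta_0}\langle a^*\zeta_0 a,\overline{\eta_0}\rangle$ into $\|a^*\zeta_0 a\|_2$ (resp. $\|a^*\zeta_0 a\|_{so}$, resp. $m^{-1/2}\|a^*\zeta_0 a\|_2$), which yields exactly the defining inequalities of $\mathfrak{C}_u$, $\mathfrak{C}_o$, $\mathfrak{C}_l$. This establishes the three equalities.

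The step I expect to be the main obstacle is the duality-of-norms bookkeeping: one must be careful that the dual norm of $\|\cdot\|_2$ on the hermitian $e$-complement subspace $M_m(H)_h^e$ — with respect to the scalar pairing $\langle\cdot,\cdot\rangle$ on $(M_m(H),M_m(\overline{H}))$ — is again $\|\cdot\|_2$ (self-duality of the Hilbert norm, using Lemma \ref{lHS11} and Remark \ref{remlHS}), that the dual of $\|\cdot\|_o$ restricted to this subspace is the norm $\|\cdot\|_{so}$ defined via $\sup|\langle\zeta,\overline{\mathfrak{B}}\rangle|$ (this is essentially the definition of $\|\cdot\|_{so}$ together with Corollary \ref{corHS1} comparing it to $\|\cdot\|_o$ and $\|\cdot\|_2$), and that restricting the supremum to the \emph{hermitian} part of each ball loses nothing because $\zeta_0$ itself is hermitian (so $\langle a^*\zeta_0 a,\overline{\eta_0}\rangle$ is real and replacing $\eta_0$ by its real part only helps). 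One also has to make sure the constant $\tau(\langle\langle a^*\zeta a,\overline{e}\rangle\rangle) = (ba,a)$ is nonnegative, which is automatic once $\langle\langle\zeta,\overline{e}^{\oplus n}\rangle\rangle \geq 0$; but the latter is itself part of the conclusion, so the clean way is to note that \emph{if} $\zeta$ is in the polar then testing with $\eta_0 = 0$ forces $b \geq 0$, and conversely the defining inequality of each $\mathfrak{C}_\bullet$ already presupposes the right-hand side is $\geq 0$, as remarked in the text preceding the proposition.

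For the ``in particular'' clause: by Theorem \ref{t1} (the bipolar theorem for quantum cones over the dual $\ast$-pair $(H,\overline{H})$), each set of the form $\mathfrak{M}^{\boxdot}$ is an $\mathfrak{s}(H,\overline{H})$-closed quantum cone on $H$; so $\mathfrak{C}_l,\mathfrak{C}_o,\mathfrak{C}_u$ are weakly closed quantum cones, hence Archimedian. The inclusions $\mathfrak{C}_l \subseteq \mathfrak{C}_o \subseteq \mathfrak{C}_u$ follow by antitonicity of $\boxdot$ from the inclusions of the test sets, which in turn come from the norm comparisons $\|\cdot\|_o \leq \|\cdot\|_{so}$ and $\mathfrak{H} \cap M_n(H) \subseteq \mathfrak{B}\cap M_n(H) \subseteq \sqrt{n}\,\mathfrak{H}\cap M_n(H)$ of Lemma \ref{lHS11} (so $\overline{\mathfrak{B}_h^e}+\overline{\mathfrak{e}} \subseteq \overline{\mathfrak{K}_h^e}+\overline{\mathfrak{e}}$ and $\overline{\mathfrak{H}_h^e}+\overline{\mathfrak{e}} \subseteq \overline{\mathfrak{B}_h^e}+\overline{\mathfrak{e}}$, after checking the $m^{-1/2}$-scaling matches). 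That each is a quantization of $\mathfrak{c}$ means $\mathfrak{C}_\bullet \cap H = \mathfrak{c}$, already displayed in the text (set $n=1$ in the defining formulas, where $\|\cdot\|_2 = \|\cdot\|_{so} = \|\cdot\|_o = \|\cdot\|$ and the $m^{-1/2}$ weight becomes harmless because scalar $a$ only rescale). Separatedness ($\mathfrak{C}_\bullet \cap -\mathfrak{C}_\bullet = \{0\}$) follows from $\mathfrak{C}_\bullet \subseteq \min\mathfrak{c}$ (since $\overline{\mathfrak{H}_h^e}+\overline{\mathfrak{e}} \supseteq$ a rescaled copy of $S(\mathfrak{c}) = \operatorname{ball}(\overline{H_h^e})+\overline{e}$ in the $n=1$ slice, giving $\mathfrak{C}_u \subseteq S(\mathfrak{c})^{\boxdot}$ after the bipolar theorem and the fact that the larger test set gives a smaller polar — one checks $\operatorname{ball}(\overline{H_h^e})+\overline{e} \subseteq \overline{\mathfrak{H}_h^e}+\overline{\mathfrak{e}}$ in slice $1$), combined with $\min\mathfrak{c}$ being separated by Proposition \ref{propOKE21}. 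Finally, unitality of each $\mathfrak{C}_\bullet$ follows from Lemma \ref{lemPTT}: since $\mathfrak{C}_\bullet \cap H = \mathfrak{c}$ is a unital cone in $H$ (Lemma \ref{lHSc11}), and $\mathfrak{C}_\bullet$ is a quantum cone containing $\mathfrak{c}^c$, the $n=1$ slice being unital forces the whole quantum cone to be unital.
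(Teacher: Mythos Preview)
Your proof is correct and follows essentially the same approach as the paper: unwind the quantum polar via the identity $(\langle\langle\zeta,\overline{\eta}\rangle\rangle a,a)=\langle a^*\zeta a,\overline{\eta}\rangle$, recognize the supremum over $\eta_0$ in the appropriate ball as the relevant norm of $a^*\zeta_0 a$, and read off the defining inequalities of $\mathfrak{C}_l,\mathfrak{C}_o,\mathfrak{C}_u$; the inclusions then come from Lemma~\ref{lHS11} by antitonicity of $\boxdot$.

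The only substantive difference is in the ``in particular'' clause. For separatedness, the paper argues directly: from $\zeta\in\mathfrak{C}_u\cap-\mathfrak{C}_u$ one gets $\langle\langle\zeta,\overline{e}\rangle\rangle=0$ and $\langle\langle\zeta_0,\overline{\mathfrak{H}_h^e}\rangle\rangle=\{0\}$, hence (by linearity and passing to real parts) $\langle\langle\zeta_0,\overline{\mathfrak{B}}\rangle\rangle=\{0\}$, so $\|\zeta_0\|_o=0$. Your route via $\mathfrak{C}_u\subseteq\min\mathfrak{c}$ is also valid, but note that Proposition~\ref{propOKE21} does not explicitly assert that $\min\mathfrak{c}$ is separated; you would need to add the one-line observation (immediate from Proposition~\ref{pmin11}) that $\zeta\in\min\mathfrak{c}\cap-\min\mathfrak{c}$ forces $a^*\zeta a\in\mathfrak{c}\cap-\mathfrak{c}=\{0\}$ for all columns $a$. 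For unitality the paper uses $\max\mathfrak{c}\subseteq\mathfrak{C}_l$ (since $\mathfrak{C}_l$ is a closed quantization of $\mathfrak{c}$), which is the closure of your $\mathfrak{c}^c\subseteq\mathfrak{C}_l$ argument and amounts to the same thing.
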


\begin{proof}
First take $\zeta\in\mathfrak{C}_{l}\mathfrak{\cap}M_{n}\left(  H\right)  $.
Then $\zeta\in M_{n}\left(  H\right)  _{h}$ and $\left\Vert \zeta
_{0}\right\Vert _{2}\leq n^{-1/2}\tau\left(  \left\langle \left\langle
\zeta,\overline{e}\right\rangle \right\rangle \right)  $, where $\zeta
_{0}=\sum_{f\neq e}\left\langle \left\langle \zeta,\overline{f}\right\rangle
\right\rangle f^{\oplus n}=\varphi_{e}^{\left(  n\right)  }\left(
\zeta\right)  \in M_{n}\left(  H\right)  _{h}^{e}$. If $\eta\in\mathfrak{H}%
_{h}^{e}$ then $\eta=\sum_{f\neq e}\left\langle \left\langle \eta,\overline
{f}\right\rangle \right\rangle f^{\oplus n}$, $\left\Vert \eta\right\Vert
_{2}\leq1$ and
\[
\left\langle \zeta,\overline{\eta}\right\rangle =\sum_{f\neq e}\left\langle
\zeta,\left\langle \left\langle \eta,\overline{f}\right\rangle \right\rangle
^{\operatorname{t}}\overline{f}^{\oplus n}\right\rangle =\sum_{f\neq e}%
\tau\left(  \left\langle \left\langle \zeta,\overline{f}\right\rangle
\right\rangle \left\langle \left\langle \eta,\overline{f}\right\rangle
\right\rangle \right)  =\left\langle \zeta_{0},\overline{\eta}\right\rangle
\]
(see (\ref{faeb}) and (\ref{fND})). Note that $\left\langle \zeta
,\overline{\eta}\right\rangle =\left\langle \zeta^{\ast},\overline{\eta
}\right\rangle =\sum_{i,j}\left\langle \zeta_{ji}^{\ast},\overline{\eta_{ij}%
}\right\rangle =\sum_{i,j}\left\langle \zeta_{ji},\overline{\eta_{ij}^{\ast}%
}\right\rangle ^{\ast}=\left\langle \zeta,\overline{\eta^{\ast}}\right\rangle
^{\ast}=\left\langle \zeta,\overline{\eta}\right\rangle ^{\ast}$, which means
that $\left\langle \zeta,\overline{\eta}\right\rangle \in\mathbb{R}$. Note
also that all matrices $\left\langle \left\langle \zeta,\overline
{f}\right\rangle \right\rangle $ and $\left\langle \left\langle \eta
,\overline{f}\right\rangle \right\rangle $, $f\neq e$ are hermitians,
therefore $\tau\left(  \left\langle \left\langle \zeta,\overline
{f}\right\rangle \right\rangle \left\langle \left\langle \eta,\overline
{f}\right\rangle \right\rangle \right)  \in\mathbb{R}$ as well. Since
$\left\langle \zeta,\overline{\eta}\right\rangle \leq\left\Vert \zeta
_{0}\right\Vert _{2}\left\Vert \eta\right\Vert _{2}$ (see Remark
\ref{remlHS}), we deduce that $\sup\left\vert \left\langle \zeta
,\overline{\mathfrak{H}_{h}^{e}}\right\rangle \right\vert \leq\left\Vert
\zeta_{0}\right\Vert _{2}\leq n^{-1/2}\tau\left(  \left\langle \left\langle
\zeta,\overline{e}\right\rangle \right\rangle \right)  =n^{-1/2}\left\langle
\zeta,\overline{e}^{\oplus n}\right\rangle $ or $-\left\langle \zeta
,\overline{e}^{\oplus n}\right\rangle \leq\left\langle \zeta,n^{1/2}%
\overline{\mathfrak{H}_{h}^{e}}\right\rangle \leq\left\langle \zeta
,\overline{e}^{\oplus n}\right\rangle $. Hence $\left\langle \zeta
,n^{1/2}\overline{\mathfrak{H}_{h}^{e}}+\overline{e}^{\oplus n}\right\rangle
\geq0$. Conversely, suppose the latter holds for $\zeta\in M_{n}\left(
H\right)  _{h}$. Since $-\mathfrak{H}_{h}^{e}=\mathfrak{H}_{h}^{e}$, it
follows that $\sup\left\vert \left\langle \zeta,\overline{\mathfrak{H}_{h}%
^{e}}\right\rangle \right\vert \leq n^{-1/2}\left\langle \zeta,\overline
{e}^{\oplus n}\right\rangle $. But
\begin{align*}
\left\Vert \zeta_{0}\right\Vert _{2}  &  =\sup\left\vert \left\langle
\zeta_{0},\overline{\mathfrak{H}^{e}}\right\rangle \right\vert =\sup\left\vert
\operatorname{Re}\left\langle \zeta_{0},\overline{\mathfrak{H}^{e}%
}\right\rangle \right\vert =\sup\left\vert \left\langle \zeta_{0}%
,\operatorname{Re}\overline{\mathfrak{H}^{e}}\right\rangle \right\vert
=\sup\left\vert \left\langle \zeta_{0},\overline{\mathfrak{H}_{h}^{e}%
}\right\rangle \right\vert \\
&  =\sup\left\vert \left\langle \zeta,\overline{\mathfrak{H}_{h}^{e}%
}\right\rangle \right\vert ,
\end{align*}
therefore $\left\Vert \zeta_{0}\right\Vert _{2}\leq n^{-1/2}\left\langle
\zeta,\overline{e}^{\oplus n}\right\rangle $. Consequently,
\[
\mathfrak{C}_{l}\mathfrak{\cap}M_{n}\left(  H\right)  =\left\{  \zeta\in
M_{n}\left(  H\right)  _{h}:\left\langle a^{\ast}\zeta a,m^{1/2}%
\overline{\mathfrak{H}_{h}^{e}}+\overline{e}^{\oplus m}\right\rangle
\geq0,a\in M_{n,m},m\in\mathbb{N}\right\}  .
\]
Taking into account that $\left\langle a^{\ast}\zeta a,m^{1/2}\overline
{\mathfrak{H}_{h}^{e}}+\overline{e}^{\oplus m}\right\rangle =\left(
\left\langle \left\langle \zeta,m^{1/2}\overline{\mathfrak{H}_{h}^{e}%
}+\overline{e}^{\oplus m}\right\rangle \right\rangle a,a\right)  $, we derive
that $\zeta\in\mathfrak{C}_{l}$ iff $\left\langle \left\langle \zeta
,m^{1/2}\overline{\mathfrak{H}_{h}^{e}}+\overline{e}^{\oplus m}\right\rangle
\right\rangle \geq0$ or $\zeta\in\left(  \overline{\mathfrak{K}_{h}^{e}%
}+\overline{\mathfrak{e}}\right)  ^{\boxdot}$. Similarly, $\zeta
\in\mathfrak{C}_{u}$ iff $\left\langle \left\langle \zeta,\overline
{\mathfrak{H}_{h}^{e}}+\overline{e}^{\oplus n}\right\rangle \right\rangle
\geq0$ or $\zeta\in\left(  \overline{\mathfrak{H}_{h}^{e}}+\overline
{\mathfrak{e}}\right)  ^{\boxdot}$.

Further, $\left\Vert \zeta_{0}\right\Vert _{so}\leq\tau\left(  \left\langle
\left\langle \zeta,\overline{e}\right\rangle \right\rangle \right)  $ means
that $\sup\left\vert \left\langle \zeta,\overline{\mathfrak{B}_{h}^{e}%
}\right\rangle \right\vert \leq\tau\left(  \left\langle \left\langle
\zeta,\overline{e}\right\rangle \right\rangle \right)  =\left\langle
\zeta,\overline{e}^{\oplus n}\right\rangle $. The latter in turn is equivalent
to $\left\langle \zeta,\overline{\mathfrak{B}_{h}^{e}}+\overline{e}^{\oplus
n}\right\rangle \geq0$. Thus
\[
\mathfrak{C}_{o}\mathfrak{\cap}M_{n}\left(  H\right)  \mathfrak{=}\left\{
\zeta\in M_{n}\left(  H\right)  _{h}:\left\langle a^{\ast}\zeta a,\overline
{\mathfrak{B}_{h}^{e}}+\overline{e}^{\oplus m}\right\rangle \geq0,a\in
M_{n,m},m\in\mathbb{N}\right\}  .
\]
As above, we derive that $\zeta\in\mathfrak{C}_{o}$ iff $\left\langle
\left\langle \zeta,\overline{\mathfrak{B}_{h}^{e}}+\overline{e}^{\oplus
n}\right\rangle \right\rangle \geq0$, that is, $\mathfrak{C}_{o}=\left(
\overline{\mathfrak{B}_{h}^{e}}+\overline{\mathfrak{e}}\right)  ^{\boxdot}$.
Based on Lemma \ref{lHS11}, we obtain that $\mathfrak{H}_{h}^{e}+e^{\oplus
n}\subseteq\mathfrak{B}_{h}^{e}+e^{\oplus n}\subseteq\sqrt{n}\mathfrak{H}%
_{h}^{e}+e^{\oplus n}$ in $M_{n}\left(  H\right)  _{h}$, or $\overline
{\mathfrak{H}_{h}^{e}}+\overline{\mathfrak{e}}\subseteq\overline
{\mathfrak{B}_{h}^{e}}+\overline{\mathfrak{e}}\subseteq\overline
{\mathfrak{K}_{h}^{e}}+\overline{\mathfrak{e}}$ are inclusions of the quantum
sets on $\overline{H}$. Therefore $\mathfrak{C}_{l}=\left(  \overline
{\mathfrak{K}_{h}^{e}}+\overline{\mathfrak{e}}\right)  ^{\boxdot}%
\subseteq\mathfrak{C}_{o}=\left(  \overline{\mathfrak{B}_{h}^{e}}%
+\overline{\mathfrak{e}}\right)  ^{\boxdot}\subseteq\mathfrak{C}_{u}=\left(
\overline{\mathfrak{H}_{h}^{e}}+\overline{\mathfrak{e}}\right)  ^{\boxdot}$.

Further, prove that all these quantum cones are separated. Take $\zeta\in
M_{n}\left(  H\right)  _{h}$. Suppose $\zeta\in\mathfrak{C}_{u}\cap
-\mathfrak{C}_{u}$ with $\zeta=\zeta_{0}+\left\langle \left\langle
\zeta,\overline{e}\right\rangle \right\rangle e^{\oplus n}$. Since
$\left\langle \left\langle \pm\zeta,\overline{\mathfrak{H}_{h}^{e}}%
+\overline{e}^{\oplus n}\right\rangle \right\rangle \geq0$, it follows that
$\left\langle \left\langle \zeta,\overline{e}\right\rangle \right\rangle =0$
and $\left\langle \left\langle \zeta_{0},\overline{\mathfrak{H}_{h}^{e}%
}\right\rangle \right\rangle =\left\{  0\right\}  $. In particular,
$\left\langle \left\langle \zeta_{0},\overline{\mathfrak{B}_{h}^{e}%
}\right\rangle \right\rangle =\left\{  0\right\}  $. Since $\left\Vert
\eta^{\ast}\right\Vert _{o}=\left\Vert \eta\right\Vert _{o}$ for all $\eta\in
M\left(  H_{o}\right)  $, it follows that $\operatorname{Re}\eta$,
$\operatorname{Im}\eta\in\mathfrak{B}$ whenever $\eta\in\mathfrak{B}$. Hence
$\left\langle \left\langle \zeta_{0},\overline{\mathfrak{B}}\right\rangle
\right\rangle =\left\{  0\right\}  $ and $\left\Vert \zeta_{0}\right\Vert
=\sup\left\Vert \left\langle \left\langle \zeta_{0},\overline{\mathfrak{B}%
}\right\rangle \right\rangle \right\Vert =0$, that is, $\zeta=0$. Thus all
quantum cones are separated.

Finally prove that $\mathfrak{C}_{l}$ is unital. Since $\mathfrak{C}_{l}$ is a
topologically closed quantization of the unital cone $\mathfrak{c}$, it
follows that $\max\mathfrak{c=c}^{\boxdot\boxdot}=\left(  \mathfrak{c}%
^{c}\right)  ^{-}\subseteq\mathfrak{C}_{l}$ thanks to Proposition
\ref{propOKE21}. But $\max\mathfrak{c}$ is unital (see Lemma \ref{lemPTT}),
therefore so is $\mathfrak{C}_{l}$. In particular, so are both $\mathfrak{C}%
_{o}$ and $\mathfrak{C}_{u}$.
\end{proof}

\begin{remark}
The fact that $\mathfrak{C}_{o}$ (in turn $\mathfrak{C}_{u}$) is unital also
follows from the following argument. Take $\zeta\in M_{n}\left(  H\right)
_{h}$. Prove that $\left\langle \left\langle \zeta+re^{\oplus n}%
,\overline{\mathfrak{B}_{h}^{e}}+\overline{e}^{\oplus m}\right\rangle
\right\rangle \geq0$ for large positive $r$. But $\left\langle \left\langle
\zeta,\overline{\mathfrak{B}_{h}^{e}}+\overline{\mathfrak{e}}\right\rangle
\right\rangle $ is a bounded set of hermitian matrices in $M$. Then
$-rI_{nm}\leq\left\langle \left\langle \zeta,\overline{\mathfrak{B}_{h}^{e}%
}+\overline{e}^{\oplus m}\right\rangle \right\rangle \leq rI_{nm}$ for all
$m$, which in turn implies that
\begin{align*}
\left\langle \left\langle \zeta+re^{\oplus n},\overline{\mathfrak{B}_{h}^{e}%
}+\overline{e}^{\oplus m}\right\rangle \right\rangle  &  \subseteq\left\langle
\left\langle \zeta,\overline{\mathfrak{B}_{h}^{e}}+\overline{e}^{\oplus
m}\right\rangle \right\rangle +\left\langle \left\langle re^{\oplus
n},\overline{\mathfrak{B}_{h}^{e}}+\overline{e}^{\oplus m}\right\rangle
\right\rangle \\
&  =\left\langle \left\langle \zeta,\overline{\mathfrak{B}_{h}^{e}}%
+\overline{e}^{\oplus m}\right\rangle \right\rangle +rI_{nm}\geq0,
\end{align*}
that is, $\mathfrak{C}_{o}$ is unital. In particular, so is $\mathfrak{C}_{u}$.
\end{remark}

\subsection{The quantum polars}

Now consider the quantum set $\overline{\mathfrak{B}_{h}^{e}}+\overline
{\mathfrak{e}}=\left(  \overline{\mathfrak{B}_{h}^{e}}+\overline{e}^{\oplus
n}\right)  _{n}$ on $\overline{H}$. If $\eta\in\mathfrak{B}_{h}^{e}$ then
$\left\langle \left\langle e,\overline{\eta}\right\rangle \right\rangle
=\left\langle \left\langle e,\overline{\eta}\right\rangle \right\rangle
^{\ast}=\left[  \left\langle e,\overline{\eta_{ij}}\right\rangle \right]
_{i,j}^{\ast}=\left[  \left\langle e,\overline{\eta_{ji}}\right\rangle ^{\ast
}\right]  _{i,j}=\left[  \left(  e,\eta_{ji}\right)  ^{\ast}\right]
_{i,j}=\left[  \left(  \eta_{ji},e\right)  \right]  _{i,j}=\left\langle
\left\langle \eta,\overline{e}\right\rangle \right\rangle ^{\operatorname{t}%
}=0$. Thus $\overline{\mathfrak{B}_{h}^{e}}+\overline{\mathfrak{e}}\subseteq
M\left(  \overline{H}\right)  _{h}\cap M\left(  \overline{H}\right)
_{e}=M\left(  \overline{H}\right)  _{he}$ (see Subsection \ref{subsecUQC}),
and put $\mathfrak{B}_{e}=\left(  \overline{\mathfrak{B}_{h}^{e}}%
+\overline{\mathfrak{e}}\right)  ^{\odot}$, which is a closed, absolutely
matrix convex subset on $H$. Note that $\overline{\mathfrak{B}_{h}^{e}%
}+\overline{\mathfrak{e}}$ is a matrix convex subset of $M\left(  \overline
{H}\right)  _{h}$.

\begin{lemma}
\label{lBP1}The equality $\mathfrak{B}_{e}^{\odot}\cap M\left(  \overline
{H}\right)  _{he}=\overline{\mathfrak{B}_{h}^{e}}+\overline{\mathfrak{e}}$ holds.
\end{lemma}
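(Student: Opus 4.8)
The plan is to prove the two inclusions separately; the inclusion $\supseteq$ is formal and all the work is in $\subseteq$. Write $\mathfrak{A}=\overline{\mathfrak{B}_h^e}+\overline{\mathfrak{e}}$. Since $\mathfrak{A}\subseteq\mathfrak{A}^{\odot\odot}=\mathfrak{B}_e^{\odot}$ for any quantum set (polars reverse inclusion) and $\mathfrak{A}\subseteq M(\overline{H})_{he}$ was just observed, we get $\mathfrak{A}\subseteq\mathfrak{B}_e^{\odot}\cap M(\overline{H})_{he}$ at once.

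\textbf{Unpacking $\mathfrak{B}_e^{\odot}$.} By the Bipolar Theorem \ref{BiT}, applied to the absolutely matrix convex hull of $\mathfrak{A}$, the set $\mathfrak{B}_e^{\odot}=\mathfrak{A}^{\odot\odot}$ is the $\mathfrak{s}(\overline{H},H)$-closure of all finite matrix combinations $\sum_k a_k\overline{\xi_k}b_k$ with $\overline{\xi_k}\in\mathfrak{A}$ and scalar matrices $a_k,b_k$ of matching sizes satisfying $\sum_k a_ka_k^{\ast}\leq I$ and $\sum_k b_k^{\ast}b_k\leq I$. I would then use the elementary identity $a\,\overline{e}^{\oplus m}b=(ab)\,\overline{e}^{\oplus n}$ for scalar $a\in M_{n,m}$, $b\in M_{m,n}$ (a scalar matrix times the diagonal generator), so that, splitting each $\overline{\xi_k}=\overline{\zeta_k}+\overline{e}^{\oplus m_k}$ with $\overline{\zeta_k}\in\overline{\mathfrak{B}_h^e}$, a level-$n$ combination takes the form
\[
\sum_k a_k\overline{\xi_k}b_k=\overline{w}+c\,\overline{e}^{\oplus n},\qquad\overline{w}:=\sum_k a_k\overline{\zeta_k}b_k,\quad c:=\sum_k a_kb_k .
\]
Since each $\overline{\zeta_k}$ lies in the operator Hilbert ball $\overline{\mathfrak{B}}=\mathfrak{B}^{\odot}$, which is absolutely matrix convex, $\overline{w}\in\overline{\mathfrak{B}}$, i.e. $\left\Vert w\right\Vert_o\leq1$; and $\langle\langle\zeta_k,\overline{e}\rangle\rangle=0$ forces $\langle\langle e,\overline{\zeta_k}\rangle\rangle=0$, whence $\langle\langle e,\overline{w}\rangle\rangle=0$ and $\langle\langle e,\sum_k a_k\overline{\xi_k}b_k\rangle\rangle=c$. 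Thus every element of the hull decomposes canonically as $\overline{w}+c\,\overline{e}^{\oplus n}$ with $\Vert w\Vert_o\leq1$, $\langle\langle e,\overline{w}\rangle\rangle=0$ and $c$ equal to its pairing with $e$.

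\textbf{Passing to the limit.} Take $\overline{\eta}\in\mathfrak{B}_e^{\odot}\cap M_n(\overline{H})_{he}$ and a net $\overline{\eta^{(\alpha)}}=\overline{w^{(\alpha)}}+c^{(\alpha)}\overline{e}^{\oplus n}$ of such combinations with $\overline{\eta^{(\alpha)}}\to\overline{\eta}$ in $\mathfrak{s}(\overline{H},H)$ at level $n$. Pairing against $e$ (a weakly continuous operation) gives $c^{(\alpha)}=\langle\langle e,\overline{\eta^{(\alpha)}}\rangle\rangle\to\langle\langle e,\overline{\eta}\rangle\rangle=I_n$, hence $\overline{w^{(\alpha)}}=\overline{\eta^{(\alpha)}}-c^{(\alpha)}\overline{e}^{\oplus n}\to\overline{\eta}-\overline{e}^{\oplus n}$ weakly. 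Because $\overline{\mathfrak{B}}=\mathfrak{B}^{\odot}$ and the hyperplane $\{\langle\langle e,\cdot\rangle\rangle=0\}$ are both $\mathfrak{s}(\overline{H},H)$-closed and contain every $\overline{w^{(\alpha)}}$, the limit $\overline{\eta}-\overline{e}^{\oplus n}$ lies in $\overline{\mathfrak{B}}\cap M_n(\overline{H})^e$; it is also hermitian, since $\overline{\eta}$ and $\overline{e}^{\oplus n}$ are. Therefore $\overline{\eta}-\overline{e}^{\oplus n}\in\overline{\mathfrak{B}_h^e}$, so $\overline{\eta}\in\overline{\mathfrak{B}_h^e}+\overline{\mathfrak{e}}$, which is the remaining inclusion.

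\textbf{Main obstacle.} The substantive point is the limiting step: although elements of the absolutely matrix convex hull of $\mathfrak{A}$ leave the slice $M(\overline{H})_{he}$, the entire departure is carried by the scalar coefficient $c^{(\alpha)}$, which is forced to $I_n$ as soon as the limit sits in the slice, so the ``core'' parts $\overline{w^{(\alpha)}}$ converge inside the weakly closed operator Hilbert ball. I expect the rest — keeping leg orderings in $\langle\langle\cdot,\cdot\rangle\rangle$ straight when checking $a\,\overline{e}^{\oplus m}b=(ab)\,\overline{e}^{\oplus n}$ and $\langle\langle e,\overline{w}\rangle\rangle=0$, and quoting the form of the absolutely matrix convex hull matching the polar $\odot$ — to be only routine bookkeeping.
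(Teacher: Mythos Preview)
Your proposal is correct and follows essentially the same route as the paper: both identify $\mathfrak{B}_e^{\odot}$ with the weak closure of the absolutely matrix convex hull of $\overline{\mathfrak{B}_h^e}+\overline{\mathfrak{e}}$ via Theorem~\ref{BiT}, split a generic hull element as $\overline{w}+c\,\overline{e}^{\oplus n}$ using $a\,\overline{e}^{\oplus m}b=(ab)\,\overline{e}^{\oplus n}$, read off $c$ by pairing with $e$, and pass to the limit inside the weakly closed ball $\overline{\mathfrak{B}}$. The only cosmetic difference is that the paper collapses the finite matrix combinations to single terms $a_k(\overline{\eta_k}+\overline{e}^{\oplus n_k})b_k$ by the direct-sum trick, whereas you keep the sums explicit.
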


\begin{proof}
Take $\overline{z}\in\operatorname{amc}\left(  \overline{\mathfrak{B}_{h}^{e}%
}+\overline{\mathfrak{e}}\right)  ^{-}\cap M_{n}\left(  \overline{H}\right)
_{he}$, where $\operatorname{amc}\left(  \overline{\mathfrak{B}_{h}^{e}%
}+\overline{\mathfrak{e}}\right)  ^{-}$ is the closed absolutely matrix convex
hull of $\overline{\mathfrak{B}_{h}^{e}}+\overline{\mathfrak{e}}$ in $M\left(
\overline{H}\right)  $. Then $\overline{z}\in M_{n}\left(  \overline
{H}\right)  _{h}$, $\left\langle \left\langle e,\overline{z}\right\rangle
\right\rangle =I_{n}$ and $\overline{z}=\lim_{k}a_{k}\left(  \overline
{\eta_{k}}+\overline{e}^{\oplus n_{k}}\right)  b_{k}$ with $a_{k},b_{k}%
\in\operatorname{ball}M$ and $\eta_{k}\in\mathfrak{B}_{h}^{e}$. It follows
that
\[
I_{n}=\left\langle \left\langle e,\overline{z}\right\rangle \right\rangle
=\lim_{k}\left\langle \left\langle e,a_{k}\left(  \overline{\eta_{k}%
}+\overline{e}^{\oplus n_{k}}\right)  b_{k}\right\rangle \right\rangle
=\lim_{k}a_{k}\left\langle \left\langle e,\overline{\eta_{k}}\right\rangle
\right\rangle b_{k}+a_{k}\left\langle \left\langle e,\overline{e}^{\oplus
n_{k}}\right\rangle \right\rangle b_{k}=\lim_{k}a_{k}b_{k},
\]
which in turn implies that $\lim_{k}a_{k}\overline{e}^{\oplus n_{k}}b_{k}%
=\lim_{k}a_{k}b_{k}\overline{e}^{\oplus n}=\overline{e}^{\oplus n}$. In
particular, we have the limit $\overline{\eta}=\lim_{k}a_{k}\overline{\eta
_{k}}b_{k}=\overline{z}-\overline{e}^{\oplus n}\in M_{n}\left(  \overline
{H}\right)  _{h}$ and $\left\Vert \overline{\eta}\right\Vert _{o}\leq
\limsup_{k}\left\Vert a_{k}\right\Vert \left\Vert \overline{\eta_{k}%
}\right\Vert _{o}\left\Vert b_{k}\right\Vert \leq1$, that is, $\overline{\eta
}\in\overline{\mathfrak{B}_{h}^{e}}$. Hence $\overline{z}=\overline{\eta
}+\overline{e}^{\oplus n}\in\overline{\mathfrak{B}_{h}^{e}}+\overline
{\mathfrak{e}}$. Thus $\operatorname{amc}\left(  \overline{\mathfrak{B}%
_{h}^{e}}+\overline{\mathfrak{e}}\right)  ^{-}\cap M\left(  \overline
{H}\right)  _{he}=\overline{\mathfrak{B}_{h}^{e}}+\overline{\mathfrak{e}}$.

Finally, using the Bipolar Theorem \ref{BiT}, we deduce that $\mathfrak{B}%
_{e}^{\odot}\cap M\left(  \overline{H}\right)  _{he}=\left(  \overline
{\mathfrak{B}_{h}^{e}}+\overline{\mathfrak{e}}\right)  ^{\odot\odot}\cap
M\left(  \overline{H}\right)  _{he}=\operatorname{amc}\left(  \overline
{\mathfrak{B}_{h}^{e}}+\overline{\mathfrak{e}}\right)  ^{-}\cap M\left(
\overline{H}\right)  _{he}=\overline{\mathfrak{B}_{h}^{e}}+\overline
{\mathfrak{e}}$.
\end{proof}

\begin{lemma}
\label{lBP2}The following inclusions $10^{-1}\mathfrak{B}_{e}\subseteq
\mathfrak{B\subseteq}2\mathfrak{B}_{e}$ of quantum balls on $H$ hold. In
particular, the Minkowski functional $\left\Vert \cdot\right\Vert _{e}$ of
$\mathfrak{B}_{e}$ is a matrix norm which is equivalent to $\left\Vert
\cdot\right\Vert _{o}$.
\end{lemma}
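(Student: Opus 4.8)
The plan is to pass everything to operator polars and combine the self-duality $\mathfrak{B}^{\odot}=\overline{\mathfrak{B}}$ of $H_{o}$ with the Bipolar Theorem \ref{BiT}. Since $\mathfrak{B}_{e}=\left(\overline{\mathfrak{B}_{h}^{e}}+\overline{\mathfrak{e}}\right)^{\odot}$, Theorem \ref{BiT} gives $\mathfrak{B}_{e}^{\odot}=\left(\overline{\mathfrak{B}_{h}^{e}}+\overline{\mathfrak{e}}\right)^{\odot\odot}=\operatorname{amc}\left(\overline{\mathfrak{B}_{h}^{e}}+\overline{\mathfrak{e}}\right)^{-}$, the closed absolutely matrix convex hull, exactly as in the proof of Lemma \ref{lBP1}, and $\mathfrak{B}_{e}^{\odot\odot}=\mathfrak{B}_{e}$ since $\mathfrak{B}_{e}$ is $\mathfrak{s}\left(H,\overline{H}\right)$-closed and absolutely matrix convex. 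Because operator polars reverse inclusions between closed absolutely matrix convex sets (via Theorem \ref{BiT}) and send $\lambda\left(\cdot\right)$ to $\lambda^{-1}\left(\cdot\right)^{\odot}$, the two desired inclusions are equivalent to
\[
\operatorname{amc}\left(\overline{\mathfrak{B}_{h}^{e}}+\overline{\mathfrak{e}}\right)^{-}\subseteq 2\overline{\mathfrak{B}}\qquad\text{and}\qquad 10^{-1}\overline{\mathfrak{B}}\subseteq\operatorname{amc}\left(\overline{\mathfrak{B}_{h}^{e}}+\overline{\mathfrak{e}}\right)^{-}.
\]
The first follows at once: $2\overline{\mathfrak{B}}$ is closed and absolutely matrix convex, and $\left\Vert\overline{\eta}+\overline{e}^{\oplus m}\right\Vert_{o}\leq\left\Vert\overline{\eta}\right\Vert_{o}+\left\Vert\overline{e}^{\oplus m}\right\Vert_{o}\leq 2$ for $\overline{\eta}\in\overline{\mathfrak{B}_{h}^{e}}$, using $\left\Vert e\right\Vert_{o}=1$, so $\overline{\mathfrak{B}_{h}^{e}}+\overline{\mathfrak{e}}\subseteq 2\overline{\mathfrak{B}}$ and hence so is its closed absolutely matrix convex hull. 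This already yields $\mathfrak{B}\subseteq 2\mathfrak{B}_{e}$.

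The second inclusion is the heart of the matter. Given $\overline{\zeta}\in M_{n}\left(\overline{H}\right)$ with $\left\Vert\overline{\zeta}\right\Vert_{o}\leq 1$, I first reduce to the hermitian case: as $\left\Vert\overline{\eta}^{\ast}\right\Vert_{o}=\left\Vert\overline{\eta}\right\Vert_{o}$, the hermitian parts $\operatorname{Re}\overline{\zeta}$ and $\operatorname{Im}\overline{\zeta}$ again have $\left\Vert\cdot\right\Vert_{o}\leq 1$, and $\operatorname{amc}\left(\cdot\right)^{-}$ is stable under multiplication by scalars of modulus $\leq 1$ and under ordinary convex combinations; thus, once $\tfrac{1}{5}\overline{\zeta}$ is shown to lie in the hull for every hermitian $\overline{\zeta}$ with $\left\Vert\overline{\zeta}\right\Vert_{o}\leq 1$, it follows that $\tfrac{1}{10}\overline{\zeta}=\tfrac12\left(\tfrac15\operatorname{Re}\overline{\zeta}\right)+\tfrac12\left(\tfrac{i}{5}\operatorname{Im}\overline{\zeta}\right)$ lies in it in general. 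For hermitian $\overline{\zeta}$ with $\left\Vert\overline{\zeta}\right\Vert_{o}\leq 1$ I use the canonical decomposition $\overline{\zeta}=\overline{\zeta_{0}}+c\,\overline{e}^{\oplus n}$ with $c=\left\langle\left\langle\overline{\zeta},e\right\rangle\right\rangle\in\left(M_{n}\right)_{h}$ and $\overline{\zeta_{0}}\in M_{n}\left(\overline{H}\right)_{h}^{e}$. The self-duality pairing bound gives $\left\Vert c\right\Vert=\left\Vert\left\langle\left\langle\overline{\zeta},e\right\rangle\right\rangle\right\Vert\leq\left\Vert\overline{\zeta}\right\Vert_{o}\left\Vert e\right\Vert_{o}\leq 1$, and then $\left\Vert c\,\overline{e}^{\oplus n}\right\Vert_{o}\leq\left\Vert c\right\Vert\left\Vert\overline{e}^{\oplus n}\right\Vert_{o}\leq 1$ and $\left\Vert\overline{\zeta_{0}}\right\Vert_{o}\leq\left\Vert\overline{\zeta}\right\Vert_{o}+\left\Vert c\,\overline{e}^{\oplus n}\right\Vert_{o}\leq 2$.

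It remains to assemble the combination. Since $\overline{e}^{\oplus n}=0+\overline{e}^{\oplus n}\in\overline{\mathfrak{B}_{h}^{e}}+\overline{\mathfrak{e}}$, the product $c\,\overline{e}^{\oplus n}=c\left(\overline{e}^{\oplus n}\right)I_{n}$ is a one-term absolutely matrix convex combination (as $\left\Vert cc^{\ast}\right\Vert\leq 1$), so $c\,\overline{e}^{\oplus n}\in\operatorname{amc}\left(\overline{\mathfrak{B}_{h}^{e}}+\overline{\mathfrak{e}}\right)^{-}$; on the other hand $\tfrac12\overline{\zeta_{0}}\in\overline{\mathfrak{B}_{h}^{e}}$, while the identity $\tfrac12\overline{\eta}=\tfrac12\left(\overline{\eta}+\overline{e}^{\oplus m}\right)+\tfrac12\left(-\overline{e}^{\oplus m}\right)$ shows $\overline{\mathfrak{B}_{h}^{e}}\subseteq 2\operatorname{amc}\left(\overline{\mathfrak{B}_{h}^{e}}+\overline{\mathfrak{e}}\right)^{-}$, so $\tfrac14\overline{\zeta_{0}}\in\operatorname{amc}\left(\overline{\mathfrak{B}_{h}^{e}}+\overline{\mathfrak{e}}\right)^{-}$. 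Then $\tfrac15\overline{\zeta}=\tfrac45\left(\tfrac14\overline{\zeta_{0}}\right)+\tfrac15\left(c\,\overline{e}^{\oplus n}\right)$ is a convex combination of elements of the hull, hence lies in it, completing the reduction. Taking operator polars back yields $\mathfrak{B}_{e}\subseteq 10\mathfrak{B}$, i.e. $10^{-1}\mathfrak{B}_{e}\subseteq\mathfrak{B}\subseteq 2\mathfrak{B}_{e}$; and since $\mathfrak{B}_{e}$ is $\mathfrak{s}\left(H,\overline{H}\right)$-closed, absolutely matrix convex and absorbent (being sandwiched between multiples of the norm ball $\mathfrak{B}$), its Minkowski functional $\left\Vert\cdot\right\Vert_{e}$ is a matrix seminorm, and the two-sided inclusion gives $\tfrac12\left\Vert\cdot\right\Vert_{e}\leq\left\Vert\cdot\right\Vert_{o}\leq 10\left\Vert\cdot\right\Vert_{e}$, so $\left\Vert\cdot\right\Vert_{e}$ is a matrix norm equivalent to $\left\Vert\cdot\right\Vert_{o}$. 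The main obstacle is precisely this lower inclusion: setting up the decomposition of a hermitian $\overline{\zeta}$ and verifying that each two-term combination employed is a genuine absolutely-matrix-convex operation.
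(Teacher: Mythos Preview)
Your proof is correct and follows essentially the same approach as the paper: pass to operator polars, show $\overline{\mathfrak{B}_{h}^{e}}+\overline{\mathfrak{e}}\subseteq 2\overline{\mathfrak{B}}$ for the easy inclusion, and for the hard inclusion decompose a hermitian element along $\overline{e}^{\oplus n}$ and its complement, then handle the non-hermitian case via $\operatorname{Re}/\operatorname{Im}$. The only cosmetic difference is that the paper writes the hermitian $z$ (with $\|z\|_{o}\leq 1/2$) as $(w+e^{\oplus n})+(a-I_{n})e^{\oplus n}$, landing one summand directly in $\mathfrak{B}_{h}^{e}+\mathfrak{e}$ and the other in $(3/2)\operatorname{amc}(\mathfrak{e})$, whereas you keep $\overline{\zeta_{0}}$ and $c\,\overline{e}^{\oplus n}$ separate and invoke $\overline{\mathfrak{B}_{h}^{e}}\subseteq 2\operatorname{amc}(\overline{\mathfrak{B}_{h}^{e}}+\overline{\mathfrak{e}})$; both routes yield the same constant $10$.
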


\begin{proof}
First note that $\overline{\mathfrak{B}_{h}^{e}}+\overline{\mathfrak{e}%
}\subseteq\overline{\mathfrak{B}_{h}}+\overline{\mathfrak{B}_{h}}%
\subseteq2\overline{\mathfrak{B}}$. Therefore $2^{-1}\mathfrak{B=}\left(
2\overline{\mathfrak{B}}\right)  ^{\odot}\subseteq\left(  \overline
{\mathfrak{B}_{h}^{e}}+\overline{\mathfrak{e}}\right)  ^{\odot}=\mathfrak{B}%
_{e}$, that is, the second inclusion follows. To prove the first one, take
$z\in2^{-1}\mathfrak{B}_{h}\cap M_{n}\left(  H\right)  $ with its expansion
$z=w+ae^{\oplus n}$, where $w=\sum_{f\neq e}\left\langle \left\langle
z,\overline{f}\right\rangle \right\rangle f^{\oplus n}\in M_{n}\left(
H\right)  _{h}^{e}$ and $a=\left\langle \left\langle z,\overline
{e}\right\rangle \right\rangle $ is hermitian. Moreover, $\left\Vert
w\right\Vert _{o}=\left\Vert \varphi_{e}^{\left(  n\right)  }\left(  z\right)
\right\Vert _{o}\leq2\left\Vert z\right\Vert _{o}\leq1$ (that is,
$w\in\mathfrak{B}_{h}^{e}$) and $\left\Vert a\right\Vert \leq\left\Vert
z\right\Vert _{o}\leq2^{-1}$. Thus $w+e^{\oplus n}\in\mathfrak{B}_{h}%
^{e}+\mathfrak{e}$ and $\left(  a-I_{n}\right)  e^{\oplus n}\in\left(
3/2\right)  \operatorname{amc}\left(  \mathfrak{e}\right)  \subseteq\left(
3/2\right)  \operatorname{amc}\left(  \mathfrak{B}_{h}^{e}+\mathfrak{e}%
\right)  $, which in turn implies that $z=w+e^{\oplus n}+\left(
a-I_{n}\right)  e^{\oplus n}\in\mathfrak{B}_{h}^{e}+\mathfrak{e+}\left(
3/2\right)  \operatorname{amc}\left(  \mathfrak{B}_{h}^{e}+\mathfrak{e}%
\right)  \subseteq\left(  5/2\right)  \operatorname{amc}\left(  \mathfrak{B}%
_{h}^{e}+\mathfrak{e}\right)  $. Hence $2^{-1}\overline{\mathfrak{B}_{h}%
}\subseteq\left(  5/2\right)  \operatorname{amc}\left(  \overline
{\mathfrak{B}_{h}^{e}}+\overline{\mathfrak{e}}\right)  \subseteq\left(
5/2\right)  \left(  \overline{\mathfrak{B}_{h}^{e}}+\overline{\mathfrak{e}%
}\right)  ^{\odot\odot}=\left(  5/2\right)  \mathfrak{B}_{e}^{\odot}$, and
$\overline{\mathfrak{B}}\subseteq2\operatorname{amc}\left(  \overline
{\mathfrak{B}_{h}}\right)  \subseteq10\mathfrak{B}_{e}^{\odot}$. By passing to
the quantum polars, we obtain that $10^{-1}\mathfrak{B}_{e}\subseteq
\overline{\mathfrak{B}}^{\odot}=\mathfrak{B}$.

Finally, for the matrix norm $\left\Vert \zeta\right\Vert _{e}=\sup\left\Vert
\left\langle \left\langle \zeta,\overline{\mathfrak{B}_{h}^{e}}+\overline
{\mathfrak{e}}\right\rangle \right\rangle \right\Vert $ defined by means of
$\mathfrak{B}_{e}$, we obtain that $2^{-1}\left\Vert \zeta\right\Vert _{e}%
\leq\left\Vert \zeta\right\Vert \leq10\left\Vert \zeta\right\Vert _{e}$ for
all $\zeta\in M\left(  H\right)  $, which means that $\left\Vert
\cdot\right\Vert _{e}$ and $\left\Vert \cdot\right\Vert _{o}$ are equivalent
matrix norms.
\end{proof}

\begin{theorem}
\label{tMm1}Let $H$ be a Hilbert space. The operator Hilbert space $H_{o}$ is
an operator system whose unital quantum cone of positive elements is given by
$\mathfrak{C}_{o}$ with $\mathcal{S}\left(  \mathfrak{C}_{o}\right)
=\overline{\mathfrak{B}_{h}^{e}}+\overline{\mathfrak{e}}$. Moreover,
$\mathfrak{C}_{o}^{\boxdot}=\overline{\mathfrak{C}}_{o}$, where $\overline
{\mathfrak{C}}_{o}$ is the related quantum cone on $\overline{H}_{o}$ with
$\mathcal{S}\left(  \overline{\mathfrak{C}}_{o}\right)  =\mathfrak{B}_{h}%
^{e}+\mathfrak{e}$. Thus $H_{o}$ is a self-dual operator system.
\end{theorem}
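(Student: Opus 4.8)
The plan is to assemble Theorem~\ref{tMm1} from the structural facts already in place: Proposition~\ref{lHS21} identifies $\mathfrak{C}_{o}$ as a separated, closed, unital quantum cone on $H$ which is a quantization of $\mathfrak{c}$, and it gives the polar description $\mathfrak{C}_{o}=\left(\overline{\mathfrak{B}_{h}^{e}}+\overline{\mathfrak{e}}\right)^{\boxdot}$; Lemma~\ref{lBP2} shows the Minkowski functional $\left\Vert\cdot\right\Vert_{e}$ of $\mathfrak{B}_{e}$ is a matrix norm equivalent to $\left\Vert\cdot\right\Vert_{o}$; and Lemma~\ref{lBP1} computes $\mathfrak{B}_{e}^{\odot}\cap M\left(\overline{H}\right)_{he}=\overline{\mathfrak{B}_{h}^{e}}+\overline{\mathfrak{e}}$. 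First I would invoke the classification of operator system structures cited in the introduction (from \cite{DSbM}): a unital $\ast$-vector space equipped with a unital quantum cone $\mathfrak{C}$ together with its compatible matrix norm is an operator system precisely when the cone is separated, closed and unital and the norm is the order matrix norm it induces. Since $\mathfrak{C}_{o}$ has all these properties by Proposition~\ref{lHS21}, and since the matrix norm attached to it (the Minkowski functional of $\mathfrak{B}_{e}$, equivalently $\left\Vert\cdot\right\Vert_{e}$) is equivalent to $\left\Vert\cdot\right\Vert_{o}$ by Lemma~\ref{lBP2}, we conclude that $(H_{o},\mathfrak{C}_{o})$ is an operator system whose underlying matrix norm agrees (up to equivalence) with the operator Hilbert norm.

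Next I would pin down the matricial state space. By definition $\mathcal{S}\left(\mathfrak{C}_{o}\right)=\mathfrak{C}_{o}^{\boxdot}\cap M\left(\overline{H}\right)_{e}$, so I must show this equals $\overline{\mathfrak{B}_{h}^{e}}+\overline{\mathfrak{e}}$. One inclusion is immediate: $\overline{\mathfrak{B}_{h}^{e}}+\overline{\mathfrak{e}}\subseteq M\left(\overline{H}\right)_{he}$ was already observed before Lemma~\ref{lBP1}, and $\mathfrak{C}_{o}=\left(\overline{\mathfrak{B}_{h}^{e}}+\overline{\mathfrak{e}}\right)^{\boxdot}$ gives $\overline{\mathfrak{B}_{h}^{e}}+\overline{\mathfrak{e}}\subseteq\mathfrak{C}_{o}^{\boxdot\boxdot}\cap M\left(\overline{H}\right)_{e}$, and since $\mathfrak{C}_{o}$ is closed, $\mathfrak{C}_{o}^{\boxdot\boxdot}=\mathfrak{C}_{o}^{\boxdot}$ — wait, more carefully: $\mathfrak{C}_{o}=\left(\overline{\mathfrak{B}_{h}^{e}}+\overline{\mathfrak{e}}\right)^{\boxdot}$ means $\mathfrak{C}_{o}^{\boxdot}\supseteq\left(\overline{\mathfrak{B}_{h}^{e}}+\overline{\mathfrak{e}}\right)^{\boxdot\boxdot}\supseteq\overline{\mathfrak{B}_{h}^{e}}+\overline{\mathfrak{e}}$ by the bipolar property, so intersecting with $M\left(\overline{H}\right)_{e}$ gives one containment. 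For the reverse, I would use the Unital Bipolar Theorem~\ref{tUBP}: since $\mathfrak{C}_{o}$ is $\mathfrak{s}(H,\overline{H})$-closed and unital, $\mathfrak{C}_{o}=\mathcal{S}\left(\mathfrak{C}_{o}\right)^{\boxdot}$. If $\mathcal{S}\left(\mathfrak{C}_{o}\right)$ were strictly larger than $\overline{\mathfrak{B}_{h}^{e}}+\overline{\mathfrak{e}}$, then passing to polars would shrink $\mathfrak{C}_{o}$ below $\left(\overline{\mathfrak{B}_{h}^{e}}+\overline{\mathfrak{e}}\right)^{\boxdot}=\mathfrak{C}_{o}$, a contradiction — so equality holds provided $\overline{\mathfrak{B}_{h}^{e}}+\overline{\mathfrak{e}}$ is already "polar-saturated" inside $M\left(\overline{H}\right)_{e}$, which is exactly the content of the matrix convexity and closedness established around Lemma~\ref{lBP1}. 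The cleanest route is to observe $\mathcal{S}\left(\mathfrak{C}_{o}\right)=\mathfrak{C}_{o}^{\boxdot}\cap M(\overline{H})_{e}$ and, since $\mathfrak{C}_{o}^{\boxdot}$ is the $\mathfrak{s}$-closed quantum cone generated by $\overline{\mathfrak{B}_{h}^{e}}+\overline{\mathfrak{e}}$ (a matrix additive, matrix convex set lying in $M(\overline{H})_{he}$), intersecting back with the affine slice $M(\overline{H})_{e}$ recovers the generating set — precisely the argument of Lemma~\ref{lBP1} transported to the cone side.

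Finally, for self-duality I would compute $\mathfrak{C}_{o}^{\boxdot}$ directly. Running the entire construction with the roles of $H$ and $\overline{H}$ interchanged produces, by symmetry, a quantum cone $\overline{\mathfrak{C}}_{o}$ on $\overline{H}$ with $\overline{\mathfrak{C}}_{o}=\left(\mathfrak{B}_{h}^{e}+\mathfrak{e}\right)^{\boxdot}$ and $\mathcal{S}\left(\overline{\mathfrak{C}}_{o}\right)=\mathfrak{B}_{h}^{e}+\mathfrak{e}$. Then $\mathfrak{C}_{o}^{\boxdot}=\left(\overline{\mathfrak{B}_{h}^{e}}+\overline{\mathfrak{e}}\right)^{\boxdot\boxdot}=\mathfrak{s}(\overline{H},H)\text{-closure of the quantum cone generated by }\overline{\mathfrak{B}_{h}^{e}}+\overline{\mathfrak{e}}$; the symmetry of $\mathfrak{B}$ under conjugation (the self-dual property $\mathfrak{B}^{\odot}=\overline{\mathfrak{B}}$ quoted from \cite{ER}) identifies this with the cone generated by $\mathfrak{B}_{h}^{e}+\mathfrak{e}$, i.e.\ with $\overline{\mathfrak{C}}_{o}$ after applying Theorem~\ref{t1}. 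Hence $\mathfrak{C}_{o}^{\boxdot}=\overline{\mathfrak{C}}_{o}$, and $H_{o}$ is a self-dual operator system. The step I expect to be the real obstacle is the exact identification $\mathcal{S}\left(\mathfrak{C}_{o}\right)=\overline{\mathfrak{B}_{h}^{e}}+\overline{\mathfrak{e}}$: one must be careful that the $\mathfrak{s}$-closure involved in forming the generated quantum cone does not enlarge the slice over $M(\overline{H})_{e}$, which is why the closedness and matrix-convexity bookkeeping of Lemma~\ref{lBP1} (and its cone-analogue via Theorem~\ref{tUBP}) is doing the essential work; once that is nailed down, the operator-system assertion and self-duality follow formally from the duality machinery of Sections~\ref{secPr}--\ref{secQHS}.
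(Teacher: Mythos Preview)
Your overall architecture matches the paper's: use Proposition~\ref{lHS21} and Lemma~\ref{lBP2} to get the operator system structure, use Lemma~\ref{lBP1} for the state space, and then run a symmetric argument on $\overline{H}$ for self-duality. But two of your key steps do not go through as written.

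First, the state-space identification. Your contradiction argument (``if $\mathcal{S}(\mathfrak{C}_{o})$ were strictly larger, passing to polars would shrink $\mathfrak{C}_{o}$ below itself'') is invalid: the polar of a larger set is smaller, but not strictly so, and indeed $\mathcal{S}(\mathfrak{C}_{o})^{\boxdot}=\mathfrak{C}_{o}$ always holds by Theorem~\ref{tUBP}, so no contradiction arises. Your fallback (``intersecting back with the affine slice recovers the generating set'') is the right intuition but is not a cone-side version of Lemma~\ref{lBP1} for free. The paper closes this gap by passing through the Paulsen power $\mathcal{P}_{H}$: from $\mathfrak{B}_{e}$ one builds the cone $\mathfrak{C}(\mathfrak{B}_{e})$ on $\mathcal{P}_{H}$ and the associated closed quantum cone $\mathfrak{C}_{e}$ on $H$, and the structure theory of \cite{Dhjm,DSbM} gives the identity $\mathcal{S}(\mathfrak{C}_{e})=\mathfrak{B}_{e}^{\odot}\cap M(\overline{H})_{he}$. \emph{Then} Lemma~\ref{lBP1} applies directly to give $\overline{\mathfrak{B}_{h}^{e}}+\overline{\mathfrak{e}}$, and one checks $\mathfrak{C}_{e}=\mathfrak{C}_{o}$ via Proposition~\ref{lHS21}. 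You are missing this bridge from the cone polar $\mathfrak{C}_{o}^{\boxdot}\cap M(\overline{H})_{e}$ to the absolute polar $\mathfrak{B}_{e}^{\odot}\cap M(\overline{H})_{he}$.

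Second, the self-duality step. Invoking ``the symmetry of $\mathfrak{B}$ under conjugation ($\mathfrak{B}^{\odot}=\overline{\mathfrak{B}}$)'' is not enough to identify the \emph{cone} generated by $\overline{\mathfrak{B}_{h}^{e}}+\overline{\mathfrak{e}}$ with $\overline{\mathfrak{C}}_{o}$. The paper instead shows $\overline{\mathfrak{C}}_{o}^{\boxdot}=\mathfrak{C}_{o}$: using Theorem~\ref{tmax32} to see $\overline{\mathfrak{C}}_{o}^{\boxdot}$ is unital, then Theorems~\ref{tUBP} and~\ref{t1} reduce the claim to
\[
(\mathfrak{B}_{h}^{e}+\mathfrak{e})^{\boxdot}\cap M(\overline{H})_{he}=\overline{\mathfrak{B}_{h}^{e}}+\overline{\mathfrak{e}},
\]
and this equality is verified by hand. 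The nontrivial inclusion uses that $\langle\langle\mathfrak{B}_{h}^{e},\overline{\eta_{0}}\rangle\rangle+I\geq 0$ forces $\sup\|\langle\langle\mathfrak{B}_{h}^{e},\overline{\eta_{0}}\rangle\rangle\|\leq 1$, and then one plugs in $\|\eta_{0}\|_{o}^{-1}\eta_{0}\in\mathfrak{B}_{h}^{e}$ to deduce $\|\eta_{0}\|_{o}\leq 1$; the reverse inclusion is the matrix Schwarz inequality \cite[3.5.1]{ER}. This explicit computation is the heart of the self-duality and is absent from your outline.
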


\begin{proof}
As above $\mathfrak{B}$ denotes the unit ball of the matrix norm $\left\Vert
\cdot\right\Vert _{o}$. By Lemma \ref{lBP2}, $\mathfrak{B}_{e}$ is an
absorbent, $\mathfrak{s}\left(  H,\overline{H}\right)  $-closed, absolutely
matrix convex set on $H$. As in \cite[Lemma 4.3]{Dhjm} (see also \cite{DSbM})
consider the Paulsen's power $\mathcal{P}_{H}$ of $H$ and related
$\mathfrak{s}\left(  \mathcal{P}_{H},\mathcal{P}_{\overline{H}}\right)
$-closed (see Theorem \ref{tdos}), unital, quantum cone $\mathfrak{C}%
_{\mathfrak{B}_{e}}$ on $\mathcal{P}_{H}$ obtained by means of $\mathfrak{B}%
_{e}$. For brevity we write $\mathfrak{C}\left(  \mathfrak{B}_{e}\right)  $
instead of $\mathfrak{C}_{\mathfrak{B}_{e}}$. Notice that $\mathfrak{C}\left(
\mathfrak{B}_{e}\right)  $ is a cone on $H$. The $\mathfrak{s}\left(
H,\overline{H}\right)  $-closed, quantum cone on $H$ generated by
$\mathfrak{C}\left(  \mathfrak{B}_{e}\right)  $ is denoted by $\mathfrak{C}%
_{e}$. Actually, $\mathfrak{C}_{e}=\mathfrak{C}\left(  \mathfrak{B}%
_{e}\right)  ^{\boxdot\boxdot}$, where $\mathfrak{C}\left(  \mathfrak{B}%
_{e}\right)  ^{\boxdot}$ is the quantum polar of the cone $\mathfrak{C}\left(
\mathfrak{B}_{e}\right)  $ with respect to the dual $\ast$-pair $\left(
H,\overline{H}\right)  $. The quantum cone $\mathfrak{C}_{e}$ is unital and
$\mathcal{S}\left(  \mathfrak{C}_{e}\right)  =\mathfrak{C}\left(
\mathfrak{B}_{e}\right)  ^{\boxdot}\cap M\left(  \overline{H}\right)  _{e}$.
Since $\mathfrak{B}_{e}$ is an absorbent, $\mathfrak{s}\left(  H,\overline
{H}\right)  $-closed, absolutely matrix convex set on $H$, we derive that%
\[
\mathcal{S}\left(  \mathfrak{C}_{e}\right)  =\mathfrak{B}_{e}^{\odot}\cap
M\left(  \overline{H}\right)  _{he}=\overline{\mathfrak{B}_{h}^{e}}%
+\overline{\mathfrak{e}}%
\]
by virtue of Lemma \ref{lBP1}. Using Proposition \ref{lHS21}, we deduce that
\[
\mathfrak{C}_{e}=\left(  \overline{\mathfrak{B}_{h}^{e}}+\overline
{\mathfrak{e}}\right)  ^{\boxdot}=\mathfrak{C}_{o}.
\]
The matrix normed topology on $H$ of the unital quantum cone $\mathfrak{C}%
_{o}$ is given by the absolutely matrix convex set $\widehat{\mathfrak{C}_{o}%
}=h_{H}^{-1}\left(  \mathfrak{C}_{o}-\mathfrak{e}\right)  $ on $H$ (see
\cite[Corollary 5.1]{Dhjm}). Namely, if $p_{o}$ is the Minkowski functional of
$\widehat{\mathfrak{C}_{o}}$ then $p_{o}$ is a matrix norm by Proposition
\ref{lHS21}. Moreover,
\begin{align*}
p_{o}\left(  \zeta\right)   &  =\sup\left\Vert \left\langle \left\langle
\zeta,\mathfrak{C}_{o}^{\boxdot}\cap M\left(  \overline{H}\right)
_{e}\right\rangle \right\rangle \right\Vert =\sup\left\Vert \left\langle
\left\langle \zeta,\mathfrak{C}_{e}^{\boxdot}\cap M\left(  \overline
{H}\right)  _{e}\right\rangle \right\rangle \right\Vert \\
&  =\sup\left\Vert \left\langle \left\langle \zeta,\mathfrak{C}\left(
\mathfrak{B}_{e}\right)  ^{\boxdot}\cap M\left(  \overline{H}\right)
_{e}\right\rangle \right\rangle \right\Vert =\sup\left\Vert \left\langle
\left\langle \zeta,\mathfrak{B}_{e}^{\odot}\cap M\left(  \overline{H}\right)
_{he}\right\rangle \right\rangle \right\Vert \\
&  =\sup\left\Vert \left\langle \left\langle \zeta,\overline{\mathfrak{B}%
_{h}^{e}}+\overline{\mathfrak{e}}\right\rangle \right\rangle \right\Vert
=\left\Vert \zeta\right\Vert _{e}%
\end{align*}
for all $\zeta\in M\left(  H\right)  $, that is, $\widehat{\mathfrak{C}_{o}%
}=\widehat{\mathfrak{C}_{e}}=\mathcal{S}\left(  \mathfrak{C}_{e}\right)
^{\odot}=\left(  \overline{\mathfrak{B}_{h}^{e}}+\overline{\mathfrak{e}%
}\right)  ^{\odot}=\mathfrak{B}_{e}$. Thus $p_{o}=\left\Vert \cdot\right\Vert
_{e}$, which is equivalent to the matrix norm $\left\Vert \cdot\right\Vert
_{o}$ thanks to Lemma \ref{lBP2}. Consequently, $H_{o}$ is an operator system
with the related separated, closed, unital, quantum cone $\mathfrak{C}_{o}$.

By symmetry we have a separated, closed, unital, quantum cone $\overline
{\mathfrak{C}}_{o}$ on $\overline{H}$ with $\mathcal{S}\left(  \overline
{\mathfrak{C}}_{o}\right)  =\mathfrak{B}_{h}^{e}+\mathfrak{e}$. By Proposition
\ref{lHS21}, $\overline{\mathfrak{C}}_{o}$ is a quantization of the unital
cone $\overline{\mathfrak{c}}$ in $H$. Therefore $\max\overline{\mathfrak{c}%
}\subseteq\overline{\mathfrak{C}}_{o}\subseteq\min\overline{\mathfrak{c}}$. By
passing to the quantum polars and using Theorem \ref{tmax32}, we deduce that
$\max\mathfrak{c\subseteq}\overline{\mathfrak{C}}_{o}^{\boxdot}\subseteq
\min\mathfrak{c}$. In particular, $\overline{\mathfrak{C}}_{o}^{\boxdot}$ is a
unital quantum cone. By Unital Bipolar Theorem \ref{tUBP} and Bipolar Theorem
\ref{t1}, we derive that
\[
\overline{\mathfrak{C}}_{o}^{\boxdot}=\left(  \overline{\mathfrak{C}}%
_{o}^{\boxdot\boxdot}\cap M\left(  \overline{H}\right)  _{he}\right)
^{\boxdot}=\left(  \left(  \mathfrak{B}_{h}^{e}+\mathfrak{e}\right)
^{\boxdot\boxdot\boxdot}\cap M\left(  \overline{H}\right)  _{he}\right)
^{\boxdot}=\left(  \left(  \mathfrak{B}_{h}^{e}+\mathfrak{e}\right)
^{\boxdot}\cap M\left(  \overline{H}\right)  _{he}\right)  ^{\boxdot}.
\]
If $\overline{\eta}\in\left(  \mathfrak{B}_{h}^{e}+\mathfrak{e}\right)
^{\boxdot}\cap M_{n}\left(  \overline{H}\right)  _{he}$ then $\overline{\eta
}=\overline{\eta_{0}}+\overline{e}^{\oplus n}$ and $\left\langle \left\langle
\mathfrak{B}_{h}^{e},\overline{\eta_{0}}\right\rangle \right\rangle +I\geq0$,
which in turn implies that $\sup\left\Vert \left\langle \left\langle
\mathfrak{B}_{h}^{e},\overline{\eta_{0}}\right\rangle \right\rangle
\right\Vert \leq1$. In particular, $\left\Vert \eta_{0}\right\Vert
_{o}=\left\Vert \left\langle \left\langle \left\Vert \eta_{0}\right\Vert
_{o}^{-1}\eta_{0},\overline{\eta_{0}}\right\rangle \right\rangle \right\Vert
\leq1$ or $\overline{\eta_{0}}\in\overline{\mathfrak{B}_{h}^{e}}$. Thus
$\left(  \mathfrak{B}_{h}^{e}+\mathfrak{e}\right)  ^{\boxdot}\cap M\left(
\overline{H}\right)  _{he}\subseteq\overline{\mathfrak{B}_{h}^{e}}%
+\overline{\mathfrak{e}}$. Conversely, take $\overline{\eta}=\overline
{\eta_{0}}+\overline{e}^{\oplus n}\in\overline{\mathfrak{B}_{h}^{e}}%
+\overline{\mathfrak{e}}$. Using the matrix Schwarz inequality \cite[3.5.1]%
{ER}, we obtain that $\sup\left\Vert \left\langle \left\langle \mathfrak{B}%
_{h}^{e},\overline{\eta_{0}}\right\rangle \right\rangle \right\Vert \leq
\sup\left\Vert \mathfrak{B}_{h}^{e}\right\Vert _{o}\left\Vert \eta
_{0}\right\Vert _{o}\leq1$, which in turn implies that $\left\langle
\left\langle \mathfrak{B}_{h}^{e}+\mathfrak{e},\overline{\eta}\right\rangle
\right\rangle =\left\langle \left\langle \mathfrak{B}_{h}^{e},\overline
{\eta_{0}}\right\rangle \right\rangle +I\geq0$. The latter means that
$\overline{\eta}\in\left(  \mathfrak{B}_{h}^{e}+\mathfrak{e}\right)
^{\boxdot}\cap M\left(  \overline{H}\right)  _{he}$. Hence $\left(
\mathfrak{B}_{h}^{e}+\mathfrak{e}\right)  ^{\boxdot}\cap M\left(  \overline
{H}\right)  _{he}=\overline{\mathfrak{B}_{h}^{e}}+\overline{\mathfrak{e}}$ and
$\overline{\mathfrak{C}}_{o}^{\boxdot}=\left(  \overline{\mathfrak{B}_{h}^{e}%
}+\overline{\mathfrak{e}}\right)  ^{\boxdot}=\mathfrak{C}_{o}$ by Proposition
\ref{lHS21}.
\end{proof}

\begin{remark}
\label{remBPL1}The unital, quantum cone $\mathfrak{C}_{o}$ on $H$ in Theorem
\ref{tMm1} can be replaced by $\mathcal{S}^{\boxdot}$ for $\mathcal{S=}\left(
2\overline{\mathfrak{B}}\right)  \cap M\left(  \overline{H}\right)  _{he}$.
Namely, note that $\overline{\mathfrak{B}_{h}^{e}}+\overline{\mathfrak{e}%
}\subseteq\left(  2\overline{\mathfrak{B}}\right)  \cap M\left(  \overline
{H}\right)  _{he}\subseteq5\operatorname{abc}\left(  \overline{\mathfrak{B}%
_{h}^{e}}+\overline{\mathfrak{e}}\right)  $. The first inclusion is immediate.
Further, take $\overline{\eta}=\overline{\eta_{0}}+\overline{e}^{\oplus n}%
\in\left(  2\overline{\mathfrak{B}}\right)  \cap M_{n}\left(  \overline
{H}\right)  _{he}$. Then $\left\Vert \overline{\eta_{0}}\right\Vert _{o}%
\leq\left\Vert \overline{\eta}\right\Vert _{o}+\left\Vert \overline{e}^{\oplus
n}\right\Vert _{o}\leq3$, $\overline{\theta}=\left\Vert \overline{\eta_{0}%
}\right\Vert _{o}^{-1}\overline{\eta_{0}}+\overline{e}^{\oplus n}\in
\overline{\mathfrak{B}_{h}^{e}}+\overline{\mathfrak{e}}$ and $\overline{\eta
}=\left\Vert \overline{\eta_{0}}\right\Vert _{o}\overline{\theta}+\left(
1-\left\Vert \overline{\eta_{0}}\right\Vert _{o}\right)  \overline{e}^{\oplus
n}\in5\operatorname{abc}\left(  \overline{\mathfrak{B}_{h}^{e}}+\overline
{\mathfrak{e}}\right)  $. In particular, $5^{-1}\mathfrak{B}_{e}%
\subseteq\mathcal{S}^{\odot}\subseteq\mathfrak{B}_{e}$. As above
$\mathcal{S}^{\odot}$ responds to a unique closed, unital, separated, quantum
cone $\mathfrak{C}$ on $H$ such that $\widehat{\mathfrak{C}}=\mathcal{S}%
^{\odot}$ and $\mathcal{S}\left(  \mathfrak{C}\right)  =\mathcal{S}%
^{\odot\odot}\cap M\left(  \overline{H}\right)  _{he}$. Since
$\mathcal{S\subseteq S}\left(  \mathfrak{C}\right)  $ and $\mathcal{S}^{\odot
}=\widehat{\mathfrak{C}}$, it follows that $\mathfrak{C=}\mathcal{S}^{\boxdot
}$, that is, $\mathcal{S}$ is a prematricial state space of $\mathfrak{C}$
\cite{DSbM}, and the related normed quantum topology coincides with the
original one of $H_{o}$.
\end{remark}

\begin{remark}
\label{remBPL2}The matricial state space $\overline{\mathfrak{B}_{h}^{e}%
}+\overline{\mathfrak{e}}$ can not be replaced by $\mathfrak{B}^{\odot}\cap
M\left(  \overline{H}\right)  _{he}$. Indeed, first note that $\mathfrak{B}%
^{\odot}\cap M\left(  \overline{H}\right)  _{he}=\overline{\mathfrak{B}}\cap
M\left(  \overline{H}\right)  _{he}\subseteq\overline{\mathfrak{B}_{h}^{e}%
}+\overline{\mathfrak{e}}$. Take $\overline{\eta}=\overline{\eta_{0}%
}+\overline{e}^{\oplus n}\in\overline{\mathfrak{B}}\cap M_{n}\left(
\overline{H}\right)  _{he}$. Since $\overline{\mathfrak{B}_{h}^{e}}%
+\overline{\mathfrak{e}}$ is a matrix convex set, it follows that $\eta
_{i,i}=\eta_{0,i,i}+e\in\overline{\mathfrak{B}}\cap\overline{H}_{he}=\left(
\operatorname{ball}\overline{H}\right)  \cap\overline{H}_{he}$ for all $i$.
Taking into account that $\left\Vert \eta_{i,i}\right\Vert ^{2}=\left\Vert
\eta_{0,i,i}\right\Vert ^{2}+1$, we conclude that $\eta_{0,i,i}=0$ for all
$i$, that is, the diagonal of $\overline{\eta_{0}}$ consists of zeros. In
particular, every diagonal entry of $\left\langle \left\langle \eta
_{0},\overline{\eta_{0}}\right\rangle \right\rangle =\left[  \left\langle
\eta_{0,i,k},\overline{\eta_{0,j,l}}\right\rangle \right]  _{\left(
i,j\right)  ,\left(  k,l\right)  }$ is zero. Hence $\left\langle \left\langle
\eta,\overline{\eta}\right\rangle \right\rangle =I+\left\langle \left\langle
\eta_{0},\overline{\eta_{0}}\right\rangle \right\rangle $ in $M_{n^{2}}$ and
the hermitian matrix $\left\langle \left\langle \eta_{0},\overline{\eta_{0}%
}\right\rangle \right\rangle $ admits a positive eigenvalue $\lambda$. It
follows that $1+\lambda$ is an eigenvalue of $\left\langle \left\langle
\eta,\overline{\eta}\right\rangle \right\rangle $. But $\left\Vert
\left\langle \left\langle \eta,\overline{\eta}\right\rangle \right\rangle
\right\Vert =\left\Vert \eta\right\Vert _{o}^{2}\leq1$, therefore $\eta_{0}%
=0$. Consequently, $\mathfrak{B}^{\odot}\cap M\left(  \overline{H}\right)
_{he}=\overline{\mathfrak{e}}$ and $M\left(  H\right)  ^{e}\subseteq
\overline{\mathfrak{e}}^{\odot}=\left(  \mathfrak{B}^{\odot}\cap M\left(
\overline{H}\right)  _{he}\right)  ^{\odot}$, that is, $\left(  \mathfrak{B}%
^{\odot}\cap M\left(  \overline{H}\right)  _{he}\right)  ^{\odot}$ is an
unbounded quantum set, which can not generate the original normed quantum
topology of $H_{o}$.
\end{remark}

\begin{remark}
In the case of a finite dimensional Hilbert space $H$ of dimension $n$ the
quantum cone $\mathfrak{C}_{o}$ is reduced to one from \cite{NP}, that is,
$\left(  H,\mathfrak{C}_{o}\right)  =\operatorname{SOH}\left(  n\right)  $.
Namely, let us prove that if $\mathfrak{C}$ is the quantum cone of the
operator system $\operatorname{SOH}\left(  n\right)  $ then $\mathcal{S}%
\left(  \mathfrak{C}\right)  =\overline{\mathfrak{B}_{h}^{e}}+\overline
{\mathfrak{e}}$. First notice that if $\zeta=\zeta_{0}+ae^{\oplus m}%
\in\mathfrak{C\cap}M_{m}\left(  H\right)  $ then $a\geq0$, $\zeta_{0}^{\ast
}=\zeta_{0}$ and $-ae^{\oplus m}\leq\zeta_{0}\leq ae^{\oplus m}$ in the
operator system $M_{m}\left(  \operatorname{SOH}\left(  n\right)  \right)  $
(see \cite[Proposition 3.3]{NP}). If $a=I$ the latter is equivalent to
$\left\Vert \zeta_{0}\right\Vert _{o}\leq1$. Thus $\mathfrak{B}_{h}%
^{e}+\mathfrak{e\subseteq C}$. Further, take $\overline{\eta}=\overline
{\eta_{0}}+c\overline{e}^{\oplus k}\in\mathcal{S}\left(  \mathfrak{C}\right)
$ then $c=\left\langle \left\langle e,\overline{\eta}\right\rangle
\right\rangle =I$ and $\left\langle \left\langle \mathfrak{B}_{h}%
^{e}+\mathfrak{e,}\overline{\eta}\right\rangle \right\rangle \geq0$. In
particular, $\left\langle \left\langle \mathfrak{B}_{h}^{e}\mathfrak{,}%
\overline{\eta_{0}}\right\rangle \right\rangle +I\geq0$, which means that
$\sup\left\Vert \left\langle \left\langle \mathfrak{B}_{h}^{e}\mathfrak{,}%
\overline{\eta_{0}}\right\rangle \right\rangle \right\Vert \leq1$. Taking into
account that $\eta_{0}/\left\Vert \eta_{0}\right\Vert _{o}\in\mathfrak{B}%
_{h}^{e}$, we derive that $\left\Vert \eta_{0}\right\Vert _{o}=\left\Vert
\left\langle \left\langle \eta_{0}/\left\Vert \eta_{0}\right\Vert
_{o}\mathfrak{,}\overline{\eta_{0}}\right\rangle \right\rangle \right\Vert
\leq\sup\left\Vert \left\langle \left\langle \mathfrak{B}_{h}^{e}%
\mathfrak{,}\overline{\eta_{0}}\right\rangle \right\rangle \right\Vert \leq1$,
that is, $\overline{\eta_{0}}\in\overline{\mathfrak{B}_{h}^{e}}$. Thus
$\mathcal{S}\left(  \mathfrak{C}\right)  \subseteq\overline{\mathfrak{B}%
_{h}^{e}}+\overline{\mathfrak{e}}$. Conversely, $\operatorname{SOH}\left(
n\right)  $ is a self-dual operator system \cite[Theorem 3.4]{NP}, therefore
$\overline{\mathfrak{B}_{h}^{e}}+\overline{\mathfrak{e}}\subseteq
\overline{\mathfrak{C}}=\mathfrak{C}^{\boxdot}$, which in turn implies that
$\overline{\mathfrak{B}_{h}^{e}}+\overline{\mathfrak{e}}\subseteq
\mathfrak{C}^{\boxdot}\cap M\left(  \overline{H}\right)  _{e}=\mathcal{S}%
\left(  \mathfrak{C}\right)  $. Consequently, $\mathfrak{C=}\mathcal{S}\left(
\mathfrak{C}\right)  ^{\boxdot}=\left(  \overline{\mathfrak{B}_{h}^{e}%
}+\overline{\mathfrak{e}}\right)  ^{\boxdot}=\mathfrak{C}_{o}$ thanks to the
Unital Bipolar Theorem \ref{tUBP} and Proposition \ref{lHS21}.
\end{remark}

\section{The positive maps of operator Hilbert systems\label{sectionPMOH}}

In this section we analyze the positive maps between ordered Hilbert spaces.
Everywhere below $X$ denotes a Hausdorff compact topological space, $C\left(
X\right)  $ the abelian $C^{\ast}$-algebra of all complex continuous functions
on $X$ with the norm $\left\Vert v\right\Vert _{\infty}=\sup\left\vert
v\left(  X\right)  \right\vert $, $v\in C\left(  X\right)  $, and the unital
quantum cone $M\left(  C\left(  X\right)  \right)  _{+}$ of all positive
matrix valued functions on $X$, which is a quantization of the cone $C\left(
X\right)  _{+}$.

\subsection{Positive maps between unital Hilbert spaces\label{subsecPM11}}

Now let $\left(  K,u\right)  $ and $\left(  H,e\right)  $ be unital Hilbert
spaces with the related unital cones $\mathfrak{c}_{u}$ and $\mathfrak{c}_{e}%
$, respectively, and let $F$ be a hermitian basis for $H$, which contains $e$.
A bounded family $k=\left\{  k_{f}:f\in F\right\}  \subseteq K_{h}$ is said to
be \textit{an }$H$\textit{-support in} $K$ if
\[
k_{e}\in\mathfrak{c}_{u}\quad\text{and\quad}\sum_{f\neq e}\left(  \eta
,k_{f}\right)  ^{2}\leq\left(  \eta,k_{e}\right)  ^{2}\quad\text{for all\quad
}\eta\in\mathfrak{c}_{u}.
\]
In this case, $\left(  \eta,k_{e}\right)  \geq0$ for all $\eta\in
\mathfrak{c}_{u}$. Indeed, put $k_{f}=k_{f}^{u}+r_{f}u$ with $k_{f}^{u}\in
K_{h}^{u}$, $r_{f}\in\mathbb{R}$. Note that $r_{e}\geq0$ and $\left\Vert
k_{e}^{u}\right\Vert \leq r_{e}$, for $k_{e}=k_{e}^{u}+r_{e}u\in
\mathfrak{c}_{u}$. It follows that $\left(  \eta,k_{e}\right)  =\left(
\eta_{0},k_{e}^{u}\right)  +r_{e}\left(  \eta,u\right)  $ and $\left\vert
\left(  \eta_{0},k_{e}^{u}\right)  \right\vert \leq\left\Vert \eta
_{0}\right\Vert \left\Vert k_{e}^{u}\right\Vert \leq r_{e}\left\Vert \eta
_{0}\right\Vert $. In particular, if $\eta\in\mathfrak{c}_{u}$ then
$\left\Vert \eta_{0}\right\Vert \leq\left(  \eta,u\right)  $, and $\left\vert
\left(  \eta,k_{e}^{u}\right)  \right\vert =\left\vert \left(  \eta_{0}%
,k_{e}^{u}\right)  \right\vert \leq r_{e}\left(  \eta,u\right)  $, which in
turn implies that $\left(  \eta,k_{e}\right)  =\left(  \eta_{0},k_{e}%
^{u}\right)  +r_{e}\left(  \eta,u\right)  \geq0$. Note also that $\sum_{f\neq
e}r_{f}^{2}=\sum_{f\neq e}\left(  u,k_{f}\right)  ^{2}\leq\left(
u,k_{e}\right)  ^{2}=r_{e}^{2}$, for $u\in\mathfrak{c}_{u}$. If $r_{e}=1$ and
$r_{f}=0$ for all $f\neq e$ then we say that $k$ is \textit{a unital }%
$H$-\textit{support.}

\begin{remark}
\label{remUHS}Let $k=\left\{  k_{f}:f\in F\right\}  \subseteq K_{h}$ be a
bounded family with $k_{e}\in\operatorname{ball}K_{h}^{u}+u$ and $k_{f}\perp
u$, $f\neq e$. Then $k$ is a unital $H$-support iff $\sum_{f\neq e}\left(
\eta_{0},k_{f}\right)  ^{2}\leq\left(  \left(  \eta_{0},k_{e}\right)
+\left\Vert \eta_{0}\right\Vert \right)  ^{2}$ for all $\eta_{0}\in K_{h}^{u}%
$. Indeed, since $\eta=\eta_{0}+\left\Vert \eta_{0}\right\Vert u\in
\mathfrak{c}_{u}$ for every $\eta_{0}\in K_{h}^{u}$, it follows that
$\sum_{f\neq e}\left(  \eta_{0},k_{f}\right)  ^{2}=\sum_{f\neq e}\left(
\eta,k_{f}\right)  ^{2}\leq\left(  \eta,k_{e}\right)  ^{2}=\left(  \left(
\eta_{0},k_{e}\right)  +\left\Vert \eta_{0}\right\Vert \right)  ^{2}$. Note
also that $\left(  \eta_{0},k_{e}\right)  =0$ whenever $k_{e}=u$ (see below
Subsection \ref{subsecCHe}).
\end{remark}

If additionally $\sum_{f}\left\Vert k_{f}\right\Vert ^{p}<\infty$ we say that
$k$ is \textit{of type }$p$, where $p=1,2$. An $H$-support $k$ in $K$ defines
a linear operator
\[
T_{k}:K\rightarrow H,\quad T_{k}\eta=\sum_{f}\left(  \eta,k_{f}\right)  f,
\]
which is positive in the sense of $T_{k}\left(  \mathfrak{c}_{u}\right)
\subseteq\mathfrak{c}_{e}$. In particular, $T_{k}$ is a $\ast$-linear mapping.
Note that $\left\vert \left(  T_{k}\eta,e\right)  \right\vert =\left\vert
\left(  \eta,k_{e}\right)  \right\vert \leq\sup\left\Vert k\right\Vert $ for
all $\eta\in\operatorname{ball}K$. It follows that
\begin{align*}
\left\Vert T_{k}\eta\right\Vert  &  \leq\left\Vert T_{k}\eta_{0}\right\Vert
+\left\vert \left(  \eta,u\right)  \right\vert \left\Vert T_{k}u\right\Vert
\leq\left\Vert T_{k}\eta_{0}\right\Vert +\left\Vert T_{k}u\right\Vert
\leq\left\Vert T_{k}\left(  \eta_{0}+u\right)  \right\Vert +2\left\Vert
T_{k}u\right\Vert \\
&  \leq\sqrt{2}\left(  T_{k}\left(  \eta_{0}+3u\right)  ,e\right)  \leq
4\sqrt{2}\sup\left\Vert k\right\Vert
\end{align*}
for all $\eta=\eta_{0}+\left(  \eta,u\right)  u\in\operatorname{ball}K_{h}$.
For $\eta\in\operatorname{ball}K$ we derive that $\left\Vert T_{k}%
\eta\right\Vert \leq\left\Vert T_{k}\operatorname{Re}\eta\right\Vert
+\left\Vert T_{k}\operatorname{Im}\eta\right\Vert \leq8\sqrt{2}\sup\left\Vert
k\right\Vert $, that is, $T_{k}\in\mathcal{B}\left(  K,H\right)  $ with
$\left\Vert T_{k}\right\Vert \leq8\sqrt{2}\sup\left\Vert k\right\Vert $.

\begin{remark}
\label{remHSop}Let $k$ be an $H$-support in $K$. Then $T_{k}\in\mathcal{B}%
^{2}\left(  K,H\right)  $ iff $k$ is of type $2$. Indeed, take a Hilbert basis
$\left(  \eta_{i}\right)  _{i\in I}$ for $K$. Since $T_{k}\eta=\sum_{f}\left(
\eta,k_{f}\right)  f$, $\eta\in K$, we deduce that $\sum_{f}\left\Vert
k_{f}\right\Vert _{2}^{2}=\sum_{f}\sum_{i\in I}\left\vert \left(  k_{f}%
,\eta_{i}\right)  \right\vert ^{2}=\sum_{i\in I}\sum_{f}\left\vert \left(
\eta_{i},k_{f}\right)  \right\vert ^{2}=\sum_{i\in I}\left\Vert T\eta
_{i}\right\Vert ^{2}=\left\Vert T\right\Vert _{2}^{2}$. If $k$ is of type $1$
then $T_{k}\in\mathcal{B}^{1}\left(  K,H\right)  $, $\left\Vert T\right\Vert
_{1}\leq\sum_{f}\left\Vert f\right\Vert \left\Vert k_{f}\right\Vert \leq
\sum_{f}\left\Vert k_{f}\right\Vert <\infty$.
\end{remark}

\begin{proposition}
\label{propKom1}If $T:\left(  K,u\right)  \rightarrow\left(  H,e\right)  $ is
a (untal) positive mapping then $T=T_{k}$ for a certain (unital) $H$-support
$k$ in $K$.
\end{proposition}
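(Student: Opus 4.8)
The plan is to reconstruct the support $k$ from $T$ by the Riesz representation theorem, so the first thing to settle is that $T$ is $\ast$-linear and bounded. For $\ast$-linearity, note that $K_{h}=\mathfrak{c}_{u}-\mathfrak{c}_{u}$ by Lemma \ref{lHSc11}, so $T$ carries $K_{h}$ into $\mathfrak{c}_{e}-\mathfrak{c}_{e}\subseteq H_{h}$; applying this to $\operatorname{Re}\eta$ and $\operatorname{Im}\eta$ gives $T\left(\eta^{\ast}\right)=T\left(\eta\right)^{\ast}$ for every $\eta\in K$. For boundedness, take $\eta\in K_{h}$ with $-ru\leq\eta\leq ru$; then $ru\pm\eta\in\mathfrak{c}_{u}$, hence $rT\left(u\right)\pm T\left(\eta\right)\in\mathfrak{c}_{e}$, that is, $-rT\left(u\right)\leq T\eta\leq rT\left(u\right)$. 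Since $u\in\mathfrak{c}_{u}$ we have $T\left(u\right)\in\mathfrak{c}_{e}$, and recalling that $\zeta\in\mathfrak{c}_{e}$ means $\left\Vert \zeta_{0}\right\Vert \leq\left(\zeta,e\right)$ one gets $T\left(u\right)\leq Me$ with $M=\left(T\left(u\right),e\right)+\left\Vert T\left(u\right)-\left(T\left(u\right),e\right)e\right\Vert <\infty$. Thus $-rMe\leq T\eta\leq rMe$, so $\left\Vert T\eta\right\Vert _{e}\leq rM$; taking the infimum over such $r$ and using $\left\Vert \eta\right\Vert _{u}=\inf\left\{ r>0:-ru\leq\eta\leq ru\right\}$ (Proposition \ref{propOKE21} applied to $\left(K,u\right)$) yields $\left\Vert T\eta\right\Vert _{e}\leq M\left\Vert \eta\right\Vert _{u}$ on $K_{h}$; extending through real and imaginary parts and invoking the equivalence of $\left\Vert \cdot\right\Vert _{u}$ and $\left\Vert \cdot\right\Vert _{e}$ with the Hilbert norms, we conclude $T\in\mathcal{B}\left(K,H\right)$.

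Next, for each $f\in F$ the map $\eta\mapsto\left(T\eta,f\right)$ is a bounded linear functional on $K$, so by Riesz there is a unique $k_{f}\in K$ with $\left(T\eta,f\right)=\left(\eta,k_{f}\right)$ for all $\eta\in K$; then $\left\Vert k_{f}\right\Vert \leq\left\Vert T\right\Vert $, so $k=\left\{ k_{f}:f\in F\right\} $ is a bounded family. Each $k_{f}$ is hermitian: using $f=f^{\ast}$ and the $\ast$-linearity of $T$, $\left(\eta^{\ast},k_{f}^{\ast}\right)=\left(\eta,k_{f}\right)^{\ast}=\left(T\eta,f\right)^{\ast}=\left(T\left(\eta^{\ast}\right),f\right)=\left(\eta^{\ast},k_{f}\right)$ for every $\eta$, whence $k_{f}=k_{f}^{\ast}$. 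Since $F$, being a hermitian basis, is an orthonormal basis of $H$, for every $\eta\in K$ we have $T\eta=\sum_{f}\left(T\eta,f\right)f=\sum_{f}\left(\eta,k_{f}\right)f=T_{k}\eta$, so $T=T_{k}$.

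It then remains to verify that $k$ is an $H$-support. Fix $\eta\in\mathfrak{c}_{u}$, so $T\eta\in\mathfrak{c}_{e}$; writing $T\eta=\left(T\eta\right)_{0}+\left(T\eta,e\right)e$ with $\left(T\eta\right)_{0}\perp e$, we get $\left\Vert \left(T\eta\right)_{0}\right\Vert \leq\left(T\eta,e\right)$, and in particular $\left(\eta,k_{e}\right)=\left(T\eta,e\right)\geq0$. As this holds for all $\eta\in\mathfrak{c}_{u}$ and $\mathfrak{c}_{u}$ is self-dual in $K_{h}$ --- which follows from Remark \ref{remDPC} together with $\mathfrak{c}_{u}=\mathbb{R}_{+}S\left(\overline{\mathfrak{c}_{u}}\right)$ of Corollary \ref{cormincbar}, or directly from its ice-cream-cone description --- it follows that $k_{e}\in\mathfrak{c}_{u}$. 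Moreover $\left\{ f\in F:f\neq e\right\} $ is an orthonormal basis of $H^{e}$, so $\sum_{f\neq e}\left(\eta,k_{f}\right)^{2}=\sum_{f\neq e}\left(T\eta,f\right)^{2}=\left\Vert \left(T\eta\right)_{0}\right\Vert ^{2}\leq\left(T\eta,e\right)^{2}=\left(\eta,k_{e}\right)^{2}$, which is the required inequality. Hence $k$ is an $H$-support and $T=T_{k}$. In the unital case $T\left(u\right)=e$ gives $\left(u,k_{f}\right)=\left(e,f\right)$, equal to $1$ if $f=e$ and $0$ otherwise; writing $k_{f}=k_{f}^{u}+r_{f}u$ with $k_{f}^{u}\perp u$ and $r_{f}=\left(k_{f},u\right)=\left(u,k_{f}\right)$, this means $r_{e}=1$ and $r_{f}=0$ for $f\neq e$, that is, $k$ is a unital $H$-support (with $\left\Vert k_{e}^{u}\right\Vert \leq\left(k_{e},u\right)=1$, since $k_{e}\in\mathfrak{c}_{u}$).

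The step I expect to be the genuine obstacle is the boundedness of $T$: it is the only point at which the order-norm machinery of Proposition \ref{propOKE21} (the equivalence of the order norm with the Hilbert norm) is really needed, whereas everything afterwards is a direct application of the Riesz theorem and a routine unwinding of the definitions of $\mathfrak{c}_{e}$ and of an $H$-support.
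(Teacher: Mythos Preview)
Your proof is correct. The approach differs from the paper's in a way worth noting. The paper proceeds constructively: it writes $T\eta=S\eta_{0}+\left(\eta,u\right)\zeta_{0}+\left(\gamma\left(\eta_{0}\right)+\left(\eta,ru\right)\right)e$ with $S:K^{u}\rightarrow H^{e}$ and $\gamma:K^{u}\rightarrow\mathbb{C}$, then bounds $S$ and $\gamma$ by hand (testing on $\eta_{0}+\left\Vert \eta_{0}\right\Vert u\in\mathfrak{c}_{u}$), applies Riesz to $\gamma$ to get $\gamma_{0}$, and finally assembles $k_{e}=\gamma_{0}+ru$ and $k_{f}=S^{\ast}f+r_{f}u$. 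You instead establish the boundedness of $T$ in one stroke via the order-norm machinery of Proposition~\ref{propOKE21}, then set $k_{f}=T^{\ast}f$ globally and read off the $H$-support conditions directly from $T\eta\in\mathfrak{c}_{e}$ together with the self-duality of the Lorentz cone $\mathfrak{c}_{u}$. Your route is shorter and more conceptual, at the cost of invoking Proposition~\ref{propOKE21} as a black box; the paper's route is self-contained and yields explicit constants (e.g.\ $\sup\left\Vert k\right\Vert \leq7r$) along the way, which it does not actually use later but which make the structure of $k$ more transparent. Your observation that boundedness is the only nontrivial step is accurate, and your handling of it is clean.
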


\begin{proof}
First note that $Tu=\zeta_{0}+re\in\mathfrak{c}_{e}$, that is, $\zeta_{0}%
=\sum_{f\neq e}r_{f}f\in H_{h}^{e}$, $r\geq0$ and $\sum_{f\neq e}r_{f}%
^{2}=\left\Vert \zeta_{0}\right\Vert ^{2}\leq r^{2}$. For $\eta\in K$ we have
\[
T\eta=T\eta_{0}+\left(  \eta,u\right)  Tu=\sum_{f}\left(  T\eta_{0},f\right)
f+\left(  \eta,u\right)  \left(  \zeta_{0}+re\right)  =S\eta_{0}+\left(
\eta,u\right)  \zeta_{0}+\left(  \gamma\left(  \eta_{0}\right)  +\left(
\eta,ru\right)  \right)  e,
\]
where $S:K^{u}\rightarrow H^{e}$, $S\eta_{0}=\sum_{f\neq e}\left(  T\eta
_{0},f\right)  f$, and $\gamma:K^{u}\rightarrow\mathbb{C}$, $\gamma\left(
\eta_{0}\right)  =\left(  T\eta_{0},e\right)  $. Take $\eta_{0}\in K_{h}^{u}$.
Then $\eta=\eta_{0}+\left\Vert \eta_{0}\right\Vert u\in\mathfrak{c}_{u}$ and
$S\eta_{0}+\left\Vert \eta_{0}\right\Vert \zeta_{0}+\left(  \gamma\left(
\eta_{0}\right)  +r\left\Vert \eta_{0}\right\Vert \right)  e=T\eta
\in\mathfrak{c}_{e}$. It follows that $S\eta_{0}\in H_{h}^{e}$, $\gamma\left(
\eta_{0}\right)  \geq-r\left\Vert \eta_{0}\right\Vert $ and $\left\Vert
S\eta_{0}+\left\Vert \eta_{0}\right\Vert \zeta_{0}\right\Vert \leq
\gamma\left(  \eta_{0}\right)  +r\left\Vert \eta_{0}\right\Vert $. In
particular, $S$ and $\gamma$ are $\ast$-linear maps, $\left\vert \gamma\left(
\eta_{0}\right)  \right\vert \leq r\left\Vert \eta_{0}\right\Vert $ and
\[
\left\Vert S\eta_{0}\right\Vert \leq\left\Vert S\eta_{0}+\left\Vert \eta
_{0}\right\Vert \zeta_{0}\right\Vert +\left\Vert \left\Vert \eta
_{0}\right\Vert \zeta_{0}\right\Vert \leq\left\vert \gamma\left(  \eta
_{0}\right)  \right\vert +2r\left\Vert \eta_{0}\right\Vert \leq3r\left\Vert
\eta_{0}\right\Vert
\]
for all $\eta_{0}\in K_{h}^{u}$. Thus $\gamma\in\left(  K^{u}\right)  ^{\ast}%
$, $\left\vert \gamma\left(  \eta_{0}\right)  \right\vert \leq\left\vert
\gamma\left(  \operatorname{Re}\eta_{0}\right)  \right\vert +\left\vert
\gamma\left(  \operatorname{Im}\eta_{0}\right)  \right\vert \leq2r\left\Vert
\eta_{0}\right\Vert $ and $\left\Vert S\eta_{0}\right\Vert \leq\left\Vert
S\operatorname{Re}\eta_{0}\right\Vert +\left\Vert S\operatorname{Im}\eta
_{0}\right\Vert \leq6r\left\Vert \eta_{0}\right\Vert $ for all $\eta_{0}\in
K^{u}$. But $K^{u}$ is a Hilbert space, therefore $\gamma\left(  \eta
_{0}\right)  =\left(  \eta_{0},\gamma_{0}\right)  $ for a certain $\gamma
_{0}\in K^{u}$. Since $\left(  \eta_{0},\gamma_{0}^{\ast}\right)  =\left(
\eta_{0}^{\ast},\gamma_{0}\right)  ^{\ast}=\gamma\left(  \eta_{0}^{\ast
}\right)  ^{\ast}=\gamma\left(  \eta_{0}\right)  =\left(  \eta_{0},\gamma
_{0}\right)  $, $\eta_{0}\in K^{u}$, it follows that $\gamma_{0}\in K_{h}^{u}$
and $\left\Vert \gamma_{0}\right\Vert ^{2}=\left(  \gamma_{0},\gamma
_{0}\right)  =\gamma\left(  \gamma_{0}\right)  \leq r\left\Vert \gamma
_{0}\right\Vert $, that is, $\gamma_{0}\in r\operatorname{ball}K_{h}^{u}$. Put
$k_{e}=\gamma_{0}+ru\in r\operatorname{ball}K_{h}^{u}+ru\subseteq
\mathfrak{c}_{u}$. Thus
\[
T\eta=S\eta_{0}+\left(  \eta,u\right)  \zeta_{0}+\left(  \left(  \eta
,\gamma_{0}\right)  +\left(  \eta,ru\right)  \right)  e=S\eta_{0}+\left(
\eta,u\right)  \zeta_{0}+\left(  \eta,k_{e}\right)  e
\]
and both $S$ and $T$ are bounded $\ast$-linear operators. It follows that
$S\eta_{0}=\sum_{f\neq e}\left(  S\eta_{0},f\right)  f=\sum_{f\neq e}\left(
\eta_{0},k_{f}^{u}\right)  f$ for uniquely defined $k_{f}^{u}=S^{\ast}f\in
K_{h}^{u}$, $\left\Vert k_{f}^{u}\right\Vert \leq\left\Vert S^{\ast
}\right\Vert \leq6r$, $f\neq e$. Put $k=\left\{  k_{f}:f\in F\right\}  $ with
$k_{f}=k_{f}^{u}+r_{f}u$. Note that $\left\Vert k_{f}\right\Vert
^{2}=\left\Vert k_{f}^{u}\right\Vert ^{2}+r_{f}^{2}\leq37r^{2}$ for all $f\neq
e$, and $\left\Vert k_{e}\right\Vert ^{2}=\left\Vert \gamma_{0}\right\Vert
^{2}+r^{2}\leq2r^{2}$, that is, $\sup\left\Vert k\right\Vert \leq7r$.
Moreover,
\[
T\eta=\sum_{f\neq e}\left(  \eta_{0},k_{f}^{u}\right)  f+\sum_{f\neq e}\left(
\eta,r_{f}u\right)  f+\left(  \eta,k_{e}\right)  e=\sum_{f\neq e}\left(
\eta,k_{f}\right)  f+\left(  \eta,k_{e}\right)  e=\sum_{f}\left(  \eta
,k_{f}\right)  f.
\]
Finally, for $\eta\in\mathfrak{c}_{u}$ we have $T\eta=S\eta_{0}+\left(
\eta,u\right)  \zeta_{0}+\left(  \gamma\left(  \eta_{0}\right)  +\left(
\eta,ru\right)  \right)  e\in\mathfrak{c}_{e}$ and
\[
\sum_{f\neq e}\left(  \eta,k_{f}\right)  ^{2}=\left\Vert S\eta_{0}+\left(
\eta,u\right)  \zeta_{0}\right\Vert ^{2}\leq\left(  \gamma\left(  \eta
_{0}\right)  +\left(  \eta,ru\right)  \right)  ^{2}=\left(  \left(
\eta,\gamma_{0}\right)  +\left(  \eta,ru\right)  \right)  ^{2}=\left(
\eta,k_{e}\right)  ^{2},
\]
which means that $k$ is an $H$-support in $K$ and $T\eta=T_{k}\eta$ for all
$\eta\in K$. If $T$ is a unital positive mapping then $\zeta_{0}=0$, that is,
$r_{f}=0$ for all $f\neq e$, and $r=1$. The latter means that $k$ is a unital
$H$-support (see Remark \ref{remUHS}).
\end{proof}

\subsection{The unital cone $L^{2}\left(  X,\mu\right)  _{+}$%
\label{subsecL1XM}}

The matrix algebra $M_{n}\left(  C\left(  X\right)  \right)  $ is identified
with the algebra $C\left(  X,M_{n}\right)  $ of all $M_{n}$-valued continuous
functions on $X$. The following result is known (see \cite[Theorem
3.2]{PaulTT}). For the sake of a reader we provide its detailed proof within
the duality context, which is a bit different than its original one.

\begin{proposition}
\label{propPTT1}The equality holds $M\left(  C\left(  X\right)  \right)
_{+}=\min C\left(  X\right)  _{+}$.
\end{proposition}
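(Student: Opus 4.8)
The plan is to unwind the definition $\min C(X)_+=S\!\left(C(X)_+\right)^{\boxdot}$ from Subsection~\ref{subsecUQC}, where the polar is taken with respect to the dual $\ast$-pair $\left(C(X),\mathcal{M}(X)\right)$ (the cone $C(X)_+$ is separated, algebraically closed and unital, and the associated order norm $\left\Vert\cdot\right\Vert_e$ is the uniform norm, so the normed dual is $\mathcal{M}(X)$). The one genuinely content-bearing input is the identification $S\!\left(C(X)_+\right)=\mathcal{P}(X)$: a linear functional $\sigma$ with $\sigma(1)=1$ and $\sigma\!\left(C(X)_+\right)\geq0$ is automatically $\left\Vert\cdot\right\Vert_\infty$-bounded, since $-1\leq f\leq1$ forces $-1\leq\sigma(f)\leq1$ for $f\in C(X)_h$, hence by the Riesz representation theorem $\sigma$ is a probability measure; conversely every probability measure is such a state. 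In particular each Dirac measure $\delta_t$, $t\in X$, lies in $S\!\left(C(X)_+\right)$.

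For the inclusion $\min C(X)_+\subseteq M\!\left(C(X)\right)_+$, I would take $v\in M_n\!\left(C(X)\right)_h$, identified as usual with $C\!\left(X,M_n\right)_h$, lying in $\min C(X)_+$. By the definition of the quantum polar this means $\left\langle\left\langle v,\sigma\right\rangle\right\rangle=\left[\sigma(v_{ij})\right]_{i,j}\geq0$ in $M_n$ for every state $\sigma$. Feeding in $\sigma=\delta_t$ gives $\left[v_{ij}(t)\right]_{i,j}=v(t)\geq0$ for every $t\in X$, i.e. $v\in M_n\!\left(C(X)\right)_+$.

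For the reverse inclusion I would first check that $M\!\left(C(X)\right)_+$ is a $\mathfrak{s}\!\left(C(X),\mathcal{M}(X)\right)$-closed (if $v_\alpha(t)\geq0$ for all $\alpha,t$ and $v_\alpha\to v$ weakly, then $v_\alpha(t)=\left\langle\left\langle v_\alpha,\delta_t\right\rangle\right\rangle\to\left\langle\left\langle v,\delta_t\right\rangle\right\rangle=v(t)$, so $v(t)\geq0$), separated, unital quantum cone with $M\!\left(C(X)\right)_+\cap C(X)=C(X)_+$; unitality follows from Lemma~\ref{lemPTT} applied with $m=1$, since $C(X)_+-1$ absorbs $C(X)_h$. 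Thus $M\!\left(C(X)\right)_+$ is a quantization of $C(X)_+$, and by the general fact recorded in Subsection~\ref{subsecUQC} that every quantization lies between $\max C(X)_+$ and $\min C(X)_+$ we obtain $M\!\left(C(X)\right)_+\subseteq\min C(X)_+$. (Alternatively this inclusion is direct: if $v(t)\geq0$ on $X$ then for $\xi\in\mathbb{C}^n$ and $\mu\in\mathcal{P}(X)$ one has $\left(\left\langle\left\langle v,\mu\right\rangle\right\rangle\xi,\xi\right)=\mu\!\left(t\mapsto(v(t)\xi,\xi)\right)\geq0$.) Combining the two inclusions yields $M\!\left(C(X)\right)_+=\min C(X)_+$.

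No step here is a serious obstacle; the only place requiring a moment's care is the identification $S\!\left(C(X)_+\right)=\mathcal{P}(X)$ (Riesz representation plus the automatic boundedness of positive unital functionals), after which both inclusions collapse to testing matrix-valued functions against Dirac measures and to positivity of integration against probability measures.
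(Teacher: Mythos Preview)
Your proof is correct and follows essentially the same route as the paper: identify $S\!\left(C(X)_+\right)=\mathcal{P}(X)$ and test matrix-valued functions against Dirac measures. The only organizational difference is that the paper invokes Krein--Milman to obtain $\mathcal{P}(X)^{\boxdot}=\left(\partial\mathcal{P}(X)\right)^{\boxdot}$ and thereby handles both inclusions simultaneously via point evaluations, whereas you treat the two inclusions separately and use the general ``every quantization lies in $\min$'' principle (or direct integration) for the reverse inclusion, which lets you bypass Krein--Milman entirely.
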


\begin{proof}
By its very definition $S\left(  C\left(  X\right)  _{+}\right)
=\mathcal{P}\left(  X\right)  $ is the space of all probability measures on
$X$. Note that $\mathcal{P}\left(  X\right)  $ is a $w^{\ast}$-compact subset
of the space $\mathcal{M}\left(  X\right)  =C\left(  X\right)  ^{\ast}$ of all
finite Radon charges on $X$. Based on Krein-Milman theorem, we conclude that
$\mathcal{P}\left(  X\right)  $ is the $w^{\ast}$-closure of the convex hull
of its extremal boundary $\partial\mathcal{P}\left(  X\right)  $ which
consists of Dirac measures $\delta_{t}$, $t\in X$. For every $v\in C\left(
X\right)  $ we have $\left\Vert v\right\Vert _{\infty}=\sup\left\{  \left\vert
v\left(  t\right)  \right\vert :t\in X\right\}  =\sup\left\{  \left\vert
\left\langle v,\delta_{t}\right\rangle \right\vert :t\in X\right\}
=\sup\left\vert \left\langle v,\partial\mathcal{P}\left(  X\right)
\right\rangle \right\vert \leq\sup\left\vert \left\langle v,\mathcal{P}\left(
X\right)  \right\rangle \right\vert =\left\Vert v\right\Vert _{e}\leq
\sup\left\vert \left\langle v,\operatorname{ball}\mathcal{M}\left(  X\right)
\right\rangle \right\vert =\left\Vert v\right\Vert _{\infty}$, that is,
$\left\Vert v\right\Vert _{\infty}=\left\Vert v\right\Vert _{e}$ (see
Subsection \ref{subsecUQC}). It follows that $\min C\left(  X\right)
_{+}=\mathcal{P}\left(  X\right)  ^{\boxdot}=\left(  \partial\mathcal{P}%
\left(  X\right)  \right)  ^{\boxdot}$. Take $v\in M_{n}\left(  C\left(
X\right)  \right)  $. Then $v\in\min C\left(  X\right)  _{+}$ iff
$\left\langle \left\langle v,\partial\mathcal{P}\left(  X\right)
\right\rangle \right\rangle \geq0$. The latter means (see Proposition
\ref{pmin11}) that $\left(  v\left(  t\right)  a,a\right)  =a^{\ast}v\left(
t\right)  a=\left(  a^{\ast}va\right)  \left(  t\right)  =\left\langle
a^{\ast}va,\delta_{t}\right\rangle =\left(  \left\langle \left\langle
v,\delta_{t}\right\rangle \right\rangle a,a\right)  \geq0$ for all $t\in X$
and $a\in M_{n,1}$, that is, $v\in M_{n}\left(  C\left(  X\right)  \right)
_{+}$. Whence $M_{n}\left(  C\left(  X\right)  \right)  \cap\min C\left(
X\right)  _{+}=M_{n}\left(  C\left(  X\right)  \right)  _{+}$ for all $n$.
\end{proof}

Now fix $\mu\in\mathcal{M}\left(  X\right)  _{+}$ and consider the Hilbert
$\ast$-space $H=L^{2}\left(  X,\mu\right)  $ with the canonical representation
mapping $\iota:C\left(  X\right)  \rightarrow L^{2}\left(  X,\mu\right)  $.
Put $\iota\left(  1\right)  =u$. Note that $\iota$ a $\ast$-linear mapping and
$\mu\left(  X\right)  ^{1/2}=\left(  \int1\right)  ^{1/2}=\left\Vert
u\right\Vert _{2}=\left\Vert \iota\left(  1\right)  \right\Vert _{2}%
\leq\left\Vert \iota\right\Vert \left\Vert 1\right\Vert _{\infty}=\left\Vert
\iota\right\Vert =\sup\left\{  \left\Vert \iota\left(  \operatorname{ball}%
C\left(  X\right)  \right)  \right\Vert _{2}\right\}  \leq\sup\left\{
\left\Vert \operatorname{ball}C\left(  X\right)  \right\Vert _{\infty}\left(
\int1\right)  ^{1/2}\right\}  \leq\mu\left(  X\right)  ^{1/2}$, that is,
$\left\Vert \iota\right\Vert =\mu\left(  X\right)  ^{1/2}$. Recall that each
element $\eta^{\sim}\in L^{2}\left(  X,\mu\right)  $ being an equivalence
class has a Borel representative $\eta$. We use the same notation $\eta$ for
the class $\eta^{\sim}$ either. If $\mu\in\mathcal{P}\left(  X\right)  $ then
$u$ takes place the role of a unit in $L^{2}\left(  X,\mu\right)  $, and the
related cone $\mathfrak{c}$ consists of those real-valued Borel functions
$\eta$ on $X$ such that $\eta=\eta_{0}+ru$ with $\eta_{0}\perp u$, $r=\left(
\eta,u\right)  =\int\eta d\mu\geq0$ and $\int\eta_{0}\left(  t\right)
^{2}d\mu\leq r^{2}$. We use the notation $L^{2}\left(  X,\mu\right)  _{+}$
instead of $\mathfrak{c}$. Recall that $L^{2}\left(  X,\mu\right)  $ possesses
another conventional cone lifted from the cone $C\left(  X\right)  _{+}$. Thus
a hermitian class $\eta^{\sim}\in L^{2}\left(  X,\mu\right)  $ is positive iff
$\eta\left(  t\right)  \geq0$ for $\mu$-almost all $t\in X$. These cones are
essentially distinct. A real-valued Borel representative of a class from the
cone $L^{2}\left(  X,\mu\right)  _{+}$ could take an highly negative values
being far to be positive in the ordinary sense.

\begin{example}
\label{exm11}Let us equip the compact interval $X=\left[  -1,1\right]
\subseteq\mathbb{R}$ with Lebesgue's measure $2^{-1}dt$. Put $\eta
=\chi_{\left[  -1,-1/n\right]  }+\left(  1-\sqrt{n}\right)  \chi_{\left[
-1/n,0\right]  }+\left(  1+\sqrt{n}\right)  \chi_{\left[  0,1/n\right]  }%
+\chi_{\left[  1/n,1\right]  }$, where $\chi_{M}$ indicates to the
characteristic function of a subset $M$ from $X$. Note that $\eta=\eta_{0}+1$
with $\eta_{0}=-\sqrt{n}\chi_{\left[  -1/n,0\right]  }+\sqrt{n}\chi_{\left[
0,1/n\right]  }$. Since $\int\eta_{0}=2^{-1}\left(  -\sqrt{n}/n+\sqrt
{n}/n\right)  =0$, we conclude that $\eta=\eta_{0}+1$ is an orthogonal
expansion in $L^{2}\left(  X,\mu\right)  _{+}$ and $\int\eta_{0}\left(
t\right)  ^{2}d\mu=1$, that is, $\eta\in L^{2}\left(  X,\mu\right)  _{+}$. But
$\eta\left(  \left[  -1/n,0\right]  \right)  =1-\sqrt{n}<0$ for $n>1$. A very
similar example can be constructed with a continuous function (or
representative) $\eta$.
\end{example}

\begin{corollary}
\label{cormin12}Let $\eta\in M_{n}\left(  L^{2}\left(  X,\mu\right)  \right)
_{h}$ with its expansion $\eta=\eta_{0}+au^{\oplus n}$, $a=\left\langle
\left\langle \eta,u\right\rangle \right\rangle \in M_{n}$. Then $\eta\in\min
L^{2}\left(  X,\mu\right)  _{+}$ iff $a\geq0$ and $\int\left(  \eta_{0}\left(
t\right)  \beta,\beta\right)  ^{2}d\mu\leq\left(  a\beta,\beta\right)  ^{2}$
for all $\beta\in M_{n,1}$. In the case of an atomic measure $\mu$
concentrated on a countable subset $S\subseteq X$ we have $-a\leq\mu\left(
s\right)  ^{1/2}\eta_{0}\left(  s\right)  \leq a$ in $M_{n}$, $s\in S$
whenever $\eta\in\min L^{2}\left(  X,\mu\right)  _{+}$.
\end{corollary}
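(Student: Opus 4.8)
The plan is to deduce the statement from Proposition \ref{pmin11}, applied to the unital Hilbert $\ast$-space $H=L^{2}(X,\mu)$ with unit $u=\iota(1)$ (so $\mu\in\mathcal{P}(X)$) and the unital cone $\mathfrak{c}=L^{2}(X,\mu)_{+}$ described in Subsection \ref{subsecL1XM}. By Proposition \ref{pmin11}, $\eta\in\min L^{2}(X,\mu)_{+}$ iff $\beta^{\ast}\eta\beta\in L^{2}(X,\mu)_{+}$ for every $\beta\in M_{n,1}$, i.e. iff $\Vert\beta^{\ast}\eta\beta\Vert_{2}\leq\sqrt{2}\,(\beta^{\ast}\eta\beta,u)$ for all such $\beta$.

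First I would unwind the membership $\beta^{\ast}\eta\beta\in L^{2}(X,\mu)_{+}$. From $\eta=\eta_{0}+au^{\oplus n}$ with $a=\langle\langle\eta,u\rangle\rangle$ and $\eta_{0}$ the $u$-orthogonal part of $\eta$, one gets $\beta^{\ast}\eta\beta=\beta^{\ast}\eta_{0}\beta+(a\beta,\beta)u$ with $\beta^{\ast}\eta_{0}\beta\perp u$. Choosing jointly Borel representatives for the entries of $\eta_{0}$, the class $\beta^{\ast}\eta_{0}\beta$ is represented by $t\mapsto(\eta_{0}(t)\beta,\beta)$, which is real valued since $\eta_{0}(t)$ is a hermitian matrix for $\mu$-a.e.\ $t$ (as $\eta$, hence $\eta_{0}$, is hermitian, and $a$ is hermitian). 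Since $(\beta^{\ast}\eta\beta,u)=(a\beta,\beta)$ and $\Vert\beta^{\ast}\eta\beta\Vert_{2}^{2}=\Vert\beta^{\ast}\eta_{0}\beta\Vert_{2}^{2}+(a\beta,\beta)^{2}$, the description of $L^{2}(X,\mu)_{+}$ in Subsection \ref{subsecL1XM} shows that $\beta^{\ast}\eta\beta\in L^{2}(X,\mu)_{+}$ is equivalent to $(a\beta,\beta)\geq0$ together with $\int(\eta_{0}(t)\beta,\beta)^{2}\,d\mu=\Vert\beta^{\ast}\eta_{0}\beta\Vert_{2}^{2}\leq(a\beta,\beta)^{2}$. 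Quantifying over $\beta\in M_{n,1}$, the first condition becomes $a\geq0$ and the second is the asserted integral inequality; this gives the first claim.

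For the atomic case I would pass to the pointwise picture on $S$: if $\mu$ is concentrated on the countable set $S$, then classes in $L^{2}(X,\mu)$ are genuinely defined at the points of $S$ and $\int g\,d\mu=\sum_{s\in S}\mu(s)g(s)$, consistent with $\delta_{s}=\mu(s)^{-1}[s]\,\mu$ from Lemma \ref{lematom1}. Applying the first claim to $\eta\in\min L^{2}(X,\mu)_{+}$ gives $\sum_{s\in S}\mu(s)(\eta_{0}(s)\beta,\beta)^{2}\leq(a\beta,\beta)^{2}$ for every $\beta$; dropping all summands but the one indexed by a fixed $s$ (legitimate, as the summands are nonnegative) yields $\mu(s)^{1/2}\,\lvert(\eta_{0}(s)\beta,\beta)\rvert\leq(a\beta,\beta)$ for every $\beta\in M_{n,1}$, using $a\geq0$. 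Hence $((\pm\mu(s)^{1/2}\eta_{0}(s)-a)\beta,\beta)\leq0$ for all $\beta$, i.e. $\pm\mu(s)^{1/2}\eta_{0}(s)\leq a$, which is exactly $-a\leq\mu(s)^{1/2}\eta_{0}(s)\leq a$ in $M_{n}$ for each $s\in S$.

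The argument is essentially bookkeeping once Proposition \ref{pmin11} is in hand; the only points requiring a little care are the choice of Borel representatives of the matrix-valued $L^{2}$-element $\eta_{0}$ (and the resulting a.e.\ hermiticity of $\eta_{0}(t)$, which makes $(\eta_{0}(t)\beta,\beta)$ real and lets one write the square rather than the modulus squared) and, in the atomic part, the passage from the summed inequality to the matrix-order inequality via the $\pm$-trick. I do not expect any genuine obstacle beyond this.
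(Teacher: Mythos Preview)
Your proposal is correct and follows essentially the same route as the paper: both invoke Proposition~\ref{pmin11} to reduce membership in $\min L^{2}(X,\mu)_{+}$ to the scalar conditions $\beta^{\ast}\eta\beta\in L^{2}(X,\mu)_{+}$, unwind this via the orthogonal decomposition $\beta^{\ast}\eta\beta=\beta^{\ast}\eta_{0}\beta+(a\beta,\beta)u$, and then in the atomic case drop all but one summand in $\sum_{s}\mu(s)(\eta_{0}(s)\beta,\beta)^{2}$ to obtain the matrix-order inequality. Your added remarks on Borel representatives and a.e.\ hermiticity of $\eta_{0}(t)$ are a welcome clarification but do not alter the argument.
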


\begin{proof}
Using Proposition \ref{pmin11}, we deduce that $\eta\in\min L^{2}\left(
X,\mu\right)  _{+}$ iff $\beta^{\ast}\eta\beta\in L^{2}\left(  X,\mu\right)
_{+}$ for all $\beta\in M_{n,1}$. Since $\beta^{\ast}\eta\beta=\beta^{\ast
}\eta_{0}\beta+\beta^{\ast}a\beta u$, it follows that $\beta^{\ast}a\beta
\geq0$ and $\left\Vert \beta^{\ast}\eta_{0}\beta\right\Vert _{2}\leq
\beta^{\ast}a\beta$. But $\eta_{0}$ is identified with a Borel function
$\eta_{0}:X\rightarrow M_{n}$ and $\left(  \beta^{\ast}\eta_{0}\beta\right)
\left(  t\right)  =\beta^{\ast}\eta_{0}\left(  t\right)  \beta=\left(
\eta_{0}\left(  t\right)  \beta,\beta\right)  $ for all $\beta\in M_{n,1}$.
Similarly, $\beta^{\ast}a\beta=\left(  a\beta,\beta\right)  \geq0$, which
means that $a\geq0$. Thus $\eta\in\min L^{2}\left(  X,\mu\right)  _{+}$ iff
$a\geq0$ and $\int\left(  \eta_{0}\left(  t\right)  \beta,\beta\right)
^{2}d\mu=\int\left(  \beta^{\ast}\eta_{0}\beta\right)  \left(  t\right)
^{2}d\mu=\left\Vert \beta^{\ast}\eta_{0}\beta\right\Vert _{2}^{2}\leq\left(
a\beta,\beta\right)  ^{2}$.

Finally, assume that $\mu$ is an atomic measure concentrated on $S$, and
$\eta\in\min L^{2}\left(  X,\mu\right)  _{+}$. For every $s\in S$ we have
$\mu\left(  s\right)  \left(  \eta_{0}\left(  s\right)  \beta,\beta\right)
^{2}\leq\int\left(  \eta_{0}\left(  t\right)  \beta,\beta\right)  ^{2}d\mu
\leq\left(  a\beta,\beta\right)  ^{2}$, which in turn implies that $\mu\left(
s\right)  ^{1/2}\left\vert \left(  \eta_{0}\left(  s\right)  \beta
,\beta\right)  \right\vert \leq\left(  a\beta,\beta\right)  $ for all
$\beta\in M_{n,1}$, that is, $-a\leq\mu\left(  s\right)  ^{1/2}\eta_{0}\left(
s\right)  \leq a$ in $M_{n}$ for all $s\in S$.
\end{proof}

\begin{remark}
If $v\in C\left(  X\right)  _{+}$ with its orthogonal expansion $v=v_{0}+ru$
in $L^{2}\left(  X,\mu\right)  $ satisfies an extra positivity
condition$-v_{0}+ru\geq0$ in $C\left(  X\right)  $ then $v\in L^{2}\left(
X,\mu\right)  _{+}$. Indeed, since $v\geq0$, it follows that $r=\left(
v,u\right)  =\int v\geq0$. Moreover, $-ru\leq v_{0}\leq ru$ or $v_{0}^{2}\leq
r^{2}u$, which in turn implies that $\left\Vert v_{0}\right\Vert _{2}=\left(
\int v_{0}^{2}\right)  ^{1/2}\leq r\left(  \int u\right)  =r$, that is, $v\in
L^{2}\left(  X,\mu\right)  _{+}$. Conversely, if $v\in L^{2}\left(
X,\mu\right)  _{+}$ for some $v\in C\left(  X\right)  $, and $\mu$ is atomic
measure concentrated on $S$, then using Corollary \ref{cormin12}, we derive
that $-r\mu\left(  s\right)  ^{-1/2}\leq v_{0}\left(  s\right)  \leq
r\mu\left(  s\right)  ^{-1/2}$ for all $s\in S$. Thus $\pm v_{0}+r\mu
^{-1/2}\geq0$.
\end{remark}

Thus the canonical, unital $\ast$-linear mapping $\iota:C\left(  X\right)
\rightarrow L^{2}\left(  X,\mu\right)  $ is not positive is the sense of
Subsection \ref{subsecPM11}.

\begin{proposition}
Let $A\subseteq X$ be a $\mu$-measurable subset with $\mu\left(  A\right)
>0$. Then $\chi_{A}\in L^{2}\left(  X,\mu\right)  _{+}$ iff $\mu\left(
A\right)  \geq1/2$.
\end{proposition}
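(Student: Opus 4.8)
The plan is to reduce the membership condition $\chi_{A}\in L^{2}\left(X,\mu\right)_{+}$ to an elementary inequality for the number $\mu\left(A\right)$ by computing the orthogonal decomposition of $\chi_{A}$ along the unit. Recall the standing assumption of this subsection that $\mu\in\mathcal{P}\left(X\right)$, so that $u=\iota\left(1\right)$ is the unit of $L^{2}\left(X,\mu\right)$ with $\left(u,u\right)=\mu\left(X\right)=1$, and that $L^{2}\left(X,\mu\right)_{+}$ consists of the real-valued Borel functions $\eta$ admitting an expansion $\eta=\eta_{0}+ru$ with $\eta_{0}\perp u$, $r=\left(\eta,u\right)=\int\eta\,d\mu\geq0$ and $\left\Vert \eta_{0}\right\Vert _{2}^{2}\leq r^{2}$.

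First I would write the decomposition of $\chi_{A}$. Since $\left(\chi_{A},u\right)=\int\chi_{A}\,d\mu=\mu\left(A\right)$, we get $\chi_{A}=w+\mu\left(A\right)u$ with $w=\chi_{A}-\mu\left(A\right)u\perp u$ and $r=\mu\left(A\right)$. By hypothesis $\mu\left(A\right)>0$, so the sign requirement $r\geq0$ is automatic, and $\chi_{A}\in L^{2}\left(X,\mu\right)_{+}$ is equivalent to the single condition $\left\Vert w\right\Vert _{2}^{2}\leq\mu\left(A\right)^{2}$.

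Then I would compute $\left\Vert w\right\Vert _{2}^{2}$ directly, using $\chi_{A}^{2}=\chi_{A}$ and $\mu\left(X\right)=1$:
\[
\left\Vert w\right\Vert _{2}^{2}=\int\left(\chi_{A}-\mu\left(A\right)\right)^{2}d\mu=\int\chi_{A}\,d\mu-2\mu\left(A\right)\int\chi_{A}\,d\mu+\mu\left(A\right)^{2}=\mu\left(A\right)-\mu\left(A\right)^{2}.
\]
Hence $\chi_{A}\in L^{2}\left(X,\mu\right)_{+}$ iff $\mu\left(A\right)\left(1-\mu\left(A\right)\right)\leq\mu\left(A\right)^{2}$; dividing by $\mu\left(A\right)>0$ this reads $1-\mu\left(A\right)\leq\mu\left(A\right)$, i.e. $\mu\left(A\right)\geq1/2$, which is the asserted equivalence.

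There is essentially no obstacle: the argument is the one-line evaluation of $\left\Vert w\right\Vert _{2}^{2}$. The only points requiring a touch of care are invoking $\mu\left(A\right)>0$ when dividing through, and recalling that $\mu$ being a probability measure is what makes $u$ a unit with $\left\Vert u\right\Vert _{2}=1$ (so that $\left(\chi_{A},u\right)$ and the projection formula take the stated form).
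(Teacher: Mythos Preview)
Your proof is correct and follows essentially the same approach as the paper: both write the orthogonal decomposition $\chi_{A}=(\chi_{A}-\mu(A)u)+\mu(A)u$, reduce membership in $L^{2}(X,\mu)_{+}$ to $\|\chi_{A}-\mu(A)u\|_{2}^{2}\leq\mu(A)^{2}$, and compute the left side as $\mu(A)(1-\mu(A))$. The only cosmetic difference is that the paper evaluates the integral by splitting over $A$ and $X\setminus A$, while you expand the square and use $\chi_{A}^{2}=\chi_{A}$ directly.
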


\begin{proof}
First notice that $\left(  \chi_{A},u\right)  =\mu\left(  A\right)  $ and
$\chi_{A}-\mu\left(  A\right)  u\in L^{2}\left(  X,\mu\right)  _{h}^{u}$. Thus
$\chi_{A}=\left(  \chi_{A}-\mu\left(  A\right)  u\right)  +\mu\left(
A\right)  u$ is the orthogonal decomposition of $\chi_{A}$ in $L^{2}\left(
X,\mu\right)  $. It follows that $\chi_{A}\in L^{2}\left(  X,\mu\right)  _{+}$
iff $\left\Vert \chi_{A}-\mu\left(  A\right)  u\right\Vert _{2}\leq\mu\left(
A\right)  $. But
\begin{align*}
\left\Vert \chi_{A}-\mu\left(  A\right)  u\right\Vert _{2}^{2}  &
=\int\left(  \chi_{A}\left(  t\right)  -\mu\left(  A\right)  \right)  ^{2}%
d\mu=\int_{A}\left(  \chi_{A}\left(  t\right)  -\mu\left(  A\right)  \right)
^{2}d\mu+\int_{X\backslash A}\left(  \chi_{A}\left(  t\right)  -\mu\left(
A\right)  \right)  ^{2}d\mu\\
&  =\left(  1-\mu\left(  A\right)  \right)  ^{2}\mu\left(  A\right)
+\mu\left(  A\right)  ^{2}\mu\left(  X\backslash A\right)  =\left(
1-\mu\left(  A\right)  \right)  \mu\left(  A\right)  .
\end{align*}
Thus $\left(  1-\mu\left(  A\right)  \right)  \mu\left(  A\right)  \leq
\mu\left(  A\right)  ^{2}$ iff $\mu\left(  A\right)  \geq1/2$.
\end{proof}

Finally, suppose that $\mu\sim\mu^{\prime}$ in $\mathcal{P}\left(  X\right)
$, that is, $I_{\mu}\left(  X\right)  =I_{\mu^{\prime}}\left(  X\right)  $. By
Lebesgue-Nikodym Theorem, $\mu^{\prime}=k\mu$ for some $k\in L^{1}\left(
X,\mu\right)  $ such that $k\left(  t\right)  >0$ for $\mu$-almost all $t\in
X$ and $\int k\left(  t\right)  d\mu=1$. In this case, $k^{-1}\in L^{1}\left(
X,\mu^{\prime}\right)  $ or $k^{-1/2}\in L^{2}\left(  X,\mu^{\prime}\right)
$. Moreover, $L^{2}\left(  X,\mu\right)  $ is identified with $L^{2}\left(
X,\mu^{\prime}\right)  $ along with the $\ast$-linear unitary $U:L^{2}\left(
X,\mu\right)  \rightarrow L^{2}\left(  X,\mu^{\prime}\right)  $, $U\left(
\eta\right)  =\eta/\sqrt{k}$. Namely,%
\[
\left(  U\eta_{1},U\eta_{2}\right)  ^{\prime}=\int\eta_{1}\left(  t\right)
\eta_{2}^{\ast}\left(  t\right)  k\left(  t\right)  ^{-1}d\mu^{\prime}%
=\int\eta_{1}\left(  t\right)  \eta_{2}^{\ast}\left(  t\right)  d\mu=\left(
\eta_{1},\eta_{2}\right)
\]
for all $\eta_{i}\in L^{2}\left(  X,\mu\right)  $. Note that $u^{\prime
}=u/\sqrt{k}$ is a unit vector in $L^{2}\left(  X,\mu^{\prime}\right)  $, and
we have the related unital cone $L^{2}\left(  X,\mu^{\prime}\right)  _{+}$. If
$\eta\in L^{2}\left(  X,\mu\right)  _{+}$ then $U\left(  \eta\right)  \in
L^{2}\left(  X,\mu^{\prime}\right)  _{h}$ and
\begin{align*}
\left\Vert U\left(  \eta\right)  \right\Vert _{2}^{\prime}  &  =\left\Vert
\eta\right\Vert _{2}\leq\sqrt{2}\left(  \eta,u\right)  =\sqrt{2}\int%
\eta\left(  t\right)  d\mu=\sqrt{2}\int\left(  \eta/\sqrt{k_{e}}\right)
\left(  t\right)  \left(  1/\sqrt{k_{e}}\right)  \left(  t\right)
d\mu^{\prime}\\
&  =\sqrt{2}\left(  U\left(  \eta\right)  ,u^{\prime}\right)  ^{\prime},
\end{align*}
which means that $U\left(  \eta\right)  \in L^{2}\left(  X,\mu^{\prime
}\right)  $. Thus $UL^{2}\left(  X,\mu\right)  _{+}=L^{2}\left(  X,\mu
^{\prime}\right)  _{+}$ or $U$ is an order isomorphism of the related unital
Hilbert spaces. In this case, $U\iota:C\left(  X\right)  \rightarrow
L^{2}\left(  X,\mu^{\prime}\right)  $, $\left(  U\iota\right)  \left(
1\right)  =1/\sqrt{k}$ is not the canonical mapping that responds to
$\mu^{\prime}$.

\subsection{A unital positive mapping from $C\left(  X\right)  $ to $\left(
H,e\right)  $\label{subsecCHe}}

For brevity we focus on unital positive maps instead of positive maps. As
above we fix a Hilbert space $H$ with its hermitian basis $F$, the unital cone
$\mathfrak{c}$, and fix also a probability measure $\mu$ (or integral $\int$)
on a compact Hausdorff topological space $X$. A family of real valued Borel
functions $k=\left\{  k_{f}:f\in F\right\}  \subseteq\operatorname{ball}%
L^{\infty}\left(  X,\mu\right)  _{h}$ with $k_{e}=u$ is said to be \textit{an
}$H$-\textit{support on }$X$\textit{ }if%
\[
k_{f}\perp k_{e},f\neq e\quad\text{and\quad}\sum_{f\neq e}\left(
v,k_{f}\right)  ^{2}\leq\left(  v,k_{e}\right)  ^{2}\quad\text{in\quad}%
L^{2}\left(  X,\mu\right)  \quad\text{for all }v\in C\left(  X\right)  _{+}.
\]
Note that $\left(  v,k_{e}\right)  =\int v\geq0$ whenever $v\geq0$. If
additionally, $\sum_{f\neq e}\left\Vert k_{f}\right\Vert _{2}^{p}<\infty$ in
$L^{2}\left(  X,\mu\right)  $ for $p=1,2$, we say that $k$ is \textit{an }%
$H$-\textit{support on }$X$\textit{ of type }$p$. But if $\sum_{f\neq e}%
k_{f}^{2}\leq k_{e}^{2}$ in $L^{\infty}\left(  X,\mu\right)  $ we say that $k$
is \textit{a maximal }$H$-\textit{support on }$X$. Note that a maximal support
if of type $2$ automatically. Indeed, $\sum_{f\in\lambda}k_{f}^{2}\leq u$ in
$L^{\infty}\left(  X,\mu\right)  $ implies that $\sum_{f\in\lambda}\int
k_{f}^{2}\leq1$ for every finite subset $\lambda\subseteq F\backslash\left\{
e\right\}  $, therefore $\sum_{f\neq e}\left\Vert k_{f}\right\Vert _{2}%
^{2}=\sum_{f\neq e}\int k_{f}^{2}\leq1$.

\begin{lemma}
\label{lemCRT0}If $k$ is an $H$-support on $X$ then $T:C\left(  X\right)
\rightarrow\left(  H,e\right)  $, $Tv=\sum_{f}\left(  v,k_{f}\right)  f$ is a
unital positive mapping, that is, $T\left(  1\right)  =e$ and $T\left(
C\left(  X\right)  _{+}\right)  \subseteq\mathfrak{c}$. Moreover, if $k$ is of
type $p$ then $T$ admits a unique bounded linear extension $T_{k}:L^{2}\left(
X,\mu\right)  \rightarrow\left(  H,e\right)  $, $T_{k}=\sum_{f}f\odot
\overline{k_{f}}$, which is a nuclear operator if $p=1$ and Hilbert-Schmidt
operator if $p=2$.
\end{lemma}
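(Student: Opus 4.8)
The plan is to verify the four assertions of Lemma~\ref{lemCRT0} in sequence: (i) $T$ is well-defined and unital, (ii) $T$ is positive, (iii) $T$ extends to $T_k = \sum_f f\odot\overline{k_f}$ on $L^2(X,\mu)$ when $k$ is of type $p$, and (iv) $T_k$ is nuclear for $p=1$ and Hilbert--Schmidt for $p=2$. First I would observe that each $k_f$ lies in $\operatorname{ball}L^\infty(X,\mu)_h\subseteq L^2(X,\mu)$, so the scalar products $(v,k_f)$ make sense for $v\in C(X)\subseteq L^2(X,\mu)$; unitality is immediate since $(1,k_f) = (1,k_e\cdot\delta_{f,e}) $ — more precisely $k_f\perp k_e = u$ for $f\neq e$ gives $(1,k_f)=\int k_f\,d\mu = (u,k_f)=0$, while $(1,k_e)=(u,u)=1$, so $T(1)=e$. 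For well-definedness of the (possibly infinite) sum $Tv = \sum_f (v,k_f)f$ in $H$, I would use the support inequality applied to $v = \operatorname{Re}v_+ + \operatorname{Re}v_- + \cdots$ decomposed into positive parts (as in Proposition~\ref{propKom1}): for $v\in C(X)_+$, $\sum_{f\neq e}(v,k_f)^2 \leq (v,k_e)^2 < \infty$, and linearity together with the four-positive-parts decomposition of a general $v\in C(X)$ bounds $\sum_{f\neq e}(v,k_f)^2$, so $Tv\in H$.

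For positivity, take $v\in C(X)_+$ and write $Tv = \zeta_0 + (v,k_e)e$ with $\zeta_0 = \sum_{f\neq e}(v,k_f)f\in H_h^e$ (each $k_f$ is real-valued and $v$ is real, so all coefficients are real, hence $Tv\in H_h$). Then $\|\zeta_0\|^2 = \sum_{f\neq e}(v,k_f)^2 \leq (v,k_e)^2$ directly from the defining inequality of an $H$-support, so $\|\zeta_0\|\leq (v,k_e) = (Tv,e)$, which by the description of $\mathfrak c$ in Subsection~\ref{subsecUCHE1} (namely $\mathfrak c\cap H = \{\zeta : \|\zeta_0\|\leq(\zeta,e)\}$) means $Tv\in\mathfrak c$. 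Thus $T(C(X)_+)\subseteq\mathfrak c$.

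For the extension, when $k$ is of type $p$ each $k_f\in L^2(X,\mu)$ and $\sum_{f\neq e}\|k_f\|_2^p<\infty$; since $\|k_e\|_2 = 1$, the formula $T_k\eta = \sum_f (\eta,k_f)_{L^2}f$ defines a bounded operator $L^2(X,\mu)\to H$, and $T_k = \sum_f f\odot\overline{k_f}$ as a sum of rank-one operators. One checks $T_k\iota = T$ on $C(X)$ because $(\iota v,k_f)_{L^2} = \int v k_f\,d\mu = (v,k_f)$; uniqueness of the extension follows since $\iota(C(X))$ is dense in $L^2(X,\mu)$. For the operator-ideal membership in (iv): if $p=1$, then $\|T_k\|_1 \leq \sum_f\|f\|\,\|k_f\|_2 = \sum_f\|k_f\|_2 < \infty$ (using $\|f\|=1$ and invoking Remark~\ref{remHSop}'s computation), so $T_k$ is nuclear. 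If $p=2$, then $\sum_f\|k_f\|_2^2<\infty$ and, by the same Remark~\ref{remHSop} identity $\|T_k\|_2^2 = \sum_f\|k_f\|_2^2$, $T_k$ is Hilbert--Schmidt. The main obstacle I anticipate is the subtle bookkeeping needed to pass from the defining support inequality ``for all $v\in C(X)_+$'' to a bound valid for all $v\in C(X)$ (and hence well-definedness of the infinite sum $Tv$ in the general, non-positive case) — this requires the positive-parts decomposition and a triangle-inequality estimate analogous to the one in the proof of Proposition~\ref{propKom1}, with constants one must track carefully. Everything else reduces to the direct computations indicated above.
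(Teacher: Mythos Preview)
Your proposal is correct and covers all four assertions. The unitality, positivity, and extension/ideal-membership steps match the paper's argument essentially verbatim. The one place you diverge is the well-definedness (boundedness) of $T$ on all of $C(X)$: you propose a four-positive-parts decomposition of a general $v$ and sum the resulting $\ell^2$-bounds, whereas the paper uses the slicker observation that for $v\in\operatorname{ball}C(X)_h$ both $v+k_e$ and $-v+k_e$ lie in $C(X)_+$ (since $k_e=u=1$), so the support inequality applied to $v\pm k_e$ gives $\sum_{f\neq e}(v,k_f)^2\le\bigl((v,k_e)\pm1\bigr)^2$ and hence $\sum_{f\neq e}(v,k_f)^2\le(1-|(v,k_e)|)^2$, yielding $\|Tv\|\le1$ directly on $\operatorname{ball}C(X)_h$ and $\|T\|\le2$ after splitting into real and imaginary parts. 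Your decomposition approach works too (and is in fact the device the paper deploys later in Theorem~\ref{corCHe1} to get the sharper $L^1$-type bound $\|Tv\|\le\sqrt{2}\int|v|\,d\mu$), but it produces looser constants and requires a little more bookkeeping; the paper's $\pm1$ shift is the cleaner route here and avoids the ``obstacle'' you flagged.
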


\begin{proof}
If $v\in C\left(  X\right)  _{h}$ with $-1\leq v\leq1$, then $v\pm k_{e}\geq
0$, $\left\vert \left(  v,k_{e}\right)  \right\vert \leq\int\left\vert
v\right\vert \leq\int1=1$ and $\sum_{f\neq e}\left(  v,k_{f}\right)  ^{2}%
=\sum_{f\neq e}\left(  v\pm k_{e},k_{f}\right)  ^{2}\leq\left(  v\pm
k_{e},k_{e}\right)  ^{2}=\left(  \left(  v,k_{e}\right)  \pm1\right)  ^{2}$.
In particular, $\sum_{f\neq e}\left(  v,k_{f}\right)  ^{2}\leq\left(
1-\left\vert \left(  v,k_{e}\right)  \right\vert \right)  ^{2}$, which in turn
implies that
\[
\left\Vert Tv\right\Vert =\left(  \sum_{f}\left(  v,k_{f}\right)  ^{2}\right)
^{1/2}\leq\left\vert \left(  v,k_{e}\right)  \right\vert +\left(  \sum_{f\neq
e}\left(  v,k_{f}\right)  ^{2}\right)  ^{1/2}\leq1.
\]
Hence $\left\Vert T|\operatorname{ball}C\left(  X\right)  _{h}\right\Vert
\leq1$. In the case of any $v\in\operatorname{ball}C\left(  X\right)  $, we
have $\operatorname{Re}v,\operatorname{Im}v\in\operatorname{ball}C\left(
X\right)  _{h}$ and $\left\Vert Tv\right\Vert \leq\left\Vert
T\operatorname{Re}v\right\Vert +\left\Vert T\operatorname{Im}v\right\Vert
\leq2$, that is, $T$ is a well defined bounded linear mapping. Further, take
$v\in C\left(  X\right)  _{+}$. Taking into account that $k$ is an $H$-support
on $X$, we deduce that $\left\Vert Tv\right\Vert ^{2}=\sum_{f}\left(
v,k_{f}\right)  ^{2}\leq2\left(  v,k_{e}\right)  ^{2}=2\left(  Tv,e\right)
^{2}$ or $\left\Vert Tv\right\Vert \leq\sqrt{2}\left(  Tv,e\right)  $, that
is, $T\left(  C\left(  X\right)  _{+}\right)  \subseteq\mathfrak{c}$.
Moreover, $Tu=\sum_{f}\left(  k_{e},k_{f}\right)  f=\left(  k_{e}%
,k_{e}\right)  e=\left(  \int1\right)  e=e$. Thus $T$ is a unital positive mapping.

Finally, assume that $k$ is of type $2$. For every $v\in C\left(  X\right)  $
we have $\left\Vert Tv\right\Vert ^{2}=\sum_{f}\left\vert \left(
v,k_{f}\right)  \right\vert ^{2}\leq\left\Vert v\right\Vert _{2}^{2}\sum
_{f}\left\Vert k_{f}\right\Vert _{2}^{2}$. By continuity argument $T$ admits a
unique extension $T_{k}:L^{2}\left(  X,\mu\right)  \rightarrow\left(
H,e\right)  $, $T_{k}\iota=T$ such that $T_{k}=\sum_{f}f\odot\overline{k_{f}}$
and $\left\Vert T_{k}\right\Vert _{2}^{2}=\sum_{f}\left\Vert T_{k}^{\ast
}f\right\Vert ^{2}=\sum_{f}\left\Vert k_{f}\right\Vert _{2}^{2}<\infty$. Hence
$T_{k}$ is a Hilbert-Schmidt operator. If $k$ is of type $1$ then $\left\Vert
T_{k}\right\Vert _{1}\leq\sum_{f}\left\Vert f\right\Vert \left\Vert
k_{f}\right\Vert _{2}=\sum_{f}\left\Vert k_{f}\right\Vert _{2}<\infty$, which
means that $T_{k}$ is a nuclear operator.
\end{proof}

Below in Theorem \ref{corCHe1}, we prove that the bounded linear extension
$T_{k}:L^{2}\left(  X,\mu\right)  \rightarrow\left(  H,e\right)  $ exists for
every $H$-support $k$ on $X$.

\begin{proposition}
\label{lemCRT1}Let $T:C\left(  X\right)  \rightarrow\left(  H,e\right)  $ be a
unital positive mapping. There is a unique probability measure $\mu$ on $X$
and an $H$-support $k\subseteq\operatorname{ball}L^{\infty}\left(
X,\mu\right)  _{h}$ on $X$ such that $Tv=\sum_{f}\left(  v,k_{f}\right)  f$,
$v\in C\left(  X\right)  $. The functions $k_{f}$, $f\neq e$ are uniquely
determined modulo $\mu$-null functions, and
\[
Tv=\lim_{\lambda}\int v\left(  t\right)  \left(  \sum_{f\in\lambda}%
k_{f}\left(  t\right)  f+e\right)  d\mu,
\]
where $\lambda$ is running over all finite subsets in $F\backslash\left\{
e\right\}  $, and we used the related Radon integral for $H$-valued measurable
functions on $X$. Thus there is a one to one correspondence between unital
positive maps $C\left(  X\right)  \rightarrow\left(  H,e\right)  $ and
$H$-supports on $X$.
\end{proposition}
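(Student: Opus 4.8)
The plan is to extract the probability measure and the support functions directly from $T$. First I would record that $T$ is bounded: since $C\left(X\right)$ carries the order unit $1$ and $T\left(1\right)=e$, for $v\in C\left(X\right)_{h}$ with $\left\Vert v\right\Vert _{\infty}\leq1$ we have $1\pm v\in C\left(X\right)_{+}$, hence $e\pm Tv=T\left(1\pm v\right)\in\mathfrak{c}$, i.e. $-e\leq Tv\leq e$, so $\left\Vert Tv\right\Vert _{e}\leq1$; by Proposition \ref{propOKE21} the norm $\left\Vert \cdot\right\Vert _{e}$ is equivalent to $\left\Vert \cdot\right\Vert$, and splitting a general $v$ into real and imaginary parts yields $T\in\mathcal{B}\left(C\left(X\right),H\right)$. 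Moreover $T$ is $\ast$-linear, since $H_{h}=\mathfrak{c}-\mathfrak{c}$ by Lemma \ref{lHSc11} and $T$ carries $C\left(X\right)_{+}$ into $\mathfrak{c}$. Consequently, for each hermitian $f\in F$ the functional $v\mapsto\left(Tv,f\right)$ on $C\left(X\right)$ is bounded and $\ast$-linear, hence represented by a unique hermitian Radon charge $\mu_{f}\in\mathcal{M}\left(X\right)_{h}$ with $\left(Tv,f\right)=\left\langle v,\mu_{f}\right\rangle$.

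For $f=e$, positivity of $T$ gives $\left\langle v,\mu_{e}\right\rangle=\left(Tv,e\right)\geq0$ for $v\geq0$, while $T\left(1\right)=e$ gives $\mu_{e}\left(X\right)=1$; set $\mu:=\mu_{e}\in\mathcal{P}\left(X\right)$ and $k_{e}:=u=\iota\left(1\right)$, so that $\left(Tv,e\right)=\left\langle v,\mu\right\rangle=\left(v,k_{e}\right)$ in $L^{2}\left(X,\mu\right)$. For $f\neq e$ I would use the expansion $Tv=\left(Tv\right)_{0}+\left(Tv,e\right)e$ with $\left(Tv\right)_{0}=\sum_{f\neq e}\left(Tv,f\right)f$: for $v\in C\left(X\right)_{+}$ the membership $Tv\in\mathfrak{c}$ yields $\sum_{f\neq e}\left(Tv,f\right)^{2}\leq\left(Tv,e\right)^{2}$, whence $\left|\left\langle v,\mu_{f}\right\rangle\right|\leq\left\langle v,\mu\right\rangle$ for all $v\in C\left(X\right)_{+}$. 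This forces $-\mu\leq\mu_{f}\leq\mu$ in the vector lattice $\mathcal{M}\left(X\right)_{h}$, so $\left|\mu_{f}\right|\leq\mu$ and $\mu_{f}\in I_{\mu}\left(X\right)=L^{1}\left(X,\mu\right)_{h}$. By the Lebesgue-Nikodym theorem $\mu_{f}=k_{f}\mu$ for a unique $k_{f}\in L^{1}\left(X,\mu\right)_{h}$ modulo $\mu$-null functions, and the bounds $-\mu\leq\mu_{f}\leq\mu$ are equivalent to $\left|k_{f}\right|\leq1$ $\mu$-a.e., i.e. $k_{f}\in\operatorname{ball}L^{\infty}\left(X,\mu\right)_{h}$.

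It remains to check that $k=\left\{k_{f}:f\in F\right\}$ is an $H$-support on $X$ recovering $T$, and to settle uniqueness. Orthogonality $k_{f}\perp k_{e}$ for $f\neq e$ follows from $\int k_{f}\,d\mu=\left\langle 1,\mu_{f}\right\rangle=\left(T1,f\right)=\left(e,f\right)=0$; and for $v\in C\left(X\right)_{+}$ one has $\left(v,k_{f}\right)=\left\langle v,\mu_{f}\right\rangle=\left(Tv,f\right)$, so $\sum_{f\neq e}\left(v,k_{f}\right)^{2}=\sum_{f\neq e}\left(Tv,f\right)^{2}\leq\left(Tv,e\right)^{2}=\left(v,k_{e}\right)^{2}$, which is the defining inequality. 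Since $F$ is an orthonormal basis of $H$, $Tv=\sum_{f}\left(Tv,f\right)f=\sum_{f}\left(v,k_{f}\right)f$, and the finite partial sums $\left(v,k_{e}\right)e+\sum_{f\in\lambda}\left(v,k_{f}\right)f$ over $\lambda\subseteq F\backslash\left\{e\right\}$ are precisely the $H$-valued Radon integrals $\int v\left(t\right)\bigl(\sum_{f\in\lambda}k_{f}\left(t\right)f+e\bigr)\,d\mu$, converging to $Tv$ in $H$ because $\sum_{f}\left|\left(v,k_{f}\right)\right|^{2}=\left\Vert Tv\right\Vert ^{2}<\infty$. For uniqueness, $\mu=\mu_{e}$ is determined by $T$, each $k_{f}$ ($f\neq e$) is the Radon-Nikodym derivative $d\mu_{f}/d\mu$ and hence determined modulo $\mu$-null functions, and $k_{e}=u$ is fixed; together with Lemma \ref{lemCRT0}, which produces a unital positive map $T_{k}$ from any $H$-support, this establishes the asserted bijection. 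The only slightly delicate steps are the boundedness of $T$ (handled by the order-unit argument together with Proposition \ref{propOKE21}) and the passage $\left|\left\langle v,\mu_{f}\right\rangle\right|\leq\left\langle v,\mu\right\rangle\ \left(v\geq0\right)\Rightarrow\left|\mu_{f}\right|\leq\mu$ through the lattice order of $\mathcal{M}\left(X\right)_{h}$; everything else is routine bookkeeping with the orthonormal basis $F$.
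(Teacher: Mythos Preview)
Your proof is correct and follows essentially the same route as the paper's: extract $\mu$ from $(Tv,e)$, represent each $(Tv,f)$ as $\langle v,k_{f}\mu\rangle$ via Radon--Nikodym with $|k_{f}|\le 1$, and verify the $H$-support axioms. The only minor technical difference is that the paper avoids proving boundedness of $T$ up front by using the state $\overline{f}+\overline{e}\in S(\mathfrak{c})$ to obtain the \emph{positive} functional $v\mapsto(Tv,f+e)\le 2(Tv,e)$ directly, then setting $k_{f}=m_{f}-1$, whereas you first establish $T\in\mathcal{B}(C(X),H)$ via the order-unit argument and Proposition~\ref{propOKE21} and then work with the signed charge $\mu_{f}$; both devices yield the same bound $-\mu\le\mu_{f}\le\mu$ and the same $k_{f}$.
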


\begin{proof}
If $v\in C\left(  X\right)  _{+}$ then $Tv\in\mathfrak{c}$. In particular,
$\left(  Tv,e\right)  \geq0$, which means that $v\mapsto\left(  T\left(
v\right)  ,e\right)  $ is a positive Radon integral, that is, $\left(
Tv,e\right)  =\left\langle v,\mu\right\rangle $ for a certain $\mu
\in\mathcal{M}\left(  X\right)  _{+}$. Note that $\int1d\mu=\left(
T1,e\right)  =\left\Vert e\right\Vert ^{2}=1$, that is, $\mu\in\mathcal{P}%
\left(  X\right)  $. Moreover, $\sum_{f\neq e}\left(  Tv,f\right)  ^{2}%
\leq\left(  Tv,e\right)  ^{2}$ for all $v\in C\left(  X\right)  _{+}$. Since
$\overline{f}+\overline{e}\in S\left(  \mathfrak{c}\right)  $ (see Lemma
\ref{lHSc11}), it follows that $\left(  Tv,f+e\right)  =\left\langle v,\mu
_{f}\right\rangle $ for some $\mu_{f}\in\mathcal{M}\left(  X\right)  _{+}$.
But $\left(  Tv,f+e\right)  =\left(  Tv,f\right)  +\left(  Tv,e\right)
\leq2\left(  Tv,e\right)  $ for all $v\in C\left(  X\right)  _{+}$, which
means that $\mu_{f}\leq2\mu$ in $\mathcal{M}\left(  X\right)  _{h}$ for all
$f\neq e$. Thus $\left\{  \mu_{f}\right\}  \subseteq I_{\mu}\left(  X\right)
$, where $I_{\mu}\left(  X\right)  $ is the closed (lattice) ideal of the
complete lattice $\mathcal{M}\left(  X\right)  _{h}$ generated by $\mu$ (see
Subsection \ref{subsecARM}). Using Lebesgue-Nikodym Theorem, we deduce that
$\mu_{f}=m_{f}\mu$ for some (real) Borel function $m_{f}\in L^{1}\left(
X,\mu\right)  _{h}$ such that $0\leq m_{f}\leq2$. The functions $\left\{
m_{f}:f\neq e\right\}  $ are uniquely determined modulo $\mu$-null functions.
It follows that
\[
\left(  Tv,f\right)  =\left(  Tv,f+e\right)  -\left(  Tv,e\right)
=\left\langle v,m_{f}\mu\right\rangle -\left\langle v,\mu\right\rangle
=\left\langle v,k_{f}\mu\right\rangle
\]
for all $v\in C\left(  X\right)  $, where $k_{f}=m_{f}-1$ is a bounded Borel
function from $L^{1}\left(  X,\mu\right)  _{h}$. Since $T1=e$, we obtain that
$\left\langle 1,k_{f}\mu\right\rangle =\left(  T1,f\right)  =0$, that is,
$k_{f}\perp u$ in $L^{2}\left(  X,\mu\right)  $ for all $f\neq e$.

Thus $Tv=\sum_{f}\left(  v,k_{f}\right)  f=\sum_{f\neq e}\left(  \int v\left(
t\right)  k_{f}\left(  t\right)  d\mu\right)  f+\left(  \int v\left(
t\right)  d\mu\right)  e$. In particular,%
\[
Tv=\lim_{\lambda}\sum_{f\in\lambda}\left(  \int v\left(  t\right)
k_{f}\left(  t\right)  d\mu\right)  f+\left(  \int v\left(  t\right)
d\mu\right)  e=\lim_{\lambda}\int\left(  \sum_{f\in\lambda}v\left(  t\right)
k_{f}\left(  t\right)  f+v\left(  t\right)  e\right)  d\mu,
\]
where $\lambda$ is running over all finite subsets in $F\backslash\left\{
e\right\}  $. Notice that we used the canonical extension of the Radon
integral to $H$-valued functions on $X$ (see below Remark \ref{remCRT1}).

Finally, prove that $k=\left\{  k_{f}\right\}  \subseteq\operatorname{ball}%
L^{\infty}\left(  X,\mu\right)  _{h}$. Since $\sum_{f\neq e}\left\langle
v,k_{f}\mu\right\rangle ^{2}\leq\left\langle v,\mu\right\rangle ^{2}\ $for all
$v\in C\left(  X\right)  _{+}$, we conclude that $\left\vert \left\langle
v,k_{f}\mu\right\rangle \right\vert \leq\left\langle v,\mu\right\rangle $,
$v\in C\left(  X\right)  _{+}$, which means that $-\mu\leq k_{f}\mu\leq\mu$ in
$\mathcal{M}\left(  X\right)  _{h}$. It follows that $\left\vert
k_{f}\right\vert \mu=\left\vert k_{f}\mu\right\vert =\left(  k_{f}\mu\right)
\vee\left(  -k_{f}\mu\right)  \leq\mu$ (see \cite[Ch. V, 5.4]{BourInt}), that
is, $\left\vert k_{f}\right\vert \leq1$ for $\mu$-almost everywhere on $X$.
Thus $k\subseteq\operatorname{ball}L^{\infty}\left(  X,\mu\right)  _{h}$ and
it is an $H$-support on $X$. The rest follows from Lemma \ref{lemCRT0}.
\end{proof}

\begin{remark}
\label{remCRT1}Let $\mu$ be a Radon measure on a Hausdorff compact space $X$,
$H$ a Hilbert space and let $\mathbf{v}:X\rightarrow H$ be a weakly (or
weak$^{\ast}$) measurable mapping with $\mu$-integrable norm. Thus
$\left\langle \mathbf{v}\left(  \cdot\right)  ,\overline{\eta}\right\rangle $
is measurable for every $\eta\in H$, and $\int\left\Vert \mathbf{v}\left(
t\right)  \right\Vert d\mu<\infty$. There is a unique element $\int%
\mathbf{v}\left(  t\right)  d\mu\in H$ such that $\left\langle \int%
\mathbf{v}\left(  t\right)  d\mu,\overline{\eta}\right\rangle =\int%
\left\langle \mathbf{v}\left(  t\right)  ,\overline{\eta}\right\rangle d\mu$
for all $\eta\in H$ (see \cite[2.5.14]{Ped}). If $\mathbf{v}$ is continuous
then $\int\mathbf{v}\left(  t\right)  d\mu$ is a limit of Riemann sums
$\sum_{m=1}^{N}\mu\left(  E_{m}\right)  \mathbf{v}\left(  t_{m}\right)  $
taken over all partitions $\left\{  E_{m}\right\}  $ of $X$ into disjoint
Borel subsets (see \cite[E 2.5.8]{Ped}). In particular, if $\mathbf{v}\left(
X\right)  \subseteq\mathfrak{c}$ for a certain closed cone $\mathfrak{c}$ then
$\int\mathbf{v}\left(  t\right)  d\mu\in\mathfrak{c}$.
\end{remark}

Now we can prove that all unital positive maps $C\left(  X\right)
\rightarrow\left(  H,e\right)  $ admit unique extensions up to positive maps
between Hilbert spaces.

\begin{theorem}
\label{corCHe1}Let $T:C\left(  X\right)  \rightarrow\left(  H,e\right)  $ be a
unital positive mapping with its $H$-support $k\subseteq\operatorname{ball}%
L^{\infty}\left(  X,\mu\right)  $ on $X$. Then $T$ is an absolutely summable
mapping, $k$ is a unital $H$-support in $L^{2}\left(  X,\mu\right)  $, and $T$
admits a unique bounded linear extension $T_{k}:\left(  L^{2}\left(
X,\mu\right)  ,u\right)  \rightarrow\left(  H,e\right)  $, which is a unital
positive mapping of Hilbert spaces.
\end{theorem}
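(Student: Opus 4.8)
The plan is to build everything on the description of $T$ already obtained in Proposition \ref{lemCRT1}: there is a unique probability measure $\mu$ on $X$ and a family $k=\{k_f:f\in F\}\subseteq\operatorname{ball}L^{\infty}(X,\mu)_h$ with $k_e=u$ and $k_f\perp u$ for $f\neq e$, such that $Tv=\sum_f(v,k_f)f$ and $\sum_{f\neq e}(v,k_f)^2\leq(v,u)^2$ in $L^2(X,\mu)$ for every $v\in C(X)_+$. First I would note that this inequality persists for every $v\in L^1(X,\mu)_+$: indeed $C(X)_+$ is $L^1$-dense in $L^1(X,\mu)_+$ (replace an approximating net $w_n\to v$ in $C(X)$ by $(w_n)_+$), each $k_f$ lies in $L^{\infty}(X,\mu)$, and the inequality is stable under passing to the limit first over a fixed finite subset $\lambda\subseteq F\setminus\{e\}$ and then over $\lambda$.

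For the absolute summability of $T$ I would argue that for $v\in C(X)_+$ one has $\|Tv\|^2=\sum_f(v,k_f)^2\leq 2(v,u)^2$, so $\|Tv\|\leq\sqrt2\int_X v\,d\mu$; splitting an arbitrary $v$ as $v=v_+-v_-$ and then into real and imaginary parts yields $\|Tv\|\leq 2\int_X|v(t)|\,d\mu(t)$ for all $v\in C(X)$. By the Pietsch theorem recalled in Subsection \ref{subsecPF} (applied with $2\mu$) this says precisely that $T\in\mathcal{A}(C(X),H)$. Since $\mu$ is a probability measure the same estimate gives $\|Tv\|\leq 2\|v\|_{L^1(X,\mu)}\leq 2\|v\|_{L^2(X,\mu)}$, so $T$ is $\|\cdot\|_2$-continuous on $C(X)$; as $C(X)$ is $\|\cdot\|_2$-dense in $L^2(X,\mu)$, $T$ extends uniquely to a bounded operator $T_k:L^2(X,\mu)\to H$, and by continuity of the inner product $(T_k\xi,f)=(\xi,k_f)$ for all $\xi\in L^2(X,\mu)$ and $f\in F$, whence $T_k\xi=\sum_f(\xi,k_f)f$. (In the language of Proposition \ref{propAlP2}, $T_k$ is the Pietsch extension $T_2$ of $T$.)

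The crucial point is to upgrade the ``one-sided'' estimate over $C(X)_+$ to a genuine quadratic bound over all of $L^2(X,\mu)$. For a real $\xi\in L^2(X,\mu)$ write $\xi=\xi_+-\xi_-$ with $\xi_\pm\in L^1(X,\mu)_+$; then, applying the extended inequality to $\xi_+$ and to $\xi_-$ and estimating the cross term by the Cauchy--Schwarz inequality in $\ell^2$ (so that it is at most $2(\xi_+,u)(\xi_-,u)$), and using $(\xi_+,u)+(\xi_-,u)=\int_X|\xi|\,d\mu$, one gets
\[
\sum_{f\neq e}(\xi,k_f)^2=\sum_{f\neq e}(\xi_+,k_f)^2-2\sum_{f\neq e}(\xi_+,k_f)(\xi_-,k_f)+\sum_{f\neq e}(\xi_-,k_f)^2\leq\Big(\int_X|\xi|\,d\mu\Big)^2\leq\|\xi\|_{L^2(X,\mu)}^2 ,
\]
and taking real and imaginary parts this extends to $\sum_{f\neq e}|(\xi,k_f)|^2\leq\|\xi\|_2^2$ for every $\xi\in L^2(X,\mu)$. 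I expect this cross-term trick to be the only non-routine step of the argument.

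With this in hand the rest is immediate. Let $\eta\in\mathfrak{c}_u=L^2(X,\mu)_+$ with its orthogonal decomposition $\eta=\eta_0+ru$, $\eta_0\perp u$, $r=(\eta,u)\geq0$, $\|\eta_0\|_2\leq r$. Since $k_f\perp u$, $(\eta,k_f)=(\eta_0,k_f)$ for $f\neq e$, so the estimate above gives $\sum_{f\neq e}(\eta,k_f)^2\leq\|\eta_0\|_2^2\leq r^2=(\eta,u)^2$; together with $(\eta,k_e)=(\eta,u)\geq0$ this is exactly the statement that $k$ is a unital $H$-support in $L^2(X,\mu)$ in the sense of Subsection \ref{subsecPM11}. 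Moreover $\|T_k\eta\|^2=\sum_f(\eta,k_f)^2=(\eta,u)^2+\sum_{f\neq e}(\eta,k_f)^2\leq 2(\eta,u)^2=2(T_k\eta,e)^2$ with $(T_k\eta,e)=(\eta,u)\geq0$, so $T_k\eta\in\mathfrak{c}_e$; and $T_ku=\sum_f(u,k_f)f=(u,u)e=e$. Hence $T_k$ is a unital positive mapping of Hilbert spaces and the unique bounded linear extension of $T$, which completes the proof.
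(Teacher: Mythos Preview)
Your proof is correct and follows essentially the same approach as the paper's: both start from Proposition~\ref{lemCRT1}, use the Cauchy--Schwarz cross-term trick on the decomposition $\xi=\xi_+-\xi_-$ to pass from the one-sided inequality over positive elements to the bound $\sum_{f\neq e}(\xi,k_f)^2\le(\int|\xi|)^2$, invoke Pietsch for absolute summability, and extend to $L^2(X,\mu)$ by density. The only organizational difference is that you first push the support inequality from $C(X)_+$ to $L^1(X,\mu)_+$ and then run the cross-term argument directly on $L^2(X,\mu)_h$, whereas the paper does the cross-term estimate on $C(X)_h$ first and afterwards extends by density to $L^2(X,\mu)_h^u$ via Remark~\ref{remUHS}; your route also yields the slightly sharper constant $2$ in place of the paper's $2\sqrt{2}$, but this is immaterial.
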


\begin{proof}
By Proposition \ref{lemCRT1}, there is a unique probability measure $\mu$ on
$X$ and an $H$-support $k\subseteq\operatorname{ball}L^{\infty}\left(
X,\mu\right)  _{h}$ on $X$ such that $Tv=\sum_{f}\left(  v,k_{f}\right)  f$,
$v\in C\left(  X\right)  $. The functions $k_{f}$ are uniquely determined
modulo $\mu$-null functions. Prove that $T:\left(  C\left(  X\right)
,\left\Vert \cdot\right\Vert _{2}\right)  \rightarrow H$ is bounded. If $v\in
C\left(  X\right)  _{h}$ then $v=v_{+}-v_{-}$ with $v_{+},v_{-}\in C\left(
X\right)  _{+}$ and $\left\vert v\right\vert =v_{+}\vee v_{-}=v_{+}+v_{-}$.
Moreover,
\begin{align*}
\sum_{f\neq e}\left(  v,k_{f}\right)  ^{2}  &  =\sum_{f\neq e}\left(
v_{+},k_{f}\right)  ^{2}+\sum_{f\neq e}\left(  v_{-},k_{f}\right)  ^{2}%
-2\sum_{f\neq e}\left(  v_{+},k_{f}\right)  \left(  v_{-},k_{f}\right) \\
&  \leq\left(  v_{+},k_{e}\right)  ^{2}+\left(  v_{-},k_{e}\right)  ^{2}%
+2\sum_{f\neq e}\left\vert \left(  v_{+},k_{f}\right)  \left(  v_{-}%
,k_{f}\right)  \right\vert \\
&  \leq\left(  v_{+},k_{e}\right)  ^{2}+\left(  v_{-},k_{e}\right)
^{2}+2\left(  \sum_{f\neq e}\left(  v_{+},k_{f}\right)  ^{2}\right)
^{1/2}\left(  \sum_{f\neq e}\left(  v_{-},k_{f}\right)  ^{2}\right)  ^{1/2}\\
&  \leq\left(  v_{+},k_{e}\right)  ^{2}+\left(  v_{-},k_{e}\right)
^{2}+2\left(  v_{+},k_{e}\right)  \left(  v_{-},k_{e}\right)  =\left(
v_{+}+v_{-},k_{e}\right)  ^{2}\\
&  =\left(  \left\vert v\right\vert ,k_{e}\right)  ^{2},
\end{align*}
which in turn implies that
\[
\left\Vert Tv\right\Vert ^{2}=\sum_{f\neq e}\left(  v,k_{f}\right)
^{2}+\left(  v,k_{e}\right)  ^{2}\leq\left(  \left\vert v\right\vert
,k_{e}\right)  ^{2}+\left(  v,k_{e}\right)  ^{2}\leq2\left(  \int\left\vert
v\right\vert d\mu\right)  ^{2},
\]
that is, $\left\Vert Tv\right\Vert \leq\sqrt{2}\int\left\vert v\right\vert
d\mu$. In the case of any $v\in C\left(  X\right)  $ we derive that
$\left\Vert Tv\right\Vert \leq\left\Vert T\operatorname{Re}v\right\Vert
+\left\Vert T\operatorname{Im}v\right\Vert \leq\sqrt{2}\int\left(  \left\vert
\operatorname{Re}v\right\vert +\left\vert \operatorname{Im}v\right\vert
\right)  d\mu\leq2\sqrt{2}\int\left\vert v\right\vert d\mu$. By the known
result of Pietsch \cite[2.3.3]{AlP}, we deduce that $T$ is an absolutely
summable mapping with $\left\Vert T\right\Vert \leq\pi\left(  T\right)
\leq2\sqrt{2}\mu\left(  X\right)  =2\sqrt{2}$. It follows that $T$ is
factorized throughout the Hilbert space $L^{2}\left(  X,\mu\right)  $
\cite[3.3.4]{AlP}. Namely, $\left\Vert Tv\right\Vert \leq2\sqrt{2}\left(
\int\left\vert v\right\vert ^{2}d\mu\right)  ^{1/2}\left(  \int1d\mu\right)
^{1/2}=2\sqrt{2}\left\Vert v\right\Vert _{2}$ for all $v\in C\left(  X\right)
$, and taking into account the density of $\iota\left(  C\left(  X\right)
\right)  $ in $L^{2}\left(  X,\mu\right)  $, we obtain a unique bounded linear
extension $T_{k}:L^{2}\left(  X,\mu\right)  \rightarrow H$, $T_{k}\iota=T$.
Moreover, $T_{k}\eta=\sum_{f}\left(  \eta,k_{f}\right)  f$ for all $\eta\in
L^{2}\left(  X,\mu\right)  $ due to the density of $\iota\left(  C\left(
X\right)  \right)  $ in $L^{2}\left(  X,\mu\right)  $.

It remains to prove that $k$ is a unital $H$-support in the unital Hilbert
space $\left(  L^{2}\left(  X,\mu\right)  ,u\right)  $. If $v_{0}\in
\iota\left(  C\left(  X\right)  \right)  \cap L^{2}\left(  X,\mu\right)
_{h}^{u}$ then as above we have $\sum_{f\neq e}\left(  v_{0},k_{f}\right)
^{2}\leq\left(  \left\vert v_{0}\right\vert ,k_{e}\right)  ^{2}\leq\left\Vert
\left\vert v_{0}\right\vert \right\Vert _{2}^{2}\left(  \int1d\mu\right)
^{2}=\left\Vert v_{0}\right\Vert _{2}^{2}=\left(  \left(  v_{0},k_{e}\right)
+\left\Vert v_{0}\right\Vert _{2}\right)  ^{2}$. Notice that $\left(
v_{0},k_{e}\right)  =\left(  v_{0},u\right)  =0$. Take $\eta_{0}\in
L^{2}\left(  X,\mu\right)  _{h}^{u}$. Then $\eta_{0}=\lim_{n}v_{0,n}$ in
$L^{2}\left(  X,\mu\right)  $ for a certain sequence $\left(  v_{0,n}\right)
_{n}$ from $\iota\left(  C\left(  X\right)  \right)  \cap L^{2}\left(
X,\mu\right)  _{h}^{u}$. For every finite subset $\lambda\subseteq
F\backslash\left\{  e\right\}  $ we have
\[
\sum_{f\in\lambda}\left(  \eta_{0},k_{f}\right)  ^{2}=\lim_{n}\sum
_{f\in\lambda}\left(  v_{0,n},k_{f}\right)  ^{2}\leq\lim_{n}\left\Vert
v_{0,n}\right\Vert _{2}^{2}=\left\Vert \eta_{0}\right\Vert _{2}^{2}=\left(
\left(  \eta_{0},k_{e}\right)  +\left\Vert \eta_{0}\right\Vert _{2}\right)
^{2},
\]
which in turn implies that $\sum_{f\neq e}\left(  \eta_{0},k_{f}\right)
^{2}\leq\left(  \left(  \eta_{0},k_{e}\right)  +\left\Vert \eta_{0}\right\Vert
_{2}\right)  ^{2}$. Consequently, $k$ is a unital $H$-support in $\left(
L^{2}\left(  X,\mu\right)  ,u\right)  $ (see Remark \ref{remUHS}), and
$T=T_{k}$ in the sense of Proposition \ref{propKom1}.
\end{proof}

Notice that $T\left(  C\left(  X\right)  _{+}\right)  \subseteq\mathfrak{c}$
implies that $T^{\ast}\left(  S\left(  \mathfrak{c}\right)  \right)
\subseteq\mathcal{P}\left(  X\right)  $. Using Lemma \ref{lDualM} and
Proposition \ref{propPTT1}, we obtain that $T^{\left(  \infty\right)  }\left(
M\left(  C\left(  X\right)  \right)  _{+}\right)  =T^{\left(  \infty\right)
}\left(  \min C\left(  X\right)  _{+}\right)  =T^{\left(  \infty\right)
}\left(  \mathcal{P}\left(  X\right)  ^{\boxdot}\right)  \subseteq S\left(
\mathfrak{c}\right)  ^{\boxdot}=\min\mathfrak{c}$.

\subsection{Separable and nuclear morphisms}

Recall that a positive mapping $\phi:\mathcal{V\rightarrow W}$ of operator
systems is called \textit{a separable} if $\phi=\sum_{l}p_{l}\odot q_{l}$ for
some positive functionals $q_{l}$ on $\mathcal{V}$ and positive elements
$p_{l}$ from $\mathcal{W}$, where $\left(  p_{l}\odot q_{l}\right)
v=q_{l}\left(  v\right)  p_{l}$ for all $v\in\mathcal{V}$. Thus $\phi\left(
v\right)  =\lim_{k}\sum_{l=1}^{k}q_{l}\left(  v\right)  p_{l}$ in
$\mathcal{W}$ for every $v\in\mathcal{V}$. Notice that a separable mapping
$\phi$ defines a matrix positive mapping $\phi:\mathcal{V\rightarrow}\left(
\mathcal{W},\max\mathcal{W}_{+}\right)  $ automatically. Indeed, take $v\in
M_{n}\left(  \mathcal{V}\right)  _{+}$. Since the positive functionals $q_{l}$
on $\mathcal{V}$ are matrix positive, we deduce that $\phi^{\left(  n\right)
}\left(  v\right)  =\lim_{k}\sum_{l=1}^{k}q_{l}^{\left(  n\right)  }\left(
v\right)  p_{l}^{\oplus n}=\lim_{k}\sum_{l=1}^{k}q_{l}^{\left(  n\right)
}\left(  v\right)  ^{1/2}p_{l}^{\oplus n}q_{l}^{\left(  n\right)  }\left(
v\right)  ^{1/2}$. But $q_{l}^{\left(  n\right)  }\left(  v\right)
^{1/2}p_{l}^{\oplus n}q_{l}^{\left(  n\right)  }\left(  v\right)  ^{1/2}%
\in\mathcal{W}_{+}^{c}$, therefore $\phi^{\left(  n\right)  }\left(  v\right)
\in\mathcal{W}_{+}^{\boxdot\boxdot}=\left(  \mathcal{W}_{+}^{c}\right)
^{-}=\max\mathcal{W}_{+}$.

Now let $T:C\left(  X\right)  \rightarrow\left(  H,e\right)  $ be a unital
positive mapping. By Proposition \ref{lemCRT1}, $T$ is given by an $H$-support
$k\subseteq\operatorname{ball}L^{\infty}\left(  X,\mu\right)  _{h}$ on $X$.
Suppose $T$ is a nuclear mapping, that is, $T=\sum_{l}\gamma_{l}\odot q_{l}$
for some $\left(  \gamma_{l}\right)  _{l}\subseteq H$ and $\left(
q_{l}\right)  _{l}\subseteq\mathcal{M}\left(  X\right)  $ such that $\sum
_{l}\left\Vert \gamma_{l}\right\Vert \left\Vert q_{l}\right\Vert <\infty$.
Taking into account that $T$ is a $\ast$-linear mapping and both $\left(
C\left(  X\right)  ,\mathcal{M}\left(  X\right)  \right)  $ and $\left(
H,\overline{H}\right)  $ are dual $\ast$-pairs, we can assume that $\left(
\gamma_{l}\right)  _{l}\subseteq\operatorname{ball}H_{h}$ and $\left(
q_{l}\right)  _{l}\subseteq\mathcal{M}\left(  X\right)  _{h}$ with $\sum
_{l}\left\Vert q_{l}\right\Vert <\infty$. We say that $T$ is a \textit{nuclear
morphism }if $T=\sum_{l}\gamma_{l}\odot q_{l}$ for some $\left(  \gamma
_{l}\right)  _{l}\subseteq\operatorname{ball}H_{h}$ and $\left(  q_{l}\right)
_{l}\subseteq I_{\mu}\left(  X\right)  $ with $\sum_{l}\left\Vert
q_{l}\right\Vert <\infty$.

\begin{lemma}
\label{lemNSO1}Let $T:C\left(  X\right)  \rightarrow\left(  H,e\right)  $ be a
unital positive mapping given by an $H$-support $k\subseteq\operatorname{ball}%
L^{\infty}\left(  X,\mu\right)  _{h}$ on $X$. Then $T$ is a nuclear morphism
if and only if $T+e\odot q$ is separable for a certain $q\in I_{\mu}\left(
X\right)  $. In this case, one can assume that $q\in I_{\mu}\left(  X\right)
_{+}$.
\end{lemma}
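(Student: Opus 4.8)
The plan is to prove both implications by rearranging the rank-one summands, the point being that the relation $\|p\|\le\sqrt2\,(p,e)$ valid for $p\in\mathfrak{c}_e$ converts the merely pointwise data of ``separable'' into the norm-summable data of ``nuclear''. First I would record two elementary facts. If $\gamma\in\operatorname{ball}H_h$, write $\gamma=\gamma_0+(\gamma,e)e$ with $\gamma_0\in H_h^e$; then $\|\gamma_0\|^2+(\gamma,e)^2\le1$, so $\|\gamma_0\|\pm(\gamma,e)\le\sqrt2$ by Cauchy--Schwarz, and since $\mathfrak{c}_e=\{\zeta\in H_h:\|\zeta_0\|\le(\zeta,e)\}$ (see Lemma \ref{lHSc11}) this gives $\gamma+\sqrt2\,e\in\mathfrak{c}_e$ and $\sqrt2\,e-\gamma\in\mathfrak{c}_e$. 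Secondly, for $p\in\mathfrak{c}_e$ we have $\|p\|\le\sqrt2\,(p,e)$, hence $p=0$ whenever $(p,e)=0$ and $p/\|p\|\in\operatorname{ball}H_h$ otherwise.

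For the forward implication, let $T=\sum_l\gamma_l\odot q_l$ be a nuclear morphism: $\gamma_l\in\operatorname{ball}H_h$, $q_l\in I_\mu(X)$, $\sum_l\|q_l\|<\infty$. Split $q_l=q_l^+-q_l^-$ into Jordan parts, which lie in $I_\mu(X)_+$ since $I_\mu(X)$ is a closed ideal, with $\|q_l^\pm\|\le\|q_l\|$. A direct computation gives
\[
(\gamma_l+\sqrt2\,e)\odot q_l^++(\sqrt2\,e-\gamma_l)\odot q_l^-=\gamma_l\odot q_l+e\odot(\sqrt2\,|q_l|),
\]
and every series involved converges absolutely in operator norm because $\sum_l\|q_l^\pm\|<\infty$. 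Put $q:=\sqrt2\sum_l|q_l|$; it is a norm limit of an increasing sequence of positive measures from $I_\mu(X)$, hence lies in $I_\mu(X)_+$ (the total-variation norm is an $L$-norm, so this norm limit coincides with the lattice supremum, which $I_\mu(X)$ contains). Summing the displayed identity over $l$,
\[
T+e\odot q=\sum_l\bigl[(\gamma_l+\sqrt2\,e)\odot q_l^++(\sqrt2\,e-\gamma_l)\odot q_l^-\bigr],
\]
a pointwise-convergent sum of rank-one maps built from positive elements of $\mathfrak{c}_e$ and positive functionals $q_l^\pm$ on $C(X)$; thus $T+e\odot q$ is a separable (and positive) map with $q\in I_\mu(X)_+$.

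For the converse, assume $T+e\odot q=\sum_l p_l\odot q_l$ is separable with $q\in I_\mu(X)$, $p_l\in\mathfrak{c}_e$, $q_l\in\mathcal M(X)_+$. Replacing $q$ by $q^+\in I_\mu(X)_+$ and inserting the extra positive rank-one summand $e\odot q^-$, we may assume $q\in I_\mu(X)_+$. By Proposition \ref{lemCRT1}, $T$ has an $H$-support with $k_e=u$, so $(Tv,e)=\langle v,\mu\rangle$; testing the separable representation against the functional $\overline e=(\,\cdot\,,e)$ yields $\sum_l(p_l,e)q_l=\mu+q=:\nu$, a finite positive measure in $I_\mu(X)_+$. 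Discarding the terms with $(p_l,e)=0$ (for which $p_l=0$), from $(p_l,e)q_l\le\nu$ we get $q_l\in I_\mu(X)_+$, and $\sum_l(p_l,e)\|q_l\|=\nu(X)=\mu(X)+q(X)<\infty$; since $\|p_l\|\le\sqrt2\,(p_l,e)$ this forces $\sum_l\|p_l\|\,\|q_l\|<\infty$, so $\sum_l p_l\odot q_l$ converges in operator norm, necessarily to $T+e\odot q$. Then
\[
T=\sum_l\bigl(\|p_l\|^{-1}p_l\bigr)\odot\bigl(\|p_l\|q_l\bigr)+e\odot(-q)
\]
exhibits $T$ as a nuclear morphism, since $\|p_l\|^{-1}p_l,e\in\operatorname{ball}H_h$, $\|p_l\|q_l,-q\in I_\mu(X)$, and $\sum_l\|p_l\|\,\|q_l\|<\infty$.

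The step I expect to be the real obstacle is this converse direction: ``separable'' only asserts pointwise convergence, whereas ``nuclear morphism'' demands both norm-summability of the series and membership of the functionals in $I_\mu(X)$; the device that unlocks both at once is testing the separable representation against $\overline e$ and using $\|p_l\|\le\sqrt2\,(p_l,e)$. The only other delicate point is the order-/norm-completeness argument placing $\sum_l|q_l|$ inside the closed ideal $I_\mu(X)$.
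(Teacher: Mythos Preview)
Your proof is correct and follows essentially the same strategy as the paper's: in the converse direction you both test the separable representation against $\overline{e}$ to obtain $\sum_l(p_l,e)\,q_l=\mu+q$, then use the lattice-ideal property of $I_\mu(X)$ to place each $q_l$ there and the cone inequality $\|p_l\|\le\sqrt{2}\,(p_l,e)$ to force norm-summability; in the forward direction you both Jordan-decompose the $q_l$ and shift the $\gamma_l$ into $\mathfrak{c}$ by adding a multiple of $e$. Your forward direction is in fact slightly more direct than the paper's, shifting $\gamma_l\mapsto\gamma_l+\sqrt{2}\,e\in\mathfrak{c}$ at once rather than first splitting off the $e$-component of $\gamma_l$ as the paper does.
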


\begin{proof}
First assume that $T+e\odot q$ is separable for a certain $q\in I_{\mu}\left(
X\right)  $. Then $T=\sum_{l}p_{l}\odot q_{l}-e\odot q$ for some $\left(
p_{l}\right)  _{l}\subseteq\mathfrak{c}$ and $\left(  q_{l}\right)
_{l}\subseteq\mathcal{M}\left(  X\right)  _{+}$. We have $p_{l}=\eta_{l}%
+r_{l}e$, $\eta_{l}\in H_{h}^{e}$, $\left\Vert \eta_{l}\right\Vert \leq r_{l}%
$. Put $\zeta_{l}=r_{l}^{-1}\eta_{l}\in\operatorname{ball}H_{h}^{e}$, and
$\mu_{l}=r_{l}q_{l}$. Then%
\begin{align*}
T\left(  v\right)   &  =\sum_{l}\left\langle v,q_{l}\right\rangle \left(
\eta_{l}+r_{l}e\right)  -\left\langle v,q\right\rangle e=\sum_{l}\left\langle
v,\mu_{l}\right\rangle \left(  \zeta_{l}+e\right)  -\left\langle
v,q\right\rangle e\\
&  =\sum_{l}\left\langle v,\mu_{l}\right\rangle \zeta_{l}+\left(  \sum
_{l}\left\langle v,\mu_{l}\right\rangle -\left\langle v,q\right\rangle
\right)  e\in H^{e}\oplus\mathbb{C}e=H
\end{align*}
for all $v\in C\left(  X\right)  $. In particular, $\sum_{l}\left\langle
1,\mu_{l}\right\rangle \zeta_{l}=0$ and $\sum_{l}\left\langle 1,\mu
_{l}\right\rangle =1+\left\langle 1,q\right\rangle $. The latter means that
$\tau=\sum_{l}\mu_{l}\in\mathcal{M}\left(  X\right)  _{+}$ with $\sum
_{l}\left\Vert \mu_{l}\right\Vert =\sum_{l}\left\langle 1,\mu_{l}\right\rangle
=1+\left\Vert q\right\Vert <\infty$. By Proposition \ref{lemCRT1}, we obtain
the equality $\mu=\tau-q$. But $q\in I_{\mu}\left(  X\right)  $, therefore
$\tau=\mu+q\in I_{\mu}\left(  X\right)  _{+}$. Since $\left\{  \mu
_{l}\right\}  \leq\tau$ and $I_{\mu}\left(  X\right)  $ is a lattice ideal, it
follows that $\left\{  \mu_{l}\right\}  \subseteq I_{\mu}\left(  X\right)
_{+}$. Moreover, $\sum_{l}\left\Vert \zeta_{l}\right\Vert \left\Vert \mu
_{l}\right\Vert \leq\sum_{l}\left\Vert \mu_{l}\right\Vert =\left\Vert
\tau\right\Vert =1+\left\Vert q\right\Vert $, which means that $T$ is a
nuclear morphism given by $T=\sum_{l}\zeta_{l}\odot\mu_{l}+e\odot\mu$,
$\left\{  \mu_{l}\right\}  \subseteq I_{\mu}\left(  X\right)  $, and
$\left\Vert T\right\Vert _{1}\leq2+\left\Vert q\right\Vert $.

Conversely, suppose that $T$ is a nuclear morphism. Then $T=\sum_{l}\gamma
_{l}\odot q_{l}$ for some $\left(  \gamma_{l}\right)  _{l}\subseteq
\operatorname{ball}H_{h}$ and $\left(  q_{l}\right)  _{l}\subseteq I_{\mu
}\left(  X\right)  $ with $\sum_{l}\left\Vert q_{l}\right\Vert <\infty$. Thus
$\gamma_{l}=\zeta_{l}+r_{l}e$ with $\zeta_{l}\in\operatorname{ball}H_{h}^{e}$,
$r_{l}\in\mathbb{R}$ and $\left\Vert \zeta_{l}\right\Vert ^{2}+r_{l}^{2}\leq
1$. If $v\in C\left(  X\right)  $ then
\[
Tv=\sum_{l}\left\langle v,q_{l}\right\rangle \left(  \zeta_{l}+r_{l}e\right)
=\sum_{l}\left\langle v,q_{l}\right\rangle \zeta_{l}+\sum_{l}\left\langle
v,r_{l}q_{l}\right\rangle e=\sum_{l}\left\langle v,q_{l}\right\rangle
\zeta_{l}+\left\langle v,\mu\right\rangle e,
\]
where $\mu=\sum_{l}r_{l}q_{l}$, $\sum_{l}\left\Vert r_{l}q_{l}\right\Vert
=\sum_{l}\left\vert r_{l}\right\vert \left\Vert q_{l}\right\Vert \leq\sum
_{l}\left\Vert q_{l}\right\Vert <\infty$. Thus $T=\sum_{l}\zeta_{l}\odot
q_{l}+e\odot\mu$ with $\sum_{l}\left\Vert \zeta_{l}\right\Vert \left\Vert
q_{l}\right\Vert \leq\sum_{l}\left\Vert q_{l}\right\Vert <\infty$. Using the
Jordan decompositions $q_{l}=q_{l,+}-q_{l,-}$ with $q_{l,+}$, $q_{l,-}%
\in\mathcal{M}\left(  X\right)  _{+}$ and $\left\Vert q_{l}\right\Vert
=\left\Vert q_{l,+}\right\Vert +\left\Vert q_{l,-}\right\Vert $ \cite[Ch. 3,
2.6]{BourInt}, we obtain that $T=\sum_{l}\zeta_{l}\odot q_{l,+}+\sum
_{l}\left(  -\zeta_{l}\right)  \odot q_{l,-}+e\odot\mu$ and $\sum
_{l}\left\Vert \zeta_{l}\right\Vert \left\Vert q_{l,+}\right\Vert +\sum
_{l}\left\Vert -\zeta_{l}\right\Vert \left\Vert q_{l,-}\right\Vert \leq
\sum_{l}\left\Vert \zeta_{l}\right\Vert \left\Vert q_{l}\right\Vert <\infty$.
Taking into account that $\left\{  q_{l}\right\}  \subseteq I_{\mu}\left(
X\right)  $, we deduce that $\left\{  q_{l,+},q_{l,-}\right\}  \subseteq
I_{\mu}\left(  X\right)  $ either. Thus we can assume that $T=\sum_{l}%
\zeta_{l}\odot\mu_{l}+e\odot\mu$ with $\zeta_{l}\in\operatorname{ball}%
H_{h}^{e}$, $\mu_{l}\in I_{\mu}\left(  X\right)  _{+}$ and $\sum_{l}\left\Vert
\mu_{l}\right\Vert <\infty$. It follows that%
\[
T=\sum_{l}\left(  \zeta_{l}+e\right)  \odot\mu_{l}+e\odot\mu-e\odot\tau
=\sum_{l}\eta_{l}\odot\mu_{l}+e\odot\mu-e\odot\tau,
\]
where $\eta_{l}=\zeta_{l}+e\in\mathfrak{c}$ and $\tau=\sum_{l}\mu_{l}\in
I_{\mu}\left(  X\right)  _{+}$. Consequently, we can assume that $T=\sum
_{l}\eta_{l}\odot\mu_{l}-e\odot\tau$ for some $\left(  \eta_{l}\right)
_{l}\subseteq\mathfrak{c}$, $\left(  \mu_{l}\right)  _{l}\subseteq
\mathcal{M}\left(  X\right)  _{+}$, $\tau\in I_{\mu}\left(  X\right)  _{+}$
such that $\sum_{l}\left\Vert \eta_{l}\right\Vert \left\Vert \mu
_{l}\right\Vert <\infty$, which means that $T+e\odot\tau$ is separable.
\end{proof}

\begin{corollary}
If $T:C\left(  X\right)  \rightarrow\left(  H,e\right)  $ is a separable
morphism then $T$ is a nuclear morphism automatically.
\end{corollary}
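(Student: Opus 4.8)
The plan is to deduce the statement in essentially one line from Lemma~\ref{lemNSO1}. First I would note that a separable morphism $T:C(X)\to(H,e)$ is in particular a unital positive mapping, so Proposition~\ref{lemCRT1} attaches to it a unique probability measure $\mu$ on $X$ and an $H$-support $k\subseteq\operatorname{ball}L^{\infty}(X,\mu)_{h}$ with $Tv=\sum_{f}(v,k_{f})f$. This is exactly the standing hypothesis under which Lemma~\ref{lemNSO1} is formulated, so that lemma applies verbatim to this $T$ and this $\mu$.

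Next I would observe that the zero charge belongs to $I_{\mu}(X)$, since an ideal of $\mathcal{M}(X)_{h}$ is by definition a vector subspace and hence contains $0$. Choosing $q=0\in I_{\mu}(X)$, the rank-one operator $e\odot q$ is the zero map, whence $T+e\odot q=T$, which is separable by hypothesis. Thus $T+e\odot q$ is separable for some $q\in I_{\mu}(X)$, and the half of Lemma~\ref{lemNSO1} asserting that separability of $T+e\odot q$ (for a $q\in I_{\mu}(X)$) implies nuclearity of $T$ gives at once that $T$ is a nuclear morphism.

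For completeness I would also spell out the routine direct argument underneath this appeal. Writing $T=\sum_{l}p_{l}\odot q_{l}$ with $p_{l}=\eta_{l}+r_{l}e\in\mathfrak{c}$, $\|\eta_{l}\|\le r_{l}$, and $q_{l}\in\mathcal{M}(X)_{+}$, one discards the terms with $r_{l}=0$, puts $\zeta_{l}=r_{l}^{-1}\eta_{l}\in\operatorname{ball}H_{h}^{e}$ and $\mu_{l}=r_{l}q_{l}\in\mathcal{M}(X)_{+}$, and obtains $T=\sum_{l}\zeta_{l}\odot\mu_{l}+e\odot\tau$ with $\tau=\sum_{l}\mu_{l}$. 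Pairing $T$ against $\overline{e}$ and invoking the uniqueness of the associated measure in Proposition~\ref{lemCRT1} forces $\tau=\mu$; since $\{\mu_{l}\}\le\tau=\mu$ and $I_{\mu}(X)$ is a lattice ideal, we get $\{\mu_{l}\}\subseteq I_{\mu}(X)$, and $\sum_{l}\|\zeta_{l}\|\,\|\mu_{l}\|\le\sum_{l}\|\mu_{l}\|=\|\mu\|=1<\infty$, which by definition exhibits $T$ as a nuclear morphism. There is no genuine obstacle here: the only thing to verify is the trivial membership $0\in I_{\mu}(X)$, so that $q=0$ is a legitimate choice in Lemma~\ref{lemNSO1}, and all the substance is already packaged into that lemma together with Proposition~\ref{lemCRT1}.
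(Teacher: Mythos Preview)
Your proposal is correct and takes exactly the same approach as the paper: the paper's proof consists of the single sentence ``One needs to use Lemma~\ref{lemNSO1} with $q=0$,'' which is precisely what you do. Your additional unpacking of the argument (rewriting the separable decomposition to show $\{\mu_l\}\subseteq I_\mu(X)$ directly) simply reproduces the relevant half of the proof of Lemma~\ref{lemNSO1} in the special case $q=0$.
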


\begin{proof}
One needs to use Lemma \ref{lemNSO1} with $q=0$.
\end{proof}

\begin{theorem}
\label{propSepNuc1}Let $T:C\left(  X\right)  \rightarrow\left(  H,e\right)  $
be a unital positive mapping with its $H$-support $k\subseteq
\operatorname{ball}L^{\infty}\left(  X,\mu\right)  $ on $X$. If $T$ is a
nuclear-morphism then its bounded linear extension $T_{k}:L^{2}\left(
X,\mu\right)  \rightarrow\left(  H,e\right)  $ is a Hilbert-Schmidt operator.
In this case, the $H$-support $k$ on $X$ is maximal whenever $T$ is separable.
\end{theorem}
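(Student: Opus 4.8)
The plan is to settle the separable case first, since it carries the substance of both assertions. Suppose $T$ is separable, $T=\sum_l p_l\odot q_l$ with $p_l\in\mathfrak{c}$ and positive functionals $q_l\in\mathcal{M}(X)_+$. Writing $p_l=\eta_l+r_le$ with $\eta_l\in H_h^e$, $r_l\ge0$ and $\|\eta_l\|\le r_l$, one has $Tv=\sum_l\langle v,q_l\rangle\eta_l+\bigl\langle v,\sum_l r_lq_l\bigr\rangle e$ for $v\in C(X)$. Comparing this with the description of $T$ through its $H$-support in Proposition \ref{lemCRT1} (the $e$-component of $Tv$ is $\langle v,\mu\rangle$ and the $f$-component, $f\neq e$, is $\langle v,k_f\mu\rangle$), I would read off that $\mu=\sum_l r_lq_l$; hence each $q_l\ll\mu$, say $q_l=a_l\mu$ with $a_l\ge0$, that $\sum_l r_la_l=1$ $\mu$-a.e., and that $k_f=\sum_l(\eta_l,f)a_l$ $\mu$-a.e.\ for $f\neq e$ (the last series converging in $L^1(X,\mu)$ because $\sum_l|(\eta_l,f)|\,\|q_l\|\le\sum_l r_l\|q_l\|=\|\mu\|=1$).

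I would then pass to the $H$-valued function $\xi(t)=\sum_l a_l(t)\eta_l\in H_h^e$, the series converging absolutely in $H$ since $\sum_l a_l(t)\|\eta_l\|\le\sum_l a_l(t)r_l=1$ $\mu$-a.e. Restricting to the countable set $\{f:k_f\not\equiv0\}$ and to a common conull set, $(\xi(t),f)=k_f(t)$ for all $f\neq e$, hence
\[
\sum_{f\neq e}k_f(t)^2\le\|\xi(t)\|^2\le\Bigl(\sum_l a_l(t)\|\eta_l\|\Bigr)^2\le1\qquad\mu\text{-a.e.}
\]
Since $k_e=1$, this is precisely $\sum_{f\neq e}k_f^2\le k_e^2$ in $L^\infty(X,\mu)$, i.e.\ $k$ is a maximal $H$-support. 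In particular $\sum_{f\neq e}\|k_f\|_2^2=\int\sum_{f\neq e}k_f^2\,d\mu\le1$, so $T_k=\sum_f f\odot\overline{k_f}$ is Hilbert-Schmidt with $\|T_k\|_2^2\le2$; both conclusions hold in the separable case.

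For a general nuclear morphism $T$, I would reduce to the separable case using Lemma \ref{lemNSO1}: pick $q\in I_\mu(X)_+$, $q=q'\mu$ with $q'\in L^1(X,\mu)_+$, such that $S:=T+e\odot q$ is separable. Then $S$ is positive into $\mathfrak{c}$ and $S(1)=(1+\|q\|)e$, so $T':=(1+\|q\|)^{-1}S$ is a unital positive mapping. Comparing $e$-components identifies the probability measure of $T'$ as $\mu'=(1+\|q\|)^{-1}(1+q')\mu$ (so $\mu'\sim\mu$), and comparing the remaining components shows the $H$-support of $T'$ is $k'_e=1$, $k'_f=k_f/(1+q')$ for $f\neq e$. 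The separable case applied to $T'$ gives that $k'$ is maximal, $\sum_{f\neq e}(k'_f)^2\le1$, hence $\sum_{f\neq e}k_f^2\le(1+q')^2$ $\mu$-a.e.; integrating and using that $1+q'=d(\mu+q)/d\mu\in L^2(X,\mu)$ (as $\mu+q$ is a finite measure equivalent to $\mu$),
\[
\|T_k\|_2^2=1+\sum_{f\neq e}\int k_f^2\,d\mu\le1+\int(1+q')^2\,d\mu<\infty,
\]
so $T_k:L^2(X,\mu)\to H$ is Hilbert-Schmidt.

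The bookkeeping extracting $\mu,a_l,k_f$ from a separable decomposition, and the manipulation of $\mu$-a.e.\ statements over a possibly uncountable basis $F$ (restrict to $\{f:k_f\not\equiv0\}$, take a common conull set), are routine. The essential point --- and the place where separability is genuinely used --- is upgrading the merely integral constraint $\sum_{f\neq e}(v,k_f)^2\le(v,k_e)^2$, which alone controls only the averages $\int_A k_f\,d\mu$, to the pointwise estimate $\|\xi(t)\|\le1$; this follows from $\|\sum_l a_l(t)\eta_l\|\le\sum_l a_l(t)\|\eta_l\|$ together with $\|\eta_l\|\le r_l$ and $\sum_l r_la_l=1$. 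Carrying that pointwise bound through the Radon-Nikodym density $1+q'$ in the nuclear-morphism step, and checking the resulting integral is finite, is the main thing to verify.
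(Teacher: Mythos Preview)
Your organization---settle the separable case first, then reduce the nuclear-morphism case to it via the normalization $T'=(1+\|q\|)^{-1}(T+e\odot q)$---is different from the paper's, which works through the nuclear case directly and specializes to $q=0$ only at the end. Your separable argument is correct and arguably cleaner than the paper's: writing $\xi(t)=\sum_l a_l(t)\eta_l$ and reading off $\|\xi(t)\|\le\sum_l a_l(t)\,r_l=1$ is exactly the right way to upgrade the integral support inequality to the pointwise one, and the a.e.\ bookkeeping is fine since maximality is tested on finite subfamilies of $F$.

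The gap is in the nuclear step. Your claim that $1+q'=d(\mu+q)/d\mu\in L^2(X,\mu)$ ``as $\mu+q$ is a finite measure equivalent to $\mu$'' is false as stated: equivalence of finite measures does not force the Radon--Nikodym derivative into $L^2$ (take $\mu$ Lebesgue on $[0,1]$ and $q'(t)=t^{-1/2}$). Without this, integrating $\sum_{f\neq e}k_f^2\le(1+q')^2$ yields nothing. Note also that your reduction only gives $T'_{k'}$ Hilbert--Schmidt on $L^2(X,\mu')$; passing back to $T_k$ on $L^2(X,\mu)$ is precisely where $L^2$-control of $1+q'$ is required, so the normalization trick does not sidestep the issue.

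The paper addresses this point explicitly: working with the separable decomposition $T+e\odot q=\sum_l(\zeta_l+e)\odot\mu_l$, it writes $\mu_l=m_l\mu$, identifies $m^{-1}=\sum_l m_l=1+q'$, and then argues separately that the functional $v\mapsto(v,m^{-1})$ is bounded on $L^2(X,\mu)$, concluding $m^{-1}\in L^2(X,\mu)$. You should either adopt that argument or supply your own proof that $1+q'\in L^2(X,\mu)$; once that is in hand, the rest of your proof goes through and yields the same final estimate $\sum_{f\neq e}k_f^2\le(1+q')^2$ that the paper records in its remark.
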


\begin{proof}
Assume that $T$ is a nuclear morphism. By Lemma \ref{lemNSO1}, $T+e\odot q$ is
separable for a certain $q\in I_{\mu}\left(  X\right)  _{+}$. Thus $T+e\odot
q=\sum_{l}\left(  \zeta_{l}+e\right)  \odot\mu_{l}$ with $\left(  \zeta
_{l}\right)  _{l}\subseteq\operatorname{ball}H_{h}^{e}$ and $\left(  \mu
_{l}\right)  _{l}\subseteq\mathcal{M}\left(  X\right)  _{+}$. Put $\tau
=\sum_{l}\mu_{l}\in\mathcal{M}\left(  X\right)  _{+}$. Notice that $\left(
1+\left\Vert q\right\Vert \right)  e=T\left(  1\right)  +\left\langle
1,q\right\rangle e=\sum_{l}\left\langle 1,\mu_{l}\right\rangle \left(
\zeta_{l}+e\right)  =\sum_{l}\left\langle 1,\mu_{l}\right\rangle e$ and
$\left\langle 1,\tau\right\rangle =\sum_{l}\left\langle 1,\mu_{l}\right\rangle
=\sum_{l}\left\Vert \mu_{l}\right\Vert =1+\left\Vert q\right\Vert <\infty$.
Then $T=\sum_{l}\zeta_{l}\odot\mu_{l}+e\odot\left(  \tau-q\right)  $, which in
turn implies that $\mu=\tau-q$. Since $q\in I_{\mu}\left(  X\right)  _{+}$, we
obtain that $\tau=\mu+q\in I_{\mu}\left(  X\right)  _{+}$ and $\mu\leq\tau$.
Hence $I_{\mu}\left(  X\right)  =I_{\tau}\left(  X\right)  $. By
Lebesgue-Nikodym Theorem, $\mu=m\tau$ for a Borel function $m$ such that
$0<m\left(  t\right)  \leq1$ for $\mu$-almost all $t\in X$ (see \cite[Ch. V,
5.6, Proposition 10]{BourInt}). Since $\left\{  \mu_{l}\right\}  \leq\tau$, we
deduce also that $\left\{  \mu_{l}\right\}  \subseteq I_{\tau}\left(
X\right)  $ and there are (unique) positive bounded Borel function $\left\{
n_{l}\right\}  $ on $X$ such that $\mu_{l}=n_{l}\tau$ for all $l$. Notice
that
\[
\tau=\sum_{l}\mu_{l}=\sum_{l}n_{l}\tau=\vee\left\{  \sum_{l=1}^{k}n_{l}%
\tau\right\}  =\vee\left\{  \left(  \sum_{l=1}^{k}n_{l}\right)  \tau\right\}
=\left(  \vee\sum_{l=1}^{k}n_{l}\right)  \tau=\left(  \sum_{l}n_{l}\right)
\tau
\]
thanks to \cite[Ch. V, 5.4, Proposition 6]{BourInt}. Hence $\sum_{l}n_{l}=1$
for $\tau$-almost (or $\mu$-almost) everywhere on $X$. Put $m_{l}=\dfrac
{n_{l}}{m}$ for all $l$. Thus $m_{l}$ are $\mu$-almost everywhere finite Borel
functions on $X$, and $\mu_{l}=n_{l}\tau=m_{l}m\tau=m_{l}\mu$. Moreover,
\begin{equation}
\sum_{l}\left\Vert \zeta_{l}\right\Vert \left\Vert m_{l}\right\Vert _{1}%
\leq\sum_{l}\left\Vert m_{l}\right\Vert _{1}\leq\sum_{l}\left\langle 1,\mu
_{l}\right\rangle =\left\langle 1,\tau\right\rangle =1+\left\Vert q\right\Vert
, \label{est40}%
\end{equation}
thereby $m^{-1}=\sum_{l}n_{l}m^{-1}=\sum_{l}m_{l}\in L^{1}\left(
X,\mu\right)  $ being an absolutely summable series in $L^{1}\left(
X,\mu\right)  $, and $\tau=\sum_{l}m_{l}\mu=m^{-1}\mu$. Actually, $m^{-1}\in
L^{2}\left(  X,\mu\right)  $. Indeed,
\[
\left\vert \left(  v,m^{-1}\right)  \right\vert \leq\sum_{l}\int\left\vert
v\right\vert m_{l}d\mu=\sum_{l}\int\left\vert v\right\vert d\mu_{l}\leq
\int\left\vert v\right\vert d\tau\leq\left(  \int\left\vert v\right\vert
^{2}d\tau\right)  ^{1/2}\left(  \int1d\tau\right)  ^{1/2}%
\]
for all $v\in C\left(  X\right)  $. Take a sequence $\left(  v_{r}\right)
_{r}\subseteq C\left(  X\right)  $ with $\lim_{r}\iota\left(  v_{r}\right)
=0$ in $L^{2}\left(  X,\mu\right)  $. Then\ $\lim_{r}\int\left\vert
v\right\vert ^{2}d\tau=0$ by Lebesgue-Nikodym Theorem, and $\lim_{r}\left(
v_{r},m^{-1}\right)  =0$, which means that $\left(  \cdot,m^{-1}\right)  $ is
a bounded linear functional on $L^{2}\left(  X,\mu\right)  $, or $m^{-1}\in
L^{2}\left(  X,\mu\right)  $. In particular, $\left\{  m_{l}\right\}
\subseteq L^{2}\left(  X,\mu\right)  $. If $\overline{m_{l}}:L^{2}\left(
X,\mu\right)  \rightarrow\mathbb{C}$ is the related bounded linear functional
then%
\[
\left\langle v,\overline{m_{l}}\right\rangle =\left(  v,m_{l}\right)  =\int
v\left(  t\right)  m_{l}\left(  t\right)  d\mu=\int v\left(  t\right)
m_{l}\left(  t\right)  m\left(  t\right)  d\tau=\int v\left(  t\right)
d\mu_{l}=\left\langle v,\mu_{l}\right\rangle
\]
for all $v\in C\left(  X\right)  $, that is, $\mu_{l}=\left(  \cdot
,m_{l}\right)  $ for all $l$.

By Theorem \ref{corCHe1}, $T$ admits a unique bounded linear extension
$T_{k}:L^{2}\left(  X,\mu\right)  \rightarrow\left(  H,e\right)  $ with the
related unital $H$-support $k=\left\{  k_{f}:f\in F\right\}  $ (see
Proposition \ref{propKom1}). Note that $\left(  T_{k}\iota\right)  \left(
v\right)  =T\left(  v\right)  =\sum_{l}\left(  v,m_{l}\right)  \zeta
_{l}+\left(  v,1\right)  e$ and
\begin{equation}
\sum_{l}\left\Vert \left(  v,m_{l}\right)  \zeta_{l}\right\Vert \leq\sum
_{l}\int\left\vert v\right\vert m_{l}d\mu\left\Vert \zeta_{l}\right\Vert
=\sum_{l}\int\left\vert v\right\vert d\mu_{l}\left\Vert \zeta_{l}\right\Vert
\leq\int\left\vert v\right\vert d\tau, \label{est41}%
\end{equation}
that is, the series $\sum_{l}\left(  v,m_{l}\right)  \zeta_{l}$ is absolutely
summable in $H$ for every $v\in C\left(  X\right)  $. Take a Borel function
$\eta\in L^{2}\left(  X,\mu\right)  $. Then $\eta=\lim_{r}v_{r}$ in
$L^{2}\left(  X,\mu\right)  $ for some sequence $\left(  v_{r}\right)
_{r}\subseteq C\left(  X\right)  $. In particular, $\left\Vert v_{r}%
-v_{s}\right\Vert _{2}\rightarrow0$ for large $r$, $s$. Since $\tau\sim\mu$,
we have $\int\left\vert v_{r}-v_{s}\right\vert d\tau\leq\left(  \int\left\vert
v_{r}-v_{s}\right\vert ^{2}d\tau\right)  ^{1/2}\left(  \int1d\tau\right)
^{1/2}\rightarrow0$ for large $r$ and $s$. Using (\ref{est41}), we obtain
that
\[
\left\vert \sum_{l}\left\Vert \left(  v_{r},m_{l}\right)  \zeta_{l}\right\Vert
-\sum_{l}\left\Vert \left(  v_{s},m_{l}\right)  \zeta_{l}\right\Vert
\right\vert \leq\sum_{l}\left\Vert \left(  v_{r}-v_{s},m_{l}\right)  \zeta
_{l}\right\Vert \leq\int\left\vert v_{r}-v_{s}\right\vert d\tau\rightarrow0
\]
for large $r$ and $s$. Hence there is a limit $\lim_{r}\sum_{l}\left\Vert
\left(  v_{r},m_{l}\right)  \zeta_{l}\right\Vert $. Using the lower
semicontinuity property, we obtain that%
\[
\sum_{l}\left\Vert \left(  \eta,m_{l}\right)  \zeta_{l}\right\Vert =\sum
_{l}\left\Vert \left(  \lim_{r}v_{r},m_{l}\right)  \zeta_{l}\right\Vert
\leq\lim\inf_{r}\sum_{l}\left\Vert \left(  v_{r},m_{l}\right)  \zeta
_{l}\right\Vert =\lim_{r}\sum_{l}\left\Vert \left(  v_{r},m_{l}\right)
\zeta_{l}\right\Vert <\infty,
\]
that is, $\zeta=\sum_{l}\left(  \eta,m_{l}\right)  \zeta_{l}\in H$ being the
sum of an absolutely summable series in $H$. Actually, $\zeta=\lim_{r}\sum
_{l}\left(  v_{r},m_{l}\right)  \zeta_{l}$. Indeed, for $\varepsilon>0$ one
can find $r_{0}$ such that $\sum_{l}\left\Vert \left(  v_{r}-v_{s}%
,m_{l}\right)  \zeta_{l}\right\Vert \leq\varepsilon$ for all $r,s\geq r_{0}$.
Then
\begin{align*}
\left\Vert \zeta-\sum_{l}\left(  v_{r},m_{l}\right)  \zeta_{l}\right\Vert  &
\leq\sum_{l}\left\Vert \left(  \eta-v_{r},m_{l}\right)  \zeta_{l}\right\Vert
=\sum_{l}\left\Vert \lim_{s}\left(  v_{s}-v_{r},m_{l}\right)  \zeta
_{l}\right\Vert \\
&  \leq\lim\inf_{s}\sum_{l}\left\Vert \left(  v_{s}-v_{r},m_{l}\right)
\zeta_{l}\right\Vert \leq\varepsilon
\end{align*}
for all $r\geq r_{0}$. Thus $T_{k}\eta=\lim_{r}\left(  T_{k}\iota\right)
\left(  v_{r}\right)  =\lim_{r}\sum_{l}\left(  v_{r},m_{l}\right)  \zeta
_{l}+\left(  v_{r},1\right)  e=\sum_{l}\left(  \eta,m_{l}\right)  \zeta
_{l}+\left(  \eta,1\right)  e$. Hence
\begin{equation}
T_{k}=\sum_{l}\zeta_{l}\odot\overline{m_{l}}+e\odot\overline{1}\qquad
\text{on}\qquad L^{2}\left(  X,\mu\right)  . \label{TKL}%
\end{equation}
Finally take expansions $\zeta_{l}=\sum_{f\neq e}\zeta_{l,f}f$ in $F$ with
real $\zeta_{l,f}$, $\left\vert \zeta_{l,f}\right\vert \leq1$, and put
$k_{f}^{\prime}=\sum_{l}\zeta_{l,f}m_{l}$ for all $f\neq e$. Since $\left\vert
k_{f}^{\prime}\right\vert \leq\sum_{l}m_{l}=m^{-1}$, it follows that
$k_{f}^{\prime}\in L^{2}\left(  X,\mu\right)  $. Based on (\ref{TKL}), we
deduce that $\left(  T_{k}\eta,f\right)  =\left(  \sum_{l}\left(  \eta
,m_{l}\right)  \zeta_{l},f\right)  =\sum_{l}\left(  \eta,m_{l}\right)
\zeta_{l,f}=\left(  \eta,k_{f}^{\prime}\right)  $ for all $\eta\in
L^{2}\left(  X,\mu\right)  $ and $f\neq e$, therefore $k_{f}=k_{f}^{\prime}$
for all $f\neq e$, and $k_{e}=u$. For a finite subset $\lambda\subseteq
F\backslash\left\{  e\right\}  $ we have
\begin{align*}
\sum_{f\in\lambda}k_{f}^{2}  &  =\sum_{f\in\lambda}\sum_{l,t}\zeta_{l,f}%
\zeta_{t,f}m_{l}m_{t}\leq\sum_{l,t}\left(  \sum_{f\in\lambda}\left\vert
\zeta_{l,f}\right\vert \left\vert \zeta_{t,f}\right\vert \right)  m_{l}m_{t}\\
&  \leq\sum_{l,t}\left(  \sum_{f\in\lambda}\left\vert \zeta_{l,f}\right\vert
^{2}\right)  ^{1/2}\left(  \sum_{f\in\lambda}\left\vert \zeta_{t,f}\right\vert
^{2}\right)  ^{1/2}m_{l}m_{t}\\
&  \leq\sum_{l,t}\left\Vert \zeta_{l}\right\Vert \left\Vert \zeta
_{t}\right\Vert m_{l}m_{t}\leq\sum_{l,t}m_{l}m_{t}=\left(  \sum_{l}%
m_{l}\right)  ^{2}=m^{-2},
\end{align*}
that is, $\sum_{f\neq e}k_{f}^{2}\leq m^{-2}$. Consequently, $\left\Vert
T_{k}\right\Vert _{2}^{2}=\sum_{f}\left\Vert k_{f}\right\Vert _{2}^{2}%
=1+\sum_{f\neq e}\int k_{f}^{2}d\mu\leq1+\int m^{-2}d\mu<\infty$, which means
that $T_{k}$ is a Hilbert-Schmidt operator. In particular, the $H$-support $k$
is maximal if $m=1$ or $q=0$. The latter is the case of a separable $T$.
\end{proof}

\begin{remark}
As follows from the proof of Theorem \ref{propSepNuc1}, the Radon-Nikodym
derivative $\dfrac{d\left(  \mu+q\right)  }{d\mu}$ belongs to $L^{2}\left(
X,\mu\right)  \ $and $\sum_{f\neq e}k_{f}^{2}\leq\left(  \dfrac{d\left(
\mu+q\right)  }{d\mu}\right)  ^{2}$ if $T+e\odot q$ is separable for a certain
$q\in I_{\mu}\left(  X\right)  _{+}$. In particular, $k$ is a maximal
$H$-support on $X$ if $q=0$ (or $T$ is separable).
\end{remark}

\subsection{The maximal and Hilbert-Schmidt supports in $L^{2}\left(
X,\mu\right)  $}

As above fix $\mu\in\mathcal{P}\left(  X\right)  $ on a Hausdorff compact
topological space $X$, and let $T:\left(  L^{2}\left(  X,\mu\right)
,u\right)  \rightarrow\left(  H,e\right)  $ be a unital positive mapping. By
Proposition \ref{propKom1}, $T=T_{k}$ for a unital $H$-support $k$ in
$L^{2}\left(  X,\mu\right)  $. Thus $k=\left\{  k_{f}:f\in F\right\}  $ is a
bounded family in $L^{2}\left(  X,\mu\right)  _{h}$ such that $k_{f}\perp u$
for all $f\neq e$, $k_{e}=k_{e}^{u}+u\in\operatorname{ball}L^{2}\left(
X,\mu\right)  _{h}^{u}+u$, and $\sum_{f\neq e}\left(  \eta_{0},k_{f}\right)
^{2}\leq\left(  \left(  \eta_{0},k_{e}^{u}\right)  +\left\Vert \eta
_{0}\right\Vert \right)  ^{2}$ for all $\eta_{0}\in L^{2}\left(  X,\mu\right)
_{h}^{u}$ (see Remark \ref{remUHS}). Certainly we can assume that $k$ consists
of real-valued Borel functions on $X$. We say that $k$ is \textit{a maximal
}$H$-\textit{support in }$L^{2}\left(  X,\mu\right)  $ if $k_{e}\geq0$ and
$\sum_{f\neq e}k_{f}^{2}\leq k_{e}^{2}$. The latter means that $\sum
_{f\in\lambda}k_{f}^{2}\leq k_{e}^{2}$ as the Borel functions for every finite
subset $\lambda\subseteq F\backslash\left\{  e\right\}  $. Since $\sum
_{f\in\lambda}\int k_{f}^{2}d\mu\leq\int k_{e}^{2}d\mu$, it follows that $k$
is an $H$-support in $L^{2}\left(  X,\mu\right)  $ of type $2$ automatically.

\begin{proposition}
\label{tCRT1}Let $T_{k}:\left(  L^{2}\left(  X,\mu\right)  ,u\right)
\rightarrow\left(  H,e\right)  $ be a unital positive mapping that responds to
a maximal $H$-support $k$ in $L^{2}\left(  X,\mu\right)  $, and let
$T=T_{k}\iota:C\left(  X\right)  \rightarrow\left(  H,e\right)  $ be the
related unital $\ast$-linear mapping. Then $T^{\left(  \infty\right)  }\left(
\min C\left(  X\right)  _{+}\right)  \subseteq\max\mathfrak{c}$, which means
that $T:\left(  C\left(  X\right)  ,M\left(  C\left(  X\right)  \right)
_{+}\right)  \rightarrow\left(  H,\max\mathfrak{c}\right)  $ is a morphism of
the relevant operator systems, whose support $k^{\prime}\subseteq
\operatorname{ball}L^{\infty}\left(  X,\mu^{\prime}\right)  $ on $X$ is given
by the family $k_{f}^{\prime}=\dfrac{k_{f}}{k_{e}}$, $f\neq e$ and
$k_{e}^{\prime}=u$, where $\mu^{\prime}=k_{e}\mu\in I_{\mu}\left(  X\right)
_{+}$. Moreover, in this case $T$ is a nuclear operator.
\end{proposition}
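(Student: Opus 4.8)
The plan is to transport the maximal support $k$ in $L^{2}\left(  X,\mu\right)  $ to the genuine support $k^{\prime}$ of $T$ on $X$ by a change of measure, to read off the matrix positivity $T^{\left(  \infty\right)  }\left(  \min C\left(  X\right)  _{+}\right)  \subseteq\max\mathfrak{c}$ from a barycentric integral representation of $T$, and to get nuclearity of $T$ from the Hilbert--Schmidt property of $T_{k}$.

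First I would put $\mu^{\prime}=k_{e}\mu$. Maximality gives $k_{e}\geq0$, and $\int k_{e}\,d\mu=\int\left(  k_{e}-u\right)  d\mu+\int u\,d\mu=0+1=1$ since $k_{e}-u\in L^{2}\left(  X,\mu\right)  _{h}^{u}$; hence $\mu^{\prime}\in\mathcal{P}\left(  X\right)  $, and as $\mu^{\prime}=k_{e}\mu$ with $k_{e}\in L^{1}\left(  X,\mu\right)  _{+}$ we have $\mu^{\prime}\in I_{\mu}\left(  X\right)  _{+}$. Set $k_{e}^{\prime}=u$ and $k_{f}^{\prime}=k_{f}/k_{e}$ for $f\neq e$. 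From $k_{f}^{2}\leq\sum_{g\neq e}k_{g}^{2}\leq k_{e}^{2}$ we get $\left\vert k_{f}^{\prime}\right\vert \leq1$ and $\sum_{f\neq e}\left(  k_{f}^{\prime}\right)  ^{2}\leq1$ $\mu^{\prime}$-a.e., while $\int k_{f}^{\prime}\,d\mu^{\prime}=\int k_{f}\,d\mu=0$ shows $k_{f}^{\prime}\perp k_{e}^{\prime}$ in $L^{2}\left(  X,\mu^{\prime}\right)  $; the weighted Schwarz inequality $\left(  \int vk_{f}^{\prime}\,d\mu^{\prime}\right)  ^{2}\leq\left(  \int v\,d\mu^{\prime}\right)  \left(  \int v\left(  k_{f}^{\prime}\right)  ^{2}d\mu^{\prime}\right)  $ for $v\in C\left(  X\right)  _{+}$, summed over $f\neq e$, then yields $\sum_{f\neq e}\left(  v,k_{f}^{\prime}\right)  _{\mu^{\prime}}^{2}\leq\left(  v,k_{e}^{\prime}\right)  _{\mu^{\prime}}^{2}$ in $L^{2}\left(  X,\mu^{\prime}\right)  $, so $k^{\prime}\subseteq\operatorname{ball}L^{\infty}\left(  X,\mu^{\prime}\right)  _{h}$ is a maximal $H$-support on $X$. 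A direct inner-product computation gives $Tv=T_{k}\iota v=\sum_{f}\left(  \iota v,k_{f}\right)  _{\mu}f=\sum_{f\neq e}\left(  \int vk_{f}\,d\mu\right)  f+\left(  \int vk_{e}\,d\mu\right)  e=\sum_{f}\left(  v,k_{f}^{\prime}\right)  _{\mu^{\prime}}f$; by Lemma \ref{lemCRT0} this exhibits $T$ as the unital positive map $C\left(  X\right)  \rightarrow\left(  H,e\right)  $ determined by $k^{\prime}$, and by Proposition \ref{lemCRT1} the pair $\left(  \mu^{\prime},k^{\prime}\right)  $ is the unique measure and support attached to $T$.

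For the inclusion into $\max\mathfrak{c}$ I would introduce, for $\mu^{\prime}$-a.e. $t$, the vector $p\left(  t\right)  =\sum_{f\neq e}k_{f}^{\prime}\left(  t\right)  f+e\in H$; since $\left\Vert p\left(  t\right)  -e\right\Vert ^{2}=\sum_{f\neq e}k_{f}^{\prime}\left(  t\right)  ^{2}\leq1$ we have $p\left(  t\right)  \in\mathfrak{c}$, and $\left\Vert p\left(  t\right)  \right\Vert \leq\sqrt{2}$. Only countably many $k_{f}^{\prime}$ are non-null, so $t\mapsto p\left(  t\right)  $ is essentially separably valued and weakly measurable, hence strongly $\mu^{\prime}$-measurable and Bochner integrable; a Fubini argument for the (absolutely convergent) relevant double series identifies $Tv=\int_{X}v\left(  t\right)  p\left(  t\right)  d\mu^{\prime}\left(  t\right)  $ for every $v\in C\left(  X\right)  $, and therefore for $v=\left[  v_{ij}\right]  \in M_{n}\left(  C\left(  X\right)  \right)  _{+}$
\[
T^{\left(  n\right)  }\left(  v\right)  =\int_{X}v\left(  t\right)  ^{1/2}p\left(  t\right)  ^{\oplus n}v\left(  t\right)  ^{1/2}\,d\mu^{\prime}\left(  t\right)  .
\]
Because $v\left(  t\right)  \geq0$ and $p\left(  t\right)  \in\mathfrak{c}$, the bounded Borel $M_{n}\left(  H\right)  $-valued integrand takes its values in the quantum cone $\mathfrak{c}^{c}$ generated by $\mathfrak{c}$, hence in $\max\mathfrak{c}=\left(  \mathfrak{c}^{c}\right)  ^{-}$, which is a weakly (hence norm) closed convex cone; as $\mu^{\prime}$ is a probability measure, Remark \ref{remCRT1} forces $T^{\left(  n\right)  }\left(  v\right)  \in\max\mathfrak{c}$. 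Since $\min C\left(  X\right)  _{+}=M\left(  C\left(  X\right)  \right)  _{+}$ by Proposition \ref{propPTT1} is the union of the $M_{n}\left(  C\left(  X\right)  \right)  _{+}$, this gives $T^{\left(  \infty\right)  }\left(  \min C\left(  X\right)  _{+}\right)  \subseteq\max\mathfrak{c}$, i.e. $T:\left(  C\left(  X\right)  ,M\left(  C\left(  X\right)  \right)  _{+}\right)  \rightarrow\left(  H,\max\mathfrak{c}\right)  $ is a morphism of operator systems.

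Finally, integrating $\sum_{f\neq e}k_{f}^{2}\leq k_{e}^{2}$ shows $k$ is of type $2$ in $L^{2}\left(  X,\mu\right)  $, so $\left\Vert T_{k}\right\Vert _{2}^{2}=\sum_{f}\left\Vert k_{f}\right\Vert _{2}^{2}\leq2\left\Vert k_{e}\right\Vert _{2}^{2}<\infty$ and $T_{k}\in\mathcal{B}^{2}\left(  L^{2}\left(  X,\mu\right)  ,H\right)  $; by Lemma \ref{lemAlP1} in the form valid for every Hilbert--Schmidt operator, $T=T_{k}\iota$ is nuclear with $\left\Vert T\right\Vert _{1}\leq\left\Vert T_{k}\right\Vert _{2}$. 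The delicate step is the passage into $\max\mathfrak{c}$: the obvious sufficient condition of Corollary \ref{cormincbar}, namely $\sum_{f\neq e}\left\vert \left\langle \left\langle T^{\left(  n\right)  }v,\overline{f}\right\rangle \right\rangle \right\vert \leq\left\langle \left\langle T^{\left(  n\right)  }v,\overline{e}\right\rangle \right\rangle $, in fact fails here, so one is forced to recognise $T^{\left(  n\right)  }\left(  v\right)  $ as a $\mu^{\prime}$-barycenter of points of $\mathfrak{c}^{c}$ and to invoke the principle that such barycenters remain in a closed cone; the change of measure to $\mu^{\prime}=k_{e}\mu$, without which even the positivity of $T$ is not transparent, together with the Pettis/Fubini bookkeeping for the $H$-valued integral, are the accompanying technical points.
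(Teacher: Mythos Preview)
Your proof is correct and shares the paper's core idea: represent $T^{(n)}(v)$ as an integral of an $M_{n}(H)$-valued function with values in $\mathfrak{c}^{c}$, and conclude membership in $\max\mathfrak{c}=(\mathfrak{c}^{c})^{-}$. The execution differs in two ways. First, you perform the change of measure $\mu\mapsto\mu'=k_{e}\mu$ at the outset, which normalises $k_{e}'$ to $1$ and lets the full sum $p(t)=\sum_{f\neq e}k_{f}'(t)f+e$ land in $\mathfrak{c}$ directly; the paper instead keeps the original $\mu$, works with finite truncations $\sum_{f\in\lambda}k_{f}(t)f+k_{e}(t)e\in\mathfrak{c}$, and passes to the limit over $\lambda$ at the end. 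Second, you handle the Borel (non-continuous) integrand via Pettis measurability and Bochner integrability, whereas the paper approximates each $k_{f}$ by continuous functions and invokes the Riemann-sum part of Remark~\ref{remCRT1}. Your route is the more streamlined one; note only that Remark~\ref{remCRT1} as written gives the closed-cone conclusion under a continuity hypothesis, so your citation should be supplemented by the standard fact that a Bochner (or weak) integral of a function valued in a closed convex cone remains there---equivalently, pair $T^{(n)}(v)$ against any $\overline{\eta}\in(\max\mathfrak{c})^{\boxdot}$ and integrate the resulting nonnegative scalar function.
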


\begin{proof}
Take $v\in M_{n}\left(  C\left(  X\right)  \right)  _{+}$. Then $v\left(
t\right)  \in M_{n}^{+}$ for all $t\in X$. Note that
\begin{align*}
T^{\left(  n\right)  }v  &  =\sum_{f}\left\langle \left\langle v,\overline
{k_{f}}\right\rangle \right\rangle f^{\oplus n}=\lim_{\lambda}\sum
_{f\in\lambda}\left\langle \left\langle v,\overline{k_{f}}\right\rangle
\right\rangle f^{\oplus n}+\left\langle \left\langle v,\overline{k_{e}%
}\right\rangle \right\rangle e^{\oplus n}\\
&  =\lim_{\lambda}\sum_{f\in\lambda}\int v\left(  t\right)  k_{f}\left(
t\right)  ^{\oplus n}f^{\oplus n}d\mu+\int v\left(  t\right)  k_{e}\left(
t\right)  ^{\oplus n}e^{\oplus n}d\mu\\
&  =\lim_{\lambda}\int v\left(  t\right)  ^{1/2}\left(  \sum_{f\in\lambda
}k_{f}\left(  t\right)  f+k_{e}\left(  t\right)  e\right)  ^{\oplus n}v\left(
t\right)  ^{1/2}d\mu\\
&  =\lim_{\lambda}\int\mathbf{v}_{\lambda}\left(  t\right)  d\mu,
\end{align*}
where $\mathbf{v}_{\lambda}\left(  t\right)  =v\left(  t\right)  ^{1/2}\left(
\sum_{f\in\lambda}k_{f}\left(  t\right)  f+k_{e}\left(  t\right)  e\right)
^{\oplus n}v\left(  t\right)  ^{1/2}$ and $\lambda$ is running over all finite
subsets in $F\backslash\left\{  e\right\}  $. Notice that we used the
canonical extension of the Radon integral to $M_{n}\left(  H\right)  $-valued
functions on $X$ (see Remark \ref{remCRT1}). Fix a finite subset
$\lambda\subseteq F\backslash\left\{  e\right\}  $. By assumption
$k_{e}\left(  t\right)  \geq0$ and $\sum_{f\in\lambda}k_{f}\left(  t\right)
^{2}\leq k_{e}\left(  t\right)  ^{2}$, which means that $\sum_{f\in\lambda
}k_{f}\left(  t\right)  f+k_{e}\left(  t\right)  e\in\mathfrak{c}$, therefore
$\mathbf{v}_{\lambda}\left(  t\right)  \in\mathfrak{c}^{c}$. In the case of
continuous $k_{f}$, $f\in\lambda$, and $k_{e}$, we derive that $\int%
\mathbf{v}_{\lambda}\left(  t\right)  d\mu\in\left(  \mathfrak{c}^{c}\right)
^{-}=\mathfrak{c}^{\boxdot\boxdot}=\max\mathfrak{c}$ (see Remark
\ref{remCRT1}). In the general case, $k_{f}\left(  t\right)  =\lim_{m}%
k_{f,m}\left(  t\right)  $ is a sequential limit of continuous functions
$\left\{  k_{f,m}\right\}  \subseteq C\left(  X\right)  $, and $k_{e}\left(
t\right)  =\lim_{m}k_{e,m}\left(  t\right)  $ for an increasing sequence
$\left\{  k_{e,m}\right\}  \subseteq C\left(  X\right)  _{+}$ for $\mu$-almost
all $t\in X$. We can assume that $\sum_{f\in\lambda}k_{f,m}^{2}\leq
k_{e,m}^{2}$ (just replace $k_{e,m}$ by $k_{e,m}\vee\left(  \sum_{f\in\lambda
}k_{f,m}^{2}\right)  ^{1/2}$) for all $m$, and put $\mathbf{v}_{\lambda
,m}\left(  t\right)  =v\left(  t\right)  ^{1/2}\left(  \sum_{f\in\lambda
}k_{f,m}\left(  t\right)  f+k_{e,m}\left(  t\right)  e\right)  ^{\oplus
n}v\left(  t\right)  ^{1/2}$. As above $\mathbf{v}_{\lambda,m}\left(
t\right)  \in\mathfrak{c}^{c}$ and $z_{\lambda,m}=\int\mathbf{v}_{\lambda
,m}\left(  t\right)  d\mu\in\max\mathfrak{c}$ for all $m$. Then
\begin{align*}
\int\mathbf{v}_{\lambda}\left(  t\right)  d\mu &  =\lim_{m}\int v\left(
t\right)  ^{1/2}\left(  \sum_{f\in\lambda}k_{f,m}\left(  t\right)
f+k_{e,m}\left(  t\right)  e\right)  ^{\oplus n}v\left(  t\right)  ^{1/2}%
d\mu\\
&  =\lim_{m}\int\mathbf{v}_{\lambda,m}\left(  t\right)  d\mu=\lim
_{m}z_{\lambda,m}\in\max\mathfrak{c},
\end{align*}
which in turn implies that $T^{\left(  n\right)  }v=\lim_{\lambda}%
\int\mathbf{v}_{\lambda}\left(  t\right)  d\mu\left(  t\right)  \in
\max\mathfrak{c}$. In particular, $T:C\left(  X\right)  \rightarrow H$ is a
unital positive mapping. By Proposition \ref{lemCRT1}, $T$ is given by an
$H$-support $k^{\prime}\subseteq\operatorname{ball}L^{\infty}\left(
X,\mu^{\prime}\right)  $ on $X$, where $\left\langle v,\mu^{\prime
}\right\rangle =\left(  T\iota\left(  v\right)  ,e\right)  =\left(
\iota\left(  v\right)  ,k_{e}\right)  =\int v\left(  t\right)  k_{e}\left(
t\right)  d\mu=\left\langle v,k_{e}\mu\right\rangle $ for all $v\in C\left(
X\right)  $, that is, $\mu^{\prime}=k_{e}\mu\in I_{\mu}\left(  X\right)  _{+}%
$. Similarly,
\[
\left(  T\iota\left(  v\right)  ,f\right)  =\left(  \iota\left(  v\right)
,k_{f}\right)  =\int v\left(  t\right)  k_{f}\left(  t\right)  d\mu=\int
v\left(  t\right)  k_{f}\left(  t\right)  k_{e}\left(  t\right)  ^{-1}%
d\mu^{\prime}=\left\langle v,k_{f}k_{e}^{-1}\mu^{\prime}\right\rangle
\]
for all $v\in C\left(  X\right)  $, which means that $k_{f}^{\prime}%
=\dfrac{k_{f}}{k_{e}}$ for all $f\neq e$. Notice that $\mu^{\prime}\left\{
k_{e}=0\right\}  =0$. Finally, taking into account that $T_{k}:\left(
L^{2}\left(  X,\mu\right)  ,u\right)  \rightarrow\left(  H,e\right)  $ is a
Hilbert-Schmidt operator (see Remark \ref{remHSop}), we deduce that
$T:C\left(  X\right)  \rightarrow\left(  H,e\right)  $ is a nuclear operator
\cite[3.3.3]{AlP}.
\end{proof}

We say that $k$ is \textit{a Hilbert-Schmidt }$H$-\textit{support in }%
$L^{2}\left(  X,\mu\right)  $ if $k_{e}=u$ and $\sum_{f\in\lambda}\int
k_{f}^{2}d\mu<\infty$.

\begin{theorem}
\label{tCRT23}Let $T_{k}:\left(  L^{2}\left(  X,\mu\right)  ,u\right)
\rightarrow\left(  H,e\right)  $ be a unital positive mapping that responds to
a Hilbert-Schmidt support $k$ in $L^{2}\left(  X,\mu\right)  $ and let
$T=T_{k}\iota:C\left(  X\right)  \rightarrow\left(  H,e\right)  $ be the
related unital $\ast$-linear mapping. Then $T+e\odot q$ is a separable
morphism for a certain $q\in I_{\mu}\left(  X\right)  _{+}$.
\end{theorem}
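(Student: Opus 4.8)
The statement to prove is a converse companion of Theorem \ref{propSepNuc1}: starting from a unital positive map $T_{k}$ on $L^{2}(X,\mu)$ coming from a Hilbert--Schmidt support $k$, one wants $T+e\odot q$ separable for some $q\in I_{\mu}(X)_{+}$. The natural route is to find a measure $\tau\sim\mu$, $\tau\in I_{\mu}(X)$, that "absorbs" the square-summable tails of $k$, normalise $k$ against $\tau$ to obtain a maximal support on $X$ with respect to $\tau$, and then invoke Proposition \ref{tCRT1}. First I would record the data: $k_{e}=u$, each $k_{f}$ ($f\neq e$) is a real Borel function with $k_{f}\perp u$ in $L^{2}(X,\mu)$, and $s:=\bigl(\sum_{f\neq e}k_{f}^{2}\bigr)^{1/2}$ is, a priori, only in $L^{1}(X,\mu)$ since $\sum_{f\neq e}\int k_{f}^{2}d\mu<\infty$. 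Set $w=(1+s^{2})^{1/2}\geq 1$; then $w\in L^{2}(X,\mu)$ because $\int w^{2}d\mu=1+\sum_{f\neq e}\int k_{f}^{2}d\mu<\infty$.

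The key construction is the measure $\tau$. Because the $H$-support $k$ on $X$ built from $T=T_{k}\iota$ via Proposition \ref{lemCRT1} is a priori obtained relative to $\mu$ (it lands in $\operatorname{ball}L^{\infty}(X,\mu)_{h}$), while the square root $w$ above need not be essentially bounded, the trick mirrors the proof of Theorem \ref{propSepNuc1} read backwards: I would put $\tau=w\mu$. Then $\tau\in I_{\mu}(X)_{+}$ (since $w\in L^{1}(X,\mu)$, indeed $L^{2}$), $\mu\ll\tau$ with Radon--Nikodym derivative $d\mu/d\tau=w^{-1}\leq 1$, and $L^{2}(X,\tau)\cong L^{2}(X,\mu)$ via the unitary $U\eta=\eta/\sqrt{w}$ as in Subsection \ref{subsecL1XM}. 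The renormalised family $\widetilde{k}_{f}=k_{f}/w$ ($f\neq e$), $\widetilde{k}_{e}=w^{-1}\cdot(\text{unit of }L^{2}(X,\tau))$ — more precisely I would work with $\ell_{f}=k_{f}/w$ for $f\neq e$ and $\ell_{e}=1$ as Borel functions on $X$ — satisfies $\sum_{f\neq e}\ell_{f}^{2}=s^{2}/w^{2}=s^{2}/(1+s^{2})\leq 1=\ell_{e}^{2}$ pointwise. Hence $\ell$ is a \emph{maximal} $H$-support on $X$ relative to $\tau$, and the associated map $T_{\ell}:L^{2}(X,\tau)\to(H,e)$ is exactly the transported $T_{k}$ (one checks $T_{\ell}\iota_{\tau}(v)=\sum_{f}\langle v,\ell_{f}\tau\rangle f=\sum_{f}\langle v,k_{f}\mu\rangle f=T(v)$ using $\ell_{f}\tau=(k_{f}/w)(w\mu)=k_{f}\mu$ and $\ell_{e}\tau=w\mu=\mu+? $ — here I must be careful: $\ell_{e}\tau=w\mu$, which is \emph{not} $\mu$; the defect is what will produce $q$).

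Now apply Proposition \ref{tCRT1} to $T_{\ell}$: since $\ell$ is a maximal $H$-support in $L^{2}(X,\tau)$, the map $T'=T_{\ell}\iota_{\tau}:C(X)\to(H,e)$ is a \emph{nuclear} operator, and moreover $T'^{(\infty)}(\min C(X)_{+})\subseteq\max\mathfrak{c}$; by the proof of Proposition \ref{tCRT1} it is given by a support on $X$ relative to a new measure. But $T'$ and $T$ coincide on $C(X)$ (both equal $\sum_{f}\langle v,k_{f}\mu\rangle f+\langle v,\mu\rangle e$ once one identifies the $e$-coefficient correctly — this is the computation I would do carefully, writing $T'v=\sum_{f\neq e}\langle v,k_{f}\mu\rangle f+\langle v,w\mu\rangle e$ and noting $w\mu=\mu+(w-1)\mu$, so $T'=T+e\odot q$ with $q=(w-1)\mu\in I_{\mu}(X)_{+}$). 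Thus $T+e\odot q=T'$ is nuclear with $q=(w-1)\mu\in I_{\mu}(X)_{+}$, and by Lemma \ref{lemNSO1} (together with the nuclearity being of the "nuclear morphism" type, which one gets from the explicit expansion $T_{\ell}=\sum_{f\neq e}(f/w)\odot\overline{k_{f}}+e\odot\overline{w}$ of the Hilbert--Schmidt/nuclear map in the style of formula \eqref{TKL}) one concludes $T+e\odot q'$ is separable for some $q'\in I_{\mu}(X)_{+}$; taking $q'$ in place of $q$ gives the claim.

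The main obstacle I anticipate is the bookkeeping around which unit and which measure each support refers to, and in particular verifying that $w=(1+s^{2})^{1/2}$ genuinely lies in $L^{2}(X,\mu)$ rather than merely $L^{1}$ — this is exactly where the Hilbert--Schmidt hypothesis $\sum_{f\neq e}\int k_{f}^{2}d\mu<\infty$ is used, and it is essential both for $\tau=w\mu\in I_{\mu}(X)$ and for the transported support $\ell$ to be genuinely maximal (pointwise, not just $\mu$-a.e.). A secondary technical point is to make the identification $T'=T+e\odot q$ rigorous at the level of $C(X)$ using density of $\iota(C(X))$ and the absolutely summable expansions, and to check that the resulting separable decomposition produced by Lemma \ref{lemNSO1} indeed has its perturbation term in $I_{\mu}(X)_{+}$; this follows because $q=(w-1)\mu$ is dominated by $\tau$ and $I_{\mu}(X)=I_{\tau}(X)$, so the lattice-ideal argument of Lemma \ref{lemNSO1} applies verbatim.
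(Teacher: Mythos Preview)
Your change-of-measure idea is appealing, but the argument has a real gap at the point where you pass from ``nuclear'' to ``separable''. Proposition \ref{tCRT1} only yields that $\tilde T = T_\ell\iota_{\tau}$ is a nuclear \emph{operator}; to invoke Lemma \ref{lemNSO1} you need $T$ to be a nuclear \emph{morphism}, meaning the functionals in some nuclear decomposition lie in $I_\mu(X)$. You try to supply this via ``the explicit expansion $T_\ell=\sum_{f\neq e}(f/w)\odot\overline{k_f}+e\odot\overline{w}$'', but (ignoring the notational slip) the corresponding decomposition $T=\sum_{f\neq e} f\odot k_f\mu + e\odot\mu$ is only a Hilbert--Schmidt expansion: one has $\sum_f\|k_f\mu\|\le\sum_f\|k_f\|_2$, and the hypothesis gives $\sum_f\|k_f\|_2^2<\infty$ but not $\sum_f\|k_f\|_2<\infty$, so this series need not be absolutely summable in the nuclear sense. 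Converting HS data into an honest nuclear-morphism decomposition is precisely what the paper's proof does, via step-function approximations $h_{f,n}$ of the $k_f$, partitions $X=\bigcup_r X_{nr}$, and the key pointwise estimate $\sum_r\|\zeta_{nr}\|^2\chi_{nr}\le\sum_{f\in\lambda_n}k_f^2$ together with a Cauchy--Schwarz bound $\sum_r\|\zeta_{nr}\|\mu(X_{nr})\le\|T_k\|_2$. This is also the argument later recycled in Theorem \ref{thMX1} to prove ``maximal support $\Rightarrow$ separable'', so you cannot cite that implication here without circularity.

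A secondary point: to apply Proposition \ref{tCRT1} you need $T_\ell$ to be a \emph{unital positive} map on $L^2(X,\tau')$ with $\tau'=\tau/\|\tau\|$ a probability measure. Unitality follows from your computation, and positivity can be recovered via Theorem \ref{corCHe1} once you observe that $\tilde T=\|\tau\|^{-1}(T+e\odot q)$ is unital positive on $C(X)$ with associated measure exactly $\tau'$; but this should be said, since the cones $L^2(X,\mu)_+$ and $L^2(X,\tau')_+$ differ and the unitary $U\eta=\eta/\sqrt{w}$ does \emph{not} carry $u$ to the unit of $L^2(X,\tau')$. In short, the renormalisation $\ell_f=k_f/w$ with $w=(1+\sum_{f\neq e}k_f^2)^{1/2}$ does cleanly reduce the problem to the maximal-support case, and $q=(w-1)\mu\in I_\mu(X)_+$ is the right perturbation, but the maximal case itself is not available as a black box prior to Theorem \ref{tCRT23}: you still owe the step-function approximation that constitutes the paper's proof.
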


\begin{proof}
By assumption $T_{k}$ is a Hilbert-Schmidt operator given by $T_{k}\eta
=\sum_{f}\left(  \eta,k_{f}\right)  f$, $\eta\in L^{2}\left(  X,\mu\right)  $,
and $\left\Vert T_{k}\right\Vert _{2}^{2}=\sum_{f}\left\Vert k_{f}\right\Vert
_{2}^{2}=\sum_{f}\int k_{f}^{2}d\mu<\infty$. For every $n$ choose a finite
subset $\lambda_{n}\subseteq F\backslash\left\{  e\right\}  $ such that
$\sum_{F\backslash\lambda_{n}}\left\Vert k_{f}\right\Vert _{2}^{2}<\dfrac
{1}{2n^{2}}$. Take $f\in\lambda_{n}$ and a real-valued $\mu$-step function
$h_{f,n}$ on $X$ such that $\left\vert h_{f,n}\right\vert \leq\left\vert
k_{f}\right\vert $ and $\left\Vert k_{f}-h_{f,n}\right\Vert _{2}^{2}\leq
\dfrac{1}{2\left\vert \lambda_{n}\right\vert n^{2}}$, where $\left\vert
\lambda_{n}\right\vert $ indicates to the cardinality of $\lambda_{n}$.
Namely, since $k_{f}=k_{f,+}-k_{f,-}$ with $k_{f,+}\geq0$, $k_{f,-}\geq0$ and
$\left\vert k_{f}\right\vert =k_{f,+}+k_{f,-}$, one can choose increasing
sequences $h_{f,n}^{\left(  1\right)  }$ and $h_{f,n}^{\left(  2\right)  }$ of
positive $\mu$-step functions such that $h_{f,n}^{\left(  1\right)  }\uparrow
k_{f,+}$ and $h_{f,n}^{\left(  2\right)  }\uparrow k_{f,-}$. If $h_{f,n}%
=h_{f,n}^{\left(  1\right)  }-h_{f,n}^{\left(  2\right)  }$ then $k_{f}%
=\lim_{n}h_{f,n}$. Note that $\left\vert h_{f,n}\left(  t\right)  \right\vert
=h_{f,n}^{\left(  1\right)  }\left(  t\right)  -h_{f,n}^{\left(  2\right)
}\left(  t\right)  \leq h_{f,n}^{\left(  1\right)  }\left(  t\right)  \leq
k_{f,+}\left(  t\right)  \leq\left\vert k_{f}\left(  t\right)  \right\vert $
if $h_{f,n}^{\left(  1\right)  }\left(  t\right)  \geq h_{f,n}^{\left(
2\right)  }\left(  t\right)  $, and $\left\vert h_{f,n}\left(  t\right)
\right\vert =-h_{f,n}^{\left(  1\right)  }\left(  t\right)  +h_{f,n}^{\left(
2\right)  }\left(  t\right)  \leq h_{f,n}^{\left(  2\right)  }\left(
t\right)  \leq k_{f,-}\left(  t\right)  \leq\left\vert k_{f}\left(  t\right)
\right\vert $ if $h_{f,n}^{\left(  1\right)  }\left(  t\right)  \leq
h_{f,n}^{\left(  2\right)  }\left(  t\right)  $. Define $T_{n}:L^{2}\left(
X,\mu\right)  \rightarrow H$, $T_{n}=e\odot\overline{k_{e}}+\sum_{f\in
\lambda_{n}}f\odot\overline{h_{f,n}}$, which is a finite rank operator such
that $T_{n}^{\ast}f=h_{f,n}$, $f\in\lambda_{n}$, $T_{n}^{\ast}e=k_{e}=u$ and
$T_{n}^{\ast}f=0$, $f\notin\left\{  e\right\}  \cup\lambda_{n}$. Moreover,
\begin{align*}
\left\Vert T_{k}-T_{n}\right\Vert _{2}^{2}  &  =\sum_{f}\left\Vert T_{k}%
^{\ast}f-T_{n}^{\ast}f\right\Vert _{2}^{2}=\sum_{f\in\left\{  e\right\}
\cup\lambda_{n}}\left\Vert T_{k}^{\ast}f-T_{n}^{\ast}f\right\Vert _{2}%
^{2}+\sum_{f\notin\left\{  e\right\}  \cup\lambda_{n}}\left\Vert T_{k}^{\ast
}f\right\Vert _{2}^{2}\\
&  =\sum_{f\in\lambda_{n}}\left\Vert k_{f}-h_{f,n}\right\Vert _{2}^{2}%
+\sum_{f\notin\left\{  e\right\}  \cup\lambda_{n}}\left\Vert k_{f}\right\Vert
_{2}^{2}\leq\dfrac{1}{n^{2}},
\end{align*}
that is, $T_{k}=\lim_{n}T_{n}$ in $\mathcal{B}^{2}\left(  L^{2}\left(
X,\mu\right)  ,H\right)  $. Further, for every $n$ there is a partition
$X=X_{n1}\cup\ldots\cup X_{nm_{n}}$ of $X$ into $\mu$-measurable subsets
$X_{nr}$ such that $h_{f,n}=\sum_{r=1}^{m_{n}}\alpha_{f,n,r}\chi_{nr}$, where
$\chi_{nr}$ is the characteristic function of $X_{nr}$. Then $T_{n}%
=e\odot\overline{k_{e}}+\sum_{f\in\lambda_{n}}\sum_{r=1}^{m_{n}}\alpha
_{f,n,r}f\odot\overline{\chi_{nr}}=e\odot\overline{k_{e}}+\sum_{r=1}^{m_{n}%
}\zeta_{nr}\odot\overline{\chi_{nr}}$ with $\zeta_{nr}=\sum_{f\in\lambda_{n}%
}\alpha_{f,n,r}f\in H_{h}^{e}$. For every $t\in X$ we have
\begin{align*}
\left\Vert \zeta_{nr}\right\Vert ^{2}\chi_{nr}\left(  t\right)   &
=\sum_{f\in\lambda_{n}}\alpha_{f,n,r}^{2}\chi_{nr}\left(  t\right)
=\sum_{f\in\lambda_{n}}h_{f,n}^{2}\left(  t\right)  \chi_{nr}\left(  t\right)
=\left(  \sum_{f\in\lambda_{n}}h_{f,n}^{2}\chi_{nr}\right)  \left(  t\right)
\\
&  \leq\left(  \sum_{f\in\lambda_{n}}k_{f}^{2}\chi_{nr}\right)  \left(
t\right)  =\left(  \sum_{f\in\lambda_{n}}k_{f}^{2}\right)  \left(  t\right)
\chi_{nr}\left(  t\right)  ,
\end{align*}
that is, $\left\Vert \zeta_{nr}\right\Vert ^{2}\chi_{nr}\leq\left(  \sum
_{f\in\lambda_{n}}k_{f}^{2}\right)  \chi_{nr}$. Taking into account that
$\sum_{r=1}^{m_{n}}\chi_{nr}=1$ for all $n$, we obtain that
\begin{equation}
\sum_{r=1}^{m_{n}}\left\Vert \zeta_{nr}\right\Vert ^{2}\chi_{nr}\leq\left(
\sum_{f\in\lambda_{n}}k_{f}^{2}\right)  \sum_{r=1}^{m_{n}}\chi_{nr}\leq
\sum_{f\in\lambda_{n}}k_{f}^{2} \label{KeyP}%
\end{equation}
for all $n$. In particular,
\begin{equation}
\sum_{r=1}^{m_{n}}\left\Vert \zeta_{nr}\right\Vert ^{2}\mu\left(
X_{nr}\right)  =\sum_{r=1}^{m_{n}}\int\left\Vert \zeta_{nr}\right\Vert
^{2}\chi_{nr}d\mu\leq\int\sum_{f\in\lambda_{n}}k_{f}^{2}d\mu=\sum_{f\in
\lambda_{n}}\left\Vert k_{f}\right\Vert _{2}^{2}. \label{KeyY}%
\end{equation}
Put $\mu_{nr}=\chi_{nr}\mu\in I_{\mu}\left(  X\right)  _{+}$. It follows that
$T_{n}\iota=e\odot k_{e}\mu+\sum_{r=1}^{m_{n}}\zeta_{nr}\odot\mu_{nr}$ and
$\left\Vert T\left(  v\right)  -T_{n}\iota\left(  v\right)  \right\Vert
\leq\left\Vert T_{k}-T_{n}\right\Vert \left\Vert v\right\Vert _{2}%
\leq\left\Vert T_{k}-T_{n}\right\Vert _{2}\left\Vert v\right\Vert _{2}$ for
all $v\in C\left(  X\right)  $, that is, $T\left(  v\right)  =\lim T_{n}%
\iota\left(  v\right)  $, $v\in C\left(  X\right)  $. Hence
\[
T=\lim_{n}T_{n}\iota=\lim_{n}\sum_{r=1}^{m_{n}}\zeta_{nr}\odot\mu_{nr}+e\odot
k_{e}\mu=\sum_{n,r}\zeta_{nr}\odot\mu_{nr}+e\odot\mu
\]
with $\left\{  \zeta_{nr}\right\}  \subseteq H_{h}^{e}$ and $\left\{  \mu
_{nr}\right\}  \subseteq I_{\mu}\left(  X\right)  _{+}$. Put $q=\sum
_{n,r}\left\Vert \zeta_{nr}\right\Vert \mu_{nr}$. Using (\ref{KeyY}), we
derive that
\begin{align*}
\left\Vert q\right\Vert  &  \leq\lim_{n}\sum_{r=1}^{m_{n}}\left\Vert
\zeta_{nr}\right\Vert \left\Vert \mu_{nr}\right\Vert =\lim_{n}\sum
_{r=1}^{m_{n}}\left\Vert \zeta_{nr}\right\Vert \mu\left(  X_{nr}\right)
^{1/2}\mu\left(  X_{nr}\right)  ^{1/2}\\
&  \leq\lim_{n}\left(  \sum_{r=1}^{m_{n}}\left\Vert \zeta_{nr}\right\Vert
^{2}\mu\left(  X_{nr}\right)  \right)  ^{1/2}\left(  \sum_{r=1}^{m_{n}}%
\mu\left(  X_{nr}\right)  \right)  ^{1/2}\leq\lim_{n}\left(  \sum_{f\in
\lambda_{n}}\left\Vert k_{f}\right\Vert _{2}^{2}\right)  ^{1/2}\\
&  \leq\left(  \sum_{f}\left\Vert k_{f}\right\Vert _{2}^{2}\right)
^{1/2}=\left\Vert T_{k}\right\Vert _{2}^{2}<\infty,
\end{align*}
that is, $q\in I_{\mu}\left(  X\right)  _{+}$. It follows that
\[
T=\sum_{n,r}\left(  \zeta_{nr}+\left\Vert \zeta_{nr}\right\Vert e\right)
\odot\mu_{nr}+e\odot k_{e}\mu-e\odot q.
\]
But $\left(  \zeta_{nr}+\left\Vert \zeta_{nr}\right\Vert e\right)
_{n,r}\subseteq\operatorname{ball}H_{h}^{e}+e=S\left(  \overline{\mathfrak{c}%
}\right)  \subseteq\mathfrak{c}$ and $\sum_{n,r}\left(  \zeta_{nr}+\left\Vert
\zeta_{nr}\right\Vert e\right)  \odot\mu_{nr}+e\odot k_{e}\mu$ is a separable
morphism. Whence $T+e\odot q$ is separable for some $q\in I_{\mu}\left(
X\right)  _{+}$.
\end{proof}

\begin{theorem}
\label{thMX1}Let $T:C\left(  X\right)  \rightarrow\left(  H,e\right)  $ be a
unital positive mapping with its $H$-support $k$ on $X$. Then $T$ is a nuclear
morphism iff $k$ is of type $2$. Moreover, $T$ is separable iff $k$ is a
maximal $H$-support on $X$. Thus there is a natural bijection between nuclear
morphisms $T:C\left(  X\right)  \rightarrow\left(  H,e\right)  $ and
$H$-supports $k$ on $X$ of type $2$. In this case, separable morphisms
correspond to the maximal supports.
\end{theorem}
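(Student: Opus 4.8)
The plan is to assemble Theorem \ref{thMX1} from the machinery already in place, namely Theorems \ref{corCHe1}, \ref{propSepNuc1} and \ref{tCRT23}, together with Proposition \ref{tCRT1}. First I would record the correspondence $T\leftrightarrow k$ of Proposition \ref{lemCRT1}: every unital positive $T:C(X)\to(H,e)$ is given by a unique probability measure $\mu$ and a unique (modulo $\mu$-null functions) $H$-support $k\subseteq\operatorname{ball}L^\infty(X,\mu)_h$ on $X$, and by Theorem \ref{corCHe1} it extends uniquely to a unital positive $T_k:(L^2(X,\mu),u)\to(H,e)$. The whole statement then becomes a chain of equivalences about $k$ routed through this extension $T_k$.

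The forward implications are already done. If $T$ is a nuclear morphism, Theorem \ref{propSepNuc1} shows $T_k$ is a Hilbert--Schmidt operator, i.e. $\sum_f\|k_f\|_2^2<\infty$, which is exactly the statement that $k$ is of type $2$ (recall $\|T_k\|_2^2=\sum_f\|k_f\|_2^2$ from the proof of Lemma \ref{lemCRT0}); and in the separable case the same theorem yields that $k$ is maximal, $\sum_{f\neq e}k_f^2\le 1$ in $L^\infty(X,\mu)$. So I would next argue the converses. Suppose $k$ is of type $2$. Then $k$ is a Hilbert--Schmidt $H$-support in $L^2(X,\mu)$ in the sense of Subsection ``The maximal and Hilbert--Schmidt supports'', since $k_e=u$ and $\sum_{f}\int k_f^2\,d\mu<\infty$. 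Applying Theorem \ref{tCRT23} to $T_k$ gives that $T+e\odot q$ is a separable morphism for some $q\in I_\mu(X)_+$, and then Lemma \ref{lemNSO1} (the direction ``$T+e\odot q$ separable $\Rightarrow$ $T$ is a nuclear morphism'') shows $T$ is a nuclear morphism. For the maximal case, suppose $\sum_{f\neq e}k_f^2\le 1$. Then $k$ (viewed in $L^2(X,\mu)$, still with $k_e=u$) is a maximal $H$-support in $L^2(X,\mu)$, so Proposition \ref{tCRT1} applies and shows $T=T_k\iota$ is a separable (indeed nuclear) morphism of the relevant operator systems: there $T^{(\infty)}(\min C(X)_+)\subseteq\max\mathfrak c$, and inspecting its proof one extracts the explicit separable decomposition $T=\sum_{n,r}(\zeta_{nr}+\|\zeta_{nr}\|e)\odot\mu_{nr}+e\odot\mu$ coming from the step-function approximation, so $T$ is separable. (Alternatively, one can invoke Theorem \ref{tCRT23} with $q=0$, since maximality forces $m=1$ in its notation.)

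Finally I would close the bijection statements. By Proposition \ref{lemCRT1}, $T\mapsto k$ is already a bijection between all unital positive maps $C(X)\to(H,e)$ and all $H$-supports on $X$; restricting to the subclass of type-$2$ supports and using the two equivalences just established, this restricts to a bijection between nuclear morphisms and type-$2$ supports, and further to a bijection between separable morphisms and maximal supports. The one point needing care — and the place I expect the only genuine friction — is verifying that the implication ``$k$ maximal $\Rightarrow$ $T$ separable'' really produces a \emph{separable} decomposition and not merely a matrix-positivity conclusion; Proposition \ref{tCRT1} is phrased as a statement about $T^{(\infty)}(\min C(X)_+)\subseteq\max\mathfrak c$, so I must either cite the explicit $\sum_{n,r}\zeta_{nr}\odot\mu_{nr}$-type decomposition visible inside the proof of Theorem \ref{tCRT23} (with $q=0$), or re-derive it directly from the $\mu$-step-function approximation of the $k_f$, controlling $\sum_{n,r}\|\zeta_{nr}\|\,\|\mu_{nr}\|$ by $\sum_f\|k_f\|_2^2<\infty$ exactly as in inequality \eqref{KeyY}. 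Everything else is bookkeeping over the already-proved results.
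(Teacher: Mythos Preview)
Your proposal is correct and follows essentially the same route as the paper: the forward implications via Theorem \ref{propSepNuc1}, the type-$2$ converse via Theorem \ref{tCRT23} followed by Lemma \ref{lemNSO1}, and the maximal-implies-separable step by re-entering the proof of Theorem \ref{tCRT23} and exploiting the pointwise estimate \eqref{KeyP}. One small correction on that last step: in the maximal case the $q$ produced by Theorem \ref{tCRT23} is not literally zero; rather, \eqref{KeyP} together with $\sum_{f\neq e}k_f^2\le 1$ gives $\sum_r\|\zeta_{nr}\|\mu_{nr}\le k_e\mu=\mu$ for every $n$, hence $q\le\mu$, and the decomposition $T=\sum_{n,r}(\zeta_{nr}+\|\zeta_{nr}\|e)\odot\mu_{nr}+e\odot(\mu-q)$ is separable because $\mu-q\ge 0$. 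Your parenthetical ``$q=0$, since maximality forces $m=1$'' conflates the $m$ from the proof of Theorem \ref{propSepNuc1} with the construction in Theorem \ref{tCRT23}; drop it and the argument is exactly the paper's.
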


\begin{proof}
Let $T:C\left(  X\right)  \rightarrow\left(  H,e\right)  $ be a unital
positive mapping with its $H$-support $k\subseteq\operatorname{ball}L^{\infty
}\left(  X,\mu\right)  $ on $X$. If $T$ is a nuclear morphism then its bounded
linear extension $T_{k}:L^{2}\left(  X,\mu\right)  \rightarrow\left(
H,e\right)  $ is a Hilbert-Schmidt operator by virtue of Theorem
\ref{propSepNuc1}. In particular, $k$ is of type $2$.

Conversely, suppose $k$ is an $H$-support on $X$ of type $2$. By Theorem
\ref{corCHe1}, $T_{k}:L^{2}\left(  X,\mu\right)  \rightarrow\left(
H,e\right)  $ is a unital positive mapping of the Hilbert spaces. Moreover,
$k$ turns out to be a Hilbert-Schmidt $H$-support in\textit{ }$L^{2}\left(
X,\mu\right)  $. Notice that $k_{e}=u$ automatically. By Theorem \ref{tCRT23},
$T_{k}\iota+e\odot q$ is separable for some $q\in I_{\mu}\left(  X\right)
_{+}$. But $T_{k}\iota=T$ is a unital positive mapping by assumption. By Lemma
\ref{lemNSO1}, $T$ is a nuclear morphism.

Further, the $H$-support $k$ on $X$ is maximal whenever $T$ is separable
thanks to Theorem \ref{propSepNuc1}. Conversely, suppose $\sum_{f\neq e}%
k_{f}^{2}\leq k_{e}^{2}$. Using (\ref{KeyP}) from the proof of Theorem
\ref{tCRT23}, we have $\sum_{r=1}^{m_{n}}\left\Vert \zeta_{nr}\right\Vert
^{2}\chi_{nr}\leq k_{e}^{2}=1$, which in turn implies that $\sum_{r=1}^{m_{n}%
}\left\Vert \zeta_{nr}\right\Vert \mu_{nr}\leq k_{e}\mu$ for all $n$. Then
$q=\vee\left\{  \sum_{r=1}^{m_{n}}\left\Vert \zeta_{nr}\right\Vert \mu
_{nr}\right\}  \leq k_{e}\mu$ or $k_{e}\mu-q\geq0$. Hence $T=\sum_{n,r}\left(
\zeta_{nr}+\left\Vert \zeta_{nr}\right\Vert e\right)  \odot\mu_{nr}%
+e\odot\left(  k_{e}\mu-q\right)  $ turns out to be a separable morphism.
\end{proof}

\begin{example}
Consider the Hilbert space $H=\ell^{2}$ with its canonical (hermitian) basis
$F=\left\{  f_{n}:n\geq1\right\}  $ and put $e=f_{1}$. The cone $\mathfrak{c}$
consists of those hermitians $\zeta\in\ell^{2}$ such that $\left\Vert
\zeta\right\Vert \leq\sqrt{2}\left(  \zeta,e\right)  $. As in Example
\ref{exm11}, we equip the compact interval $X=\left[  -1,1\right]
\subseteq\mathbb{R}$ with Lebesgue's measure $2^{-1}dt$. Put $k_{n}=k_{f_{n}%
}=n^{-1}\sin\left(  n\pi t\right)  $, $n\geq2$, and $k_{1}=k_{e}=1$. The
family $k=\left\{  k_{n}\right\}  $ is an $\ell^{2}$-support on $\left[
-1,1\right]  $. Indeed, we know that $k_{n}\perp k_{1}$, $n\geq2$, and
\[
\sum_{n>1}\left(  v,k_{n}\right)  ^{2}=\sum_{n>1}n^{-2}\left(  \int v\left(
t\right)  \sin\left(  n\pi t\right)  2^{-1}dt\right)  ^{2}\leq\left(
\sum_{n>1}n^{-2}\right)  \left(  \int v\left(  t\right)  2^{-1}dt\right)
^{2}\leq\left(  v,k_{1}\right)  ^{2}%
\]
in $L^{2}\left[  -1,1\right]  $ for all $v\in C\left[  -1,1\right]  _{+}$.
Thus $T:C\left[  -1,1\right]  \rightarrow\ell^{2}$, $Tv=\sum_{n\geq1}\left(
v,k_{n}\right)  f_{n}$ is a unital positive mapping. Actually, it is a
separable morphism. Indeed, based on Theorem \ref{tCRT23}, it suffices to
prove that the support $k$ is maximal, which can easily be detected
\[
\sum_{n>1}k_{n}^{2}=\sum_{n>1}n^{-2}\sin^{2}\left(  n\pi t\right)  \leq
\sum_{n>1}n^{-2}\leq1=k_{e},
\]
In particular, $T:L^{2}\left[  -1,1\right]  \rightarrow\ell^{2}$,
$T=\sum_{n\geq1}f_{n}\odot\overline{k_{n}}$ is a Hilbert-Schmidt operator and
$\left\Vert T\right\Vert _{2}^{2}=\sum_{n\geq1}\left\Vert k_{n}\right\Vert
_{2}^{2}=\sum_{n\geq1}n^{-2}\leq\pi^{2}/6$.
\end{example}

\begin{corollary}
\label{corCHe2}Let $T:C\left(  X\right)  \rightarrow\left(  H,e\right)  $ be a
unital positive mapping with its $H$-support $k\subseteq\operatorname{ball}%
L^{\infty}\left(  X,\mu\right)  $ on $X$. If $\mu$ is an atomic measure on $X$
of finite support then $T$ is a separable morphism. In particular, a unital
positive mapping $T:\ell^{\infty}\left(  n\right)  \rightarrow\left(
H,e\right)  $ defines a morphism $T:\left(  \ell^{\infty}\left(  n\right)
,\min\ell^{\infty}\left(  n\right)  _{+}\right)  \rightarrow\left(
H,\max\mathfrak{c}\right)  $ of the relevant operator systems.
\end{corollary}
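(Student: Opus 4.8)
The plan is to deduce separability from the maximality criterion of Theorem \ref{thMX1}: that theorem says a unital positive map $T:C\left(X\right)\rightarrow\left(H,e\right)$ is separable exactly when its $H$-support $k$ on $X$ is maximal, i.e.\ $\sum_{f\neq e}k_{f}^{2}\leq1$ in $L^{\infty}\left(X,\mu\right)$ (recall $k_{e}=u=\iota\left(1\right)$). So it suffices to show that when $\mu$ is atomic with finite support $S=\left\{s_{1},\dots,s_{m}\right\}$, $\mu\left(\left\{s_{j}\right\}\right)>0$, the defining $H$-support inequality $\sum_{f\neq e}\left(v,k_{f}\right)^{2}\leq\left(v,k_{e}\right)^{2}$ for all $v\in C\left(X\right)_{+}$ already forces $k$ to be maximal.

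First I would note that since $\mu$ is concentrated on $S$ and $X\setminus S$ is $\mu$-null, a class in $L^{\infty}\left(X,\mu\right)$ is determined by its values on $S$; hence $\sum_{f\neq e}k_{f}^{2}\leq1$ in $L^{\infty}\left(X,\mu\right)$ is equivalent to $\sum_{f\in\lambda}k_{f}\left(s\right)^{2}\leq1$ for every $s\in S$ and every finite $\lambda\subseteq F\setminus\left\{e\right\}$. Fix such $s$ and $\lambda$. Since $X$ is compact Hausdorff and $\left\{s\right\}$, $S\setminus\left\{s\right\}$ are disjoint closed sets, there is $v\in C\left(X\right)_{+}$ with $v\left(s\right)=1$ and $v\equiv0$ on $S\setminus\left\{s\right\}$. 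Because the integral against $\mu$ only sees the atoms, the $L^{2}$-pairings collapse to finite sums over $S$: $\left(v,k_{f}\right)=\mu\left(\left\{s\right\}\right)k_{f}\left(s\right)$ for $f\neq e$ and $\left(v,k_{e}\right)=\left(v,u\right)=\int v\,d\mu=\mu\left(\left\{s\right\}\right)$. The $H$-support condition then gives
\[
\mu\left(\left\{s\right\}\right)^{2}\sum_{f\in\lambda}k_{f}\left(s\right)^{2}=\sum_{f\in\lambda}\left(v,k_{f}\right)^{2}\leq\sum_{f\neq e}\left(v,k_{f}\right)^{2}\leq\left(v,k_{e}\right)^{2}=\mu\left(\left\{s\right\}\right)^{2},
\]
and dividing by $\mu\left(\left\{s\right\}\right)^{2}>0$ yields $\sum_{f\in\lambda}k_{f}\left(s\right)^{2}\leq1$. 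Letting $\lambda$ exhaust $F\setminus\left\{e\right\}$ and $s$ run over $S$ shows $k$ is a maximal $H$-support on $X$, whence $T$ is separable by Theorem \ref{thMX1}.

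For the last assertion I would identify $\ell^{\infty}\left(n\right)$ with $C\left(X\right)$ for the finite discrete space $X=\left\{1,\dots,n\right\}$; every Radon measure on $X$ is automatically atomic with finite support, so by Proposition \ref{lemCRT1} any unital positive map $T:\ell^{\infty}\left(n\right)\rightarrow\left(H,e\right)$ carries such an $H$-support and, by the first part, is separable. As observed in the subsection on separable and nuclear morphisms (just after the definition of a separable map, preceding Lemma \ref{lemNSO1}), a separable positive map is automatically matrix positive as a map into $\left(H,\max\mathfrak{c}\right)$; combining this with $M\left(\ell^{\infty}\left(n\right)\right)_{+}=\min\ell^{\infty}\left(n\right)_{+}$ (Proposition \ref{propPTT1}) exhibits $T:\left(\ell^{\infty}\left(n\right),\min\ell^{\infty}\left(n\right)_{+}\right)\rightarrow\left(H,\max\mathfrak{c}\right)$ as a morphism of the relevant operator systems. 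The only delicate point — the ``hard part'', such as it is — is the reduction of the $L^{\infty}$-estimate to the atoms of $\mu$, that is, producing test functions in $C\left(X\right)_{+}$ whose $L^{2}$-pairings isolate a single atom; once that is in place everything reduces to a direct appeal to the quoted results.
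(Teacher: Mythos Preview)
Your proof is correct and follows essentially the same line as the paper's: isolate each atom $s$ of $\mu$ with a Urysohn-type test function $v\in C(X)_{+}$, plug into the $H$-support inequality to obtain $\sum_{f\neq e}k_{f}(s)^{2}\leq 1$, conclude that $k$ is maximal, and invoke the characterization of separable morphisms via maximal supports. Your citation of Theorem~\ref{thMX1} is in fact the cleaner reference for the last step (the paper points to Theorem~\ref{tCRT23}, whose proof contains the needed argument but whose statement is about Hilbert--Schmidt supports).
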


\begin{proof}
Let $S\subseteq X$ be a finite subset and let $\left\{  c_{t}:t\in S\right\}
$ be a family of positive real numbers with $\sum_{t\in S}c_{t}=1$. By
assumption, $\mu=\sum_{t\in S}c_{t}\delta_{t}\in\mathcal{P}\left(  X\right)  $
is an atomic measure with the support $\operatorname{supp}\left(  \mu\right)
=S$. Using Proposition \ref{tCRT1}, we deduce that $\left(  Tv,e\right)
=\left\langle v,\mu\right\rangle =\sum_{t\in S}v\left(  t\right)  c_{t}$ for
all $v\in C\left(  X\right)  $. Moreover, $\left(  Tv,f\right)  =\sum_{t\in
S}v\left(  t\right)  k_{f}\left(  t\right)  c_{t}=\left\langle v,k_{f}%
\mu\right\rangle $ for $k_{f}\in L^{\infty}\left(  X,\mu\right)  $,
$k_{f}\perp1$ in $L^{2}\left(  X,\mu\right)  $ (or $\sum_{t\in S}k_{f}\left(
t\right)  c_{t}=0$) for all $f\neq e$. Since $k$ is an $H$-support on $X$, we
obtain that
\[
\sum_{f\neq e}\left(  \sum_{t\in S}v\left(  t\right)  k_{f}\left(  t\right)
c_{t}\right)  ^{2}\leq\left(  \sum_{t\in S}v\left(  t\right)  c_{t}\right)
^{2}\text{ for all }v\in C\left(  X\right)  _{+}.
\]
Fix $s\in S$ and choose its neighborhood $U$ such that $U\cap S=\left\{
s\right\}  $. Take $v\in C\left(  X\right)  _{h}$, $0\leq v\leq1$ such that
$\operatorname{supp}\left(  v\right)  \subseteq U$ and $v\left(  s\right)
=1$. Then $\left\langle v,\mu\right\rangle =\sum_{t\in S\cap U}v\left(
t\right)  c_{t}=c_{s}$, $\left\langle v,k_{f}\mu\right\rangle =k_{f}\left(
s\right)  c_{s}$ and $\sum_{f\neq e}k_{f}\left(  s\right)  ^{2}c_{s}^{2}%
=\sum_{f\neq e}\left\langle v,k_{f}\mu\right\rangle ^{2}\leq\left\langle
v,\mu\right\rangle ^{2}=c_{s}^{2}$. Thus $\sum_{f\neq e}k_{f}^{2}\leq1$ in
$L^{\infty}\left(  X,\mu\right)  $, which means that $k$ is a maximal
$H$-support on $X$. Using Theorem \ref{tCRT23}, we conclude that $T$ is a
separable morphism.

Finally, if $X=\left\{  1,2,\ldots,n\right\}  $ is a finite set then
$\ell^{\infty}\left(  X\right)  =C\left(  X\right)  $ and $\mathcal{P}\left(
X\right)  $ consists of atomic measures with their finite supports. Therefore
the support of every unital positive mapping $T:\ell^{\infty}\left(  X\right)
\rightarrow\left(  H,e\right)  $ is maximal. It follows that $T$ is separable.
In particular, $T:\left(  \ell^{\infty}\left(  X\right)  ,\min\ell^{\infty
}\left(  X\right)  _{+}\right)  \rightarrow\left(  H,\max\mathfrak{c}\right)
$ is a morphism of the operator systems.
\end{proof}

\subsection{Paulsen-Todorov-Tomforde problem}

Fix two basis elements $u$ and $e$ from a hermitian basis $F$ for $H$, and
consider the related unital cones $\mathfrak{c}_{u}$ and $\mathfrak{c}_{e}$ in
$H$, respectively. Thus we have the unital spaces $\left(  H,u\right)  $ and
$\left(  H,e\right)  $, respectively. Since $F$ is a basis for $H$, the
correspondence $T\left(  u\right)  =e$, $T\left(  e\right)  =u$, $T\left(
f\right)  =f$, $f\neq e,u$ is uniquely extended up to a unitary operator
$T\in\mathcal{B}\left(  H\right)  $ such that $T\zeta=\left(  \zeta,e\right)
u+\left(  \zeta,u\right)  e+\sum_{f\neq u,e}\left(  \zeta,f\right)  f$. Note
that $T=T_{k}$ for the $H$-support $k=\left\{  k_{f}:f\in F\right\}  $ with
$k_{e}=u$, $k_{u}=e$ and $k_{f}=f$ for all $f\neq u,e$. Notice that for every
$\zeta_{0}\in H_{h}^{u}$ we have $\sum_{f\neq e}\left(  \zeta_{0}%
,k_{f}\right)  ^{2}=\sum_{f\neq u}\left(  \zeta_{0},f\right)  ^{2}%
\leq\left\Vert \zeta_{0}\right\Vert ^{2}=\left(  \left(  \zeta_{0}%
,k_{e}\right)  +\left\Vert \zeta_{0}\right\Vert \right)  ^{2}$, which means
that $k$ is a unital $H$-support in $\left(  H,u\right)  $ (see Remark
\ref{remUHS}). Moreover,%
\begin{align*}
\left(  T\zeta,\eta\right)   &  =\left(  \zeta,e\right)  \left(
\eta,u\right)  ^{\ast}+\left(  \zeta,u\right)  \left(  \eta,e\right)  ^{\ast
}+\sum_{f\neq u,e}\left(  \zeta,f\right)  \left(  \eta,f\right)  ^{\ast}\\
&  =\left(  \zeta,\left(  \eta,e\right)  u\right)  +\left(  \zeta,\left(
\eta,u\right)  e\right)  +\sum_{f\neq u,e}\left(  \zeta,\left(  \eta,f\right)
f\right)  =\left(  \zeta,T\eta\right)
\end{align*}
for all $\zeta,\eta\in H$, which means that $T^{\ast}=T=T^{-1}$. In
particular, $\left\langle T\zeta,\overline{\eta}\right\rangle =\left(
T\zeta,\eta\right)  =\left(  \zeta,T\eta\right)  =\left\langle \zeta
,\overline{T\eta}\right\rangle $, which means that $\overline{T}\in
\mathcal{B}\left(  \overline{H}\right)  $, $\overline{T}\left(  \overline
{\eta}\right)  =\overline{T\eta}$ is the dual mapping to $T$. Note also that
$T:\left(  H,u\right)  \rightarrow\left(  H,e\right)  $ is a unital $\ast
$-linear mapping of unital spaces. Indeed, $T\zeta^{\ast}=\left(  \zeta^{\ast
},e\right)  u+\left(  \zeta^{\ast},u\right)  e+\sum_{f\neq u,e}\left(
\zeta^{\ast},f\right)  f=\left(  \zeta,e\right)  ^{\ast}u+\left(
\zeta,u\right)  ^{\ast}e+\sum_{f\neq u,e}\left(  \zeta,f\right)  ^{\ast
}f=\left(  T\zeta\right)  ^{\ast}$. Notice that $F$ is a hermitian basis for
$H$.

\begin{lemma}
\label{lemTminmax}For the cones $\mathfrak{c}_{u}$ and $\mathfrak{c}_{e}$ we
have $T\left(  \mathfrak{c}_{u}\right)  =\mathfrak{c}_{e}$ and $\overline
{T}\left(  S\left(  \mathfrak{c}_{e}\right)  \right)  =S\left(  \mathfrak{c}%
_{u}\right)  $. Similarly, $\overline{T}\left(  \overline{\mathfrak{c}_{u}%
}\right)  =\overline{\mathfrak{c}_{e}}$ and $T\left(  S\left(  \overline
{\mathfrak{c}_{e}}\right)  \right)  =S\left(  \overline{\mathfrak{c}_{u}%
}\right)  $. In particular, $T^{\left(  \infty\right)  }\left(  \min
\mathfrak{c}_{u}\right)  =\min\mathfrak{c}_{e}$ and $T^{\left(  \infty\right)
}\left(  \max\mathfrak{c}_{u}\right)  =\max\mathfrak{c}_{e}$.
\end{lemma}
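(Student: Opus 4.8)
The plan is to exploit that $T$ is a $\ast$-linear unitary of $H$ with $T=T^{\ast}=T^{-1}$ which interchanges the two units $e$ and $u$ and fixes every other basis vector, so that the whole statement reduces to tracking the defining inequality of $\mathfrak{c}_{w}$ under $T$ and then dualizing by means of Lemma \ref{lDualM}. First I would record $T\left(\mathfrak{c}_{u}\right)=\mathfrak{c}_{e}$: for $\zeta\in H_{h}$ one has $T\zeta\in H_{h}$, $\left\Vert T\zeta\right\Vert =\left\Vert \zeta\right\Vert$ and $\left(T\zeta,e\right)=\left(\zeta,Te\right)=\left(\zeta,u\right)$, so $\zeta\in\mathfrak{c}_{u}$ iff $\left\Vert T\zeta\right\Vert \leq\sqrt{2}\left(T\zeta,e\right)$ iff $T\zeta\in\mathfrak{c}_{e}$; since $T=T^{-1}$ this is an equality of cones. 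The identical computation with $H$ replaced by $\overline{H}$, using that $\overline{T}\,\overline{\eta}=\overline{T\eta}$ is again a $\ast$-linear unitary interchanging $\overline{e}$ and $\overline{u}$, gives $\overline{T}\left(\overline{\mathfrak{c}_{u}}\right)=\overline{\mathfrak{c}_{e}}$.

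Next I would treat the state spaces. By Lemma \ref{lHSc11}, $S\left(\mathfrak{c}_{w}\right)=\operatorname{ball}\left(\overline{H_{h}^{w}}\right)+\overline{w}$; since $\overline{T}$ is an isometric $\ast$-linear map with $\overline{T}\,\overline{e}=\overline{u}$ and $\overline{T}\left(\overline{H_{h}^{e}}\right)=\overline{H_{h}^{u}}$ (because $\left(T\eta_{0},u\right)=\left(\eta_{0},e\right)=0$ for $\eta_{0}\in H_{h}^{e}$), applying $\overline{T}$ to this description yields $\overline{T}\left(S\left(\mathfrak{c}_{e}\right)\right)=S\left(\mathfrak{c}_{u}\right)$; equivalently one may argue from the state characterization $\sigma\left(e\right)=1$, $\sigma\left(\mathfrak{c}_{e}\right)\geq0$ together with the adjoint identity $\left\langle \zeta,\overline{T}\sigma\right\rangle =\left\langle T\zeta,\sigma\right\rangle$ and the already proven $T\left(\mathfrak{c}_{u}\right)=\mathfrak{c}_{e}$. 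Running the mirror version on $\left(\overline{H},H\right)$ gives $T\left(S\left(\overline{\mathfrak{c}_{e}}\right)\right)=S\left(\overline{\mathfrak{c}_{u}}\right)$.

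For the last assertion I would use $\min\mathfrak{c}_{w}=S\left(\mathfrak{c}_{w}\right)^{\boxdot}$ and $\max\mathfrak{c}_{w}=\mathfrak{c}_{w}^{\boxdot\boxdot}$ from Proposition \ref{propOKE21}, together with the fact that $\overline{T}$ is the weakly continuous $\ast$-linear dual of $T$ with $\left(\overline{T}\right)^{\ast}=T$. Viewing $\mathfrak{c}_{w}\subseteq H$ and $S\left(\mathfrak{c}_{w}\right)\subseteq\overline{H}$ as quantum sets concentrated at level one (so that $T^{\left(\infty\right)}$ and $\overline{T}^{\left(\infty\right)}$ act on them simply as $T$ and $\overline{T}$ respectively), Lemma \ref{lDualM} turns $\overline{T}\left(S\left(\mathfrak{c}_{e}\right)\right)\subseteq S\left(\mathfrak{c}_{u}\right)$ into $T^{\left(\infty\right)}\left(\min\mathfrak{c}_{u}\right)\subseteq\min\mathfrak{c}_{e}$ in one step, and $T\left(\mathfrak{c}_{u}\right)\subseteq\mathfrak{c}_{e}$ first into $\overline{T}^{\left(\infty\right)}\left(\mathfrak{c}_{e}^{\boxdot}\right)\subseteq\mathfrak{c}_{u}^{\boxdot}$ and then, on a second application of the lemma, into $T^{\left(\infty\right)}\left(\max\mathfrak{c}_{u}\right)\subseteq\max\mathfrak{c}_{e}$. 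Since the whole configuration is symmetric under interchanging $e$ and $u$ and $T^{\left(\infty\right)}=\left(T^{\left(\infty\right)}\right)^{-1}$, combining each inclusion with the one obtained by swapping $e$ and $u$ and then applying $T^{\left(\infty\right)}$ upgrades it to the desired equality.

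I do not expect a genuine obstacle here; the points that need care are the conjugate-linear bookkeeping — namely that the dual of $T\colon H\to H$ under the pairing $\left\langle\cdot,\cdot\right\rangle$ of $\left(H,\overline{H}\right)$ is exactly $\overline{T}\colon\overline{H}\to\overline{H}$, $\overline{\eta}\mapsto\overline{T\eta}$, and that dualizing once more returns $T$ — and the harmless observation that $\mathfrak{c}_{w}$ and $S\left(\mathfrak{c}_{w}\right)$ are level-one quantum sets, so that Lemma \ref{lDualM} applies verbatim and the polars it produces are precisely $\min$ and $\max$ of the respective cones.
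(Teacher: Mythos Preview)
Your proposal is correct and follows essentially the same architecture as the paper's proof: establish $T(\mathfrak{c}_u)=\mathfrak{c}_e$ and the corresponding state-space equality, then apply Lemma~\ref{lDualM} (together with the polar descriptions of $\min$ and $\max$) and the $e\leftrightarrow u$ symmetry to upgrade inclusions to equalities. The only noteworthy differences are stylistic: for $T(\mathfrak{c}_u)=\mathfrak{c}_e$ you use the one-line unitary argument $\Vert T\zeta\Vert=\Vert\zeta\Vert$, $(T\zeta,e)=(\zeta,u)$, whereas the paper carries out an explicit coordinate decomposition $\zeta=\zeta_0'+(\zeta,e)e+(\zeta,u)u$; and for $\max$ you go through the bipolar $\mathfrak{c}_w^{\boxdot\boxdot}$ via two applications of Lemma~\ref{lDualM}, while the paper instead invokes Theorem~\ref{tmax32} in the form $\max\mathfrak{c}_w=(\min\overline{\mathfrak{c}_w})^{\boxdot}$ and applies Lemma~\ref{lDualM} once --- but since $\min\overline{\mathfrak{c}_w}=\mathfrak{c}_w^{\boxdot}$ by Corollary~\ref{cormincbar}, the intermediate object is the same and the two routes coincide.
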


\begin{proof}
Take $\zeta\in\mathfrak{c}_{u}$ with $\zeta=\zeta_{0}+\left(  \zeta,u\right)
u$, $\left\Vert \zeta_{0}\right\Vert \leq\left(  \zeta,u\right)  $, where
$\zeta_{0}\in H_{h}^{u}$. But $\zeta_{0}=\left(  \zeta,e\right)  e+\zeta
_{0}^{\prime}$, $\zeta_{0}^{\prime}\in H_{h}^{u}\cap H_{h}^{e}$, therefore
$T\zeta=\zeta_{0}^{\prime}+\left(  \zeta,e\right)  u+\left(  \zeta,u\right)
e$ and $\left\Vert \zeta_{0}^{\prime}+\left(  \zeta,e\right)  u\right\Vert
^{2}=\left\Vert \zeta_{0}^{\prime}\right\Vert ^{2}+\left\vert \left(
\zeta,e\right)  \right\vert ^{2}=\left\Vert \zeta_{0}\right\Vert ^{2}%
\leq\left(  \zeta,u\right)  ^{2}=\left(  T\zeta,e\right)  ^{2}$. The latter
means that $T\zeta\in\mathfrak{c}_{e}$, that is, $T\left(  \mathfrak{c}%
_{u}\right)  \subseteq\mathfrak{c}_{e}$. If $\left(  \zeta,u\right)  =1$ then
$\left(  T\zeta,e\right)  =1$ as well, which means that $T\left(  S\left(
\overline{\mathfrak{c}_{u}}\right)  \right)  =T\left(  \operatorname{ball}%
H_{h}^{u}+u\right)  \subseteq\operatorname{ball}H_{h}^{e}+e=S\left(
\overline{\mathfrak{c}_{e}}\right)  $ (see Lemma \ref{lHSc11}). By symmetry,
$T\left(  \mathfrak{c}_{e}\right)  \subseteq\mathfrak{c}_{u}$ and $T\left(
S\left(  \overline{\mathfrak{c}_{e}}\right)  \right)  \subseteq S\left(
\overline{\mathfrak{c}_{u}}\right)  $, therefore $T\left(  \mathfrak{c}%
_{u}\right)  =\mathfrak{c}_{e}$ and $T\left(  S\left(  \overline
{\mathfrak{c}_{u}}\right)  \right)  =S\left(  \overline{\mathfrak{c}_{e}%
}\right)  $. Similarly, $\overline{T}\left(  \overline{\mathfrak{c}_{u}%
}\right)  =\overline{\mathfrak{c}_{e}}$, $T\left(  S\left(  \overline
{\mathfrak{c}_{e}}\right)  \right)  =S\left(  \overline{\mathfrak{c}_{u}%
}\right)  $, and $\overline{T}\left(  S\left(  \mathfrak{c}_{e}\right)
\right)  =S\left(  \mathfrak{c}_{u}\right)  $.

Finally, the equality $\overline{T}\left(  S\left(  \mathfrak{c}_{e}\right)
\right)  =S\left(  \mathfrak{c}_{u}\right)  $ implies that $T^{\left(
\infty\right)  }\left(  \min\mathfrak{c}_{u}\right)  =T^{\left(
\infty\right)  }\left(  S\left(  \mathfrak{c}_{u}\right)  ^{\boxdot}\right)
\subseteq S\left(  \mathfrak{c}_{e}\right)  ^{\boxdot}=\min\mathfrak{c}_{e}$
due to Lemma \ref{lDualM}. But $\overline{T}\left(  S\left(  \mathfrak{c}%
_{u}\right)  \right)  =S\left(  \mathfrak{c}_{e}\right)  $ as well, thereby
$T^{\left(  \infty\right)  }\left(  \min\mathfrak{c}_{e}\right)  \subseteq
\min\mathfrak{c}_{u}$. Hence $T^{\left(  \infty\right)  }\left(
\min\mathfrak{c}_{u}\right)  =\min\mathfrak{c}_{e}$. In particular,
$\overline{T}^{\left(  \infty\right)  }\left(  \min\overline{\mathfrak{c}_{e}%
}\right)  =\min\overline{\mathfrak{c}_{u}}$. Using again Lemma \ref{lDualM}
and Theorem \ref{tmax32}, we obtain that $T^{\left(  \infty\right)  }\left(
\max\mathfrak{c}_{u}\right)  =T^{\left(  \infty\right)  }\left(  \left(
\min\overline{\mathfrak{c}_{u}}\right)  ^{\boxdot}\right)  \subseteq\left(
\min\overline{\mathfrak{c}_{e}}\right)  ^{\boxdot}=\max\mathfrak{c}_{e}$. By
symmetry, $T^{\left(  \infty\right)  }\left(  \max\mathfrak{c}_{e}\right)
\subseteq\max\mathfrak{c}_{u}$. Whence $T^{\left(  \infty\right)  }\left(
\max\mathfrak{c}_{u}\right)  =\max\mathfrak{c}_{e}$.
\end{proof}

Thus $T:\left(  H,\max\mathfrak{c}_{u}\right)  \rightarrow\left(
H,\max\mathfrak{c}_{e}\right)  $ is a matrix positive mapping. Actually it is
an isomorphism of the operator systems.

\begin{theorem}
\label{tSEP1}Let $H$ be an infinite dimensional Hilbert space and let
$T\in\mathcal{B}\left(  H\right)  $ be a unitary given by $T=u\odot
\overline{e}+e\odot\overline{u}+\sum_{f\neq u,e}f\odot\overline{f}$. The
matrix positive mapping $T:\left(  H,\max\mathfrak{c}_{u}\right)
\rightarrow\left(  H,\max\mathfrak{c}_{e}\right)  $ given by $T$ is not separable.
\end{theorem}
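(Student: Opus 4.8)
The plan is to argue by contradiction. Suppose $T$, which is a positive map by Lemma~\ref{lemTminmax} (indeed $T(\mathfrak{c}_u)=\mathfrak{c}_e$), admits a separable decomposition $T=\sum_l p_l\odot q_l$. First I would pin down the shape of such a decomposition. Since $\max\mathfrak{c}_e$ is a quantization of $\mathfrak{c}_e$, the positive elements of $(H,\max\mathfrak{c}_e)$ are exactly the vectors of $\mathfrak{c}_e$, so $p_l\in\mathfrak{c}_e$. A positive functional $q_l$ on $(H,\max\mathfrak{c}_u)$ is positive on $\mathfrak{c}_u$, hence bounded (the order norm of $\mathfrak{c}_u$ is equivalent to the Hilbert norm by Proposition~\ref{propOKE21}), so $q_l=\overline{\eta_l}$; and $\overline{\eta_l}\in\mathfrak{c}_u^{\boxdot}\cap\overline{H}=\overline{\mathfrak{c}_u}$ by Corollary~\ref{cormincbar}, i.e.\ $\eta_l\in\mathfrak{c}_u$. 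Thus $T\zeta=\sum_l(\zeta,\eta_l)p_l$ (norm-convergent in $H$ for each $\zeta$) with $p_l\in\mathfrak{c}_e$ and $\eta_l\in\mathfrak{c}_u$. Dropping the null terms I may assume every $p_l,\eta_l$ is nonzero, so $a_l:=(\eta_l,u)>0$ and $c_l:=(p_l,e)>0$; writing the orthogonal decompositions $\eta_l=\eta_{l,0}+a_lu$ and $p_l=p_{l,0}+c_le$ one has $\|\eta_{l,0}\|\le a_l$ and $\|p_{l,0}\|\le c_l$.

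Next I would feed the transposition identities into the decomposition. Using $T\zeta=(\zeta,u)e+(\zeta,e)u+\sum_{f\neq u,e}(\zeta,f)f$: from $Tu=e$ I get $\sum_l a_lp_l=e$, and pairing with $e$ gives $\sum_l a_lc_l=1$; from $Te=u$ I get $\sum_l(e,\eta_{l,0})p_l=u$ (note $(e,\eta_l)=(e,\eta_{l,0})$ since $e\perp u$), and pairing with $u$ gives $\sum_l(e,\eta_{l,0})(p_{l,0},u)=1$ (note $(p_l,u)=(p_{l,0},u)$). All these series converge absolutely, being dominated by $\sum_l a_lc_l=1$. Therefore the chain
\[
1=\sum_l(e,\eta_{l,0})(p_{l,0},u)\le\sum_l\bigl|(e,\eta_{l,0})\bigr|\,\bigl|(p_{l,0},u)\bigr|\le\sum_l\|\eta_{l,0}\|\,\|p_{l,0}\|\le\sum_l a_lc_l=1
\]
is forced to be an equality term by term: $\|\eta_{l,0}\|=a_l$ and $\|p_{l,0}\|=c_l$ for every $l$, and then $|(e,\eta_{l,0})|=\|\eta_{l,0}\|$ and $|(p_{l,0},u)|=\|p_{l,0}\|$, so by the equality case of the Cauchy--Schwarz inequality $\eta_{l,0}$ is a real multiple of $e$ and $p_{l,0}$ is a real multiple of $u$. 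Hence $\eta_l\in\mathbb{C}e+\mathbb{C}u$ for every $l$.

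This is the contradiction: since $H$ is infinite dimensional there is a basis vector $f\in F$ with $f\neq u,e$, and then $(f,\eta_l)=0$ for all $l$, so $Tf=\sum_l(f,\eta_l)p_l=0$, contradicting $Tf=f\neq0$. The step I expect to demand the most care is the equality analysis of the displayed four-term chain — checking that discarding null terms is harmless and that the absolute convergence of all the sums (inherited from $\sum_l a_lc_l<\infty$) legitimizes the term-by-term comparison — together with the short verification that a positive functional on $(H,\max\mathfrak{c}_u)$ is genuinely represented by a vector of $\mathfrak{c}_u$; once these are in place, the rigidity of the narrow cones $\mathfrak{c}_u,\mathfrak{c}_e$ finishes the argument at once.
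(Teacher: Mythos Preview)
Your proof is correct, but it follows a genuinely different route from the paper's. Both arguments begin by writing $T=\sum_l p_l\odot\overline{\eta_l}$ with $p_l\in\mathfrak{c}_e$, $\eta_l\in\mathfrak{c}_u$ and extracting the normalisation $\sum_l a_lc_l=1$ from $Tu=e$. From there the paper introduces the functional $\varphi(\zeta)=\sum_l r_l(\zeta,\eta_l)$ (in your notation $\sum_l c_l(\zeta,\eta_{l,0})$), shows $\varphi$ vanishes on the whole basis $F$, and rewrites $T=e\odot\overline{u}+\sum_l\zeta_l\odot\overline{\eta_l}$ with $\sum_l\|\zeta_l\|\,\|\eta_l\|\le\sum_l a_lc_l=1$; hence $T$ is nuclear, therefore compact, which is impossible for a unitary on an infinite-dimensional space. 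Your argument is sharper and more elementary: you push one Cauchy--Schwarz chain to equality and read off $\eta_{l,0}\in\mathbb{R}e$, $p_{l,0}\in\mathbb{R}u$, so that $Tf=0$ for any basis vector $f\neq u,e$. This bypasses the nuclear/compact machinery entirely and isolates exactly where the rigidity of the thin cones bites. The paper's approach, on the other hand, ties the example back to the nuclearity theme running through Section~\ref{sectionPMOH} and gives the extra information that any separable decomposition would force $T$ to be trace class with $\|T\|_1\le 1$. One small wording point: in your equality analysis you should deduce $\|\eta_{l,0}\|=a_l>0$ and $\|p_{l,0}\|=c_l>0$ \emph{first} (from $\|\eta_{l,0}\|\|p_{l,0}\|=a_lc_l$ together with $\|\eta_{l,0}\|\le a_l$, $\|p_{l,0}\|\le c_l$), and only then invoke the Cauchy--Schwarz equality case, since the product identity $|(e,\eta_{l,0})|\,|(p_{l,0},u)|=\|\eta_{l,0}\|\,\|p_{l,0}\|$ alone does not split without knowing both factors are nonzero.
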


\begin{proof}
Suppose that $T$ is separable, that is, $T=\sum_{l}p_{l}\odot q_{l}$ for some
$\mathfrak{c}_{u}$-positive functionals $q_{l}$ on $\left(  H,u\right)  $ and
$\mathfrak{c}_{e}$-positive elements $p_{l}$ from $\left(  H,e\right)  $. By
Corollary \ref{cormincbar}, $q_{l}=\overline{\eta_{l}}+s_{l}\overline{u}$,
$\eta_{l}\in H_{h}^{u}$, $\left\Vert \eta_{l}\right\Vert \leq s_{l}$, and
$p_{l}=\zeta_{l}+r_{l}e$, $\zeta_{l}\in H_{h}^{e}$, $\left\Vert \zeta
_{l}\right\Vert \leq r_{l}$. Then
\begin{align*}
T\zeta &  =\sum_{l}\left(  \left(  \zeta,\eta_{l}\right)  +s_{l}\left(
\zeta,u\right)  \right)  \left(  \zeta_{l}+r_{l}e\right) \\
&  =\sum_{l}\left(  \left(  \zeta,\eta_{l}\right)  +s_{l}\left(
\zeta,u\right)  \right)  \zeta_{l}+\sum_{l}r_{l}\left(  \left(  \zeta,\eta
_{l}\right)  +s_{l}\left(  \zeta,u\right)  \right)  e
\end{align*}
for all $\zeta\in H$. In particular, $Te=\sum_{l}\left(  \eta_{l},e\right)
\zeta_{l}+\sum_{l}r_{l}\left(  \eta_{l},e\right)  e=u$ and $\sum_{l}\left(
\eta_{l},e\right)  \zeta_{l}\in H_{h}^{e}$ imply that $\sum_{l}r_{l}\left(
\eta_{l},e\right)  =0$ and $\sum_{l}\left(  \eta_{l},e\right)  \zeta_{l}=u$.
Similarly, $Tu=\sum_{l}s_{l}\zeta_{l}+\sum_{l}r_{l}s_{l}e=e$ implies that
$\sum_{l}s_{l}\zeta_{l}=0$ and $\sum_{l}r_{l}s_{l}=1$. Put $\varphi\left(
\zeta\right)  =\sum_{l}r_{l}\left(  \zeta,\eta_{l}\right)  $, $\zeta\in H$.
Then $\left\vert \varphi\left(  \zeta\right)  \right\vert \leq\sum_{l}%
r_{l}\left\vert \left(  \zeta,\eta_{l}\right)  \right\vert \leq\left\Vert
\zeta\right\Vert \sum_{l}r_{l}s_{l}=\left\Vert \zeta\right\Vert $, that is,
$\varphi\in\operatorname{ball}H^{\ast}$ and
\[
T\zeta=G\left(  \zeta\right)  +\left(  \varphi\left(  \zeta\right)  +\left(
\zeta,u\right)  \right)  e,
\]
where $G\left(  \zeta\right)  =\sum_{l}\left(  \zeta,\eta_{l}\right)
\zeta_{l}\in H_{h}^{e}$ for all $\zeta\in H$. As we have seen above $G\left(
e\right)  =u$, $G\left(  u\right)  =0$ and $\varphi\left(  u\right)
=\varphi\left(  e\right)  =0$. Since $Tf=f$ for all $f\neq u,e$, we deduce
that $\varphi\left(  f\right)  =0$ and $G\left(  f\right)  =f$ for all $f\neq
u,e$. Thus $\varphi\left(  F\right)  =\left\{  0\right\}  $, which means that
$\varphi=0$. Consequently,
\[
T=e\odot\overline{u}+\sum_{l}\zeta_{l}\odot\overline{\eta_{l}}\text{ with
}\sum_{l}\left\Vert \zeta_{l}\right\Vert \left\Vert \overline{\eta_{l}%
}\right\Vert \leq\sum_{l}r_{l}s_{l}=1,
\]
which means that $T$ is a nuclear operator. In particular, $T$ is a compact
operator. But $\operatorname{im}T$ contains an infinite dimensional closed
subspace generated by $F\backslash\left\{  u,e\right\}  $, a contradiction.
\end{proof}

Thus a matrix positive mapping into $\max$-quantization may not be separable
(see \cite{PaulTT}).

\subsection{The operator Hilbert system $\ell^{2}\left(  2\right)  $}

In this subsection we analyze the $2$-dimensional case of $\ell^{2}\left(
2\right)  $. Suppose that $K=\ell^{2}\left(  \epsilon\right)  $ with an
hermitian basis $\epsilon=\left(  \epsilon_{1},\epsilon_{2}\right)  $ and unit
$u=\epsilon_{1}$. Thus $\mathfrak{c}_{u}$ consists of those $\eta=\eta
_{1}\epsilon_{1}+\eta_{2}\epsilon_{2}\in K_{h}$ such that $\eta_{1}\geq0$ and
$\left\vert \eta_{2}\right\vert \leq\eta_{1}$. Moreover, $S\left(
\mathfrak{c}_{u}\right)  =\operatorname{ball}\overline{K_{h}^{u}}+u=\left\{
\eta_{2}\overline{\epsilon_{2}}+\overline{\epsilon_{1}}:\left\vert \eta
_{2}\right\vert \leq1\right\}  $, and $\zeta\in\mathfrak{c}_{u}$ iff
$\left\langle \zeta,S\left(  \mathfrak{c}_{u}\right)  \right\rangle \geq0$. We
have the canonical $\ast$-representation $K\rightarrow C\left(  S\left(
\mathfrak{c}_{u}\right)  \right)  $, $\zeta\mapsto\widehat{\zeta}$,
$\widehat{\zeta}\left(  t\right)  =\left\langle \zeta,t\right\rangle $, $t\in
S\left(  \mathfrak{c}_{u}\right)  $ (see below Appendix Section \ref{secAPPX}%
). Notice that $\zeta\in\mathfrak{c}_{u}$ iff $\widehat{\zeta}\in C\left(
S\left(  \mathfrak{c}_{u}\right)  \right)  _{+}$. Consider the algebra
$\ell^{\infty}\left(  \theta\right)  $ with a basis $\theta=\left(  \theta
_{1},\theta_{2},\theta_{3}\right)  $, and the unital linear embedding
$\kappa:K\rightarrow\ell^{\infty}\left(  \theta\right)  $, $\kappa\left(
\eta\right)  =\kappa\left(  \eta_{1},\eta_{2}\right)  =\left(  \eta_{1}%
,\eta_{1}+\eta_{2},\eta_{1}-\eta_{2}\right)  $. Note that $\kappa\left(
u\right)  =\kappa\left(  \epsilon_{1}\right)  =\left(  1,1,1\right)  $,
$\kappa\left(  \epsilon_{2}\right)  =\left(  0,1,-1\right)  =\theta_{2}%
-\theta_{3}$.

\begin{lemma}
\label{lemFDC1}Let $\zeta\in K_{h}$. Then $\zeta\in\mathfrak{c}_{u}$ iff
$\widehat{\zeta}\left(  \overline{\epsilon_{1}}\right)  \geq0$ and
$\widehat{\zeta}\left(  \overline{\epsilon_{1}}\pm\overline{\epsilon_{2}%
}\right)  \geq0$. In particular, $\kappa\left(  \mathfrak{c}_{u}\right)
=\ell^{\infty}\left(  \theta\right)  _{+}\cap\kappa\left(  K\right)  $, and
every unital positive mapping $T:K\rightarrow\mathcal{V}$ from $K$ to an
operator system $\mathcal{V}$ admits a unital positive extension
$\widetilde{T}:\ell^{\infty}\left(  \theta\right)  \rightarrow\mathcal{V}$,
$\widetilde{T}\cdot\kappa=T$.
\end{lemma}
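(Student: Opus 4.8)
The plan is to unpack the three claims in Lemma \ref{lemFDC1} in order, leaning on the explicit two–dimensional geometry of $\mathfrak{c}_u$ established just before the statement.

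First I would prove the characterization $\zeta\in\mathfrak{c}_u$ iff $\widehat\zeta(\overline{\epsilon_1})\ge 0$ and $\widehat\zeta(\overline{\epsilon_1}\pm\overline{\epsilon_2})\ge 0$. Write $\zeta=\zeta_1\epsilon_1+\zeta_2\epsilon_2\in K_h$, so $\widehat\zeta(\overline{\epsilon_1})=(\zeta,\epsilon_1)=\zeta_1$ and $\widehat\zeta(\overline{\epsilon_1}\pm\overline{\epsilon_2})=\zeta_1\pm\zeta_2$. The three inequalities $\zeta_1\ge 0$, $\zeta_1+\zeta_2\ge 0$, $\zeta_1-\zeta_2\ge 0$ together say exactly $\zeta_1\ge 0$ and $|\zeta_2|\le\zeta_1$, which is the description of $\mathfrak{c}_u$ recalled at the start of the subsection. (One can also phrase this via the dual description $\zeta\in\mathfrak{c}_u$ iff $\langle\zeta,S(\mathfrak{c}_u)\rangle\ge 0$, noting that $S(\mathfrak{c}_u)=\{\eta_2\overline{\epsilon_2}+\overline{\epsilon_1}:|\eta_2|\le 1\}$ is the segment with endpoints $\overline{\epsilon_1}\pm\overline{\epsilon_2}$, and a linear functional is nonnegative on a segment iff it is nonnegative at the two endpoints — but the direct coordinate computation is shortest.)

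Second, for $\kappa(\mathfrak{c}_u)=\ell^\infty(\theta)_+\cap\kappa(K)$: by definition $\kappa(\zeta_1,\zeta_2)=(\zeta_1,\zeta_1+\zeta_2,\zeta_1-\zeta_2)$, and this triple lies in $\ell^\infty(\theta)_+$ iff all three coordinates are $\ge 0$, i.e. iff $\zeta_1\ge 0$ and $|\zeta_2|\le\zeta_1$, which by the first part is precisely $\zeta\in\mathfrak{c}_u$. Since $\kappa$ is injective, this gives the set equality. I should also note $\kappa$ is unital, $\kappa(u)=(1,1,1)$, which is already observed in the text.

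Third, the extension statement. Identify $\kappa(K)\subseteq\ell^\infty(\theta)$ as an operator subsystem (it is a unital self-adjoint subspace since $\kappa$ is unital and $\ast$-linear, and by the second part its positive cone is the inherited one $\ell^\infty(\theta)_+\cap\kappa(K)$, so the inclusion $\kappa(K)\hookrightarrow\ell^\infty(\theta)$ is a unital complete order embedding because $\ell^\infty(\theta)=C(\{1,2,3\})$ carries the $\min$ structure by Proposition \ref{propPTT1}). Given a unital positive $T:K\to\mathcal V$, transport it to $T\kappa^{-1}:\kappa(K)\to\mathcal V$, a unital positive map on the operator subsystem $\kappa(K)$ of the abelian $C^\ast$-algebra $\ell^\infty(\theta)$. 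The hard part — and really the only nontrivial step — is producing a unital positive extension $\widetilde T:\ell^\infty(\theta)\to\mathcal V$. Here I would invoke that $\ell^\infty(\theta)$ is three-dimensional commutative, so $\kappa(K)$ has codimension one in it; a codimension-one unital self-adjoint subspace of an abelian $C^\ast$-algebra, equipped with its $\min$ structure, always admits a unital positive projection onto it (equivalently, unital positive maps out of such a subspace extend), because one extends the positive functionals obtained by pairing against the three atoms $\delta_{\theta_i}$ — each of which restricts to a positive functional on $\kappa(K)$ — back to $\ell^\infty(\theta)$ trivially, and then sets $\widetilde T(x)=\sum_{i=1}^3 (\text{coordinate}_i)(x)\,T(\kappa^{-1}(\text{suitable element}))$; more cleanly, since $\kappa(\mathfrak c_u)$ is the base of a simplicial cone in $\mathbb R^3_{\ge 0}$ with three extreme rays, one writes each standard basis vector $\theta_i$ of $\ell^\infty(\theta)$ as a linear combination of $\kappa(\epsilon_1),\kappa(\epsilon_2)$ and defines $\widetilde T(\theta_i)$ by the same combination of $T(\epsilon_1),T(\epsilon_2)$; positivity of $\widetilde T$ then follows because a positive element of $\ell^\infty(\theta)$ is a nonnegative combination of the $\theta_i$ and each $\widetilde T(\theta_i)=T(\kappa^{-1}\theta_i)$ — I would check the three vectors $\kappa^{-1}(\theta_i)$, namely $\epsilon_1$, $\tfrac12(\epsilon_1+\epsilon_2)$, $\tfrac12(\epsilon_1-\epsilon_2)$, all lie in $\mathfrak c_u$, hence map into $\mathfrak c_e$ (the cone of $\mathcal V$) under $T$ — and unitality follows from $\theta_1+\theta_2+\theta_3=\kappa(2u)$ wait more simply from $\widetilde T\kappa=T$ and $\kappa(u)=(1,1,1)$ giving $\widetilde T(1,1,1)=Tu=e$. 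This completes the proof.

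The one delicate point to get right is the direction of the extension and why it is automatic: it hinges on $\ell^\infty(\theta)$ carrying the $\min$ operator system structure (Proposition \ref{propPTT1}), so that "positive extension" for the scalar-level map already forces matrix positivity, and on the simplicial nature of the cone $\kappa(\mathfrak c_u)$ — its three extreme rays are the images of the three explicit vectors above, which span $\ell^\infty(\theta)$, so the extension is uniquely and explicitly determined. Everything else is the linear algebra of a $2$-into-$3$ embedding and should be dispatched with the coordinate identities already recorded before the lemma.
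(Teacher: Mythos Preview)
Your treatment of the first two parts is correct and essentially identical to the paper's: the coordinate identities $\widehat\zeta(\overline{\epsilon_1})=\zeta_1$ and $\widehat\zeta(\overline{\epsilon_1}\pm\overline{\epsilon_2})=\zeta_1\pm\zeta_2$ immediately give the cone characterization, and the description of $\kappa(\mathfrak c_u)$ follows.

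The extension argument, however, contains a genuine error. You write ``each standard basis vector $\theta_i$ of $\ell^\infty(\theta)$ as a linear combination of $\kappa(\epsilon_1),\kappa(\epsilon_2)$'' and then speak of ``$\kappa^{-1}(\theta_i)$, namely $\epsilon_1$, $\tfrac12(\epsilon_1+\epsilon_2)$, $\tfrac12(\epsilon_1-\epsilon_2)$''. But $\kappa(K)$ is a $2$-dimensional subspace of the $3$-dimensional $\ell^\infty(\theta)$, so no $\theta_i$ lies in $\kappa(K)$ and $\kappa^{-1}(\theta_i)$ does not exist. Concretely $\kappa(\epsilon_1)=(1,1,1)\neq\theta_1$ and $\kappa\bigl(\tfrac12(\epsilon_1+\epsilon_2)\bigr)=(\tfrac12,1,0)\neq\theta_2$. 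With your assignments $\widetilde T(\theta_i)=T(\text{those vectors})$ one gets $\widetilde T(\kappa(\epsilon_1))=\widetilde T(1,1,1)=e+\tfrac12(e+T\epsilon_2)+\tfrac12(e-T\epsilon_2)=2e\neq e$, so $\widetilde T\kappa\neq T$.

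The fix is exactly what the paper does: use the one degree of freedom coming from codimension one to set $\widetilde T(\theta_1)=0$ (not $T(\epsilon_1)$), while keeping your correct choices $\widetilde T(\theta_2)=T\bigl(\tfrac12(\epsilon_1+\epsilon_2)\bigr)=\tfrac12(e+T\epsilon_2)$ and $\widetilde T(\theta_3)=T\bigl(\tfrac12(\epsilon_1-\epsilon_2)\bigr)=\tfrac12(e-T\epsilon_2)$. Then $\widetilde T(\kappa(\epsilon_1))=0+\tfrac12(e+T\epsilon_2)+\tfrac12(e-T\epsilon_2)=e$ and $\widetilde T(\kappa(\epsilon_2))=\widetilde T(\theta_2)-\widetilde T(\theta_3)=T\epsilon_2$, so $\widetilde T\kappa=T$; positivity holds because $-e\le T\epsilon_2\le e$ forces each $\widetilde T(\theta_i)\ge 0$. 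This is the paper's construction with $v_1=0$, $v_2=\tfrac12(e+T\epsilon_2)$. (Your remarks on the $\min$ structure are not needed here: the lemma asks only for a unital \emph{positive} extension, not a completely positive one.)
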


\begin{proof}
If $\zeta\in\mathfrak{c}_{u}$ then $\widehat{\zeta}\left(  \overline
{\epsilon_{1}}\right)  \geq0$ and $\widehat{\zeta}\left(  \overline
{\epsilon_{1}}\pm\overline{\epsilon_{2}}\right)  \geq0$, for $\left\{
\overline{\epsilon_{1}},\overline{\epsilon_{1}}\pm\overline{\epsilon_{2}%
}\right\}  \subseteq S\left(  \mathfrak{c}_{u}\right)  $. Conversely, assume
that $\widehat{\zeta}\left(  \overline{\epsilon_{1}}\right)  \geq0$ and
$\widehat{\zeta}\left(  \overline{\epsilon_{1}}\pm\overline{\epsilon_{2}%
}\right)  \geq0$. Then $\left\vert \widehat{\zeta}\left(  \overline
{\epsilon_{2}}\right)  \right\vert \leq\widehat{\zeta}\left(  \overline
{\epsilon_{1}}\right)  $. Take $t=r\overline{\epsilon_{2}}+\overline
{\epsilon_{1}}\in S\left(  \mathfrak{c}_{u}\right)  $ with $\left\vert
r\right\vert \leq1$. Then $\left\vert \widehat{\zeta}\left(  r\overline
{\epsilon_{2}}\right)  \right\vert =\left\vert r\widehat{\zeta}\left(
\overline{\epsilon_{2}}\right)  \right\vert \leq\widehat{\zeta}\left(
\overline{\epsilon_{1}}\right)  $, which in turn implies that $\widehat{\zeta
}\left(  t\right)  =\widehat{\zeta}\left(  \overline{\epsilon_{1}}%
+r\overline{\epsilon_{2}}\right)  \geq0$. Hence $\widehat{\zeta}\in C\left(
S\left(  \mathfrak{c}_{u}\right)  \right)  _{+}$ or $\zeta\in\mathfrak{c}_{u}$.

In particular, $\zeta\in\mathfrak{c}_{u}$ iff $\kappa\left(  \zeta\right)
\left(  1\right)  =\zeta_{1}=\widehat{\zeta}\left(  \overline{\epsilon_{1}%
}\right)  \geq0$, $\kappa\left(  \zeta\right)  \left(  2\right)  =\zeta
_{1}+\zeta_{2}=\widehat{\zeta}\left(  \overline{\epsilon_{1}}+\overline
{\epsilon_{2}}\right)  \geq0$ and $\kappa\left(  \zeta\right)  \left(
3\right)  =\zeta_{1}-\zeta_{2}=\widehat{\zeta}\left(  \overline{\epsilon_{1}%
}-\overline{\epsilon_{2}}\right)  \geq0$, that is, $\kappa\left(
\zeta\right)  \in\ell^{\infty}\left(  \theta\right)  _{+}$.

Finally, suppose $T:K\rightarrow\mathcal{V}$ is a unital positive mapping into
an operator system $\mathcal{V}$. Since $-u\leq\epsilon_{2}\leq u$ in $K_{h}$,
it follows that $-e\leq T\left(  \epsilon_{2}\right)  \leq e$, where
$e=T\left(  u\right)  $ is the unit of $\mathcal{V}$. Thus $\left\Vert
T\left(  \epsilon_{2}\right)  \right\Vert \leq1$. Choose $v_{i}\in
\mathcal{V}_{+}$, $i=1,2$ such that $v_{1}+2v_{2}=e+T\left(  \epsilon
_{2}\right)  $, $T\left(  \epsilon_{2}\right)  \leq v_{2}$. For example, one
can choose $v_{1}=0$ and $v_{2}=2^{-1}\left(  e+T\left(  \epsilon_{2}\right)
\right)  $, for $T\left(  \epsilon_{2}\right)  \leq e$ implies that
$e+T\left(  \epsilon_{2}\right)  \geq2T\left(  \epsilon_{2}\right)  $ or
$v_{2}\geq T\left(  \epsilon_{2}\right)  $. Define $\widetilde{T}:\ell
^{\infty}\left(  \theta\right)  \rightarrow\mathcal{V}$ to be $\widetilde{T}%
\left(  \lambda_{1}\theta_{1}+\lambda_{2}\theta_{2}+\lambda_{3}\theta
_{3}\right)  =\lambda_{1}v_{1}+\left(  \lambda_{2}+\lambda_{3}\right)
v_{2}-T\left(  \lambda_{3}\epsilon_{2}\right)  $. Then
\begin{align*}
\widetilde{T}\kappa\left(  \eta\right)   &  =\widetilde{T}\left(  \eta
_{1}\theta_{1}+\left(  \eta_{1}+\eta_{2}\right)  \theta_{2}+\left(  \eta
_{1}-\eta_{2}\right)  \theta_{3}\right)  =\eta_{1}v_{1}+\left(  2\eta
_{1}\right)  v_{2}-T\left(  \left(  \eta_{1}-\eta_{2}\right)  \epsilon
_{2}\right) \\
&  =\eta_{1}\left(  v_{1}+2v_{2}\right)  -\left(  \eta_{1}-\eta_{2}\right)
T\left(  \epsilon_{2}\right)  =\eta_{1}\left(  e+T\left(  \epsilon_{2}\right)
\right)  -\left(  \eta_{1}-\eta_{2}\right)  T\left(  \epsilon_{2}\right) \\
&  =\eta_{1}e+\eta_{2}T\left(  \epsilon_{2}\right)  =\eta_{1}T\left(
\epsilon_{1}\right)  +\eta_{2}T\left(  \epsilon_{2}\right)  =T\eta,
\end{align*}
that is, $\widetilde{T}\kappa=T$. In particular, $\widetilde{T}\left(
1\right)  =\widetilde{T}\left(  \kappa\left(  1,0\right)  \right)  =T\left(
\epsilon_{1}\right)  =T\left(  u\right)  =e$. Note also that $\widetilde{T}%
\left(  \theta_{1}\right)  =v_{1}$, $\widetilde{T}\left(  \theta_{2}\right)
=v_{2}$ and $\widetilde{T}\left(  \theta_{3}\right)  =v_{2}-T\left(
\epsilon_{2}\right)  $, that is, $\widetilde{T}\left(  \theta_{i}\right)
\in\mathcal{V}_{+}$ for all $i$. Consequently, $\widetilde{T}\left(
\ell^{\infty}\left(  \theta\right)  _{+}\right)  \subseteq\mathcal{V}_{+}$,
which means that $\widetilde{T}$ is a unital positive extension of $T$.
\end{proof}

Now let $H$ be an operator Hilbert space with its Hermitian basis $F$ and a
unit $e\in F$. If $T:K\rightarrow H\mathcal{\ }$is a unital positive mapping
then it admits a unital positive extension $\widetilde{T}:\ell^{\infty}\left(
\theta\right)  \rightarrow H$, $\widetilde{T}\cdot\kappa=T$ thanks to Lemma
\ref{lemFDC1}.

\begin{proposition}
\label{propFDC1}Let $K=\ell^{2}\left(  \epsilon\right)  $ be the operator
Hilbert system with its hermitian basis $\epsilon=\left(  \epsilon
_{1},\epsilon_{2}\right)  $ and unit $u=\epsilon_{1}$, $\left(  H,e\right)  $
an operator Hilbert system, and let $T:\left(  K,u\right)  \rightarrow\left(
H,e\right)  $ be a unital positive mapping. Then $T$ is a separable morphism
automatically. In particular, $T:\left(  K,\min\mathfrak{c}_{u}\right)
\rightarrow\left(  H,\max\mathfrak{c}_{e}\right)  $ is a morphism of the
operator systems.
\end{proposition}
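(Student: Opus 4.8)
The plan is to factor $T$ through the finite-dimensional abelian $C^{\ast}$-algebra $\ell^{\infty}\left(\theta\right)$ and reduce everything to Corollary \ref{corCHe2}. First I would apply Lemma \ref{lemFDC1} to obtain a unital positive extension $\widetilde{T}:\ell^{\infty}\left(\theta\right)\rightarrow\left(H,e\right)$ with $\widetilde{T}\cdot\kappa=T$. Since $\ell^{\infty}\left(\theta\right)=C\left(X\right)$ with $X=\left\{1,2,3\right\}$ finite (hence any probability measure on $X$ is atomic of finite support), Corollary \ref{corCHe2} applies: it gives that $\widetilde{T}$ is a separable morphism, say $\widetilde{T}=\sum_{l}p_{l}\odot q_{l}$ with $p_{l}\in\mathfrak{c}_{e}$ and $q_{l}$ positive functionals on $\ell^{\infty}\left(\theta\right)$ satisfying $\sum_{l}\left\Vert p_{l}\right\Vert \left\Vert q_{l}\right\Vert <\infty$, and moreover that $\widetilde{T}:\left(\ell^{\infty}\left(\theta\right),\min\ell^{\infty}\left(\theta\right)_{+}\right)\rightarrow\left(H,\max\mathfrak{c}_{e}\right)$ is a morphism of operator systems.

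For the separability of $T$ itself I would compose: $T=\widetilde{T}\cdot\kappa=\sum_{l}p_{l}\odot\left(q_{l}\circ\kappa\right)$. Here each $q_{l}\circ\kappa$ is a positive functional on the unital space $\left(K,u\right)$, because Lemma \ref{lemFDC1} gives $\kappa\left(\mathfrak{c}_{u}\right)=\ell^{\infty}\left(\theta\right)_{+}\cap\kappa\left(K\right)\subseteq\ell^{\infty}\left(\theta\right)_{+}$, so $\left(q_{l}\circ\kappa\right)\left(\zeta\right)=q_{l}\left(\kappa\left(\zeta\right)\right)\geq0$ for all $\zeta\in\mathfrak{c}_{u}$. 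Since $\kappa$ is a bounded operator, $\sum_{l}\left\Vert p_{l}\right\Vert \left\Vert q_{l}\circ\kappa\right\Vert \leq\left\Vert \kappa\right\Vert \sum_{l}\left\Vert p_{l}\right\Vert \left\Vert q_{l}\right\Vert <\infty$, and the series $\sum_{l}\left(q_{l}\circ\kappa\right)\left(\zeta\right)p_{l}=\widetilde{T}\left(\kappa\left(\zeta\right)\right)$ converges for every $\zeta\in K$. Thus $T=\sum_{l}p_{l}\odot\left(q_{l}\circ\kappa\right)$ exhibits $T$ as a separable morphism into $\left(H,e\right)$.

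It then remains to upgrade this to the matricial statement that $T:\left(K,\min\mathfrak{c}_{u}\right)\rightarrow\left(H,\max\mathfrak{c}_{e}\right)$ is a morphism of operator systems. I would first check that $\kappa$ itself is matrix positive as a map $\left(K,\min\mathfrak{c}_{u}\right)\rightarrow\left(\ell^{\infty}\left(\theta\right),\min\ell^{\infty}\left(\theta\right)_{+}\right)$. By Proposition \ref{propPTT1} one has $\min\ell^{\infty}\left(\theta\right)_{+}=M\left(\ell^{\infty}\left(\theta\right)\right)_{+}$, so I must show $\kappa^{\left(n\right)}\left(\zeta\right)\left(t\right)\geq0$ in $M_{n}$ for each $t\in\left\{1,2,3\right\}$ whenever $\zeta\in\left(\min\mathfrak{c}_{u}\right)\cap M_{n}\left(K\right)_{h}$. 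By Proposition \ref{pmin11} this membership means $\beta^{\ast}\zeta\beta\in\mathfrak{c}_{u}$ for every $\beta\in M_{n,1}$, and a direct computation gives $\beta^{\ast}\kappa^{\left(n\right)}\left(\zeta\right)\left(t\right)\beta=\kappa\left(\beta^{\ast}\zeta\beta\right)\left(t\right)$, which is $\geq0$ by Lemma \ref{lemFDC1}. Hence $\kappa^{\left(n\right)}\left(\zeta\right)\in M_{n}\left(\ell^{\infty}\left(\theta\right)\right)_{+}$. Composing with the morphism $\widetilde{T}$ from Corollary \ref{corCHe2} yields $T^{\left(\infty\right)}\left(\min\mathfrak{c}_{u}\right)=\widetilde{T}^{\left(\infty\right)}\bigl(\kappa^{\left(\infty\right)}\left(\min\mathfrak{c}_{u}\right)\bigr)\subseteq\widetilde{T}^{\left(\infty\right)}\left(\min\ell^{\infty}\left(\theta\right)_{+}\right)\subseteq\max\mathfrak{c}_{e}$, as desired.

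The only genuine computation is the identity $\beta^{\ast}\kappa^{\left(n\right)}\left(\zeta\right)\left(t\right)\beta=\kappa\left(\beta^{\ast}\zeta\beta\right)\left(t\right)$ together with the explicit descriptions of $\min\mathfrak{c}_{u}$ and $\min\ell^{\infty}\left(\theta\right)_{+}$; everything else is bookkeeping. The point I would be most careful about is \emph{not} to invoke the equality $\min\mathfrak{c}_{u}=\max\mathfrak{c}_{u}$ for the two-dimensional operator Hilbert system: that equality is itself a consequence of this proposition (apply it with $H=K$ and $T=\mathrm{id}$), so the matricial positivity must be routed through $\kappa$ and Corollary \ref{corCHe2}, rather than through the abstract ``separable $\Rightarrow$ matrix positive into $\max$'' principle, whose input would be the a priori smaller quantum cone $\mathfrak{C}_{o}$ of $K$.
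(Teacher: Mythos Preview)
Your proof is correct and follows essentially the same route as the paper: extend $T$ to $\widetilde{T}$ via Lemma~\ref{lemFDC1}, apply Corollary~\ref{corCHe2} to $\widetilde{T}$, and then pull back along the positive map $\kappa$. Your verification that $\kappa^{(\infty)}(\min\mathfrak{c}_u)\subseteq\min\ell^{\infty}(\theta)_+$ via Proposition~\ref{pmin11} is a direct variant of the argument the paper records in the Remark following the proposition (which instead uses the polar duality of Lemma~\ref{lDualM}).

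One small point: your caution in the last paragraph is unnecessary. The general principle ``separable $\Rightarrow$ matrix positive into $\max$'' already delivers the matricial conclusion with domain $\min\mathfrak{c}_u$, without any appeal to $\min\mathfrak{c}_u=\max\mathfrak{c}_u$. Indeed, each positive functional $q_l\circ\kappa$ on $(K,u)$ lies in $\overline{\mathfrak{c}_u}=\mathbb{R}_+S(\mathfrak{c}_u)$ (Corollary~\ref{cormincbar}), and since $\min\mathfrak{c}_u=\overline{\mathfrak{c}_u}^{\boxdot}$ one has $(q_l\circ\kappa)^{(n)}(v)\geq0$ for every $v\in(\min\mathfrak{c}_u)\cap M_n(K)$; the rest of the argument in the paper's paragraph preceding Lemma~\ref{lemNSO1} then goes through verbatim. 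So the paper's terse proof simply invokes that principle, while your longer factorization through $\kappa$ (and the paper's Remark) give an independent confirmation.
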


\begin{proof}
Based on Lemma \ref{lemFDC1}, there is a unital positive extension
$\widetilde{T}:\ell^{\infty}\left(  \theta\right)  \rightarrow H$ of $T$.
Using Corollary \ref{corCHe2}, we deduce that $\widetilde{T}$ is a separable
morphism. Since $\kappa:K\rightarrow\ell^{\infty}\left(  \theta\right)  $ is a
unital positive mapping (see Lemma \ref{lemFDC1}), it follows that
$T=\widetilde{T}\cdot\kappa$ is a separable morphism either.
\end{proof}

\begin{remark}
Optionally, one can use the following argument. Since $\widetilde{T}$ is
separable, we have $\widetilde{T}^{\left(  \infty\right)  }\left(  \min
\ell^{\infty}\left(  \theta\right)  _{+}\right)  \subseteq\mathfrak{c}_{e}%
^{c}\subseteq\max\mathfrak{c}_{e}$. By Lemma \ref{lemFDC1}, $\kappa\left(
\mathfrak{c}_{u}\right)  =\ell^{\infty}\left(  \theta\right)  _{+}\cap
\kappa\left(  K\right)  $, which in turn implies that $\kappa^{\ast}\left(
S\left(  \ell^{\infty}\left(  \theta\right)  _{+}\right)  \right)  \subseteq
S\left(  \mathfrak{c}_{u}\right)  $. Using Lemma \ref{lDualM} and Proposition
\ref{propPTT1}, we derive that $\kappa^{\left(  \infty\right)  }\left(
\min\mathfrak{c}_{u}\right)  =\kappa^{\left(  \infty\right)  }\left(  S\left(
\mathfrak{c}_{u}\right)  ^{\boxdot}\right)  \subseteq S\left(  \ell^{\infty
}\left(  \theta\right)  _{+}\right)  ^{\boxdot}=\min\ell^{\infty}\left(
\theta\right)  _{+}$. Consequently,%
\[
T^{\left(  \infty\right)  }\left(  \min\mathfrak{c}_{u}\right)  =\widetilde{T}%
^{\left(  \infty\right)  }\kappa^{\left(  \infty\right)  }\left(
\min\mathfrak{c}_{u}\right)  \subseteq\widetilde{T}^{\left(  \infty\right)
}\left(  \min\ell^{\infty}\left(  \theta\right)  _{+}\right)  \subseteq
\max\mathfrak{c}_{e},
\]
which means that $T:\left(  K,\min\mathfrak{c}_{u}\right)  \rightarrow\left(
H,\max\mathfrak{c}_{e}\right)  $ is matrix positive.
\end{remark}

\begin{corollary}
The operator Hilbert system $\ell^{2}\left(  2\right)  $ admits only one
quantization, that is, $\min\mathfrak{c}_{u}=\max\mathfrak{c}_{u}$.
\end{corollary}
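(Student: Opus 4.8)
The plan is to deduce this instantly from Proposition \ref{propFDC1} by feeding it the identity map. Concretely, take $H=K=\ell^{2}\left(  2\right)  $ and $e=u$, so that $\left(  H,e\right)  $ is again the operator Hilbert system $\ell^{2}\left(  2\right)  $, and consider the identity operator $\mathrm{id}:\left(  K,u\right)  \rightarrow\left(  H,e\right)  $. It is a unital positive mapping in the sense of Subsection \ref{subsecPM11}: indeed $\mathrm{id}\left(  u\right)  =u=e$ and $\mathrm{id}\left(  \mathfrak{c}_{u}\right)  =\mathfrak{c}_{u}=\mathfrak{c}_{e}\subseteq\mathfrak{c}_{e}$, so it is of the form $T_{k}$ for a (unital) $H$-support by Proposition \ref{propKom1}.

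Next I would apply Proposition \ref{propFDC1}, which asserts that every unital positive mapping from the operator Hilbert system $\ell^{2}\left(  2\right)  $ to an operator Hilbert system is a separable morphism, and in particular that $\mathrm{id}:\left(  K,\min\mathfrak{c}_{u}\right)  \rightarrow\left(  H,\max\mathfrak{c}_{e}\right)  =\left(  K,\max\mathfrak{c}_{u}\right)  $ is a morphism of operator systems, i.e.\ matrix positive. Unwinding this, $\mathrm{id}^{\left(  \infty\right)  }\left(  \min\mathfrak{c}_{u}\right)  \subseteq\max\mathfrak{c}_{u}$, that is, $\min\mathfrak{c}_{u}\subseteq\max\mathfrak{c}_{u}$ as quantum cones in $M\left(  K\right)  _{h}$.

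Finally I would combine this with the general inclusion $\max\mathfrak{c}_{u}\subseteq\min\mathfrak{c}_{u}$ valid for any unital Hilbert $\ast$-space, recorded in Subsection \ref{subsecUQC} (every quantization $\mathfrak{C}$ of $\mathfrak{c}_{u}$ satisfies $\max\mathfrak{c}_{u}\subseteq\mathfrak{C}\subseteq\min\mathfrak{c}_{u}$). The two inclusions together yield $\min\mathfrak{c}_{u}=\max\mathfrak{c}_{u}$, so $\mathfrak{c}_{u}$ admits a unique quantization. There is no real obstacle beyond citing Proposition \ref{propFDC1} correctly; the only point worth a sentence of justification is that $\ell^{2}\left(  2\right)  $ does qualify as an operator Hilbert system in the hypothesis of that proposition, which holds by Theorem \ref{tMm1} (it is finite dimensional, and $\mathfrak{C}_{o}$ is its unital quantum cone), and that one may legitimately take $H=K$ there since nothing in the statement forbids it.
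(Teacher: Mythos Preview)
Your proposal is correct and follows essentially the same approach as the paper's own proof: set $H=\ell^{2}(2)$, $T=\operatorname{id}$, apply Proposition \ref{propFDC1} to obtain $\min\mathfrak{c}_{u}\subseteq\max\mathfrak{c}_{u}$, and combine with the general reverse inclusion. The paper's version is simply more terse, omitting the explicit verification that $\operatorname{id}$ is unital positive and the citation of Theorem \ref{tMm1}.
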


\begin{proof}
Put $H=\ell^{2}\left(  2\right)  $ and $T=\operatorname{id}$. Using
Proposition \ref{propFDC1}, we derive that $\min\mathfrak{c}_{u}=T^{\left(
\infty\right)  }\left(  \min\mathfrak{c}_{u}\right)  \subseteq\max
\mathfrak{c}_{u}\subseteq\min\mathfrak{c}_{u}$, that is, $\min\mathfrak{c}%
_{u}=\max\mathfrak{c}_{u}$.
\end{proof}

\begin{corollary}
Every unital positive mapping $T:C\left(  X\right)  \rightarrow\ell^{2}\left(
2\right)  $ is a separable morphism automatically.
\end{corollary}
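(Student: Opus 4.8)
The plan is to read the assertion off from the correspondence between unital positive maps $C(X)\to(H,e)$ and $H$-supports on $X$, together with the characterization of separable morphisms by maximal supports. First I would fix a hermitian basis $F$ for $H=\ell^{2}(2)$ containing the unit $e$. Since $\dim\ell^{2}(2)=2$, the basis $F$ has exactly two elements, so $F\setminus\{e\}$ is a singleton, say $F=\{e,g\}$.

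Next, given a unital positive mapping $T:C(X)\to(\ell^{2}(2),e)$, Proposition \ref{lemCRT1} supplies a (unique) probability measure $\mu$ on $X$ and an $H$-support $k=\{k_{f}:f\in F\}\subseteq\operatorname{ball}L^{\infty}(X,\mu)_{h}$ with $Tv=\sum_{f}(v,k_{f})f$ and $k_{e}=1$ the constant function. The sole off-diagonal member of $k$ is $k_{g}$, and the condition $k_{g}\in\operatorname{ball}L^{\infty}(X,\mu)_{h}$ says precisely that $|k_{g}|\le 1$ $\mu$-almost everywhere, hence $k_{g}^{2}\le 1$ in $L^{\infty}(X,\mu)$. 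Therefore $\sum_{f\ne e}k_{f}^{2}=k_{g}^{2}\le 1=k_{e}^{2}$, which is exactly the defining condition for $k$ to be a \emph{maximal} $H$-support on $X$.

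Finally, Theorem \ref{thMX1} then yields at once that $T$ is a separable morphism. I do not anticipate any obstacle here: all the substance is already contained in Proposition \ref{lemCRT1} and Theorem \ref{thMX1}, and the decisive simplification is that $\ell^{2}(2)$ has only a single basis vector besides $e$, so there is just one support function off the unit and it automatically lies in $\operatorname{ball}L^{\infty}(X,\mu)$ — this forces maximality of the support for free, with no further computation.
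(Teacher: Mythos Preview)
Your argument is correct and follows essentially the same route as the paper: obtain the $H$-support $k$ via Proposition~\ref{lemCRT1}, observe that with only one basis element $g\neq e$ the condition $k_{g}\in\operatorname{ball}L^{\infty}(X,\mu)_{h}$ forces $k_{g}^{2}\le 1=k_{e}^{2}$ so that $k$ is maximal, and conclude separability by Theorem~\ref{thMX1}. The paper's own proof is the same in substance, citing Proposition~\ref{propFDC1} and Theorem~\ref{tCRT23} in place of your Proposition~\ref{lemCRT1} and Theorem~\ref{thMX1}; your choice of references is in fact the cleaner one for this particular implication.
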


\begin{proof}
Based on Proposition \ref{propFDC1}, we conclude that the support $k$ of $T$
is given by Borel functions $k_{1}$ and $k_{2}$ from $\operatorname{ball}%
L^{\infty}\left(  X,\mu\right)  $ such that $k_{1}=1$ and $\left\vert
k_{2}\right\vert \leq1$ (or $k_{2}^{2}\leq1$), that is, $k$ is a maximal
support on $X$. By Theorem \ref{tCRT23}, $T$ is a separable morphism.
\end{proof}

\subsection{The operator Hilbert system $HS_{n}$}

Now consider the Hilbert space $HS_{n}$ of all Hilbert-Schmidt operators on
$\ell^{2}\left(  n\right)  $. Thus $HS_{n}=M_{n}$ equipped with the inner
product $\left(  x,y\right)  _{\tau}=\tau\left(  xy^{\ast}\right)  $, $x,y\in
M_{n}$, where $\tau$ is the normalized trace on $M_{n}$. In this case,
$\left\Vert x\right\Vert _{2}=\left(  x,x\right)  _{\tau}^{1/2}=\tau\left(
\left\vert x\right\vert ^{2}\right)  ^{1/2}$ is the Hilbert-Schmidt norm,
$\left\Vert x\right\Vert _{2}\leq\left\Vert x\right\Vert \leq\sqrt
{n}\left\Vert x\right\Vert _{2}$, $x\in M_{n}$, and $\tau\left(  e\right)
=1$. Moreover, $\left(  HS_{n}\right)  _{h}^{e}=\left\{  x_{0}\in\left(
M_{n}\right)  _{h}:\tau\left(  x_{0}\right)  =\left(  x_{0},e\right)  _{\tau
}=0\right\}  =\left(  HS_{n}\right)  _{h}\cap\ker\tau$ and every hermitian $x$
admits a unique orthogonal expansion $x=x_{0}+\tau\left(  x\right)  e$. In
particular,
\[
HS_{n}^{+}=\left\{  x\in\left(  HS_{n}\right)  _{h}:x=x_{0}+\tau\left(
x\right)  e,\left\Vert x_{0}\right\Vert _{2}\leq\tau\left(  x\right)
\right\}
\]
is the unital, separated cone of the operator Hilbert system $HS_{n}$ called
\textit{an operator Hilbert-Schmidt system.}

\begin{proposition}
The equality $M_{n}^{+}=HS_{n}^{+}$ holds for $n<3$.
\end{proposition}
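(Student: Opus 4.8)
The plan is to reduce everything to an elementary eigenvalue computation, since both $M_n^+$ and $HS_n^+$ are unitarily invariant finite dimensional cones. The case $n=1$ is immediate: here $M_1=\mathbb{C}$, the normalized trace is the identity map, every hermitian $x$ has traceless part $x_0=0$, and the defining condition $\|x_0\|_2\le\tau(x)$ collapses to $x\ge0$; thus $M_1^+=HS_1^+=\mathbb{R}_+$. So all the content lies in the case $n=2$.

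For $n=2$ I would first record that $HS_2^+$ is invariant under conjugation $x\mapsto uxu^\ast$ by a unitary $u\in M_2$: indeed $\tau(uxu^\ast)=\tau(x)$ and $\|uxu^\ast\|_2=\|x\|_2$, and conjugation fixes $e=I_2$ and preserves $\ker\tau$, hence carries the orthogonal decomposition $x=x_0+\tau(x)e$ to the corresponding decomposition of $uxu^\ast$. Since $M_2^+$ is obviously unitarily invariant as well, it suffices to compare the two cones on diagonal hermitian matrices $x=\operatorname{diag}(\lambda_1,\lambda_2)$ with real eigenvalues $\lambda_1,\lambda_2$.

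The key step is then a direct computation. For such $x$ one has $\tau(x)=(\lambda_1+\lambda_2)/2$ and $x_0=x-\tau(x)e=\operatorname{diag}\bigl((\lambda_1-\lambda_2)/2,(\lambda_2-\lambda_1)/2\bigr)$, so $\|x_0\|_2^2=\tau(x_0^2)=(\lambda_1-\lambda_2)^2/4$, i.e. $\|x_0\|_2=|\lambda_1-\lambda_2|/2$. Hence $x\in HS_2^+$ iff $|\lambda_1-\lambda_2|\le\lambda_1+\lambda_2$, which, on unfolding the absolute value, says exactly $\lambda_2-\lambda_1\le\lambda_1+\lambda_2$ and $\lambda_1-\lambda_2\le\lambda_1+\lambda_2$, that is, $\lambda_1\ge0$ and $\lambda_2\ge0$. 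The latter is precisely the condition $x\in M_2^+$, so $HS_2^+=M_2^+$, completing the proof.

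There is no real obstacle here; the only points requiring a little care are verifying the unitary invariance of $HS_2^+$ (so that one may diagonalize) and keeping track of the normalization of $\tau$ in the computation of $\|x_0\|_2$. It is also worth noting that the computation exhibits the sharpness of the hypothesis $n<3$: for a rank-one projection $p\in M_n$ one has $\tau(p)=1/n$ and $\|p-\tau(p)e\|_2=\sqrt{n-1}/n>\tau(p)$ as soon as $n\ge3$, so $p\notin HS_n^+$ and $M_n^+\ne HS_n^+$ for every $n\ge3$.
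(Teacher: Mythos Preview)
Your proof is correct and rests on the same eigenvalue idea as the paper, but is organized a bit differently. The paper treats the two inclusions separately: for $M_2^+\subseteq HS_2^+$ it uses the eigenvalues of $x_0$ to get $\|x_0\|_2\le\|x_0\|\le\tau(x)$; for $HS_2^+\subseteq M_2^+$ it writes a generic traceless hermitian as $x_0=\bigl(\begin{smallmatrix}a&b\\ b^\ast&-a\end{smallmatrix}\bigr)$ and computes directly that $\|x_0\|=\|x_0\|_2=\sqrt{a^2+|b|^2}$, so $\|x_0\|\le\tau(x)$ forces $x\ge0$. Your unitary-invariance reduction to the diagonal case compresses both directions into the single equivalence $|\lambda_1-\lambda_2|\le\lambda_1+\lambda_2\iff\lambda_1,\lambda_2\ge0$, which is slightly more streamlined. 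Your rank-one-projection counterexample for $n\ge3$ is also tidier than the paper's ad hoc $3\times3$ matrix and handles all $n\ge3$ uniformly.
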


\begin{proof}
Since the equality is trivial for $n=1$ we need just to look at the case of
$n=2$. Take $x\in M_{2}^{+}$. Then $\tau\left(  x\right)  \geq0$ and
$x=x_{0}+\tau\left(  x\right)  e$ with $x_{0}\in\left(  HS_{2}\right)
_{h}^{e}$. If $\lambda_{1}$ and $\lambda_{2}$ are (real) eigenvalues of
$x_{0}$, then $\lambda_{1}+\lambda_{2}=2\tau\left(  x_{0}\right)  =0$, that
is, $\lambda_{1}\geq0\geq\lambda_{2}=-\lambda_{1}$. But $x_{0}\geq-\tau\left(
x\right)  e$, thereby $\lambda_{2}+\tau\left(  x\right)  \geq0$ or
$\lambda_{1}\leq\tau\left(  x\right)  $. It follows that $-\tau\left(
x\right)  e\leq x_{0}\leq\tau\left(  x\right)  e$ and $\left\Vert
x_{0}\right\Vert _{2}\leq\left\Vert x_{0}\right\Vert \leq\tau\left(  x\right)
$. The latter means that $x\in HS_{2}^{+}$. Thus $M_{2}^{+}\subseteq
HS_{2}^{+}$.

Conversely, take $x\in HS_{2}^{+}$ with its expansion $x=x_{0}+\tau\left(
x\right)  e$, $x_{0}\in\left(  HS_{2}\right)  _{h}^{e}$. Since $\tau\left(
x_{0}\right)  =0$, we obtain that
\[
x_{0}=\left[
\begin{array}
[c]{cc}%
a & b\\
b^{\ast} & -a
\end{array}
\right]  \text{ with }a\in\mathbb{R}\text{ and }b\in\mathbb{C}\text{.}%
\]
Note that $\left\Vert x_{0}\right\Vert =\left\Vert x_{0}^{2}\right\Vert
^{1/2}=\sqrt{a^{2}+\left\vert b\right\vert ^{2}}$ and $\left\Vert
x_{0}\right\Vert _{2}=\tau\left(  x_{0}^{2}\right)  ^{1/2}=\left(
2^{-1}\left(  2a^{2}+2\left\vert b\right\vert ^{2}\right)  \right)
^{1/2}=\sqrt{a^{2}+\left\vert b\right\vert ^{2}}=\left\Vert x_{0}\right\Vert
$. It follows that $x_{0}\in\left(  M_{n}\right)  _{h}\ $and $\left\Vert
x_{0}\right\Vert \leq\tau\left(  x\right)  $, which means that $-\tau\left(
x\right)  e\leq x_{0}\leq\tau\left(  x\right)  e$. The latter in turn implies
that $x=x_{0}+\tau\left(  x\right)  e\geq0$, that is, $x\in M_{2}^{+}$. Whence
$M_{2}^{+}=HS_{2}^{+}$.
\end{proof}

\begin{remark}
The equality $M_{n}^{+}=HS_{n}^{+}$ fails to be true for $n\geq3$. For
example, take
\[
x_{0}=\left(  1/\sqrt{2}\right)  \left[
\begin{array}
[c]{ccc}%
1 & 0 & 1\\
0 & -1 & 1\\
1 & 1 & 0
\end{array}
\right]  \in\left(  HS_{3}\right)  _{h}^{e}.
\]
Then $-\sqrt{3/2}$, $0$ and $\sqrt{3/2}$ are eigenvalues of $x_{0}$ and
$\left\Vert x_{0}\right\Vert _{2}=1<\sqrt{3/2}=\left\Vert x_{0}\right\Vert $.
It follows that $x=x_{0}+e\in HS_{3}^{+}$, whereas $x\notin M_{3}^{+}$, for
$x$ admits a negative eigenvalue $1-\sqrt{3/2}$.
\end{remark}

\subsection{Operator Hilbert systems and entanglement breaking mappings}

Let $\mathcal{V}$ be an operator system and let $\mathfrak{C}$ be a
quantization of its cone $\mathcal{V}_{+}$ of positive elements. For every $n$
the quantum cone $\mathfrak{C}$ defines a unital, closed, separated cone
$\mathfrak{C\cap}M_{n}\left(  \mathcal{V}\right)  $ in $M_{n}\left(
\mathcal{V}\right)  $, whose state space in $M_{n}\left(  \mathcal{V}^{\ast
}\right)  $ is denoted by $S_{n}\left(  \mathfrak{C}\right)  $. Thus
$S_{n}\left(  \mathfrak{C}\right)  =S\left(  \mathfrak{C\cap}M_{n}\left(
\mathcal{V}\right)  \right)  $. These state spaces $S_{n}\left(
\mathfrak{C}\right)  $ in turn define the state space $S\left(  \mathfrak{C}%
\right)  $ of $\mathfrak{C}$ on $\mathcal{V}^{\ast}$.

Now let $\mathcal{W}$ be another operator system with its unit $e^{\prime}$
and a quantization $\mathfrak{K}$ of $\mathcal{W}_{+}$. Thus $\left(
\mathcal{V},\mathfrak{C}\right)  $ and $\left(  \mathcal{W},\mathfrak{K}%
\right)  $ are quantum systems. Consider a matrix (or completely) positive
mapping $\varphi:\left(  \mathcal{V},\mathfrak{C}\right)  \rightarrow\left(
\mathcal{W},\mathfrak{K}\right)  $, that is, $\varphi^{\left(  \infty\right)
}\left(  \mathfrak{C}\right)  \subseteq\mathfrak{K}$. In particular,
$\varphi\left(  \mathcal{V}_{+}\right)  =\varphi\left(  \mathfrak{C\cap
}\mathcal{V}\right)  \subseteq\mathfrak{K\cap}\mathcal{W=W}_{+}$. Hence
$\varphi$ is positive. It is well known \cite[5.1.1]{ER} that $\varphi$ is
completely bounded. Moreover, $\left(  \varphi^{\ast}\right)  ^{\left(
\infty\right)  }\left(  S\left(  \mathfrak{K}\right)  \right)  \subseteq
\mathbb{R}_{+}S\left(  \mathfrak{C}\right)  $, where $\mathbb{R}_{+}S\left(
\mathfrak{C}\right)  $ indicates to the quantum set of all positive
functionals on the matrix spaces. Indeed,%
\[
\left\langle \mathfrak{C\cap}M_{n}\left(  \mathcal{V}\right)  ,\left(
\varphi^{\ast}\right)  ^{\left(  n\right)  }\left(  s\right)  \right\rangle
=\left\langle \varphi^{\left(  n\right)  }\left(  \mathfrak{C\cap}M_{n}\left(
\mathcal{V}\right)  \right)  ,s\right\rangle \subseteq\left\langle
\mathfrak{K\cap}M_{n}\left(  \mathcal{W}\right)  ,s\right\rangle \geq0
\]
for all $s\in S_{n}\left(  \mathfrak{K}\right)  $. If $\varphi$ is unital
(that is, $\varphi\left(  e\right)  =e^{\prime}$) then $\left\langle e^{\oplus
n},\left(  \varphi^{\ast}\right)  ^{\left(  n\right)  }\left(  s\right)
\right\rangle =\left\langle \varphi^{\left(  n\right)  }\left(  e^{\oplus
n}\right)  ,s\right\rangle =\left\langle e^{\prime\oplus n},s\right\rangle
=1$, which means $\left(  \varphi^{\ast}\right)  ^{\left(  n\right)  }\left(
s\right)  \in S_{n}\left(  \mathfrak{C}\right)  $ for every $s\in S_{n}\left(
\mathfrak{K}\right)  $. Thus $\left(  \varphi^{\ast}\right)  ^{\left(
\infty\right)  }\left(  S\left(  \mathfrak{K}\right)  \right)  \subseteq
S\left(  \mathfrak{C}\right)  $ whenever $\varphi$ is a morphism.

A linear mapping $\varphi:\mathcal{V}\rightarrow\mathcal{W}$ of operator
systems is called \textit{an entanglement breaking }if $\left(  \varphi^{\ast
}\right)  ^{\left(  \infty\right)  }\left(  S\left(  M\left(  \mathcal{W}%
\right)  _{+}\right)  \right)  \subseteq\mathbb{R}_{+}S\left(  \mathfrak{C}%
\right)  $ for every quantization $\mathfrak{C}$ of $\mathcal{V}_{+}$, where
$\varphi^{\ast}$ indicates to the algebraic dual mapping to $\varphi$. An
entanglement breaking mapping $\varphi:\mathcal{V}\rightarrow\mathcal{W}$ is
bounded automatically. Moreover, if $\varphi:\mathcal{V}\rightarrow\left(
\mathcal{W},\max\mathcal{W}_{+}\right)  $ is an entanglement breaking mapping
then so is $\varphi:\mathcal{V\rightarrow W}$. Indeed, by its very definition,
$\left(  \varphi^{\ast}\right)  ^{\left(  \infty\right)  }\left(  S\left(
\max\mathcal{W}_{+}\right)  \right)  \subseteq\mathbb{R}_{+}S\left(
\mathfrak{C}\right)  $ for every quantization $\mathfrak{C}$ of $\mathcal{V}%
_{+}$. But $\max\mathcal{W}_{+}\subseteq M\left(  \mathcal{W}\right)  _{+}$,
therefore $S\left(  M\left(  \mathcal{W}\right)  _{+}\right)  \subseteq
S\left(  \max\mathcal{W}_{+}\right)  $. In particular, $\left(  \varphi^{\ast
}\right)  ^{\left(  \infty\right)  }\left(  S\left(  M\left(  \mathcal{W}%
\right)  _{+}\right)  \right)  \subseteq\mathbb{R}_{+}S\left(  \mathfrak{C}%
\right)  $ for every quantization $\mathfrak{C}$ of $\mathcal{V}_{+}$, which
means that $\varphi:\mathcal{V\rightarrow W}$ is an entanglement breaking mapping.

Now assume that $\mathcal{H}$ is an operator Hilbert system with the unit $e$
and the related unital cone $\mathcal{H}_{+}$ (the notation instead of
$\mathfrak{c}_{e}$). In this case, $\mathcal{H}^{\ast}=\overline{\mathcal{H}}$
is an operator system with the unit $\overline{e}$ and the cone $\overline
{\mathcal{H}}_{+}$.

\begin{proposition}
Let $\mathcal{M}$ be either a finite-dimensional von Neumann algebra or
another operator Hilbert system, and let $\varphi:\mathcal{H\rightarrow M}$ be
a linear mapping. Then $\varphi$ is an entanglement breaking mapping iff
$\varphi^{\ast}:\mathcal{M}^{\ast}\rightarrow\left(  \overline{\mathcal{H}%
},\max\overline{\mathcal{H}}_{+}\right)  $ is matrix positive. Similarly,
$\varphi^{\ast}:\overline{\mathcal{H}}\rightarrow\mathcal{M}^{\ast}$ is an
entanglement breaking mapping iff $\varphi:\mathcal{M}\rightarrow\left(
\mathcal{H},\max\mathcal{H}_{+}\right)  $ is matrix positive.
\end{proposition}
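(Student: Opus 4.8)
The plan is to prove the two equivalences in parallel, using the duality pairings $\left(\mathcal{H},\overline{\mathcal{H}}\right)$ and $\left(\mathcal{M},\mathcal{M}^{\ast}\right)$ together with the fundamental identity $\left\langle\left\langle\varphi^{\left(\infty\right)}\left(x\right),y\right\rangle\right\rangle=\left\langle\left\langle x,\left(\varphi^{\ast}\right)^{\left(\infty\right)}\left(y\right)\right\rangle\right\rangle$ from (\ref{conj1}). First I would fix what the two key objects mean. On the one hand, ``$\varphi:\mathcal{H}\rightarrow\mathcal{M}$ is entanglement breaking'' means, by definition in the excerpt, that $\left(\varphi^{\ast}\right)^{\left(\infty\right)}\left(\mathcal{S}\left(M\left(\mathcal{M}^{\ast\ast}\right)_{+}\right)\right)\subseteq\mathbb{R}_{+}\mathcal{S}\left(\mathfrak{C}\right)$ for every quantization $\mathfrak{C}$ of $\mathcal{H}_{+}$ — and since $\max\mathcal{H}_{+}$ is the smallest quantization and its state space $\mathcal{S}\left(\max\mathcal{H}_{+}\right)$ is the largest, it suffices to track the inclusion into $\mathbb{R}_{+}\mathcal{S}\left(\max\mathcal{H}_{+}\right)$. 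On the other hand, ``$\varphi^{\ast}:\mathcal{M}^{\ast}\rightarrow\left(\overline{\mathcal{H}},\max\overline{\mathcal{H}}_{+}\right)$ is matrix positive'' means $\left(\varphi^{\ast}\right)^{\left(\infty\right)}\left(M\left(\mathcal{M}^{\ast}\right)_{+}\right)\subseteq\max\overline{\mathcal{H}}_{+}$, where $M\left(\mathcal{M}^{\ast}\right)_{+}$ is the canonical quantum cone of the dual (for $\mathcal{M}$ a finite-dimensional von Neumann algebra this is the predual cone, for $\mathcal{M}$ an operator Hilbert system it is $\overline{\mathfrak{C}}_{o}$ via Theorem \ref{tMm1}).

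\textbf{The core computation.}
The heart of the argument is the identification $\max\overline{\mathcal{H}}_{+}=\mathcal{S}\left(\max\mathcal{H}_{+}\right)^{\boxdot}$ together with Theorem \ref{tmax32}, which gives $\left(\max\overline{\mathcal{H}}_{+}\right)^{\boxdot}=\min\mathcal{H}_{+}$ and dually. Here is the chain I would run. Suppose $\varphi^{\ast}:\mathcal{M}^{\ast}\rightarrow\left(\overline{\mathcal{H}},\max\overline{\mathcal{H}}_{+}\right)$ is matrix positive, i.e. $\left(\varphi^{\ast}\right)^{\left(\infty\right)}\left(M\left(\mathcal{M}^{\ast}\right)_{+}\right)\subseteq\max\overline{\mathcal{H}}_{+}$. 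The key point is that $M\left(\mathcal{M}^{\ast}\right)_{+}$ contains $\mathcal{S}\left(M\left(\mathcal{M}\right)_{+}\right)$ (states are positive functionals normalized at the unit), so $\left(\varphi^{\ast}\right)^{\left(\infty\right)}\left(\mathcal{S}\left(M\left(\mathcal{M}\right)_{+}\right)\right)\subseteq\max\overline{\mathcal{H}}_{+}$. But $\max\overline{\mathcal{H}}_{+}=\mathcal{S}\left(\max\mathcal{H}_{+}\right)^{\boxdot}$, and every positive-functional element of $\max\overline{\mathcal{H}}_{+}$ is (up to a positive scalar, after normalizing at $\overline{e}$) a genuine element of $\mathcal{S}\left(\max\mathcal{H}_{+}\right)$; this is exactly where the hypothesis that $\varphi$ carries $e$-behavior correctly is used, via the identity $\left\langle e^{\oplus n},\left(\varphi^{\ast}\right)^{\left(n\right)}\left(s\right)\right\rangle=\left\langle\varphi^{\left(n\right)}\left(e^{\oplus n}\right),s\right\rangle$. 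This yields $\left(\varphi^{\ast}\right)^{\left(\infty\right)}\left(\mathcal{S}\left(M\left(\mathcal{M}\right)_{+}\right)\right)\subseteq\mathbb{R}_{+}\mathcal{S}\left(\max\mathcal{H}_{+}\right)$, i.e. entanglement breaking. Conversely, if $\varphi$ is entanglement breaking, one passes back through the same identifications: $\left(\varphi^{\ast}\right)^{\left(\infty\right)}$ maps states to (positive multiples of) states in $\mathcal{S}\left(\max\mathcal{H}_{+}\right)=\max\overline{\mathcal{H}}_{+}\cap M\left(\overline{\mathcal{H}}\right)_{e}$, and since $M\left(\mathcal{M}^{\ast}\right)_{+}=\mathbb{R}_{+}\mathcal{S}\left(M\left(\mathcal{M}\right)_{+}\right)^{c}$ (the cone is generated by its states — true both in the finite-dimensional von Neumann case and, by Theorem \ref{tMm1} and Corollary \ref{cormincbar}, in the operator Hilbert case), applying $\left(\varphi^{\ast}\right)^{\left(\infty\right)}$ and using that $\max\overline{\mathcal{H}}_{+}=\left(\mathcal{S}\left(\max\mathcal{H}_{+}\right)^{c}\right)^{-}$ is closed under the quantum-cone operations gives $\left(\varphi^{\ast}\right)^{\left(\infty\right)}\left(M\left(\mathcal{M}^{\ast}\right)_{+}\right)\subseteq\max\overline{\mathcal{H}}_{+}$.

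\textbf{The second equivalence by symmetry, and the obstacle.}
For the statement about $\varphi^{\ast}:\overline{\mathcal{H}}\rightarrow\mathcal{M}^{\ast}$ and $\varphi:\mathcal{M}\rightarrow\left(\mathcal{H},\max\mathcal{H}_{+}\right)$, I would apply the first equivalence with the roles of source and target interchanged, using that $\overline{\overline{\mathcal{H}}}=\mathcal{H}$ and $\left(\mathcal{M}^{\ast}\right)^{\ast}=\mathcal{M}$ (reflexivity holds in both cases — finite-dimensional, or a Hilbert space which is reflexive), together with Lemma \ref{lDualM} to transport the polar inclusions, and Theorem \ref{tmax32} again in the form $\left(\min\overline{\mathfrak{c}}\right)^{\boxdot}=\max\mathfrak{c}$. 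The bidual mapping $\varphi^{\ast\ast}$ must be identified with $\varphi$, which requires the reflexivity just noted and the fact that $\mathcal{M}$ is an operator system (so $\max\mathcal{M}_{+}$ behaves well) — hence the hypothesis restricting $\mathcal{M}$ to the two listed cases. I expect the main obstacle to be the careful bookkeeping at the level of $M\left(\mathcal{M}^{\ast}\right)_{+}$ versus $\mathcal{S}\left(M\left(\mathcal{M}\right)_{+}\right)$: one must verify that ``$\left(\varphi^{\ast}\right)^{\left(\infty\right)}$ sends states into $\mathbb{R}_{+}\mathcal{S}\left(\max\mathcal{H}_{+}\right)$'' is genuinely equivalent to ``$\left(\varphi^{\ast}\right)^{\left(\infty\right)}$ sends the whole cone $M\left(\mathcal{M}^{\ast}\right)_{+}$ into $\max\overline{\mathcal{H}}_{+}$'', i.e. that taking positive combinations of images of states recovers the image of the cone and stays inside the closed quantum cone $\max\overline{\mathcal{H}}_{+}$. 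This is precisely the content of the unital bipolar theorem (Theorem \ref{tUBP}) combined with the fact, recorded after Theorem \ref{tmax32}, that $\max\overline{\mathcal{H}}_{+}=\mathcal{S}\left(\mathcal{H}_{+}\right)^{\boxdot\boxdot}$ is the $\mathfrak{s}$-closed quantum cone generated by $\mathcal{S}\left(\mathcal{H}_{+}\right)$; the finite-dimensionality or the operator-Hilbert structure of $\mathcal{M}$ is what guarantees the corresponding generation property on the $\mathcal{M}$-side.
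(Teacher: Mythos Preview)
Your argument has a genuine logical error at the outset. You write that since $\max\mathcal{H}_{+}$ is the smallest quantization and $S(\max\mathcal{H}_{+})$ is the largest state space, ``it suffices to track the inclusion into $\mathbb{R}_{+}S(\max\mathcal{H}_{+})$''. This is backwards. Entanglement breaking requires $(\varphi^{\ast})^{(\infty)}(S(M(\mathcal{M})_{+}))\subseteq\mathbb{R}_{+}S(\mathfrak{C})$ for \emph{every} quantization $\mathfrak{C}$; the binding constraint is therefore the \emph{smallest} right-hand side, namely $\mathbb{R}_{+}S(\min\mathcal{H}_{+})$, not the largest. Since $\mathbb{R}_{+}S_{n}(\mathfrak{C})=(\mathfrak{C}^{\boxdot})_{n}$ for a unital quantum cone (Theorem~\ref{t1}), the intersection over all $\mathfrak{C}$ is $(\min\mathcal{H}_{+})^{\boxdot}=\max\overline{\mathcal{H}}_{+}$ by Theorem~\ref{tmax32}. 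So the correct target cone does appear---but your route to it, via $S(\max\mathcal{H}_{+})$, does not reach it. Relatedly, the identity you invoke, $\max\overline{\mathcal{H}}_{+}=\mathcal{S}(\max\mathcal{H}_{+})^{\boxdot}$, is false: the Unital Bipolar Theorem gives $\mathcal{S}(\max\mathcal{H}_{+})^{\boxdot}=\max\mathcal{H}_{+}$, a cone on $\mathcal{H}$, not on $\overline{\mathcal{H}}$.

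The paper's proof avoids all of this state-space bookkeeping by quoting the known reformulation from \cite{PaulTT}: $\varphi$ is entanglement breaking iff $\varphi:(\mathcal{H},\min\mathcal{H}_{+})\rightarrow\mathcal{M}$ is matrix positive. Once you have that, the first equivalence is a two-line application of Lemma~\ref{lDualM} and Theorem~\ref{tmax32}: $\varphi^{(\infty)}(\min\mathcal{H}_{+})\subseteq M(\mathcal{M})_{+}$ iff $(\varphi^{\ast})^{(\infty)}(M(\mathcal{M})_{+}^{\boxdot})\subseteq(\min\mathcal{H}_{+})^{\boxdot}=\max\overline{\mathcal{H}}_{+}$, and $M(\mathcal{M})_{+}^{\boxdot}=M(\mathcal{M}^{\ast})_{+}$. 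The second equivalence follows by the same duality with the roles swapped. Your attempt to reconstruct the equivalence directly from the state-space definition, even after correcting the $\min$/$\max$ confusion, would still need to show that the cone $M(\mathcal{M}^{\ast})_{+}$ is recovered from the state spaces $S(M(\mathcal{M})_{+})$ under $(\varphi^{\ast})^{(\infty)}$---essentially reproving the \cite{PaulTT} characterization---and the ``obstacle'' paragraph you wrote acknowledges but does not resolve this.
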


\begin{proof}
For brevity we assume that $\mathcal{M}$ is a finite-dimensional von Neumann
algebra. The case of an operator Hilbert system $\mathcal{M}$ can be proved in
a very similar way. It is known (see \cite{PaulTT}) that $\varphi$ is an
entanglement breaking mapping iff $\varphi:\left(  \mathcal{H},\min
\mathcal{H}_{+}\right)  \rightarrow\mathcal{M}$ is matrix positive, that is,
$\varphi^{\left(  \infty\right)  }\left(  \min\mathcal{H}_{+}\right)
\subseteq M\left(  \mathcal{M}\right)  _{+}$. Using Lemma \ref{lDualM} and
Theorem \ref{tmax32}, we have
\[
\left(  \varphi^{\ast}\right)  ^{\left(  \infty\right)  }\left(  T\left(
\mathcal{M}_{\ast}\right)  _{+}\right)  =\left(  \varphi^{\ast}\right)
^{\left(  \infty\right)  }\left(  M\left(  \mathcal{M}\right)  _{+}^{\boxdot
}\right)  \subseteq\left(  \min\mathcal{H}_{+}\right)  ^{\boxdot}%
=\max\overline{\mathcal{H}}_{+},
\]
that is, $\varphi^{\ast}:\mathcal{M}^{\ast}\rightarrow\left(  \overline
{\mathcal{H}},\max\overline{\mathcal{H}}_{+}\right)  $ is matrix positive.
Conversely, if the latter mapping is matrix positive then $\varphi^{\left(
\infty\right)  }\left(  \left(  \max\overline{\mathcal{H}}_{+}\right)
^{\boxdot}\right)  \subseteq M\left(  \mathcal{M}^{\ast}\right)  _{+}%
^{\boxdot}$ thanks to Lemma \ref{lDualM}. But $\left(  \max\overline
{\mathcal{H}}_{+}\right)  ^{\boxdot}=\min\mathcal{H}_{+}$ and $M\left(
\mathcal{M}^{\ast}\right)  _{+}^{\boxdot}=M\left(  \mathcal{M}\right)  _{+}$
again by Theorem \ref{tmax32}. Hence $\varphi^{\left(  \infty\right)  }\left(
\min\mathcal{H}_{+}\right)  \subseteq M\left(  \mathcal{M}\right)  _{+}$,
which means that $\varphi:\left(  \mathcal{H},\min\mathcal{H}_{+}\right)
\rightarrow\mathcal{M}$ is matrix positive.

Finally, $\varphi^{\ast}:\left(  \overline{\mathcal{H}},\min\overline
{\mathcal{H}}_{+}\right)  \rightarrow\mathcal{M}^{\ast}$ is matrix positive
iff $\varphi:\mathcal{M}\rightarrow\left(  \mathcal{H},\max\mathcal{H}%
_{+}\right)  $ is matrix positive thanks to Lemma \ref{lDualM} and Theorem
\ref{tmax32}.
\end{proof}

\section{Appnedix: The unital measures on the state space\label{secAPPX}}

In this section we analyse so called measured state space of an ordered
Hilbert space $H$ to generate positive $L_{2}$-representations of $H$.

\subsection{The canonical $\ast$-representation $H\rightarrow C\left(
S\left(  \mathfrak{c}\right)  \right)  $\label{subsecHCS}}

Put $X=S\left(  \mathfrak{c}\right)  $ equipped with the weak$^{\ast}$
topology $\sigma\left(  \overline{H},H\right)  $. Thus $X$ is a compact
Hausdorff topological space and there is a canonical $\ast$-representation
$\kappa:H\rightarrow C\left(  X\right)  $, $\kappa\left(  \zeta\right)
=\zeta\left(  \cdot\right)  $, where $\zeta\left(  t\right)  =\left\langle
\zeta,t\right\rangle $, $t\in X$. If $\zeta\left(  \cdot\right)  =0$ then
$\left\langle \zeta,\overline{\mathfrak{c}}\right\rangle =\left\langle
\zeta,\mathbb{R}_{+}S\left(  \mathfrak{c}\right)  \right\rangle =\mathbb{R}%
_{+}\left\langle \zeta,X\right\rangle =\left\{  0\right\}  $ by Corollary
\ref{cormincbar}, and $\left\langle \zeta,\overline{H}_{h}\right\rangle
=\left\langle \zeta,\overline{\mathfrak{c}}\right\rangle -\left\langle
\zeta,\overline{\mathfrak{c}}\right\rangle =\left\{  0\right\}  $, which in
turn implies that $\left\langle \zeta,\overline{H}\right\rangle =\left\langle
\zeta,\overline{H}_{h}\right\rangle +i\left\langle \zeta,\overline{H}%
_{h}\right\rangle =\left\{  0\right\}  $, that is, $\zeta=0$. Moreover,
$\zeta\in\mathfrak{c}$ iff $\zeta\left(  \cdot\right)  \in C\left(  X\right)
_{+}$ (see Remark \ref{remDPC}), and $\kappa\left(  e\right)  =e\left(
\cdot\right)  =1$. Thus the unital $\ast$-representation $\kappa:H\rightarrow
C\left(  X\right)  $ is an order isomorphism onto its range, which means that
$H$ is realized as an operator system in $C\left(  X\right)  $, and $M\left(
H\right)  \cap M\left(  C\left(  X\right)  \right)  _{+}=M\left(  H\right)
\cap\min C\left(  X\right)  _{+}=\min\mathfrak{c}$ (see Proposition
\ref{propPTT1}, and \cite[Theorem 3.2]{PaulTT}). Further, $\left\Vert
\zeta\left(  \cdot\right)  \right\Vert _{\infty}=\sup\left\vert \zeta\left(
X\right)  \right\vert =\sup\left\vert \left\langle \zeta,S\left(
\mathfrak{c}\right)  \right\rangle \right\vert =\left\Vert \zeta\right\Vert
_{e}$ for all $\zeta\in H$. By Proposition \ref{propOKE21}, $H$ turns out to
be a complete subspace with respect to the uniform norm of $C\left(  X\right)
$, or $H$ is a norm-closed operator system in $C\left(  X\right)  $. Therefore
$\kappa^{\ast}$ is an exact quotient mapping
\[
\kappa^{\ast}:\mathcal{M}\left(  X\right)  \rightarrow\overline{H},\quad
\kappa^{\ast}\left(  \mu\right)  =\mu\cdot\kappa,\quad\operatorname{ball}%
\overline{H}=\kappa^{\ast}\left(  \operatorname{ball}\mathcal{M}\left(
X\right)  \right)  \text{,\quad}\ker\left(  \kappa^{\ast}\right)  =H^{\perp},
\]
where $H^{\perp}$ is the polar of $H$ in $C\left(  X\right)  ^{\ast}$. Note
that
\[
\left\langle \zeta,\kappa^{\ast}\left(  \mu^{\ast}\right)  \right\rangle
=\left\langle \kappa\left(  \zeta\right)  ,\mu^{\ast}\right\rangle
=\left\langle \zeta\left(  \cdot\right)  ^{\ast},\mu\right\rangle ^{\ast
}=\left\langle \zeta^{\ast}\left(  \cdot\right)  ,\mu\right\rangle ^{\ast
}=\left\langle \kappa\left(  \zeta^{\ast}\right)  ,\mu\right\rangle ^{\ast
}=\left\langle \zeta^{\ast},\kappa^{\ast}\left(  \mu\right)  \right\rangle
^{\ast}=\left\langle \zeta,\kappa^{\ast}\left(  \mu\right)  ^{\ast
}\right\rangle
\]
for all $\zeta\in H$, which means that $\kappa^{\ast}$ is $\ast$-linear. If
$\mu\in\mathcal{M}\left(  X\right)  _{+}$ then $\left\langle \zeta
,\kappa^{\ast}\left(  \mu\right)  \right\rangle =\left\langle \zeta\left(
\cdot\right)  ,\mu\right\rangle \geq0$ for all $\zeta\in\mathfrak{c}$, which
in turn implies that $\kappa^{\ast}\left(  \mu\right)  \in\overline
{\mathfrak{c}}$ thanks to Corollary \ref{cormincbar}. Thus $\kappa^{\ast}$ is
a positive mapping in the sense of $\kappa^{\ast}\left(  \mathcal{M}\left(
X\right)  _{+}\right)  \subseteq\overline{\mathfrak{c}}$. If $\mu
\in\mathcal{P}\left(  X\right)  $ then $\kappa^{\ast}\left(  \mu\right)  \in
S\left(  \mathfrak{c}\right)  $, that is, $\kappa^{\ast}\left(  \mu\right)
=s$ for some $s\in X$. We skip the bars in $\overline{s}$ for the elements of
$X$ for brevity, and we write $s=s_{0}+e\in X$ (instead of $\overline
{s}=\overline{s_{0}}+\overline{e}$) with uniquely defined $s_{0}%
\in\operatorname{ball}H_{h}^{e}$.

The closed subspace in $\mathcal{M}\left(  X\right)  $ of all atomic measures
on $X$ is denoted by $\ell^{1}\left(  X\right)  $. Take $\mu=\sum_{t\in
S}c_{t}\delta_{t}\in\ell^{1}\left(  X\right)  $ with $\sum_{t\in S}\left\vert
c_{t}\right\vert =\left\Vert \mu\right\Vert <\infty$ and a subset $S\subseteq
X$. Since $\sum_{t\in S}\left\Vert c_{t}t\right\Vert \leq\sqrt{2}\left\Vert
\mu\right\Vert <\infty$, it follows that $\overline{\eta}=\sum_{t\in S}c_{t}t$
defines an element of $\overline{H}$ with $\left\Vert \overline{\eta
}\right\Vert \leq\sqrt{2}\left\Vert \mu\right\Vert $ and
\[
\left\langle \zeta,\kappa^{\ast}\left(  \mu\right)  \right\rangle
=\left\langle \zeta\left(  \cdot\right)  ,\mu\right\rangle =\sum_{t\in S}%
c_{t}\left\langle \zeta\left(  \cdot\right)  ,\delta_{t}\right\rangle
=\sum_{t\in S}c_{t}\zeta\left(  t\right)  =\sum_{t\in S}c_{t}\left\langle
\zeta,t\right\rangle =\left\langle \zeta,\overline{\eta}\right\rangle
\]
for all $\zeta\in H$. Hence
\begin{equation}
\kappa^{\ast}\left(  \sum_{t\in S}c_{t}\delta_{t}\right)  =\sum_{t\in S}%
c_{t}t, \label{Atom11}%
\end{equation}
which means that $\kappa^{\ast}\left(  \ell^{1}\left(  X\right)  \right)
=\overline{H}$.

\begin{lemma}
\label{lemDL11}For each $\mu\in\mathcal{M}\left(  X\right)  $ there are points
$s,t\in X$, $c_{s},c_{t},c_{e}\in\mathbb{C}$ and $\nu\in H^{\perp}$ such that
\[
\mu=c_{s}\delta_{s}+c_{t}\delta_{t}+c_{e}\delta_{e}+\nu.
\]
If $\mu\in\mathcal{M}\left(  X\right)  _{h}$ then $\mu=c_{s}\delta_{s}%
+c_{e}\delta_{e}+\nu$ for some $s\in X$, $c_{s},c_{e}\in\mathbb{R}$ and
$\nu\in H^{\perp}\cap\mathcal{M}\left(  X\right)  _{h}$. If $\mu\in
\mathcal{P}\left(  X\right)  $ then $\mu=\delta_{s}+\nu$ for some $s\in X$ and
$\nu\in H^{\perp}\cap\mathcal{M}\left(  X\right)  _{h}$.
\end{lemma}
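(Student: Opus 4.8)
The plan is to exploit the quotient map $\kappa^{\ast}:\mathcal{M}(X)\to\overline{H}$ established just before the statement, together with the description of $S(\mathfrak{c})$ from Lemma~\ref{lHSc11} and formula (\ref{Atom11}). The key point is that $\overline{H}$ is a very small quotient of $\mathcal{M}(X)$: once we fix $e$, every element of $\overline{H}$ is determined by its pairing against $H$, and $H$ decomposes as $\mathbb{C}e\oplus H^{e}$ with $H^{e}_{h}$ a real Hilbert space. So given $\mu\in\mathcal{M}(X)$, I would first consider $\overline{\eta}=\kappa^{\ast}(\mu)\in\overline{H}$, write $\overline{\eta}=\overline{\eta_{0}}+c_{e}\overline{e}$ with $\overline{\eta_{0}}\in\overline{H^{e}}$ and $c_{e}=\langle e,\overline{\eta}\rangle=\langle 1,\mu\rangle$, and then produce an atomic measure supported on at most two points of $X=S(\mathfrak{c})$ with the same image under $\kappa^{\ast}$; the difference will lie in $\ker(\kappa^{\ast})=H^{\perp}$, which is exactly the $\nu$ we want.

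The construction of the atomic part goes as follows. If $\eta_{0}=0$ then $\overline{\eta}=c_{e}\overline{e}=\kappa^{\ast}(c_{e}\delta_{e})$ by (\ref{Atom11}), and we are done with $c_{s}=c_{t}=0$. Otherwise, using $S(\overline{\mathfrak{c}})=\operatorname{ball}(H_{h}^{e})+e$ and the fact (Lemma~\ref{lHSc11}) that the $H$-side state space is $\operatorname{ball}(\overline{H_{h}^{e}})+\overline{e}$, the unit vector $\|\eta_{0}\|^{-1}\eta_{0}$ paired off gives points of $X$: more precisely, for $\theta_{0}\in\operatorname{ball}(H^{e}_{h})$ the element $\overline{\theta_{0}}+\overline{e}$ lies in $X=S(\mathfrak{c})$, since $S(\mathfrak{c})=\operatorname{ball}(\overline{H^{e}_{h}})+\overline{e}$. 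Now $\overline{\eta_{0}}$ is a (complex-scalar) combination of two hermitian vectors $\operatorname{Re}\overline{\eta_{0}},\operatorname{Im}\overline{\eta_{0}}\in\overline{H^{e}_{h}}$, and each hermitian vector in $\overline{H^{e}_{h}}$ is a real multiple of a point of the form $(\overline{\theta_{0}}+\overline{e})-\overline{e}$ with $\overline{\theta_{0}}$ in the unit ball. Writing $\operatorname{Re}\overline{\eta_{0}}=a(\overline{s}-\overline{e})$ and $\operatorname{Im}\overline{\eta_{0}}=b(\overline{t}-\overline{e})$ with $s,t\in X$ and $a,b\in\mathbb{R}$, we get
\[
\overline{\eta}=a\,\overline{s}+ib\,\overline{t}+(c_{e}-a-ib)\overline{e}
=\kappa^{\ast}\!\left(a\delta_{s}+ib\delta_{t}+(c_{e}-a-ib)\delta_{e}\right)
\]
by (\ref{Atom11}). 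Setting $c_{s}=a$, $c_{t}=ib$, $c_{e}'=c_{e}-a-ib$ and $\nu=\mu-(c_{s}\delta_{s}+c_{t}\delta_{t}+c_{e}'\delta_{e})$, we have $\kappa^{\ast}(\nu)=0$, i.e. $\nu\in H^{\perp}$, which is the first assertion (renaming $c_e'$ as $c_e$).

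For the hermitian refinement, if $\mu\in\mathcal{M}(X)_{h}$ then $\kappa^{\ast}(\mu)=\overline{\eta}\in\overline{H}_{h}$ because $\kappa^{\ast}$ is $\ast$-linear, so $\overline{\eta_{0}}\in\overline{H^{e}_{h}}$ and $c_{e}\in\mathbb{R}$; hence the $\operatorname{Im}$ part vanishes, $b=0$, $t$ is irrelevant, and $\overline{\eta}=a\overline{s}+(c_{e}-a)\overline{e}$ with $a,c_{e}\in\mathbb{R}$. The leftover $\nu=\mu-(c_{s}\delta_{s}+c_{e}'\delta_{e})$ is then automatically hermitian, giving $\nu\in H^{\perp}\cap\mathcal{M}(X)_{h}$. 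Finally, if $\mu\in\mathcal{P}(X)$ then $\kappa^{\ast}(\mu)\in S(\mathfrak{c})$, so by definition $\kappa^{\ast}(\mu)=\overline{s}$ for some $s\in X$, whence $\nu=\mu-\delta_{s}\in H^{\perp}$ and $\nu$ is hermitian since both $\mu$ and $\delta_{s}$ are. I do not anticipate a genuine obstacle here; the only point requiring a little care is verifying that one can always realize a hermitian vector of $\overline{H^{e}_{h}}$ (of arbitrary norm) in the form (scalar)$\cdot(\overline{s}-\overline{e})$ with $\overline{s}\in X$ — this is immediate from $X=\operatorname{ball}(\overline{H^{e}_{h}})+\overline{e}$ by scaling — and that $\kappa^{\ast}$ actually sends $\delta_s$ to $\overline s$ for $s\in X$, which is precisely the content of (\ref{Atom11}) applied to a single Dirac mass. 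One should also note $H^{\perp}$ is closed and $\ast$-invariant (it is the kernel of the $\ast$-linear $\kappa^{\ast}$), so all the claimed membership statements for $\nu$ are legitimate.
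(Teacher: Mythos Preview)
Your proposal is correct and follows essentially the same route as the paper: both arguments compute $\overline{\eta}=\kappa^{\ast}(\mu)$, split it via $\overline{\eta}=\overline{\eta_{0}}+c_{e}\overline{e}$ and then into real and imaginary parts, realize each hermitian piece of $\overline{\eta_{0}}$ as a real scalar times $(\overline{s}-\overline{e})$ with $\overline{s}\in X=\operatorname{ball}(\overline{H_{h}^{e}})+\overline{e}$, invoke (\ref{Atom11}) to lift back to Dirac masses, and take $\nu\in\ker\kappa^{\ast}=H^{\perp}$ as the remainder. The paper just writes the hermitian step slightly more explicitly as $\overline{\eta}=\|\eta_{0}\|\bigl(\|\eta_{0}\|^{-1}\overline{\eta_{0}}+\overline{e}\bigr)+(r-\|\eta_{0}\|)\overline{e}$, which is exactly your ``scaling'' observation.
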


\begin{proof}
Put $\overline{\eta}=\kappa^{\ast}\left(  \mu\right)  $. If $\overline{\eta
}\in\overline{H}_{h}=\mathbb{R}_{+}\overline{\mathfrak{c}}-\mathbb{R}%
_{+}\overline{\mathfrak{c}}$ then $\overline{\eta}=c_{s}s-c_{t}t$ for some
$c_{s},c_{t}\in\mathbb{R}_{+}$ and $s,t\in S\left(  \mathfrak{c}\right)  $. It
follows that $\overline{\eta}=\kappa^{\ast}\left(  c_{s}\delta_{s}-c_{t}%
\delta_{t}\right)  $ thanks to (\ref{Atom11}). Actually,
\[
\overline{\eta}=\overline{\eta_{0}}+r\overline{e}=\left\Vert \eta
_{0}\right\Vert \left(  \left\Vert \eta_{0}\right\Vert ^{-1}\overline{\eta
_{0}}+\overline{e}\right)  +\left(  r-\left\Vert \eta_{0}\right\Vert \right)
\overline{e}=\kappa^{\ast}\left(  \left\Vert \eta_{0}\right\Vert \delta
_{s}+\left(  r-\left\Vert \eta_{0}\right\Vert \right)  \delta_{e}\right)  ,
\]
where $s$ represents the point $\left\Vert \eta_{0}\right\Vert ^{-1}%
\overline{\eta_{0}}+\overline{e}$ from $X$. In the case of any $\overline
{\eta}\in\overline{H}$ we have
\[
\overline{\eta}=\operatorname{Re}\overline{\eta}+i\operatorname{Im}%
\overline{\eta}=\kappa^{\ast}\left(  c_{s}\delta_{s}+c_{t}\delta_{t}%
+c_{e}\delta_{e}\right)
\]
for some $s,t\in X$ and $c_{s},c_{t},c_{e}\in\mathbb{C}$. Thus $\mu
=c_{s}\delta_{s}+c_{t}\delta_{t}+c_{e}\delta_{e}+\nu$ for some $\nu\in
H^{\perp}$. In the case of $\mu\in\mathcal{P}\left(  X\right)  $ we have
$\kappa^{\ast}\left(  \mu\right)  =s\in X$ and $\mu=\delta_{s}+\nu$ with
$\nu\in H^{\perp}$. But $\mu,\delta_{s}\in\mathcal{M}\left(  X\right)  _{h}$,
therefore $\nu\in H^{\perp}\cap\mathcal{M}\left(  X\right)  _{h}$.
\end{proof}

We say that $\mu$ is \textit{a unital measure on} $X$ if $\mu\in
\mathcal{P}\left(  X\right)  $ and $\kappa^{\ast}\left(  \mu\right)
=\overline{e}$. By Lemma \ref{lemDL11}, $\mu$ is unital iff $\mu=\delta
_{e}+\nu$ for some $\nu\in H^{\perp}\cap\mathcal{M}\left(  X\right)  _{h}$. In
particular, $\delta_{e}$ is a unital measure. An atomic probability measure
$\mu=\sum_{t\in S}c_{t}\delta_{t}\in\mathcal{P}\left(  X\right)  $ with
$c_{t}\geq0$ and $\sum_{t}c_{t}=1$ is unital iff $\sum_{t\in S}c_{t}t_{0}=0$
thanks to (\ref{Atom11}), where $t_{0}\in\operatorname{ball}H_{h}^{e}$ with
$t=t_{0}+e\in X$.

\begin{example}
\label{exmpUM32}If $F_{0}\subseteq\operatorname{ball}H_{h}^{e}$ is a finite
subset whose convex hull contains the origin, then $S=F_{0}+e\subseteq X$ is a
finite subset and $\mu=\sum_{t\in S}c_{t}\delta_{t}$ is a unital measure on
$X$ with the finite support, where $c_{t}\geq0$, $\sum_{t\in S}c_{t}=1$ and
$\sum_{t_{0}\in F_{0}}c_{t}t_{0}=0$ in $H_{h}^{e}$.
\end{example}

Notice that $X$ is identified with the subset $\delta_{X}=\left\{  \delta
_{t}:t\in X\right\}  \subseteq\ell^{1}\left(  X\right)  $ along with the
weak$^{\ast}$ continuous mapping $X\rightarrow\mathcal{M}\left(  X\right)  $,
$t\mapsto\delta_{t}$. Thus $\delta_{X}$ is a $w^{\ast}$-compact subset of
$\ell^{1}\left(  X\right)  $ being a homeomorphic copy of $X$. Further, the
mapping $\kappa^{\ast}:\mathcal{M}\left(  X\right)  \rightarrow\overline{H}$
implements a bijection of $\delta_{X}$ onto $X$, for the equality $\delta
_{s}=\delta_{t}$ over $H$ implies that $s$ and $t$ are the same states of the
cone $\mathfrak{c}$, that is, $s=t$ in $X$. Since $\left\vert \left\langle
\zeta,t\right\rangle \right\vert =\left\vert \left\langle \zeta\left(
\cdot\right)  ,\delta_{t}\right\rangle \right\vert $ for all $\zeta\in H$ and
$t\in X$, it follows that $\kappa^{\ast}|\delta_{X}:\delta_{X}\rightarrow X$
is a weak$^{\ast}$ continuous mapping of compact spaces. Thus $\kappa^{\ast
}|\delta_{X}$ is a homeomorphic inverse of the mapping $X\rightarrow\delta
_{X}$, $t\mapsto\delta_{t}$. Put $\widetilde{X}=\left(  \kappa^{\ast}\right)
^{-1}\left(  X\right)  $ to be a $w^{\ast}$-closed subset of $\mathcal{M}%
\left(  X\right)  $, which contains $\delta_{X}$. We say that $\widetilde{X}$
is \textit{the measured state space of the cone} $\mathfrak{c}$, and we also
use the notation $\widetilde{S}\left(  \mathfrak{c}\right)  $ instead of
$\widetilde{X}$. Taking into account that $\kappa^{\ast}$ is a $\ast$-linear
mapping, we conclude that $\widetilde{X}$ is a self-adjoint subset of
$\mathcal{M}\left(  X\right)  $ in the sense of $\widetilde{X}^{\ast
}=\widetilde{X}$, and $\widetilde{X}\cap H^{\perp}=\varnothing$. Thus
$\widetilde{S}\left(  \mathfrak{c}\right)  $ is a $w^{\ast}$-closed, convex,
$\ast$-subset of $\mathcal{M}\left(  X\right)  $ disjoint with $H^{\perp}$.

\begin{corollary}
\label{corDL1}The measured state space $\widetilde{S}\left(  \mathfrak{c}%
\right)  $ of the unital cone $\mathfrak{c}$ is the disjoint union of all
$\delta_{s}+H^{\perp}$, that is, $\widetilde{S}\left(  \mathfrak{c}\right)
=\bigvee\left\{  \delta_{s}+H^{\perp}:s\in S\left(  \mathfrak{c}\right)
\right\}  $. In particular, $\mathcal{P}\left(  X\right)  =\widetilde{X}%
\cap\mathcal{M}\left(  X\right)  _{+}$.
\end{corollary}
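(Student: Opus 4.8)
The plan is to deduce everything from the structural facts about the exact quotient mapping $\kappa^{\ast}:\mathcal{M}(X)\to\overline{H}$ that are already in hand: it is $\ast$-linear and positive, surjective with $\ker\kappa^{\ast}=H^{\perp}$, it carries $\mathcal{P}(X)$ into $S(\mathfrak{c})=X$, by (\ref{Atom11}) it satisfies $\kappa^{\ast}(\delta_{t})=t$ for every $t\in X$, and $\kappa^{\ast}|\delta_{X}$ is a homeomorphism of $\delta_{X}$ onto $X$. No new analytic input is needed; the work is entirely set-theoretic.

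First I would establish the set equality $\widetilde{S}(\mathfrak{c})=\bigcup_{s\in X}(\delta_{s}+H^{\perp})$. For the inclusion $\supseteq$, note that for $s\in X$ and $\nu\in H^{\perp}$ one has $\kappa^{\ast}(\delta_{s}+\nu)=\kappa^{\ast}(\delta_{s})+\kappa^{\ast}(\nu)=s+0=s\in X$, so $\delta_{s}+H^{\perp}\subseteq(\kappa^{\ast})^{-1}(X)=\widetilde{X}$. Conversely, if $\mu\in\widetilde{X}$ then $\kappa^{\ast}(\mu)=s$ for some $s\in X$, whence $\kappa^{\ast}(\mu-\delta_{s})=s-s=0$, i.e. $\mu-\delta_{s}\in\ker\kappa^{\ast}=H^{\perp}$ and $\mu\in\delta_{s}+H^{\perp}$; this is exactly the shape of the decomposition supplied by Lemma \ref{lemDL11} in the probability case, now read off for all of $\widetilde{X}$. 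Next I would check disjointness, which justifies writing $\bigvee$: if $(\delta_{s}+H^{\perp})\cap(\delta_{t}+H^{\perp})\neq\varnothing$ for $s,t\in X$, then $\delta_{s}-\delta_{t}\in H^{\perp}$, hence $s=\kappa^{\ast}(\delta_{s})=\kappa^{\ast}(\delta_{t})=t$, using that $\kappa^{\ast}$ separates the Diracs $\delta_{X}$. Thus the cosets $\delta_{s}+H^{\perp}$, $s\in X$, are pairwise disjoint and $\widetilde{S}(\mathfrak{c})=\bigvee\{\delta_{s}+H^{\perp}:s\in S(\mathfrak{c})\}$.

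For the ``in particular'' clause $\mathcal{P}(X)=\widetilde{X}\cap\mathcal{M}(X)_{+}$: the inclusion $\subseteq$ is immediate, since $\mathcal{P}(X)\subseteq\mathcal{M}(X)_{+}$ and $\kappa^{\ast}(\mathcal{P}(X))\subseteq S(\mathfrak{c})=X$, so every probability measure lies in $(\kappa^{\ast})^{-1}(X)=\widetilde{X}$. For $\supseteq$, take $\mu\in\widetilde{X}\cap\mathcal{M}(X)_{+}$; then $\kappa^{\ast}(\mu)=s\in X=S(\mathfrak{c})$, and since $\kappa(e)=1$ in $C(X)$ we get $\mu(X)=\langle 1,\mu\rangle=\langle\kappa(e),\mu\rangle=\langle e,\kappa^{\ast}(\mu)\rangle=\langle e,s\rangle=1$; a positive measure of total mass $1$ is a probability measure, so $\mu\in\mathcal{P}(X)$.

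I do not expect a genuine obstacle. The one place that needs care is keeping straight the three incarnations of a state $s$ — as a point of $X=S(\mathfrak{c})$, as the functional $\overline{\eta}\in\overline{H}$ it represents, and as the Dirac $\delta_{s}\in\mathcal{M}(X)$ — and invoking at the right moments the already-proved facts that $\ker\kappa^{\ast}=H^{\perp}$, that $\kappa^{\ast}(\delta_{t})=t$, and that $\kappa^{\ast}|\delta_{X}$ is a homeomorphism onto $X$; once those identifications are pinned down the argument is a short chain of applications of linearity of $\kappa^{\ast}$.
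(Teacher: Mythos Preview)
Your proof is correct and follows essentially the same route as the paper: both arguments hinge on the identities $\ker\kappa^{\ast}=H^{\perp}$ and $\kappa^{\ast}(\delta_{s})=s$ to obtain the coset decomposition and its disjointness, and both compute the total mass of $\mu\in\widetilde{X}\cap\mathcal{M}(X)_{+}$ via $\langle e(\cdot),\mu\rangle=\langle e,\kappa^{\ast}(\mu)\rangle=1$. The only cosmetic difference is that the paper invokes Lemma~\ref{lemDL11} for the inclusion $\mathcal{P}(X)\subseteq\widetilde{X}$, whereas you cite the already-established fact $\kappa^{\ast}(\mathcal{P}(X))\subseteq S(\mathfrak{c})$ directly.
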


\begin{proof}
If $\delta_{s}-\delta_{t}\in H^{\perp}$ for some $s,t\in X$ then $s=t$ as we
have just confirmed above. Therefore the union $\cup\left\{  \delta
_{s}+H^{\perp}:s\in S\left(  \mathfrak{c}\right)  \right\}  $ is a disjoint
union which is $\widetilde{S}\left(  \mathfrak{c}\right)  $. Moreover,
$\mathcal{P}\left(  X\right)  \subseteq\widetilde{X}\cap\mathcal{M}\left(
X\right)  _{+}$ thanks to Lemma \ref{lemDL11}. Conversely, if $\mu=\delta
_{s}+\nu\in\mathcal{M}\left(  X\right)  _{+}$ with $\nu\in H^{\perp}$, then
$\int d\mu=\left\langle e\left(  \cdot\right)  ,\mu\right\rangle =\left\langle
e\left(  \cdot\right)  ,\delta_{s}\right\rangle +\left\langle e\left(
\cdot\right)  ,\nu\right\rangle =e\left(  s\right)  =1$, for $e\left(
\cdot\right)  \in H$ and $\left\langle e\left(  \cdot\right)  ,\nu
\right\rangle =0$. Whence $\mu\in\mathcal{P}\left(  X\right)  $.
\end{proof}

Finally, notice that $\operatorname{Re}\mu\in\widetilde{X}$ whenever $\mu
\in\widetilde{X}$. Indeed, by Corollary \ref{corDL1}, we have $\mu=\delta
_{s}+\nu$ for some $\nu\in H^{\perp}$. Then $\operatorname{Re}\mu=\delta
_{s}+\operatorname{Re}\nu$. But $H^{\perp}$ is a $\ast$-subspace of
$\mathcal{M}\left(  X\right)  $, therefore $\operatorname{Re}\nu\in H^{\perp}%
$, and $\operatorname{Re}\mu\in\widetilde{X}$. Note also that
$\operatorname{Im}\mu=\operatorname{Im}\nu$ being an element of $H^{\perp}$
stays out of $\widetilde{X}$. Similarly, in the general case the positive part
$\mu_{+}$ of a hermitian $\mu\in\widetilde{X}$ may stay out of $\widetilde{X}%
$. The set of all unital measures on $X$ is denoted by $\mathcal{U}\left(
X\right)  $, that is, $\mathcal{U}\left(  X\right)  =\mathcal{P}\left(
X\right)  \cap\left(  \delta_{e}+H^{\perp}\right)  $ is a convex subset of
$\mathcal{P}\left(  X\right)  $. Notice that $\mathcal{U}\left(  X\right)
=\mathcal{M}\left(  X\right)  _{+}\cap\left(  \delta_{e}+H^{\perp}\right)
=\mathcal{M}\left(  X\right)  _{+}\cap\left(  \delta_{e}+H^{\perp}%
\cap\mathcal{M}\left(  X\right)  _{h}\right)  $ thanks to Corollary
\ref{corDL1}.

\subsection{The unital measures on $X$}

Fix $\mu\in\mathcal{U}\left(  X\right)  $. Put $u=e\left(  \cdot\right)  $,
which is a unit of $L^{1}\left(  X,\mu\right)  $ and it represents $\mu$ in
$\mathcal{M}\left(  X\right)  $. Consider the related Hilbert space
$L^{2}\left(  X,\mu\right)  $ (which is a subspace of $L^{1}\left(
X,\mu\right)  $ out of compactness of $X$) with its norm $\left\Vert
\cdot\right\Vert _{2}$, the unital cone $L^{2}\left(  X,\mu\right)  _{+}$ with
the unit $u$, and the unital $\ast$-linear mapping $\iota:C\left(  X\right)
\rightarrow L^{2}\left(  X,\mu\right)  $. The latter in turn defines the
following bounded, unital $\ast$-linear mapping $\iota\kappa:H\rightarrow
L^{2}\left(  X,\mu\right)  $ of Hilbert spaces. If $\eta\in L^{2}\left(
X,\mu\right)  $ then $\iota^{\ast}\left(  \overline{\eta}\right)  \in C\left(
X\right)  ^{\ast}=\mathcal{M}\left(  X\right)  $, $\eta\mu\in\mathcal{M}%
\left(  X\right)  $ and%
\[
\left\langle h,\iota^{\ast}\left(  \overline{\eta}\right)  \right\rangle
=\left\langle \iota\left(  h\right)  ,\overline{\eta}\right\rangle =\left(
\iota\left(  h\right)  ,\eta\right)  =\int h\left(  t\right)  \eta^{\ast
}\left(  t\right)  d\mu=\left\langle h,\eta^{\ast}\mu\right\rangle
\]
for all $h\in C\left(  X\right)  $, where the inner product is taken in
$L^{2}\left(  X,\mu\right)  $. Thus $\iota^{\ast}:L^{2}\left(  X,\mu\right)
^{\ast}\rightarrow\mathcal{M}\left(  X\right)  $ is reduced to the canonical
identification $\iota^{\ast}\left(  \overline{\eta}\right)  =\eta^{\ast}\mu$.
In particular, $\left(  \iota\kappa\right)  ^{\ast}\left(  \overline{\eta
}\right)  =\kappa^{\ast}\iota^{\ast}\left(  \overline{\eta}\right)
=\kappa^{\ast}\left(  \eta^{\ast}\mu\right)  $ for all $\eta\in L^{2}\left(
X,\mu\right)  $, which justifies to use a brief notation $\iota:H\rightarrow
L^{2}\left(  X,\mu\right)  $ instead of $\iota\kappa$. In this case,
$\iota\left(  e\right)  =u$, and its dual is reduced to $\kappa^{\ast}%
|L^{2}\left(  X,\mu\right)  $ for the exact quotient mapping $\kappa^{\ast
}:\mathcal{M}\left(  X\right)  \rightarrow\overline{H}$ considered above in
Subsection \ref{subsecHCS}.

Now let $\iota\left(  H\right)  ^{-}$ be the closure of the subspace
$\iota\left(  H\right)  $ in the Hilbert space $L^{2}\left(  X,\mu\right)  $,
and let $P\in\mathcal{B}\left(  L^{2}\left(  X,\mu\right)  \right)  $ be the
orthogonal projection onto $\iota\left(  H\right)  ^{-}$. Since $L^{2}\left(
X,\mu\right)  =\iota\left(  H\right)  ^{-}\oplus\iota\left(  H\right)
^{\perp}$ and $\iota$ is a $\ast$-linear mapping, it follows that both
$\iota\left(  H\right)  ^{-}$ and $\iota\left(  H\right)  ^{\perp}$ are $\ast
$-subspaces, and $u\in\iota\left(  H\right)  \subseteq\operatorname{im}\left(
P\right)  $. In particular, $P$ is a unital $\ast$-linear mapping. If $\eta
\in\iota\left(  H\right)  ^{\perp}$ for $\eta\in L^{2}\left(  X,\mu\right)  $,
then $\eta\mu\in\mathcal{M}\left(  X\right)  $ and $\left\langle \zeta\left(
\cdot\right)  ,\eta\mu\right\rangle =\left(  \zeta\left(  \cdot\right)
,\eta\right)  =\left(  \iota\left(  \zeta\right)  ,\eta\right)  =0$ for all
$\zeta\in H$, which means that $\eta\mu\in H^{\perp}$. Hence $L^{2}\left(
X,\mu\right)  \cap H^{\perp}=\iota\left(  H\right)  ^{\perp}$ (up to the
canonical identification). The orthogonal to $u$ subspace of
$\operatorname{im}\left(  P\right)  $ is denoted by $\operatorname{im}\left(
P\right)  ^{u}$ whereas $\operatorname{im}\left(  P\right)  _{h}^{u}$ denotes
the closed real subspace $\operatorname{im}\left(  P\right)  ^{u}\cap
L^{2}\left(  X,\mu\right)  _{h}$.

\begin{lemma}
\label{lemUTM11}The unital $\ast$-linear mapping $\iota:H\rightarrow
L^{2}\left(  X,\mu\right)  $ is a Hilbert-Schmidt operator with $\left\Vert
\iota\right\Vert _{2}\leq\sqrt{2}$ and $\iota\left(  \operatorname{ball}%
H^{e}\right)  \subseteq\operatorname{ball}\operatorname{im}\left(  P\right)
^{u}$, whose dual $\iota^{\ast}:L^{2}\left(  X,\mu\right)  \rightarrow H$ is a
unital $\ast$-linear mapping given by the following $\overline{H}$-valued
integral
\[
\iota^{\ast}\left(  \eta\right)  =\int\eta\left(  t\right)  td\mu,\quad\eta\in
L^{2}\left(  X,\mu\right)  .
\]
In particular, $\iota\left(  \operatorname{ball}H_{h}^{e}\right)
\subseteq\operatorname{ball}\operatorname{im}\left(  P\right)  _{h}^{u}$,
$\operatorname{im}\left(  P\right)  _{h}^{u}=\iota\left(  H_{h}^{e}\right)
^{-}$, and $\int td\mu=e$.
\end{lemma}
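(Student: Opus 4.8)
\textbf{Proof plan for Lemma \ref{lemUTM11}.}

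The plan is to verify the four assertions --- the Hilbert--Schmidt bound, the integral formula for $\iota^{\ast}$, the inclusions of balls, and the identities $\operatorname{im}(P)_{h}^{u}=\iota(H_{h}^{e})^{-}$ and $\int t\,d\mu=e$ --- in an order that lets each step feed the next. First I would compute $\left\Vert \iota\right\Vert_{2}$ directly from the definition of the Hilbert--Schmidt norm: picking a hermitian basis $F$ for $H$ containing $e$, one has $\left\Vert \iota\right\Vert_{2}^{2}=\sum_{f\in F}\left\Vert \iota(f)\right\Vert_{2}^{2}=\sum_{f}\int|\langle f,\cdot\rangle|^{2}\,d\mu$. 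Since every $t\in X=S(\mathfrak{c})$ satisfies $\sum_{f\neq e}\langle f,t\rangle^{2}=\left\Vert t_{0}\right\Vert^{2}\leq 1$ and $\langle e,t\rangle=1$ (using $S(\mathfrak{c})=\operatorname{ball}(\overline{H_{h}^{e}})+\overline{e}$ from Lemma \ref{lHSc11}), one gets $\sum_{f}|\langle f,t\rangle|^{2}\leq 2$ pointwise, so integrating against the probability measure $\mu$ gives $\left\Vert \iota\right\Vert_{2}^{2}\leq 2$. This also immediately proves $\iota\in\mathcal{B}^{2}(H,L^{2}(X,\mu))$, and a cleaner way to organize it is to invoke Remark \ref{remIT11} with the roles set up here; but the direct estimate is short enough to just write out.

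Next I would identify $\iota^{\ast}$. The text already records $\iota^{\ast}(\overline{\eta})=\eta^{\ast}\mu$ as an element of $\mathcal{M}(X)$ and, after composing with $\kappa^{\ast}$, that $(\iota\kappa)^{\ast}(\overline{\eta})=\kappa^{\ast}(\eta^{\ast}\mu)$. So the task reduces to showing $\kappa^{\ast}(\eta\mu)=\int \eta(t)\,t\,d\mu$ for $\eta\in L^{2}(X,\mu)_{h}$, then extending $\ast$-linearly (the conjugation bookkeeping $\eta\mapsto\eta^{\ast}$ is routine given that $\kappa$ and $\iota$ are $\ast$-linear). For the pairing one checks, for $\zeta\in H$, that $\langle\zeta,\kappa^{\ast}(\eta\mu)\rangle=\langle\zeta(\cdot),\eta\mu\rangle=\int\zeta(t)\eta(t)\,d\mu=\int\eta(t)\langle\zeta,t\rangle\,d\mu=\langle\zeta,\int\eta(t)\,t\,d\mu\rangle$, where the $\overline{H}$-valued integral makes sense by Remark \ref{remCRT1} because $\mathbf{t}:X\to\overline{H}$, $t\mapsto t$, is weak$^{\ast}$ continuous with $\left\Vert t\right\Vert\leq\sqrt{2}$ and $\int|\eta|\,d\mu\leq\left\Vert\eta\right\Vert_{2}<\infty$. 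Specializing $\eta=u$ (the constant function $1$, which represents $\mu\in\mathcal{U}(X)$) gives $\iota^{\ast}(u)=\kappa^{\ast}(\mu)=\overline{e}$ --- i.e.\ $\int t\,d\mu=e$, using precisely that $\mu$ is a unital measure --- so $\iota^{\ast}$ is unital.

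Finally, for the ball inclusions: $\iota$ contracts norms ($\left\Vert\iota(\zeta)\right\Vert_{2}\leq\left\Vert\zeta\right\Vert_{e}\leq\left\Vert\zeta\right\Vert$, since $\left\Vert\iota\right\Vert=\mu(X)^{1/2}=1$ from the computation in Subsection \ref{subsecL1XM}), so $\iota(\operatorname{ball}H)\subseteq\operatorname{ball}L^{2}(X,\mu)$; and $\iota$ is $\ast$-linear, so it sends hermitians to hermitians. The point needing an argument is that $\iota(H^{e})\perp u$, equivalently $\langle\iota(\zeta),u\rangle=0$ whenever $(\zeta,e)=0$: but $\langle\iota(\zeta),u\rangle=\int\zeta(t)\,d\mu=\langle\zeta,\kappa^{\ast}(\mu)\rangle=\langle\zeta,\overline{e}\rangle=(\zeta,e)=0$. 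Combined with $\iota(H)\subseteq\operatorname{im}(P)$ (tautological) this gives $\iota(\operatorname{ball}H^{e})\subseteq\operatorname{ball}\operatorname{im}(P)^{u}$ and its hermitian version. For $\operatorname{im}(P)_{h}^{u}=\iota(H_{h}^{e})^{-}$ I would argue: $\operatorname{im}(P)=\iota(H)^{-}$ by definition of $P$; intersecting with $u^{\perp}$ and with $L^{2}(X,\mu)_{h}$ and using that $H=H^{e}\oplus\mathbb{C}e$ decomposes orthogonally and is carried by $\iota$ into the orthogonal decomposition $\operatorname{im}(P)=\operatorname{im}(P)^{u}\oplus\mathbb{C}u$ (because $\iota(e)=u$ and $\iota(H^{e})\perp u$), one gets $\iota(H)^{-}\cap u^{\perp}=\iota(H^{e})^{-}$; then take hermitian parts, noting the closure and the real-part projection commute since $\iota(H)^{-}$ is a $\ast$-subspace. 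The main obstacle I anticipate is not any single hard estimate but keeping the $\ast$-linear / conjugate-linear bookkeeping consistent across $\kappa$, $\iota$, $\kappa^{\ast}$, $\iota^{\ast}$ and the bar on elements of $X$ (the excerpt's convention of dropping bars on points of $X$), and making sure the orthogonal-decomposition argument for $\operatorname{im}(P)_{h}^{u}=\iota(H_{h}^{e})^{-}$ correctly handles the closures rather than just the dense subspaces.
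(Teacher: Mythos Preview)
Your plan matches the paper's proof in structure and in every essential computation: the Hilbert--Schmidt estimate via $\sum_{f}|\langle f,t\rangle|^{2}\leq 2$, the integral formula for $\iota^{\ast}$ by pairing against $\zeta\in H$, unitality from $\kappa^{\ast}(\mu)=\overline{e}$, the orthogonality $\iota(H^{e})\perp u$ from the same unitality, and the density argument for $\operatorname{im}(P)_{h}^{u}=\iota(H_{h}^{e})^{-}$.

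One slip to flag: the inequality $\left\Vert\zeta\right\Vert_{e}\leq\left\Vert\zeta\right\Vert$ is \emph{false} for general $\zeta\in H$. For hermitian $\zeta=\zeta_{0}+re$ one has $\left\Vert\zeta\right\Vert_{e}=\left\Vert\zeta_{0}\right\Vert+|r|\geq\sqrt{\left\Vert\zeta_{0}\right\Vert^{2}+r^{2}}=\left\Vert\zeta\right\Vert$, with equality only when $\zeta_{0}=0$ or $r=0$; so the claim $\iota(\operatorname{ball}H)\subseteq\operatorname{ball}L^{2}(X,\mu)$ does not follow from the chain you wrote. This does not damage the proof, because the lemma only asserts the inclusion on $\operatorname{ball}H^{e}$, and for $\zeta_{0}\in H^{e}$ one has $\zeta_{0}(t)=(\zeta_{0},t_{0})$, hence $\left\Vert\zeta_{0}\right\Vert_{e}=\sup_{t}|(\zeta_{0},t_{0})|\leq\left\Vert\zeta_{0}\right\Vert$ as needed. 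Just restrict the norm estimate to $H^{e}$ from the outset (the paper does this via a basis expansion and Cauchy--Schwarz, arriving at the same bound $\left\Vert\iota(\zeta_{0})\right\Vert_{2}\leq\left\Vert\zeta_{0}\right\Vert$; your route through $\left\Vert\cdot\right\Vert_{e}$ is in fact cleaner once corrected).
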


\begin{proof}
The fact that $\iota$ is a Hilbert-Schmidt operator follows from Remark
\ref{remIT11}. In the present case, if $F$ is a (hermitian) Hilbert basis for
$H$ containing $e$, then $\iota=\sum_{f}\iota\left(  f\right)  \odot
\overline{f}$ with
\begin{align*}
\left\Vert \iota\right\Vert _{2}^{2}  &  =\sum_{f}\left\Vert \iota\left(
f\right)  \right\Vert _{2}^{2}=\sum_{f\neq e}\int\left\vert \left(
f,t\right)  \right\vert ^{2}d\mu+\int\left\vert \left(  e,t\right)
\right\vert ^{2}d\mu=\int\sum_{f\neq e}\left\vert \left(  f,t\right)
\right\vert ^{2}d\mu+1\\
&  \leq\int\left\Vert t_{0}\right\Vert ^{2}d\mu+1\leq2,
\end{align*}
which means that $\left\Vert \iota\right\Vert _{2}\leq\sqrt{2}$. Note that
$\iota^{\ast}\left(  u\right)  =\kappa^{\ast}\left(  u^{\ast}\mu\right)
=\kappa^{\ast}\left(  \mu\right)  =e$, that is, $\iota^{\ast}$ is a unital
mapping. Further,
\begin{align*}
\left\langle \zeta,\iota^{\ast}\left(  \eta\right)  \right\rangle  &
=\left\langle \zeta,\kappa^{\ast}\left(  \eta^{\ast}\mu\right)  \right\rangle
=\left\langle \zeta\left(  \cdot\right)  ,\eta^{\ast}\mu\right\rangle =\left(
\zeta\left(  \cdot\right)  ,\eta\right)  =\int\left(  \zeta,t\right)
\eta^{\ast}\left(  t\right)  d\mu=\int\left(  \zeta,\eta\left(  t\right)
t\right)  d\mu\\
&  =\left(  \zeta,\int\eta\left(  t\right)  td\mu\right)
\end{align*}
for all $\zeta\in H$. It follows that $\iota^{\ast}\left(  \eta\right)
=\int\eta\left(  t\right)  td\mu$ for all $\eta\in L^{2}\left(  X,\mu\right)
$. In particular,
\[
\iota^{\ast}\left(  \eta^{\ast}\right)  =\int\eta^{\ast}\left(  t\right)
td\mu=\left(  \int\eta\left(  t\right)  td\mu\right)  ^{\ast}=\iota^{\ast
}\left(  \eta\right)  ^{\ast}%
\]
for all $\eta\in L^{2}\left(  X,\mu\right)  $, which means that $\iota^{\ast}$
is a unital $\ast$-linear mapping, and $\int td\mu=\iota^{\ast}\left(
u\right)  =e$.

Now prove that $\iota\left(  \operatorname{ball}H^{e}\right)  \subseteq
\operatorname{ball}\operatorname{im}\left(  P\right)  ^{u}$. Take $\zeta
_{0}\in H^{e}$ and a (hermitian) Hilbert basis $F$ for $H$ containing $e$.
Since $\mu$ is unital, we conclude that%
\[
\left(  \iota\left(  \zeta_{0}\right)  ,u\right)  =\int\zeta_{0}\left(
t\right)  d\mu=\left\langle \zeta_{0}\left(  \cdot\right)  ,\mu\right\rangle
=\left\langle \zeta_{0}\left(  \cdot\right)  ,\delta_{e}\right\rangle =\left(
\zeta_{0},e\right)  =0,
\]
that is, $\iota\left(  \zeta_{0}\right)  \perp u$. Thus $\iota\left(
H^{e}\right)  \subseteq\operatorname{im}\left(  P\right)  ^{u}$ and
$\iota\left(  F\backslash\left\{  e\right\}  \right)  \subseteq
\operatorname{im}\left(  P\right)  _{h}^{u}$. If $\zeta_{0}\in
\operatorname{ball}H^{e}$ then $\zeta_{0}=\sum_{f\neq e}\left(  \zeta
_{0},f\right)  f$ and
\begin{align*}
\left\Vert \iota\left(  \zeta_{0}\right)  \right\Vert _{2}  &  \leq\sum_{f\neq
e}\left\vert \left(  \zeta_{0},f\right)  \right\vert \left\Vert \iota\left(
f\right)  \right\Vert _{2}\leq\left(  \sum_{f\neq e}\left\vert \left(
\zeta_{0},f\right)  \right\vert ^{2}\right)  ^{1/2}\left(  \sum_{f\neq
e}\left\Vert \iota\left(  f\right)  \right\Vert _{2}^{2}\right)  ^{1/2}\\
&  \leq\left(  \sum_{f\neq e}\left\vert \left(  \zeta_{0},f\right)
\right\vert ^{2}\right)  ^{1/2}\left(  \int\left\Vert t_{0}\right\Vert
^{2}d\mu\right)  ^{1/2}\leq\left\Vert \zeta_{0}\right\Vert \leq1,
\end{align*}
that is, $\iota\left(  \zeta_{0}\right)  \in\operatorname{ball}%
\operatorname{im}\left(  P\right)  ^{u}$. Since $\iota$ is $\ast$-linear, we
also deduce that $\iota\left(  \operatorname{ball}H_{h}^{e}\right)
\subseteq\operatorname{ball}\operatorname{im}\left(  P\right)  _{h}^{u}$. In
particular, $\iota\left(  H_{h}^{e}\right)  ^{-}\subseteq\operatorname{im}%
\left(  P\right)  _{h}^{u}$.

Finally, prove that $\operatorname{im}\left(  P\right)  _{h}^{u}=\iota\left(
H_{h}^{e}\right)  ^{-}$. Take $\theta\in\operatorname{im}\left(  P\right)
_{h}^{u}$. Since $\iota\left(  H\right)  $ is dense in $\operatorname{im}%
\left(  P\right)  $ and $\iota$ is a $\ast$-linear mapping, it follows that
$\theta=\lim_{n}\zeta_{n}\left(  \cdot\right)  $ for a certain sequence
$\left(  \zeta_{n}\right)  _{n}\subseteq H_{h}$. But $\zeta_{n}\left(
\cdot\right)  =\zeta_{n,0}\left(  \cdot\right)  +r_{n}u$ with $\zeta_{n,0}\in
H_{h}^{e}$, $r_{n}\in\mathbb{R}$, and $\lim_{n}r_{n}=\lim_{n}\left(
\zeta_{n,0}\left(  \cdot\right)  ,u\right)  +r_{n}\left(  u,u\right)
=\lim_{n}\left(  \zeta_{n}\left(  \cdot\right)  ,u\right)  =\left(
\theta,u\right)  =0$. Thereby $\theta=\lim_{n}\zeta_{n,0}\left(  \cdot\right)
\in\iota\left(  H_{h}^{e}\right)  ^{-}$.
\end{proof}

\begin{remark}
Notice also that $\left\Vert \iota^{\ast}\left(  \eta\right)  \right\Vert
\leq\int\left\vert \eta\right\vert \left\Vert t\right\Vert d\mu\leq\sqrt
{2}\left\Vert \eta\right\Vert _{1}\leq\sqrt{2}\left\Vert \eta\right\Vert _{2}$
for all $\eta\in L^{2}\left(  X,\mu\right)  $.
\end{remark}

Put $\mathfrak{c}_{\mu}=\left\{  \zeta\in H_{h}:\left\Vert \zeta_{0}\left(
\cdot\right)  \right\Vert _{2}\leq\left(  \zeta,e\right)  \right\}  $, which
is a cone in $H$.

\begin{lemma}
\label{lemUTM2}The cone $\mathfrak{c}_{\mu}$ in $H$ is a unital cone
containing $\mathfrak{c}$, $\iota\left(  \mathfrak{c}_{\mu}\right)  \subseteq
L^{2}\left(  X,\mu\right)  _{+}$ and $\operatorname{im}\left(  P\right)  \cap
L^{2}\left(  X,\mu\right)  _{+}=\iota\left(  \mathfrak{c}_{\mu}\right)  ^{-}$.
In particular, $\iota:H\rightarrow L^{2}\left(  X,\mu\right)  $ is a unital
positive mapping in the sense of $\iota\left(  e\right)  =u$ and $\iota\left(
\mathfrak{c}\right)  \subseteq L^{2}\left(  X,\mu\right)  _{+}$, and $P$ is a
unital positive projection (a conditional expectation).
\end{lemma}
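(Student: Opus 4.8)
The plan is to verify the listed properties of $\mathfrak{c}_{\mu}$ in turn, the only substantial point being the description of $\operatorname{im}\left(  P\right)  \cap L^{2}\left(  X,\mu\right)  _{+}$. First I would observe $\mathfrak{c\subseteq c}_{\mu}$: if $\zeta=\zeta_{0}+\left(  \zeta,e\right)  e\in\mathfrak{c}$ with $\zeta_{0}\in H_{h}^{e}$ and $\left\Vert \zeta_{0}\right\Vert \leq\left(  \zeta,e\right)  $, then $\left\Vert \iota\left(  \zeta_{0}\right)  \right\Vert _{2}\leq\left\Vert \zeta_{0}\right\Vert \leq\left(  \zeta,e\right)  $ by Lemma \ref{lemUTM11} (which gives $\iota\left(  \operatorname{ball}H^{e}\right)  \subseteq\operatorname{ball}\operatorname{im}\left(  P\right)  ^{u}$), so $\zeta\in\mathfrak{c}_{\mu}$. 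That $\mathfrak{c}_{\mu}$ is closed under addition and nonnegative scaling is immediate from the triangle inequality for $\left\Vert \cdot\right\Vert _{2}$ applied to $\zeta\mapsto\iota\left(  \zeta_{0}\right)  $ together with additivity of $\zeta\mapsto\left(  \zeta,e\right)  $, and $e\in\mathfrak{c}_{\mu}$ is clear; since $\mathfrak{c}$ is a unital cone (Lemma \ref{lHSc11}) and $\mathfrak{c\subseteq c}_{\mu}$, the set $\mathfrak{c}_{\mu}-e$ is absorbent in $H_{h}$, so $\mathfrak{c}_{\mu}$ is a unital cone.

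Next I would show $\iota\left(  \mathfrak{c}_{\mu}\right)  \subseteq L^{2}\left(  X,\mu\right)  _{+}$. For $\zeta=\zeta_{0}+\left(  \zeta,e\right)  e\in\mathfrak{c}_{\mu}$ the class $\iota\left(  \zeta\right)  =\iota\left(  \zeta_{0}\right)  +\left(  \zeta,e\right)  u$ is hermitian ($\iota$ being $\ast$-linear), $\iota\left(  \zeta_{0}\right)  \perp u$, and $\left(  \iota\left(  \zeta\right)  ,u\right)  =\int\zeta\left(  t\right)  d\mu=\left\langle \zeta\left(  \cdot\right)  ,\mu\right\rangle =\left\langle \zeta,\kappa^{\ast}\left(  \mu\right)  \right\rangle =\left(  \zeta,e\right)  \geq0$ because $\mu$ is unital (Lemma \ref{lemUTM11} gives $\kappa^{\ast}\left(  \mu\right)  =\overline{e}$ and $\iota\left(  \zeta_{0}\right)  \perp u$), while $\left\Vert \iota\left(  \zeta_{0}\right)  \right\Vert _{2}\leq\left(  \zeta,e\right)  $ by the definition of $\mathfrak{c}_{\mu}$; hence $\iota\left(  \zeta\right)  \in L^{2}\left(  X,\mu\right)  _{+}$. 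Combined with $\iota\left(  e\right)  =u$ and $\mathfrak{c\subseteq c}_{\mu}$, this already shows $\iota$ is a unital positive mapping.

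The equality $\operatorname{im}\left(  P\right)  \cap L^{2}\left(  X,\mu\right)  _{+}=\iota\left(  \mathfrak{c}_{\mu}\right)  ^{-}$ is the heart of the lemma. The inclusion $\supseteq$ is immediate, since $\iota\left(  \mathfrak{c}_{\mu}\right)  \subseteq\iota\left(  H\right)  \subseteq\operatorname{im}\left(  P\right)  $ and $\iota\left(  \mathfrak{c}_{\mu}\right)  \subseteq L^{2}\left(  X,\mu\right)  _{+}$, with $\operatorname{im}\left(  P\right)  $ a closed subspace and $L^{2}\left(  X,\mu\right)  _{+}$ a closed cone. For $\subseteq$, which I expect to be the main obstacle, take $\theta\in\operatorname{im}\left(  P\right)  \cap L^{2}\left(  X,\mu\right)  _{+}$ and write $\theta=\theta_{0}+ru$ with $r=\left(  \theta,u\right)  \geq0$, $\theta_{0}\perp u$, $\left\Vert \theta_{0}\right\Vert _{2}\leq r$. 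As $u\in\operatorname{im}\left(  P\right)  $ and $\theta$ is hermitian, $\theta_{0}\in\operatorname{im}\left(  P\right)  _{h}^{u}=\iota\left(  H_{h}^{e}\right)  ^{-}$ by Lemma \ref{lemUTM11}; choose $\zeta_{n,0}\in H_{h}^{e}$ with $\iota\left(  \zeta_{n,0}\right)  \rightarrow\theta_{0}$. If $r=0$ then $\theta=0\in\iota\left(  \mathfrak{c}_{\mu}\right)  $; otherwise set $\lambda_{n}=\min\left\{  1,\,r\left\Vert \iota\left(  \zeta_{n,0}\right)  \right\Vert _{2}^{-1}\right\}  $, with $\lambda_{n}=1$ when $\iota\left(  \zeta_{n,0}\right)  =0$. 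Then $\lambda_{n}\zeta_{n,0}+re\in\mathfrak{c}_{\mu}$, since $\lambda_{n}\zeta_{n,0}\in H_{h}^{e}$, the $e$-component equals $r$, and $\left\Vert \iota\left(  \lambda_{n}\zeta_{n,0}\right)  \right\Vert _{2}=\lambda_{n}\left\Vert \iota\left(  \zeta_{n,0}\right)  \right\Vert _{2}\leq r$; moreover $\left\Vert \iota\left(  \zeta_{n,0}\right)  \right\Vert _{2}\rightarrow\left\Vert \theta_{0}\right\Vert _{2}\leq r$ forces $\lambda_{n}\rightarrow1$, so $\iota\left(  \lambda_{n}\zeta_{n,0}+re\right)  =\lambda_{n}\iota\left(  \zeta_{n,0}\right)  +ru\rightarrow\theta$, i.e. $\theta\in\iota\left(  \mathfrak{c}_{\mu}\right)  ^{-}$.

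Finally, $P$ is unital ($Pu=u$) and positive: for $\eta=\eta_{0}+ru\in L^{2}\left(  X,\mu\right)  _{+}$ one has $P\eta=P\eta_{0}+ru$ with $P\eta_{0}$ hermitian, $\left(  P\eta_{0},u\right)  =\left(  \eta_{0},Pu\right)  =\left(  \eta_{0},u\right)  =0$ and $\left\Vert P\eta_{0}\right\Vert _{2}\leq\left\Vert \eta_{0}\right\Vert _{2}\leq r$, whence $P\eta\in L^{2}\left(  X,\mu\right)  _{+}$; thus $P$ is a unital positive projection, i.e. a conditional expectation in the ordered sense. The only delicate step is the rescaling argument in the third paragraph, which is needed because an arbitrary approximating sequence $\iota\left(  \zeta_{n,0}\right)  \rightarrow\theta_{0}$ need not respect the sharp constraint $\left\Vert \iota\left(  \zeta_{0}\right)  \right\Vert _{2}\leq\left(  \zeta,e\right)  $ defining $\mathfrak{c}_{\mu}$.
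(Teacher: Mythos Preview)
Your proof is correct and follows essentially the same route as the paper's: both reduce the key inclusion $\operatorname{im}(P)\cap L^{2}(X,\mu)_{+}\subseteq\iota(\mathfrak{c}_{\mu})^{-}$ to approximating $\theta_{0}\in\operatorname{im}(P)_{h}^{u}=\iota(H_{h}^{e})^{-}$ by $\iota(\zeta_{n,0})$ and then adjusting to satisfy the constraint $\left\Vert \iota(\cdot)\right\Vert _{2}\leq r$. The only cosmetic difference is that the paper first shrinks $\theta_{0}$ to $(1-n^{-1})\theta_{0}$ to obtain the strict inequality $\left\Vert \theta_{0}\right\Vert _{2}<r$ and then passes to a tail of the approximating sequence, whereas you handle the boundary case directly via the rescaling factors $\lambda_{n}=\min\{1,r\left\Vert \iota(\zeta_{n,0})\right\Vert _{2}^{-1}\}$; both tricks achieve the same end.
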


\begin{proof}
Take $\zeta=\zeta_{0}+re\in\mathfrak{c}$ with $\zeta_{0}\in H_{h}^{e}$ and
$\left\Vert \zeta_{0}\right\Vert \leq r$. Note that $\iota\left(
\zeta\right)  =\iota\left(  \zeta_{0}\right)  +ru$, $\iota\left(  \zeta
_{0}\right)  =\zeta_{0}\left(  \cdot\right)  \in\operatorname{im}\left(
P\right)  _{h}^{u}$ and $\left\Vert \zeta_{0}\left(  \cdot\right)  \right\Vert
_{2}\leq\left\Vert \zeta_{0}\right\Vert \leq r$ by virtue of Lemma
\ref{lemUTM11}, that is, $\zeta\in\mathfrak{c}_{\mu}$. Thus
$\mathfrak{c\subseteq c}_{\mu}$ and $\mathfrak{c}_{\mu}$ is a unital cone in
$H$. Prove that $\iota\left(  \mathfrak{c}_{\mu}\right)  \subseteq
L^{2}\left(  X,\mu\right)  _{+}$. Take $\zeta=\zeta_{0}+re\in\mathfrak{c}%
_{\mu}$. By Lemma \ref{lemUTM11}, $\iota\left(  \zeta_{0}\right)  \perp u$ and
$\left\Vert \zeta_{0}\left(  \cdot\right)  \right\Vert _{2}\leq\left(
\zeta,e\right)  =\left\langle \zeta\left(  \cdot\right)  ,\delta
_{e}\right\rangle =\left\langle \zeta\left(  \cdot\right)  ,\mu\right\rangle
=\left(  \zeta\left(  \cdot\right)  ,u\right)  $, which means that
$\iota\left(  \zeta\right)  \in L^{2}\left(  X,\mu\right)  _{+}$. Thus
$\iota\left(  \mathfrak{c}\right)  \subseteq\iota\left(  \mathfrak{c}_{\mu
}\right)  \subseteq L^{2}\left(  X,\mu\right)  _{+}$ and $\iota$ is a unital
positive mapping. If $\eta=\eta_{0}+ru\in L^{2}\left(  X,\mu\right)  _{+}$
then $P\left(  u\right)  =u$, $P\left(  \eta\right)  =P\left(  \eta
_{0}\right)  +ru\in L^{2}\left(  X,\mu\right)  _{h}$, $\left(  P\left(
\eta_{0}\right)  ,u\right)  =\left(  \eta_{0},u\right)  =0$ in $L^{2}\left(
X,\mu\right)  $, and $\left\Vert P\left(  \eta_{0}\right)  \right\Vert
_{2}\leq\left\Vert \eta_{0}\right\Vert _{2}\leq r$, which means that $P$ is a
unital positive projection.

Finally prove that $\operatorname{im}\left(  P\right)  \cap L^{2}\left(
X,\mu\right)  _{+}=\iota\left(  \mathfrak{c}_{\mu}\right)  ^{-}$. We saw above
$\iota\left(  \mathfrak{c}_{\mu}\right)  \subseteq L^{2}\left(  X,\mu\right)
_{+}\cap\operatorname{im}\left(  P\right)  $, which results in $\iota\left(
\mathfrak{c}_{\mu}\right)  ^{-}\subseteq L^{2}\left(  X,\mu\right)  _{+}%
\cap\operatorname{im}\left(  P\right)  $. Take $\eta=\eta_{0}+ru\in
\operatorname{im}\left(  P\right)  \cap L^{2}\left(  X,\mu\right)  _{+}$ with
$\left\Vert \eta_{0}\right\Vert _{2}\leq r$. Prove that $\eta\in\iota\left(
\mathfrak{c}_{\mu}\right)  ^{-}$. Since $\eta=\lim_{n}\left(  1-n^{-1}\right)
\eta_{0}+ru$, we can assume that $\left\Vert \eta_{0}\right\Vert _{2}<r$. By
Lemma \ref{lemUTM11}, $\eta_{0}\in\operatorname{im}\left(  P\right)  _{h}%
^{u}=\iota\left(  H_{h}^{e}\right)  ^{-}$, therefore $\eta=\lim_{n}\zeta
_{0,n}\left(  \cdot\right)  +ru$ for some $\left(  \zeta_{0,n}\right)
_{n}\subseteq H_{h}^{e}$. But $\lim_{n}\left\Vert \zeta_{0,n}\left(
\cdot\right)  \right\Vert _{2}=\left\Vert \eta_{0}\right\Vert _{2}<r$,
therefore we can assume that $\left\Vert \zeta_{0,n}\left(  \cdot\right)
\right\Vert _{2}<r$ for all $n$. Thus $\zeta_{n}=\zeta_{0,n}+re\in
\mathfrak{c}_{\mu}$ and $\eta=\lim_{n}\zeta_{n}\left(  \cdot\right)  =\lim
_{n}\iota\left(  \zeta_{n}\right)  \in\iota\left(  \mathfrak{c}_{\mu}\right)
^{-}$.
\end{proof}

The description of the state space of the cone $L^{2}\left(  X,\mu\right)
_{+}$ in terms of the measured state space $\widetilde{S}\left(
\mathfrak{c}\right)  $ considered above in Subsection \ref{subsecHCS} is
provided in the following assertion.

\begin{theorem}
\label{thUTM1}If $\mu\in\mathcal{U}\left(  X\right)  $ then $S\left(
L^{2}\left(  X,\mu\right)  _{+}\right)  =\widetilde{X}\cap\sqrt{2}%
\operatorname{ball}L^{2}\left(  X,\mu\right)  _{h}$. In particular,
$\iota^{\ast}\left(  L^{2}\left(  X,\mu\right)  _{+}\right)  \subseteq
\mathfrak{c}$, which means that $\iota^{\ast}:L^{2}\left(  X,\mu\right)
\rightarrow H$ is a unital positive mapping.
\end{theorem}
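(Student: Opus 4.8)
The goal is to identify the state space of the unital cone $L^{2}\left(  X,\mu\right)  _{+}$ inside $L^{2}\left(  X,\mu\right)  ^{\ast}=\overline{L^{2}\left(  X,\mu\right)  }$, and then deduce positivity of $\iota^{\ast}$. Recall that a state of $L^{2}\left(  X,\mu\right)  _{+}$ is an $\overline{\eta}$ with $\left(  u,\eta\right)  =1$ and $\left\langle L^{2}\left(  X,\mu\right)  _{+},\overline{\eta}\right\rangle \geq0$. The plan is to show the two inclusions of $S\left(  L^{2}\left(  X,\mu\right)  _{+}\right)  =\widetilde{X}\cap\sqrt{2}\operatorname{ball}L^{2}\left(  X,\mu\right)  _{h}$ separately. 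For the $\supseteq$ direction: given $\eta\in\sqrt{2}\operatorname{ball}L^{2}\left(  X,\mu\right)  _{h}$ with $\eta\mu\in\widetilde{X}$ (equivalently $\kappa^{\ast}\left(  \eta\mu\right)  \in S\left(  \mathfrak{c}\right)  $), one checks first that $\left(  u,\eta\right)  =\int\eta\, d\mu=\langle u,\eta\mu\rangle = \langle \kappa(e),\eta\mu\rangle = \langle e,\kappa^{\ast}(\eta\mu)\rangle =1$ since $\kappa^{\ast}(\eta\mu)\in S(\mathfrak{c})$ and $\langle e,S(\mathfrak{c})\rangle=1$ by Lemma \ref{lHSc11}. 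Then for $\zeta\in L^{2}\left(  X,\mu\right)  _{+}$, write $\zeta=\zeta_{0}+r u$ with $\zeta_{0}\perp u$, $\left\Vert \zeta_{0}\right\Vert _{2}\leq r$, and compute $\left(  \zeta,\eta\right)  =\left(  \zeta_{0},\eta_{0}\right)  +r\left(  u,\eta\right)$ where $\eta_{0}$ is the part of $\eta$ orthogonal to $u$; the inequality $\left\Vert \eta_{0}\right\Vert _{2}\leq 1$ (which follows from $\|\eta\|_2\le\sqrt2$ and $(u,\eta)=1$, since $\|\eta\|_2^2=\|\eta_0\|_2^2+1$) gives $\left\vert \left(  \zeta_{0},\eta_{0}\right)  \right\vert \leq\left\Vert \zeta_{0}\right\Vert _{2}\leq r = r(u,\eta)$, hence $\left(  \zeta,\eta\right)  \geq0$.

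For the $\subseteq$ direction: let $\overline{\eta}\in S\left(  L^{2}\left(  X,\mu\right)  _{+}\right)$. Since $\iota\left(  \mathfrak{c}\right)  \subseteq L^{2}\left(  X,\mu\right)  _{+}$ by Lemma \ref{lemUTM2}, the functional $\zeta\mapsto\left\langle \iota\left(  \zeta\right)  ,\overline{\eta}\right\rangle =\left\langle \zeta,\iota^{\ast}\left(  \overline{\eta}\right)  \right\rangle$ is nonnegative on $\mathfrak{c}$ and takes value $1$ on $e$; by Corollary \ref{cormincbar} (or Lemma \ref{lHSc11} together with $S\left(  \mathfrak{c}\right)  =\operatorname{ball}\left(  \overline{H_{h}^{e}}\right)  +\overline{e}$), this means $\iota^{\ast}\left(  \overline{\eta}\right)  \in S\left(  \mathfrak{c}\right)  $. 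By Lemma \ref{lemUTM11}, $\iota^{\ast}\left(  \overline{\eta}\right)  =\kappa^{\ast}\left(  \eta^{\ast}\mu\right)$, so $\kappa^{\ast}\left(  \eta^{\ast}\mu\right)  \in S\left(  \mathfrak{c}\right)  \subseteq X$, i.e. $\eta^{\ast}\mu\in\widetilde{X}$; taking the involution (using that $\widetilde{X}$ is self-adjoint and $\eta\mapsto\eta^{\ast}$ corresponds to the involution on $\overline{L^2}$) yields $\eta\mu\in\widetilde{X}$ provided $\overline\eta$ is taken hermitian — which is legitimate since $S(L^2(X,\mu)_+)\subseteq L^2(X,\mu)_h$ as the cone spans $L^2(X,\mu)_h$. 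It remains to bound the norm: from $-u\leq\left\Vert \eta_{0}\right\Vert _{2}^{-1}\eta_{0}\leq u$ being an element of $L^{2}\left(  X,\mu\right)  _{+}$-related pairing... more precisely, the element $-\left\Vert \eta_{0}\right\Vert_2\,\eta_0' + \eta_0' \cdots$; cleanly: evaluate $\overline\eta$ on $\zeta=\zeta_0+\|\zeta_0\|_2 u\in L^2(X,\mu)_+$ to get $(\zeta_0,\eta_0)\ge -\|\zeta_0\|_2$ for all $\zeta_0\perp u$, whence $\|\eta_0\|_2\le 1$ and so $\|\eta\|_2^2=\|\eta_0\|_2^2+1\le 2$.

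Finally, positivity of $\iota^{\ast}$: for any $\eta\in L^{2}\left(  X,\mu\right)  _{+}$ one has $\eta\in\sqrt{2}\operatorname{ball}L^{2}\left(  X,\mu\right)  _{h}$ (indeed $\left\Vert \eta\right\Vert _{2}^{2}=\left\Vert \eta_{0}\right\Vert _{2}^{2}+r^{2}\leq 2r^{2}\leq 2$ when $r=\left(  \eta,u\right)  \leq 1$, and rescaling handles the general case — or one simply argues on the normalized state $\eta/(\eta,u)$). The description just proved then gives $\eta\mu\in\widetilde{X}$, i.e. $\kappa^{\ast}\left(  \eta^{\ast}\mu\right)  =\iota^{\ast}\left(  \overline{\eta}\right)  \in X\subseteq\mathfrak{c}$ (using that $X=S(\mathfrak{c})\subseteq\overline{\mathfrak{c}}$... actually one wants $\iota^{\ast}(\eta)\in\mathfrak{c}$, which follows since for $\eta\in L^2(X,\mu)_+$ hermitian, $\kappa^\ast(\eta\mu)$ being a positive Radon-type object maps into $\overline{\mathfrak c}$; one should instead directly verify $\iota^*(\eta)\in\mathfrak c$ by pairing with $S(\mathfrak c)$ and invoking the $\supseteq$ inclusion). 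Hence $\iota^{\ast}\left(  L^{2}\left(  X,\mu\right)  _{+}\right)  \subseteq\mathfrak{c}$, and since $\iota^{\ast}\left(  u\right)  =e$ by Lemma \ref{lemUTM11}, $\iota^{\ast}$ is a unital positive mapping.

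I expect the main obstacle to be the bookkeeping with the three pairings involved — the $L^{2}$ inner product, the duality $\langle\cdot,\cdot\rangle$ of $\left(  H,\overline{H}\right)$, and the $C\left(  X\right)  $–$\mathcal{M}\left(  X\right)  $ duality — together with keeping the identifications $\iota^{\ast}\left(  \overline{\eta}\right)  =\kappa^{\ast}\left(  \eta^{\ast}\mu\right)$ and $L^{2}\left(  X,\mu\right)  \cap H^{\perp}=\iota\left(  H\right)  ^{\perp}$ consistent, and ensuring the self-adjointness argument for $\widetilde{X}$ is applied to genuinely hermitian representatives. The norm bound $\|\eta_0\|_2\le 1$ is the quantitative heart and is the only place where the constant $\sqrt 2$ enters; it comes precisely from the defining inequality $\|\zeta_0\|_2\le(\zeta,e)$ of the cone $L^2(X,\mu)_+$ dualized.
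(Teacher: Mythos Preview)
Your two-inclusion argument for $S(L^2(X,\mu)_+) = \widetilde{X}\cap\sqrt{2}\operatorname{ball}L^2(X,\mu)_h$ is correct and follows the same strategy as the paper. For $\supseteq$ you are in fact more direct than the paper: you compute $(u,\eta)=1$ from $\kappa^*(\eta\mu)\in S(\mathfrak{c})$ and then verify positivity by hand (equivalently, invoke Lemma~\ref{lHSc11}), whereas the paper first reduces to $\eta\in\iota(H)_h^-$ and passes through an approximation $\eta=\lim_n\zeta_n(\cdot)$ --- a detour not strictly needed for the stated conclusion, though it does yield the extra information recorded in Remark~\ref{remUTM1} that $\eta_0\in\operatorname{im}(P)_h^u$. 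For $\subseteq$ both you and the paper use that $\iota$ is positive (Lemma~\ref{lemUTM2}) to push a state of $L^2(X,\mu)_+$ to a state of $\mathfrak{c}$; the paper additionally writes out the representing vector $s_0\in\operatorname{ball}H_h^e$ explicitly, which you do not need.

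Your final paragraph on positivity of $\iota^*$ is, as you sense, tangled: the assertion ``$\eta\in L^2(X,\mu)_+ \Rightarrow \|\eta\|_2\le\sqrt{2}$'' is false without normalization, and the subsequent chain conflates $\mathfrak{c}\subseteq H$ with $X=S(\mathfrak{c})\subseteq\overline{H}$. The paper's fix is a one-liner via Corollary~\ref{cormincbar}: from the $\subseteq$ inclusion just established one has $\iota^*\bigl(S(L^2(X,\mu)_+)\bigr)\subseteq X$, and since $\overline{L^2(X,\mu)}_+=\mathbb{R}_+S(L^2(X,\mu)_+)$ and $\overline{\mathfrak{c}}=\mathbb{R}_+X$, this gives
\[
\iota^*\bigl(\overline{L^2(X,\mu)}_+\bigr)\subseteq\mathbb{R}_+\,\iota^*\bigl(S(L^2(X,\mu)_+)\bigr)\subseteq\mathbb{R}_+X=\overline{\mathfrak{c}},
\]
equivalently $\iota^*(L^2(X,\mu)_+)\subseteq\mathfrak{c}$. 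Replace your last paragraph with this and the proof is complete.
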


\begin{proof}
By Lemma \ref{lemUTM2}, $\iota:H\rightarrow L^{2}\left(  X,\mu\right)  $ is a
unital positive mapping. This fact in turn implies that $\iota^{\ast}\left(
S\left(  L^{2}\left(  X,\mu\right)  _{+}\right)  \right)  \subseteq X$. Let us
show the details of this inclusion. Take a state $\overline{\eta}%
=\overline{\eta_{0}}+\overline{u}\in S\left(  L^{2}\left(  X,\mu\right)
_{+}\right)  $ of the unital cone $L^{2}\left(  X,\mu\right)  _{+}$, where
$\eta_{0}\in\operatorname{ball}$ $L^{2}\left(  X,\mu\right)  _{h}^{u}$. Notice
that $\left\Vert \eta\right\Vert _{2}=\left(  \left\Vert \eta_{0}\right\Vert
_{2}^{2}+\left\Vert u\right\Vert _{2}^{2}\right)  ^{1/2}\leq\sqrt{2}$.
Further, $\iota^{\ast}\left(  \overline{\eta}\right)  =\kappa^{\ast}\left(
\eta\mu\right)  =\left(  \left(  \eta_{0}+u\right)  \mu\right)  |H$, which
means that
\[
\left\langle \zeta,\iota^{\ast}\left(  \eta\right)  \right\rangle
=\left\langle \zeta\left(  \cdot\right)  ,\left(  \eta_{0}+u\right)
\mu\right\rangle =\left\langle \zeta\left(  \cdot\right)  ,\eta_{0}%
\mu\right\rangle +\left\langle \zeta\left(  \cdot\right)  ,\mu\right\rangle
=\left\langle \zeta\left(  \cdot\right)  ,\eta_{0}\mu\right\rangle +\left(
\zeta,e\right)
\]
for all $\zeta\in H$. But $\eta_{0}\mu|H$ is a hermitian linear functional on
$H$ such that
\[
\left\Vert \eta_{0}\mu|H\right\Vert \leq\left\Vert \eta_{0}\mu\right\Vert
=\left\Vert \eta_{0}\right\Vert _{1}=\int\left\vert \eta_{0}\right\vert
d\mu\leq\left(  \int d\mu\right)  ^{1/2}\left\Vert \eta_{0}\right\Vert
_{2}\leq1.
\]
It follows that the functional $\eta_{0}\mu|H$ is given by a hermitian vector
$s_{0}\in H_{h}$ such that $\left\langle \zeta\left(  \cdot\right)  ,\eta
_{0}\mu\right\rangle =\left(  \zeta,s_{0}\right)  $. But $\left(
e,s_{0}\right)  =\left\langle e\left(  \cdot\right)  ,\eta_{0}\mu\right\rangle
=\int\eta_{0}d\mu=\left(  u,\eta_{0}\right)  =0$, that is, $s_{0}%
\in\operatorname{ball}H_{h}^{e}$. Consequently, $s_{\eta}=\overline{s_{0}%
}+\overline{e}\in X$ and $\left\langle \zeta,\iota^{\ast}\left(  \eta\right)
\right\rangle =\left\langle \zeta,s_{\eta}\right\rangle $ for all $\zeta\in
H$, which means that $s_{\eta}=\iota^{\ast}\left(  \overline{\eta}\right)  \in
X$. In particular, $S\left(  L^{2}\left(  X,\mu\right)  _{+}\right)
\subseteq\widetilde{X}\cap\sqrt{2}\operatorname{ball}L^{2}\left(
X,\mu\right)  _{h}$.

Conversely, suppose $\iota^{\ast}\left(  \overline{\eta}\right)  =s\in X$ for
some $\eta\in L^{2}\left(  X,\mu\right)  _{h}$ with $\left\Vert \eta
\right\Vert _{2}\leq\sqrt{2}$. Prove that $\overline{\eta}\in S\left(
L^{2}\left(  X,\mu\right)  _{+}\right)  $. Taking into account that
$\ker\left(  \iota^{\ast}\right)  =\iota\left(  H\right)  ^{\perp}$, we can
also assume that $\eta\in\iota\left(  H\right)  _{h}^{-}$. Then $\eta=\lim
_{n}\iota\left(  \zeta_{0,n}\right)  +r_{n}u$ for some $\zeta_{0,n}\in
H_{h}^{e}$ and $r_{n}\in\mathbb{R}$. By Lemma \ref{lemUTM11}, $\left\{
\iota\left(  \zeta_{0,n}\right)  \right\}  \subseteq\iota\left(  H_{h}%
^{e}\right)  \subseteq\operatorname{im}\left(  P\right)  _{h}^{u}$, and
\begin{align*}
\lim_{n}r_{n}  &  =\lim_{n}\left(  \iota\left(  \zeta_{0,n}\right)
+r_{n}u,u\right)  =\left(  \eta,u\right)  =\int\eta d\mu=\left\langle e\left(
\cdot\right)  ,\eta\mu\right\rangle \\
&  =\left\langle \iota\left(  e\right)  ,\eta\mu\right\rangle =\left\langle
e,\kappa^{\ast}\left(  \eta\mu\right)  \right\rangle =\left\langle
e,\iota^{\ast}\left(  \overline{\eta}\right)  \right\rangle =\left\langle
e,s\right\rangle =\left(  e,s\right)  =1.
\end{align*}
It follows that $\eta=\eta_{0}+u$ with $\eta_{0}=\lim_{n}\iota\left(
\zeta_{0,n}\right)  \in\iota\left(  H_{h}^{e}\right)  ^{-}=\operatorname{im}%
\left(  P\right)  _{h}^{u}$ (see Lemma \ref{lemUTM11}), and $\left\Vert
\eta_{0}\right\Vert _{2}^{2}+1=\left\Vert \eta\right\Vert _{2}^{2}\leq2$,
which means that $\eta_{0}\in\operatorname{ball}$ $L^{2}\left(  X,\mu\right)
_{h}^{u}$. Whence $\overline{\eta}=\overline{\eta_{0}}+\overline{u}\in
S\left(  L^{2}\left(  X,\mu\right)  _{+}\right)  $ and $s=s_{\eta}$. Hence
$S\left(  L^{2}\left(  X,\mu\right)  _{+}\right)  =\widetilde{X}\cap\sqrt
{2}\operatorname{ball}L^{2}\left(  X,\mu\right)  _{h}$.

Finally, prove that $\iota^{\ast}:L^{2}\left(  X,\mu\right)  \rightarrow H$
(or $\iota^{\ast}:\overline{L^{2}\left(  X,\mu\right)  }\rightarrow
\overline{H}$) is positive either. Since $\iota^{\ast}$ is unital and
$S\left(  L^{2}\left(  X,\mu\right)  _{+}\right)  \subseteq\widetilde{X}%
=\left(  \iota^{\ast}\right)  ^{-1}\left(  X\right)  $, it follows that
\[
\iota^{\ast}\left(  \overline{L^{2}\left(  X,\mu\right)  }_{+}\right)
=\iota^{\ast}\left(  \mathbb{R}_{+}S\left(  L^{2}\left(  X,\mu\right)
_{+}\right)  \right)  \subseteq\mathbb{R}_{+}\iota^{\ast}\left(  S\left(
L^{2}\left(  X,\mu\right)  _{+}\right)  \right)  \subseteq\mathbb{R}%
_{+}X=\overline{\mathfrak{c}}%
\]
or equivalently we have $\iota^{\ast}\left(  L^{2}\left(  X,\mu\right)
_{+}\right)  \subseteq\mathfrak{c}$, which means that $\iota^{\ast}$ is positive.
\end{proof}

\begin{remark}
\label{remUTM1}As follows from the proof of Theorem \ref{thUTM1}, if
$\iota^{\ast}\left(  \eta\right)  =s\in X$ for some $\eta\in L^{2}\left(
X,\mu\right)  _{h}$, then $\eta=\eta_{0}+u$ with $\eta_{0}\in\operatorname{im}%
\left(  P\right)  _{h}^{u}$. In particular, $\iota^{\ast}\left(  \eta
_{0}\right)  =s_{0}$ for $s=s_{0}+e$ with $s_{0}\in\operatorname{ball}%
H_{h}^{e}$.
\end{remark}

Recall that a point $s\in X$ is called a $\mu$-mass if $\mu\left(  s\right)
>0$ (see Subsection \ref{subsecARM}).

\begin{corollary}
\label{corUTM1}Let $\mu\in\mathcal{U}\left(  X\right)  $ and let $s$ be a
$\mu$-mass in $X$ with $\mu\left(  s\right)  \geq1/2$. Then $s$ is given by a
certain state $\overline{\eta}$ of $L^{2}\left(  X,\mu\right)  _{+}$, that is,
$s=\iota^{\ast}\left(  \overline{\eta}\right)  $ for $\overline{\eta}\in
S\left(  L^{2}\left(  X,\mu\right)  _{+}\right)  $.
\end{corollary}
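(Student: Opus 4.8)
The plan is to combine Lemma \ref{lematom1} with Theorem \ref{thUTM1}. First I would invoke Lemma \ref{lematom1}: since $s$ is a $\mu$-mass, $\delta_{s}=s^{\prime}\mu$ with $s^{\prime}=\mu\left(s\right)^{-1}\left[s\right]\in L^{2}\left(X,\mu\right)$, where $\left[s\right]$ is the characteristic function of the singleton $\left\{s\right\}$. This $s^{\prime}$ is a hermitian element of $L^{2}\left(X,\mu\right)$ (it is a real, nonnegative function), so the only thing left to check in order to apply Theorem \ref{thUTM1} is the norm bound $\left\Vert s^{\prime}\right\Vert_{2}\leq\sqrt{2}$ together with the identification $\iota^{\ast}\left(\overline{s^{\prime}}\right)=s$ as an element of $X$.

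For the norm bound, I would compute $\left\Vert s^{\prime}\right\Vert_{2}^{2}=\mu\left(s\right)^{-2}\int\left[s\right]^{2}d\mu=\mu\left(s\right)^{-2}\mu\left(s\right)=\mu\left(s\right)^{-1}$, so $\left\Vert s^{\prime}\right\Vert_{2}=\mu\left(s\right)^{-1/2}\leq\sqrt{2}$ precisely because $\mu\left(s\right)\geq1/2$. This is where the hypothesis $\mu\left(s\right)\geq1/2$ is used, and it is the only place. For the identification, I would note that for every $\zeta\in H$, by the integral formula in Lemma \ref{lemUTM11},
\[
\left\langle\zeta,\iota^{\ast}\left(\overline{s^{\prime}}\right)\right\rangle=\left\langle\zeta\left(\cdot\right),s^{\prime\ast}\mu\right\rangle=\left\langle\zeta\left(\cdot\right),s^{\prime}\mu\right\rangle=\left\langle\zeta\left(\cdot\right),\delta_{s}\right\rangle=\zeta\left(s\right)=\left\langle\zeta,s\right\rangle,
\]
using that $s^{\prime}$ is real and $s^{\prime}\mu=\delta_{s}$; hence $\iota^{\ast}\left(\overline{s^{\prime}}\right)=s$ in $\overline{H}$, and since $s\in X=S\left(\mathfrak{c}\right)$ this means $\overline{s^{\prime}}\in\widetilde{X}=\left(\iota^{\ast}\right)^{-1}\left(X\right)$.

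Putting these together, $\overline{s^{\prime}}\in\widetilde{X}\cap\sqrt{2}\operatorname{ball}L^{2}\left(X,\mu\right)_{h}$, which by Theorem \ref{thUTM1} is exactly $S\left(L^{2}\left(X,\mu\right)_{+}\right)$. Thus $\overline{\eta}=\overline{s^{\prime}}$ is a state of $L^{2}\left(X,\mu\right)_{+}$ with $s=\iota^{\ast}\left(\overline{\eta}\right)$, as required. I do not expect any real obstacle here: the statement is essentially a direct specialization of Theorem \ref{thUTM1} once one writes down the Radon--Nikodym derivative of $\delta_{s}$ and checks its $L^{2}$-norm; the mild subtlety is just being careful that the element of $L^{2}\left(X,\mu\right)$ one feeds into $\iota^{\ast}$ is hermitian and that the pairing computation correctly recovers the evaluation functional $\zeta\mapsto\zeta\left(s\right)$.
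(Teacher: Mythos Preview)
Your proof is correct and follows essentially the same route as the paper: invoke Lemma~\ref{lematom1} to write $\delta_{s}=s^{\prime}\mu$ with $s^{\prime}=\mu(s)^{-1}[s]$, check $\left\Vert s^{\prime}\right\Vert_{2}=\mu(s)^{-1/2}\leq\sqrt{2}$, identify $\iota^{\ast}(\overline{s^{\prime}})=\kappa^{\ast}(s^{\prime}\mu)=\kappa^{\ast}(\delta_{s})=s\in X$, and then apply Theorem~\ref{thUTM1}. The only difference is cosmetic: the paper phrases the membership as $s^{\prime}\mu\in\widetilde{X}\cap\sqrt{2}\operatorname{ball}L^{2}(X,\mu)_{h}$ via the identification of $L^{2}$-functions with measures, while you work directly with $\overline{s^{\prime}}$ and the pairing formula.
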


\begin{proof}
By Lemma \ref{lematom1}, $\delta_{s}=s^{\prime}\mu$ for $s^{\prime}=\mu\left(
s\right)  ^{-1}\left[  s\right]  \in L^{2}\left(  X,\mu\right)  _{h}$. Then
$s=\kappa^{\ast}\left(  s^{\prime}\mu\right)  $, which means that $s^{\prime
}\mu\in\widetilde{X}$. But $\left\Vert s^{\prime}\right\Vert _{2}=\mu\left(
s\right)  ^{-1/2}\leq\sqrt{2}$, that is, $s^{\prime}\mu\in\widetilde{X}%
\cap\sqrt{2}\operatorname{ball}L^{2}\left(  X,\mu\right)  _{h}$. By Theorem
\ref{thUTM1}, $s^{\prime}\mu\in S\left(  L^{2}\left(  X,\mu\right)
_{+}\right)  $, and the result follows.
\end{proof}

In the case of any $\mu$-mass $s$ in $X$ we have $s^{\prime}=\mu\left(
s\right)  ^{-1}\left[  s\right]  \in L^{2}\left(  X,\mu\right)  _{h}$,
$\left(  s^{\prime},u\right)  =\int s^{\prime}d\mu=1$, and $s^{\prime
}=Ps^{\prime}+\left(  1-P\right)  s^{\prime}=s_{0}^{\prime}+u+\left(
1-P\right)  s^{\prime}$ with $s_{0}^{\prime}\in\operatorname{im}\left(
P\right)  _{h}^{u}$. It follows that $s=\kappa^{\ast}\left(  s^{\prime}%
\mu\right)  =\iota^{\ast}\left(  s^{\prime}\right)  =\iota^{\ast}\left(
s_{0}^{\prime}\right)  +\iota^{\ast}\left(  u\right)  =\iota^{\ast}\left(
s_{0}^{\prime}\right)  +e$ (see Remark \ref{remUTM1}), which in turn implies
that $\iota^{\ast}\left(  s_{0}^{\prime}\right)  =s_{0}\in\operatorname{ball}%
H_{h}^{e}$.

\begin{corollary}
\label{corUTM21}Let $\mu\in\mathcal{U}\left(  X\right)  $ and let $s$ be a
$\mu$-mass in $X$. Then
\[
\mu\left(  s\right)  ^{1/2}\left\Vert s_{0}\right\Vert ^{2}\leq\left\Vert
s_{0}\left(  \cdot\right)  \right\Vert _{2}\leq\left\Vert s_{0}\right\Vert
\leq\left\Vert s_{0}^{\prime}\right\Vert _{2}\leq\left(  \mu\left(  s\right)
^{-1}-1\right)  ^{1/2}.
\]
In particular, $\mu\left(  s\right)  \leq\left(  1+\left\Vert s_{0}\right\Vert
^{2}\right)  ^{-1}$, and $\mu\left(  s\right)  \leq1/2$ whenever $\left\Vert
s_{0}\right\Vert =1$.
\end{corollary}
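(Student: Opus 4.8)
Looking at Corollary \ref{corUTM21}, I need to prove the chain of inequalities
$$\mu(s)^{1/2}\|s_0\|^2 \leq \|s_0(\cdot)\|_2 \leq \|s_0\| \leq \|s_0'\|_2 \leq (\mu(s)^{-1}-1)^{1/2},$$
together with the corollaries $\mu(s) \leq (1+\|s_0\|^2)^{-1}$ and $\mu(s) \leq 1/2$ when $\|s_0\|=1$.

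Let me think through the ingredients. We have a $\mu$-mass $s = s_0 + e$ with $s_0 \in \operatorname{ball} H_h^e$, and from the discussion just before the corollary, $s' = \mu(s)^{-1}[s] \in L^2(X,\mu)_h$, $(s',u)=1$, with decomposition $s' = s_0' + u + (1-P)s'$ where $s_0' \in \operatorname{im}(P)_h^u$, and $\iota^*(s_0') = s_0$. Also recall from Lemma \ref{lematom1} that $\delta_s = s'\mu$, hence $\|s'\|_2 = \mu(s)^{-1/2}$ by the computation $\int(s')^2 d\mu = s'(s) = \mu(s)^{-1}$.

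Here is the plan. \textbf{Step 1:} The innermost inequality $\|s_0(\cdot)\|_2 \leq \|s_0\|$. This is immediate from Lemma \ref{lemUTM11}, since $\iota: H \to L^2(X,\mu)$ is a contraction in the Hilbert-Schmidt sense — more precisely $\iota(\operatorname{ball} H_h^e) \subseteq \operatorname{ball}\operatorname{im}(P)_h^u$ — so $\|\iota(s_0)\|_2 = \|s_0(\cdot)\|_2 \leq \|s_0\|$. \textbf{Step 2:} The middle inequality $\|s_0\| \leq \|s_0'\|_2$. Since $\iota^*(s_0') = s_0$ and $\iota^*$ is bounded, I expect $\|\iota^*\| \leq 1$ on the relevant subspace; indeed from the discussion preceding Corollary \ref{corUTM1} and Remark after Lemma \ref{lemUTM11}, one has $\|\iota^*(\eta)\| \leq \sqrt{2}\|\eta\|_2$ in general, but restricted to $\operatorname{im}(P)_h^u$ the norm bound should be $1$. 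The cleanest route: $s_0' \in \operatorname{im}(P)_h^u = \iota(H_h^e)^-$, and the restriction of $\iota^*\iota$ to $H_h^e$ relates these — actually since $P = \iota\iota^*$ (or a normalization thereof) is the orthogonal projection, and $s_0' = P(\text{something})$, we get $\|s_0\| = \|\iota^*(s_0')\| \leq \|s_0'\|_2$ using that $\iota$ is an isometry-like contraction. I would verify $\|\iota^*|_{\operatorname{im}(P)_h^u}\| \leq 1$ by duality with Step 1's contraction statement. \textbf{Step 3:} The outermost inequality $\|s_0'\|_2 \leq (\mu(s)^{-1}-1)^{1/2}$. Since $s' = s_0' + u + (1-P)s'$ is an orthogonal decomposition (the three summands lie in $\operatorname{im}(P)^u$, $\mathbb{C}u$, and $\operatorname{im}(P)^\perp$ respectively), Pythagoras gives $\|s'\|_2^2 = \|s_0'\|_2^2 + \|u\|_2^2 + \|(1-P)s'\|_2^2 = \|s_0'\|_2^2 + 1 + \|(1-P)s'\|_2^2$. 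Combined with $\|s'\|_2^2 = \mu(s)^{-1}$, this yields $\|s_0'\|_2^2 = \mu(s)^{-1} - 1 - \|(1-P)s'\|_2^2 \leq \mu(s)^{-1}-1$.

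\textbf{Step 4:} The leftmost inequality $\mu(s)^{1/2}\|s_0\|^2 \leq \|s_0(\cdot)\|_2$. This needs the mass-point structure: for the $\mu$-mass $s$, evaluation at $s$ makes sense. The key computation is $(s_0(\cdot), s_0') = \langle \iota(s_0), s_0'\rangle$ and since $\iota^*(s_0') = s_0$ this equals $(s_0, s_0) = \|s_0\|^2$. On the other hand, writing $s_0' = P(\mu(s)^{-1}[s])$ restricted appropriately, Cauchy–Schwarz gives $\|s_0\|^2 = (s_0(\cdot), s_0') \leq \|s_0(\cdot)\|_2 \|s_0'\|_2$; but that's not quite the target. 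Instead I suspect the right move is: $\|s_0\|^2 = \langle s_0(\cdot), s_0'\mu\rangle$ evaluated via the point mass — since $s_0'$ is (up to $P$) concentrated near $s$ with $s'(s) = \mu(s)^{-1}$, one gets $\|s_0\|^2 = (s_0(\cdot), s') = \mu(s)^{-1} s_0(\cdot)(s) \cdot$ something; so $|s_0(\cdot)(s)| \geq \mu(s)\|s_0\|^2$, and then $\|s_0(\cdot)\|_2^2 \geq \mu(s)|s_0(\cdot)(s)|^2 \geq \mu(s)\cdot\mu(s)^2\|s_0\|^4$, which overshoots. I would instead use $\|s_0(\cdot)\|_2 \geq \mu(s)^{1/2}|s_0(\cdot)(s)|$ and show $|s_0(\cdot)(s)| = \langle s_0, s\rangle - \langle s_0, e\rangle$-type quantity equals $\|s_0\|^2$ (since $s = s_0 + e$ and $s_0 \perp e$, we get $\langle s_0, \overline{s}\rangle = (s_0, s_0) + (s_0,e) = \|s_0\|^2$, i.e. $s_0(\cdot)(s) = \|s_0\|^2$). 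Then $\|s_0(\cdot)\|_2 \geq \mu(s)^{1/2}\|s_0\|^2$ directly. \textbf{The main obstacle} will be Step 2 — pinning down that $\|\iota^*\| \leq 1$ on $\operatorname{im}(P)_h^u$ rather than the weaker $\sqrt{2}$ bound — and keeping the various orthogonal decompositions ($s' = s_0' + u + (1-P)s'$ and $P = $ appropriately normalized $\iota\iota^*$) consistent. For the final remarks: $\mu(s)^{1/2}\|s_0\|^2 \leq (\mu(s)^{-1}-1)^{1/2}$ chains the whole inequality, squaring and rearranging gives $\mu(s)\|s_0\|^4 \leq \mu(s)^{-1}-1$, hence $\mu(s)^2\|s_0\|^4 \leq 1 - \mu(s)$, i.e. $(\mu(s)\|s_0\|^2)^2 + \mu(s) \leq 1$; but more simply, from $\|s_0\| \leq \|s_0'\|_2 \leq (\mu(s)^{-1}-1)^{1/2}$ we get $\|s_0\|^2 \leq \mu(s)^{-1}-1$, so $\mu(s)(1 + \|s_0\|^2) \leq 1$, i.e. $\mu(s) \leq (1+\|s_0\|^2)^{-1}$; setting $\|s_0\| = 1$ gives $\mu(s) \leq 1/2$.
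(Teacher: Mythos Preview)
Your proposal is correct and follows essentially the same approach as the paper. In particular, your ``main obstacle'' in Step~2 is resolved exactly as you suggest, by duality with the contraction $\iota(\operatorname{ball} H_h^e)\subseteq\operatorname{ball}\operatorname{im}(P)_h^u$ from Lemma~\ref{lemUTM11}: since $s_0=\iota^*(s_0')$ with $s_0'\in\operatorname{im}(P)_h^u$, one has $\|s_0\|=\sup|(\operatorname{ball}H_h^e,\iota^*(s_0'))|=\sup|(\iota(\operatorname{ball}H_h^e),s_0')|\le\|s_0'\|_2$, which is precisely the paper's argument (your aside about $P=\iota\iota^*$ is not needed and not quite right, but you correctly abandon it); Steps~1, 3, 4 and the final consequences likewise match the paper's proof.
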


\begin{proof}
As in the proof of Corollary \ref{corUTM1}, we have $\mu\left(  s\right)
^{-1}=\left\Vert s_{0}^{\prime}\right\Vert _{2}^{2}+1+\left\Vert \left(
1-P\right)  s^{\prime}\right\Vert _{2}^{2}\geq\left\Vert s_{0}^{\prime
}\right\Vert _{2}^{2}+1$, that is, $\left\Vert s_{0}^{\prime}\right\Vert
_{2}\leq\left(  \mu\left(  s\right)  ^{-1}-1\right)  ^{1/2}$. Further, notice
that $s_{0}^{\prime}\in\operatorname{im}\left(  P\right)  _{h}^{u}$,
$s_{0}=\iota^{\ast}\left(  s_{0}^{\prime}\right)  \in H_{h}^{e}$, and
$\iota\left(  \operatorname{ball}H_{h}^{e}\right)  \subseteq
\operatorname{ball}\operatorname{im}\left(  P\right)  _{h}^{u}$ thanks to
Lemma \ref{lemUTM11}. It follows that
\[
\left\Vert s_{0}\right\Vert =\sup\left\vert \left(  \operatorname{ball}%
H_{h}^{e},\iota^{\ast}\left(  s_{0}^{\prime}\right)  \right)  \right\vert
=\sup\left\vert \left(  \iota\left(  \operatorname{ball}H_{h}^{e}\right)
,s_{0}^{\prime}\right)  \right\vert \leq\sup\left\vert \left(
\operatorname{ball}\operatorname{im}\left(  P\right)  _{h}^{u},s_{0}^{\prime
}\right)  \right\vert =\left\Vert s_{0}^{\prime}\right\Vert _{2},
\]
that is, $\left\Vert s_{0}\right\Vert \leq\left\Vert s_{0}^{\prime}\right\Vert
_{2}$. Using again Lemma \ref{lemUTM11}, we deduce that
\[
\mu\left(  s\right)  ^{1/2}\left\Vert s_{0}\right\Vert ^{2}=\left(  \mu\left(
s\right)  \left(  s_{0},s_{0}\right)  ^{2}\right)  ^{1/2}\leq\left(
\int\left\vert \left(  s_{0},t\right)  \right\vert ^{2}d\mu\right)
^{1/2}=\left\Vert s_{0}\left(  \cdot\right)  \right\Vert _{2}\leq\left\Vert
s_{0}\right\Vert .
\]
Hence $\mu\left(  s\right)  ^{1/2}\left\Vert s_{0}\right\Vert ^{2}%
\leq\left\Vert s_{0}\left(  \cdot\right)  \right\Vert _{2}\leq\left\Vert
s_{0}\right\Vert \leq\left\Vert s_{0}^{\prime}\right\Vert _{2}\leq\left(
\mu\left(  s\right)  ^{-1}-1\right)  ^{1/2}$. In particular, $\left\Vert
s_{0}\right\Vert ^{2}+1\leq\mu\left(  s\right)  ^{-1}$ or $\mu\left(
s\right)  \leq\left(  1+\left\Vert s_{0}\right\Vert ^{2}\right)  ^{-1}$.
\end{proof}

\begin{corollary}
\label{corUTM2}For every $\lambda\in\widetilde{X}$ there corresponds $\mu
\in\mathcal{U}\left(  X\right)  $ such that $\lambda\in\sqrt{2}%
\operatorname{ball}L^{2}\left(  X,\mu\right)  _{h}$ modulo $H^{\perp}$.
\end{corollary}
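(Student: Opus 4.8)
The plan is to reduce the statement to Corollary \ref{corUTM1} by constructing, for a given $\lambda\in\widetilde{X}$, an explicit two-point unital measure on $X$ carrying mass exactly $1/2$ at the relevant state. Throughout, $L^{2}\left(X,\mu\right)$ is identified with a subspace of $\mathcal{M}\left(X\right)$ via $\eta\mapsto\eta\mu$, and the assertion that $\lambda$ lies in $\sqrt{2}\operatorname{ball}L^{2}\left(X,\mu\right)_{h}$ modulo $H^{\perp}$ is read as: the coset $\lambda+H^{\perp}$ contains an element $\eta\mu$ with $\eta\in\sqrt{2}\operatorname{ball}L^{2}\left(X,\mu\right)_{h}$; equivalently, $\kappa^{\ast}\left(\eta\mu\right)=\kappa^{\ast}\left(\lambda\right)$, since $H^{\perp}=\ker\left(\kappa^{\ast}\right)$.

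First I would invoke Corollary \ref{corDL1} to write $\lambda=\delta_{s}+\nu$ with $s\in S\left(\mathfrak{c}\right)$ and $\nu\in H^{\perp}$, and expand $s=s_{0}+e$ with $s_{0}\in\operatorname{ball}\left(H_{h}^{e}\right)$. If $s_{0}=0$, then $s=e$ and $\mu=\delta_{e}\in\mathcal{U}\left(X\right)$ already works: $u\mu=\delta_{e}$ with $\left\Vert u\right\Vert _{2}=1\leq\sqrt{2}$, and $\kappa^{\ast}\left(\delta_{e}\right)=e=\kappa^{\ast}\left(\lambda\right)$ by (\ref{Atom11}). If $s_{0}\neq0$, then $s$ and $s'=-s_{0}+e$ are distinct points of $X$, and by Example \ref{exmpUM32}, applied to the finite set $F_{0}=\left\{s_{0},-s_{0}\right\}\subseteq\operatorname{ball}\left(H_{h}^{e}\right)$ (whose convex hull contains the origin), the atomic probability measure
\[
\mu=\tfrac{1}{2}\delta_{s}+\tfrac{1}{2}\delta_{s'}
\]
belongs to $\mathcal{U}\left(X\right)$, since $\tfrac{1}{2}s_{0}+\tfrac{1}{2}\left(-s_{0}\right)=0$.

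Now $s$ is a $\mu$-mass with $\mu\left(s\right)=1/2$. By Corollary \ref{corUTM1} (whose proof, via Lemma \ref{lematom1} and Theorem \ref{thUTM1}, produces $\eta=\mu\left(s\right)^{-1}\left[s\right]$ with $\eta\mu=\delta_{s}$ and $\left\Vert \eta\right\Vert _{2}=\mu\left(s\right)^{-1/2}=\sqrt{2}$), the function $\eta$ lies in $\sqrt{2}\operatorname{ball}L^{2}\left(X,\mu\right)_{h}$ and represents $\delta_{s}$. Since $\eta\mu=\delta_{s}$ and $\nu\in H^{\perp}$, we get $\eta\mu-\lambda=\delta_{s}-\delta_{s}-\nu=-\nu\in H^{\perp}$, so $\lambda$ agrees with $\eta\mu\in\sqrt{2}\operatorname{ball}L^{2}\left(X,\mu\right)_{h}$ modulo $H^{\perp}$, which is the claim. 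I do not expect a genuine obstacle here; the only delicate point is the choice of measure, for which the mass at $s$ must be at least $1/2$, and by Corollary \ref{corUTM21} this cannot be improved when $\left\Vert s_{0}\right\Vert =1$ — the symmetric two-point measure realizes the borderline value exactly.
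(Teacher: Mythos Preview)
Your proof is correct and follows essentially the same approach as the paper's: both construct the symmetric two-point measure $\mu=\tfrac{1}{2}\delta_{s}+\tfrac{1}{2}\delta_{-s_{0}+e}$ so that $\mu(s)=1/2$, then invoke Corollary~\ref{corUTM1} (via Theorem~\ref{thUTM1}) to produce $\eta\in\sqrt{2}\operatorname{ball}L^{2}(X,\mu)_{h}$ with $\kappa^{\ast}(\eta\mu)=s=\kappa^{\ast}(\lambda)$. The only cosmetic difference is that you split off the degenerate case $s_{0}=0$ explicitly with $\mu=\delta_{e}$, whereas the paper dismisses $\dim(H)\leq 1$ as trivial and otherwise lets the two-point construction degenerate silently.
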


\begin{proof}
The assertion is trivial for $\dim\left(  H\right)  \leq1$. Suppose that
$\dim\left(  H\right)  \geq2$. Take $\lambda\in\widetilde{X}$ with
$s=\kappa^{\ast}\left(  \lambda\right)  \in X$. Notice that $s=s_{0}+e$ for
$s_{0}\in\operatorname{ball}\overline{H}_{h}^{e}$. Put $s-=-s_{0}+e$, which is
another point of $X$. Then $\mu=2^{-1}\delta_{s}+2^{-1}\delta_{s-}$ is a
unital measure on $X$ (see Example \ref{exmpUM32}). Moreover, $\mu\left(
s\right)  =1/2$. By Corollary \ref{corUTM1}, $s=\iota^{\ast}\left(
\overline{\eta}\right)  $ for a certain $\overline{\eta}\in S\left(
L^{2}\left(  X,\mu\right)  _{+}\right)  $. Then $\eta\mu\in\widetilde{X}%
\cap\sqrt{2}\operatorname{ball}L^{2}\left(  X,\mu\right)  _{h}$ thanks to
Theorem \ref{thUTM1}, and $\kappa^{\ast}\left(  \eta\mu\right)  =s$. Whence
$\kappa^{\ast}\left(  \lambda-\eta\mu\right)  =0$ or $\lambda-\eta\mu\in
H^{\perp}$.
\end{proof}

\subsection{$\mu$-concentration sets}

As above for every $s\in X$ we use the notation $s=\overline{s_{0}}%
+\overline{e}$ with $s_{0}\in\operatorname{ball}\overline{H}_{h}^{e}$. Let
$S\subseteq X$ be a subset. By \textit{probabilistic mass on }$S$ we mean a
summable function $m:S\rightarrow\mathbb{R}_{+}$ such that $\sum_{s\in
S}m\left(  s\right)  \leq1$ and
\[
\left\Vert \sum_{s\in S}m\left(  s\right)  s_{0}\right\Vert \leq1-\sum_{s\in
S}m\left(  s\right)  .
\]
In this case, we say that $S$ is \textit{a concentration set} \textit{with a
mass }$m$. Note that $\sum_{s\in S}m\left(  s\right)  s_{0}$ converges
absolutely, for $\sum_{s\in S}\left\Vert m\left(  s\right)  s_{0}\right\Vert
\leq\sum_{s\in S}m\left(  s\right)  \leq1$. The function $m=0$ is a mass on
each subset $S$ automatically. Basically, we deal with a nontrivial mass $m$
on $S$, in this case, we say that $m$ is a positive mass on $S$.

\begin{lemma}
\label{lemConcM1}A subset $S\subseteq X$ is a concentration set with a
positive mass $m$ iff there is a unital measure $\mu$ on $X$ such that
$\mu:S\rightarrow\mathbb{R}_{+}$ is a nozero function. In this case, $\mu$
defines a mass on $S$ and $\mu\geq m$.
\end{lemma}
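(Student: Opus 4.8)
\textbf{Proof proposal for Lemma \ref{lemConcM1}.}
The plan is to prove the two implications separately. For the forward direction, suppose $S$ is a concentration set with a positive mass $m$, so $\sum_{s\in S}m(s)\le 1$ and $\left\Vert\sum_{s\in S}m(s)s_0\right\Vert\le 1-\sum_{s\in S}m(s)$. The idea is to build a unital atomic measure on $X$ whose restriction to $S$ is (a scalar multiple of) $m$, using the freedom afforded by Example \ref{exmpUM32} and Corollary \ref{corDL1}. First I would set $r=1-\sum_{s\in S}m(s)\ge 0$ and $\zeta_0=\sum_{s\in S}m(s)s_0\in \overline{H}_h^e$, noting the series converges absolutely since $\sum_{s\in S}\Vert m(s)s_0\Vert\le\sum m(s)\le 1$ and that $\Vert\zeta_0\Vert\le r$. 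If $r=0$ and $\zeta_0=0$, the measure $\mu_0=\sum_{s\in S}m(s)\delta_s$ already lies in $\mathcal{P}(X)$ and satisfies $\kappa^\ast(\mu_0)=\sum_{s\in S}m(s)s=\overline{e}$ by (\ref{Atom11}) together with $\sum m(s)s_0=0$, so $\mu_0\in\mathcal{U}(X)$; otherwise I would add a correcting atom. Concretely, consider the point $w=-\Vert\zeta_0\Vert^{-1}\zeta_0+\overline{e}\in X$ (when $\zeta_0\ne 0$), with $w_0=-\Vert\zeta_0\Vert^{-1}\zeta_0\in\operatorname{ball}\overline{H}_h^e$, and the atomic measure
\[
\mu=\sum_{s\in S}m(s)\delta_s+\Vert\zeta_0\Vert\,\delta_w+\bigl(r-\Vert\zeta_0\Vert\bigr)\delta_e,
\]
with the convention that the $\delta_w$ term is absent if $\zeta_0=0$. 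The total mass is $\sum_{s\in S}m(s)+\Vert\zeta_0\Vert+r-\Vert\zeta_0\Vert=1$, all coefficients are nonnegative since $\Vert\zeta_0\Vert\le r$, so $\mu\in\mathcal{P}(X)$; and $\kappa^\ast(\mu)=\zeta_0+\Vert\zeta_0\Vert w_0+\overline{e}=\zeta_0-\zeta_0+\overline{e}=\overline{e}$ by (\ref{Atom11}), hence $\mu\in\mathcal{U}(X)$. Since the atoms appearing in the $S$-part and the correcting atoms are points of $X$ and could a priori coincide, I would phrase the argument so that $\mu(s)\ge m(s)$ for each $s\in S$ (the correcting atoms only add mass), which gives $\mu|_S$ nonzero and $\mu\ge m$ on $S$; and $\mu|_S$ itself is then a (possibly larger) mass on $S$, since $\sum_{s\in S}\mu(s)\le 1$ and $\left\Vert\sum_{s\in S}\mu(s)s_0\right\Vert\le 1-\sum_{s\in S}\mu(s)$ follows from the same relation applied to the unital measure $\mu$ (the defining inequality for a mass is exactly the unitality constraint $\kappa^\ast(\mu)=\overline e$ rearranged, as recorded in the paragraph preceding Example \ref{exmpUM32}).

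For the converse, suppose $\mu\in\mathcal{U}(X)$ with $\mu|_S:S\to\mathbb{R}_+$ nonzero. Writing $\mu=\sum_{t}\mu(t)\delta_t+\mu_c$ where $\mu_c$ has no atoms on $X$ (or more carefully splitting off the atomic part on $S$), unitality gives $\kappa^\ast(\mu)=\overline e$; applying $\kappa^\ast$ and using (\ref{Atom11}) for the atomic part together with $\kappa^\ast(\mathcal{M}(X)_+)\subseteq\overline{\mathfrak{c}}$ and a positivity estimate yields, after restricting attention to the atoms on $S$, the inequality $\left\Vert\sum_{s\in S}\mu(s)s_0\right\Vert\le 1-\sum_{s\in S}\mu(s)$. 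Indeed $\sum_{s\in S}\mu(s)\le\mu(X)=1$, and writing $\mu=\sum_{s\in S}\mu(s)\delta_s+\lambda$ with $\lambda\in\mathcal{M}(X)_+$, $\Vert\lambda\Vert=1-\sum_{s\in S}\mu(s)$, one has $\overline e=\kappa^\ast(\mu)=\sum_{s\in S}\mu(s)s+\kappa^\ast(\lambda)$, so $\sum_{s\in S}\mu(s)s_0+\sum_{s\in S}\mu(s)\overline e+\kappa^\ast(\lambda)=\overline e$; since $\kappa^\ast(\lambda)\in\mathbb{R}_+ S(\mathfrak c)$ has the form $\overline{\eta}+r'\overline e$ with $\Vert\overline\eta\Vert\le r'=1-\sum_{s\in S}\mu(s)$ (by Corollary \ref{cormincbar}, $\overline{\mathfrak c}=\mathbb{R}_+S(\overline{\mathfrak c})$-type description applied in $\overline H$), comparing the $e$-components forces the scalar balance and comparing the $\overline{H}_h^e$-components gives $\sum_{s\in S}\mu(s)s_0=-\overline\eta$, whence $\left\Vert\sum_{s\in S}\mu(s)s_0\right\Vert=\Vert\overline\eta\Vert\le 1-\sum_{s\in S}\mu(s)$. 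Thus $m:=\mu|_S$ is a positive mass on $S$, so $S$ is a concentration set.

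The main obstacle I anticipate is the bookkeeping in the forward direction around possible coincidences of atoms (the points $s\in S$, the correcting point $w$, and $e$ may overlap), and more substantively making precise the claim $\mu\ge m$ on $S$: one must ensure the correcting atoms never subtract from the prescribed $S$-masses, which is automatic in the additive construction above but needs to be stated carefully. A secondary subtlety is the measurability/summability of $m$ on a possibly uncountable $S$: since $\sum_{s\in S}m(s)\le 1$, the support of $m$ is countable, so $\mu$ is genuinely an atomic (hence Radon) measure on $X$ and the identity (\ref{Atom11}) applies with $\Vert\mu\Vert<\infty$. With these points handled, everything else is a direct application of Corollary \ref{corDL1}, the characterization of $\mathcal{U}(X)$ as $\mathcal{M}(X)_+\cap(\delta_e+H^\perp\cap\mathcal{M}(X)_h)$, and formula (\ref{Atom11}).
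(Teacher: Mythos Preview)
Your proof is correct and handles both implications as the paper does, with the converse direction (unital measure $\Rightarrow$ mass) essentially identical to the paper's argument: decompose $\mu=\sum_{s\in S}\mu(s)\delta_s+\lambda$ with $\lambda\ge 0$, observe $\kappa^\ast(\lambda)\in\overline{\mathfrak c}$, and read off the mass inequality from the $\overline{H}_h^e$- and $\overline e$-components of $\kappa^\ast(\mu)=\overline e$.

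The genuine difference is in the direction mass $\Rightarrow$ unital measure. The paper sets $\eta=-\sum_{s\in S}m(s)s_0+\bigl(1-\sum_{s\in S}m(s)\bigr)e\in\mathfrak c$, notes that the positive functional $\overline\eta$ on $\kappa(H)\subseteq C(X)$ admits a positive extension to a measure $\nu\in\mathcal M(X)_+$, and puts $\mu=\sum_{s\in S}m(s)\delta_s+\nu$; then $\mu(s)=m(s)+\nu(s)\ge m(s)$. You instead exhibit $\nu$ explicitly as a combination of at most two atoms, $\|\zeta_0\|\delta_w+(r-\|\zeta_0\|)\delta_e$ with $w=-\|\zeta_0\|^{-1}\zeta_0+\overline e$, exploiting Corollary~\ref{cormincbar} that $\overline{\mathfrak c}=\mathbb R_+ S(\mathfrak c)$. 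Your route is more elementary and constructive---it bypasses the positive-extension step entirely and yields an atomic $\mu$---while the paper's abstract extension makes no structural choice for $\nu$ and would survive verbatim in settings where an explicit preimage point $w$ is not available. Your bookkeeping remarks about possible atom coincidences and the countable support of $m$ are apt and correctly resolved: the correcting atoms only add mass, so $\mu(s)\ge m(s)$ on $S$ regardless of overlaps.
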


\begin{proof}
First assume that there is $\mu\in\mathcal{P}\left(  X\right)  $ such that
$\mu:S\rightarrow\mathbb{R}_{+}$, $s\mapsto\mu\left(  s\right)  $ is a
nontrivial function. Then $\left\{  \mu\left(  s\right)  \delta_{s}:s\in
S\right\}  $ is a summable family of measures on $X$ and $\mu=\sum_{s\in S}%
\mu\left(  s\right)  \delta_{s}+\nu$ for some $\nu\in\mathcal{M}\left(
X\right)  _{+}$ \cite[Ch. 5, 5.10, Proposition 15]{BourInt}. Certainly,
$\sum_{s\in S}\mu\left(  s\right)  =\sup\left\{  \mu\left(  F\right)
:F\subseteq S\right\}  \leq\mu\left(  X\right)  =1$, where $F$ is running over
all finite subsets of $S$. But
\begin{align*}
\left(  \zeta,e\right)   &  =\left\langle \zeta,\overline{e}\right\rangle
=\left\langle \zeta\left(  \cdot\right)  ,\mu\right\rangle =\left\langle
\zeta\left(  \cdot\right)  ,\sum_{s\in S}\mu\left(  s\right)  \delta
_{s}\right\rangle +\left\langle \zeta\left(  \cdot\right)  ,\nu\right\rangle
=\sum_{s\in S}\mu\left(  s\right)  \zeta\left(  s\right)  +\left\langle
\zeta\left(  \cdot\right)  ,\nu\right\rangle \\
&  =\sum_{s\in S}\mu\left(  s\right)  \left(  \zeta,s_{0}\right)  +\sum_{s\in
S}\mu\left(  s\right)  \left(  \zeta,e\right)  +\left\langle \zeta\left(
\cdot\right)  ,\nu\right\rangle ,
\end{align*}
which in turn implies that $\left\langle \zeta,\overline{\eta}\right\rangle
=\left(  \zeta,\eta\right)  =\left\langle \zeta\left(  \cdot\right)
,\nu\right\rangle $ for $\eta=-\sum_{s\in S}\mu\left(  s\right)  s_{0}+\left(
1-\sum_{s\in S}\mu\left(  s\right)  \right)  e\in H$. It follows that
$\iota^{\ast}\left(  \nu\right)  =\overline{\eta}$. But $\nu\geq0$, therefore
$\overline{\eta}\in\overline{\mathfrak{c}}$ or $\eta\in\mathfrak{c}$. Since
$-\sum_{s\in S}\mu\left(  s\right)  s_{0}\in H_{h}^{e}$, we deduce that
$\left\Vert \sum_{s\in S}\mu\left(  s\right)  s_{0}\right\Vert \leq
1-\sum_{s\in S}\mu\left(  s\right)  $. The latter means that $\mu$ is a mass
on $S$.

Conversely, suppose that $m$ is a nonzero mass on $S$. Then
\[
\eta=-\sum_{s\in S}m\left(  s\right)  s_{0}+\left(  1-\sum_{s\in S}m\left(
s\right)  \right)  e\in\mathfrak{c}%
\]
and $\overline{\eta}$ defines a positive functional, which in turn admits an
extension up to a positive measure $\nu$ on $X$. Put $\mu=\sum_{s\in
S}m\left(  s\right)  \delta_{s}+\nu\in\mathcal{M}\left(  X\right)  _{+}$.
Then
\[
\left\langle \zeta\left(  \cdot\right)  ,\mu\right\rangle =\sum_{s\in
S}m\left(  s\right)  \left(  \zeta,s\right)  +\left(  \zeta,\eta\right)
=\left(  \zeta,e\right)  =\left\langle \zeta\left(  \cdot\right)
,\overline{e}\right\rangle
\]
for all $\zeta\in H$. In particular, $\mu\left(  X\right)  =\left\langle
e\left(  \cdot\right)  ,\mu\right\rangle =\left(  e,e\right)  =1$, which means
that $\mu$ is a unital measure on $X$. Finally, $\mu\left(  s\right)
=m\left(  s\right)  +\nu\left(  s\right)  \geq m\left(  s\right)  $ for all
$s\in S$ (see \cite[Ch. 5, 3.5, Corollary 1]{BourInt}).
\end{proof}

Notice that if $m=0$ the assertion of Lemma \ref{lemConcM1} follows with any
unital measure $\mu$ on $X$.

\subsection{The exact and finite $H$-measures on $X$}

A unital measure $\mu$ on the state space $X$ is said to be \textit{a finite
}$H$\textit{-measure} if $\dim\iota\left(  H\right)  <\infty$. Notice that the
latter is equivalent to the fact that $\iota\left(  H\right)  ^{u}%
=\operatorname{im}\left(  P\right)  ^{u}$ and $\dim\iota\left(  H\right)
^{u}<\infty$. For example, if $\mu$ is a unital atomic measure with its finite
support then it is a finite $H$-measure on $X$. By Lemma \ref{lemUTM11},
$\iota\left(  \operatorname{ball}H_{h}^{e}\right)  ^{-}\subseteq
\operatorname{ball}\operatorname{im}\left(  P\right)  _{h}^{u}$ for every
unital measure $\mu$ on $X$. If the inclusion $\operatorname{ball}%
\operatorname{im}\left(  P\right)  _{h}^{u}\subseteq\iota\left(
\operatorname{ball}H_{h}^{e}\right)  ^{-}$ (or the equality $\iota\left(
\operatorname{ball}H_{h}^{e}\right)  ^{-}=\operatorname{ball}\operatorname{im}%
\left(  P\right)  _{h}^{u}$) holds for $\mu$, we say that $\mu$ is \textit{an
exact }$H$-\textit{measure on }$X$.

\begin{proposition}
\label{propEXM1}An exact $H$-measure $\mu$ on $X$ is a finite $H$-measure
automatically. In this case, we have $\operatorname{ball}\operatorname{im}%
\left(  P\right)  _{h}=\iota\left(  \operatorname{ball}H_{h}\right)  ^{-}$,
$\operatorname{im}\left(  P\right)  \cap L^{2}\left(  X,\mu\right)  _{+}%
=\iota\left(  \mathfrak{c}\right)  ^{-}$ and $\left\Vert \iota^{\ast}\left(
\eta_{0}\right)  \right\Vert =\left\Vert \eta_{0}\right\Vert _{2}$ for every
$\eta_{0}\in\operatorname{im}\left(  P\right)  _{h}^{u}$. In particular,
$\left\Vert s_{0}\right\Vert =\left\Vert s_{0}^{\prime}\right\Vert _{2}$
whenever $s=s_{0}+e$ is a $\mu$-mass in $X$.
\end{proposition}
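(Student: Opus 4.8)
The plan is to work from the definition of exactness and propagate the equality $\iota(\operatorname{ball}H_h^e)^- = \operatorname{ball}\operatorname{im}(P)_h^u$ upward through the various subspaces. First I would establish that $\mu$ is a finite $H$-measure. Since $\iota$ is a Hilbert--Schmidt operator (Lemma \ref{lemUTM11}), and in particular compact, the closed bounded set $\iota(\operatorname{ball}H_h^e)^-$ is compact in $\operatorname{im}(P)_h^u$. By exactness this compact set equals $\operatorname{ball}\operatorname{im}(P)_h^u$, the closed unit ball of the real Hilbert space $\operatorname{im}(P)_h^u$. A Hilbert space whose closed unit ball is compact is finite-dimensional (Riesz's lemma), so $\dim\operatorname{im}(P)_h^u<\infty$; complexifying and adding the line $\mathbb{C}u$ gives $\dim\operatorname{im}(P)<\infty$, and since $\iota(H)^-=\operatorname{im}(P)$ with $\iota(H)$ dense, finite-dimensionality forces $\iota(H)=\operatorname{im}(P)$. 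Hence $\mu$ is a finite $H$-measure.

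Next I would upgrade the ball equality from the $e$-orthogonal part to the full hermitian part. We have the orthogonal decomposition $\operatorname{im}(P)_h = \operatorname{im}(P)_h^u \oplus \mathbb{R}u$ and, on the source side, $H_h = H_h^e \oplus \mathbb{R}e$ with $\iota(e)=u$. Since $\iota$ is $\ast$-linear and unital, a hermitian $\zeta=\zeta_0+re$ maps to $\iota(\zeta)=\iota(\zeta_0)+ru$; combining the established identity $\iota(\operatorname{ball}H_h^e)^-=\operatorname{ball}\operatorname{im}(P)_h^u$ with this splitting (and using that in finite dimensions the relevant images are already closed) yields $\iota(\operatorname{ball}H_h)^-=\operatorname{ball}\operatorname{im}(P)_h$. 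The statement $\operatorname{im}(P)\cap L^2(X,\mu)_+=\iota(\mathfrak{c})^-$ then follows by the same argument already used in Lemma \ref{lemUTM2} for $\mathfrak{c}_\mu$: one inclusion is $\iota(\mathfrak{c})\subseteq L^2(X,\mu)_+\cap\operatorname{im}(P)$ (immediate from $\iota$ positive, Lemma \ref{lemUTM2}), and for the reverse, given $\eta=\eta_0+ru\in\operatorname{im}(P)\cap L^2(X,\mu)_+$ with $\|\eta_0\|_2\le r$, shrink slightly to $\|\eta_0\|_2<r$, use exactness to write $\eta_0=\lim_n\iota(\zeta_{0,n})$ with $\zeta_{0,n}\in H_h^e$ and eventually $\|\iota(\zeta_{0,n})\|_2<r$, so that $\zeta_{0,n}+re\in\mathfrak{c}$ and $\eta\in\iota(\mathfrak{c})^-$.

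For the isometry statement $\|\iota^{\ast}(\eta_0)\|=\|\eta_0\|_2$ on $\operatorname{im}(P)_h^u$, I would compute the norm dually. By Lemma \ref{lemUTM11}, $\iota(\operatorname{ball}H_h^e)\subseteq\operatorname{ball}\operatorname{im}(P)_h^u$, so for $\eta_0\in\operatorname{im}(P)_h^u$,
\[
\|\iota^{\ast}(\eta_0)\|=\sup|(\operatorname{ball}H_h^e,\iota^{\ast}(\eta_0))|=\sup|(\iota(\operatorname{ball}H_h^e),\eta_0)|\le\sup|(\operatorname{ball}\operatorname{im}(P)_h^u,\eta_0)|=\|\eta_0\|_2,
\]
which is the inequality already in Corollary \ref{corUTM21}. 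For the reverse, exactness gives $\iota(\operatorname{ball}H_h^e)^-=\operatorname{ball}\operatorname{im}(P)_h^u$, so the supremum over $\iota(\operatorname{ball}H_h^e)$ already exhausts $\operatorname{ball}\operatorname{im}(P)_h^u$; hence the inequality is an equality, i.e. $\|\iota^{\ast}(\eta_0)\|=\|\eta_0\|_2$. Specializing to $\eta_0=s_0'\in\operatorname{im}(P)_h^u$ for a $\mu$-mass $s=s_0+e$ in $X$, where $s_0=\iota^{\ast}(s_0')$ (as recorded after Corollary \ref{corUTM21}), gives $\|s_0\|=\|s_0'\|_2$.

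The main obstacle I anticipate is the first step: converting exactness into finite-dimensionality cleanly. One must be careful that $\iota(\operatorname{ball}H_h^e)^-$ is genuinely the \emph{whole} closed unit ball of $\operatorname{im}(P)_h^u$ (not merely a large closed convex subset), which is exactly what the hypothesis $\operatorname{ball}\operatorname{im}(P)_h^u\subseteq\iota(\operatorname{ball}H_h^e)^-$ provides; and that compactness of this ball, coming from the Hilbert--Schmidt (hence compact) nature of $\iota$ established in Lemma \ref{lemUTM11}, forces $\dim<\infty$ via Riesz. Once finite-dimensionality is secured, all the closures become automatic and the remaining steps are the routine $\ast$-linear bookkeeping and the duality computation sketched above.
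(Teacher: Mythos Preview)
Your approach is the same as the paper's: compactness of $\iota$ together with exactness forces $\operatorname{im}(P)_h^u$ to be finite-dimensional (the paper invokes the Open Mapping Theorem for this, you invoke Riesz's lemma directly --- equivalent), then the ball and cone identities are obtained from the orthogonal splitting $\zeta=\zeta_0+re$, and the isometry $\|\iota^\ast(\eta_0)\|=\|\eta_0\|_2$ by the duality computation you wrote.

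There is one slip in the cone step. You write that exactness gives approximants $\zeta_{0,n}\in H_h^e$ with eventually $\|\iota(\zeta_{0,n})\|_2<r$, ``so that $\zeta_{0,n}+re\in\mathfrak{c}$''. That implication is false: the bound $\|\iota(\zeta_{0,n})\|_2<r$ only yields $\zeta_{0,n}+re\in\mathfrak{c}_\mu$, which is what Lemma~\ref{lemUTM2} already gives without exactness. Membership in $\mathfrak{c}$ requires $\|\zeta_{0,n}\|\le r$ in $H$. The point of exactness is exactly that $r^{-1}\eta_0\in\operatorname{ball}\operatorname{im}(P)_h^u=\iota(\operatorname{ball}H_h^e)^-$, so one can choose the approximants with $\|\zeta_{0,n}\|\le r$, and \emph{that} $H$-norm bound is what puts $\zeta_{0,n}+re$ in $\mathfrak{c}$. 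This is how the paper runs the argument; once you replace the $L^2$-norm condition by the $H$-norm condition the step is correct.
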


\begin{proof}
Suppose that $\operatorname{ball}\operatorname{im}\left(  P\right)  _{h}%
^{u}\subseteq\iota\left(  \operatorname{ball}H_{h}^{e}\right)  ^{-}$. Then
$\iota:H_{h}^{e}\rightarrow\operatorname{im}\left(  P\right)  _{h}^{u}$ turns
out to be an open mapping thanks to the Open Mapping Theorem. But it is a
compact operator by virtue of Lemma \ref{lemUTM11}. It follows that
$\dim\operatorname{im}\left(  P\right)  _{h}^{u}<\infty$, which in turn
implies that $\dim\operatorname{im}\left(  P\right)  <\infty$. Thus
$\operatorname{im}\left(  P\right)  =\iota\left(  H\right)  ^{-}=\iota\left(
H\right)  $, which means that $\mu$ is a finite $H$-measure.

Now prove that $\operatorname{ball}\operatorname{im}\left(  P\right)
_{h}\subseteq\iota\left(  \operatorname{ball}H_{h}\right)  ^{-}$ whenever
$\mu$ is an exact $H$-measure. Take $\theta\in\operatorname{im}\left(
P\right)  _{h}$ with $\left\Vert \theta\right\Vert _{2}<1$. Then $\theta
=\zeta\left(  \cdot\right)  $ for some $\zeta\in H_{h}$. But $\zeta=\zeta
_{0}+re$, $\zeta_{0}\in H_{h}^{e}$. Then $\theta=\zeta_{0}\left(
\cdot\right)  +ru$, $\zeta_{0}\left(  \cdot\right)  \in\iota\left(  H_{h}%
^{e}\right)  \subseteq\operatorname{im}\left(  P\right)  _{h}^{u}$ (see Lemma
\ref{lemUTM11}) and $\left\Vert \zeta_{0}\left(  \cdot\right)  \right\Vert
_{2}^{2}+r^{2}=\left\Vert \theta\right\Vert _{2}^{2}<1$, that is, $\zeta
_{0}\left(  \cdot\right)  \in\sqrt{1-r^{2}}\operatorname{ball}%
\operatorname{im}\left(  P\right)  _{h}^{u}=\sqrt{1-r^{2}}\iota\left(
\operatorname{ball}H_{h}^{e}\right)  ^{-}$. In particular, $\zeta_{0}\left(
\cdot\right)  =\lim_{n}\zeta_{0,n}\left(  \cdot\right)  $ for some sequence
$\left(  \zeta_{0,n}\right)  _{n}\subseteq\sqrt{1-r^{2}}\operatorname{ball}%
H_{h}^{e}$. It follows that $\theta=\lim_{n}\zeta_{n}\left(  \cdot\right)  $
with $\zeta_{n}\left(  \cdot\right)  =\zeta_{0,n}\left(  \cdot\right)
+ru\in\iota\left(  \operatorname{ball}H_{h}\right)  $, that is, $\theta
\in\iota\left(  \operatorname{ball}H_{h}\right)  ^{-}$.

Further prove that $\operatorname{im}\left(  P\right)  \cap L^{2}\left(
X,\mu\right)  _{+}=\iota\left(  \mathfrak{c}\right)  ^{-}$. Based on Lemma
\ref{lemUTM2}, it suffices to prove that $\zeta\left(  \cdot\right)  \in
\iota\left(  \mathfrak{c}\right)  ^{-}$ for every $\zeta\in\mathfrak{c}_{\mu}%
$. Take $\zeta=\zeta_{0}+re\in\mathfrak{c}_{\mu}$ with $\zeta_{0}\in H_{h}%
^{e}$, $\left\Vert \zeta_{0}\left(  \cdot\right)  \right\Vert _{2}\leq r$.
Since $r^{-1}\zeta_{0}\left(  \cdot\right)  \in\operatorname{ball}\iota\left(
H_{h}^{e}\right)  \subseteq\iota\left(  \operatorname{ball}H_{h}^{e}\right)
^{-}$, it follows that $r^{-1}\zeta_{0}\left(  \cdot\right)  =\lim_{n}%
\zeta_{0,n}\left(  \cdot\right)  $ for a sequence $\left(  \zeta_{0,n}\right)
_{n}\subseteq\operatorname{ball}H_{h}^{e}$. Then $\zeta_{n}=r\zeta_{0,n}%
+re\in\mathfrak{c}$ and $\zeta\left(  \cdot\right)  =\lim_{n}\zeta_{n}\left(
\cdot\right)  \in\iota\left(  \mathfrak{c}\right)  ^{-}$ in $L^{2}\left(
X,\mu\right)  $.

Finally, take $\eta_{0}\in\operatorname{im}\left(  P\right)  _{h}^{u}$. Taking
into account that $\operatorname{ball}\operatorname{im}\left(  P\right)
_{h}^{u}=\iota\left(  \operatorname{ball}H_{h}^{e}\right)  ^{-}$ and
$\iota^{\ast}\left(  \eta_{0}\right)  \in H_{h}^{e}$, we deduce that
\[
\left\Vert \eta_{0}\right\Vert _{2}=\sup\left\vert \left(  \iota\left(
\operatorname{ball}H_{h}^{e}\right)  ,\eta_{0}\right)  \right\vert
=\sup\left\vert \left(  \operatorname{ball}H_{h}^{e},\iota^{\ast}\left(
\eta_{0}\right)  \right)  \right\vert =\left\Vert \iota^{\ast}\left(  \eta
_{0}\right)  \right\Vert ,
\]
that is, $\left\Vert \iota^{\ast}\left(  \eta_{0}\right)  \right\Vert
=\left\Vert \eta_{0}\right\Vert _{2}$. If $s=s_{0}+e$ is a $\mu$-mass in $X$
then $s_{0}^{\prime}\in\operatorname{im}\left(  P\right)  _{h}^{u}$ and
$\iota^{\ast}\left(  s_{0}^{\prime}\right)  =s_{0}$. Whence $\left\Vert
s_{0}\right\Vert =\left\Vert s_{0}^{\prime}\right\Vert _{2}$.
\end{proof}

Now take $s\in X$ such that $s_{0}\neq0$. As in the proof of Corollary
\ref{corUTM2}, we use the notation $s-=-s_{0}+e$ for the symmetric opposite of
$s$ in $X$. Notice that the unital measure $\mu_{s}=2^{-1}\delta_{s}%
+2^{-1}\delta_{s-}$ on $X$ is a finite $H$-measure, and the mapping
$U_{s}:L^{2}\left(  X,\mu_{s}\right)  \rightarrow\mathbb{C}^{2}$, $U_{s}%
\eta=\left(  \eta\left(  s\right)  /\sqrt{2},\eta\left(  s-\right)  /\sqrt
{2}\right)  $ implements a unitary equivalence of the Hilbert spaces. Indeed,
\[
\left\Vert U_{s}\eta\right\Vert =\left(  \left\vert \eta\left(  s\right)
\right\vert ^{2}/2+\left\vert \eta\left(  s-\right)  \right\vert
^{2}/2\right)  ^{1/2}=\left(  \int\left\vert \eta\right\vert ^{2}d\mu
_{s}\right)  ^{1/2}=\left\Vert \eta\right\Vert _{2}%
\]
for all $\eta\in L^{2}\left(  X,\mu_{s}\right)  $. Moreover, $U_{s}u=\left(
1/\sqrt{2},1/\sqrt{2}\right)  =e_{s}$ and $L^{2}\left(  X,\mu_{s}\right)
^{u}=\mathbb{C}U_{s}^{\ast}f_{s}$ for $f_{s}=\left(  1/\sqrt{2},-1/\sqrt
{2}\right)  \in\mathbb{C}^{2}$. Thus $\left(  \mathbb{C}^{2},e_{s}\right)
=\ell_{2}\left(  2\right)  $ is a unital Hilbert space equipped with the
unital cone%
\[
\ell_{2}\left(  2\right)  _{+}=\left\{  \lambda f_{s}+re_{s}:\lambda
,r\in\mathbb{R},\left\vert \lambda\right\vert \leq r\right\}  .
\]
In particular, $\iota:H\rightarrow L^{2}\left(  X,\mu_{s}\right)  $ is reduced
to the following mapping $\iota_{s}:H\rightarrow\ell_{2}\left(  2\right)  $,
$\iota_{s}\left(  \zeta\right)  =\left(  \left(  \zeta,s\right)  /\sqrt
{2},\left(  \zeta,s-\right)  /\sqrt{2}\right)  $. If $\zeta_{0}\in H_{h}^{e}$
then $\iota_{s}\left(  \zeta_{0}\right)  =\left(  \left(  \zeta_{0}%
,s_{0}\right)  /\sqrt{2},-\left(  \zeta_{0},s_{0}\right)  /\sqrt{2}\right)
=\left(  \zeta_{0},s_{0}\right)  f_{s}$. In particular, $\iota_{s}\left(
s_{0}\right)  =f_{s}$, which in turn implies that $P$ is the identity
projection and%
\[
\iota_{s}\left(  \operatorname{ball}H_{h}^{e}\right)  =\left\{  \left(
\zeta_{0},s_{0}\right)  f_{s}:\left\Vert \zeta_{0}\right\Vert \leq1\right\}
=\left\{  \left(  rs_{0},s_{0}\right)  f_{s}:\left\vert r\right\vert
\leq1\right\}  =\left\{  rf_{s}:\left\vert r\right\vert \leq1\right\}
=\operatorname{ball}\left(  \mathbb{C}^{2}\right)  _{h}^{e_{s}},
\]
which means that $\mu_{s}$ is an exact $H$-measure on $X$. Notice that $s$ is
a $\mu$-mass with $\mu\left(  s\right)  =1/2$, $s^{\prime}=2\left[  s\right]
$ and $U_{s}s^{\prime}=\left(  2/\sqrt{2},0\right)  =e_{s}+f_{s}$, which in
turn implies that $\iota_{s}\left(  s_{0}\right)  =f_{s}=s_{0}^{\prime}$ (see
Proposition \ref{propEXM1}). If $\zeta=\zeta_{0}+re\in\mathfrak{c}$ then
$\iota\left(  \zeta\right)  =\left(  \zeta_{0},s_{0}\right)  f_{s}+re_{s}$ and
$\left\vert \left(  \zeta_{0},s_{0}\right)  \right\vert \leq\left\Vert
\zeta_{0}\right\Vert \leq r$, that is, $\iota_{s}\left(  \zeta\right)  \in
\ell_{2}\left(  2\right)  _{+}$. Conversely, if $\eta=\lambda f_{s}+re_{s}%
\in\ell_{2}\left(  2\right)  _{+}$, $\lambda,r\in\mathbb{R}$, $\left\vert
\lambda\right\vert \leq r$ then $\zeta=\lambda s_{0}+re\in\mathfrak{c}$ and
$\iota_{s}\left(  \zeta\right)  =\left(  \lambda s_{0},s_{0}\right)
f_{s}+re_{s}=\zeta$, that is, $\iota_{s}\left(  \mathfrak{c}\right)  =\ell
_{2}\left(  2\right)  _{+}$.

\subsection{The factorization problem}

As above consider the canonical $\ast$-representation $H\rightarrow C\left(
X\right)  $, $\zeta\mapsto\zeta\left(  \cdot\right)  $ from Subsection
\ref{subsecHCS}, where $X=S\left(  \mathfrak{c}_{e}\right)  $, and fix $\mu
\in\mathcal{U}\left(  X\right)  $. By Lemma \ref{lemUTM2}, $\iota:H\rightarrow
L^{2}\left(  X,\mu\right)  $ is a unital positive mapping, that is,
$\iota\left(  e\right)  =u$ and $\iota\left(  \mathfrak{c}_{e}\right)
\subseteq L^{2}\left(  X,\mu\right)  _{+}$. Based on Proposition
\ref{propKom1}, we conclude that $\iota=\iota_{\sigma}$ for a certain unital
$L^{2}\left(  X,\mu\right)  $-support $\sigma=\left\{  \sigma_{\chi}:\chi\in
B\right\}  \subseteq H_{h}$, where $B$ is a (hermitian) Hilbert basis for
$L^{2}\left(  X,\mu\right)  $ containing $u$, $\sigma_{u}=e$, $\sigma_{\chi
}\perp e$ for all $\chi\neq u$, and $\sum_{\chi\neq u}\left(  \zeta_{0}%
,\sigma_{\chi}\right)  ^{2}\leq\left\Vert \zeta_{0}\right\Vert ^{2}$ for all
$\zeta_{0}\in H_{h}^{e}$ (see Remark \ref{remUHS}). Thus $\left\{
\sigma_{\chi}:\chi\neq u\right\}  \subseteq H_{h}^{e}$, and
\[
\left(  \zeta,\sigma_{\chi}\right)  =\left(  \zeta\left(  \cdot\right)
,\chi\right)  =\int\zeta\left(  t\right)  \chi\left(  t\right)  d\mu=\left(
\zeta\left(  \cdot\right)  ,P\chi\right)
\]
for all $\zeta\in H$ and $\chi\neq u$. Thus we can assume that $B$ is a basis
for $\operatorname{im}P$, and $\left(  \zeta,\sigma_{\chi}\right)  =\left(
\zeta\left(  \cdot\right)  ,\chi\right)  $ for all $\zeta\in H$ and $\chi\neq
u$. Taking into account that $\iota\in\mathcal{B}^{2}\left(  H,L^{2}\left(
X,\mu\right)  \right)  $ (see Lemma \ref{lemUTM11}), we conclude that $\sigma$
is of type $2$, that is, $\sum_{\chi}\left\Vert \sigma_{\chi}\right\Vert
^{2}=\sum_{\chi}\left\Vert \iota^{\ast}\chi\right\Vert ^{2}\leq\left\Vert
\iota^{\ast}\right\Vert _{2}\leq\sqrt{2}$ and $\left\Vert \sigma_{\chi
}\right\Vert =\left\Vert \iota^{\ast}\chi\right\Vert \leq\left\Vert
\chi\right\Vert \leq1$ for all $\chi\in B\backslash\left\{  u\right\}  $, that
is, $\left\{  \sigma_{\chi}:\chi\neq u\right\}  \subseteq\operatorname{ball}%
H_{h}^{e}$. Put $s_{\chi}=\sigma_{\chi}+e$, $\chi\neq u$ and $s_{u}=e$. Then
$S=\left\{  s_{\chi}:\chi\in B\right\}  $ is a subset of $X$ containing $e$,
and $\left(  \zeta,s_{\chi}\right)  =\left(  \zeta,\sigma_{\chi}\right)
+\left(  \zeta,e\right)  =\left(  \zeta\left(  \cdot\right)  ,\chi\right)
+\left(  \zeta\left(  \cdot\right)  ,u\right)  =\left(  \zeta\left(
\cdot\right)  ,\chi+u\right)  $ for all $\zeta\in H$. Thus $B+u=\left\{
\chi+u\right\}  \subseteq\widetilde{X}\cap\sqrt{2}\operatorname{ball}%
L^{2}\left(  X,\mu\right)  _{h}=S\left(  L^{2}\left(  X,\mu\right)
_{+}\right)  $ and $\iota^{\ast}\left(  B+u\right)  =S$ thanks to Theorem
\ref{thUTM1}, and $\sum_{s\neq e}\left\langle \zeta_{0}\left(  \cdot\right)
,\delta_{s}\right\rangle ^{2}\leq\left\Vert \zeta_{0}\right\Vert ^{2}$ for all
$\zeta_{0}\in H_{h}^{e}$.

\subsection{$L_{2}$-factorization}

Now let $\left(  K,u\right)  $ be a unital Hilbert space, $X=S\left(
\mathfrak{c}_{u}\right)  $ with the canonical $\ast$-representation
$K\rightarrow C\left(  X\right)  $, $\eta\mapsto\eta\left(  \cdot\right)  $,
and fix $\mu\in\mathcal{U}\left(  X\right)  $. As above there is a unital
$L^{2}\left(  X,\mu\right)  $-support $\sigma=\left\{  \sigma_{\chi}:\chi\in
B\right\}  \subseteq K_{h}$ such that $\iota=\iota_{\sigma}:K\rightarrow
L^{2}\left(  X,\mu\right)  $ is a unital positive mapping. Put $K_{\mu
}=\operatorname{im}\left(  \iota^{\ast}\right)  _{h}$, which is a real
subspace in $K_{h}$, and $\sigma\subseteq K_{\mu}$. Since $\iota^{\ast}%
=\iota^{\ast}P$, we conclude that $K_{\mu}=\iota^{\ast}\left(
\operatorname{im}\left(  P\right)  _{h}\right)  $ and $\iota^{\ast
}:\operatorname{im}\left(  P\right)  _{h}\rightarrow K_{\mu}$ is injective. In
particular, for every $\eta\in K_{\mu}$ there corresponds a unique
$\eta^{\prime}\in\operatorname{im}\left(  P\right)  _{h}$ such that
$\eta=\iota^{\ast}\left(  \eta^{\prime}\right)  $. Put $\left\Vert
\eta\right\Vert _{\mu}=\left\Vert \eta^{\prime}\right\Vert _{2}$, which
defines a norm on $K_{\mu}$ such that $\left\Vert \eta\right\Vert
\leq\left\Vert \iota^{\ast}\right\Vert \left\Vert \eta\right\Vert _{\mu}$ for
all $\eta\in K_{\mu}$. If $\mu$ is a finite $K$-measure on $X$ (see Subsection
\ref{subsecHCS}), then $B$ is a finite hermitian basis for $\operatorname{im}%
\left(  P\right)  $ and the $L^{2}\left(  X,\mu\right)  $-support $\sigma$ is
finite, which in turn implies that $K_{\mu}$ is a finite dimensional real
subspace of $K_{h}$.

\begin{lemma}
\label{lemL111}Take $\eta\in K_{h}$. Then $\eta\in K_{\mu}$ iff $\eta
=\sum_{\chi}\alpha_{\chi}\sigma_{\chi}$ is a sum of an absolutely convergent
series in $K$ for a certain $\alpha=\left(  \alpha_{\chi}\right)  _{\chi}%
\in\ell^{2}\left(  B\right)  _{h}$. In this case, $\eta^{\prime}=\sum_{\chi
}\alpha_{\chi}\chi$ and $\left\Vert \eta\right\Vert _{\mu}=\left\Vert
\alpha\right\Vert _{2}$.
\end{lemma}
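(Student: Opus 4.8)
The plan is to unpack the definitions of $\iota^{\ast}$ and the $L^{2}\left(X,\mu\right)$-support $\sigma$, and then transport the orthonormal expansion in $\operatorname{im}\left(P\right)$ across the isometry-type relation $\left\Vert \iota^{\ast}\left(\eta_{0}\right)\right\Vert=\left\Vert \eta_{0}\right\Vert_{2}$ (or, more carefully, the mere injectivity of $\iota^{\ast}$ on $\operatorname{im}\left(P\right)_{h}$). First I would take $\eta\in K_{h}$ with $\eta\in K_{\mu}$; by definition there is a unique $\eta^{\prime}\in\operatorname{im}\left(P\right)_{h}$ with $\eta=\iota^{\ast}\left(\eta^{\prime}\right)$. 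Since $B$ is a hermitian Hilbert basis for $\operatorname{im}\left(P\right)$ and $\eta^{\prime}$ is hermitian, I would write $\eta^{\prime}=\sum_{\chi}\alpha_{\chi}\chi$ with $\alpha_{\chi}=\left(\eta^{\prime},\chi\right)\in\mathbb{R}$ and $\alpha=\left(\alpha_{\chi}\right)_{\chi}\in\ell^{2}\left(B\right)_{h}$, $\left\Vert\alpha\right\Vert_{2}=\left\Vert\eta^{\prime}\right\Vert_{2}$. Applying the bounded operator $\iota^{\ast}$ termwise and using $\iota^{\ast}\left(\chi\right)=\sigma_{\chi}$, I get $\eta=\iota^{\ast}\left(\eta^{\prime}\right)=\sum_{\chi}\alpha_{\chi}\sigma_{\chi}$, and the series converges absolutely because $\sum_{\chi}\left|\alpha_{\chi}\right|\left\Vert\sigma_{\chi}\right\Vert\le\left(\sum_{\chi}\left|\alpha_{\chi}\right|^{2}\right)^{1/2}\left(\sum_{\chi}\left\Vert\sigma_{\chi}\right\Vert^{2}\right)^{1/2}<\infty$ by Cauchy--Schwarz together with the fact (established from Lemma~\ref{lemUTM11}, $\iota\in\mathcal{B}^{2}$) that $\sum_{\chi}\left\Vert\sigma_{\chi}\right\Vert^{2}=\sum_{\chi}\left\Vert\iota^{\ast}\chi\right\Vert^{2}\le\left\Vert\iota^{\ast}\right\Vert_{2}^{2}<\infty$. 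This gives one direction, together with the identification $\eta^{\prime}=\sum_{\chi}\alpha_{\chi}\chi$ and hence $\left\Vert\eta\right\Vert_{\mu}=\left\Vert\eta^{\prime}\right\Vert_{2}=\left\Vert\alpha\right\Vert_{2}$.

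For the converse I would start from an absolutely convergent series $\eta=\sum_{\chi}\alpha_{\chi}\sigma_{\chi}$ in $K$ with $\alpha\in\ell^{2}\left(B\right)_{h}$. Since $\alpha\in\ell^{2}\left(B\right)$, the series $\theta=\sum_{\chi}\alpha_{\chi}\chi$ converges in $L^{2}\left(X,\mu\right)$ and lies in $\operatorname{im}\left(P\right)_{h}$ (each $\chi\in B\subseteq\operatorname{im}\left(P\right)$, and $\operatorname{im}\left(P\right)$ is a closed $\ast$-subspace). Applying the bounded mapping $\iota^{\ast}$ and using continuity plus $\iota^{\ast}\left(\chi\right)=\sigma_{\chi}$, I obtain $\iota^{\ast}\left(\theta\right)=\sum_{\chi}\alpha_{\chi}\sigma_{\chi}=\eta$, so $\eta\in\operatorname{im}\left(\iota^{\ast}\right)_{h}=K_{\mu}$. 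By injectivity of $\iota^{\ast}$ on $\operatorname{im}\left(P\right)_{h}$ (noted in Subsection~\ref{subsecHCS}, since $\ker\left(\iota^{\ast}\right)=\iota\left(H\right)^{\perp}$ meets $\operatorname{im}\left(P\right)$ only in $0$), we conclude $\eta^{\prime}=\theta=\sum_{\chi}\alpha_{\chi}\chi$, whence $\left\Vert\eta\right\Vert_{\mu}=\left\Vert\theta\right\Vert_{2}=\left\Vert\alpha\right\Vert_{2}$.

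The only subtle point — and the step I would treat as the main obstacle — is justifying the termwise application of $\iota^{\ast}$ to the infinite sum and the identification of $\sigma_{\chi}$ with $\iota^{\ast}\left(\chi\right)$. The latter comes from the displayed relation $\left(\zeta,\sigma_{\chi}\right)=\left(\zeta\left(\cdot\right),\chi\right)=\left\langle\zeta,\iota^{\ast}\left(\overline{\chi}\right)\right\rangle$ for all $\zeta\in H$ (here $K$ playing the role of $H$), established just before Subsection~\ref{subsecHCS} via the formula $\iota^{\ast}\left(\eta\right)=\int\eta\left(t\right)t\,d\mu$ of Lemma~\ref{lemUTM11}; one should be slightly careful about the $u$-component since $\sigma_{u}=e=\iota^{\ast}\left(u\right)$ as well, so the identity $\iota^{\ast}\left(\chi\right)=\sigma_{\chi}$ holds uniformly over all $\chi\in B$. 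The termwise passage is then legitimate because $\iota^{\ast}$ is bounded (indeed Hilbert--Schmidt) and the partial sums of $\sum_{\chi}\alpha_{\chi}\chi$ converge in $L^{2}\left(X,\mu\right)$; absolute convergence of the image series in $K$ was already secured by the Cauchy--Schwarz estimate above, so no reordering issues arise.
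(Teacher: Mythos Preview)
Your proof is correct and follows essentially the same route as the paper's: expand $\eta'\in\operatorname{im}(P)_h$ in the hermitian basis $B$, push through $\iota^{\ast}$ using $\iota^{\ast}(\chi)=\sigma_{\chi}$, and invoke Cauchy--Schwarz together with $\sum_{\chi}\|\sigma_{\chi}\|^{2}\le\|\iota\|_{2}^{2}$ for absolute convergence. You are in fact more careful than the paper in spelling out the converse direction and the injectivity of $\iota^{\ast}|\operatorname{im}(P)_h$ needed to identify $\eta'$ with $\theta$.
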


\begin{proof}
Note that $\eta\in K_{\mu}$ iff $\eta=\iota^{\ast}\left(  \eta^{\prime
}\right)  $ for some $\eta^{\prime}\in\operatorname{im}\left(  P\right)  _{h}%
$. But $\eta^{\prime}=\sum_{\chi}\alpha_{\chi}\chi$ has a unique expansion
through the basis $B$ such that $\left\Vert \eta\right\Vert _{\mu}=\left\Vert
\eta^{\prime}\right\Vert _{2}=\left\Vert \alpha\right\Vert _{2}$, where
$\alpha=\left(  \alpha_{\chi}\right)  _{\chi}\in\ell^{2}\left(  B\right)
_{h}$. It follows that $\eta=\sum_{\chi}\alpha_{\chi}\sigma_{\chi}$ and%
\begin{align*}
\sum_{\chi}\left\Vert \alpha_{\chi}\sigma_{\chi}\right\Vert  &  =\sum_{\chi
}\left\vert \alpha_{\chi}\right\vert \left\Vert \sigma_{\chi}\right\Vert
\leq\left(  \sum_{\chi}\left\vert \alpha_{\chi}\right\vert ^{2}\right)
^{1/2}\left(  \sum_{\chi}\left\Vert \sigma_{\chi}\right\Vert ^{2}\right)
^{1/2}=\left\Vert \alpha\right\Vert _{2}\left(  \sum_{\chi}\left\Vert
\iota^{\ast}\chi\right\Vert ^{2}\right)  ^{1/2}\\
&  \leq\left\Vert \alpha\right\Vert _{2}\left\Vert \iota\right\Vert
_{2}<\infty,
\end{align*}
which means that $\eta$ is a sum of an absolutely convergent series in $K$.
\end{proof}

Let $T:\left(  K,u\right)  \rightarrow\left(  H,e\right)  $ be a unital
positive mapping given by a unital $H$-support $k\subseteq K_{h}$. We say that
$T$ is $L^{2}\left(  X,\mu\right)  $\textit{-factorable mapping} if $T=S\iota$
for a certain $S\in\mathcal{B}\left(  L^{2}\left(  X,\mu\right)  ,H\right)  $.
Taking into account $\iota\in\mathcal{B}^{2}\left(  K,L^{2}\left(
X,\mu\right)  \right)  $, we conclude that $T=S\iota\in\mathcal{B}^{2}\left(
K,H\right)  $. Based on Remark \ref{remHSop}, we obtain that $k$ is of type
$2$ automatically.

\begin{lemma}
\label{lemL12}If $T:\left(  K,u\right)  \rightarrow\left(  H,e\right)  $ is a
unital positive mapping given by a unital $H$-support $k\subseteq K_{h}$ and
$T=S\iota$ for a positve mapping $S:L^{2}\left(  X,\mu\right)  \rightarrow
H$\textit{, }then $k\subseteq K_{\mu}$ with $\sup\left\Vert k\right\Vert
_{\mu}<\infty$.
\end{lemma}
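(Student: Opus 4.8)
\textbf{Proof plan for Lemma \ref{lemL12}.}

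The plan is to unravel the definitions and reduce everything to the statement $k_f \in K_\mu$ with a uniform bound on $\|k_f\|_\mu$, using the description of $K_\mu$ from Lemma \ref{lemL111} together with the relation $T = S\iota$. First I would record that, by Proposition \ref{propKom1} applied to $T:(K,u)\to(H,e)$, we have $Tv = \sum_f (v,k_f) f$ for the unital $H$-support $k = \{k_f : f\in F\}$, and that $T=S\iota = S\iota_\sigma$ where $\iota_\sigma : K \to L^2(X,\mu)$ is given by the unital $L^2(X,\mu)$-support $\sigma = \{\sigma_\chi : \chi\in B\}\subseteq K_h$ from the preceding subsection. Dualizing, $T^* = \iota^* S^*$, so $\operatorname{im}(T^*) \subseteq \operatorname{im}(\iota^*)$, hence $T^*(\overline{H}_h) \subseteq \operatorname{im}(\iota^*)_h = K_\mu$. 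Since $T^* f = k_f$ for each hermitian $f \in F$ (this is exactly the adjoint relation $(Tv,f) = (v, T^*f) = (v,k_f)$ for $v\in K$, which forces $T^* f = k_f$), we conclude $k_f \in K_\mu$ for all $f \in F$, i.e. $k \subseteq K_\mu$.

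Next I would establish the uniform bound $\sup_f \|k_f\|_\mu < \infty$. By definition $\|k_f\|_\mu = \|k_f'\|_2$ where $k_f' \in \operatorname{im}(P)_h$ is the unique element with $\iota^*(k_f') = k_f$. The natural candidate for $k_f'$ is $S^* f$ composed with the projection $P$: since $T^* = \iota^* S^*$ and $\iota^* = \iota^* P$, we have $k_f = T^* f = \iota^*(P S^* f)$, so $k_f' = P S^* f \in \operatorname{im}(P)_h$ by uniqueness (using that $S$ is a positive, hence $\ast$-linear, map so $S^*$ is $\ast$-linear and $P$ preserves hermitians, as noted in Lemma \ref{lemUTM11}). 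Therefore $\|k_f\|_\mu = \|P S^* f\|_2 \le \|S^* f\|_2$. Now $S:L^2(X,\mu)\to H$ is bounded, hence so is $S^*:\overline{H}\to L^2(X,\mu)$, and since $F$ is a Hilbert basis consisting of unit vectors, $\|S^* \overline{f}\|_2 \le \|S^*\| = \|S\|$ for every $f \in F$. Thus $\sup_f \|k_f\|_\mu \le \|S\| < \infty$, which completes the proof.

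The only point requiring care — the step I expect to be the main obstacle — is verifying that $k_f'$ really is $P S^* f$ and that this lies in $\operatorname{im}(P)_h$, i.e. that the identification of $K_\mu$ with a subspace of $\operatorname{im}(P)_h$ via $\iota^*$ is unambiguous on the relevant elements. This needs: (i) $S$ positive implies $S^*$ is $\ast$-linear, which follows as in the computation preceding Lemma \ref{lDualM}; (ii) $P$ is a $\ast$-linear (indeed positive) projection onto $\iota(H)^- = \operatorname{im}(P)$, which is Lemma \ref{lemUTM2} (with $K$ in place of $H$); and (iii) the injectivity of $\iota^*$ restricted to $\operatorname{im}(P)_h$, so that $k_f'$ is uniquely determined — this is exactly the statement that $\ker(\iota^*) = \iota(K)^\perp$ intersects $\operatorname{im}(P)$ trivially, recorded in the discussion of $K_\mu$ just before Lemma \ref{lemL111}. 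Once these three facts are in hand the argument is a routine chain of adjoint identities and operator-norm estimates; no delicate analysis is needed beyond boundedness of $S$ and the Hilbert–Schmidt property of $\iota$ already supplied by Lemma \ref{lemUTM11}.
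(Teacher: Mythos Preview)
Your proof is correct and follows essentially the same route as the paper. The paper first replaces $S$ by $SP$ (using that $P$ is a positive projection, Lemma \ref{lemUTM2}) and then invokes Proposition \ref{propKom1} to write $S=S_m$ for an $H$-support $m=\{m_f\}\subseteq\operatorname{im}(P)_h$, after which $k_f=\iota^*S^*f=\iota^*(m_f)$ gives $k_f'=m_f$ and $\sup\|k\|_\mu=\sup\|m\|_2<\infty$; your argument bypasses the explicit support representation of $S$ and identifies $k_f'=PS^*f$ directly, bounding $\|k_f\|_\mu\le\|S\|$ via the operator norm---but since $m_f=PS^*f$, the two computations coincide.
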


\begin{proof}
Since $T=SP\iota$ and $P$ is a positive projection (see Lemma \ref{lemUTM2}),
we can assume that $S=S_{m}$ for a certain $H$-support $m=\left\{  m_{f}:f\in
F\right\}  $ in $\operatorname{im}\left(  P\right)  $, where $m_{f}\perp u$,
$f\neq e$ and $m_{e}=m_{e}^{u}+u$, $m_{e}^{u}\in\operatorname{ball}%
\operatorname{im}\left(  P\right)  _{h}^{u}$ (see Proposition \ref{propKom1}).
Then
\[
k_{f}=\iota^{\ast}S^{\ast}f=\iota^{\ast}\left(  m_{f}\right)  =\iota^{\ast
}\left(  \sum_{\chi}\left(  m_{f},\chi\right)  \chi\right)  =\sum_{\chi
}\left(  m_{f},\chi\right)  \iota^{\ast}\left(  \chi\right)  =\sum_{\chi
}\left(  m_{f},\chi\right)  \sigma_{\chi}.
\]
By Lemma \ref{lemL111}, $k_{f}^{\prime}=m_{f}$ and $\left\Vert k_{f}%
\right\Vert _{\mu}^{2}=\sum_{\chi}\left(  m_{f},\chi\right)  ^{2}=\left\Vert
m_{f}\right\Vert _{2}^{2}<\infty$. If $f\neq e$ then $k_{f}=\sum_{\chi\neq
u}\left(  m_{f},\chi\right)  \sigma_{\chi}\in K_{\mu}\cap K_{h}^{u}$, whereas
$k_{e}=\sum_{\chi\neq u}\left(  m_{e}^{u},\chi\right)  \sigma_{\chi}+u$. Hence
$k\subseteq K_{\mu}$ and $\sup\left\Vert k\right\Vert _{\mu}=\sup\left\Vert
m\right\Vert _{2}<\infty$.
\end{proof}

Notice that $T$ is $L^{2}\left(  X,\mu\right)  $-factorable iff $\left\Vert
T\eta\right\Vert \leq C\left\Vert \eta\left(  \cdot\right)  \right\Vert _{2}$,
$\eta\in K$ for some positive constant $C$. In particular, $T$ transforms
$\left\Vert \cdot\right\Vert _{2}$-bounded sequences from $K$ to bounded
sequences in $H$.

\begin{lemma}
\label{lemL13}Let $T:\left(  K,u\right)  \rightarrow\left(  H,e\right)  $ be a
unital positive mapping given by a unital $H$-support $k\subseteq K_{h}$,
which transforms $\left\Vert \cdot\right\Vert _{2}$-bounded sequences from $K$
to bounded sequences in $H$ for some $\mu\in\mathcal{U}\left(  S\left(
\mathfrak{c}_{u}\right)  \right)  $. If $k\subseteq K_{\mu}$ with
$\sup\left\Vert k\right\Vert _{\mu}<\infty$, then $T=S\iota$ for a certain
unital $\ast$-linear mapping $S:L^{2}\left(  X,\mu\right)  \rightarrow H$.
\end{lemma}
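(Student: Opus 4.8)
The plan is to reconstruct the mapping $S$ from the data $k\subseteq K_{\mu}$ by transporting the $H$-support $k$ of $T$ over to an $H$-support in $\operatorname{im}(P)$ via the correspondence $\eta\mapsto\eta'$ described in Lemma \ref{lemL111}. First I would set, for each $f\in F\setminus\{e\}$, $m_{f}=k_{f}'\in\operatorname{im}(P)_{h}^{u}$, which makes sense because $k_{f}\in K_{\mu}\cap K_{h}^{u}$ by the hypothesis $k\subseteq K_{\mu}$ (combined with $k_{f}\perp u$, so the unique preimage lies in the orthogonal complement of $u$). For $f=e$ I would put $m_{e}=k_{e}'=m_{e}^{u}+u$ with $m_{e}^{u}=\iota^{\ast}{}^{-1}$ applied to $k_{e}-u$, again landing in $\operatorname{ball}\operatorname{im}(P)_{h}^{u}$ after rescaling, since $k$ is a \emph{unital} $H$-support in $K$ so $k_{e}=k_{e}^{u}+u$. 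The family $m=\{m_{f}:f\in F\}$ will have $\|m_{f}\|_{2}=\|k_{f}\|_{\mu}\le\sup\|k\|_{\mu}<\infty$, so it is a bounded family of type $2$ in $\operatorname{im}(P)$.

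Next I would verify that $m$ is an $H$-support in the unital Hilbert space $(\operatorname{im}(P),u)$: one must check $m_{f}\perp u$ for $f\ne e$ (immediate from the construction) and $\sum_{f\ne e}(\theta,m_{f})^{2}\le\big((\theta,m_{e}^{u})+\|\theta\|_{2}\big)^{2}$ for all $\theta\in\operatorname{im}(P)_{h}^{u}$, using the criterion of Remark \ref{remUHS}. Here I would exploit the identity $(\zeta,\sigma_{\chi})=(\zeta(\cdot),\chi)$ established just before Subsection on $L_{2}$-factorization, and the fact that for $\eta_{0}\in K_{h}^{u}$ with $\eta_{0}'\in\operatorname{im}(P)_{h}^{u}$ one has $(\eta_{0},m_{f})=(\eta_{0}',m_{f})$ by self-adjointness of $\iota^{\ast}$; since $\iota$ is a unital positive map (Lemma \ref{lemUTM2}) the $H$-support inequality for $k$ in $(K,u)$ transfers. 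Then by Proposition \ref{propKom1} (its $\ast$-linear extension version, or Lemma \ref{lemCRT0} applied to the Hilbert space $\operatorname{im}(P)$) the support $m$ defines a bounded $\ast$-linear map $S_{m}:\operatorname{im}(P)\to H$, $S_{m}=\sum_{f}f\odot\overline{m_{f}}$, which I extend by zero on $\operatorname{im}(P)^{\perp}=\iota(K)^{\perp}$ to get $S:L^{2}(X,\mu)\to H$; composing with the orthogonal projection $P$ keeps it unital $\ast$-linear.

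Finally I would check $S\iota=T$: for $\eta\in K$,
\[
(S\iota\eta,f)=(\iota\eta,m_{f})=(\iota\eta,P m_{f})=(\iota^{\ast}m_{f},\eta)=(k_{f},\eta)=(\eta,k_{f})
\]
for all $f\in F$ (using $m_{f}=k_{f}'$, hence $\iota^{\ast}m_{f}=k_{f}$), so $S\iota\eta=\sum_{f}(\eta,k_{f})f=T\eta$. The hypothesis that $T$ transforms $\|\cdot\|_{2}$-bounded sequences to bounded sequences in $H$ is what guarantees $S$ is genuinely \emph{bounded} on all of $L^{2}(X,\mu)$ rather than merely densely defined: I would use it to conclude $\|Sv\|=\|T\eta\|\le C\|\eta(\cdot)\|_{2}=C\|v\|_{2}$ on the dense subspace $v=\iota\eta$, then extend by continuity; the type-$2$ estimate $\sum_f\|m_f\|_2^2<\infty$ gives an alternative direct bound.

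The main obstacle I anticipate is the careful bookkeeping in showing $m$ is an $H$-support in $(\operatorname{im}(P),u)$ — specifically, transferring the quadratic inequality $\sum_{f\ne e}(\eta_{0},k_{f})^{2}\le((\eta_{0},k_{e}^{u})+\|\eta_{0}\|)^{2}$ from arbitrary $\eta_{0}\in K_{h}^{u}$ to arbitrary $\theta\in\operatorname{im}(P)_{h}^{u}$. The subtlety is that not every $\theta$ is of the form $\eta_{0}'$ with $\eta_{0}\in K_{h}^{u}$ unless $\mu$ is sufficiently rich (e.g. an exact $H$-measure); in general one only has $\theta=\lim_{n}\iota(\eta_{0,n})$ with $\eta_{0,n}\in H_{h}^{u}$ by density of $\iota(K)$ in $\operatorname{im}(P)$, so I would pass to this limit and invoke continuity of both sides together with $(\eta_{0,n},m_{f})=(\iota\eta_{0,n},m_{f})\to(\theta,m_{f})$, mirroring the limiting argument at the end of the proof of Theorem \ref{corCHe1}. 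The boundedness hypothesis on $T$ is then the ingredient that prevents this limit from being vacuous.
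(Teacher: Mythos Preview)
The core construction --- setting $m_{f}=k_{f}'\in\operatorname{im}(P)_{h}$ and checking $S\iota=T$ via $(\iota\eta,m_{f})=(\eta,\iota^{\ast}m_{f})=(\eta,k_{f})$ --- is exactly what the paper does. But your principal route to boundedness, namely verifying that $m$ is an $H$-support in $(\operatorname{im}(P),u)$ and then invoking the general ``$H$-support gives bounded positive operator'' machinery, has a genuine gap and is in any case more than the lemma asks for: the conclusion requires only that $S$ be bounded, unital and $\ast$-linear, not positive.

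The gap is a norm mismatch in your limiting argument. For $\theta_{0}\in\operatorname{im}(P)_{h}^{u}$ written as $\theta_{0}=\lim_{n}\iota(\eta_{0,n})$ with $\eta_{0,n}\in K_{h}^{u}$, the unital $H$-support inequality for $k$ (Remark~\ref{remUHS}) reads
\[
\sum_{f\neq e}(\eta_{0,n},k_{f})^{2}\leq\big((\eta_{0,n},k_{e})+\lVert\eta_{0,n}\rVert_{K}\big)^{2}.
\]
The left side passes to the limit by Fatou, and $(\eta_{0,n},k_{e})=(\iota\eta_{0,n},m_{e})\to(\theta_{0},m_{e})$; but nothing controls $\lVert\eta_{0,n}\rVert_{K}$ in terms of $\lVert\iota\eta_{0,n}\rVert_{2}$. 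Without exactness of $\mu$ (Proposition~\ref{propEXM1}) one cannot choose approximants with $\lVert\eta_{0,n}\rVert_{K}$ close to $\lVert\theta_{0}\rVert_{2}$, so the right-hand side need not converge to $((\theta_{0},m_{e}^{u})+\lVert\theta_{0}\rVert_{2})^{2}$ and the support inequality for $m$ does not follow. Your aside that ``the type-$2$ estimate $\sum_{f}\lVert m_{f}\rVert_{2}^{2}<\infty$ gives an alternative direct bound'' is also unfounded: the hypothesis only gives $\sup_{f}\lVert m_{f}\rVert_{2}=\sup\lVert k\rVert_{\mu}<\infty$.

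The paper bypasses all of this. It defines $S(\theta)=\sum_{f}(\theta,m_{f})f$ directly and, for each $\theta\in\operatorname{im}(P)$, picks $\eta_{n}\in K$ with $\iota\eta_{n}\to\theta$; then $(\eta_{n})$ is $\lVert\cdot\rVert_{2}$-bounded, so by hypothesis $(T\eta_{n})$ is bounded in $H$, and lower semicontinuity gives
\[
\lVert S\theta\rVert^{2}=\sum_{f}\lvert(\theta,m_{f})\rvert^{2}\leq\liminf_{n}\sum_{f}\lvert(\eta_{n},k_{f})\rvert^{2}=\liminf_{n}\lVert T\eta_{n}\rVert^{2}<\infty.
\]
Thus $S\theta\in H$ for every $\theta$, and the Uniform Boundedness Principle (applied to the finite partial sums) yields $S\in\mathcal{B}(L^{2}(X,\mu),H)$. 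Your remark in the third paragraph --- using the hypothesis to get $\lVert T\eta\rVert\leq C\lVert\iota\eta\rVert_{2}$ on the dense range and extending by continuity --- is a legitimate alternative and is closest in spirit to the paper's argument; promote it to the main line and drop the $H$-support verification entirely.
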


\begin{proof}
As above we put $X=S\left(  \mathfrak{c}_{u}\right)  $. Since $k\subseteq
K_{\mu}$, it follows that $k_{f}=\sum_{\chi}\alpha_{f,\chi}\sigma_{\chi}$ with
$\alpha_{f}=\left(  \alpha_{f,\chi}\right)  _{\chi}\in\ell^{2}\left(
B\right)  _{h}$, $\left\Vert k_{f}\right\Vert _{\mu}=\left\Vert \alpha
_{f}\right\Vert _{2}$ thanks to Lemma \ref{lemL111}. Note that $\alpha
_{f,u}=0$, $f\neq e$, and $\alpha_{e,u}=1$. Put $m_{f}=\sum_{\chi}%
\alpha_{f,\chi}\chi\in\operatorname{im}\left(  P\right)  _{h}$, $f\in F$, and
$m=\left\{  m_{f}:f\in F\right\}  $. Then $m_{f}=k_{f}^{\prime}$ and
$\left\Vert k_{f}\right\Vert _{\mu}=\left\Vert m_{f}\right\Vert _{2}$ for all
$f$ (see Lemma \ref{lemL111}). Notice that $\left\{  m_{f}:f\neq e\right\}
\subseteq\operatorname{im}\left(  P\right)  _{h}^{u}$ and $m_{e}=m_{e}^{u}+u$
with $m_{e}^{u}=\sum_{\chi\neq u}\alpha_{e,\chi}\chi\in\operatorname{im}%
\left(  P\right)  _{h}^{u}$. Moreover, $\sup\left\Vert m\right\Vert _{2}%
=\sup\left\{  \left\Vert \alpha_{f}\right\Vert _{2}:f\in F\right\}
=\sup\left\Vert k\right\Vert _{\mu}<\infty$, that is, $m$ is a bounded family
in $L^{2}\left(  X,\mu\right)  _{h}$. Take $\theta\in\operatorname{im}\left(
P\right)  $. Then $\theta=\lim_{n}\eta_{n}\left(  \cdot\right)  $ for a
certain sequence $\left(  \eta_{n}\right)  _{n}\subseteq K$. Thus $\left(
\eta_{n}\right)  _{n}$ is a $\left\Vert \cdot\right\Vert _{2}$-bounded
sequence in $K$. By assumption, $\left(  T\eta_{n}\right)  _{n}$ is a bounded
sequence in $H$. Put $S\left(  \theta\right)  =\sum_{f}\left(  \theta
,m_{f}\right)  f$. Using again Lemma \ref{lemL111} and the lowersemicontinuity
argument, we deduce that%
\begin{align*}
\left\Vert S\left(  \theta\right)  \right\Vert ^{2}  &  =\sum_{f}\left\vert
\left(  \theta,m_{f}\right)  \right\vert ^{2}=\sum_{f}\lim_{n}\left\vert
\left(  \eta_{n}\left(  \cdot\right)  ,m_{f}\right)  \right\vert ^{2}\leq
\lim\inf_{n}\sum_{f}\left\vert \left(  \eta_{n}\left(  \cdot\right)
,m_{f}\right)  \right\vert ^{2}\\
&  =\lim\inf_{n}\sum_{f}\left\vert \left(  \eta_{n},k_{f}\right)  \right\vert
^{2}=\lim\inf_{n}\left\Vert T\eta_{n}\right\Vert ^{2}<\infty.
\end{align*}
By the Uniform Boundedness Principle, we obtain that $S\in\mathcal{B}\left(
L^{2}\left(  X,\mu\right)  ,H\right)  $ with $S=SP$. Moreover, $S\left(
\iota\left(  \eta\right)  \right)  =\sum_{f}\left(  \eta\left(  \cdot\right)
,m_{f}\right)  f=\sum_{f}\left(  \eta,k_{f}\right)  f=T\left(  \eta\right)  $,
$\eta\in K$ thanks to Lemma \ref{lemL111}.
\end{proof}

Notice that the unital $\ast$-linear mapping $S$ from Lemma \ref{lemL13} may
not be positive being just $\iota\left(  \mathfrak{c}_{u}\right)  ^{-}%
$-positive. But that is the case of an exact $K$-measure $\mu$.

\begin{proposition}
\label{propMXO1}Let $T:\left(  K,u\right)  \rightarrow\left(  H,e\right)  $ be
a unital positive mapping given by a unital $H$-support $k\subseteq K_{h}$,
and let $\mu$ be an exact $K$-measure on the state space $X=S\left(
\mathfrak{c}_{u}\right)  $. Then $T$ is factorized as $T=S\iota$ throughout
the canonical mapping $\iota:K\rightarrow L^{2}\left(  X,\mu\right)  $ and a
unital positive mapping $S:L^{2}\left(  X,\mu\right)  \rightarrow H$ iff
$k\subseteq K_{\mu}$. In this case, $T$ is of finite rank automatically.
\end{proposition}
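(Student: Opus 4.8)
The plan is to prove the two implications of the stated equivalence, exploiting the exactness of $\mu$ at exactly the step where the bare $L^{2}$-factorization of Lemma \ref{lemL13} fails to deliver positivity. For the forward direction, suppose $T=S\iota$ for a unital positive $S:L^{2}\left(X,\mu\right)\rightarrow H$. Then Lemma \ref{lemL12} immediately gives $k\subseteq K_{\mu}$ with $\sup\left\Vert k\right\Vert _{\mu}<\infty$; so this half is essentially a citation of the preceding lemma, with no real obstacle.

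For the converse, assume $k\subseteq K_{\mu}$. Since $\mu$ is an exact (hence finite, by Proposition \ref{propEXM1}) $K$-measure, $\operatorname{im}\left(P\right)$ is finite dimensional, $B$ is a finite basis for $\operatorname{im}P$, and $K_{\mu}$ is a finite dimensional real subspace of $K_{h}$. First I would verify the hypothesis of Lemma \ref{lemL13}: because $\iota\in\mathcal{B}^{2}\left(K,L^{2}\left(X,\mu\right)\right)$ by Lemma \ref{lemUTM11}, the mapping $T$ does transform $\left\Vert\cdot\right\Vert_{2}$-bounded sequences from $K$ to bounded sequences in $H$ — indeed $\left\Vert T\eta\right\Vert^{2}=\sum_{f}\left|\left(\eta,k_{f}\right)\right|^{2}=\sum_{f}\left|\left(\eta\left(\cdot\right),k_{f}'\right)\right|^{2}\leq\left\Vert\eta\left(\cdot\right)\right\Vert_{2}^{2}\sup\left\Vert k\right\Vert_{\mu}^{2}\cdot\left(\#B\right)$, using $k_{f}'\in\operatorname{im}P$ and finiteness of $B$. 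Lemma \ref{lemL13} then produces a unital $\ast$-linear $S\in\mathcal{B}\left(L^{2}\left(X,\mu\right),H\right)$ with $S=SP$ and $T=S\iota$. It remains to upgrade $S$ to a positive mapping, i.e.\ $S\left(L^{2}\left(X,\mu\right)_{+}\right)\subseteq\mathfrak{c}_{e}$.

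The positivity upgrade is the crux, and it is where exactness is used. By Lemma \ref{lemUTM2} and Proposition \ref{propEXM1}, for an exact $K$-measure we have $\operatorname{im}\left(P\right)\cap L^{2}\left(X,\mu\right)_{+}=\iota\left(\mathfrak{c}_{u}\right)^{-}$, and since $\operatorname{im}P$ is finite dimensional the closure is superfluous: $\operatorname{im}\left(P\right)\cap L^{2}\left(X,\mu\right)_{+}=\iota\left(\mathfrak{c}_{u}\right)$. Because $S=SP$ kills $\operatorname{im}\left(P\right)^{\perp}$, it suffices to check $S$ on $\operatorname{im}\left(P\right)\cap L^{2}\left(X,\mu\right)_{+}$; but any element there is $\iota\left(\eta\right)$ for some $\eta\in\mathfrak{c}_{u}$, and $S\iota\left(\eta\right)=T\left(\eta\right)\in\mathfrak{c}_{e}$ since $T$ is positive. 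Hence $S\left(L^{2}\left(X,\mu\right)_{+}\right)=S\left(PL^{2}\left(X,\mu\right)_{+}\right)\subseteq T\left(\mathfrak{c}_{u}\right)\subseteq\mathfrak{c}_{e}$, so $S$ is a unital positive mapping. (Here I would also record that $P$ being a positive projection sends $L^{2}\left(X,\mu\right)_{+}$ into $\operatorname{im}\left(P\right)\cap L^{2}\left(X,\mu\right)_{+}$, which is the only place the conditional-expectation property of $P$ is needed.)

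Finally, the finite-rank conclusion: $T=S\iota$ with $S=SP$ factors through the finite dimensional space $\operatorname{im}P$, so $\operatorname{rank}\left(T\right)\leq\dim\operatorname{im}\left(P\right)=\#B<\infty$. The main obstacle, as indicated above, is precisely the positivity of $S$: without exactness one only obtains $\iota\left(\mathfrak{c}_{u}\right)^{-}$-positivity (as noted in the remark following Lemma \ref{lemL13}), and the equality $\operatorname{im}\left(P\right)\cap L^{2}\left(X,\mu\right)_{+}=\iota\left(\mathfrak{c}_{u}\right)^{-}$ together with finite-dimensionality is exactly what closes the gap. Everything else is bookkeeping: matching up the $H$-support data of $T$, $S$ and $\iota$ via Proposition \ref{propKom1}, and invoking Lemmas \ref{lemL12} and \ref{lemL13} in turn.
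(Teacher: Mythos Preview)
Your overall strategy matches the paper's: use Lemma \ref{lemL12} for one direction, and for the other use exactness via the identity $\operatorname{im}(P)\cap L^{2}(X,\mu)_{+}=\iota(\mathfrak{c}_{u})^{-}$ from Proposition \ref{propEXM1}. Two technical points deserve correction, though neither breaks the argument.

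First, your displayed estimate $\sum_{f}\lvert(\eta(\cdot),k_{f}')\rvert^{2}\leq\lVert\eta(\cdot)\rVert_{2}^{2}\,\bigl(\sup\lVert k\rVert_{\mu}\bigr)^{2}\cdot(\#B)$ is not valid as written: the sum runs over $f\in F$, which may be infinite, and bounding each summand by $\lVert\eta(\cdot)\rVert_{2}^{2}\lVert k_{f}'\rVert_{2}^{2}$ gives no control on the series; the factor $\#B$ (the dimension of $\operatorname{im}P$) does not compensate. The correct reason the hypothesis of Lemma \ref{lemL13} holds is simpler: since $(\eta,k_{f})=(\iota(\eta),k_{f}')$, the value $T\eta$ depends only on $\iota(\eta)$, and because $\mu$ is a finite $K$-measure one has $\iota(K)=\operatorname{im}(P)$, a finite-dimensional space on which the induced linear map $\theta\mapsto\sum_{f}(\theta,k_{f}')f$ is automatically bounded.

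Second, the assertion that ``the closure is superfluous'' in $\iota(\mathfrak{c}_{u})^{-}$ is not justified by finite-dimensionality alone (linear images of closed cones need not be closed, even in finite dimensions). It is also unnecessary: since $S$ is continuous and $\mathfrak{c}_{e}$ is closed, $S\bigl(\iota(\mathfrak{c}_{u})^{-}\bigr)\subseteq\overline{T(\mathfrak{c}_{u})}\subseteq\mathfrak{c}_{e}$ already gives what you need. The paper sidesteps both issues by not invoking Lemma \ref{lemL13} at all: it defines $m_{f}=k_{f}'$, checks directly (using the same limit argument with $P(\theta)=\lim_{n}\eta_{n}(\cdot)$ and lower semicontinuity) that $m$ satisfies the $H$-support inequality $\sum_{f\neq e}(\theta,m_{f})^{2}\leq(\theta,m_{e})^{2}$ for all $\theta\in L^{2}(X,\mu)_{+}$, and then reads off positivity of $S$ from Proposition \ref{propKom1}.
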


\begin{proof}
If $T$ admits a positive $L^{2}\left(  X,\mu\right)  $-factorization then
$k\subseteq K_{\mu}$ thanks to Lemma \ref{lemL12}. Conversely, assume that the
latter inclusion holds. Using Lemma \ref{lemL111} and Proposition
\ref{propEXM1}, we deduce that $\left\Vert k_{f}\right\Vert _{\mu}=\left\Vert
k_{f}^{\prime}\right\Vert _{2}=\left\Vert k_{f}\right\Vert \leq\sup\left\Vert
k\right\Vert <\infty$ for all $f\neq e$. In particular, $\sup\left\Vert
k\right\Vert _{\mu}<\infty$. As in the proof of Lemma \ref{lemL13}, we have
the bounded family $m=\left\{  m_{f}:f\in F\right\}  =k^{\prime}$ in
$\operatorname{im}\left(  P\right)  $. Take $\eta\in\mathfrak{c}_{u}$. Using
Lemma \ref{lemL111}, we derive that
\begin{equation}
\sum_{f\neq e}\left(  \eta\left(  \cdot\right)  ,m_{f}\right)  ^{2}%
=\sum_{f\neq e}\left(  \eta,k_{f}\right)  ^{2}\leq\left(  \eta,k_{e}\right)
^{2}=\left(  \eta\left(  \cdot\right)  ,m_{e}\right)  ^{2} \label{est1}%
\end{equation}
in $L^{2}\left(  X,\mu\right)  $. Now take $\theta\in L^{2}\left(
X,\mu\right)  _{+}$. Then $P\left(  \theta\right)  \in\operatorname{im}\left(
P\right)  \cap L^{2}\left(  X,\mu\right)  _{+}=\iota\left(  \mathfrak{c}%
_{u}\right)  ^{-}$ thanks to Proposition \ref{propEXM1}, that is, $P\left(
\theta\right)  =\lim_{n}\eta_{n}\left(  \cdot\right)  $ in $L^{2}\left(
X,\mu\right)  $ for a certain sequence $\left(  \eta_{n}\right)  _{n}%
\subseteq\mathfrak{c}_{u}$. Using (\ref{est1}) and the lowersemicontinuity
argument, we deduce that
\begin{align*}
\sum_{f\neq e}\left(  \theta,m_{f}\right)  ^{2}  &  =\sum_{f\neq e}\left(
P\left(  \theta\right)  ,m_{f}\right)  ^{2}=\sum_{f\neq e}\lim_{n}\left(
\eta_{n}\left(  \cdot\right)  ,m_{f}\right)  ^{2}\leq\lim\inf_{n}\sum_{f\neq
e}\left(  \eta_{n}\left(  \cdot\right)  ,m_{f}\right)  ^{2}\\
&  \leq\lim\inf_{n}\left(  \eta_{n}\left(  \cdot\right)  ,m_{e}\right)
^{2}=\left(  \theta,m_{e}\right)  ^{2},
\end{align*}
which means that $m$ is a unital $H$-support in $\left(  L^{2}\left(
X,\mu\right)  ,u\right)  $. By Proposition \ref{propKom1}, $S:L^{2}\left(
X,\mu\right)  \rightarrow H$, $S\left(  \theta\right)  =\sum_{f}\left(
\theta,m_{f}\right)  f$ is a unital positive mapping that responds to $m$, and
$S\iota=T$.
\end{proof}

In particular, the assertion of Proposition \ref{propMXO1} is applicable to a
unital atomic measure $\mu$ on $X$ with its finite support$.$


\begin{thebibliography}{99}                                                                                               %


\bibitem {BourInt}Bourbaki N., Elements of Mathematics, Integration, Ch. I-V,
Nauka, Moscow (1967)

\bibitem {ECh}Choi M.D., Effros E.G., Injectivity and operator spaces, J.
Functional Anal., 24 (1977) 156-209.

\bibitem {CH}Chen Z., Huan Z., \textit{On the continuity of the mth root of a
continuous nonnegative definite matrix-valued function,} J. Math. Anal. Appl.,
209 (1997) 60--66.

\bibitem {Dik}Dixmier J., Von Neumann algebras, North-Holland Publ. Company (1981).

\bibitem {Djfa}Dosiev A. A., \textit{Local operator spaces, unbounded
operators and multinormed }$C^{\ast}$\textit{-algebras, }J. Funct. Anal., 255
(2008) 1724--1760.

\bibitem {DTrans}Dosi A.A., \textit{Local operator algebras, fractional
positivity and quantum moment problem, }Trans. AMS, 363 (2) (2011) 801-856.

\bibitem {DNew}Dosi A.A., \textit{Quantum duality, unbounded operators and
inductive limits, }J. Math. Physics 51 (6) (2010) 1-43.

\bibitem {DInt}Dosi A.A., \textit{Noncommutative Mackey theorem, }Inter. J.
Math.22 (3) (2011) 1-7.

\bibitem {DT}Dosi A.A., \textit{Quotients of quantum bornological space,
}Taiwanese J. Math. 15 (3) (2011) 1287-1303.

\bibitem {Dfaa}Dosi A. A., \textit{Bipolar theorem for quantum cones,} Funct.
Anal. and its Appl., 46 (3) (2012) 228-231.

\bibitem {DfaaR}Dosi A. A., \textit{A representation theorem for quantum
systems,} Funct. Anal. and its Appl. 47(3) (2013) 241-245.

\bibitem {Dhjm}Dosi A. A., \textit{Quantum cones and their duality, }Houston
J. Math. 39 (3) (2013)\ (853-887).

\bibitem {DCcones}Dosi A. A., \textit{Quantum systems and representation
theorem, }Positivity. 17 (3) (2013) 841-861.

\bibitem {DAJM}Dosi A. A., \textit{Quantum Cones and Quantum Balls, }Azerb. J.
Math. 8 (2) (2018) 142-151.

\bibitem {DSbM}Dosi A. A., \textit{Quantum system structures of quantum spaces
and entanglement breaking maps, }Sbornik Math. (2018).1-55 (in press)

\bibitem {ER}Effros E.G., Ruan Z-.J., Operator Spaces, Clarendon Press, Oxford (2000).

\bibitem {EW}Effros E. G., Webster C., \textit{Operator analogues of locally
convex spaces,} Operator Algebras and Applications (Samos 1996), (NATO Adv.
Sci. Inst. Ser. C Math. Phys. Sci., 495), Kluwer (1997).

\bibitem {EWin}Effros E. G., Winkler S., \textit{Matrix convexity: operator
analogues of the Bilpolar and Hanh-Banach theorems, }J. Funct. Anal., 144
(1997) 117-152.

\bibitem {Hel}Helemskii A. Ya., Quantum functional analysis, MCCME, Moscow (2009).

\bibitem {HelBA}Helemskii A. Ya., Banach and Locally Convex Algebras, Oxford
Math. Monographs (1995).

\bibitem {Kav}Kavruk A. , Paulsen V. I., Todorov I. G., Tomforde M.,
\textit{Tensor products of operator systems,} J. Func. Anal. 261 (2011) 267-299.

\bibitem {Kav2}Kavruk A. , Paulsen V. I., Todorov I. G., Tomforde M.,
\textit{Quotients, exactness and nuclearity in the operator system category,
}Adv. Math. 235 (2013) 321-360.

\bibitem {NP}Ng W. H., Paulsen V. I., \textit{The SOH operator system,} J.
Oper. Theory, 76 (2) (2016) 285-305.

\bibitem {Pa}Paulsen V. I., Completely bounded maps and operator algebras,
Cambridge Stud. Advan. Math. 78 (2002).

\bibitem {PT}Paulsen V. I., Tomforde M., \textit{Vector spaces with an order
unit, }Indiana Univ. Math. J., 58 (2009) 1319-1359.

\bibitem {PaulTT}Paulsen V. I., Todorov I. G., Tomforde M., \textit{Operator
system structures on ordered spaces, }Proc. London Math. Soc. (3) 102 (2011) 25-49.

\bibitem {Ped}Pederson G. Analysis Now, Graduate Text in Math. (1988).

\bibitem {Pis}Pisier G., Introduction to operator space theory, Cambridge
Univ. Press. (2003).

\bibitem {AlP}Pietsch A. Nuclear locally convex spaces, Springer-Verlag (1972).

\bibitem {Sh}Schaefer H., Topological vector spaces, Springer-Verlag, 1970.

\bibitem {Win}Wihler T. P., On the Holder continuity of matrix functions for
normal matrices, J. Inequalities in Pure Appl. Math. 10 (4) (2009) 1-5.
\end{thebibliography}
\end{document}